\numberwithin{equation}{section}
\colorlet{ColorPink}{red!30}
\newcommand{\R}{\mathbb R}
\newcommand{\FF}{{\boldsymbol F}}
\newcommand{\Sbold}{{\boldsymbol S}}
\newcommand{\exterior}{\mathrm{ext}}
\newcommand{\interior}{\mathrm{int}}
\renewcommand\ae{{a.\@e.\@}}
\renewcommand\d{{\rm d}}
\newcommand\qq{\qquad}
\newcommand{\GG}{\mathbf{G}}
\newcommand{\tosc}{\mathrm{tOsc}}
\newcommand{\dista}{\operatorname{dist}}
\newcommand{\divm}{\mathrm{div}_{\tau}}
\newcommand{\dif}{\mathrm{d}}
\DeclareMathOperator{\dist}{dist}
\DeclareMathOperator{\diver}{div}
\renewcommand{\dif}{\operatorname{d}\!}
\newcommand{\lebe}{\operatorname{L}}
\newcommand{\sobo}{\operatorname{W}}
\newcommand{\locc}{\operatorname{loc}}
\newcommand{\hold}{\operatorname{C}}
\newcommand{\uu}{\mathbf{u}}
\newcommand{\bv}{\operatorname{BV}}
\newcommand{\ball}{\operatorname{B}}
\newcommand{\di}{\operatorname{div}}
\newcommand{\besov}{\operatorname{B}}
\newcommand{\mres}{\mathbin{\vrule height 1.6ex depth 0pt width
0.13ex\vrule height 0.13ex depth 0pt width 1.3ex}}
\newcommand{\dashint}{\fint}
\newcommand{\spt}{\operatorname{spt}}
\newcommand{\bmo}{\operatorname{BMO}}
\renewcommand\d{\mathrm{d}}
\renewcommand\qq\qquad
\newcommand{\bphi}{\bm{\varphi}}
\newcommand{\bmu}{\bm{\mu}}
\newcommand{\cm}{\mathscr{C}\!\mathscr{M}}
\newcommand{\curl}{\mathrm{curl\,}}
\renewcommand{\d}{\mathrm{d}}
\newcommand{\lla}{\langle\!\langle}
\newcommand{\rra}{\rangle\!\rangle}
\theoremstyle{plain}
\newtheorem*{theorem*}{Theorem}
\newtheorem{theorem}{Theorem}[section]
\newtheorem{lem}[theorem]{Lemma}
\newtheorem{prop}[theorem]{Proposition}
\newtheorem{corollary}[theorem]{Corollary}
\newtheorem{definition}[theorem]{Definition}
\theoremstyle{definition}
\newtheorem{rem}[theorem]{Remark}
\newtheorem{example}[theorem]{Example}
\begin{document}


\title[Curl-Measure Fields, the Stokes Theorem and Vorticity Fluxes]{Curl-Measure Fields, the  Generalized\\ Stokes Theorem and Vorticity Fluxes}
\author[G.-Q. Chen]{Gui-Qiang G. Chen}
\address{G.-Q.G.C.: Mathematical Institute, University of Oxford, Andrew Wiles Building,
Radcliffe Observatory Quarter, Woodstock Road, Oxford OX2 6GG
United Kingdom}
\email{gui-qiang.chen@maths.ox.ac.uk}
\author[F. Gmeineder]{Franz Gmeineder}
\address{F.G.: Department of Mathematics and Statistics, University of Konstanz, Universit\"{a}tsstra\ss e 10, 78464 Konstanz, Germany}
\email{franz.gmeineder@uni-konstanz.de}
\author[M. Torres]{Monica Torres}
\address{M.T.: Department of Mathematics, Purdue University, 150 N. University Street, West Lafayette, IN 47907, USA}
\email{torresm@purdue.edu}
\keywords{Curl measure fields, divergence measure fields, functions of bounded variation, trace theorems, Stokes theorems}

\date{\today}
\subjclass{26A24, 26A45, 26B20, 35R01, 42B25, 53A05}
\maketitle

\begin{abstract}
We introduce and analyze the class $\mathscr{CM}^{p}$ of curl-measure fields that are
$p$-integrable vector fields whose distributional curl is a vector-valued finite Radon measure. 
These spaces provide a unifying framework for problems involving vorticity.
A central focus of this paper is the development of Stokes-type theorems in
{low-regularity regimes},
made possible by new trace theorems for curl-measure fields. 
To this end, we introduce Stokes functionals on so-called good manifolds, 
defined by the finiteness of manifold-adapted maximal operators. 
Using novel techniques that may be of independent interest, we establish results 
that are new even in classical settings, such as Sobolev spaces 
or their curl-variants $\mathrm{H}^{\curl}(\R^{3})$, 
which arise, for example, in the study of Maxwell's equations. 
The sharpness of our theorems is illustrated through several fundamental examples.

\end{abstract}

\setcounter{tocdepth}{1}
\tableofcontents

\section{Introduction}
The main purpose of this paper is to introduce and analyze a new class of weak vector fields, called $\mathscr{CM}^{p}$ curl-measure fields -- that is, $p$-integrable vector fields whose distributional curl is a vector-valued finite Radon measure. Building on new trace theorems for such fields, we develop Stokes-type theorems that remain valid in low-regularity regimes, well beyond the classical smooth setting. This framework establishes a unified analytic setting for problems involving singular vorticity, encompassing phenomena such as vortex sheets, concentrated swirls, and other irregular structures in fluid and continuum mechanics. 

A wealth of problems arising in fluid or continuum mechanics involves the description of vortices. For instance, the description of how gases or waters are swirled directly leads to the study of their vorticity, in turn being expressed as the curl of the flow velocity. On the other hand, the mathematical modeling of electromagnetism is based on the vorticity of the magnetic fields. This is manifested in Maxwell's equations, and a combination with fluid mechanics then gives rise to the equations of magnetohydrodynamics. For example, the latter are employed in the mathematical study of plasmas. 

The analytic treatment of such phenomena necessitates not only
suitable weak formulations of the underlying problems, but also suitable function spaces which capture the essential features one aims to describe. 
Depending on the specific physical aspects, this task requires different spaces 
that
reflect the anticipated behaviour. 
This might be point swirls or along lines 
or surfaces, 
for example,
in the description of whirlwinds in geophysical applications, 
or discontinuities of the tangential fluid velocities, 
as is the case for vortex sheets, to be addressed in detail below.

In this paper,
we introduce and analyze the class of \emph{curl-measure fields}
which gives a unifying framework for such vorticity-based problems. 

\begin{definition}[Curl-measure fields]\label{def:cmfields}
Let $1\leq p\leq \infty$. Given an open subset $\Omega\subset\R^{3}$,
the space of \emph{$p$-curl measure fields} over $\Omega$ is defined as the linear space 
\begin{align}
    \cm^{p}(\Omega) :=\{ \FF \in\lebe^{p}(\Omega;\R^{3})\,\colon\;\curl \FF \in \mathrm{RM}_{\mathrm{fin}}(\Omega;\R^{3})\}, 
\end{align}
where $\curl \FF \in\mathrm{RM}_{\mathrm{fin}}(\Omega;\R^{3})$ means that the distributional curl of $\FF$ can be represented by an $\R^{3}$-valued finite Radon measure. 
\end{definition}

Curl-measure fields in the sense of Definition \ref{def:cmfields} generalize the usual $\mathrm{H}^{\curl}$- or $\sobo^{\curl,p}$-spaces, which we recall to be defined via 
\begin{align}\label{eq:sobocurl}
\begin{split}
&\mathrm{H}^{\curl}(\Omega):=\{\FF\in\lebe^{2}(\Omega;\R^{3})\,\colon\;\curl \, \FF \in\lebe^{2}(\Omega;\R^{3})\},\\ 
&\sobo^{\curl,p}(\Omega):=\{\FF\in\lebe^{p}(\Omega;\R^{3})\,\colon\;\curl \, \FF \in\lebe^{p}(\Omega;\R^{3})\}.
\end{split}
\end{align}
For instance, such spaces arise naturally in the study of the Maxwell equations 
or vorticity-based problems in fluid mechanics. In view of boundary value problems 
on \emph{e.g.} Lipschitz domains and thus boundary traces, the situation for spaces \eqref{eq:sobocurl} is well-understood; see Alonso \& Valli \cite{Alonso}, Buffa et al. \cite{BuffaCiarlet1,BuffaCiarlet2,BuffaOverview,BuffaCostabelSheen}, Sheen \cite{Sheen}, 
and Tartar \cite{Tartar1997}: There exists a (surjective) tangential trace operator 
\begin{align}\label{eq:tracep}
\mathrm{tr}_{\tau}^{p}\colon \sobo^{\curl,p}(\Omega)\to \mathcal{X}_{\partial\Omega}^{p}:=\left\{T\in\sobo^{-1/p,p}(\partial\Omega;\R^{3})\,\colon\!\!\!\, \begin{array}{l}\,T\cdot\nu_{\partial\Omega}=0,\\ \;\mathrm{div}_{\tau}(T)\in\sobo^{-1/p,p}(\partial\Omega)\end{array}\!\!\!\right\}
\end{align}
with the distributional tangential divergence $\mathrm{div}_{\tau}$ and the inward unit normal $\nu_{\partial\Omega'}\colon\partial\Omega'\to\mathbb{S}^{2}$. We refer the reader to Appendix \hyperref[sec:AppendixC]{C}, where this mapping property is revisited. 

In various situations, however, one requires function spaces which describe concentration effects of the curls. A simple yet typical instance thereof is given by vortex sheets. Such sheets arise when two layers of fluids slip over another: Consider an open and bounded container $\Omega=\Omega_{1}\cup\Omega_{2}\subset\R^{3}$, and let $\mathbf{u}\colon \Omega\to\R^{3}$ be the velocity field of a fluid.
If the normal velocity of the fluid is continuous along $\Sigma:=\Omega\cap\partial\Omega_{1}$,
but the tangential velocities are discontinuous along $\Sigma$, 
then $\Sigma$ is referred to as a vortex sheet; see Figure \ref{fig:vortflux}. In this case,
the vorticity $\curl\mathbf{u}$ of the fluid has support on $\Sigma$. 
Even when the fluid velocities are bounded, so $\mathbf{u}\in\lebe^{\infty}(\Omega;\R^{3})$, 
it is 
where the more classical spaces \eqref{eq:sobocurl} do not suffice for a global description. Indeed, the tangential discontinuities cause the distributional expression $\curl\mathbf{u}\in\mathscr{D}'(\Omega;\R^{3})$ to be a (finite Radon) measure on $\Sigma$ which is \emph{not} absolutely continuous with respect to $\mathscr{L}^{3}$; hence, $\mathbf{u}\notin \sobo^{\curl,p}(\Omega)$ for any $1\leq p \leq \infty$.  However, one then has $\mathbf{u}\in\mathscr{CM}^{\infty}(\Omega)$ in the sense of Definition \ref{def:cmfields}. Without introducing the underlying spaces explicitly, such functions have been considered, \emph{e.g.} by Majda \cite{Majda}. 

From a mathematical perspective, $\mathscr{CM}^{p}$-fields thus provide a framework that includes both more classical spaces of weakly differentiable functions such as $\sobo^{\curl,p}$ but also allows for the incorporation of concentrations of the vorticities into various models.

\subsection{Vorticity fluxes}

In physics or engineering, one of the most fundamental problems is the description of \emph{vorticity fluxes} through a surface $\Sigma\Subset\Omega\subset\R^{3}$ oriented by $\nu\colon\Sigma\to\mathbb{S}^{2}$; see Figure \ref{fig:vortflux}. By this, we understand the expression  $\curl\mathbf{u}\cdot\nu|_{\Sigma}$ provided that the underlying sufficiently smooth velocity field is  $\mathbf{u}\colon\Omega\to\R^{3}$. In this context, we refer to 
\begin{align}\label{eq:totalvortflux}
\int_{\Sigma}\curl\mathbf{u}\cdot\nu\dif\mathscr{H}^{2}
\end{align}
as \emph{total vorticity flux} through $\Sigma$. This quantity plays a crucial role in fluid mechanics or electromagnetism, eventually leading to the Maxwell equations; 
see, \emph{e.g.}, Majda and Bertozzi \cite{Majdaetal} as well as Feynman \cite{Feynman} and MacKay \cite{Mackay}. Vorticity fluxes are also instrumental in both geophysical applications \cite{McWilliams,SmithMontgomery}, aerospace engineering, and the description of circulation or stream phenomena in the human body.
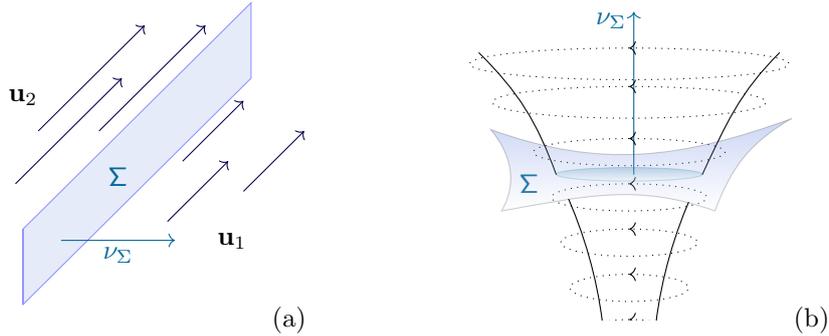
\begin{figure}
\begin{tikzpicture}
\draw[-,blue!40!white] (0,1) -- (0,0) -- (3,3) -- (3,4) -- (0,1) -- (0,0);
\draw[-,blue!60!white,fill=green!20!blue,opacity=.1] (0,1) -- (0,0) -- (3,3) -- (3,4) -- (0,1) -- (0,0);
\draw[->,blue!30!black] (1,2.3)--(2.4,3.7);
\draw[->,blue!30!black] (-0.1,1.6)--(1.3,3);
\draw[->,blue!30!black] (0.2,2.3)--(1.6,3.7);
\draw[->,blue!30!black] (2.9,1.5)--(3.7,2.3);
\draw[->,blue!30!black] (2.1,1.9)--(2.9,2.7);
\draw[->,blue!30!black] (1.9,1.1)--(2.7,1.9);
\node at (0,2.75) {$\mathbf{u}_{2}$};
\node at (2.75,0.85) {$\mathbf{u}_{1}$};
    \draw[->,green!40!blue] (0.5,0.85)--(2,0.85);
    \node[green!40!blue] at (1.25,0.65) {$\nu_{\Sigma}$};
    \node[green!40!blue] at (1.25,1.7) {$\mathsf{\Sigma}$};
    \node at (3.5,-0.2) {(a)};
\end{tikzpicture}
\hspace{1.5cm}
\begin{tikzpicture}[scale=1.2,rotate=90]
\node[green!40!blue] at (1.5,2.1) {$\mathsf{\Sigma}$};
    \draw[green!40!blue,->] (1.6,0.95) -- (3.4,0.95);
    \node[green!40!blue] at (3.3,1.2) {$\nu_{\Sigma}$};
\begin{scope}
    \clip(-0,-1) rectangle (3.2,3);
\begin{scope}
    \clip(-0,-1) rectangle (1.325,2);
\draw[-] (-4,2) [out=-20, in = 210] to (2,2);
\end{scope}
\begin{scope}
    \clip(-0,-1) rectangle (1.2975,2);
\draw[-] (-4,0) [out=20, in = -210] to (2,0);
\end{scope}
\draw[dotted,->] (0,1) arc
    [
        start angle=0,
        end angle=360,
        x radius=0.15cm,
        y radius =0.5cm
    ] ;
    \draw[dotted,->] (0.5,1) arc
    [
        start angle=0,
        end angle=360,
        x radius=0.15cm,
        y radius =0.65cm
    ] ;
    \draw[dotted,->] (1,1) arc
    [
        start angle=0,
        end angle=360,
        x radius=0.15cm,
        y radius =0.75cm
    ] ;
    \draw[dotted,->] (1.5,1) arc
    [
        start angle=0,
        end angle=360,
        x radius=0.15cm,
        y radius =0.85cm
    ] ;
    \draw[dotted,->] (2,1) arc
    [
        start angle=0,
        end angle=360,
        x radius=0.15cm,
        y radius =1.05cm
    ] ;
    \draw[dotted,->] (2.575,1) arc
    [
       start angle=0,
        end angle=360,
        x radius=0.175cm,
        y radius =1.5cm
    ] ;
    \draw[dotted,->] (3,1) arc
    [
        start angle=0,
        end angle=360,
        x radius=0.175cm,
        y radius =1.75cm
    ] ;
    \draw[-] (1.6,1.8) [out=20, in =225] to (2.95,2.65);
    \draw[-] (1.6,.2) [out=-25, in =-225] to (2.95,-0.65);
    \draw[-,top color=green!20!blue,opacity=.2,scale=1.2] (1,0.05) [out=70, in =280] to (1,2) [out=-30, in = 220] to (1.75,2.1) [out = 250, in =110] to (1.85,-0.65) [out=120,in=-20] to (1,0.05);
    \end{scope}
    \draw[green!40!blue,top color=green!40!blue,opacity=.2] (1.68,1) arc
    [
        start angle=0,
        end angle=360,
        x radius=0.075cm,
        y radius =0.80cm
    ] ;
    \node at (0,-1) {(b)};
\end{tikzpicture}
\caption{Vorticity fluxes through a surface $\Sigma$ oriented by $\nu_{\Sigma}\colon\Sigma\to\mathbb{S}^{2}$. (a) The situation of vortex sheets $\Sigma$, where the entire vorticity is concentrated on $\Sigma$. (b) The situation as encountered \emph{e.g.} in whirlwinds. }\label{fig:vortflux}
\end{figure}
For instance, they represent a crucial tool to describe the underlying mechanisms for the lift-off of aeroplanes; 
see, \emph{e.g.} Schmitz  and Chattot \cite{SchmitzChattot}. In such applications, a central tool is the \emph{Stokes theorem}, letting us express \eqref{eq:totalvortflux} in terms of circulation integrals; see \S \ref{sec:mainresults} 
for a more detailed discussion.

From a mathematical perspective, the rigorous treatment of such problems is often based on appropriate weak formulations. As a key point, this leads to spaces where one \emph{a priori} controls curl-type energy norms. 
The spaces $\mathscr{CM}^{p}(\Omega)$ from Definition \ref{def:cmfields} provide a unifying framework for the latter, even allowing for discontinuities in the tangential velocity fields and thereby being applicable to vortex sheets. Yet, even in the physically reasonable case of bounded velocities (and so $\mathbf{u}\in\mathscr{CM}^{\infty}(\Omega)$), the definition and properties of total vorticity fluxes \eqref{eq:totalvortflux} are far from obvious. This is the main theme of the present paper to be addressed in the next subsection, and is crucially intertwined with the pointwise properties of $\mathscr{CM}^{p}$-fields along surfaces $\Sigma\Subset\Omega$. 
\subsection{Main results}\label{sec:mainresults}
In view of our above discussion, it is firstly necessary to examine the behaviour of $\mathscr{CM}^{p}$-fields along surfaces or sufficiently regular boundaries $\partial\Omega'$ with  $\Omega'\Subset\Omega$ and inner unit normals $\nu_{\partial\Omega'}$. This requires suitable trace operators, and here it is clear that controlling the curl can only give us a definition of tangential traces. Inspired by  \cite{Alonso,BuffaCiarlet1,BuffaCiarlet2,BuffaOverview,BuffaCostabelSheen,Sheen,Tartar1997} in the case of $\sobo^{\curl,p}$-fields and \cite{ChenFrid1999,ChenFrid2003,ChenComiTorres,ChenTorres2005,ChenTorresZiemer,Silhavy} in the case of $\mathscr{DM}^{p}$-divergence measure fields, the definition of distributional tangential traces along Lipschitz boundaries $\partial\Omega'$ is based on the smooth integration-by-parts formula 
\begin{align}\label{eq:helpfulcurl}
  \int_{\partial\Omega'} \varphi\,\FF \times \nu_{\partial\Omega'}\dif\mathscr{H}^{2} =     \int_{\Omega'}(\varphi\,\curl  \FF - \FF \times\nabla\varphi)\dif x\
\end{align}
for any $\FF\in\hold^{1}(\Omega;\R^{3})$ and $\varphi\in\hold^{1}(\Omega)$.
Up to admitting measures, the right-hand side of \eqref{eq:helpfulcurl} also makes sense in the non-smooth context  and will serve as the definition of (distributional) tangential traces. 
Since $\curl\FF$ is a measure in our situation, this approach can be read in analogy with distributional normal traces for $\mathscr{DM}^{p}$-fields, see \cite{ChenFrid1999,ChenComiTorres}. However, unlike the latter (where the normal traces are scalar throughout), the abstractly defined  tangential traces are $\R^{3}$-valued. In this respect, it 
turns out crucial in \S \ref{sec:stokes}--\S \ref{sec:Stokesgeneral} to have suitable tangentiality results for such traces. Besides the typically different behavior of $\mathscr{CM}^{p}$-fields,  the trace theory thus is a first instance where the present setting methodologically differs from $\mathscr{DM}^{p}$-fields; see Examples \ref{ex:oscillations}, \ref{ex:limitationsp}--\ref{ex:tangentialjumps}, \ref{ex:nonex}, 
and \ref{ex:nonexctd}.
In this regard, we have
\begin{itemize}
\item \textbf{Theorems \ref{thm:tangtrace}, \ref{thm:tracemain1}, and \ref{cor:tangentiality}.} 
We establish that $\mathscr{CM}^{p}$-fields admit distributional tangential trace operators along boundaries of Lipschitz subsets. If $p=\infty$, they even belong to $\lebe^{\infty}$ and then take values in the tangent spaces to $\partial\Omega'$ $\mathscr{H}^{2}$-{a.e.}; see Theorem \ref{thm:tracemain1}. Theorem \ref{cor:tangentiality} yields the corresponding result in the case $1\leq p<\infty$, where the corresponding traces are merely distributions.
\end{itemize}
The last of these results is deferred to \S \ref{sec:Stokesgeneral}, since
we aim to address vorticity fluxes earlier in the paper. 
Most importantly, since bounded velocities are natural in many physical applications, 
we focus on $\mathscr{CM}^{\infty}$-fields first; for the latter, Theorem \ref{thm:tracemain1} suffices and even comes with convenient stronger properties.

The central tool for the description of vorticity fluxes is the \emph{Stokes theorem}, linking the vorticity flux through a surface and its circulation.  Classically, the Stokes theorem asserts that, whenever $\Sigma\subset\R^{3}$ is an oriented $\hold^{1}$-hypersurface with boundary $\Gamma_{\Sigma}$, we have\footnote{The minus sign on the right-hand side of \eqref{eq:StokesSmooth} is a matter of convention depending on the direction of $\nu$; see Figures  \ref{fig:orientationprelims} and \ref{fig:conventionsorientations}. This turns out convenient later when we work with inward unit normals.} 
\begin{align}\label{eq:StokesSmooth}
\int_{\Sigma}(\curl \FF)\cdot\nu\dif\mathscr{H}^{2} 
= - \int_{\Gamma_{\Sigma}}\FF\cdot\tau\dif\mathscr{H}^{1}\qquad\text{for all}\;\FF\in\hold^{1}(\R^{3};\R^{3}),
\end{align}
where $\nu\colon\mathrm{int}(\Sigma)\to\mathbb{S}^{2}$ is the normal to $\Sigma$ inducing its orientation, and $\tau$ denotes the tangential field to $\Gamma_{\Sigma}$. In the present situation of weakly differentiable functions, in particular curl-measure fields, not even the meaning of either side of \eqref{eq:StokesSmooth} is clear. To elaborate on this point, note that $\mathscr{CM}^{\infty}(\Omega)$ -- as $\sobo^{1,1}(\Omega;\R^{3})$ or $\bv(\Omega;\R^{3})$ -- is a space with differentiability order $s=1$ with first-order derivatives or combinations thereof being $1$-integrable or measures. By  standard principles in the theory of fine properties of functions, it is only possible to restrict or assign traces to such functions on at least $n-1=2$-dimensional, sufficiently regular subsets of $\R^{3}$; see, \emph{e.g.}, \cite{EvansGariepy,MalyZiemer}. In particular, the line integral on the right-hand side of \eqref{eq:StokesSmooth} requires to evaluate $\FF$ along a one-dimensional curve, revealing a crucial dimension gap of $1$. Yet, based on the distributional form of the classical div-curl-complex 
    \[\begin{tikzcd}
\hold^{\infty}(\R^{3}) \arrow{r}{\nabla}  & \hold^{\infty}(\R^{3};\R^{3})\arrow{r}{\curl} & \hold^{\infty}(\R^{3};\R^{3}) \arrow{r}{\mathrm{div}} & \hold^{\infty}(\R^{3}),
\end{tikzcd}
\]
it is possible to interpret an \emph{enlarged variant} of the left-hand side of \eqref{eq:StokesSmooth} as a trace functional with respect to the extended divergence-measure field $\curl\FF$ in the sense of \cite{ChenFrid2003,ChenIrvingTorres}. Hence the additional compatibility condition of $\curl\FF$ allows to assign traces on a suitable superset, but comes at the cost of having to localize suitably; see Section \ref{sec:stokes} for a detailed discussion. Firstly, working in the framework of $\mathscr{CM}^{\infty}$-fields and employing the maximal operator-based selection criteria for manifolds, we have 

	\begin{itemize}
		\item \textbf{Theorem \ref{thm:stokes}.} 
        We establish that, whenever a reference manifold $\widetilde{\Sigma}$ is fixed, the Stokes theorem is available on \emph{almost every} two-dimensional Lipschitz submanifold $\Sigma\Subset\widetilde{\Sigma}$. The identification of such good submanifolds is accomplished by the tangential maximal-type operator $\mathcal{M}^{\Psi}$ to be introduced in \S \ref{sec:maxfunctions}. As to the method of proof, we first introduce a Stokes-type functional on $\mathscr{D}(\Omega)$ (generalizing the left-hand side of \eqref{eq:StokesSmooth} to the non-smooth context) and then establish that it is a distribution of order zero on almost every Lipschitz submanifold. In essence, this leads to an expression of the form (where $\mu$ is a one-dimensional tangential trace measure along $\Gamma_{\Sigma}$)
        \begin{align}\label{eq:stokesmeasureintro}
        \int_{\Gamma_{\Sigma}}\dif\mu\;\;\;\;\text{as a substitute for the right-hand side of}\;\eqref{eq:StokesSmooth}.
        \end{align}
		\item \textbf{Theorem \ref{thm:stokes1st}.} In comparison with Theorem \ref{thm:stokes}, where a reference manifold $\widetilde{\Sigma}$ is fixed and $\Sigma\Subset\widetilde{\Sigma}$ might vary within or tangentially relative to  $\widetilde{\Sigma}$, Theorem \ref{thm:stokes1st} is concerned with variations of the reference manifold in transversal directions. The identification of good manifolds in transversal directions is accomplished by use of the normal maximal-type operator $\mathcal{M}^{\Phi}$; 
        see \S \ref{sec:maxfunctions}. In \S \ref{sec:divmeasfieldsmanif}, 
        we introduce divergence-measure fields on submanifolds of $\R^{n}$, 
        and then establish that the left-hand side of \eqref{eq:StokesSmooth} can be reduced to certain trace integrals \emph{with integrands being divergence-measure fields}. This allows us to reduce a generalized version of \eqref{eq:StokesSmooth} to the Gauss--Green theorem for divergence measure fields on manifolds.
		\end{itemize}
In view of general $\mathscr{CM}^{p}$-fields, $1\leq p \leq \infty$, we single out the case $p=\infty$ in Sections \ref{sec:stokes} and \ref{sec:divmeasfieldsmanif}. This is for two key reasons: First, in the case $p=\infty$, the Stokes theorems with respect to tangential and transversal variations are unconditional. Here, \emph{unconditional} means that the results apply to all $\mathscr{CM}^{\infty}$-fields and not to a subclass. Secondly, because of the better properties of the tangential trace operator in the case $p=\infty$, stronger results are available; specifically, this concerns the existence of the Stokes measure $\mu$ as in \eqref{eq:stokesmeasureintro} and its measure-theoretic properties; see Theorem \ref{thm:stokes} for more detail. 

Throughout, we distinguish between \emph{strong} Stokes theorems and \emph{weak} Stokes theorems (or Stokes theorems in the vorticity flux formulation). This distinction essentially relies on the set of admissible test maps; see \emph{e.g.} Theorem \ref{thm:stokes} in comparison with Corollary \ref{cor:stokesvortflux}. Whereas strong Stokes theorems can be established for $\mathscr{CM}^{\infty}$-fields, Section \ref{sec:Stokesgeneral} provides weak Stokes theorems in the case $1\leq p<\infty$ for a certain subclass $\mathscr{CM}_{1}^{p}\subset\mathscr{CM}^{p}$. The latter is the subspace of $\mathscr{CM}^{p}$-fields for which the tangential distributional divergence of the tangential trace along any manifold $\Sigma$, \emph{a priori} belonging to the dual $\mathrm{Lip}(\Sigma;T_{\Sigma})'$ by Theorem \ref{thm:tangtrace}, can be written as the tangential distributional divergence of an $\lebe^{1}$-vector field on $\Sigma$. Subject to the additional $\mathscr{CM}_{1}^{p}$-hypothesis, we have 
\begin{itemize}
    \item \textbf{Theorem \ref{thm:StokesDist}.} 
    We prove the Stokes theorem in the vorticity flux formulation for tangential variations.
    In essence, this is achieved by a localization procedure for certain tangential distributions to closed sets and makes use of some auxiliary results from \S \ref{sec:stokes} and \S \ref{sec:divmeasfieldsmanif}. 
\end{itemize}
In general, and apart from more obvious regularity scenarios to be recalled for the reader's convenience
in \S \ref{sec:StokesMostGeneral}, the validity of Stokes-type theorems seems to be novel 
even for the spaces $\sobo^{\curl,p}$. 
Solving certain elliptic equations on manifolds, it can be established that $\sobo^{p,\curl}\subset\mathscr{CM}_{1}^{p}$ for a certain range of $p$. This automatically yields the Stokes theorems 
in the vorticity flux formulation for the more classical $\sobo^{\curl,p}$-spaces 
(see Theorem \ref{thm:StokesWcurl}) and leads to overall findings as summarized in Table \ref{table:1}. 
\begin{table}[t]  
\centering 
\begin{tabular}{l c c c} 
\hline\hline   
 Space & Range of $p$ & Variations   & Result
\\ [0.5ex]  
\hline   
 & & tangential & Theorem  \ref{thm:stokes} \\[-1ex]  
\raisebox{1.5ex}{$\mathscr{CM}^{\infty}$} & \raisebox{1.5ex}{$p=\infty$}& transversal
& Theorem \ref{thm:stokes1st}  \\[2ex]  
{$\mathscr{CM}_{1}^{p}$} & {$1 \leq p < \infty$}& tangential  
& Theorem \ref{thm:StokesDist}  \\[2ex]  
 & & none  &  \\ [-1ex]
\raisebox{1.5ex}{$\sobo^{\curl,p}$} & \raisebox{1.5ex}{$3<p\leq\infty$}& (all manifolds)
& \raisebox{1.5ex}{Theorem \ref{thm:StokesWcurl}}   \\[2ex]
{$\sobo^{\curl,p}$} & {$\frac{3}{2}< p\leq 3$}& tangential  
& Theorem \ref{thm:StokesWcurl} 

\\[1ex]  
\hline 
\end{tabular}  
\vspace{0.2cm}
\caption{Stokes theorems for different classes of curl measure fields.} \label{table:1}
\end{table}  
\begin{rem}[Generalizations]
In presenting our results, we have not sought the highest possible generality. This concerns both more general operators than the curl and more irregular domains. Based on the techniques developed here, the follow-up papers \cite{ChenGmeinederStephanTorresYeh,ChenGmeinederStephanTorresYeh1} will bridge between traces in the overdetermined (elliptic) context \cite{BreitDieningGmeineder,DieningGmeineder} and  underdetermined scenarios as considered in this paper; for irregular sets, see Section \ref{sec:remsgen}.
\end{rem}

\subsection{Vorticity flux problems and interplay with  $\mathscr{DM}$-fields} Curl-measure fields are in some sense complementary to the well-established divergence measure fields, which have been intensively studied over the past three decades; see, \emph{e.g.},  
\cite{ChenFrid1999,ChenFrid2003,ChenComiTorres,ChenTorres2005,ChenTorresZiemer,ComiPayne,Silhavy2005,Silhavy}. On a formal level, this can be seen easiest via the Helmholtz decomposition 
\begin{align}\label{eq:helmholtz}
\FF(x) = \FF_{\mathrm{curl}}(x) + \FF_{\mathrm{div}}(x), \qquad x\in\R^{3}
\end{align}
for vector fields $\FF\in\hold_{\rm c}^{\infty}(\R^{3};\R^{3})$, 
where $\FF_{\curl}\in(\hold^{\infty}\cap\hold_{0})(\R^{3};\R^{3})$ is $\curl$-free and $\FF_{\mathrm{div}}\in(\hold^{\infty}\cap\hold_{0})(\R^{3};\R^{3})$ is $\di$-free. The decomposition \eqref{eq:helmholtz}, however, only serves as a motivation and does not have a straightforward $\mathscr{CM}^{p}$-$\mathscr{DM}^{p}$-analogue; indeed, the maps $
\FF\mapsto \FF_{\curl}$ and $\FF\mapsto\FF_{\mathrm{div}}$ are singular integrals of convolution type and do not map $\lebe^{1}\to\lebe^{1}$ or $\mathrm{RM}_{\mathrm{fin}}\to\mathrm{RM}_{\mathrm{fin}}$ in general. As alluded to in \S\ref{sec:mainresults}, the key difference between the single types of fields are  the components of the traces which are controlled, and it is in this sense that the $\mathscr{CM}^{p}$-fields complement the more established $\mathscr{DM}^{p}$-fields: 
\begin{example}[Gradient fields]\label{ex:oscillations}
Let $\Omega\subset\R^{3}$ be open and bounded. Whenever $v\in\lebe_{\locc}^{1}(\Omega)$ is weakly differentiable with weak gradient $\nabla v\in\lebe^{\infty}(\Omega;\R^{3})$, $\mathbf{u}:=\nabla v\in\cm^{\infty}(\Omega)$ since even $\mathrm{curl}\,\mathbf{u}=0$ in $\mathscr{D}'(\Omega;\R^{3})$. In the situation depicted in Figure \ref{fig:osc}, a function $v\in\hold(\overline{\Omega})$ with $\mathbf{u}=\nabla v\in\lebe^{\infty}(\Omega;\R^{3})$ is indicated; in particular, $A,B\in\R^{3\times 3}$ are such that $a:=A-B\neq0\in\R^{3\times 3}$, and the $\mathscr{H}^{2}$-measures of interfaces $S_{j}$ do not converge to zero. One then has 
\begin{align*}
|\mathrm{div}\,\mathbf{u}|(\Omega)\geq |a|\sum_{j=1}^{\infty}\mathscr{H}^{2}(S_{j})=\infty,
\end{align*}
whence $\mathbf{u}\notin\mathscr{D}\mathscr{M}^{\infty}(\Omega)$. 
Since $\mathbf{u}$ oscillates on arbitrarily fine scales as the boundary is approached in the normal direction, no normal traces can be assigned to $\mathbf{u}$. 
However, by construction, the tangential trace along $\partial\Omega$ is well-defined and equals zero for $\mathbf{u}$. 
\end{example}
\begin{figure}[t]
\begin{tikzpicture}[scale=0.7,rotate=90]
\draw[-] (0.5,-2.5) -- (0.5,2.5);
\draw[->] (0.5,-2.3) -- (1.5,-2.3);
\node[right] at (1,-2.3) {$\nu_{\partial\Omega}$};
\draw[-] (2,-2) -- (2,2); 
\draw[-,fill=black!20!white] (2,-2) -- (2,2) -- (1.5,1.5) -- (1.5,-1.5) -- (2,-2); 
\draw[dotted,<-] (2,-2) -- (2,-4);
\node[right] at (2,-4) {$S_{1}$};
\draw[dotted,<-] (1.5,-1.6) -- (1.5,-4);
\node[right] at (1.5,-4) {$S_{2}$};
\node[right] at (1,-4.1) {$\vdots$};
\draw[-,fill=black!10!white] (2,-2) -- (2,2) -- (2.5,1.5) -- (2.5,-1.5) -- (2,-2); 
\draw[-,fill=black!10!white] (1.5,-1.5) -- (1.5,1.5) -- (1.25,1.75) -- (1.25,-1.75)--(1.5,-1.5);
\node[black!80!white] at (1.75,3) {{\footnotesize $\nabla v = B$}};
\node[black!50!white] at (1.3,2.9) {{\footnotesize $\vdots$}};
\node[black!65!white] at (2.35,3) {{\footnotesize $\nabla v = A$}};
\draw[-,fill=black!20!white] (1.25,-1.75)--(1.25,1.75) -- (1,1.5) -- (1,-1.5) -- (1.25,-1.75);
\draw[-,fill=black!10!white] (1,-1.5) -- (1,1.5) -- (0.875,1.625) -- (0.875,-1.625)-- (1,-1.5);
\node[rotate=90] at (0.7,0) {{\tiny $...$}};
\end{tikzpicture}
\caption{Oscillations close to the boundary.}\label{fig:osc}
\end{figure}
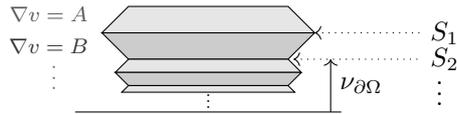
The present paper primarily focuses on properties and Stokes theorems for $\mathscr{CM}^{p}$-fields. However, these spaces are ultimately designed to yield a robust analytic framework for problems involving vortices and vorticity fluxes, potentially allowing for tangential discontinuities. 
While we defer a detailed study of models involving $\mathscr{CM}^{p}$-fields to future work, we outline several immediate consequences 
in \S \ref{sec:applications}.
Here, we first give an application of the Stokes theorem in the derivation of the Maxwell equations from electromagnetism. This relies on an interplay between divergence- and curl-based spaces in the setting of absolutely continuous measures. Secondly, to underline potential concentration effects, we foreshadow the usage of $\mathscr{CM}^{p}$-fields in the context of vortex sheets. This especially concerns a discussion of vorticity fluxes and the non-local three-dimensional Birkhoff-Rott equation on the evolution of parametrized vortex sheets 
in the framework of curl measure fields; see \eqref{eq:birkhoffrott} below and Caflisch et al. \cite{Caflisch1988,Caflisch1989,Caflisch1992} and \cite{Lopez,Meiron,Sakajo,Wu2002,Wu2006} for this system and underlying numerical studies.

\subsection{Organization of the paper}
In \S \ref{sec:prelims}, we fix notation and gather background material from measure theory, extended divergence-measure fields, 
and elementary constructions from differential geometry.  
The tangential traces for $\mathscr{CM}^{p}$-fields are introduced 
and analyzed in \S \ref{sec:tracetheorem}. 
Here, we provide several examples that will be continued in the subsequent sections, 
paying special attention to the case $p=\infty$, 
where stronger assertions are available.  
The intermediate section \S \ref{sec:maxfunctions} 
serves to introduce transversal and tangential maximal functions, 
which 
Help us identify the manifolds that support the Stokes theorem. Based on the latter,  
we then prove the Stokes theorem for $\mathscr{CM}^{\infty}$-fields for variations 
in the tangential direction in \S\ref{sec:stokes}. 
Divergence measure fields on manifolds are introduced in \S\ref{sec:divmeasfieldsmanif},
allowing us to give a proof of the Stokes theorem for $\mathscr{CM}^{\infty}$-fields 
with respect to variations in the transversal direction. 
The results of the previous sections are complemented in \S\ref{sec:Stokesgeneral} 
by discussing the case $1\leq p<\infty$. 
Here, only weaker assertions can be made, which is due to the non-integrability of the tangential traces. Finally, in \S \ref{sec:applications}, we showcase sample applications of $\mathscr{CM}^{p}$-fields and the Stokes theorems in view of problems involving vorticities. 
The appendix collects the proofs of background facts used in the main part, 
so \emph{e.g.} function spaces on manifolds and supplementary calculations in local coordinates.

\section{Preliminaries}\label{sec:prelims}
In this section, we fix notation and collect background material from measure theory, 
extended divergence-measure fields, and basic constructions in differential geometry.

\subsection{Notation}
We briefly comment on the notation. Throughout, we use bold letters, such as $\bphi$, to denote vectorial quantities, whereas non-bold ones, such as $\varphi$, are reserved for scalar quantities. We view the elements of $\R^{n}$ as column vectors, but often simply write $x=(x_{1} \cdots,x_{n})$; if it is important, we write $(x_{1},\cdots,x_{n})^{\top}$. 

\textbf{Open sets and balls.} Unless mentioned otherwise, 
$\Omega\subset\R^{n}$ always denotes an open and bounded set; typically, $n=3$. As an important convention, we denote for an open set $\Omega\subset\R^{n}$ with Lipschitz boundary by $\nu_{\partial\Omega}\colon\partial\Omega\to\mathbb{S}^{n-1}$ its \emph{inner} unit normal. The open ball of radius $r>0$ centered at $x_{0}\in\R^{n}$ is denoted by $\ball_{r}(x_{0})$, and we write 
\begin{align*}
\ball_{r}^{(n-1)}(x'_{0}):=\{(x_{1},\cdots,x_{n-1})\in\R^{n-1}\colon\;|x_{1}-x'_{0,1}|^{2} + \cdots + |x_{n-1}-x'_{0,n-1}|^{2}<r^{2}\}
\end{align*}
for the two-dimensional ball of radius $r>0$ centered at $x'_{0}=(x'_{0,1}, \cdots,x'_{0,n-1})\in\R^{n-1}$. In general, if $\ball$ is an open ball, we denote its radius by $r(\ball)$. Lastly, for $U\subset\R^{n}$, we write $\mathscr{B}(U)$ the Borel $\sigma$-algebra on $U$.

\textbf{Measures.} We use $\mathscr{L}^{n}$ and $\mathscr{H}^{s}$, $0\leq s\leq n$, to denote 
the $n$-dimensional Lebesgue measure or the $s$-dimensional Hausdorff measures, respectively. 
Moreover, given a finite-dimensional inner product space $V$, we denote by $\mathrm{RM}(\Omega;V)$ the $V$-valued Radon measures on $\Omega$, and by $\mathrm{RM}_{\mathrm{fin}}(\Omega;V)$ the finite, $V$-valued Radon measures on $\Omega$; if $V=\R$, we abbreviate $\mathrm{RM}_{(\mathrm{fin}}(\Omega):=\mathrm{RM}_{(\mathrm{fin})}(\Omega;\R)$. By finiteness we here understand that the total variation of $\bmu$, denoted by $|\bmu|(\Omega)$, is finite. Lastly, given $\bmu\in\mathrm{RM}(\Omega;V)$ and a $\bmu$-measurable set $U\subset\Omega$, we write $\bmu\mres U:=\bmu(\cdot\cap U)$ for the restriction of $\bmu$ to $U$. 

\textbf{Averages.} Given a positive measure $\mu$ on the Borel $\sigma$-algebra $\mathscr{B}(\Omega)$ and $U\in\mathscr{B}(\Omega)$ with $0<\mu(U)<\infty$, we denote the mean value of a locally $\mu$-integrable map $\FF$ by 
\begin{align*}
\dashint_{U}\FF\dif\mu := \frac{1}{\mu(U)}\int_{U}\FF\dif \mu. 
\end{align*}
As usual, if $\mu=\mathscr{L}^{3}$, we simply write $\dif x = \dif\mathscr{L}^{3}$. For $\FF\in\lebe_{\locc}^{1}(\Omega;\R^{3})$, we denote by $\mathcal{L}_{\FF}$ the set of its Lebesgue points, and write 
\begin{align}\label{eq:preciserepresentative}
\FF^{*}(x):=\begin{cases}
    \displaystyle \lim_{r\searrow 0}\dashint_{\ball_{r}(x)}\FF(y)\dif y&\;\text{if}\;x\in\mathcal{L}_{\FF}, \\ 
    0&\;\text{otherwise}, 
\end{cases}
\end{align}
for the precise representative of $\FF$. 

\textbf{Continuous functions.} We denote by $\hold_{b}(\Omega)$ the bounded continuous functions on $\Omega$, and by $\hold_{b}^{k}(\Omega)$ the $\hold^{k}$-functions on $\Omega$ with bounded derivatives up to order $k$. The Lipschitz continuous functions on $\Omega$ are denoted by $\mathrm{Lip}(\Omega)$; since $\Omega$ is assumed to be open and bounded, $\mathrm{Lip}(\Omega)=\mathrm{Lip}(\overline{\Omega})$, and we write $\mathrm{Lip}_{0}(\Omega)$ for the collection of all $f\in\mathrm{Lip}(\Omega)$ with $f|_{\partial\Omega}=0$. Moreover, we use subscripts in $\hold_{\rm c}(\Omega)$ or $\mathrm{Lip}_{\rm c}(\Omega)$ to denote the compactly supported elements of these spaces. Moreover, by a \emph{standard mollifier} on $\R^{n}$ we understand a radially symmetric function $\rho\in\hold_{\rm c}^{\infty}(\ball_{1}(0);\R_{\geq 0})$ such that $\int_{\R^{n}}\rho\dif x = 1$. For $\delta>0$, its $\delta$-rescaled version is given by $\rho_{\delta}(x):=\delta^{-n}\rho(\frac{x}{\delta})$.

\textbf{Constants.} Finally, $c,C>0$ denote generic constants that may change from one line to the other, 
and are only specific if their precise value is required in the sequel. 

\subsection{Differential geometry on $\R^{n}$}\label{sec:notionsdiffgeom}
In this subsection, we collect several basic notions and background results 
from differential geometry on $\R^{n}$. 
Most importantly, they be used from \S \ref{sec:stokes} onwards, so \emph{e.g.} in the definition of $\mathscr{DM}^{\infty}$-fields on manifolds.

Let $\Sigma\subset\R^{n}$ be a $(n-1)$-dimensional  $\hold^{1}$-submanifold of $\R^{n}$ and denote, for $x\in \Sigma$, by $T_{\Sigma}(x)$ its tangent space at $x$. We recall that $x_{0}\in \Sigma$ is called an \emph{interior point} if, for some $r>0$, $\ball_{r}(x_{0})\cap\Sigma$ is homeomorphic to an open subset of $\R^{n-1}$. The interior $\mathrm{int}(\Sigma)$ of $\Sigma$ is the collection of all interior points. By slight abuse of terminology, we then call $\Gamma_{\Sigma}:=\overline{\Sigma}\setminus\mathrm{int}(\Sigma)$ the \emph{boundary} of $\Sigma$. In the main part, we 
almost exclusively consider (orientable) manifolds $\Sigma$ with $\Sigma=\mathrm{int}(\Sigma)$. On the one hand, this comprises \emph{closed} manifolds (which, by definition, are compact manifolds $\Sigma$ without boundary, meaning that $\Gamma_{\Sigma}=\emptyset$) such as the sphere or the torus. On the other hand, this includes \emph{open} manifolds (which, by definition, are manifolds without boundary such that no component is compact) such as the boundary manifolds to be introduced in \S \ref{sec:bdrymanifolds}. 

Let $k\in\mathbb{N}$. If $\Sigma\subset\R^{n-1}$ is an $(n-1)$-dimensional  $\hold^{k}$-manifold with $\Sigma=\mathrm{int}(\Sigma)$, then the $\hold^{k}$-property of a function $\varphi\colon\Sigma\to\R$ or a map $X\colon \Sigma\to\R^{n}$ is defined as usual by local charts. 
We recall that a \emph{vector field} $X$ on $\Sigma$ is a map that assigns 
to every $x\in M$ a tangent vector $X(x)\in T_{\Sigma}(x)$. 
For $1\leq p\leq\infty$, we denote
\begin{align*}
& \hold(\Sigma;T_{\Sigma}):=\{X\;\text{is a continuous vector field on $\Sigma$}\}, \\ 
& \hold^{k}(\Sigma;T_{\Sigma}):=\{X\in\hold^{k}(\Sigma;\R^{n})\colon\;X(x)\in T_{\Sigma}(x)\;\text{for all}\;x\in\Sigma\}, \\ 
& \lebe_{(\locc)}^{p}(\Sigma;T_{\Sigma}) :=\{X\in\lebe_{(\locc)}^{p}(\Sigma;\R^{n})\colon\;X(x)\in T_{\Sigma}(x)\;\;\text{for $\mathscr{H}^{d}$-{a.e.}}\; x\in \Sigma\}.
\end{align*}
The spaces $\hold_{\rm c}^{k}(\Sigma;T_{\Sigma})$, $\mathrm{Lip}(\Sigma;T_{\Sigma})$ and $\mathrm{Lip}_{\rm c}(\Sigma;T_{\Sigma})$ are declared in the natural way. Subject to sufficient regularity, these notions inherit to $d$-dimensional Lipschitz submanifolds $\Sigma$ of $\R^{n}$, where we require $X(x)\in T_{\Sigma}(x)$ for $\mathscr{H}^{d}$-a.e. $x\in\Sigma$ throughout. 

For our later objectives, we require an integration by parts-formula on $(n-1)$-dimensional manifolds. Let $\Sigma\subset\R^{n}$ be an $(n-1)$-dimensional $\hold^{2}$-manifold oriented by $\nu\colon \Sigma\to\mathbb{S}^{n-1}$. For $T\in\hold^{1}(\R^{n};\R^{n})$, we define the \emph{tangential divergence} by 
\begin{align*}
\divm(T(x)):= \mathrm{div}(T(x))-((\nabla T(x))\nu(x))\cdot\nu(x),\qquad x\in\mathrm{int}(\Sigma).  
\end{align*}
\begin{theorem}[Smooth integration by parts-formula]\label{thm:IBPsmooth}
Let $\Sigma\subset\R^{n}$ be an oriented $(n-1)$-dimensional $\hold^{2}$-manifold oriented by $\nu\colon\Sigma\to\mathbb{S}^{n-1}$ such that  $\Sigma=\mathrm{int}(\Sigma)$ and $\overline{\Sigma}$ is compact. Moreover, suppose that, for each $x_{0}\in\Gamma_{\Sigma}$, there exists both 
an open and bounded set $U\subset\R^{n}$ and a $\hold^{2}$-diffeomorphism 
$f\colon (-1,1)^{n-1}\to U\cap\Sigma$ such that $f^{-1}(\Gamma_{\Sigma}\cap U)$ 
can be written as the graph of a Lipschitz function $g\colon(-1,1)^{n-2}\to (-1,1)$ 
together with 
\begin{align*}
f^{-1}(U\cap\Sigma)=\{(x'',x_{n-1})\in (-1,1)^{n-1}\,\colon\;g(x'')<x_{n-1}\}. 
\end{align*}
Then there exist both a normal vector field $\mathbf{H}_{\Sigma}\in\hold^{0}(\Sigma;\R^{n})$ to $\Sigma$ 
and a normal vector field $\nu_{\Gamma_{\Sigma}}\in\lebe^{\infty}(\Gamma_{\Sigma};\mathbb{S}^{n-1})$ 
such that 
\begin{align}\label{eq:IBPmanifolds}
\int_{\Sigma}\divm(T)\dif\mathscr{H}^{n-1} = \int_{\Sigma}T\cdot\mathbf{H}_{\Sigma}\dif\mathscr{H}^{n-1}-\int_{\Gamma_{\Sigma}}(T\cdot\nu_{\Gamma_{\Sigma}})\dif\mathscr{H}^{n-2}
\end{align}
holds for every $T\in\hold_{\rm c}^{1}(\R^{n};\R^{n})$. 
\end{theorem}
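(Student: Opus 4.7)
The plan is first to identify the two normal fields featured on the right-hand side. Define
\begin{align*}
\mathbf{H}_\Sigma(x) \coleq (\divm \nu)(x)\,\nu(x), \qquad x \in \Sigma,
\end{align*}
which is the classical mean-curvature vector field; its continuity is exactly where the $\hold^{2}$-hypothesis on $\Sigma$ is used. For $\nu_{\Gamma_\Sigma}$, work in the prescribed boundary charts: after pulling back by $f$, the set $f^{-1}(\Gamma_{\Sigma}\cap U)$ is the Lipschitz graph of $g$, and $\nu_{\Gamma_{\Sigma}}$ is defined as the pushforward (by $\mathrm{d} f$) of the inward unit conormal of $\{x_{n-1}>g(x'')\}$ in $(-1,1)^{n-1}$, normalized to be tangential to $\Sigma$ and of unit length with respect to the induced metric. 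Chart-independence of this construction is standard.

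The second step is the pointwise splitting $T = T^{\tau} + (T\cdot\nu)\nu$, where $T^\tau \coleq T-(T\cdot\nu)\nu$ is tangent to $\Sigma$, followed by the Leibniz decomposition
\begin{align*}
\divm T = \divm T^{\tau} + \divm\bigl((T\cdot\nu)\nu\bigr).
\end{align*}
For the normal piece, the tangential gradient of the scalar $T\cdot\nu$ lies in $T_\Sigma$, hence is orthogonal to $\nu$, so that $\divm((T\cdot\nu)\nu)=(T\cdot\nu)\,\divm\nu$. Integrating over $\Sigma$ with respect to $\mathscr{H}^{n-1}$ directly produces $\int_\Sigma T\cdot\mathbf{H}_\Sigma \d\mathscr{H}^{n-1}$, i.e.\ the first term on the right-hand side of \eqref{eq:IBPmanifolds}.

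The third step reduces the tangential-field contribution $\int_\Sigma \divm T^\tau \d\mathscr{H}^{n-1}$ to the boundary term via a partition of unity $\{\chi_i\}$ subordinate to a finite atlas of $\Sigma$. Interior charts are handled by the classical divergence theorem on open subsets of $\R^{n-1}$ applied to the pullback of $\chi_i T^\tau$, giving a zero boundary contribution. For a boundary chart $f\colon(-1,1)^{n-1}\to U\cap\Sigma$, write the pullback metric $G = (\mathrm{d} f)^\top \mathrm{d} f$ and use the standard intrinsic identity $\divm T^{\tau}\circ f = \frac{1}{\sqrt{\det G}}\,\partial_j(\sqrt{\det G}\, \tilde T^j)$, where $\tilde T = (\mathrm{d} f)^{-1} T^\tau$. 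Integrating over $\{x_{n-1}>g(x'')\}$ and applying the Gauss--Green theorem for Lipschitz domains in $\R^{n-1}$ produces a $\mathscr{H}^{n-2}$-integral on $\mathrm{graph}(g)$, which precisely corresponds to $-\int_{\Gamma_\Sigma\cap U}T^\tau\cdot\nu_{\Gamma_\Sigma}\d\mathscr{H}^{n-2}$ after the change of variables by $f$ and by our normalization of $\nu_{\Gamma_\Sigma}$. Since $\nu_{\Gamma_\Sigma}$ is tangent to $\Sigma$, $T^\tau\cdot\nu_{\Gamma_\Sigma}=T\cdot\nu_{\Gamma_\Sigma}$, and summing over the partition of unity yields \eqref{eq:IBPmanifolds}.

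The main technical obstacle is the last step: establishing chart-independence of $\nu_{\Gamma_\Sigma}$ and carrying out the metric-pullback identification so that the boundary integral in the chart coincides, after the change of variables, with an honest $\mathscr{H}^{n-2}$-integral against the intrinsic inward conormal. In particular, invariance of the boundary integrand under the reparametrizations permitted by the hypothesis, together with the Lipschitz (rather than $\hold^{1}$) regularity of $g$, requires an appeal to the rectifiability of $\Gamma_\Sigma$ and to the Gauss--Green theorem on Lipschitz domains; everything else in the argument is routine differential-geometric bookkeeping.
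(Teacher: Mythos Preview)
The paper does not supply its own proof of this theorem. Immediately after the statement it writes: ``This theorem can be found subject to higher regularity assumptions on $\Gamma_{\Sigma}$ in \cite[Theorem 11.8 \emph{ff.}]{Maggi}, and the version as given here follows from there in a routine way.'' So there is no argument to compare against beyond the pointer to Maggi and the assertion that the Lipschitz-boundary generalisation is routine.

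Your outline is correct and is precisely the standard argument one would execute to fill in that ``routine'' step: the tangential--normal splitting of $T$, the identity $\divm((T\cdot\nu)\nu)=(T\cdot\nu)\,\divm\nu$ producing the mean-curvature term, and the reduction of $\int_\Sigma\divm T^\tau$ to a boundary integral via a partition of unity and the Gauss--Green theorem on Lipschitz epigraphs in $\R^{n-1}$. The identification $\mathbf{H}_\Sigma=(\divm\nu)\nu$ is consistent with the paper's subsequent remark that in a graph chart $H_\Sigma=-\mathrm{div}'\bigl(\nabla' g/\sqrt{1+|\nabla' g|^2}\bigr)$. The one place where genuine work hides is exactly where you flag it: the Lipschitz (rather than $\hold^1$) regularity of $g$ forces the chart-level divergence theorem to be the version for Lipschitz domains, and the $\mathscr{H}^{n-2}$-a.e.\ defined conormal must be shown to transform correctly under the $\hold^2$-diffeomorphism $f$. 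This is indeed routine given rectifiability of $\Gamma_\Sigma$, and your sketch handles it appropriately.
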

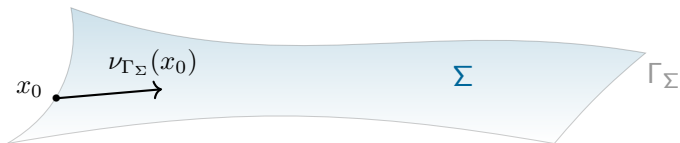
\begin{figure}
\begin{tikzpicture}[scale=1.2]
\begin{scope}
\draw[top color=green!40!blue,opacity=.2] (-3,0) [out=10, in =170] to (3,0) [out= 50, in = 220] to (4,1) [out= 170, in = -20] to (-2.3,1.5) [out= 280, in =30] to (-3,0);
\draw[->,thick] (-2.46,0.5) -- (-1.3,0.6);
\node at (-1.4,0.9) {$\nu_{\Gamma_{\Sigma}}(x_{0})$};
\node[left] at (-2.5,0.6) {$x_{0}$};
\node[-,green!40!blue] at (2,0.75) {\large $\mathsf{\Sigma}$};
\node[-,black!40!white] at (4.2,0.75) {\large $\mathsf{\Gamma}_{\mathsf{\Sigma}}$};
\node at (-2.46,0.5) {\tiny\textbullet};
\end{scope}
\end{tikzpicture}
\caption{Geometric set-up underlying \S \ref{sec:notionsdiffgeom}--\S \ref{sec:bdrymanifolds}.}\label{fig:orientationprelims}
\end{figure}
In the situation of Theorem \ref{thm:IBPsmooth},
we see that $\nu_{\Gamma_\Sigma}(x)\in T_{\Sigma}(x)$ for $\mathscr{H}^{n-2}$-a.e. $x\in\Gamma_{\Sigma}$, and the vector $\nu_{\Gamma_{\Sigma}}$ is chosen to point inside $\Sigma$; see Figure \ref{fig:orientationprelims}. 
This theorem can be found subject to higher regularity assumptions 
on $\Gamma_{\Sigma}$ in {\cite[Theorem 11.8 \emph{ff.}]{Maggi}}, 
and the version as given here follows from there in a routine way. 
The term $\mathbf{H}_{\Sigma}$ on the right-hand side of \eqref{eq:IBPmanifolds} 
is an oriented curvature term of form $\mathbf{H}_{\Sigma}=H_{\Sigma}\nu$ 
related to the  {Weingarten map}. 
If $U\subset\R^{n}$ is such that $U\cap \Sigma$ can be written as the graph of a function $g\colon \R^{n-1}\supset V \to\R$ with an open set $V\subset\R^{n-1}$, then 
\begin{align*}
H_{M}|_{U\cap \Sigma}= - \mathrm{div}'\Big(\frac{\nabla' g}{\sqrt{1+|\nabla'g|^{2}}}\Big). 
\end{align*}
Here, the dashes indicate that the gradient or the divergence of $g$ are taken with respect to the $(n-1)$ parametrizing variables. 

If, in the situation of Theorem \ref{thm:IBPsmooth}, $T\in\hold_{\rm c}^{1}(\R^{n};\R^{n})$ is such that $T(x)\in T_{\Sigma}(x)$ for $\mathscr{H}^{n-1}$-{a.e.} $x\in \Sigma$,  then $T\cdot\mathbf{H}_{\Sigma}=0$ $\mathscr{H}^{n-1}$-{a.e.} on $\Sigma$. Hence, for $\varphi\in\hold^{1}(\R^{n})$, \eqref{eq:IBPmanifolds} yields
\begin{align}\label{eq:IBPmanifolds00}
\begin{split}
\int_{\Gamma_{\Sigma}}  \varphi (T\cdot\nu_{\Gamma_{\Sigma}})\dif\mathscr{H}^{n-1} & = - \int_{\Sigma}\mathrm{div}_{\tau}(\varphi T)\dif\mathscr{H}^{n-1} \\ & = - \int_{\Sigma} \big(\varphi\,\mathrm{div}(T)- (\varphi (\nabla T)\nu)\cdot\nu\big)\dif\mathscr{H}^{n-1} \\ & \;\;\;\; - \int_{\Sigma} \big(\nabla\varphi\cdot T - ((T\otimes\nabla\varphi)\nu)\cdot\nu\big)\dif\mathscr{H}^{n-1} \\ 
& = -\int_{\Sigma}\varphi\,\divm(T)\dif\mathscr{H}^{n-1} - \int_{\Sigma}\nabla_{\tau}\varphi\cdot T\dif\mathscr{H}^{n-1}, 
\end{split}
\end{align}
since $(\nabla\varphi(x))\cdot T(x) = \nabla_{\tau}\varphi(x)\cdot T(x)$ and 
\begin{align*}
((T(x)\otimes\nabla\varphi(x))\nu(x))\cdot \nu(x)= (\nu(x)\cdot T(x))(\nabla\varphi(x)\cdot\nu(x))= 0
\end{align*}
hold for $\mathscr{H}^{n-1}$-{a.e.} $x\in \Sigma$; recall that $T(x)\in T_{\Sigma}(x)$ for $\mathscr{H}^{n-1}$-{a.e.} $x\in \Sigma$. 
If, in addition, $T(x)=0$ holds for $\mathscr{H}^{n-2}$-{a.e.} $x\in\Gamma_{\Sigma}$, then \eqref{eq:IBPmanifolds00} gives us 
\begin{align}\label{eq:IBPmanifolds0}
\begin{split}
 0  = \int_{\Sigma}\varphi\,\divm(T)\dif\mathscr{H}^{n-1} +  \int_{\Sigma}\nabla_{\tau}\varphi\cdot T\dif\mathscr{H}^{n-1}. 
\end{split}
\end{align}
\begin{rem}
By use of extensions to $\R^{n}$, Theorem \ref{thm:IBPsmooth} and \eqref{eq:IBPmanifolds00}--\eqref{eq:IBPmanifolds0} hold  true for maps $T\in\hold^{1}(\overline{\Sigma};T_{\Sigma})$. This particularly applies to the case where $\Omega\subset\R^{n}$ is open and bounded with boundary of class $\hold^{2}$,  inner unit normal $\nu_{\partial\Omega}\colon \partial\Omega\to\mathbb{S}^{n-1}$ and $\Sigma=\partial\Omega$.
\end{rem}
Next, we record a version of the coarea formula for Lipschitz maps on $\hold^{1}$-manifolds; see, \emph{e.g.}, \cite[Theorem 5.3.9]{KrantzParks} and the discussion beforehand.
\begin{lem}[Coarea formula on $\hold^{1}$-manifolds]\label{lem:coarea}
Let $N_{1},N_{2},M\in\mathbb{N}$ be such that $M\geq N_{2}$, and let $f\colon\R^{N_{1}}\to\R^{N_{2}}$ be a Lipschitz map. If $\Sigma\subset\R^{N_{1}}$ is an $M$-dimensional $\hold^{1}$-manifold, then 
\begin{align*}
\int_{\Sigma}g\,\mathbf{J}_{N_{2}}^{\Sigma}f\,\dif\mathscr{H}^{M} = \int_{\R^{N_{2}}}\int_{\Sigma\cap f^{-1}(\{y\})}g\,\dif\mathscr{H}^{M-N_{2}}\dif\mathscr{L}^{N_{2}}(y)
\end{align*}
holds for every $\mathscr{H}^{M}$-measurable function $g\colon \Sigma\to\R$. Here, $\mathbf{J}_{N_{2}}^{\Sigma}f$ denotes the Jacobian of $f$ on $\Sigma$. 
\end{lem}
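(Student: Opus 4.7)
The plan is to deduce the identity from the classical Euclidean coarea formula for Lipschitz maps by localizing to $\mathscr{C}^{1}$-charts and applying the area formula on $\Sigma$. First, I would fix a countable $\mathscr{C}^{1}$-atlas $\{(V_{i},\phi_{i})\}_{i\in\mathbb{N}}$ of $\Sigma$ with $\phi_{i}:U_{i}\subset\mathbb{R}^{M}\to V_{i}\subset\Sigma$ a $\mathscr{C}^{1}$-embedding, together with a subordinate partition of unity $\{\eta_{i}\}$. By linearity in $g$ and monotone convergence applied to the standard decomposition $g=g^{+}-g^{-}$, it suffices to prove the identity with $\eta_{i}g$ in place of $g$, so one may assume $g$ is supported in a single chart image $V=\phi(U)$.

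In this localized setting, applying the area formula to $\phi:U\to V$ transports the left-hand side to
\begin{align*}
\int_{V} g\,\mathbf{J}_{N_{2}}^{\Sigma} f\,\dif\mathscr{H}^{M} = \int_{U} g(\phi(u))\,\mathbf{J}_{N_{2}}^{\Sigma} f(\phi(u))\,\mathbf{J}_{M} \phi(u)\,\dif u,
\end{align*}
while the same procedure applied to the slices $V\cap f^{-1}(\{y\})$ rewrites the inner integrals on the right-hand side in terms of Euclidean fibre integrals over $U\cap h^{-1}(\{y\})$, where $h:=f\circ\phi:U\to\mathbb{R}^{N_{2}}$. The identity then reduces to Federer's multiplication formula for Jacobians of composed Lipschitz maps, which asserts that for $\mathscr{L}^{M}$-a.e. $u\in U$,
\begin{align*}
\mathbf{J}_{N_{2}}^{\Sigma} f(\phi(u))\,\mathbf{J}_{M} \phi(u) = \mathbf{J}_{N_{2}} h(u)\cdot\mathbf{J}_{M-N_{2}}\bigl(\phi|_{h^{-1}(\{h(u)\})}\bigr)(u),
\end{align*}
where the last factor is precisely the Jacobian of $\phi$ restricted to the fibres of $h$. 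Cancelling this fibre Jacobian between the two sides reduces the statement to the classical Euclidean coarea formula for the Lipschitz map $h:U\to\mathbb{R}^{N_{2}}$, which I would invoke directly.

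The principal technical obstacle is the Jacobian multiplication identity together with the measurability of the fibre integrals, both of which are standard in the rectifiable-set framework. A cleaner alternative, which I would actually favour, is to observe that a $\mathscr{C}^{1}$-manifold is automatically countably $M$-rectifiable, so that Federer's coarea formula for rectifiable sets applies directly and yields the claim; this is precisely the content of \cite[Theorem 5.3.9]{KrantzParks} cited by the authors, and in that formulation the localization and chart computations above are subsumed into the general rectifiable framework.
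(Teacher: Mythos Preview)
Your proposal is correct, and your final remark hits the nail on the head: the paper does not supply its own proof of this lemma but simply records it as a version of the coarea formula and refers to \cite[Theorem 5.3.9]{KrantzParks}, i.e., precisely the rectifiable-set formulation you identify as the cleaner alternative. Your chart-based reduction is a valid way to unpack that reference, but since the paper treats the result as a citation rather than giving an argument, there is nothing further to compare.
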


\subsection{Boundary manifolds, collar theorems, and deformations}\label{sec:bdrymanifolds}
We now specify the manifolds with which we will work in the main part of the paper.

\begin{definition}[Boundary manifolds]\label{def:bdrymanifolds}
Let $n\geq 3$, and let $\Omega'\subset\R^{n}$ be open and bounded with $\hold^{k}$-boundary. 
We say that $\Sigma\subset\partial\Omega'$ is a \emph{$\hold^{k}$-regular Lipschitz boundary 
manifold relative to $\Omega'$} if either $\Sigma=\partial\Omega'$ or all of the the following hold: 
\begin{enumerate}
\item $\Sigma$ is relatively open in $\partial\Omega'$, 
\item There exist open and bounded sets $U_{1},\cdots,U_{N}\subset\R^{n}$ with the following property: 
\begin{itemize}
    \item[\emph{(i)}] $\Gamma_{\Sigma}\subset\bigcup_{\ell=1}^{N}(U_{\ell}\cap\partial\Omega')$, 
    \item[\emph{(ii)}] for every $\ell\in\{1, \cdots,N\}$, \,
    there exist a bi-Lipschitz $\hold^{k}$-diffeomorphism \\ $f_{\ell}\colon (-1,1)^{n-1}\to U_{\ell}\cap\partial\Omega'$, a number $0<\theta<1$, and a Lipschitz function $g_{\ell}\colon (-1,1)^{n-2}\to(-\theta,\theta)$ such that 
    \begin{align*}
    & f_{\ell}^{-1}(U_{\ell}\cap\Sigma) = \{(x'',x_{n-1})\in (-1,1)^{n-1}\,\colon\;\;g_{\ell}(x'')<x_{n-1} \}, \\ 
    & f_{\ell}^{-1}(U_{\ell}\cap \Gamma_{\Sigma})=\{(x'',g_{\ell}(x''))\,\colon\;|x''|<1\} (=\mathrm{graph}(g_{\ell})), \\ 
    & f_{\ell}^{-1}(U_{\ell}\cap(\partial\Omega'\setminus\overline{\Sigma}) = \{(x'',x_{n-1})\in (-1,1)^{n-1}\,\colon\;|x''|<1,\;g_{\ell}(x'')>x_{n-1} \}.
    \end{align*}
\end{itemize}
\end{enumerate}
We say that $\Sigma$ is a \emph{$\hold^{k}$-regular Lipschitz boundary manifold}  if there exists an open and bounded set $\Omega'\subset\R^{n}$ with $\hold^{k}$-boundary such that $\Sigma\subset\partial\Omega'$ is a $\hold^{k}$-regular Lipschitz boundary manifold relative to $\Omega'$. If the context is clear, we 
simply speak of \emph{boundary manifolds}.
\end{definition}

By their very definition, all $\hold^{1}$-regular Lipschitz boundary manifolds are orientable. In particular, non-orientable smooth manifolds with boundary such as the M\"{o}bius strip are not boundary manifolds. On the other hand, spiral-type manifolds such as 
\begin{align*}
\Sigma = \Big\{(\frac{1}{s+1}\cos(s),\frac{1}{s+1}\sin(s),t)\,\colon\;s>0,\;0<t<1\Big\} 
\end{align*}
are equally seen to not qualify as boundary manifolds. However, they can be written as countable union of boundary manifolds.

For future reference, mostly from Section \ref{sec:maxfunctions} onwards, we now explain what we understand by \emph{shrinking} and \emph{enlarging} boundary manifolds. In the setting considered here, this requires two sorts of collar theorems (see Lee \cite[Chapter 9]{Lee} for more on this terminology). The latter are linked to the Lipschitz deformability of Lipschitz domains (see Ball and Zarnescu \cite[\S 5]{BallZarnescu}, Chen et al. \cite[\S 8]{ChenComiTorres}, \cite{ChenFrid1999}, Hofmann et al. \cite{Hofmann}). The first of these results will be applied to $\hold^{1}$-regular Lipschitz boundary manifolds, and follows from \cite[Theorem 9.25]{Lee}. Later on in \S \ref{sec:divmeasfieldsmanif}, it helps us to formalize 
the following notion of \emph{$\mathscr{L}^{1}$-a.e. boundary manifold in  the transversal  direction}: 

\begin{lem}[Collar-type I]\label{lem:collar1}
Let $\Omega'\subset\R^{n}$ be open and bounded with Lipschitz  boundary $\partial\Omega'$ (or of class $\hold^{k}$, $k\in\mathbb{N}$).  
Then there exist both a neighborhood $\mathcal{O}$ of $\partial\Omega'$ and a bi-Lipschitz map  $\Phi_{\partial\Omega'}\colon (-1,1)\times\partial\Omega'\to\mathcal{O}$ with the following properties{\rm :} 
\begin{enumerate}
    \item\label{item:collA1} $\Phi_{\partial\Omega'}(0,\partial\Omega')=\partial\Omega'$, 
    \item\label{item:collB1} $\Phi_{\partial\Omega'}(t,\partial\Omega')\subset\Omega'$ for all $0<t<1$, 
    \item\label{item:collC1} 
    $\Phi_{\partial\Omega'}(t,\partial\Omega')\subset\R^{n}\setminus\overline{\Omega'}$ for all $-1<t<0$, and 
    \item\label{item:collD1} $\Phi_{\partial\Omega'}(t,\partial\Omega')$ is a Lipschitz manifold (or of class $\hold^{k}$) for each $-1<t<1$.
\end{enumerate}
\end{lem}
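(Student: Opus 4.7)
The plan is to realize $\Phi_{\partial\Omega'}$ as a suitably rescaled flow of a uniformly inward-transversal vector field defined in a neighborhood of $\partial\Omega'$. This is the standard template underlying the classical tubular neighborhood theorem; the Lipschitz case follows the same scheme with Gronwall estimates in place of linearization.

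First, I would construct a compactly supported Lipschitz vector field $V\colon\R^{n}\to\R^{n}$ which is uniformly transversal to $\partial\Omega'$ in the inward direction. In each boundary chart of $\Omega'$, the domain appears as the epigraph of a Lipschitz function $h_\ell$ of some Lipschitz constant $\leq L$; the chart's upward coordinate direction pushes forward to a vector $w_\ell$ on the chart satisfying $w_\ell\cdot\nu_{\partial\Omega'}\geq (1+L^{2})^{-1/2}$ at $\mathscr{H}^{n-1}$-a.e.\ point of $\partial\Omega'$ in the chart. Using a smooth partition of unity $\{\chi_\ell\}$ subordinate to a finite boundary cover of $\partial\Omega'$, together with a smooth localization around $\partial\Omega'$, the convex combination $V:=\sum_\ell \chi_\ell w_\ell$ is then Lipschitz, compactly supported, and satisfies $V\cdot\nu_{\partial\Omega'}\geq\delta$ for some $\delta>0$ at $\mathscr{H}^{n-1}$-a.e.\ point of $\partial\Omega'$. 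In the $\hold^{k}$-case, the same scheme with the $\hold^{k-1}$ unit inner normal in place of the $w_\ell$'s produces a $\hold^{k-1}$-field $V$.

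Second, I would define $\Phi_{\partial\Omega'}(t,x):=\psi_{\alpha t}(x)$ for $(t,x)\in(-1,1)\times\partial\Omega'$, where $\{\psi_s\}_{s\in\R}$ is the flow of the ODE $\dot{\gamma}=V(\gamma)$ and $\alpha>0$ is chosen small enough that $\psi_{\alpha t}$ maps $\partial\Omega'$ into a prescribed tubular neighborhood for all $|t|<1$. Since $V$ is globally Lipschitz with compact support, the flow exists for all times and, by Gronwall's inequality applied to $\psi_s$ and $\psi_{-s}$, each $\psi_s$ is a bi-Lipschitz homeomorphism of $\R^{n}$; in the $\hold^{k}$-case, standard ODE regularity upgrades this to a $\hold^{k}$-diffeomorphism. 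Properties \eqref{item:collA1} and \eqref{item:collD1} are then immediate: the former since $\psi_0=\mathrm{id}$, and the latter since bi-Lipschitz (resp.\ $\hold^{k}$) homeomorphisms preserve Lipschitz (resp.\ $\hold^{k}$) manifold structure chart-by-chart. Properties \eqref{item:collB1} and \eqref{item:collC1} follow by integrating the transversality bound $V\cdot\nu_{\partial\Omega'}\geq\delta$ along integral curves, so that $\psi_{\alpha t}(x)$ strictly crosses $\partial\Omega'$ at $t=0$, entering $\Omega'$ for $t>0$ and its complement for $t<0$; this is made rigorous via the local epigraph representation of $\Omega'$ in each boundary chart.

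The main obstacle is establishing global injectivity of $\Phi_{\partial\Omega'}$ onto an open neighborhood $\mathcal{O}$ of $\partial\Omega'$: two integral curves of $V$ starting at distinct boundary points must not meet for $|t|<\alpha$, and each such curve must cross $\partial\Omega'$ exactly once. Both rest on the uniform transversality bound $V\cdot\nu_{\partial\Omega'}\geq\delta$, which in turn requires the partition-of-unity construction to be performed with coherent chart orientations so that all $w_\ell$ lie in the same open half-space relative to $\nu_{\partial\Omega'}$. Once global injectivity is secured, the bi-Lipschitz character of $\Phi_{\partial\Omega'}$ and the openness of $\mathcal{O}$ follow routinely from the Gronwall estimate in the spatial variable combined with the uniform boundedness of $V$ in the $t$-variable.
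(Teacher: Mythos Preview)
Your flow-based approach is valid in spirit and is indeed the classical tubular-neighborhood template, but it differs from the paper's construction and contains a regularity gap in the $\hold^{k}$ case.

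The paper does not use a flow. It takes a $\hold^{\infty}$ transversal field $h$ (available even for Lipschitz $\partial\Omega'$, since transversality is an open condition and does not require $h$ to equal the normal) and sets $\Phi_{\partial\Omega'}(t,x):=x-th(x)$ directly. Bi-Lipschitzness onto a collar is then quoted from Hofmann--Mitrea--Taylor, and the $\hold^{k}$ regularity of the level sets is obtained by applying the inverse function theorem to the map $(x',s)\mapsto(x',f(x'))-sh(x',f(x'))$ in local graph coordinates, which is $\hold^{k}$ because the graph function $f$ is and $h$ is $\hold^{\infty}$. This avoids ODE machinery altogether and keeps the full regularity.

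Your gap is the claim that a $\hold^{k-1}$ field $V$ yields a $\hold^{k}$ flow. It does not: the flow of a $\hold^{m}$ vector field is $\hold^{m}$ in the spatial variable, so with $V\in\hold^{k-1}$ you obtain $\psi_{\alpha t}\in\hold^{k-1}$, and hence $\Phi_{\partial\Omega'}(\{t\}\times\partial\Omega')$ is only a $\hold^{k-1}$ manifold, not $\hold^{k}$ as asserted in (d). The fix is simple and is precisely what the paper exploits: do not use the unit normal. Mollify your partition-of-unity field to a $\hold^{\infty}$ (or at least $\hold^{k}$) transversal field; uniform inward transversality survives small $\hold^{0}$ perturbations, so the rest of your argument goes through at the correct regularity. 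With this correction, your approach is a legitimate alternative route, trading the paper's explicit linear-in-$t$ collar and inverse-function-theorem argument for ODE flow regularity and Gronwall estimates.
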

This
lemma can be deduced from \cite{BallZarnescu,ChenComiTorres,Hofmann}, and we briefly pause to discuss one possible  underlying construction. If the open and bounded set  $\Omega'$ has Lipschitz boundary $\partial\Omega'$ oriented by the inner unit normal $\nu_{\partial\Omega'}$, then \cite[Proposition 2.3 and Eq. (2.23)ff.]{Hofmann} yields the existence of a globally transversal field $X\in\hold^{\infty}(\R^{n};\R^{n})$ such that $|X|=1$ on $\partial\Omega$; this means that there exists $\kappa>0$ such that $\nu_{\partial\Omega}\cdot X\leq -\kappa$ $\mathscr{H}^{n-1}$-a.e. on $\partial\Omega$. Thus, by continuity, $0<\varepsilon\leq |X|\leq L<\infty$ holds in an open neighborhood $U$ of $\partial\Omega$. By normalization, we thus find $h\in\hold^{\infty}(\R^{n};\R^{n})$ such that $|h|=1$ in $U$ and $\nu_{\partial\Omega'}\cdot h\leq -\widetilde{\kappa}$ holds $\mathscr{H}^{n-1}$-{a.e.} 
on $\partial\Omega'$ for some $\widetilde{\kappa}>0$. Put $\Phi_{\partial\Omega'}(t,x):=x-th(x)$. In this situation, it is established in \cite[Eq. (4.66)\emph{ff.}]{Hofmann} that there exists $t_{0}\in (0,1)$ such that $\Phi_{\partial\Omega'}\colon (-t_{0},t_{0})\times\partial\Omega'\to\R^{n}$ is bi-Lipschitz onto its image. From here, the assertions \ref{item:collA1}--\ref{item:collC1} follow. 
Moreover, \cite[Proposition 4.19(i)]{Hofmann} establishes that, 
for all $-t_{0}<t<t_{0}$, $\Phi_{\partial\Omega'}(\{t\}\times\partial\Omega')$ arises as the boundary of a domain with Lipschitz boundary. 
This yields \ref{item:collD1} if $\partial\Omega'$ is Lipschitz. If $\partial\Omega'$ is of class $\hold^{k}$, then $\Phi_{\partial\Omega'}(\{t\}\times\partial\Omega')$ still arises as the boundary of a domain with Lipschitz boundary. Now, if $-t_{0}<t<t_{0}$ and $y_{0}\in\Phi_{\partial\Omega'}(\{t\}\times\partial\Omega')$, we put $x_{0}:=\Phi_{\partial\Omega'}(t,\cdot)^{-1}(\{y_{0}\})$. Then there exists an isometric coordinate system such that $x_{0}$ is the origin and, in this new system,  $\partial\Omega'$ coincides with the graph of a $\hold^{k}$-function $f\colon\R^{n-1}\to\R$  close to the origin. In the new coordinate system and expressing $h$ accordingly, one then defines 
\begin{align*}
F\colon (x',s)\mapsto (x',f(x'))-sh(x',f(x')),\qquad (x',s)\in\R^{n-1}\times(-t_{0},t_{0}).
\end{align*}
As established in \cite[Eq. (4.59)--(4.62)]{Hofmann}, a suitably small 
choice of $t_{0}$ entails that $\det(DF(0,0))\neq 0$; 
in fact, this is the easier case in \cite{Hofmann} which does not require mollification. By the inverse function theorem, $F$ is locally invertible at $x_{0}=(0,0)$. Noting that we may choose $h$ to be of class $\hold^{\infty}$, this implies that its local inverse at zero is of class $\hold^{k}$ too. Thus, if $|t|$ is sufficiently small, we find $0<r,\varepsilon<1$ such that $F$ maps $Z_{r,\varepsilon}:=\ball_{r}^{(n-1)}(0)\times (t-\varepsilon,t+\varepsilon)$ bijectively onto its image. But $W_{r,\varepsilon}(y_{0}):=F(Z_{r,\varepsilon})$ is open, contains $y_{0}=F(0,t)$ and satisfies $F^{-1}(W_{r,\varepsilon}(y_{0})\cap\Phi_{\partial\Omega'}(t,\partial\Omega'))=\ball_{r}^{(n-1)}(0)\times\{t\}$. Since $y_{0}$ was arbitrary, we conclude that $\Phi_{\partial\Omega'}(t,\partial\Omega')$ is a $\hold^{k}$-manifold, and so \ref{item:collD1} follows.

Based on Lemma \ref{lem:collar1},
it is convenient to introduce the following notion:
\begin{definition}[Transversal shifting]\label{def:shrinknormal} 
In the situation of {\rm Lemma \ref{lem:collar1}}, 
let  $\Sigma\subset\R^{n}$ be a $\hold^{1}$-regular Lipschitz boundary manifold relative to $\Omega'$. With  $\Phi_{\partial\Omega'}$ as in Lemma \ref{lem:collar1}, we define for $-1<t<1$ the \emph{{transversally shifted manifold}} $\Sigma_{\Omega'}^{\Phi,t}$ by 
\begin{align}\label{eq:normmod}
\Sigma_{\Omega'}^{\Phi,t}:=\Phi_{\partial\Omega'}(t,\Sigma).
\end{align}
\end{definition}
For $\partial\Omega'$ of class $\hold^{k}$ and suitable choices of $\Phi_{\partial\Omega'}$, $\Phi_{\partial\Omega'}(t,\partial\Omega')$ can be shown to be the boundary of a set with $\hold^{k}$-boundary, and the transversally shifted manifolds $\Sigma_{\Omega'}^{\Phi,t}$ inherit the property of being  $\hold^{k}$-regular Lipschitz boundary manifolds (relative to $\Phi_{\partial\Omega'}(t,\partial\Omega')$) from that of $\Sigma$ (relative to $\partial\Omega'$). 

For our future applications, it is sometimes necessary to employ a particular construction of the collar or deformation maps $\Phi_{\partial\Omega'}$. The following refinement, discussed in parts after Lemma \ref{lem:collar1}, is 
due to Hofmann et al. \cite[Proposition 4.19]{Hofmann}; 
see also Ball \& Zarnescu \cite[\S 5]{BallZarnescu}, Chen et al. \cite[\S 8]{ChenComiTorres}, 
Doktor \cite{Doktor} and Ne\v{c}as \cite{Necas} for related results.

\begin{rem}\label{rem:aeintro}
If $\Omega'\subset\R^{n}$ is open and bounded with Lipschitz boundary, 
then, by \cite[Proposition 2.3]{Hofmann} there exist both $\kappa>0$ and a $\hold^{1}$-vector field $h\colon\R^{n}\to\R^{n}$ such that $|h(x)|=1$ and $h(x)\cdot\nu_{\partial\Omega'}(x)\leq-\kappa$ 
for $\mathscr{H}^{n-1}$-a.e. $x\in\partial\Omega'$; 
recall that $\nu_{\partial\Omega'}$ is the inner unit normal to $\partial\Omega'$. 
Define
$\Omega'_{t}:=\{x-th(x)\colon\;x\in\Omega\}$. By \cite[Prop. 4.19]{Hofmann}, there exists $t_{0}>0$ with the following properties:
\begin{enumerate}
\item\label{item:collario1} With $\Phi_{\partial\Omega'}\colon (-t_{0},t_{0})\times\partial\Omega'\to \mathcal{O}\subset\R^{n}$ given by $\Phi_{\partial\Omega'}(t,x):=x-th(x)$, we have
\begin{align}\label{eq:Phiconvergence}
|\Phi_{\partial\Omega'}(t,x)-x|\to 0\qquad \text{as}\;t\searrow 0\;\text{uniformly in $x\in\partial\Omega'$}. 
\end{align}
\item\label{item:collario2} For each $0<t<t_{0}$, $\Lambda_{t}\colon\partial\Omega\ni x\mapsto x-th(x)\in\partial\Omega_{t}$ is a bi-Lipschitz map, with bi-Lipschitz constants being uniformly bounded in $t$. 
\item\label{item:collario3} \emph{Coordinate representations.} There exists a finite covering of $\partial\Omega'$ by coordinate cylinders $Z$ which, for every $0<t<t_{0}$, is also a covering of $\partial\Omega'_{t}$ by coordinate cylinders. For each such cylinder, let $\psi_{Z},\psi_{Z,t}\colon  U_{Z}\to \R$ denote the Lipschitz functions whose graphs parametrize (up to a rotation and a translation) $Z\cap\partial\Omega$ and $Z\cap\partial\Omega_{t}$, respectively, where $U_{Z}\subset\R^{n-1}$ is open. 
Then 
$\|\nabla\psi_{Z,t}\|_{\lebe^{\infty}(U_{Z})}\leq c\|\nabla\psi_{Z}\|_{\lebe^{\infty}(U_{Z})}$ for all $0<t<t_{0}$ with a constant $c>0$ independent of $0<t<t_{0}$. Moreover, we have
\begin{align*}
\nabla\psi_{Z,t}\to\nabla\psi_{Z}\qquad\,\,\mathscr{L}^{n-1}\text{-{a.e.} in $U_{Z}$}
\end{align*}
and, for every $1\leq q<\infty$, $\nabla\psi_{Z,t}\to\nabla\psi_{Z}$ strongly in $\lebe^{q}(U_{Z};\R^{n-1})$. 
\item\label{item:collario4} \emph{Approaching the boundary non-tangentially.} There exists $c>0$ such that 
\begin{align*}
\sup_{x\in\partial\Omega'}|x-\Lambda_{t}(x)|\leq ct\qquad\,\,\text{for all}\;0<t<t_{0}. 
\end{align*}
\item\label{item:collario5} \emph{Convergence of the interior unit normals.} Denoting by $\nu_{t}$ the interior unit normal to $\partial\Omega_{t}$, there exists $C>0$ such that 
\begin{align*}
\sup_{x\in\partial\Omega'}|\nu_{\partial\Omega'}(x)-\nu_{t}(\Lambda_{t}(x))|\leq Ct\qquad\,\,\text{for all}\;0<t<t_{0}. 
\end{align*}
\end{enumerate}
\end{rem}
Before we come to a collar-type theorem on manifolds, we record a result on the Lebesgue points. 
The proof is the same as in \cite[Lemma 2.1]{ChenFrid1999}.

\begin{lem}\label{lem:Lebesgue} Let $\Omega'\subset\R^{3}$ be open and bounded with Lipschitz boundary. 
Let $\FF\in\lebe_{\locc}^{1}(\R^{3};\R^{3})$. Then, for $\mathscr{L}^{1}$-a.e. $t\in (-1,1)$, $\mathscr{H}^{2}$-a.e. $x_{0}\in\Phi_{\partial\Omega'}(\{t\}\times\partial\Omega')$ is a Lebesgue point of $\FF$. 
\end{lem}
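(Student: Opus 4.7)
The strategy is a standard Fubini argument, coupled with the bi-Lipschitz structure of $\Phi_{\partial\Omega'}$ as established in Lemma \ref{lem:collar1} and Remark \ref{rem:aeintro}.

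First, I would denote by $N \subset \R^{3}$ the complement of $\mathcal{L}_{\FF}$, i.e., the set of non-Lebesgue points of $\FF$. By the Lebesgue differentiation theorem, $\mathscr{L}^{3}(N)=0$. The goal is to show that for $\mathscr{L}^{1}$-a.e. $t\in(-1,1)$, one has $\mathscr{H}^{2}\big(\Phi_{\partial\Omega'}(\{t\}\times\partial\Omega')\cap N\big)=0$.

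Next, since $\Phi_{\partial\Omega'}\colon (-1,1)\times\partial\Omega'\to\mathcal{O}$ is bi-Lipschitz between two $3$-dimensional Lipschitz objects, the natural volume measure on the domain is the product measure $\mu:=\mathscr{L}^{1}\otimes\mathscr{H}^{2}$, while that on the target is $\mathscr{L}^{3}\mres\mathcal{O}$. The area formula for Lipschitz maps (Lemma \ref{lem:coarea} with $N_{2}=3$ or the classical change-of-variables formula applied in local coordinate cylinders from Remark \ref{rem:aeintro}\ref{item:collario3}) yields that for any Borel set $E\subset\R^{3}$,
\begin{equation*}
\mathscr{L}^{3}(E\cap\mathcal{O}) = \int_{(-1,1)\times\partial\Omega'}\mathbf{1}_{E}(\Phi_{\partial\Omega'}(t,x))\,\mathbf{J}\Phi_{\partial\Omega'}(t,x)\,\d\mu(t,x),
\end{equation*}
where the Jacobian $\mathbf{J}\Phi_{\partial\Omega'}$ is bounded above and bounded below away from zero $\mu$-a.e.\ due to the bi-Lipschitz property. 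Applying this with $E=N$ and using $\mathscr{L}^{3}(N)=0$, one concludes that $\mu\big(\Phi_{\partial\Omega'}^{-1}(N)\big)=0$.

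Fubini's theorem then gives that for $\mathscr{L}^{1}$-a.e. $t\in(-1,1)$, the slice $S_{t}:=\{x\in\partial\Omega'\,\colon\,\Phi_{\partial\Omega'}(t,x)\in N\}$ satisfies $\mathscr{H}^{2}(S_{t})=0$. For any such good $t$, I would then invoke that the map $\Phi_{\partial\Omega'}(t,\cdot)\colon\partial\Omega'\to\Phi_{\partial\Omega'}(\{t\}\times\partial\Omega')$ is a bi-Lipschitz bijection (with bi-Lipschitz constants uniformly controlled in $t$ by Remark \ref{rem:aeintro}\ref{item:collario2}). Bi-Lipschitz maps between Lipschitz manifolds of the same dimension transport $\mathscr{H}^{2}$-null sets into $\mathscr{H}^{2}$-null sets, so
\begin{equation*}
\mathscr{H}^{2}\big(\Phi_{\partial\Omega'}(\{t\}\times\partial\Omega')\cap N\big) = \mathscr{H}^{2}\big(\Phi_{\partial\Omega'}(t,S_{t})\big)=0,
\end{equation*}
which is the claimed statement.

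The only non-routine point is ensuring that the Jacobian of $\Phi_{\partial\Omega'}$ is non-degenerate $\mu$-a.e., so that the area formula can be inverted to push $\mathscr{L}^{3}$-nullity back to $\mu$-nullity. This follows from the bi-Lipschitz property via Rademacher's theorem combined with the inverse bi-Lipschitz estimate, and in fact can be checked directly in the local coordinate parametrizations of Remark \ref{rem:aeintro}\ref{item:collario3}, where $\Phi_{\partial\Omega'}(t,\cdot)$ reduces to the graph map $x''\mapsto(x'',\psi_{Z,t}(x''))$ with $\nabla\psi_{Z,t}\to\nabla\psi_{Z}$ in $\lebe^{q}$ and uniformly bounded.
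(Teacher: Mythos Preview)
Your argument is correct and is precisely the standard Fubini/area-formula approach; the paper does not give its own proof but simply refers to \cite[Lemma 2.1]{ChenFrid1999}, where exactly this reasoning is carried out. One small remark: invoking Lemma~\ref{lem:coarea} with $N_{2}=3$ is a slightly roundabout way to get the area formula for the bi-Lipschitz map $\Phi_{\partial\Omega'}$ (since the domain is a product manifold sitting in $\R^{4}$); your alternative of working in the local coordinate cylinders of Remark~\ref{rem:aeintro}\ref{item:collario3}, where the map reduces to $(t,x'')\mapsto (x'',\psi_{Z,t}(x''))$ on open subsets of $\R^{3}$, is the cleaner route and makes the Jacobian non-degeneracy transparent.
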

For future reference, we moreover record the following comparability result.  
\begin{lem}[Geodesic/intrinsic versus extrinsic distances]\label{lem:geodesic} Let $\Omega'\subset\R^{n}$ be an open and bounded set with $\hold^{1}$-boundary. Then the Euclidean distance and the geodesic distance on $\partial\Omega'$ are mutually comparable. More precisely, setting 
\begin{align*}
d_{g}(x,y):=\inf\Big\{\int_{0}^{1}|\dot{\gamma}(t)|\dif t\colon\;  \begin{array}{c}\gamma\colon\;[0,1]\to\partial\Omega'\;\text{is continuous and} \\ \text{piecewise $\hold^{1}$ with $\gamma(0)=x,\gamma(1)=y$}\end{array}\Big\},\qquad x,y\in\partial\Omega', 
\end{align*}
there exists a constant $c>1$ such that 
\begin{align*}
|x-y|\leq d_{g}(x,y)\leq c|x-y|\qquad\text{for all}\;x,y\in\partial\Omega'. 
\end{align*}
\end{lem}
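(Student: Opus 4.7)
The first inequality $|x-y|\le d_{g}(x,y)$ is immediate: for any admissible curve $\gamma\colon [0,1]\to\partial\Omega'$ joining $x$ and $y$, the fundamental theorem of calculus in $\R^{n}$ gives $|x-y|=|\gamma(1)-\gamma(0)|\le\int_{0}^{1}|\dot\gamma(t)|\dt$, and taking the infimum over $\gamma$ yields the bound. For the nontrivial reverse inequality, the plan is to construct, for each pair $x,y$ lying in the same connected component of $\partial\Omega'$, a piecewise $\hold^{1}$-curve on $\partial\Omega'$ whose length is controlled by a fixed multiple of $|x-y|$.

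First, by the $\hold^{1}$-regularity and compactness of $\partial\Omega'$, one extracts a finite covering $\{U_{i}\}_{i=1}^{N}$ of $\partial\Omega'$ together with isometries $R_{i}$ of $\R^{n}$ and $\hold^{1}$-functions $g_{i}\colon V_{i}\subset\R^{n-1}\to\R$ such that $R_{i}(U_{i}\cap\partial\Omega')$ coincides with the graph of $g_{i}$ over $V_{i}$. Setting $L:=\max_{i}\|\nabla g_{i}\|_{\lebe^{\infty}(V_{i})}$ and letting $\delta>0$ be a Lebesgue number of the cover, for any $x,y\in\partial\Omega'$ with $|x-y|<\delta$ both points lie in a common patch $U_{i}$; writing them in graph coordinates as $R_{i}^{-1}(x',g_{i}(x'))$ and $R_{i}^{-1}(y',g_{i}(y'))$, the curve $\gamma(t):=R_{i}^{-1}\big((1-t)x'+ty',\,g_{i}((1-t)x'+ty')\big)$ remains on $\partial\Omega'$ and satisfies
\begin{align*}
\int_{0}^{1}|\dot\gamma(t)|\dt\le \sqrt{1+L^{2}}\,|x'-y'|\le \sqrt{1+L^{2}}\,|x-y|,
\end{align*}
which gives $d_{g}(x,y)\le \sqrt{1+L^{2}}\,|x-y|$ on small scales.

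For $x,y$ lying in the same connected component $K$ of $\partial\Omega'$ with $|x-y|\ge\delta$, path-connectedness of $K$ combined with the finite covering lets one join $x$ to $y$ by a chain of at most $N$ overlapping patches, each contributing a bounded amount to the total length; hence the intrinsic diameter $D_{K}$ of $K$ is finite and $d_{g}(x,y)\le D_{K}\le (D_{K}/\delta)|x-y|$. Taking the maximum over the finitely many components yields the inequality with $c:=\max\big(\sqrt{1+L^{2}},\,\max_{K}D_{K}/\delta\big)$. The main technical point is the uniform extraction of $L,\delta$ and $D_{K}$ from the covering; the only conceptual subtlety concerns points in distinct components, where $d_{g}=+\infty$ while $|x-y|<\infty$, so the statement must be read componentwise. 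This causes no trouble since a bounded domain with $\hold^{1}$-boundary has only finitely many boundary components, each at positive separation from the others, and the lemma is applied component by component in the sequel.
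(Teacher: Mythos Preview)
The paper states this lemma without proof, treating it as a background fact. Your argument is essentially correct and follows the standard route: the trivial direction via arc-length, a local graph-patch estimate yielding $d_{g}(x,y)\le\sqrt{1+L^{2}}\,|x-y|$ for $|x-y|<\delta$, and a compactness argument for large separations.

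Two minor points. First, the claim that any two points in a component $K$ can be joined by a chain of ``at most $N$ overlapping patches'' is not quite right as stated---a chain might revisit patches---but the conclusion $D_{K}<\infty$ is nonetheless correct: the local estimate makes $d_{g}$ continuous on $K\times K$ with respect to the Euclidean topology, and compactness of $K\times K$ then gives a finite maximum. Second, your observation about disconnected boundaries is accurate: as written, the inequality $d_{g}(x,y)\le c|x-y|$ fails when $x,y$ lie in distinct components of $\partial\Omega'$, since $d_{g}=+\infty$ there. The lemma should indeed be read componentwise, and since the paper only invokes it in local arguments (e.g.\ Lemma~\ref{lem:Lipbounds}) this reading suffices.
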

Based on this lemma, we may in particular work with distances with respect to the Euclidean or equivalently the intrinsic metric, but think of one choice as fixed throughout. 
We now continue with the second type of collar theorems.
\begin{lem}[Collar-type II]\label{lem:collar2}
Let $\Omega'\subset\R^{n}$ be open and bounded with boundary $\partial\Omega'$ of class $\hold^{1}$, 
and let $\Sigma\subsetneq\partial\Omega'$ be a $\hold^{1}$-regular Lipschitz boundary manifold relative 
to $\Omega'$ and with boundary $\Gamma_{\Sigma}$. 
Then there exist both a relatively open neighborhood $\mathcal{O}$ 
of $\Gamma_{\Sigma}$ in $\partial\Omega'$ and 
a bi-Lipschitz \emph{collar map} $\Psi_{\Sigma}\colon (-1,1)\times \Gamma_{\Sigma}\to\mathcal{O}$ 
with the following properties: 
\begin{enumerate}
    \item\label{item:collA2} $\Psi_{\Sigma}(0,\Gamma_{\Sigma})=\Gamma_{\Sigma}$, 
    \item\label{item:collB2} $\Psi_{\Sigma}(t,\Gamma_{\Sigma})\subset \mathrm{int}(\Sigma)$ for all $0<t<1$, 
    \item\label{item:collC2} $\Psi_{\Sigma}(t,\Gamma_{\Sigma})\subset \partial\Omega'\setminus\overline{\Sigma}$ for all $-1<t<0$. 
    \item\label{item:collD2} $\Psi_{\Sigma}(t,\Gamma_{\Sigma})$ is an {$(n-2)$-dimensional Lipschitz submanifold} for each $-1<t<1$. 
    \item\label{item:collE2} If $0<s<t<1$, then $\Psi_{\Sigma}((0,s)\times\Gamma_{\Sigma})\subset\Psi_{\Sigma}((0,t)\times\Gamma_{\Sigma})$. 
    \item\label{item:collF2} There exists a constant $\theta\geq 2$ such that,
    for all $0<s<t<1$,
\begin{align}
    &\frac{t}{\theta}\leq \mathrm{dist}(\Gamma_{\Sigma},x)\leq \theta t &&\qquad\text{for all}\;x\in\Psi_{\Sigma}(t,\Gamma_{\Sigma}),\notag\\ 
    & \frac{t-s}{\theta}\leq \mathrm{dist}(\Psi_{\Sigma}(s,\Gamma_{\Sigma}),x)\leq \theta(t-s)&&\qquad\text{for all}\;x\in\Psi_{\Sigma}(t,\Gamma_{\Sigma}),\label{eq:PsiSigmaBounds}\\
    & \frac{s}{\theta}\leq\mathrm{dist}(\Gamma_{\Sigma},x)\leq\theta t&&\qquad\text{for all}\;x\in\Psi_{\Sigma}((s,t),\Gamma_{\Sigma}).\notag
    \end{align}
\end{enumerate}
\end{lem}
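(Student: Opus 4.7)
The approach is to construct $\Psi_\Sigma$ as the flow of a Lipschitz vector field $X$ that is tangential to $\partial\Omega'$ and uniformly transverse to $\Gamma_\Sigma$, pointing into $\mathrm{int}(\Sigma)$. This transports the construction behind Lemma~\ref{lem:collar1} (cf.\ Remark~\ref{rem:aeintro}) from the ambient $\R^n$ setting into the intrinsic $\hold^1$-manifold $\partial\Omega'$, with $\Sigma$ now playing the role of the Lipschitz domain and $\Gamma_\Sigma$ the role of its boundary.

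\emph{Vector field construction.} Cover $\Gamma_\Sigma$ by finitely many open sets $U_1,\dots,U_N\subset\R^n$ equipped with the $\hold^1$-diffeomorphisms $f_\ell\colon(-1,1)^{n-1}\to U_\ell\cap\partial\Omega'$ and Lipschitz graph functions $g_\ell$ from Definition~\ref{def:bdrymanifolds}(b). On $U_\ell\cap\partial\Omega'$, define the local tangential field $X_\ell(y):=\dif f_\ell(f_\ell^{-1}(y))[e_{n-1}]$, so that in chart $f_\ell$ its integral curve through $y=f_\ell(x'',g_\ell(x''))$ is the explicit vertical motion $t\mapsto f_\ell(x'',g_\ell(x'')+t)$, entering $\mathrm{int}(\Sigma)$ for $t>0$ and $\partial\Omega'\setminus\overline{\Sigma}$ for $t<0$. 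Pick a $\hold^1$ partition of unity $\{\chi_\ell\}$ on $\partial\Omega'$ subordinate to $\{U_\ell\cap\partial\Omega'\}$, covering an open neighborhood $\mathcal{N}$ of $\Gamma_\Sigma$, and set
\[
X:=\sum_{\ell=1}^N \chi_\ell X_\ell \quad\text{on }\mathcal{N},
\]
extended by zero outside $\mathcal{N}$ after a Lipschitz cutoff. Then $X$ is Lipschitz and tangential to $\partial\Omega'$, and by the uniform cone structure of the local epigraphs $\{x_{n-1}>g_\ell(x'')\}$ together with a compactness argument on the finite covering there exists $\kappa>0$ such that $X$ remains uniformly transverse to $\Gamma_\Sigma$ and points into $\mathrm{int}(\Sigma)$ throughout $\Gamma_\Sigma$.

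\emph{Flow and verification of properties.} Since $X$ is Lipschitz, bounded, and uniformly transverse along $\Gamma_\Sigma$, its flow $\phi^X_s$ exists on $\partial\Omega'$ for $s$ in a uniform interval $(-s_0,s_0)$ with $s_0\in(0,1]$ and is bi-Lipschitz onto its image. Define
\[
\Psi_\Sigma(t,x):=\phi^X_{s_0 t}(x),\qquad (t,x)\in(-1,1)\times\Gamma_\Sigma,
\]
and set $\mathcal{O}:=\Psi_\Sigma((-1,1)\times\Gamma_\Sigma)$, which is relatively open in $\partial\Omega'$ by bi-Lipschitz invariance of domain applied chartwise. Properties \ref{item:collA2}--\ref{item:collC2} follow at once from the direction of $X$; \ref{item:collD2} from the bi-Lipschitz transport of the $(n-2)$-dimensional Lipschitz manifold structure of $\Gamma_\Sigma$; and \ref{item:collE2} from the semigroup property $\phi^X_{s+r}=\phi^X_s\circ\phi^X_r$ together with injectivity of $(t,x)\mapsto\phi^X_{s_0 t}(x)$. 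For \ref{item:collF2}, the bi-Lipschitz bounds $c^{-1}\lvert(t,y)-(s,y')\rvert\leq\lvert\Psi_\Sigma(t,y)-\Psi_\Sigma(s,y')\rvert\leq c\lvert(t,y)-(s,y')\rvert$ yield all three distance sandwiches in \eqref{eq:PsiSigmaBounds}: for $x=\Psi_\Sigma(t,y)$, taking $y'\in\Gamma_\Sigma$ arbitrary gives the lower bound $\dist(\Gamma_\Sigma,x)\geq t/\theta$, while $y'=y$ gives the matching upper bound; the intermediate estimate follows analogously via $\phi^X_{s_0(t-s)}\circ\phi^X_{s_0 s}=\phi^X_{s_0 t}$.

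\emph{Main obstacle.} The principal technical hurdle is obtaining \emph{uniform} transversality of the patched field $X$: each $X_\ell(p)$ is a priori only guaranteed to lie in the local inward cone $\{L_\ell|a''|<a_{n-1}\}$ of its own chart, and when pulled back to $T_{\partial\Omega'}(p)$ via different charts these cones need not coincide. The resolution is to choose the charts sufficiently compatibly so that the local inward cones share a common open subcone at each point of $\Gamma_\Sigma$; a finite-covering argument using compactness of $\Gamma_\Sigma$ and the $\hold^1$-regularity of the transition maps then delivers the uniform transversality constant $\kappa>0$, which in turn drives the bi-Lipschitz constants underlying the quantitative estimates in \ref{item:collF2}.
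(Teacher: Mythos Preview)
Your approach is a reasonable fleshing-out of the paper's one-line sketch (``similarly as Lemma~\ref{lem:collar1} by localization and flattening''), and the transversal-field-plus-flow strategy is exactly the template behind Lemma~\ref{lem:collar1}/Remark~\ref{rem:aeintro}. There is, however, a regularity gap in a step you pass over: you assert that $X=\sum_\ell\chi_\ell X_\ell$ is Lipschitz, but $X_\ell(y)=\dif f_\ell(f_\ell^{-1}(y))[e_{n-1}]$ involves the derivative $\dif f_\ell$ of a chart that is only $\hold^1$ under the stated hypothesis on $\partial\Omega'$, hence $\dif f_\ell$ is merely continuous. Consequently $X$ is in general only continuous, Picard--Lindel\"of does not deliver a unique flow, and the bi-Lipschitz estimates you invoke for \ref{item:collD2}--\ref{item:collF2} are unavailable. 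The obstacle you single out (uniform transversality after patching) is real but secondary to this regularity loss.

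The argument is easily repaired when $\partial\Omega'$ is $\hold^{1,1}$ or $\hold^2$ (as assumed from \S\ref{sec:divmeasfieldsmanif} onward), since then $\dif f_\ell$ is Lipschitz and your proof goes through verbatim. For the bare $\hold^1$ case used in \S\ref{sec:stokes}, one should bypass the flow entirely and work directly with the chartwise bi-Lipschitz maps $(t,p)\mapsto f_\ell\bigl(f_\ell^{-1}(p)-t\,h_\ell(f_\ell^{-1}(p))\bigr)$, where $h_\ell$ is a $\hold^\infty$ transversal field in flat coordinates as supplied by Remark~\ref{rem:aeintro}; bi-Lipschitzness then comes from the bi-Lipschitz property of $f_\ell$ guaranteed by Definition~\ref{def:bdrymanifolds} rather than from ODE theory, and the gluing across charts must be done by reparametrising the $t$-variable on overlaps rather than by averaging vector fields (the latter again loses a derivative through the $\hold^1$ transition maps).
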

The proof of this lemma can be accomplished similarly as Lemma \ref{lem:collar1} by localization and flattening. When there is no confusion about the particular  manifold $\Sigma$ and the choice of $\Psi_{\Sigma}$, we simply write $\Psi=\Psi_{\Sigma}$. 
\begin{definition}[Tangential  shrinking and enlarging]\label{def:shrinktangential}
In the situation of {\rm Lemma \ref{lem:collar2}},  define 
\begin{align}\label{eq:tangmod}
 \Sigma_{\Omega',\Psi_{\Sigma}}^{\tau,t}:=
 \begin{cases} \Sigma\setminus \Psi_{\Sigma}((0,t]\times\Gamma_{\Sigma}))&\qquad\mbox{for $\,\,0<t<1$},\\[1mm] 
\Sigma\cup \Psi_{\Sigma}((t,0]\times\Gamma_{\Sigma})
&\qquad 
\mbox{for $\,\,-1<t<0$}, 
\end{cases}
\end{align}
and call $\Sigma_{\Omega',\Psi_{\Sigma}}^{\tau,t}$ the \emph{tangentially shrunk manifold} provided that $0<t<1$, and the \emph{{tangentially enlarged manifold}} provided that $-1<t<0$. If the map $\Psi_{\Sigma}$ is clear from the context, we write $\Sigma_{\Omega'}^{\tau,t}$ for the sets from \eqref{eq:tangmod}.
\end{definition}
In the setting of the preceding definition, the tangentially shrunk or enlarged manifolds are $\hold^{1}$-regular Lipschitz boundary manifolds relative to $\Omega'$ too. 
For notational brevity and thinking of $\Psi_{\Sigma}$ being fixed, we moreover put 
\begin{align}\label{eq:foliation}
\Gamma_{\Sigma}^{t} := \Psi_{\Sigma}(t,\Gamma_{\Sigma})\qquad\,\,\mbox{for $\,\, |t|<1$}.
\end{align}
 With this convention, we define for  $-\frac{1}{2}<t<\frac{1}{2}$ and $0<\varepsilon<\frac{1}{2}$ 
\begin{align}\label{eq:onesidedGammas}
(\Gamma_{\Sigma})_{t,\varepsilon} =\bigcup_{t-\varepsilon<s<t+\varepsilon}\Gamma_{\Sigma}^{s},\;\;\;(\Gamma_{\Sigma})_{t,\varepsilon}^{+} =\bigcup_{t<s<t+\varepsilon}\Gamma_{\Sigma}^{s},\;\;\;(\Gamma_{\Sigma})_{t,\varepsilon}^{-} =\bigcup_{t-\varepsilon<s<t}\Gamma_{\Sigma}^{s}.
\end{align}
Based on our choice of $t$ and $\varepsilon$, we always have $-1<t-\varepsilon<t+\varepsilon<1$,
so that
the sets from \eqref{eq:onesidedGammas} are well-defined indeed. 
\subsection{Function spaces}
We now collect the definitions and background results for function spaces as used in the main part of the paper. Here we focus on the spaces which appear most frequently; in particular, we  postpone the definition and properties of function spaces on manifolds to Appendix \hyperref[sec:AppendixA]{A}, as they only enter selected sections. 
\subsubsection{Lipschitz and $\hold^{1}$-spaces}
In the main part, we are frequently concerned with extending maps from boundaries of sets to the interior. To state the first lemma, we note that $\hold^{1}(\partial\Omega')$ for an open and bounded set $\Omega'$ with $\hold^{1}$-boundary is defined as usual by means of localization.
\begin{lem}\label{lem:GoodLip}
Let $\Omega'\subset\R^{n}$ be open and bounded with $\hold^{1}$-boundary. 
\begin{enumerate}
\item\label{item:Lipextend1} 
If $f\in\hold^{1}(\partial\Omega')$, then, for each $0<\delta<1$, 
there exists $f_{\delta}\in\hold^{1}(\overline{\Omega'})$ such that 
 \begin{itemize}
 \item[\rm (i)] $f_{\delta}|_{\partial\Omega'}=f$, 
  \item[\rm (ii)] $f_{\delta}(x)=0$ whenever $x\in\Omega'$ satisfies $\mathrm{dist}(x,\partial\Omega')>\delta$, 
  \item[\rm (iii)] there exist both a constant $c>0$ independent of $f$ and $0<\delta<1$ so that
\begin{align*}
\|\nabla f_{\delta}\|_{\lebe^{\infty}(\Omega')}\leq c\big(\|\nabla_{\tau}f\|_{\lebe^{\infty}(\partial\Omega')} + \frac{1}{\delta}\|f\|_{\lebe^{\infty}(\partial\Omega')}\big)\quad\text{in $\Omega'$}. 
\end{align*}
    \end{itemize}
    \item\label{item:Lipextend2} If $f\in\mathrm{Lip}(\partial\Omega')$, then there exists $g\in\mathrm{Lip}_{\rm c}(\R^{n})$ such that 
    \begin{itemize}
     \item[\emph{(i)}] $g|_{\partial\Omega'}=f$, 
     \item[\emph{(ii)}] $g|_{\R^{n}\setminus\overline{\Omega'}}\in\hold_{b}^{1}(\R^{n}\setminus\overline{\Omega'})$ and $g|_{\Omega'}\in\hold_{b}^{1}(\Omega')$, 
     \item[\emph{(iii)}] if $\rho_{\delta}$ is the $\delta$-rescaled version of a standard mollifier, then there exists a sequence $(\delta_{j})\subset (0,1)$ such that $\delta_{j}\searrow 0$ and 
     \begin{align*}
       \nabla_{\tau}(\rho_{\delta_{j}}*g)\to \nabla_{\tau}f
       \qquad\text{$\mathscr{H}^{n-1}$-a.e. on $\partial\Omega'$ as $j\to\infty$}. 
     \end{align*}
    \end{itemize}
\end{enumerate}
\end{lem}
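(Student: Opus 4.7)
The plan is to treat parts (a) and (b) in turn, the second requiring substantially more care due to the mollification-convergence assertion. For (a), I would invoke Lemma~\ref{lem:collar1} applied with $k=1$ to produce the bi-Lipschitz collar map $\Phi_{\partial\Omega'}\colon (-1,1)\times\partial\Omega' \to \mathcal{O}$; as recalled after that lemma, when $\partial\Omega'\in\hold^1$ the map $\Phi_{\partial\Omega'}$ may be taken as a $\hold^1$-diffeomorphism. I would fix $\eta\in\hold^\infty([0,\infty); [0,1])$ with $\eta(0)=1$, $\supp\eta\subset[0,1]$ and $|\eta'|\leq 2$, and pick $t_0\in (0,1)$ with $\Phi_{\partial\Omega'}([0,t_0)\times\partial\Omega')\subset\Omega'$. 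After replacing $\delta$ with $\min(\delta, t_0/2)$ if necessary, I would define
\[
f_\delta(x) := \begin{cases} \eta(t/\delta)\,f(y), & x=\Phi_{\partial\Omega'}(t,y),\ 0\leq t<t_0, \\ 0, & \text{otherwise in } \overline{\Omega'}. \end{cases}
\]
Properties (i) and (ii) follow immediately from the construction, and the gradient bound in (iii) is obtained by the chain rule together with the uniform bi-Lipschitz bounds on $\Phi_{\partial\Omega'}^{\pm 1}$: the tangential contribution yields $\|\nabla_\tau f\|_{\lebe^\infty(\partial\Omega')}$, while the normal contribution produces $\delta^{-1}\|f\|_{\lebe^\infty(\partial\Omega')}$ via the factor $\eta'(t/\delta)/\delta$.

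For (b), I would first extend $f$ to $\tilde f\in\mathrm{Lip}_{\rm c}(\R^n)$ by McShane's extension theorem followed by multiplication with a cutoff $\chi\in\hold_{\rm c}^\infty(\R^n)$ which is identically one on a neighborhood of $\overline{\Omega'}$; one may arrange $\mathrm{Lip}(\tilde f)\leq C\,\mathrm{Lip}(f)$. Setting $d(x):=\dist(x,\partial\Omega')$, I would then define
\[
g(x) := \begin{cases} (\rho_{d(x)/2}\ast\tilde f)(x), & d(x)>0, \\ f(x), & x\in\partial\Omega'. \end{cases}
\]
Since $d\in\mathrm{Lip}(\R^n)$ is smooth on $\{d>0\}$, a chain-rule computation combining the scale dependence with $\tilde f\in\mathrm{Lip}(\R^n)$ yields $g|_{\Omega'}\in\hold_b^1(\Omega')$ and $g|_{\R^n\setminus\overline{\Omega'}}\in\hold_b^1(\R^n\setminus\overline{\Omega'})$ together with $|\nabla g|\leq C\,\mathrm{Lip}(\tilde f)$ off $\partial\Omega'$. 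Continuity across $\partial\Omega'$ follows from $\rho_{\delta'}\ast\tilde f\to\tilde f$ uniformly as $\delta'\searrow 0$, so $g\in\mathrm{Lip}_{\rm c}(\R^n)$. This gives (i) and (ii).

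The main obstacle is (iii): showing $\nabla_\tau(\rho_{\delta_j}\ast g)\to\nabla_\tau f$ $\mathscr{H}^{n-1}$-a.e. on $\partial\Omega'$ along some subsequence $\delta_j\searrow 0$. My two-step plan is as follows. Since $g$ is obtained near $\partial\Omega'$ by averaging $\tilde f$ at scale $d(x)/2$ and $\tilde f$ restricts to $f$ on $\partial\Omega'$, the tangential part of $\nabla g$ at $x=\Phi_{\partial\Omega'}(t,y)$ with small $|t|$ will converge to $\nabla_\tau f(y)$ as $t\to 0$ at tangential Lebesgue points of $f$; this identifies the one-sided tangential traces of $\nabla g$ from each side of $\partial\Omega'$ with $\nabla_\tau f$ at $\mathscr{H}^{n-1}$-a.e.\ $x_0\in\partial\Omega'$. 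Second, writing $\nabla(\rho_{\delta_j}\ast g)=\rho_{\delta_j}\ast\nabla g$ and exploiting the radial symmetry of $\rho$, at a boundary point $x_0$ the convolution is a symmetric average of $\nabla g$ over the two sides of $\partial\Omega'$; projecting onto $T_{x_0}\partial\Omega'$ suppresses the potentially discontinuous normal components and leaves the common tangential trace. Combining a Lebesgue-differentiation argument along the normal foliation $\{d=\mathrm{const}\}$ near $\partial\Omega'$ with Lemma~\ref{lem:Lebesgue} and Fubini in the normal variable yields $\mathscr{H}^{n-1}$-a.e.\ convergence along a subsequence, and the compactness of $\partial\Omega'$ allows one to patch this together across a finite chart covering. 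The identification of the two tangential traces of $\nabla g$ and the clean removal of the normal jump is where the argument is most delicate.
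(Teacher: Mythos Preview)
Your approach to part~(a) is different from the paper's and essentially correct. The paper proceeds by localising, flattening via the $\hold^{1}$-charts, and extending $f$ to the half-space by $(Ef)(x',x_n)=\int\rho_{x_n}(x'-y')f(y',0)\,dy'$, then cutting off in $x_n$. Your collar-based construction $f_\delta(\Phi(t,y))=\eta(t/\delta)f(y)$ is more direct and avoids the flattening step; the only additional ingredient you need is that $\Phi_{\partial\Omega'}$ is a $\hold^{1}$-diffeomorphism when $\partial\Omega'\in\hold^{1}$, which is indeed implicit in the discussion after Lemma~\ref{lem:collar1}.

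For part~(b), however, there are genuine gaps. First, a minor one: the distance function $d$ is \emph{not} smooth on all of $\{d>0\}$; it fails to be $\hold^{1}$ on the medial axis (e.g.\ at the origin for $\Omega'=\ball_{1}(0)$, where $d(x)=1-|x|$). Thus your $g$ need not be $\hold^{1}$ throughout $\Omega'$. This is repairable by replacing $d$ with a regularised version away from $\partial\Omega'$.

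The substantive gap is in (iii). Your argument hinges on the claim that the one-sided tangential limits of $\nabla g$ along $\partial\Omega'$ recover $\nabla_\tau f$. But $g(x)$ is an average of the McShane extension $\tilde f$ over $\ball_{d(x)/2}(x)$, a ball lying entirely on one side of $\partial\Omega'$ and never touching it. The gradient of the McShane extension at such interior points has no transparent relation to $\nabla_\tau f$; the extension is given by an infimum formula that does not propagate tangential derivative information in any controlled way. By contrast, the paper's extension in the flattened picture, $(\widetilde{E}f)(x',x_n)=\int\widetilde\rho(z')f(x'+|x_n|z',0)\,dz'$, is an average of $f$ \emph{along the boundary itself}, so that $\partial_{x_k}\widetilde{E}f$ is literally a mollification of $\partial_{x_k}f$ at scale $|x_n|$. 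This is precisely the structural property that makes the $\lebe^{1}$-convergence estimate (and hence the a.e.\ subsequential convergence) go through. Your construction lacks this, and the sketch you give does not supply a substitute mechanism; the appeal to Lebesgue points of $\nabla\tilde f$ on the $(n-1)$-dimensional set $\partial\Omega'$ is not justified, since $\nabla\tilde f\in\lebe^\infty(\R^n)$ only guarantees $\mathscr{L}^n$-a.e.\ Lebesgue points.
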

We note that \ref{item:Lipextend1} cannot be improved to $g\in\hold_{\rm c}^{1}(\R^{n})$ in general. It is difficult to trace the precise statement of Lemma \ref{lem:GoodLip} back to a specific reference, which is why a self-contained proof is included in Appendix \hyperref[sec:AppendixB]{B} for the reader's convenience. 

\subsubsection{Extended  divergence-measure fields} Let $\Omega\subset\R^{3}$ be open. If $\FF\in\cm^{\infty}(\Omega)$, then $\mathrm{div}(\curl \FF)=0$ in $\mathscr{D}'(\Omega)$. 
Hence, $\curl \FF$ can be regarded as an \emph{extended divergence-measure field} in the terminology of \cite{ChenFrid2003}: 

\begin{definition}[Extended divergence-measure fields]\label{def:extdivmeas}
Let $\Omega\subset\R^{n}$ be open and bounded. An \emph{extended divergence-measure field} 
$\FF\in\mathscr{DM}^{\mathrm{ext}}(\Omega)$
is a vector field $\FF\in \mathrm{RM}_{\mathrm{fin}}(\Omega;\R^{n})$ such that $\di\FF\in\mathrm{RM}_{\mathrm{fin}}(\Omega)$. 
\end{definition}
For our further purpose, we now document the process of assigning distributional normal traces 
to extended divergence-measure fields and record some of their basic properties.
\begin{definition}[Normal traces]\label{def:normaltraces}
Let $\Omega\subset\R^{n}$ be open and bounded, $\FF\in\mathscr{DM}^{\mathrm{ext}}(\Omega)$, and let $E\Subset\Omega$ be a Borel set. We then define the \emph{normal trace} of $\FF$ on $\partial E$ by
\begin{align}\label{eq:normaltraceextdivmeas}
\langle \FF\cdot\nu,\varphi\rangle_{\partial E} := - \int_{E}\nabla\varphi\cdot\dif\FF - \int_{E}\varphi\dif\,(\di \FF),\qquad\varphi\in\hold_{\rm c}^{1}(\Omega). 
\end{align}
\end{definition}
The preceding definition appears \emph{e.g.} in \cite{ChenFrid2003,ChenIrvingTorres,Silhavy}. We will require a generalization of \eqref{eq:normaltraceextdivmeas} to Lipschitz maps $\varphi\in\mathrm{Lip}_{\rm c}(\Omega)$. This requires the measure $\overline{\nabla\varphi\cdot\FF}$ which, following \cite{Silhavy,ChenIrvingTorres}, can be introduced as the unique signed Radon measure on $\Omega$ which satisfies
\begin{align}\label{eq:LipschitzPairing}
\int_{\Omega}\psi\dif\overline{\nabla\varphi\cdot\FF} = \lim_{\delta\searrow 0}\int_{\Omega}\psi\nabla(\rho_{\delta}*\varphi)\cdot\dif\FF\qquad\text{for all}\;\psi\in\hold_{\rm c}(\Omega). 
\end{align}
This pairing is instrumental in generalizing
identity \eqref{eq:normaltraceextdivmeas} to Lipschitz competitors: 

\begin{lem}[{\cite[Corollary 2.9]{ChenIrvingTorres}}]\label{lem:normaltraceextend} 
In the situation of {\rm Definition \ref{def:extdivmeas}}, 
the normal trace functional \eqref{eq:normaltraceextdivmeas} extends to 
a bounded linear functional on $\mathrm{Lip}(\Omega)$ by setting
\begin{align}\label{eq:normaltraceextdivmeas1}
\langle \FF\cdot\nu,\varphi\rangle_{\partial E} := - \int_{E}\dif\overline{\nabla\varphi\cdot\FF} - \int_{E}\varphi\dif\,(\di(\FF))\qquad\,\,\mbox{for any $\varphi\in\mathrm{Lip}(\Omega)$}. 
\end{align}
\end{lem}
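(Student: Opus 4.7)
\textbf{Proof Plan for Lemma \ref{lem:normaltraceextend}.} The strategy is to take the Šilhavý pairing $\overline{\nabla\varphi\cdot\FF}$ introduced in \eqref{eq:LipschitzPairing} as a drop-in replacement for the pointwise product $\nabla\varphi\cdot\FF$ that appears in the $\hold^1_{\rm c}$-definition \eqref{eq:normaltraceextdivmeas}. Once this pairing is controlled as a finite Radon measure, the proposed formula \eqref{eq:normaltraceextdivmeas1} defines a bounded linear functional on $\mathrm{Lip}(\Omega)$, and consistency with \eqref{eq:normaltraceextdivmeas} on $\hold^1_{\rm c}(\Omega)$ is a matter of identifying the pairing measure in the smooth case.

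The first step is to observe that $\overline{\nabla\varphi\cdot\FF}\in\mathrm{RM}_{\mathrm{fin}}(\Omega)$ for every $\varphi\in\mathrm{Lip}(\Omega)$, with total variation bounded by $\mathrm{Lip}(\varphi)\,|\FF|(\Omega)$. Indeed, since $\nabla(\rho_{\delta}*\varphi)=\rho_{\delta}*\nabla\varphi$, Rademacher's theorem yields the uniform pointwise bound $\|\nabla(\rho_{\delta}*\varphi)\|_{\lebe^\infty(\R^n)}\leq\mathrm{Lip}(\varphi)$. Testing \eqref{eq:LipschitzPairing} against an arbitrary $\psi\in\hold_{\rm c}(\Omega)$ therefore yields
\begin{align*}
\Big|\int_{\Omega}\psi\dif\overline{\nabla\varphi\cdot\FF}\Big|\leq \mathrm{Lip}(\varphi)\,\|\psi\|_{\lebe^\infty(\Omega)}\,|\FF|(\Omega),
\end{align*}
and the Riesz representation theorem delivers both the measure character and the desired total variation bound. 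In particular, the integral $\int_{E}\dif\overline{\nabla\varphi\cdot\FF}$ is well defined on any Borel set $E\Subset\Omega$; the second integral in \eqref{eq:normaltraceextdivmeas1} is likewise well defined since $\varphi\in\lebe^\infty(\Omega)$ and $\di\FF\in\mathrm{RM}_{\mathrm{fin}}(\Omega)$.

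The second step is to combine the above to derive the boundedness estimate
\begin{align*}
|\langle \FF\cdot\nu,\varphi\rangle_{\partial E}|\leq \mathrm{Lip}(\varphi)\,|\FF|(\Omega)+\|\varphi\|_{\lebe^\infty(\Omega)}\,|\di\FF|(\Omega)\leq C(\FF,\Omega)\,\|\varphi\|_{\mathrm{Lip}(\Omega)},
\end{align*}
which furnishes the claimed bounded linearity on $\mathrm{Lip}(\Omega)$ (linearity being inherited from the $\varphi$-linearity of the defining limit in \eqref{eq:LipschitzPairing}). For consistency with \eqref{eq:normaltraceextdivmeas} when $\varphi\in\hold^{1}_{\rm c}(\Omega)$, note that $\nabla(\rho_{\delta}*\varphi)=\rho_\delta*\nabla\varphi\to\nabla\varphi$ \emph{uniformly} on compact sets; passing to the limit in \eqref{eq:LipschitzPairing} identifies $\overline{\nabla\varphi\cdot\FF}$ with the finite Radon measure $\nabla\varphi\cdot\FF$ having $\lebe^\infty$-density $\nabla\varphi$ against $\FF$, so that \eqref{eq:normaltraceextdivmeas1} collapses to \eqref{eq:normaltraceextdivmeas}.

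The main obstacle is the well-definedness of the pairing measure itself, namely that the limit in \eqref{eq:LipschitzPairing} truly exists and is independent of the choice of standard mollifier. For a singular measure $\FF$ the weak-$*$ convergence $\nabla(\rho_\delta*\varphi)\stackrel{*}{\rightharpoonup}\nabla\varphi$ in $\lebe^\infty(\R^n)$ does not suffice, since it does not pair with $|\FF|$. This is resolved in \cite{Silhavy,ChenIrvingTorres} by combining a uniform $\lebe^\infty$-bound on $\nabla(\rho_\delta*\varphi)$, weak-$*$ compactness in $\mathrm{RM}_{\mathrm{fin}}(\Omega)$, and a distributional integration-by-parts identity against test functions $\psi\in\hold^{1}_{\rm c}(\Omega)$ which pins down any weak-$*$ subsequential limit uniquely; once that black box is in place, the present extension is essentially bookkeeping.
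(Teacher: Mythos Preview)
The paper does not prove this lemma at all: it is quoted verbatim as \cite[Corollary 2.9]{ChenIrvingTorres} and used as a black box. Your sketch is a correct outline of the argument behind that citation---the total-variation bound on $\overline{\nabla\varphi\cdot\FF}$ via the uniform $\lebe^{\infty}$-control of mollified gradients, the resulting Lipschitz bound on the functional, and consistency with \eqref{eq:normaltraceextdivmeas} for $\hold_{\rm c}^{1}$-functions are all as you describe. You also correctly isolate the one nontrivial ingredient, namely the existence and mollifier-independence of the limit in \eqref{eq:LipschitzPairing}, and you correctly attribute its resolution to \cite{Silhavy,ChenIrvingTorres}; since the paper treats precisely that point as given, your proposal matches the paper's stance.
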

The aforementioned references directly deal with Borel sets $E$, 
whereas we largely deal with much more regular sets in the main part. 
We then only require an easier version: 

\begin{lem}\label{lem:easyprod}
Let $\Omega\subset\R^{n}$ be open and bounded,  and let $\Omega'\Subset\Omega$ be open. Moreover, let $\FF\in\mathscr{DM}^{\mathrm{ext}}(\Omega)$ 
and  $\varphi\in\mathrm{Lip}_{\rm c}(\Omega)$ be 
such that $\varphi|_{\Omega'}\in\hold_{b}^{1}(\Omega')$. Then $\overline{\nabla\varphi\cdot\FF}(\Omega')=(\nabla\varphi\cdot\FF)(\Omega')$ so that
\begin{align}\label{eq:easynormaltraceextdivmeas1}
\langle\FF\cdot\nu,\varphi\rangle_{\partial\Omega'}=-\int_{\Omega'}\nabla\varphi\cdot\dif\FF - \int_{\Omega'}\varphi\dif\,(\mathrm{div}\FF). 
\end{align}
\end{lem}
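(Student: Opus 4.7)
The plan is to first establish the measure identity $\overline{\nabla\varphi\cdot\FF}(\Omega')=(\nabla\varphi\cdot\FF)(\Omega')$, after which \eqref{eq:easynormaltraceextdivmeas1} is immediate by invoking Lemma \ref{lem:normaltraceextend} with the open Borel set $E=\Omega'$. Note that the integral $(\nabla\varphi\cdot\FF)(\Omega')=\int_{\Omega'}\nabla\varphi\cdot\dif\FF$ is well-defined as a number, since the hypothesis $\varphi|_{\Omega'}\in\hold_{b}^{1}(\Omega')$ renders $\nabla\varphi$ a bounded continuous $\R^{n}$-valued integrand paired against the finite vector Radon measure $\FF|_{\Omega'}$.

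First I would show that the two measures coincide when tested against $\psi\in\hold_{\rm c}(\Omega)$ with $\mathrm{supp}(\psi)\Subset\Omega'$. For such $\psi$, choose $\delta_{0}>0$ with $\mathrm{supp}(\psi)+\overline{\ball_{\delta_{0}}(0)}\subset\Omega'$. The $\hold^{1}$-regularity of $\varphi$ on this neighborhood allows me to commute gradient and mollification, giving $\nabla(\rho_{\delta}*\varphi)(x)=(\rho_{\delta}*\nabla\varphi)(x)$ for $x\in\mathrm{supp}(\psi)$ and $0<\delta<\delta_{0}$. Since $\nabla\varphi$ is continuous on $\Omega'$, standard mollifier theory yields $\rho_{\delta}*\nabla\varphi\to\nabla\varphi$ uniformly on $\mathrm{supp}(\psi)$, whence $\psi\,\nabla(\rho_{\delta}*\varphi)\to\psi\,\nabla\varphi$ uniformly on $\Omega$. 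The finiteness of $\FF$ then lets me pass to the limit in the defining relation \eqref{eq:LipschitzPairing} to obtain
\begin{align*}
\int_{\Omega}\psi\dif\overline{\nabla\varphi\cdot\FF}=\int_{\Omega'}\psi\,\nabla\varphi\cdot\dif\FF.
\end{align*}

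Next I would select an exhaustion $(\psi_{j})\subset\hold_{\rm c}(\Omega)$ with $\mathrm{supp}(\psi_{j})\Subset\Omega'$, $0\leq\psi_{j}\leq 1$, and $\psi_{j}\to\mathbf{1}_{\Omega'}$ pointwise on $\Omega$. On the left-hand side, dominated convergence against the finite signed measure $\overline{\nabla\varphi\cdot\FF}$ yields $\int_{\Omega}\psi_{j}\dif\overline{\nabla\varphi\cdot\FF}\to\overline{\nabla\varphi\cdot\FF}(\Omega')$; on the right, the uniform bound $|\psi_{j}\nabla\varphi|\leq\|\nabla\varphi\|_{\lebe^{\infty}(\Omega')}$ and dominated convergence against $|\FF|$ yield convergence to $(\nabla\varphi\cdot\FF)(\Omega')$. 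Inserting $\overline{\nabla\varphi\cdot\FF}(\Omega')=\int_{\Omega'}\nabla\varphi\cdot\dif\FF$ into \eqref{eq:normaltraceextdivmeas1} applied to $E=\Omega'$ produces \eqref{eq:easynormaltraceextdivmeas1}. The only step with any substance is the uniform convergence $\nabla(\rho_{\delta}*\varphi)\to\nabla\varphi$ on $\mathrm{supp}(\psi)$, which crucially uses the improved hypothesis $\varphi|_{\Omega'}\in\hold_{b}^{1}(\Omega')$; without it, the general $\mathrm{Lip}_{\rm c}$-pairing would a priori retain singular interior contributions. Every other step reduces to a routine application of dominated convergence.
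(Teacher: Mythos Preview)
Your argument is correct and follows essentially the same route as the paper's proof: fix compactly supported test functions $\psi_{j}$ in $\Omega'$, use the $\hold_{b}^{1}$-hypothesis to pass the mollifier limit in \eqref{eq:LipschitzPairing} via dominated convergence, then exhaust $\Omega'$ and conclude from \eqref{eq:normaltraceextdivmeas1}. The only cosmetic difference is that the paper interleaves the two limits (writing $\lim_{j}\lim_{i}$ in a single chain), whereas you first establish the identity for a generic $\psi$ and then exhaust; the content is identical.
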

\begin{proof}
Let $(\psi_{j})\subset\hold_{\rm c}^{1}(\Omega';[0,1])$ be such that $\psi_{j}\to 1$ pointwise in $\Omega'$ as $j\to\infty$. Since $\overline{\nabla\varphi\cdot\FF}$ is a signed measure, we obtain by dominated convergence in the first step:
\begin{align*}
\overline{\nabla{\varphi}\cdot\FF}(\Omega') & = \lim_{j\to\infty}\int_{\Omega}\psi_{j}\dif\overline{\nabla{\varphi}\cdot\FF} \stackrel{\eqref{eq:LipschitzPairing}}{=} \lim_{j\to\infty}\lim_{i\to\infty} \int_{\Omega'}\psi_{j}\nabla(\rho_{\delta_{i}}*{\varphi})\cdot\dif\FF \\ 
& = \lim_{j\to\infty}\lim_{i\to\infty} \int_{\Omega'}\psi_{j}(\rho_{\delta_{i}}*\nabla{\varphi})\cdot\dif\FF \stackrel{(*)}{=} \lim_{j\to\infty} \int_{\Omega'}\psi_{j}\nabla{\varphi}\cdot\dif\FF \stackrel{(**)}{=} \int_{\Omega'}\nabla{\varphi}\cdot\dif\FF. 
\end{align*}
Here we used at $(*)$ that $\nabla{\varphi}|_{\Omega'}\in\hold_{b}(\Omega';\R^{n})$ by \ref{item:Lipchoose2}, whereby $\rho_{\delta_{i}}*\nabla{\varphi}\to\nabla{\varphi}$ everywhere in $\Omega'$ as $i\to\infty$. In particular, for every $j\in\mathbb{N}$, $|\psi_{j}(\rho_{\delta_{i}}*\nabla{\varphi})|\leq \|\nabla{\varphi}\|_{\lebe^{\infty}(\Omega)}$ and $\psi_{j}(\rho_{\delta_{i}}*\nabla{\varphi})\to\psi\nabla{\varphi}$ everywhere in $\Omega'$. Since $\psi_{j}\nabla{\varphi}\in\hold_{\rm c}(\Omega';\R^{3})$, the integral of $\psi_{j}\nabla{\varphi}$ with respect to the vectorial measure $\FF$ is well-defined, and so $(*)$ follows by dominated convergence. As to $(**)$, it suffices to realize  that $\psi_{j}\nabla{\varphi}\to\nabla{\varphi}$ everywhere in $\Omega'$ as $j\to\infty$, whereby $|\psi_{j}\nabla{\varphi}|\leq |\nabla{\varphi}|$ yields $(**)$ again by dominated convergence. Now \eqref{eq:easynormaltraceextdivmeas1} follows from \eqref{eq:normaltraceextdivmeas1}, and the proof is complete.
\end{proof}

Moreover, we require the following two results on the local nature of the normal traces:
\begin{lem}[Locality, {\cite[Theorem 2.15]{ChenIrvingTorres}}]\label{lem:LipschitzVanish} Let $\Omega\subset\R^{n}$ be open, and let $E\Subset\Omega$ be open. If $\varphi\in\mathrm{Lip}_{\rm c}(\Omega)$ vanishes on $\partial E$, then $\langle \FF\cdot\nu,\varphi\rangle_{\partial E}=0$. 
    \end{lem}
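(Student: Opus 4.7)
The guiding idea is that the pairing \eqref{eq:normaltraceextdivmeas1} should localize to the support of the test function, so that when $\varphi$ vanishes on $\partial E$, one ought to approximate $\varphi$ by Lipschitz maps whose supports are strictly separated from $\partial E$, and to which the distributional divergence identity $\int_{E}\nabla\psi\cdot\mathrm{d}\FF = -\int_{E}\psi\,\mathrm{d}(\div\FF)$ applies for $\psi\in\mathrm{Lip}_{\rm c}(E)$. The target identity then emerges in the limit.

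First I would reduce to the case $\varphi\ge 0$. Writing $\varphi=\varphi_{+}-\varphi_{-}$, both summands lie in $\mathrm{Lip}_{\rm c}(\Omega)$ and vanish on $\partial E$, and the pairing is linear in $\varphi$. Assuming $\varphi\ge 0$, for each $\delta>0$ I would set $\varphi_{\delta}:=(\varphi-\delta)_{+}\in\mathrm{Lip}_{\rm c}(\Omega)$. Then $0\le \varphi_{\delta}\le\varphi$, $\|\varphi_{\delta}-\varphi\|_{\infty}\le\delta$, and $\|\nabla\varphi_{\delta}\|_{\infty}\le\|\nabla\varphi\|_{\infty}$. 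Since $\varphi$ is continuous and vanishes on $\partial E$, the set $U_{\delta}:=\{\varphi<\delta/2\}$ is an open neighborhood of $\partial E$ on which $\varphi_{\delta}\equiv 0$; consequently $\supp(\varphi_{\delta})\cap\overline{E}\Subset E$.

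Next I would show that $\langle\FF\cdot\nu,\varphi_{\delta}\rangle_{\partial E}=0$ for every $\delta>0$. Fix $\delta$, choose an open set $V$ with $\supp(\varphi_{\delta})\cap\overline{E}\Subset V\Subset E$, and pick $\psi\in\hold_{\rm c}(E;[0,1])$ with $\psi\equiv 1$ on $V$. For small $\epsilon>0$, $\rho_{\epsilon}*\varphi_{\delta}\in\hold_{\rm c}^{\infty}(E)$, so that the distributional definition of $\div\FF$ gives
\begin{equation*}
-\int_{E}\nabla(\rho_{\epsilon}*\varphi_{\delta})\cdot\mathrm{d}\FF-\int_{E}(\rho_{\epsilon}*\varphi_{\delta})\,\mathrm{d}(\div\FF)=0.
\end{equation*}
Letting $\epsilon\searrow 0$, the second term converges to $\int_{E}\varphi_{\delta}\,\mathrm{d}(\div\FF)$ by uniform convergence and finiteness of $|\div\FF|(\Omega)$; the first term, thanks to the cutoff $\psi\equiv 1$ on $\supp(\rho_{\epsilon}*\varphi_{\delta})$ for small $\epsilon$, converges by definition \eqref{eq:LipschitzPairing} to $\overline{\nabla\varphi_{\delta}\cdot\FF}(E)$. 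Inserting into \eqref{eq:normaltraceextdivmeas1} yields $\langle\FF\cdot\nu,\varphi_{\delta}\rangle_{\partial E}=0$.

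Finally, one must pass $\delta\to 0$. The term $\int_{E}\varphi_{\delta}\,\mathrm{d}(\div\FF)\to\int_{E}\varphi\,\mathrm{d}(\div\FF)$ is immediate from $\|\varphi_{\delta}-\varphi\|_{\infty}\to 0$ and finiteness of $|\div\FF|(\Omega)$. The main obstacle is the convergence $\overline{\nabla\varphi_{\delta}\cdot\FF}(E)\to\overline{\nabla\varphi\cdot\FF}(E)$: since these are signed measures with potentially singular parts, the convergence cannot be read off from the pointwise convergence $\nabla\varphi_{\delta}=\mathbf{1}_{\{\varphi>\delta\}}\nabla\varphi\to\nabla\varphi$ alone. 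I would address this via a diagonal argument in the defining formula \eqref{eq:LipschitzPairing}: select $\epsilon_{\delta}\searrow 0$ so that $\rho_{\epsilon_{\delta}}*\varphi_{\delta}\to\varphi$ uniformly, $\nabla(\rho_{\epsilon_{\delta}}*\varphi_{\delta})\to\nabla\varphi$ $\mathscr{L}^{n}$-a.e., and with a uniform $\lebe^{\infty}$-bound $\|\nabla\varphi\|_{\infty}$, then for a cutoff $\psi\in\hold_{\rm c}(\Omega;[0,1])$ equal to $1$ on a neighborhood of $\overline{E}$ compare $\int_{\Omega}\psi\nabla(\rho_{\epsilon_{\delta}}*\varphi_{\delta})\cdot\mathrm{d}\FF$ with both $\overline{\nabla\varphi_{\delta}\cdot\FF}(E)$ (for fixed $\delta$, small $\epsilon$) and $\overline{\nabla\varphi\cdot\FF}(E)$ (for small $\epsilon_{\delta}$); dominated convergence with respect to the vector measure $\FF$ then closes the loop. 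Calibrating the rate of $\epsilon_{\delta}$ against $\delta$ is the delicate point here.
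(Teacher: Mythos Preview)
The paper does not prove this lemma; it is quoted verbatim from \cite[Theorem 2.15]{ChenIrvingTorres} without argument. Your outline is therefore being assessed on its own.

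Steps 1--3 are sound. The reduction to $\varphi\ge 0$, the truncation $\varphi_\delta=(\varphi-\delta)_+$, and the verification that $\langle\FF\cdot\nu,\varphi_\delta\rangle_{\partial E}=0$ all go through; in particular, the localisation argument showing that a smooth function whose support misses $\partial E$ gives zero normal trace is correct.

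Step 4, however, contains a genuine gap that your diagonal suggestion does not close. You need $\overline{\nabla\varphi_\delta\cdot\FF}(E)\to\overline{\nabla\varphi\cdot\FF}(E)$, and you propose to obtain this from a diagonal sequence $\rho_{\epsilon_\delta}*\varphi_\delta$ by invoking ``dominated convergence with respect to the vector measure $\FF$''. But the only pointwise convergence you have for the gradients $\nabla(\rho_{\epsilon_\delta}*\varphi_\delta)\to\nabla\varphi$ is $\mathscr{L}^n$-a.e., while $\FF\in\mathrm{RM}_{\mathrm{fin}}(\Omega;\R^n)$ is a general finite Radon measure that may be singular with respect to $\mathscr{L}^n$. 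Dominated convergence against $\FF$ requires $|\FF|$-a.e.\ convergence of the integrands, which you have not established and which does not follow from the construction. Equivalently, writing $\psi_\delta:=\min(\varphi,\delta)$ one has $\|\psi_\delta\|_\infty\le\delta\to 0$ but only $\|\nabla\psi_\delta\|_\infty\le\|\nabla\varphi\|_\infty$, so the crude bound $|\overline{\nabla\psi_\delta\cdot\FF}|(E)\le\|\nabla\psi_\delta\|_\infty|\FF|(E)$ gives no decay either. No choice of $\epsilon_\delta$ repairs this: the obstruction is the mode of convergence, not its rate. The argument in \cite{ChenIrvingTorres} instead exploits finer structural properties of the pairing $\overline{\nabla\varphi\cdot\FF}$ (in particular the product rule $\div(\varphi\FF)=\overline{\nabla\varphi\cdot\FF}+\varphi\,\div\FF$ and its consequences) that are not accessible through the elementary limiting procedure you sketch.
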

\begin{lem}[Localisability of the normal trace, {\cite[Theorem 5.3]{ChenIrvingTorres}}]\label{lem:localisabilitynormaltrace}
Let $\Omega\subset\R^{n}$ be open, $\FF \in\mathscr{DM}^{\mathrm{ext}}(\Omega)$, and let $U,V\Subset\Omega$. If $A\subset\R^{n}$ is open such that $
A\cap U = A\cap V$, 
then 
\begin{align*}
\langle \FF\cdot\nu,\varphi\rangle_{\partial U} = \langle \FF\cdot\nu,\varphi\rangle_{\partial V}\qquad\text{for all}\;\varphi\in\sobo_{\rm c}^{1,\infty}(A). 
\end{align*}
Moreover, abbreviating $d:=\mathrm{dist}(\cdot,\partial U)$ and $U^{\varepsilon}:=\{x\in U\colon\;d(x)>\varepsilon\}$, the condition 
\begin{align}\label{eq:measregcrit}
\liminf_{\varepsilon\searrow 0} \frac{1}{\varepsilon}|\nabla d\cdot\FF|(U\setminus\overline{U^{\varepsilon}})<\infty
\end{align}
implies that  $\langle\FF\cdot\nu,\cdot\rangle_{\partial U}$ can be represented by a Radon measure on $\partial U$ and so can be localized in the above sense. 
\end{lem}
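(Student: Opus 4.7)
The lemma splits into two independent assertions (locality of the pairing, and the measure representation under \eqref{eq:measregcrit}), and I would handle them separately, with the first providing technical tools for the second.

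For the \textbf{locality part}, the plan is to return to the defining pairing \eqref{eq:normaltraceextdivmeas1} and show that both summands localize to $A$. The second summand is immediate: since $\varphi \in \sobo_{\rm c}^{1,\infty}(A)$ is compactly supported in $A$ and $A\cap U = A\cap V$, one has $\int_U\varphi\,\dif(\div \FF) = \int_{A\cap U}\varphi\,\dif(\div \FF) = \int_{A\cap V}\varphi\,\dif(\div \FF) = \int_V\varphi\,\dif(\div \FF)$. For the first summand I would argue that $\overline{\nabla\varphi\cdot \FF}$ is supported in $\overline{\supp\varphi}\subset A$: given $\psi\in\hold_{\rm c}(\Omega)$ with $\supp\psi\cap\overline{\supp\varphi}=\emptyset$, for sufficiently small $\delta>0$ the supports of $\psi$ and $\nabla(\rho_{\delta}*\varphi)$ are disjoint, so \eqref{eq:LipschitzPairing} gives $\int\psi\,\dif\overline{\nabla\varphi\cdot\FF}=0$. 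Combined with $A\cap U = A\cap V$, this yields $\int_U \dif\overline{\nabla\varphi\cdot\FF} = \int_V \dif\overline{\nabla\varphi\cdot\FF}$, completing the locality claim.

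For the \textbf{measure representation}, I would first use Lemma \ref{lem:LipschitzVanish} to show that the functional $\varphi\mapsto\langle\FF\cdot\nu,\varphi\rangle_{\partial U}$ depends only on $\varphi|_{\partial U}$: if $\varphi_{1}|_{\partial U}=\varphi_{2}|_{\partial U}$, then $\varphi_{1}-\varphi_{2}\in\mathrm{Lip}_{\rm c}(\Omega)$ vanishes on $\partial U$, forcing its normal trace to vanish. The heart of the proof is then an $\lebe^{\infty}$-bound in terms of $\|\varphi|_{\partial U}\|_{\lebe^{\infty}(\partial U)}$. Fix a smooth cut-off $\eta\colon[0,\infty)\to[0,1]$ with $\eta(0)=0$ and $\eta\equiv 1$ on $[1,\infty)$, and set $\chi_{\varepsilon}(x):=\eta(d(x)/\varepsilon)$. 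Since $\varphi\chi_{\varepsilon}\in\mathrm{Lip}_{\rm c}(\Omega)$ vanishes on $\partial U$, Lemma \ref{lem:LipschitzVanish} gives
\begin{equation*}
\langle\FF\cdot\nu,\varphi\rangle_{\partial U}
=\langle\FF\cdot\nu,\varphi(1-\chi_{\varepsilon})\rangle_{\partial U}.
\end{equation*}
Expanding via \eqref{eq:normaltraceextdivmeas1}, the dominant contribution is
\begin{equation*}
\int_{U}\varphi\,\dif\overline{\nabla\chi_{\varepsilon}\cdot\FF}
=\tfrac{1}{\varepsilon}\int_{U}\varphi\,\eta'(d/\varepsilon)\,\dif\overline{\nabla d\cdot\FF},
\end{equation*}
which is bounded by $\|\eta'\|_{\infty}\|\varphi\|_{\lebe^{\infty}(U\setminus U^{\varepsilon})}\cdot \tfrac{1}{\varepsilon}|\overline{\nabla d\cdot\FF}|(U\setminus\overline{U^{\varepsilon}})$. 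The remaining terms, carrying the factor $(1-\chi_{\varepsilon})$, vanish as $\varepsilon\searrow 0$ because $|\FF|,|\div\FF|$ are finite Radon measures and $U\setminus U^{\varepsilon}$ shrinks to $\partial U$. Passing to the $\liminf$-subsequence in \eqref{eq:measregcrit} yields a uniform bound $|\langle\FF\cdot\nu,\varphi\rangle_{\partial U}|\leq C\|\varphi\|_{\lebe^{\infty}(U\setminus U^{\varepsilon_{0}})}$. The last step is to upgrade this to $\|\varphi|_{\partial U}\|_{\lebe^{\infty}(\partial U)}$: using the freedom to alter $\varphi$ off $\partial U$ (granted by the already-established independence), one replaces $\varphi$ by a Lipschitz extension of its boundary data built via a tubular-neighborhood retraction, for which the bulk $\lebe^{\infty}$-norm is controlled by the boundary $\lebe^{\infty}$-norm. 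A Riesz representation argument then supplies the signed Radon measure on $\partial U$.

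The \textbf{main obstacle} is this final upgrade in the second part: one must simultaneously control the bulk oscillations of $\varphi$ near $\partial U$ and exploit the $\liminf$-subsequential character of \eqref{eq:measregcrit}. Concretely, the Lipschitz extension must be chosen so that its values on $\{d<\varepsilon_{0}\}$ do not exceed $\|\varphi|_{\partial U}\|_{\lebe^{\infty}(\partial U)}$, which forces a careful interplay between the retraction used to extend the boundary data and the selection of the subsequence $\varepsilon_{k}\searrow 0$ realizing the $\liminf$ in \eqref{eq:measregcrit}.
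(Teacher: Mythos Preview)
The paper does not supply its own proof of this lemma; it is quoted verbatim as \cite[Theorem 5.3]{ChenIrvingTorres}, so there is no in-paper argument to compare against. Your reconstruction is along the right lines and matches the standard strategy one would expect in the cited reference: locality reduces to the support property of $\overline{\nabla\varphi\cdot\FF}$, and the measure representation comes from an $\lebe^{\infty}$-bound obtained by inserting the cut-off $\chi_{\varepsilon}=\eta(d/\varepsilon)$ and invoking Lemma~\ref{lem:LipschitzVanish}.

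One technical point worth tightening: the identity
\[
\int_{U}\varphi\,\dif\overline{\nabla\chi_{\varepsilon}\cdot\FF}
=\frac{1}{\varepsilon}\int_{U}\varphi\,\eta'(d/\varepsilon)\,\dif\overline{\nabla d\cdot\FF}
\]
is a chain-rule statement for the pairing \eqref{eq:LipschitzPairing}, and since $d$ is merely Lipschitz (so $\nabla d$ may jump on a null set that could carry $|\FF|$-mass), this step deserves justification via the limiting definition \eqref{eq:LipschitzPairing} rather than being asserted pointwise. Your identified ``main obstacle'' is also real but slightly overstated: once you know the functional depends only on $\varphi|_{\partial U}$, you can test with the specific extension $\varphi(x)\cdot(1-\chi_{\varepsilon}(x))$ of the boundary data (which has $\|\cdot\|_{\lebe^{\infty}(U)}\le\|\varphi\|_{\lebe^{\infty}(\partial U)}+o(1)$ as $\varepsilon\searrow 0$ by continuity near $\partial U$), so no separate retraction construction is needed---the cut-off already does the job.
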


\begin{definition}[Intrinsic normal traces]\label{def:intrinsicnormaltrace}
Let $\Omega\subset\R^{n}$ be open, and let $U\Subset\Omega$ be an open subset. For an extended divergence measure field $\FF\in\mathscr{DM}^{\mathrm{ext}}(\Omega)$, define 
\begin{align}\label{eq:normaltraceextdivmeasintrinsic}
\mathcal{N}_{\FF\cdot\nu,U}(\varphi):= \langle \FF\cdot\nu,\overline{\varphi}\rangle_{\partial U}
\qquad\,\,\mbox{for any $\varphi\in\mathrm{Lip}(\partial U)$}, 
\end{align}
where $\overline{\varphi}\in\mathrm{Lip}_{\rm c}(\Omega)$ satisfies $\overline{\varphi}|_{\partial U}=\varphi$.
\end{definition}
The functional \eqref{eq:normaltraceextdivmeasintrinsic} is well-defined, meaning that it does not depend on the specific Lipschitz extension $\overline{\varphi}$. Moreover, since $U\Subset\Omega$ is open in the situation of the previous definition, we conclude in view of \eqref{eq:normaltraceextdivmeas} that we might specify to $\overline{\varphi}\in\mathrm{Lip}(U)$ with $\overline{\varphi}|_{\partial U}=\varphi$; in \eqref{eq:normaltraceextdivmeas}, only the values of $\overline{\varphi}$ inside $U$ matter. This observation implies:

\begin{rem}[Open sets $U$ with $\hold^{1}$-boundaries] If, in the situation of Definition \ref{def:intrinsicnormaltrace}, $U\Subset\Omega$ is open with $\hold^{1}$-boundary $\partial U$, then there is no need to invoke the extended pairing \eqref{eq:LipschitzPairing}. Namely, in this case, we may apply Lemma \ref{lem:GoodLip}\ref{item:Lipextend2}(i)--(ii) to extend $\varphi\in\mathrm{Lip}(\partial\Omega)$ to a function $\overline{\varphi}\in(\mathrm{Lip}\cap\hold_{b}^{1})(U)$; note that we do not claim $\overline{\varphi}\in\hold^{1}(\overline{U})$ here. Since the right-hand side of \eqref{eq:normaltraceextdivmeasintrinsic} is independent of the specific choice of the particular Lipschitz extension, we may directly work with $\overline{\varphi}$. For this choice, we see that 
$\nabla\overline{\varphi}\in\hold_{b}(U;\R^{n})$ and 
\begin{align}\label{eq:cameron}
\mathcal{N}_{\FF\cdot\nu,U}(\varphi)= - \int_{U}\nabla\overline{\varphi}\cdot\dif\FF - \int_{U}\overline{\varphi}\dif\,(\mathrm{div}\FF).
\end{align}
In particular, note that the first integral on the right-hand side of \eqref{eq:cameron} 
is well-defined as the integral of a bounded continuous function with respect 
to the finite Radon measure $\FF$. While this will be the generic scenario 
from \S \ref{sec:stokes} onwards, this simple description only works due 
to the openness and regularity assumption on $U$ and $\partial U$, respectively.
\end{rem}

\subsection{Level set estimates} 
In the main part, we require estimates on the $(n-1)$-dimensional Hausdorff measure of the level sets of distance functions. The following result is a special case of a more general assertion for asymmetric norms due to Caraballo \cite{Caraballo}; see also Kraft \cite{Kraft} for related estimates.
\begin{lem}[$\mathscr{H}^{n-1}$-measure of level sets, {\cite[Theorem 6]{Caraballo}}]\label{lem:distancelevelsets} For every $n\geq 2$, there exists a constant $c=c(n)>0$ with the following property:  Let $C\subset\R^{n}$ be non-empty and compact with $\mathscr{H}^{n-1}(C)<\infty$, and put $d(x):=\mathrm{dist}(x,C)$. Moreover, suppose that there exist $\delta,\theta>0$ such that 
\begin{align}\label{eq:measuredensityHn-1}
    \frac{\mathscr{H}^{n-1}(C\cap\ball_{\gamma}(x))}{\gamma^{n-1}}\geq\theta\qquad\,\text{for all}\;0<\gamma\leq\delta\;\text{and all}\;x\in C.
\end{align}
Then 
\begin{align}\label{eq:levelsetbd}
\mathscr{H}^{n-1}(d^{-1}(\{r\}))\leq \frac{c}{\theta}\,\sup\big\{1,\big(\frac{r}{\delta}\big)^{n-1}\big\}\,\mathscr{H}^{n-1}(C)\quad\mbox{for $\mathscr{L}^{1}$-a.e. $0<r<\infty$}.
\end{align}
\end{lem}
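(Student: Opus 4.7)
The strategy is to reduce the $\mathscr{H}^{n-1}$-bound on the level set to a bound on the volume of a thin Minkowski shell via the coarea formula, and then estimate the shell volume by a covering of $C$ whose cardinality is controlled by the lower density \eqref{eq:measuredensityHn-1}. Introduce $V(r):=\mathscr{L}^{n}(\{d\leq r\})$. Since $d$ is $1$-Lipschitz with $|\nabla d|=1$ $\mathscr{L}^{n}$-a.e.\ outside $C$, and $\mathscr{H}^{n-1}(C)<\infty$ forces $\mathscr{L}^{n}(C)=0$, the coarea formula yields $V(R)=\int_{0}^{R}\mathscr{H}^{n-1}(d^{-1}(\{s\}))\,\mathrm{d}s$ for every $R>0$. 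In particular $V$ is absolutely continuous, and $V'(r)=\mathscr{H}^{n-1}(d^{-1}(\{r\}))$ at every point of differentiability, i.e.\ for $\mathscr{L}^{1}$-a.e.\ $r>0$. It therefore suffices to show, uniformly in $r>0$ and for all sufficiently small $\eta>0$, that
\begin{equation*}
V(r+\eta)-V(r-\eta)\;\leq\;\frac{c(n)}{\theta}\,\sup\{1,(r/\delta)^{n-1}\}\,\mathscr{H}^{n-1}(C)\cdot 2\eta,
\end{equation*}
after which dividing by $2\eta$ and sending $\eta\searrow 0$ yields \eqref{eq:levelsetbd}.

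The left-hand side equals $\mathscr{L}^{n}(\{|d-r|<\eta\})$, the volume of a thin shell at distance $r$ from $C$. To estimate it, I would fix the scale $\rho:=\min\{\eta,\delta\}$ and choose a maximal $\rho/5$-separated family $\{x_{i}\}_{i\in I}\subset C$, so that $\{B_{\rho/5}(x_{i})\}_{i\in I}$ covers $C$ while $\{B_{\rho/10}(x_{i})\}_{i\in I}$ is pairwise disjoint. Applying \eqref{eq:measuredensityHn-1} at radius $\rho/10\leq\delta$ gives the counting bound
\begin{equation*}
\#I\cdot\theta(\rho/10)^{n-1}\;\leq\;\sum_{i}\mathscr{H}^{n-1}\!\bigl(C\cap B_{\rho/10}(x_{i})\bigr)\;\leq\;\mathscr{H}^{n-1}(C),\qquad\mbox{hence}\quad \#I\leq\frac{c_{1}(n)}{\theta}\,\rho^{-(n-1)}\,\mathscr{H}^{n-1}(C).
\end{equation*}
For each $y$ in the shell, choose a nearest point $\pi(y)\in C$; by the covering, $\pi(y)\in B_{\rho/5}(x_{i})$ for some $i$, and by the triangle inequality the slice $E_{i}:=\{y\in\{|d-r|<\eta\}\,\colon\,\pi(y)\in B_{\rho/5}(x_{i})\}$ is contained in the spherical annulus $A_{i}=B_{r+\eta+\rho/5}(x_{i})\setminus B_{r-\eta-\rho/5}(x_{i})$. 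This gives the partition $\{|d-r|<\eta\}=\bigsqcup_{i}E_{i}$.

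The main obstacle is the per-slice estimate of $\mathscr{L}^{n}(E_{i})$. The naive bound by $\mathscr{L}^{n}(A_{i})\sim(r+\rho)^{n-1}(\eta+\rho)$ is too coarse: it would make the right-hand side blow up as $\eta\searrow 0$. To extract the correct $\eta$-linear growth one has to exploit that $y-\pi(y)$ realizes the distance from $y$ to $C$, so $y-\pi(y)$ lies in the \emph{normal cone} to $C$ at $\pi(y)$, and this cone has controlled opening angle precisely because $\mathscr{H}^{n-1}\!\bigl(C\cap B_{\rho/5}(x_{i})\bigr)\geq\theta(\rho/5)^{n-1}$ forces a genuinely $(n-1)$-dimensional spread of $C$ inside $B_{\rho/5}(x_{i})$. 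Quantifying this -- which is the technical heart of \cite{Caraballo} and where the Ahlfors-regularity genuinely enters -- yields $\mathscr{L}^{n}(E_{i})\leq c_{2}(n)\rho^{n-1}(\eta+\rho)$ in the regime $\rho\leq r$, and $\mathscr{L}^{n}(E_{i})\leq c_{2}(n)(r+\rho)^{n-1}(\eta+\rho)$ in general. Combining this with the counting bound on $\#I$, choosing $\rho=\eta$ when $\eta\leq\delta$ and $\rho=\delta$ otherwise (with a two-scale refinement in the latter regime), and tracking the supremum between the two cases $r\lessgtr\delta$, delivers the displayed bound on $V(r+\eta)-V(r-\eta)$ and hence \eqref{eq:levelsetbd}. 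Everything outside the angular-sector estimate is a routine combination of coarea and Vitali arguments.
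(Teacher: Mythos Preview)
The paper does not give its own proof of this lemma: it is stated as a special case of \cite[Theorem~6]{Caraballo} and invoked without argument, so there is nothing in the paper to compare against.

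As for your sketch on its own terms, the coarea reduction and the Vitali-type counting bound are correct and standard. However, your proposal is by your own admission incomplete at the decisive point: you identify the per-slice estimate $\mathscr{L}^{n}(E_{i})\lesssim\rho^{n-1}\eta$ as the technical heart of the matter and then defer it to \cite{Caraballo} rather than prove it. The heuristic you give---that the lower density forces an $(n-1)$-dimensional spread of $C$ near $x_{i}$ and hence constrains the normal cone---is the right intuition, but turning it into a rigorous volume bound requires work (e.g.\ a quantitative estimate on how many directions $y-\pi(y)$ can occupy, or an integral-geometric argument). Without that step, what you have written is an outline of where the difficulty lies rather than a proof; the ``routine'' parts you do carry out are genuinely routine, and the one non-routine part is the one left open.
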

We note that, if $\Omega\subset\R^{n}$ is open and bounded with Lipschitz boundary, then \eqref{eq:measuredensityHn-1} is automatically fulfilled for $C=\partial\Omega$ and  
suitable choices of $\delta,\theta>0$.

\section{The Trace Theorem for $\cm^{p}$-Fields}\label{sec:tracetheorem}
In this section, we introduce (distributional) tangential traces of $\mathscr{CM}^{p}$-fields. 
In \S \ref{subsec:trace}, we are concerned with elementary properties and various examples in the full exponent regime $1\leq p\leq\infty$, which will be continued throughout the paper. 
In \S \ref{sec:Cminftytrace}, we are especially concerned with the case $p=\infty$, 
where stronger results are available 
and which will be
the basis for the results in \S \ref{sec:stokes}--\S\ref{sec:divmeasfieldsmanif}.



\subsection{The tangential trace functional}\label{subsec:trace} 
We directly give the definition of the tangential trace functional
by duality.

\begin{definition}[Tangential trace]\label{def:tangentialtrace}
Let $\Omega\subset\R^{3}$ be open,
let $\FF \in \cm^{p} (\Omega)$ for $1\leq p\leq \infty$,  
and let $U \Subset \Omega$ be open and bounded. 
Define the \emph{interior} and \emph{exterior} tangential traces 
of $\FF$ on $\partial U$ as $\R^{3}$-valued distributions $(\FF\times\nu)_{U}^{\mathrm{int}} \in\mathscr{D}'(\Omega;\R^{3})$ and 
$(\FF\times \nu)_{U}^{\mathrm{ext}} \in\mathscr{D}'(\Omega;\R^{3})$  
in $\Omega$ by
\begin{align}\label{eq:tangentialtrace}
  \begin{split}
&\langle (\FF \times \nu)_{U}^{\mathrm{int}}, \varphi \rangle_{\partial U} := \int_{U} \varphi \, \d ( \curl \FF) - \int_{U}   \FF\times\nabla \varphi  \,\dif x, \\ 
& \langle (\FF \times \nu)_{U}^{\mathrm{ext}}, \varphi \rangle_{\partial U} := - \int_{\Omega\setminus \overline{U}} \varphi \, \d ( \curl \FF) 
+ \int_{\Omega\setminus \overline{U}}   \FF\times \nabla \varphi\,\dif x 
\end{split}
  \end{align}
for any $\varphi\in\hold_{\rm c}^{\infty}(\Omega)$.
\end{definition} 

For a $\hold^{1}$-map $\FF$, the identities in \eqref{eq:tangentialtrace} 
hold per definition, provided that we interpret the pairings on the left-hand side of \eqref{eq:tangentialtrace} as integrals of $(\FF\times\nu_{\partial U})\varphi$ 
with respect to $\mathscr{H}^{2}$ on $\partial U$; 
see formula \eqref{eq:calculus2a} in Appendix \hyperref[sec:AppendixD]{D} 
for more detail. 
Next, based on the convention of Definition \ref{def:tangentialtrace}, we see 
that 
\begin{align}\label{eq:attention}
\langle(\FF\times\nu)_{U}^{\mathrm{ext}},\varphi\rangle_{\partial U} = - \langle(\FF\times\nu)_{\Omega\setminus \overline{U}}^{\mathrm{int}},\varphi\rangle_{\partial(\Omega\setminus\overline{U})}\qquad\text{for all}\;\varphi\in\hold_{\rm c}^{\infty}(\Omega).
 \end{align}
Now let $1\leq p \leq \infty$. In the situation of Definition \ref{def:tangentialtrace}, 
the estimate
\begin{align}\label{eq:pope}
|\langle (\FF\times \nu)_{U}^{\mathrm{int}},\varphi\rangle_{\partial U}| 
\leq \|\varphi\|_{\lebe^{\infty}(U)}|\curl \FF|(U) 
+ \|\nabla\varphi\|_{\lebe^{p'}(U)}\|\FF\|_{\lebe^{p}(U)}
\end{align}
for $\varphi\in\hold_{\rm c}^{\infty}(\Omega)$ 
entails that $\langle(\FF\times \nu)_{U}^{\mathrm{int}},\,\cdot\,\rangle_{\partial U}$  is a well-defined $\R^{3}$-valued distribution
\begin{align}\label{eq:components}
\langle(\FF\times\nu)_{U}^{\mathrm{int}},\,\cdot\,\rangle_{\partial U}=\langle((\FF\times\nu)_{U,i}^{\mathrm{int}})_{1\leq i\leq 3},\,\cdot\,\rangle_{\partial U} \in \mathrm{Lip}_{\rm c}(\Omega;\R^{3})'.
\end{align}
In a routine manner, we may therefore introduce for $\bphi=(\varphi_{1},\varphi_{2},\varphi_{3})\in\hold_{\rm c}^{\infty}(\Omega;\R^{3})$
\begin{align}\label{eq:vectorialtrace}
\begin{split}
\langle (\FF\times \nu)_{U}^{\mathrm{int}}, \bphi \rangle_{\partial U}  
& := \sum_{1\leq i \leq 3} \langle ( \FF\times \nu)_{U,i}^{\mathrm{int}},\varphi_{i}\rangle_{\partial U}\\ 
& = \int_{U}\bphi\cdot\dif\,(\curl \FF) -\int_{U}\FF\cdot\curl \bphi\dif x, 
\end{split} 
\end{align} 
and the equivalence of \eqref{eq:tangentialtrace} and \eqref{eq:vectorialtrace} 
is directly seen by use of \eqref{eq:components}; 
see also formula \eqref{eq:calculus7} in Appendix \hyperref[sec:AppendixD]{D}. 
Since we consistently denote vector fields with bold letters, there will be no confusion about the meaning of $\langle(\FF\times\nu)_{U}^{\mathrm{int}},\varphi\rangle_{\partial U}$ and  $\langle(\FF\times\nu)_{U}^{\mathrm{int}},\bphi\rangle_{\partial U}$. Lastly, \eqref{eq:pope}--\eqref{eq:vectorialtrace} hold true for $\langle(\FF\times\nu)_{U}^{\mathrm{ext}},\,\cdot\,\rangle_{\partial U}$ with the obvious modifications. 
We now have
\begin{theorem}[Distributional Tangential Traces]\label{thm:tangtrace}
In the situation of {\rm Definition \ref{def:tangentialtrace}}, 
let $U\Subset\Omega$ be open and bounded. Then the following hold:
\begin{enumerate}
    \item\label{item:tantrace1} $ \hold_{\rm c}^{\infty}(\Omega)\ni \varphi\mapsto \langle(\FF\times\nu)_{U}^{\mathrm{int}},\varphi\rangle_{\partial U}$ is an \emph{$\R^{3}$-valued distribution supported on $\partial U$ of order at most one}.
    \item\label{item:tantrace2} $\hold_{\rm c}^{\infty}(\Omega;\R^{3})\ni\bphi \mapsto \langle (\FF\times\nu)_{U}^{\mathrm{int}},\bphi\rangle_{\partial\Omega'}$ is a distribution supported on $\partial U$ of order at most one.

\end{enumerate}
With the obvious modifications, the analogous assertions hold true for $\langle(\FF\times\nu)_{U}^{\mathrm{ext}},\cdot\rangle_{\partial U}$. 
\end{theorem}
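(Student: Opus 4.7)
The plan is to deduce all four assertions---order bound and support for both scalar and vector test functions, and for both interior and exterior traces---directly from the defining formula \eqref{eq:tangentialtrace}, the a priori estimate \eqref{eq:pope}, and the distributional definition of $\curl\FF$. I would treat the interior scalar case first and reduce the remaining assertions to it.

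For the order bound in part \ref{item:tantrace1}, I would fix a compact set $K\Subset\Omega$ and test with $\varphi\in\hold_{\rm c}^{\infty}(\Omega)$ supported in $K$. The bound \eqref{eq:pope}, combined with the elementary inequalities $\|\varphi\|_{\lebe^{\infty}(U)}\leq\|\varphi\|_{\lebe^{\infty}(K)}$ and $\|\nabla\varphi\|_{\lebe^{p'}(U)}\leq\mathscr{L}^{3}(U)^{1/p'}\|\nabla\varphi\|_{\lebe^{\infty}(K)}$ (with the convention $1/p'=0$ when $p=1$), yields
\begin{align*}
|\langle(\FF\times\nu)_{U}^{\mathrm{int}},\varphi\rangle_{\partial U}|\leq C_{U,\FF}\big(\|\varphi\|_{\lebe^{\infty}(K)}+\|\nabla\varphi\|_{\lebe^{\infty}(K)}\big)
\end{align*}
for a constant $C_{U,\FF}$ depending on $|\curl\FF|(U)$, $\mathscr{L}^{3}(U)$, and $\|\FF\|_{\lebe^{p}(U)}$ but independent of $\varphi$ and $K$. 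Applied to each of the three components from \eqref{eq:components}, this is precisely the defining estimate of an $\R^{3}$-valued distribution of order at most one.

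For the support claim, I would verify that the pairing vanishes on test functions whose support lies outside $\partial U$, distinguishing two cases. If $\supp\varphi\subset\Omega\setminus\overline{U}$, both integrals in \eqref{eq:tangentialtrace} vanish trivially because $\varphi\equiv 0$ on $U$. If instead $\supp\varphi\Subset U$, I would invoke the distributional identity for $\curl\FF$ applied to the vector test field $\varphi\mathbf{e}$ for an arbitrary constant $\mathbf{e}\in\R^{3}$. Using $\curl(\varphi\mathbf{e})=\nabla\varphi\times\mathbf{e}$ together with the scalar triple product identity $\FF\cdot(\nabla\varphi\times\mathbf{e})=\mathbf{e}\cdot(\FF\times\nabla\varphi)$, the distributional definition of $\curl\FF$ gives
\begin{align*}
\mathbf{e}\cdot\int_{U}\varphi\,\d(\curl\FF)=\int_{U}\FF\cdot\curl(\varphi\mathbf{e})\,\d x=\mathbf{e}\cdot\int_{U}\FF\times\nabla\varphi\,\d x.
\end{align*}
Since $\mathbf{e}$ is arbitrary, the two vectorial integrals in \eqref{eq:tangentialtrace} agree, so the pairing vanishes. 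Combined with the first case, this yields $\supp\langle(\FF\times\nu)_{U}^{\mathrm{int}},\cdot\rangle_{\partial U}\subset\partial U$.

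The vector-valued claim \ref{item:tantrace2} then follows componentwise by summing the scalar estimates and support conclusions over $i=1,2,3$ via \eqref{eq:components}--\eqref{eq:vectorialtrace}. The exterior statements reduce to the interior ones by means of identity \eqref{eq:attention}; since test functions in $\hold_{\rm c}^{\infty}(\Omega)$ automatically have compact support away from $\partial\Omega$, the integrals over the (bounded) open set $\Omega\setminus\overline{U}$ reduce to integrals over compact subsets of $\Omega$, and both the order estimate and the support analysis carry over verbatim with $U$ replaced by $\Omega\setminus\overline{U}$. I do not anticipate any serious obstacle; the only mild subtlety is the cancellation step in the support argument when $\supp\varphi\Subset U$, which hinges on the correct use of vector identities and the distributional definition of $\curl\FF$, but is a short direct calculation once the test field $\varphi\mathbf{e}$ is introduced.
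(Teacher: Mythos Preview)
Your proof is correct and in fact takes a more direct route than the paper's for the support assertion. The paper chooses a smoothing approach: given $\varphi$ supported away from $\partial U$, it selects an intermediate set $W'$ with smooth boundary and $|\curl\FF|(\partial(U\cap W'))=0$, then approximates $\FF$ by smooth $\FF_{j}$ via Lemma~\ref{lem:smoothapprox}, applies the classical Gauss--Green formula~\eqref{eq:calculus2} to each $\FF_{j}$, and passes to the limit. You bypass all of this by testing the distributional definition of $\curl\FF$ against $\varphi\mathbf{e}$ for constant $\mathbf{e}$, which immediately yields the cancellation $\int_{U}\varphi\,\d(\curl\FF)=\int_{U}\FF\times\nabla\varphi\,\d x$ whenever $\supp\varphi\Subset U$. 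This is shorter and avoids the approximation lemma entirely; the paper's version, on the other hand, illustrates the smoothing machinery that recurs throughout the later sections. One small point you leave implicit: a test function $\varphi$ with $\supp\varphi\cap\partial U=\emptyset$ may meet both $U$ and $\Omega\setminus\overline{U}$, so your two cases must be combined via a partition of unity subordinate to the open cover $\{U,\Omega\setminus\overline{U}\}$ of $\supp\varphi$; this is routine and does not affect the argument.
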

For the proof of Theorem \ref{thm:tangtrace} and subsequent results, we require a smooth approximation result. It can be established along the same lines as \cite[\S 5.2.2, Thm. 2]{EvansGariepy}, or directly retrieved from \cite[Thm. 2.8]{BreitDieningGmeineder} by setting $\mathbb{A}=\curl$.
\begin{lem}\label{lem:smoothapprox}
Let $\Omega\subset\R^{3}$ be open and bounded, and let $\FF\in\mathscr{CM}^{p}(\Omega)$ for 
$1\leq p\leq \infty$. 
Then there exist sequences $({\FF}_{j})\subset\hold_{\rm c}^{\infty}(\Omega;\R^{3})$ and 
$(\widetilde{\FF}_{j})\subset\mathscr{CM}^{p}(\Omega)\cap\hold^{\infty}(\Omega;\R^{3})$ such that the following hold: 
\begin{enumerate}
    \item\label{item:SMAP1} If $1\leq p<\infty$, then ${\FF}_{j}\to\FF$ strongly in $\lebe^{p}(\Omega;\R^{3})$ and $(\curl{\FF}_{j})\mathscr{L}^{3}\stackrel{*}{\rightharpoonup}\curl\FF$ in $\mathrm{RM}_{\mathrm{fin}}(\Omega;\R^{3})$. Moreover,  $\widetilde{\FF}_{j}\to \FF$  $\mathscr{CM}^{p}$\emph{-strictly}, meaning that 
    \begin{align*}
    d_{s}^{p}(\widetilde{\FF}_{j},\FF):=\|\widetilde{\FF}_{j}-\FF\|_{\lebe^{p}(\Omega)}
    +\big||\curl \widetilde{\FF}_{j}|(\Omega)-|\curl \FF|(\Omega)\big|\stackrel{j\to\infty}
    {\longrightarrow} 0.
    \end{align*}
    \item\label{item:SMAP2} If $p=\infty$, then ${\FF}_{j}\stackrel{*}{\rightharpoonup}\FF$  \emph{in the weak*-sense} on $\mathscr{CM}^{\infty}(\Omega)$, meaning that 
    \begin{align*}
    {\FF}_{j}\stackrel{*}{\rightharpoonup}\FF\;\,\,
    \text{in $\,\lebe^{\infty}(\Omega;\R^{3})$},
    \qquad (\curl {\FF}_{j})\mathscr{L}^{3}\stackrel{*}{\rightharpoonup}\curl \FF \;\,\,
    \text{in $\,\mathrm{RM}_{\mathrm{fin}}(\Omega;\R^{3})$}.
    \end{align*}
    Moreover, ${\FF}_{j}\to \FF$ strongly in $\lebe_{\locc}^{q}(\Omega;\R^{3})$ for any $1\leq q<\infty$.
\end{enumerate}
In both cases, $\FF_{j}(x),\widetilde{\FF}_{j}(x)\to\FF^{*}(x)$ for every Lebesgue point $x$ of $\FF$. 
\end{lem}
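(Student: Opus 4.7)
The plan is to adapt the Meyers--Serrin / Anzellotti--Giaquinta style approximation scheme to $\mathscr{CM}^{p}$-fields. I would first build the (not necessarily compactly supported) sequence $(\widetilde{\FF}_{j})$ via a partition-of-unity mollification, and then obtain $(\FF_{j})$ by truncation.

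\textbf{Step 1 (Partition-of-unity mollification).} Fix an exhaustion $\Omega_{k}\Subset\Omega_{k+1}\Subset\Omega$ with $\bigcup_{k}\Omega_{k}=\Omega$, set $U_{1}:=\Omega_{3}$ and $U_{k}:=\Omega_{k+2}\setminus\overline{\Omega_{k-1}}$ for $k\geq 2$, and pick a partition of unity $(\eta_{k})\subset\hold_{\rm c}^{\infty}(U_{k};[0,1])$ subordinate to $\{U_{k}\}$; the key observation is that $\sum_{k}\nabla\eta_{k}\equiv 0$ in $\Omega$. For each $j\in\mathbb{N}$ choose $\delta_{k,j}>0$ small enough that $\supp(\rho_{\delta_{k,j}}*(\eta_{k}\FF))$ remains in a fixed neighborhood $V_{k}\Subset\Omega$ of $\overline{U_{k}}$, and define $\widetilde{\FF}_{j}:=\sum_{k}\rho_{\delta_{k,j}}*(\eta_{k}\FF)$. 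This is locally a finite sum, hence of class $\hold^{\infty}(\Omega;\R^{3})$.

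\textbf{Step 2 (Strict convergence of the curls).} Using $\curl(\eta\FF)=\eta\,\curl\FF+\nabla\eta\times\FF$ (valid for $\FF\in\mathscr{CM}^{p}$, since $\nabla\eta\times\FF\in\lebe^{p}$ and $\eta\,\curl\FF$ is a measure), one computes
\begin{align*}
\curl\widetilde{\FF}_{j}=\sum_{k}\rho_{\delta_{k,j}}*(\eta_{k}\,\curl\FF)+\sum_{k}\bigl[\rho_{\delta_{k,j}}*(\nabla\eta_{k}\times\FF)-\nabla\eta_{k}\times\FF\bigr],
\end{align*}
where the second sum is obtained from the first by subtracting $\sum_{k}\nabla\eta_{k}\times\FF\equiv 0$. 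Shrinking $\delta_{k,j}$ further, the commutator term is made $o(1)$ in $\lebe^{1}(\Omega;\R^{3})$ by standard mollifier convergence on each $\nabla\eta_{k}\times\FF\in\lebe^{p}_{\locc}$. Combined with $\|\rho_{\delta_{k,j}}*(\eta_{k}\FF)-\eta_{k}\FF\|_{\lebe^{p}}\leq 2^{-k}/j$, this gives $\widetilde{\FF}_{j}\to\FF$ strongly in $\lebe^{p}$ (for $p<\infty$; weak-$*$ in $\lebe^{\infty}$ for $p=\infty$). The bound $|\sum_{k}\rho_{\delta_{k,j}}*(\eta_{k}\curl\FF)|(\Omega)\leq\sum_{k}|\curl\FF|(V_{k})$, which has uniformly bounded overlap of the $V_{k}$'s and thus can be made arbitrarily close to $|\curl\FF|(\Omega)$, gives the upper bound; weak-$*$ lower semicontinuity of total variation supplies the reverse, yielding strict convergence $|\curl\widetilde{\FF}_{j}|(\Omega)\to|\curl\FF|(\Omega)$.

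\textbf{Step 3 (Compactly supported approximants and Lebesgue points).} Pick $(\chi_{j})\subset\hold_{\rm c}^{\infty}(\Omega;[0,1])$ with $\chi_{j}\equiv 1$ on $\Omega_{j}$, and set $\FF_{j}:=\chi_{j}\widetilde{\FF}_{j}\in\hold_{\rm c}^{\infty}(\Omega;\R^{3})$. For $p<\infty$, $\FF_{j}\to\FF$ strongly in $\lebe^{p}$ by dominated convergence. From $\curl\FF_{j}=\chi_{j}\curl\widetilde{\FF}_{j}+\nabla\chi_{j}\times\widetilde{\FF}_{j}$, the first summand converges weakly-$*$ in $\mathrm{RM}_{\mathrm{fin}}(\Omega;\R^{3})$ to $\curl\FF$, while the second summand is supported in $\Omega\setminus\Omega_{j}$ and tests against $\hold_{0}(\Omega)$-functions to zero (use $\lebe^{p}$-boundedness of $\widetilde{\FF}_{j}$ together with the fact that $\nabla\chi_{j}\mathbf{1}_{\Omega\setminus\Omega_{j}}$ is concentrated near $\partial\Omega$; in the $p=\infty$ case, the fact that compactly supported smooth functions cannot $\lebe^{\infty}$-approximate $\FF$ forces weak-$*$ convergence, but local strong $\lebe^{q}$-convergence for $q<\infty$ follows from $\lebe^{\infty}$-boundedness and \emph{a.e.}~convergence). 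Finally, at any Lebesgue point $x\in\mathcal{L}_{\FF}$, the standard mollification estimate gives $\rho_{\delta_{k,j}}*(\eta_{k}\FF)(x)\to\eta_{k}(x)\FF^{*}(x)$ for each $k$, whence $\widetilde{\FF}_{j}(x)\to\FF^{*}(x)$, and likewise $\FF_{j}(x)\to\FF^{*}(x)$ once $j$ is large enough that $\chi_{j}(x)=1$.

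The main technical obstacle is the careful simultaneous tuning of the mollification radii $\delta_{k,j}$ to achieve three things at once: smallness of the $\lebe^{p}$-error, cancellation in the commutator sum coming from $\sum_{k}\nabla\eta_{k}=0$, and the near-equality $|\curl\widetilde{\FF}_{j}|(\Omega)\approx|\curl\FF|(\Omega)$ needed for strict convergence; the overlap control of the $V_{k}$'s is what prevents the error from accumulating over infinitely many $k$.
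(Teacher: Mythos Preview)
Your approach is correct and is precisely the one the paper indicates: the paper does not give a detailed proof but simply refers to the Anzellotti--Giaquinta/Meyers--Serrin scheme in \cite[\S 5.2.2, Thm.~2]{EvansGariepy} (adapted to $\mathbb{A}=\curl$, as in \cite[Thm.~2.8]{BreitDieningGmeineder}), which is exactly the partition-of-unity mollification you carry out.

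One small correction in Step~2: the upper bound you state for strict convergence is slightly off. Bounding by $\sum_{k}|\curl\FF|(V_{k})$ and invoking bounded overlap only yields $\leq N|\curl\FF|(\Omega)$ for some overlap constant $N$, not something close to $|\curl\FF|(\Omega)$. The standard (and sharper) estimate uses $0\leq\eta_{k}\leq 1$ directly: since $|\rho_{\delta}*\mu|(\Omega)\leq|\mu|(\Omega)$ for compactly supported $\mu$ and small $\delta$, one has $|\rho_{\delta_{k,j}}*(\eta_{k}\,\curl\FF)|(\Omega)\leq\int_{\Omega}\eta_{k}\,\dif|\curl\FF|$, and summing over $k$ gives exactly $|\curl\FF|(\Omega)$. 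Combined with the commutator $o(1)$-term and weak-$*$ lower semicontinuity, this yields strict convergence as claimed.
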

\begin{proof}[Proof of Theorem \ref{thm:tangtrace}] We divide the proof into two steps correspondingly:

\smallskip
For \ref{item:tantrace1}, let $W \Subset \Omega$ be open with $W \cap \partial U=\emptyset$, and let $\varphi\in\mathscr{D}(\Omega)$ be such that $\spt(\varphi)\subset W$. Since $\mathrm{dist}(\spt(\varphi),\partial W)>0$, there exists an open set $W'$ with smooth boundary 
such that $\spt(\varphi)\Subset W'\Subset\ W$,
$|\curl \FF|(\partial(U\cap W'))=0$, and $\partial (U\cap W')$ is smooth.

By Lemma \ref{lem:smoothapprox}, we may choose a sequence $(\FF_{j})\subset\mathscr{D}(\Omega;\R^{3})$ such that $\FF_{j}\to\FF$ strongly in $\lebe_{\locc}^{1}(\Omega;\R^{3})$ and $(\curl \FF_{j})\mathscr{L}^{3}\stackrel{*}{\rightharpoonup}\curl \FF$ locally in $\mathrm{RM}(\Omega;\R^{3})$. By the properties of  $W'$, we see that 
$(\curl \FF_{j})\mathscr{L}^{3}\mres(U\cap W')\stackrel{*}{\rightharpoonup}\curl\FF\mres(U\cap W')$ 
in $\mathrm{RM}_{\mathrm{fin}}(U\cap W')$. 
Since
\begin{align}\label{eq:govan}
\int_{U}\curl(\varphi\GG)\dif x = \int_{U\cap W'}\curl(\varphi\GG)\dif x = \int_{\partial(U\cap W')}\varphi\GG\times\nu_{\partial(U\cap W')}\dif\mathscr{H}^{2}= 0
\end{align}
for all $\GG\in\hold^{1}(\Omega;\R^{3})$
(see \eqref{eq:calculus2} in Appendix \hyperref[sec:AppendixD]{D}), 
it follows that 
\begin{align*}
0  & \stackrel{\eqref{eq:govan}}{=} \lim_{j\to\infty}\int_{U}\curl(\varphi\FF_{j})\dif x\\
&\,\, = \lim_{j\to\infty}\Big(\int_{U\cap W'}\varphi\,\curl \FF_{j}\dif x - \int_{U\cap W'}\FF_j \times \nabla\varphi \dif x\Big) \\ 
& \;\, = \int_{U\cap W'}\varphi \dif\,(\curl\FF) -\int_{U\cap W'}\FF \times \nabla\varphi\dif x = \langle (\FF \times \nu)_{U}^{\mathrm{int}}, \varphi \rangle_{\partial U}. 
\end{align*}
By arbitrariness of the open set $W\Subset\Omega$ with $W\cap\partial U=\emptyset$ and $\varphi\in\mathscr{D}(\Omega)$ with $\spt(\varphi)\subset W$, this yields the assertion on the support. Since the statement on the order of $\langle(\FF\times\nu)_{U}^{\mathrm{int}},\cdot\rangle_{\partial U}$ being at most one is a direct consequence of \eqref{eq:pope}, this implies \ref{item:tantrace1}. 
\smallskip
For \ref{item:tantrace2}, this assertion directly follows from \ref{item:tantrace1} as in the smooth case; see also \eqref{eq:calculus7} in Appendix \hyperref[sec:AppendixD]{D}. 
This completes the proof.
\end{proof}

\begin{rem}\label{rem:ondefs}
In view of the methods employed for the closely related divergence-measure fields (see \cite{ChenComiTorres,ChenFrid2003,ChenTorres2005,Silhavy2005}), 
one can also establish product rule-type theorems for $\varphi \FF$, 
\emph{e.g.} where $\FF\in\mathscr{CM}^{\infty}(\Omega)$ and $\varphi\in (\bv\cap\lebe^{\infty})(\Omega)$. 
To keep our exposition at a reasonable length and since we do not need this in the sequel, 
we will defer such results to future work. 
Moreover, since we are mostly concerned with subsets $U\subset\Omega$ with Lipschitz, 
$\hold^{1}$- or $\hold^{2}$-boundary, this also applies to potentially more irregular sets $U$.
\end{rem}

We next illustrate the preceding theorem with several examples. 
Specifically, Examples \ref{ex:limitationsp}--\ref{ex:1leqpleqinfty} 
deal with the cases for $1\leq p<\infty$, 
and display different phenomena in various exponent regimes. As a key point, if $1\leq p <\infty$, the tangential traces from Definition \ref{def:tangentialtrace} can merely be distributions 
that cannot be represented by $\lebe_{\locc}^{1}$-fields. This differs from $p=\infty$, which is the subject of  Example \ref{ex:tangentialjumps} and the following Subsection \ref{sec:Cminftytrace}.

\begin{example}[Exponents $1\leq p<\frac{3}{2}$]\label{ex:limitationsp}
Let $\Omega=\ball_{2}(0)$ and $U:=\ball_{1}^{+}(0):=\{(x',x_{3})\in\ball_{1}(0)\colon\;\;x_{3}>0\}$, where we abbreviate $x'=(x_{1},x_{2})$ as usual. We consider the vector field
\begin{align}\label{eq:FSLapl}
\FF(x)=-\frac{1}{4\pi}\frac{x}{|x|^{3}}\qquad\,\,\mbox{for $x\in\Omega$}, 
\end{align}
so that $\FF\in\lebe^{p}(\Omega;\R^{3})$ for any $1\leq p<\frac{3}{2}$.
Since $\FF=\nabla\Phi$ with the fundamental solution $\Phi$ of the negative Laplacian $(-\Delta)$, it follows that $\curl \FF=0$ in $\Omega$. Hence, $\FF \in\mathscr{CM}^{p}(\Omega)$ for all $1\leq p<\frac{3}{2}$. We now claim that, for all $\varphi\in\mathscr{D}(\Omega)$, there holds 
\begin{align}\label{eq:maxlaim}
\begin{split}
 \langle (\FF\times\nu)_{U}^{\mathrm{int}},\varphi\rangle_{\partial U} & = - \frac{1}{4\pi}\mathrm{P.V.}\int_{\ball_{1}^{(2)}(0)\times\{0\}}\frac{(\mathbf{Q}x',0)}{|x'|^{3}}\varphi(x',0)\dif\mathscr{H}^{2}(x'), \\ 
 & := - \lim_{\varepsilon\searrow 0}  \frac{1}{4\pi}\int_{(\ball_{1}^{(2)}(0)\setminus\ball_{\varepsilon}^{(2)}(0))\times\{0\}}\frac{(\mathbf{Q}x',0)}{|x'|^{3}}\varphi(x',0)\dif\mathscr{H}^{2}(x'),
 \end{split}
\end{align}
where $\mathbf{Q}x':=\mathbf{Q}(x_{1},x_{2})=(x_{2},-x_{1})$. 
To see \eqref{eq:maxlaim}, let $\varphi\in\mathscr{D}(\Omega)$ be arbitrary. 
For $0<\varepsilon<\frac{1}{2}$, because of $\curl\FF=0$, we have
\begin{align*}
\langle(\FF\times\nu)_{U}^{\mathrm{int}},\varphi\rangle_{\partial U} & =  -\frac{1}{4\pi} \int_{U\setminus\ball_{\varepsilon}(0)}\nabla\varphi(x)\times \frac{x}{|x|^{3}}\dif x - \underbrace{\frac{1}{4\pi} \int_{U\cap\ball_{\varepsilon}(0)}\nabla\varphi(x)\times \frac{x}{|x|^{3}}\dif x}_{\to0\;\text{as}\;\varepsilon\searrow 0} \\ 
& \!\!\stackrel{\varepsilon\searrow 0}{\longrightarrow} -\frac{1}{4\pi}\lim_{\varepsilon\searrow 0}\int_{(\ball_{1}^{(2)}(0)\times\{0\})\setminus\ball_{\varepsilon}(0)}\varphi(x',0)\frac{(\mathbf{Q}x',0)}{|x'|^{3}}\dif\mathscr{H}^{2}(x'), 
\end{align*}
where the last line is a consequence of the integration-by-parts formula for the curl
(see formula \eqref{eq:calculus2a} in Appendix \hyperref[sec:AppendixD]{D}), and 
\begin{align*}
\int_{\partial U\cap\{x_{3}>0\}}\varphi(x)\frac{x\times x}{|x|^{4}}\dif\mathscr{H}^{2}(x)=\int_{\partial\!\ball_{\varepsilon}(0)\cap U}\varphi(x)\frac{x\times x}{|x|^{4}}\dif\mathscr{H}^{2}(x) = 0.
\end{align*}
Thus, \eqref{eq:maxlaim} follows. 
Moreover, $x'\mapsto \mathbf{Q}x'/|x'|^{3}$ does not belong 
to $\lebe_{\locc}^{1}(\partial U\cap\{x_{3}=0\};\R^{3})$ and so cannot represent 
the density of a Radon measure which is absolutely continuous 
with respect to $\mathscr{H}^{2}$ on $\partial U$. Note that 
\begin{align*}
|\FF(x)\times \nu(x)| = \Big\vert \frac{1}{4\pi}\frac{(-x_{2},x_{1},0)}{|x|^{3}}\Big\vert = \frac{1}{4\pi}\frac{1}{|\tilde{x}|^{2}},\qquad \tilde{x}:=(x_{1},x_{2})\neq (0,0),   
\end{align*}
and the singularity is non-integrable close to zero in two dimensions. In particular, we deduce that $\langle(\FF\times\nu)_{U}^{\mathrm{int}},\cdot\rangle_{\partial U}$ does not give rise to a distribution of order zero. 
\end{example}
\begin{example}[Exponents $1\leq p <2$]\label{ex:singCMP}
Define a vector field $\FF\colon\R^{3}\setminus\{0\}\to\R^{3}$ by 
\begin{align}\label{eq:curldef}
\FF(x_{1},x_{2},x_{3}):=\frac{1}{2\pi}\Big(-\frac{x_{2}}{x_{1}^{2}+x_{2}^{2}},\frac{x_{1}}{x_{1}^{2}+x_{2}^{2}},x_{3} \Big),\qquad x=(x_{1},x_{2},x_{3})\neq 0.
\end{align}
We claim that 
\begin{align}\label{eq:distclaim}
\curl\FF=e_{3}\mathscr{H}^{1}\mres \R e_{3}\in\mathrm{RM}(\R^{3};\R^{3})\qquad\text{in}\;\mathscr{D}'(\R^{3};\R^{3}). 
\end{align}
To see \eqref{eq:distclaim}, let $\bphi=(\varphi_{1},\varphi_{2},\varphi_{3})\in\mathscr{D}'(\R^{3};\R^{3})$. We record that 
\begin{align*}
\int_{\R^{3}}\FF\cdot\curl\bphi\dif x & = \frac{1}{2\pi}\int_{-\infty}^{\infty} \int_{\R^{2}} \Big(\frac{-x_{2}}{x_{1}^{2}+x_{2}^{2}} \Big)\big(\partial_{2}\varphi_{3}(x)-\partial_{3}\varphi_{2}(x)\big) \dif\,(x_{1},x_{2})\dif x_{3} \\ 
& + \frac{1}{2\pi}\int_{-\infty}^{\infty} \int_{\R^{2}} \Big(\frac{x_{1}}{x_{1}^{2}+x_{2}^{2}} \Big)\big(\partial_{3}\varphi_{1}(x)-\partial_{1}\varphi_{3}(x)\big) \dif\,(x_{1},x_{2})\dif x_{3} \\ 
& = -\frac{1}{2\pi}\int_{-\infty}^{\infty} \int_{\R^{2}} \Big(\frac{x_{2}}{x_{1}^{2}+x_{2}^{2}} \partial_{2}\varphi_{3}(x) + \frac{x_{1}}{x_{1}^{2}+x_{2}^{2}}\partial_{1}\varphi_{3}(x)\Big)
\dif\,(x_{1},x_{2})\dif x_{3} \\ 
& =: -\frac{1}{2\pi}\int_{-\infty}^{\infty}\mathrm{I}(x_{3})\dif x_{3}, 
\end{align*}
where we used that several integrals vanish due to an integration by parts. We abbreviate $\ball'_{\varepsilon}:=\ball_{\varepsilon}^{(2)}(0)$ and compute for $\varepsilon>0$ and any  $x_{3}\in\R$:
\begin{align*}
\mathrm{I}(x_{3}) & = \Big(\int_{\ball'_{\varepsilon}}  + \int_{\R^{2}\setminus\ball'_{\varepsilon}}\Big) \frac{x_{1}}{x_{1}^{2}+x_{2}^{2}}\partial_{1}\varphi_{3}(x) + \frac{x_{2}}{x_{1}^{2}+x_{2}^{2}} \partial_{2}\varphi_{3}(x)  \dif\,(x_{1},x_{2}) =: \mathrm{II}_{\varepsilon}^{(1)} + \mathrm{II}_{\varepsilon}^{(2)}.
\end{align*}
Writing $x'=(x_{1},x_{2})$, we then have 
\begin{align}\label{eq:distbound1}
|\mathrm{II}_{\varepsilon}^{(1)}| & \leq c\|\nabla\varphi\|_{\lebe^{\infty}(\R^{3})}\int_{\ball'_{\varepsilon}}\frac{\dif  x'}{|x'|} \leq c\|\nabla\varphi\|_{\lebe^{\infty}(\R^{3})}\,\varepsilon \to 0\qquad \mbox{as $\varepsilon\searrow 0$}. 
\end{align}
Integrating by parts yields 
\begin{align}\label{eq:distbound2}
\begin{split}
\mathrm{II}_{\varepsilon}^{(2)} & = -\int_{\partial\!\ball'_{\varepsilon}}\frac{\varphi_{3}(x',x)}{|x'|}\dif\mathscr{H}^{1}(x') = -2\pi \dashint_{\partial\!\ball'_{\varepsilon}}\varphi_{3}(x',x_{3})\dif x' \to -2\pi \varphi_{3}(0,x_{3})
\end{split}
\end{align}
as $\varepsilon\searrow 0$. By dominated convergence, 
a combination of \eqref{eq:distbound1}--\eqref{eq:distbound2} yields
\begin{align*}
\int_{\R^{3}}(\curl\FF)\cdot\bphi\dif x \stackrel{{\hyperref[eq:calculus4]{(\mathrm{D.3})}}}{=} \int_{\R^{3}}\FF\cdot\curl\bphi\dif x = \int_{-\infty}^{\infty}\varphi_{3}(0,x_{3})\dif x_{3} =  \langle  e_{3}\mathscr{H}^{1}\mres \R e_{3},\bphi\rangle, 
\end{align*}
and this completes the proof of \eqref{eq:distclaim}. 
Considering $\FF$ given by \eqref{eq:curldef} on $\Omega :=\ball_{2}^{(2)}(0)\times (-2,2)$ 
and setting $\Omega':=\ball_{1}^{(2)}(0)\times (0,1)$, we obtain that,
for any $\bphi\in\hold_{\rm c}^{1}(\Omega;\R^{3})$, 
\begin{align*}
&\langle(\FF\times\nu)_{\Omega'}^{\mathrm{int}},\bphi\rangle_{\partial\Omega'}  = \frac{1}{2\pi}\int_{\partial\Omega'}\mathbf{G}(x)\cdot\bphi(x)\dif\mathscr{H}^{2}(x),
\end{align*}
where
$$
\mathbf{G}(x)= \mathbbm{1}_{\ball_{1}^{(2)}(0)\times\{0\}}(x)\frac{(x',0)}{|(x',0)|^{2}} 
-\mathbbm{1}_{\ball_{1}^{(2)}(0)\times\{1\}}(x)\frac{(x',0)}{|(x',0)|^{2}} + \mathbf{H}(x), 
$$
and $\mathbf{H}\colon \partial\!\ball_{1}^{(2)}(0)\times(0,1)\to\R^{3}$ is smooth. 
Hence, the tangential trace can be represented by an $\lebe^{q}(\partial\Omega';\R^{3})$-map 
for $1\leq q<2$. 
If we let $\Omega'=\{(x_{1},x_{2},x_{3})\colon\;(x_{1},x_{3})\in\ball_{1}^{(2)}(0),\;0<x_{2}<1\}$ instead, an argument as in Example \ref{ex:limitationsp} shows that the distributional tangential trace cannot be represented by an  $\lebe_{\locc}^{1}$-field. Yet, $\FF\in\cm^{p}(\Omega)$ for all $1\leq p<2$.
\end{example}

\begin{example}[$1<p<\infty$]\label{ex:1leqpleqinfty}
Let $\Omega\subset\R^{3}$ be open and bounded, and let $\Omega'\Subset\Omega$ be open and bounded with Lipschitz boundary. The space $\sobo^{\mathrm{curl},p}(\Omega')$ as defined in $\eqref{eq:sobocurl}_{2}$ admits a bounded, linear, and surjective tangential trace operator 
$\mathrm{tr}_{\tau}^{p}\colon \sobo^{\curl,p}(\Omega')\to\mathcal{X}_{\partial\Omega'}^{p}$; 
see also Appendix \hyperref[sec:AppendixC]{C}. 
For $\FF\in\sobo^{\curl,p}(\Omega)$, we then have 
\begin{align*}
\langle (\FF\times\nu)_{\Omega'}^{\mathrm{int}},\bphi\rangle_{\partial\Omega'} = \langle \mathrm{tr}_{\tau}^{p}(\FF|_{\Omega'}),\bphi\rangle\qquad\,\mbox{for any $\bphi\in\hold_{\rm c}^{1}(\Omega;\R^{3})$}. 
\end{align*}
Since $\mathcal{X}_{\partial\Omega'}^{p}\not\subset\lebe_{\locc}^{1}(\partial\Omega';\R^{3})$, 
the results from Examples \ref{ex:limitationsp}--\ref{ex:singCMP} cannot be improved 
to  tangential traces in $\lebe_{\locc}^{1}$ even if $2<p<\infty$.
\end{example}

\begin{example}[$p=\infty$]\label{ex:tangentialjumps}
 
Assume that vector 
fields $\mathbf{u},\mathbf{v}\in(\bv\cap\lebe^{\infty})({\ball_{2}(0)};\R^{3})$.
For $x=(x_{1},x_{2},x_{3})\in\ball_{2}(0)$, consider 
\begin{align*}
\FF(x):=\begin{cases} 
\mathbf{u}(x) &\;\text{if}\;x\in\ball_{2}^{+}(0):=\{x\in\ball_{2}(0)\colon\;x_{3}>0\}, \\ 
\mathbf{v}(x) &\;\text{if}\;x\in\ball_{2}^{-}(0):=\{x\in\ball_{2}(0)\colon\;x_{3}<0\}.
\end{cases} 
\end{align*}
Denote by $\mathrm{tr}^{+},\mathrm{tr}^{-}$ the classical $\bv$-trace operators 
on $\partial\!\ball_{2}^{+}(0)$ or $\partial\!\ball_{2}^{-}(0)$. 
Based on formula \eqref{eq:calculus2a} in Appendix \hyperref[sec:AppendixD]{D}, we have 
\begin{align}\label{eq:BVcurlrepresenter}
\begin{split}
\curl\FF  = &\, \curl\mathbf{u}\mres\ball_{2}^{+}(0) + \curl\mathbf{v}\mres\ball_{2}^{-}(0) \\
& +  (\mathrm{tr}^{-}(v_{2})-\mathrm{tr}^{+}(u_{2}),\mathrm{tr}^{+}(u_{1})-\mathrm{tr}^{-}(v_{1}),0)\mathscr{H}^{2}\mres (\ball_{2}^{(2)}(0)\times\{0\})
\end{split}
\end{align}
{as an identity in $\mathscr{D}'(\ball_{2}(0);\R^{3})$}. 
Let $\Omega':=\ball_{1}^{+}(0)$. Definition \ref{def:tangentialtrace} implies that $\langle(\FF\times\nu)_{\Omega'}^{\mathrm{int}},\cdot\rangle_{\partial\Omega'}$ can be represented by the $\lebe^{1}$-function $\mathrm{tr}^{+}(\mathbf{u})\times\nu_{\partial\!\ball_{2}^{+}(0)}$. On the other hand, with $\widetilde{\mathrm{tr}}\colon \bv(\ball_{2}^{+}(0)\setminus\overline{\ball}{_{1}^{+}}(0);\R^{3})\to\lebe^{1}(\partial({\ball}{_{2}^{+}}(0)\setminus\overline{\ball}{_{1}^{+}}(0));\R^{3})$ denoting the usual boundary trace operator on $\bv$, $\langle(\FF\times\nu)_{\Omega'}^{\mathrm{ext}},\cdot\rangle_{\partial\Omega'}$ can be represented as 
\begin{align*}
(\mathrm{tr}^{-}(\mathbf{v})\times e_{3})|_{\partial\!\ball_{1}^{+}(0)\cap\{x_{3}=0\}} -\left.   \widetilde{\mathrm{tr}}(\mathbf{u})\times\frac{x}{|x|}\right\vert_{(\partial\!\ball_{1}^{+}(0))\cap\{x_{3}>0\}}\qquad\text{along}\;\partial\!\ball_{2}^{+}(0).
\end{align*}
These results persist for $\mathbf{u},\mathbf{v}\in\bv(\ball_{2}(0);\R^{3})$, but this is only due to the fact that the \emph{full}  traces of $\bv$-maps belong to $\lebe^{1}$ along the respective boundaries in this case.
\end{example}

\subsection{The trace theorem for $\mathscr{CM}^{\infty}$-fields}\label{sec:Cminftytrace}
Throughout this subsection, let $\Omega\subset\R^{3}$ be open and bounded, and let $\Omega'\Subset\Omega$ be open with Lipschitz boundary. In \S 3.1, 
we have seen that the distributional tangential traces cannot, in general, be represented by measures supported on $\partial\Omega'$ provided that $1\leq p <\infty$; see Examples \ref{ex:limitationsp}--\ref{ex:1leqpleqinfty}. We now establish that, if $p=\infty$, such a singular behaviour of the tangential traces cannot occur. In particular, the tangential traces of $\mathscr{CM}^{\infty}$-fields along $\partial\Omega'$ can always be represented by $\lebe^{\infty}(\partial\Omega';T_{\partial\Omega'})$-fields: 

\begin{theorem}[Tangential Traces for $\mathscr{CM}^{\infty}$-Fields]\label{thm:tracemain1}
Let $\Omega\subset\R^{3}$ be open and bounded, and let $\Omega'\Subset\Omega$ be open with Lipschitz boundary. 
If $\FF \in \cm^{\infty}(\Omega)$,   
then the following hold: 
\begin{enumerate}
\item\label{item:trace1} \emph{Representation as $\lebe^{\infty}$-functions:} The interior and exterior tangential traces from {\rm Definition \ref{def:tangentialtrace}} can be represented as $\R^{3}$-valued $\lebe^{\infty}$-functions on $\partial\Omega'$. That is, there exist $ (\FF\times\nu_{\partial\Omega'})_{\partial\Omega'}^{\interior},(\FF\times\nu_{\partial\Omega'})_{\partial\Omega'}^{\exterior} \in \lebe^{\infty} (\partial\Omega'; \mathbb{R}^3)$ such that
\begin{equation}\label{eq:trace1}
\begin{split}
&\int_{\partial\Omega'} \varphi (\FF\times \nu_{\partial\Omega'})_{\partial\Omega'}^{\interior} \dif\mathscr{H}^{2 }  = \langle (\FF\times\nu)_{\Omega'}^{\mathrm{int}},\,\varphi\rangle_{\partial\Omega'},\\
&\int_{\partial\Omega'} \varphi (\FF\times \nu_{\partial\Omega'})_{\partial\Omega'}^{\exterior} \dif\mathscr{H}^{2 } = \langle (\FF\times\nu)_{\Omega'}^{\mathrm{ext}},\,\varphi\rangle_{\partial\Omega'}
\end{split}
\end{equation}
hold for every $\varphi \in \hold_{\rm c}^{1}(\Omega)$.
\item\label{item:trace2} \emph{Tangential character:} For $\mathscr{H}^{2}$-a.e. $x\in \partial\Omega'$, 
then
\begin{align*}
(\FF\times\nu_{\partial\Omega'})_{\partial\Omega'}^{\mathrm{int}}(x)\in T_{\partial\Omega'}(x)\;\;\;\text{and}\;\;\;(\FF\times\nu_{\partial\Omega'})_{\partial\Omega'}^{\mathrm{ext}}(x)\in T_{\partial\Omega'}(x). 
\end{align*}
\item\label{item:trace3}\emph{Vectorial version of \eqref{eq:trace1}:} If $\bphi\in\hold_{\rm c}^{1}(\Omega;\R^{3})$, then
\begin{align}\label{eq:scaltovec}
\begin{split}
&\int_{\partial\Omega'}(\FF\times\nu_{\partial\Omega'})_{\partial\Omega'}^{\mathrm{int}}\cdot\bphi\dif\mathscr{H}^{2} = \int_{\Omega'}\bphi\cdot\dif\,(\curl\FF) - \int_{\Omega'}(\curl \bphi)\cdot\FF\dif x  \\ 
& \int_{\partial\Omega'}(\FF\times\nu_{\partial\Omega'})_{\partial\Omega'}^{\mathrm{ext}}\cdot\bphi\dif\mathscr{H}^{2} = -\int_{\Omega\setminus\overline{\Omega'}}\bphi\cdot\dif\,(\curl\FF) + \int_{\Omega\setminus\overline{\Omega'}} (\curl\bphi)\cdot\FF\dif x.
\end{split}
\end{align}
\end{enumerate}
\end{theorem}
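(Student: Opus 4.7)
The plan is to use a \emph{shrinking-surface method} combined with weak-$*$ compactness in $\lebe^{\infty}$. Invoking the collar $\Phi_{\partial\Omega'}$ from Lemma~\ref{lem:collar1} and the refined construction in Remark~\ref{rem:aeintro}, I set $\Sigma_{t}:=\Phi_{\partial\Omega'}(\{t\}\times\partial\Omega')$ and $\Lambda_{t}:=\Phi_{\partial\Omega'}(t,\cdot)$. For $0<t<1$, $\Sigma_{t}\subset\Omega'$ is a Lipschitz manifold bounding a shrunk subdomain $\Omega'_{t}$. By Lemma~\ref{lem:Lebesgue}, for $\mathscr{L}^{1}$-a.e.\ $t$, $\mathscr{H}^{2}$-a.e.\ point of $\Sigma_{t}$ is a Lebesgue point of $\FF$; since the $\Sigma_{t}$ are pairwise disjoint and $|\curl\FF|$ is finite, one also has $|\curl\FF|(\Sigma_{t})=0$ for $\mathscr{L}^{1}$-a.e.\ $t$. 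Restricting to such ``good'' $t$, I define
\[
G_{t}:=(\FF^{*}\times\nu_{\Sigma_{t}})\circ\Lambda_{t}\in\lebe^{\infty}(\partial\Omega';\R^{3}), \qquad \|G_{t}\|_{\lebe^{\infty}}\leq\|\FF\|_{\lebe^{\infty}(\Omega)}.
\]

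The first and main technical step is to identify $G_{t}$ with the $\lebe^{\infty}$-density of the distributional interior tangential trace on $\Sigma_{t}$:
\[
\int_{\Sigma_{t}}\varphi\,(\FF^{*}\times\nu_{\Sigma_{t}})\dif\mathscr{H}^{2}=\int_{\Omega'_{t}}\varphi\dif(\curl\FF)-\int_{\Omega'_{t}}\FF\times\nabla\varphi\dx, \quad \varphi\in\hold^{1}(\overline{\Omega}).
\]
I would obtain this by approximating $\FF$ with mollifications $\FF_{j}$ as in Lemma~\ref{lem:smoothapprox}\ref{item:SMAP2}, specifically chosen so that $\curl\FF_{j}$ coincides with $\rho_{1/j}*(\curl\FF)$ on a neighborhood of $\overline{\Omega'}$ for $j$ large, and passing to the limit in the smooth Stokes identity. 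The $\FF_{j}\times\nabla\varphi$-term converges by strong $\lebe^{q}_{\rm loc}$-convergence; the $\varphi\curl\FF_{j}$-term converges against $\varphi\chi_{\Omega'_{t}}$ via a Fubini-type identity combined with dominated convergence in $|\curl\FF|$, exploiting $|\curl\FF|(\partial\Omega'_{t})=0$; the surface integral converges by dominated convergence using $\FF_{j}\to\FF^{*}$ at Lebesgue points together with the uniform $\lebe^{\infty}$-bound. This step is the main obstacle, as it constitutes a $\curl$-analogue of Volpert's $\bv$-trace theorem and hinges on carefully tracking null sets of $|\curl\FF|$ and the Lebesgue set of $\FF$ within the foliation $\{\Sigma_{t}\}$.

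With the identification in hand, I let $t\searrow 0$. Banach--Alaoglu yields a subsequence $t_{k}\searrow 0$ and $g\in\lebe^{\infty}(\partial\Omega';\R^{3})$ with $G_{t_{k}}\overset{*}{\rightharpoonup}g$. On the right-hand side of the identity, $\Omega'_{t}\nearrow\Omega'$, $|\curl\FF|(\Omega)<\infty$, and $\FF\in\lebe^{\infty}(\Omega;\R^{3})$ give convergence to $\langle(\FF\times\nu)^{\mathrm{int}}_{\Omega'},\varphi\rangle_{\partial\Omega'}$ by dominated convergence. On the left-hand side, the change of variables via $\Lambda_{t}$ yields $\int_{\partial\Omega'}(\varphi\circ\Lambda_{t})G_{t}J_{t}\dif\mathscr{H}^{2}$ with surface Jacobian $J_{t}$; Remark~\ref{rem:aeintro}\ref{item:collario3}--\ref{item:collario4} implies $(\varphi\circ\Lambda_{t})J_{t}\to\varphi$ in $\lebe^{1}(\partial\Omega')$, so weak-$*$ convergence of $G_{t_{k}}$ produces $\int_{\partial\Omega'}\varphi g\dif\mathscr{H}^{2}$ in the limit. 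Uniqueness of the limit identifies $g$ as the desired $\lebe^{\infty}$-representative and shows the entire family $(G_{t})$ converges weak-$*$, proving (a).

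For the tangential character (b), $G_{t}\cdot(\nu_{\Sigma_{t}}\circ\Lambda_{t})=0$ pointwise by the algebra of the cross product. Using the uniform convergence $\nu_{\Sigma_{t}}\circ\Lambda_{t}\to\nu_{\partial\Omega'}$ from Remark~\ref{rem:aeintro}\ref{item:collario5} together with weak-$*$ convergence of $G_{t}$, pairing against arbitrary $\psi\in\lebe^{1}(\partial\Omega')$ yields $g\cdot\nu_{\partial\Omega'}=0$ $\mathscr{H}^{2}$-a.e.\ on $\partial\Omega'$. The vectorial formula (c) follows immediately from (a) applied componentwise in conjunction with the scalar--vectorial equivalence \eqref{eq:vectorialtrace}. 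The exterior trace is treated identically using $\Sigma_{t}$ for $-1<t<0$.
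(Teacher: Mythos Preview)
Your proof is correct and shares the paper's scaffolding---collar foliation $\Sigma_t=\Phi_{\partial\Omega'}(\{t\}\times\partial\Omega')$, smooth approximation via Lemma~\ref{lem:smoothapprox}, identification of $\FF^{*}\times\nu_{\Sigma_t}$ with the distributional trace on the shrunk boundary, and a compactness passage as $t\searrow 0$---but it diverges from the paper in two places, both in your favour.

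For \ref{item:trace1}, the paper takes weak-$*$ limits in $\mathrm{RM}_{\mathrm{fin}}(\partial\Omega';\R^{3})$ of the push-forward measures $\bmu_t$, then separately proves $\bmu\ll\mathscr{H}^{2}\mres\partial\Omega'$ via a Vitali-type covering argument, applies Radon--Nikod\'ym, and finally bounds the density pointwise. You instead exploit the uniform bound $\|G_t\|_{\lebe^{\infty}}\leq\|\FF\|_{\lebe^{\infty}}$ to take weak-$*$ limits directly in $\lebe^{\infty}(\partial\Omega';\R^{3})$, bypassing the absolute-continuity step entirely. This is cleaner; the only additional ingredient you need is $J_t\to 1$ in $\lebe^{1}(\partial\Omega')$, which follows from $\Lambda_t(x)=x-th(x)$ with $h\in\hold^{1}$: the tangential $2$-Jacobian of $(I-tDh)|_{T_{x}\partial\Omega'}$ tends to $1$ uniformly in $x$ (where the tangent space exists), so in fact $J_t\to 1$ in $\lebe^{\infty}$.

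For \ref{item:trace2}, the difference is more pronounced. The paper computes $\frac{\dif\bmu}{\dif\mathscr{H}^{2}}(x_0)\cdot\nu_{\partial\Omega'}(x_0)$ pointwise at Lebesgue points via local graph coordinates, the cross-product estimate $|(a,b,-1)\times(a',b',-1)|\leq c\sqrt{1+a^{2}+b^{2}}\,(|a-a'|+|b-b'|)$, and a splitting into two error terms handled by the $\lebe^{q}$-convergence of $\nabla\psi_{Z,t}$ from Remark~\ref{rem:aeintro}\ref{item:collario3}. Your route---the algebraic identity $G_t\cdot(\nu_{\Sigma_t}\circ\Lambda_t)=0$ plus the uniform normal convergence of Remark~\ref{rem:aeintro}\ref{item:collario5} paired against weak-$*$ convergence of $G_{t_k}$---is a three-line argument and entirely avoids local coordinates. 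What the paper's longer computation buys is a \emph{pointwise} verification at each Lebesgue point of the density, which is not needed for the statement but may be conceptually reassuring; your integrated argument gives the $\mathscr{H}^{2}$-a.e.\ conclusion directly.
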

We wish to point out that, compared with analogous results for divergence measure fields (\emph{e.g.}  \cite{ChenFrid1999}), the previous theorem departs at \ref{item:trace2}. This is, in fact, a crucial property which we will frequently use in \S \ref{sec:stokes}--\S \ref{sec:divmeasfieldsmanif}, and which does not have an analogue in the theory of $\mathscr{DM}^{\infty}$-fields; for the latter, the normal traces are scalar-valued throughout. 
    
\begin{proof}[Proof of Theorem \ref{thm:tracemain1}]  We focus our proof on the case of interior traces, 
since it is analogous to the case of exterior traces. 
    
For \ref{item:trace1}, our reasoning is inspired by that in
Chen-Frid \cite{ChenFrid1999}.
Since we require the detailed proof for \ref{item:trace2} and \ref{item:trace3}, we carry out the full argument. 
Let $\Phi_{\partial\Omega'}\colon [0,1)\times\partial\Omega'\to\mathcal{O}\Subset\Omega$ be the collar map from Lemma \ref{lem:collar1}, where we may assume that it satisfies the properties listed in Remark \ref{rem:aeintro}. We put 
$$
(\Omega')^{\Phi,t}:=\Omega'\setminus\Phi_{\partial\Omega'}([0,t]\times\partial\Omega')
\qquad\mbox{for $0<t<1$}. 
$$
Given $\FF\in\mathscr{CM}^{\infty}(\Omega)$, we choose a sequence $(\FF_{j})\subset\mathscr{D}(\Omega;\R^{3})$ as in Lemma \ref{lem:smoothapprox}\ref{item:SMAP2}. 
By formula \eqref{eq:calculus2a} in Appendix \hyperref[sec:AppendixD]{D},  
\begin{align}\label{eq:classicalGGcurl}
\int_{\partial(\Omega')^{\Phi,t}}\varphi (\FF_{j}\times\nu_{\partial(\Omega')^{\Phi,t}})\dif\mathscr{H}^{2} =  \int_{(\Omega')^{\Phi,t}}\varphi\,\curl\FF_{j}\dif x - \int_{(\Omega')^{\Phi,t}}\FF_{j}\times\nabla\varphi\dif x
\end{align}
holds for all $j\in\mathbb{N}$, all $0<t<1$, and any $\varphi\in\hold_{\rm c}^{\infty}(\Omega)$. 
By Lemma \ref{lem:Lebesgue} in conjunction with Lemma \ref{lem:smoothapprox}, there exists a set $I_{1}\subset(0,1)$ with $\mathscr{L}^{1}((0,1)\setminus I_{1})=0$ such that, for every $t\in I_{1}$, $\FF_{j}\to\FF^{*}$ pointwise  $\mathscr{H}^{2}$-a.e. on $\partial(\Omega')^{\Phi,t}$ together with $\|\FF^{*}\|_{\lebe^{\infty}(\partial(\Omega')^{\Phi,t})}\leq \|\FF\|_{\lebe^{\infty}(\Omega)}$ for all $t\in I_{1}$. This yields the convergence of the first term in \eqref{eq:classicalGGcurl}. On the other hand, there exists a set $I_{2}\subset (0,1)$ with $\mathscr{L}^{1}((0,1)\setminus I_{2})=0$ such that $|\curl\FF|(\partial(\Omega')^{\Phi,t})=0$ for all $t\in I_{2}$. Hence, for each $t\in I_{2}$, the second term in \eqref{eq:classicalGGcurl} converges to the integral of $\varphi$ over $(\Omega')^{\Phi,t}$ with respect to the $\R^{3}$-valued measure $\curl\FF$. Since $\FF_{j}\to\FF$ strongly in $\lebe_{\locc}^{1}(\Omega;\R^{3})$, we may send $j\to\infty$ in \eqref{eq:classicalGGcurl} to 
obtain 
\begin{align}\label{eq:classicalGGcurl1}
\int_{\partial(\Omega')^{\Phi,t}}\varphi (\FF^{*}\times\nu_{\partial(\Omega')^{\Phi,t}})\dif\mathscr{H}^{2} =  \int_{(\Omega')^{\Phi,t}}\varphi\,\dif\,(\curl\FF) -  \int_{(\Omega')^{\Phi,t}}\FF\times \nabla\varphi\dif x
\end{align}
whenever $t\in I:=I_{1}\cap I_{2}$. Since $\curl\FF$ is a finite Radon measure 
and $\mathbbm{1}_{(\Omega')^{\Phi,t}}\varphi\to \mathbbm{1}_{\Omega'}\varphi$ pointwise everywhere in $\Omega'$ as $t\searrow 0$, Lebesgue's theorem on dominated convergence applied to the single components yields that the right-hand side of \eqref{eq:classicalGGcurl1} converges to the right-hand side of  $\eqref{eq:trace1}_{1}$  as $t\searrow0$ with $t\in I$. Now, for $t\in I$, we define  the measure $\bmu_{t}$ on $\mathscr{B}(\partial\Omega')$ via push-forward by  
\begin{align}\label{eq:whale}
\bmu_{t}:= {\Phi_{\partial\Omega'}(t,\cdot)^{-1}}_{\#} \big((\FF^{*}\times\nu_{\partial(\Omega')^{\Phi,t}})\mathscr{H}^{2}\mres \partial(\Omega')^{\Phi,t}\big). 
\end{align}
By Lemma \ref{lem:collar1} and because of $\FF\in\lebe^{\infty}(\Omega;\R^{3})$, we see that 
$\sup_{0<t<\frac{1}{2}}|\bmu_{t}|(\partial\Omega')<\infty$. 
By the Banach-Alaoglu-Bourbaki theorem, for any sequence $(t_{i})\subset I$ with $t_{i}\searrow 0$, there exist both a subsequence $\mathfrak{t}=(t_{i_{k}})\subset(t_{i})$ 
and $\bmu^{\mathfrak{t}}\in\mathrm{RM}_{\mathrm{fin}}(\partial\Omega';\R^{3})$ 
such that $\bmu_{t_{i_{k}}}\stackrel{*}{\rightharpoonup}\bmu^{\mathfrak{t}}$ 
in $\mathrm{RM}_{\mathrm{fin}}(\partial\Omega';\R^{3})$ as $k\to\infty$. 
By the convergences established after \eqref{eq:classicalGGcurl1}, we have
\begin{align}\label{eq:zipper}
&\langle(\FF\times\nu)_{\Omega'}^{\mathrm{int}},\varphi\rangle_{\partial\Omega'}\\
& = \lim_{k\to\infty} \int_{\partial(\Omega')^{\Phi,t_{i_{k}}}}(\varphi-\varphi\circ\Phi_{\partial\Omega'}(t_{i_{k}},\cdot)^{-1})(\FF^{*}\times\nu_{\partial(\Omega')^{\Phi,t_{i_{k}}}})\dif \mathscr{H}^{2} 
+ \lim_{k\to\infty} \int_{\partial\Omega'}\varphi\dif\bmu_{t_{i_{k}}}\notag\\
&=: \mathrm{I}_{1} + \mathrm{I}_{2} \stackrel{(*)}{=}  \mathrm{I}_{2} = \int_{\partial\Omega'}\varphi\dif\bmu^{\mathfrak{t}}. \notag
\end{align}
For $(*)$, note that $\mathrm{I}_{1}=0$, which can be seen by 
$|\FF^{*}\times\nu_{\partial(\Omega')^{\Phi,t}}(x)|\leq \|\FF\|_{\lebe^{\infty}(\Omega)}$ 
for $\mathscr{H}^{2}$-a.e. $x\in\partial(\Omega')^{\Phi,t}$, 
\eqref{eq:Phiconvergence}, and Lebesgue's theorem on dominated convergence. 
Since the restrictions of $\hold_{\rm c}^{\infty}(\Omega)$-functions to $\partial\Omega'$ are dense in $(\hold(\partial\Omega'),\|\cdot\|_{\hold(\partial\Omega')})$, \eqref{eq:zipper} implies that $\bmu:=\bmu^{\mathfrak{t}}$ is independent of $\mathfrak{t}$ and  $\bmu_{t}\stackrel{*}{\rightharpoonup}\bmu$ in $\mathrm{RM}(\partial\Omega';\R^{3})$ as $t\searrow0$ with $t\in I$.

Now let $A\subset\partial\Omega'$ be compact with $\mathscr{H}^{2}(A)=0$. For $\varepsilon>0$, we  find a finite covering $(\ball_{i})_{i=1}^{i_{0}}$ by non-degenerate open balls  such that $A\subset \bigcup_{i=1}^{i_{0}}\ball_{i}$ and $\sum_{i=1}^{i_{0}}r(\ball_{i})^{2}<\varepsilon$. 
Let $\varphi\in\hold(\partial\Omega';[-1,1])$. 
Recalling that $|\FF^{*}|\leq \|\FF\|_{\lebe^{\infty}(\Omega)}$ 
on $\partial(\Omega')^{\Phi,t}$ for $t\in I$, we have
\begin{align*}
\left\vert\int_{A}\varphi\dif\bmu\right\vert
&\stackrel{\eqref{eq:zipper}}{\leq}\lim_{\substack{t\searrow 0\\ t \in I}}
\int_{\Phi_{\partial\Omega'}(t,\partial\Omega'\cap\bigcup_{i=1}^{i_{0}}\ball_{i})}
|\FF^{*}\times\nu_{\partial(\Omega')^{\Phi,t}}|\dif\mathscr{H}^{2} \\ 
&\,\, \leq \|\FF^{*}\|_{\lebe^{\infty}(\Omega)}\sup_{0<t<\frac{1}{2}}
\mathscr{H}^{2}
  (\Phi_{\partial\Omega'}(t,\partial\Omega'\cap\bigcup_{i=1}^{i_{0}}\ball_{i}))\\
&\,\,\leq c\|\FF^{*}\|_{\lebe^{\infty}(\Omega)}
\big(\sup_{0<t<\frac{1}{2}}\mathrm{Lip}(\Phi_{\partial\Omega'}(t,\cdot))\big)\varepsilon.
\end{align*}
We then pass to the supremum over all such $\varphi$. As a consequence of Lemma \ref{lem:collar1}, the Lipschitz constants of $\Phi_{\partial\Omega'}(t,\cdot)$ are bounded in $0<t<\frac{1}{2}$. Writing $\bmu=(\mu_{1},\mu_{2},\mu_{3})$, we may send $\varepsilon\searrow 0$ in the resulting inequality to see that $|\mu_{i}|(A)=0$ for $i\in\{1,2,3\}$, and so $|\bmu|(A)=0$. By the Radon property of $\bmu$, it follows that $\bmu$ is absolutely continuous for $\mathscr{H}^{2}\mres\partial\Omega'$: $\bmu\ll\mathscr{H}^{2}\mres\partial\Omega'$. Because $\partial\Omega'$ is $\sigma$-finite with respect to $\mathscr{H}^{2}$, the Radon-Nikod\'{y}m theorem implies the existence of a density 
\begin{align*}
(\FF\times\nu_{\partial{\Omega'}})_{\partial\Omega'}^{\mathrm{int}}:=\frac{\dif\bmu}{\dif\mathscr{H}^{2}},
\end{align*}
whereby 
$\bmu = (\FF\times\nu_{\partial\Omega'})_{\partial\Omega'}^{\mathrm{int}}\mathscr{H}^{2}\mres\partial\Omega'$.
We finally prove that $(\FF\times \nu_{\partial\Omega'})_{\partial\Omega'}^{\mathrm{int}}\in\lebe^{\infty}(\partial\Omega';\R^{3})$. To this end, let $x_{0}\in\partial\Omega'$ be a $\mathscr{H}^{2}$-Lebesgue point of $(\FF\times\nu_{\partial\Omega'})_{\partial\Omega'}^{\mathrm{int}}$. We then obtain that, with $a_{x_{0},r}:=\mathscr{H}^{2}(\ball_{r}(x_{0})\cap\partial\Omega')$,
\begin{align*}
\left\vert ((\FF\times\nu_{\partial\Omega'})_{\partial\Omega'}^{\mathrm{int}})^{*}(x_{0})\right\vert & = \lim_{r\searrow 0} \left\vert \dashint_{\ball_{r}(x_{0})\cap\partial\Omega'}(\FF\times\nu_{\partial\Omega'})_{\partial\Omega'}^{\mathrm{int}}\dif\mathscr{H}^{2}\right\vert \\ & \!\!\!\!\stackrel{\eqref{eq:zipper}}{\leq} \lim_{r\searrow 0}\lim_{\substack{t\searrow 0 \\ t \in I}} \frac{1}{a_{x_{0},r}} \int_{\Phi_{\partial\Omega'}(t,\ball_{r}(x_{0})\cap\partial\Omega')}|\FF^{*}\times\nu_{\partial(\Omega')^{\Phi,t}}|\dif\mathscr{H}^{2} \\ 
& \leq \|\FF^{*}\|_{\lebe^{\infty}(\Omega)}\lim_{r\searrow 0}\lim_{\substack{t\searrow 0 \\ t \in I}} \frac{\mathscr{H}^{2}(\Phi_{\partial\Omega'}(t,\ball_{r}(x_{0})\cap\partial\Omega'))}{a_{x_{0},r}}. 
\end{align*}
Since the Lipschitz constants of $\Phi_{\partial\Omega'}(t,\cdot)$ are bounded in $0<t<\frac{1}{2}$,  \ref{item:trace1} follows.

\smallskip
For \ref{item:trace2}, we choose a covering of $\partial\Omega'$ by the coordinate cylinders $Z$ and associated coordinate maps $\psi_{Z}$ as in Remark \ref{rem:aeintro}\ref{item:collario3}; we may assume $Z$ to be open. Up to a rotation and translation, we may assume that $Z\cap \partial\Omega'$ can be written as $\mathrm{graph}(\psi_{Z})\cap Z$, where $\psi_{Z}\colon U\to \R$ is a Lipschitz function and $U\subset\R^{2}$ is open. By Remark \ref{rem:aeintro}\ref{item:collario3}, we may write $Z\cap\partial(\Omega')^{\Phi,t}=Z\cap\mathrm{graph}(\psi_{Z,t})$ for all $0<t<t'\ll 1$, where $\psi_{Z,t}\colon U\to\R$ is a Lipschitz function such that $\nabla\psi_{Z,t}\to\nabla\psi_{Z}$ $\mathscr{L}^{2}$-a.e. in $U$ and strongly in $\lebe^{q}(U;\R^{2})$ for every $1\leq q<\infty$ as $t\searrow 0$. Now let $x'_{0}\in U$ be a differentiability point of $\psi_{Z}$ and Lebesgue point of $\nabla\psi_{Z}$; by the Rademacher and the Lebesgue differentiation theorems, $\mathscr{L}^{2}$-a.e. $x'_{0}\in U$ will do. We consider $x_{0}:=(x'_{0},\psi_{Z}(x'_{0}))$. If $r>0$ is sufficiently small, then $\ball_{r}(x_{0})\cap\partial\Omega'\subset Z$. By Remark \ref{rem:aeintro}\ref{item:collario4} and diminishing $t'$ if necessary, we may assume that  $\Phi_{\partial\Omega'}(\{t\}\times(\ball_{r}(x_{0})\cap\partial\Omega'))\subset Z$ for all $0<t<t'$. 

With $I$ as in the proof of \ref{item:trace1}, let $(t_{k})\subset I\cap(0,t')$ be such that $t_{k}\searrow 0$. We then see that, 
with $a_{x_{0},r}:=\mathscr{H}^{2}(\ball_{r}(x_{0})\cap\partial\Omega')$ and $
U_{k}^{r}:=\{x'\in U\colon\;(x',\psi_{Z,t_{k}}(x'))\in \Phi_{\partial\Omega'}(\{t_{k}\}\times(\ball_{r}(x_{0})\cap\partial\Omega'))\}$,
\begin{align*}
&\left\vert \frac{\dif\bmu}{\dif\mathscr{H}^{2}}(x_{0})\cdot\nu_{\partial\Omega'}(x_{0})\right\vert\\
&\,\,\, = \lim_{r\searrow 0}\left\vert\dashint_{\ball_{r}(x_{0})\cap\partial\Omega'}(\FF\times\nu_{\partial\Omega'})_{\partial\Omega'}^{\mathrm{int}}(x)\dif\mathscr{H}^{2}(x)\cdot\nu_{\partial\Omega'}(x_{0}) \right\vert  \\ 
&\stackrel{\eqref{eq:whale}\text{\emph{ff.}}}{=} \lim_{r\searrow 0}\lim_{k\to\infty}\left\vert\frac{1}{a_{x_{0},r}}\int_{\Phi_{\partial\Omega'}(t_{k},\ball_{r}(x_{0})\cap\partial\Omega')}\FF^{*}(x)\times\nu_{\partial(\Omega')^{\Phi,t_{k}}}(x)\dif\mathscr{H}^{2}(x)\cdot\nu_{\partial\Omega'}(x_{0}) \right\vert  \\ 
& 
\,\,\,=\lim_{r\searrow 0}\lim_{k\to\infty}\left\vert\frac{1}{a_{x_{0},r}}\int_{\Phi_{\partial\Omega'}(t_{k},\ball_{r}(x_{0})\cap\partial\Omega')}\FF^{*}(x)\cdot(\nu_{\partial(\Omega')^{\Phi,t_{k}}}(x)\times\nu_{\partial\Omega'}(x_{0}))\dif\mathscr{H}^{2}(x)\right\vert \\ 
&
\,\,\,= \lim_{r\searrow 0}\lim_{k\to\infty}\left\vert\frac{1}{a_{x_{0},r}}\int_{U_{k}^{r}}\FF^{*}(x',\psi_{Z,t_{k}}(x'))\cdot
(\nu_{\partial(\Omega')^{\Phi,t_{k}}}(x',\psi_{Z,t_{k}}(x'))\times\nu_{\partial\Omega'}(x_{0}))\times\right.\\
&\qquad\qquad\qquad\qquad\qquad \,\,\,\,\left.\times\sqrt{1+|\nabla\psi_{Z,t_{k}}(x')|^{2}}\dif\mathscr{L}^{2}(x')\right\vert =: \mathrm{I}. 
\end{align*}
Here we have used that, in the second line,  the weak*-limit $\bmu$ of $\bmu_{t_{k}}$ is independent of the specific choice of $(t_{k})\subset I$ with $t_{k}\searrow 0$ (as established in the first part of the proof) and, in the third line, the rule $(\mathbf{a}\times\mathbf{b})\cdot\mathbf{c}=\mathbf{a}\cdot(\mathbf{b}\times\mathbf{c})$ for $\mathbf{a},\mathbf{b},\mathbf{c}\in\R^{3}$. 

In order to estimate $\mathrm{I}$, we note that, for all $a,b,a',b'\in\R$,
\begin{align}\label{eq:easy1}
\begin{split}
|(a,b,-1)\times(a',b',-1)| & = |(b'-b,a-a',ab'-a'b)| \\ & \leq c(1+|a|+|b|)(|a-a'|+|b-b'|) \\ 
& \leq c \sqrt{1+|a|^{2}+|b|^{2}}(|a-a'|+|b-b'|). 
\end{split}
\end{align}
We define 
\begin{align*}
\mathscr{U}':=\bigcup_{k=1}^{\infty}\big\{x'\in U\colon\;x'\;\text{is not a differentiability point of}\;\psi_{Z,t_{k}}\big\}, 
\end{align*}
whereby $\mathscr{L}^{2}(U\setminus\mathscr{U}')=0$ by Rademacher's theorem. In particular, every $x'\in U\setminus \mathscr{U}'$ is a differentiability point of every $\psi_{Z,t_{k}}$. For elements $x=(x',\psi_{Z,t_{k}}(x'))$ with $x'\in U_{k}^{r}\setminus\mathscr{U}'$, we  compute  
\begin{align*}
|\nu_{\partial(\Omega')^{\Phi,t_{k}}}(x)\times\nu_{\partial\Omega'}(x_{0})| & \stackrel{\eqref{eq:easy1}}{\leq} c \frac{|\nabla\psi_{Z,t_{k}}(x')-\nabla\psi_{Z}(x'_{0})|}{\sqrt{1+|\nabla\psi_{Z}(x'_{0})|^{2}}} \\ 
& \leq  c \frac{|\nabla\psi_{Z,t_{k}}(x')-\nabla\psi_{Z}(x')|}{\sqrt{1+|\nabla\psi_{Z}(x'_{0})|^{2}}} + c \frac{|\nabla\psi_{Z}(x')-\nabla\psi_{Z}(x'_{0})|}{\sqrt{1+|\nabla\psi_{Z}(x'_{0})|^{2}}}\\
&=: \mathrm{J}_{1}+\mathrm{J}_{2}. 
\end{align*}
We deal with the single terms $\mathrm{J}_{1}$ and $\mathrm{J}_{2}$ in $\mathrm{I}$, separately. 

\emph{On the $\mathrm{J}_{1}$-integral.} By Remark \ref{rem:aeintro}\ref{item:collario3}, 
we see that $\|\nabla\psi_{Z,t_{k}}\|_{\lebe^{\infty}(U)}\leq c\|\nabla\psi_{Z}\|_{\lebe^{\infty}(U)}\leq C$ for all $k\in\mathbb{N}$, and $\nabla\psi_{Z,t_{k}}\to\nabla\psi_{Z}$ in $\lebe^{1}(U;\R^{2})$ as $k\to\infty$. We conclude by use of $\FF\in\lebe^{\infty}(\Omega;\R^{3})$ and \eqref{eq:easy1} that 
\begin{align*}
\lim_{r\searrow 0}\lim_{k\to\infty}&\left\vert \frac{1}{a_{x_{0},r}}\int_{U_{k}^{r}}\FF^{*}(x',\psi_{Z,t_{k}}(x'))\cdot \mathrm{J}_{1}(x'){\sqrt{1+|\nabla\psi_{Z,t_{k}}(x')|^{2}}}\dif\mathscr{L}^{2}(x')\right\vert \\ &   \!\!\!\!\! \!\!\!\!\!\!\!\!\!\!\!\!\!\!\!\!\!\leq c\big(1+\|\nabla\psi_{Z}\|_{\lebe^{\infty}(U)}\big) 
\|\FF\|_{\lebe^{\infty}(\Omega)}\lim_{r\searrow 0}\frac{1}{a_{x_{0},r}}\underbrace{\lim_{k\to\infty} \int_{U}|\nabla\psi_{Z,t_{k}}(x')-\nabla\psi_{Z}(x')|\dif\mathscr{L}^{2}(x')}_{= 0}=0. 
\end{align*}

\emph{On the $\mathrm{J}_{2}$-integral.} Again, we recall that $\|\nabla\psi_{Z,t_{k}}\|_{\lebe^{\infty}(U)}\leq c\|\nabla\psi_{Z}\|_{\lebe^{\infty}(U)}\leq C$ for all $k\in\mathbb{N}$. 
Denote $U_{\infty}^{r}:=\{x'\in U\colon\;(x',\psi_{Z}(x'))\in \ball_{r}(x_{0})\cap\partial\Omega'\}$, so that $\mathbbm{1}_{U_{k}^{r}}\to\mathbbm{1}_{U_{\infty}^{r}}$ $\mathscr{L}^{2}$-a.e. in $U$. For all sufficiently small $r>0$, $\mathbf{G}\colon U_{\infty}^{r}\ni x'\mapsto (x',\psi_{Z}(x'))\in\ball_{r}(x_{0})\cap\partial\Omega'$ has bi-Lipschitz constant independent of $r>0$, whereby there exists $\mathtt{c}>1$ with 
\begin{align*}
\frac{1}{\mathtt{c}}\mathscr{L}^{2}(U_{\infty}^{r})\leq a_{x_{0},r} \leq {\mathtt{c}}\mathscr{L}^{2}(U_{\infty}^{r})\qquad\text{for all sufficiently small $0<r<1$}. 
\end{align*}
Hence, Lebesgue's theorem on dominated convergence and \eqref{eq:easy1} imply that 
\begin{align*}
\lim_{r\searrow 0}\lim_{k\to\infty}&\left\vert \frac{1}{a_{x_{0},r}}\int_{U_{k}^{r}}\FF^{*}(x',\psi_{Z,t_{k}}(x'))\cdot \mathrm{J}_{2}(x'){\sqrt{1+|\nabla\psi_{Z,t_{k}}(x')|^{2}}}\dif\mathscr{L}^{2}(x')\right\vert \\ &  \!\!\!\!\!\!\!\!\!\!\!\!\leq c \big(1+\|\nabla\psi_{Z}\|_{\lebe^{\infty}(U)}\big)\|\FF\|_{\lebe^{\infty}(\Omega)}\lim_{r\searrow 0}\underbrace{\frac{\mathscr{L}^{2}(U_{\infty}^{r})}{a_{x_{0},r}}}_{\leq\mathtt{c}} \dashint_{U_{\infty}^{r}}|\nabla\psi_{Z}(x'_{0})-\nabla\psi_{Z}(x')|\dif\mathscr{L}^{2}(x') \\ 
& \!\!\!\!\!\!\!\!\!\!\!\!\leq c (1+\|\nabla\psi_{Z}\|_{\lebe^{\infty}(U)})\|\FF\|_{\lebe^{\infty}(\Omega)}\underbrace{\lim_{r\searrow 0}\dashint_{U_{\infty}^{r}}|\nabla\psi_{Z}(x'_{0})-\nabla\psi_{Z}(x')|\dif\mathscr{L}^{2}(x')}_{ (*)} = 0. 
\end{align*}
To see that $(*)=0$, note that the Lipschitz property of $\mathbf{G}$ implies that there exists a constant $0<\theta<1$ such that $\ball_{\theta r}(x'_{0})\subset U_{\infty}^{r}\subset\ball_{r}(x'_{0})$ holds for all sufficiently small $r>0$;  \emph{e.g.}, we may take $\theta=(1+\mathrm{Lip}(\psi_{Z})^{2})^{-\frac{1}{2}}$. Therefore, we have
\begin{align*}
(*) & \leq \theta^{n}\lim_{r\searrow 0}\frac{1}{\mathscr{L}^{2}(U_{\infty}^{r})}\dashint_{\ball_{r}(x'_{0})}|\nabla\psi_{Z}(x'_{0})-\nabla\psi_{Z}(x')|\dif\mathscr{L}^{2}(x') = 0, 
\end{align*}
since $x'_{0}$ is both a differentiability point of $\psi_{Z}$ and a Lebesgue point of $\nabla\psi_{Z}$.
Then $(*)=0$. In total, $\mathrm{I}=0$, and so \ref{item:trace2} follows.

Finally, \ref{item:trace3} is a direct consequence of \ref{item:trace1} and \ref{item:trace2}; 
see also \eqref{eq:calculus7} in Appendix \hyperref[sec:AppendixD]{D} in the smooth case. 
This completes the proof.
\end{proof}

We point out that the functions $(\FF\times\nu_{\partial\Omega'})_{\partial\Omega'}^{\mathrm{int}}$
and $(\FF\times\nu_{\partial\Omega'})_{\partial\Omega'}^{\mathrm{ext}}$ only make sense as a whole, 
since the \emph{full} trace of $\FF\in\mathscr{CM}^{\infty}(\Omega)$ might not exist. Moreover, based on our conventions, we have \emph{e.g.} for $\FF\in\hold_{b}^{1}(\Omega;\R^{3})$ (in which case the traces along $\partial\Omega'$ are unambiguously defined), 
the relation $(\FF\times\nu_{\partial\Omega'})_{\partial\Omega'}^{\mathrm{int}}=(\FF\times\nu_{\partial\Omega'})_{\partial\Omega'}^{\mathrm{ext}}=\FF\times\nu_{\partial\Omega'}$ along $\partial\Omega'$. 
For our applications in Section \ref{sec:stokes}, we record a consequence  of Theorem \ref{thm:tracemain1}\ref{item:trace3} which will be accessible by Lemma \ref{lem:GoodLip} later on. 

\begin{corollary}\label{cor:LipAdmit}
In the situation of {\rm Theorem \ref{thm:tracemain1}}, 
let $\Omega'$ have $\hold^{1}$-boundary and let $\varphi\in\mathrm{Lip}_{\rm c}(\Omega')$ be such that 
\begin{enumerate}
\item\label{item:Lipchoose2} ${\varphi}|_{\Omega'}\in\hold_{b}^{1}(\Omega')$, ${\varphi}|_{\Omega\setminus\overline{\Omega'}}\in\hold_{b}^{1}(\Omega\setminus\overline{\Omega'})$, 
\item\label{item:Lipchoose3} there exists a sequence $(\delta_{i})\subset (0,1)$ with $\delta_{i}\searrow 0$ such that, for $\mathscr{H}^{2}$-a.e. $x\in\partial\Omega'$, 
\begin{align*}
\nabla_{\tau}(\rho_{\delta_{i}}*{\varphi})(x)\to \nabla_{\tau}\varphi(x)\qquad \mbox{as $i\to\infty$}, 
\end{align*}
where $\rho_{\delta}\in\hold_{\rm c}^{\infty}(\ball_{1}(0);[0,1])$ is the $\delta$-rescaled version of an arbitrary but fixed standard mollifier. 
\end{enumerate}
Then the following identity holds{\rm :}
\begin{align}\label{eq:criticalId}
\int_{\partial\Omega'}(\FF\times\nu_{\partial\Omega'})_{\partial\Omega'}^{\mathrm{int}}\cdot\nabla_{\tau}\varphi\dif\mathscr{H}^{2} = (\nabla{\varphi}\cdot\curl\FF)(\Omega')
=\int_{\Omega'}\nabla\varphi\cdot\dif\,(\curl\FF),
\end{align}
and an analogous formula holds for $(\FF\times\nu_{\partial\Omega'})_{\partial\Omega'}^{\mathrm{ext}}$ with the direct modifications.
\end{corollary}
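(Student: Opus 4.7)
The plan is to apply the vectorial identity \eqref{eq:scaltovec} from Theorem \ref{thm:tracemain1}\ref{item:trace3} to the curl-free test field $\bphi_{i}:=\nabla\varphi_{i}$, where $\varphi_{i}:=\rho_{\delta_{i}}*\varphi$, and to pass to the limit $i\to\infty$ along the sequence $(\delta_{i})$ supplied by hypothesis \ref{item:Lipchoose3}. Since $\varphi$ has compact support in $\Omega$, one has $\varphi_{i}\in\hold_{\rm c}^{\infty}(\Omega)$ for all $i$ large, so $\bphi_{i}\in\hold_{\rm c}^{\infty}(\Omega;\R^{3})$ is an admissible test field. Using $\curl(\nabla\varphi_{i})=0$, the identity \eqref{eq:scaltovec} collapses to
\begin{align*}
\int_{\partial\Omega'}(\FF\times\nu_{\partial\Omega'})_{\partial\Omega'}^{\mathrm{int}}\cdot\nabla\varphi_{i}\,\dif\mathscr{H}^{2} = \int_{\Omega'}\nabla\varphi_{i}\cdot\dif\,(\curl\FF),
\end{align*}
and Theorem \ref{thm:tracemain1}\ref{item:trace2} then permits replacing $\nabla\varphi_{i}$ by $\nabla_{\tau}\varphi_{i}$ on the left-hand side $\mathscr{H}^{2}$-a.e. on $\partial\Omega'$.

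For the limit of the left-hand side, I would invoke hypothesis \ref{item:Lipchoose3} directly: it provides $\nabla_{\tau}\varphi_{i}\to\nabla_{\tau}\varphi$ $\mathscr{H}^{2}$-a.e. on $\partial\Omega'$. Combined with the standard mollifier bound $\|\nabla\varphi_{i}\|_{\lebe^{\infty}(\Omega)}\leq\|\nabla\varphi\|_{\lebe^{\infty}(\Omega)}$ and the $\lebe^{\infty}$-membership of $(\FF\times\nu_{\partial\Omega'})_{\partial\Omega'}^{\mathrm{int}}$ from Theorem \ref{thm:tracemain1}\ref{item:trace1}, dominated convergence then yields convergence to $\int_{\partial\Omega'}(\FF\times\nu_{\partial\Omega'})_{\partial\Omega'}^{\mathrm{int}}\cdot\nabla_{\tau}\varphi\,\dif\mathscr{H}^{2}$.

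The main obstacle lies on the right-hand side, since \emph{a priori} $\nabla\varphi$ may jump across $\partial\Omega'$ (hypothesis \ref{item:Lipchoose2} only provides one-sided continuity) while $\curl\FF$ is a finite Radon measure that could charge $\partial\Omega'$. The decisive observation is that $\Omega'$ is open, so the integral $\int_{\Omega'}\,\cdot\,\dif\,(\curl\FF)$ excludes the boundary: for every fixed $x\in\Omega'$ and every $i$ with $\delta_{i}<\mathrm{dist}(x,\partial\Omega')$, the mollification $\nabla\varphi_{i}(x)=(\rho_{\delta_{i}}*\nabla\varphi)(x)$ samples $\nabla\varphi$ only inside $\Omega'$, where by hypothesis \ref{item:Lipchoose2} it is continuous; consequently $\nabla\varphi_{i}(x)\to\nabla\varphi(x)$ pointwise on the open set $\Omega'$. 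Together with the uniform bound $\|\nabla\varphi_{i}\|_{\lebe^{\infty}}\leq\|\nabla\varphi\|_{\lebe^{\infty}}$ and $|\curl\FF|(\Omega')<\infty$, dominated convergence applied on $\Omega'$ delivers the right-hand side limit $\int_{\Omega'}\nabla\varphi\cdot\dif\,(\curl\FF)$, and \eqref{eq:criticalId} follows. The exterior statement is obtained by the same route from the companion identity in \eqref{eq:scaltovec} for $(\FF\times\nu)^{\mathrm{ext}}$, invoking hypothesis \ref{item:Lipchoose2} for the analogous pointwise convergence on the open set $\Omega\setminus\overline{\Omega'}$.
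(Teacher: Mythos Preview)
Your proof is correct and follows essentially the same approach as the paper: mollify $\varphi$, apply the vectorial trace identity \eqref{eq:scaltovec} with the curl-free field $\nabla(\rho_{\delta_i}*\varphi)$, use the tangentiality from Theorem \ref{thm:tracemain1}\ref{item:trace2} to replace the full gradient by the tangential gradient on $\partial\Omega'$, and then pass to the limit on both sides via dominated convergence. Your justification of the right-hand side limit (pointwise convergence on the open set $\Omega'$ via hypothesis \ref{item:Lipchoose2}) is somewhat more explicit than the paper's, but the argument is the same.
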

\begin{proof}
By dominated convergence and since $\varphi|_{\Omega'}\in\hold_{b}^{1}(\Omega')$, we have  
\begin{align}\label{eq:combino2}
\int_{\Omega'}\nabla{\varphi}\cdot\dif\,(\curl\FF) & = \lim_{i\to\infty}\int_{\Omega'}\nabla(\rho_{\delta_{i}}*{\varphi})\cdot\dif\,(\curl\FF),
\end{align}
and  $\rho_{\delta_{i}}*{\varphi}$ belongs to $\hold_{\rm c}^{\infty}(\Omega)$ for all sufficiently large $i\in\mathbb{N}$. Hence, recalling that $\curl(\nabla(\rho_{\delta_{i}}*\varphi))=0$, we obtain 
\begin{align}\label{eq:combino3}
\begin{split}
\lim_{i\to\infty}\int_{\Omega'}\nabla(\rho_{\delta_{i}}*{\varphi})\cdot\dif\,(\curl\FF) & \stackrel{\eqref{eq:scaltovec}_{1}}{=}  \lim_{i\to\infty}\int_{\partial\Omega'}(\FF\times\nu_{\partial\Omega'})_{\partial\Omega'}^{\mathrm{int}}\cdot\nabla(\rho_{\delta_{i}}*{\varphi})\dif\mathscr{H}^{2} \\ 
& \!\!\!\!\!\!\!\!\!\!\!\! \stackrel{\text{Thm. \ref{thm:tracemain1}\ref{item:trace2}}}{=} \lim_{i\to\infty} \int_{\partial\Omega'}(\FF\times\nu_{\partial\Omega'})_{\partial\Omega'}^{\mathrm{int}}\cdot\nabla_{\tau}(\rho_{\delta_{i}}*{\varphi})\dif\mathscr{H}^{2} \\ 
& \!\! \stackrel{\ref{item:Lipchoose3}}{=} \int_{\partial\Omega'}(\FF\times\nu_{\partial\Omega'})_{\partial\Omega'}^{\mathrm{int}}\cdot\nabla_{\tau}\varphi\dif\mathscr{H}^{2}. 
\end{split}
\end{align}
Again, we note that $|\nabla_{\tau}(\rho_{\delta_{i}}*{\varphi})|\leq \|\nabla{\varphi}\|_{\lebe^{\infty}(\Omega)}$, so that the ultimate equality follows from \ref{item:Lipchoose3} and dominated convergence; recall that $(\FF\times\nu_{\partial\Omega'})_{\partial\Omega'}^{\mathrm{int}}\in\lebe^{\infty}(\partial\Omega';T_{\partial\Omega'})$. Combining \eqref{eq:combino2} with \eqref{eq:combino3} yields \eqref{eq:criticalId}. This completes the proof.
\end{proof}
    
    As a routine consequence of Theorem \ref{thm:tracemain1} (see \emph{e.g.}  \cite[Chpt. 5.4]{EvansGariepy} in the full gradient case), we collect a gluing result. 
\begin{corollary}[Gluing]\label{cor:gluing} Let $\Omega\subset\R^{3}$ be open and bounded, and let $\Omega'\Subset\Omega$ be open with Lipschitz boundary. If $\FF_{1}\in\mathscr{CM}^{\infty}(\Omega')$ and $\FF_{2}\in\mathscr{CM}^{\infty}(\Omega\setminus\overline{\Omega'})$, then the glued map 
\begin{align*}
\FF(x):=\begin{cases} 
\FF_{1}(x)&\;\text{if}\;x\in\Omega',\\ 
\FF_{2}(x)&\;\text{if}\;x\in\Omega\setminus\overline{\Omega'},
\end{cases}
\end{align*}
belongs to $\mathscr{CM}^{\infty}(\Omega)$ together with 
\begin{align*}
|\curl\FF|(\Omega) 
=& |\curl\FF_{1}|(\Omega') + |\curl\FF_{2}|(\Omega\setminus\overline{\Omega'}) \\ 
&+ \int_{\partial\Omega'}|(\FF\times\nu_{\partial\Omega})_{\partial\Omega'}^{\mathrm{int}}-(\FF\times\nu_{\partial\Omega})_{\partial\Omega'}^{\mathrm{ext}}|\dif\mathscr{H}^{2}.
\end{align*}
\end{corollary}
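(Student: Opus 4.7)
The plan is to identify the distributional curl of the glued field $\FF$ as an explicit sum of three mutually singular finite Radon measures supported on $\Omega'$, $\Omega\setminus\overline{\Omega'}$ and $\partial\Omega'$, respectively, and then read off the total variation formula by invoking mutual singularity. The key ingredients are the trace identities \eqref{eq:scaltovec} from Theorem \ref{thm:tracemain1}, applied on either side of $\partial\Omega'$.

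First I fix a test field $\bphi\in\hold_{\rm c}^{\infty}(\Omega;\R^{3})$ and split the distributional pairing using $\mathscr{L}^{3}(\partial\Omega')=0$:
\begin{align*}
\langle\curl\FF,\bphi\rangle = \int_{\Omega}\FF\cdot\curl\bphi\dx = \int_{\Omega'}\FF_{1}\cdot\curl\bphi\dx + \int_{\Omega\setminus\overline{\Omega'}}\FF_{2}\cdot\curl\bphi\dx.
\end{align*}
Since the collar-based construction in the proof of Theorem \ref{thm:tracemain1} uses only values of the field in a one-sided tubular neighbourhood of $\partial\Omega'$, it applies verbatim to $\FF_{1}\in\mathscr{CM}^{\infty}(\Omega')$ from inside and to $\FF_{2}\in\mathscr{CM}^{\infty}(\Omega\setminus\overline{\Omega'})$ from outside, producing traces $(\FF_{1}\times\nu_{\partial\Omega'})_{\partial\Omega'}^{\mathrm{int}},(\FF_{2}\times\nu_{\partial\Omega'})_{\partial\Omega'}^{\mathrm{ext}}\in\lebe^{\infty}(\partial\Omega';T_{\partial\Omega'})$. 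Recalling that $\nu_{\partial\Omega'}$ is the inward normal to $\Omega'$ and therefore the outward normal to $\Omega\setminus\overline{\Omega'}$ (cf.\ \eqref{eq:attention}), the identities $\eqref{eq:scaltovec}_{1}$ and $\eqref{eq:scaltovec}_{2}$ take the form
\begin{align*}
\int_{\Omega'}\FF_{1}\cdot\curl\bphi\dx &= \int_{\Omega'}\bphi\cdot\dif(\curl\FF_{1}) - \int_{\partial\Omega'}(\FF_{1}\times\nu_{\partial\Omega'})_{\partial\Omega'}^{\mathrm{int}}\cdot\bphi\dif\mathscr{H}^{2},\\
\int_{\Omega\setminus\overline{\Omega'}}\FF_{2}\cdot\curl\bphi\dx &= \int_{\Omega\setminus\overline{\Omega'}}\bphi\cdot\dif(\curl\FF_{2}) + \int_{\partial\Omega'}(\FF_{2}\times\nu_{\partial\Omega'})_{\partial\Omega'}^{\mathrm{ext}}\cdot\bphi\dif\mathscr{H}^{2}.
\end{align*}
Because the interior (resp.\ exterior) trace of the glued field $\FF$ on $\partial\Omega'$ coincides by construction with that of $\FF_{1}$ (resp.\ $\FF_{2}$), summing yields the distributional decomposition
\begin{align*}
\curl\FF = \curl\FF_{1}\mres\Omega' + \curl\FF_{2}\mres(\Omega\setminus\overline{\Omega'}) + \bigl[(\FF\times\nu_{\partial\Omega'})_{\partial\Omega'}^{\mathrm{ext}} - (\FF\times\nu_{\partial\Omega'})_{\partial\Omega'}^{\mathrm{int}}\bigr]\mathscr{H}^{2}\mres\partial\Omega'
\end{align*}
as $\R^{3}$-valued measures on $\Omega$. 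The first two summands are finite by assumption, the third is bounded by a multiple of $\mathscr{H}^{2}(\partial\Omega')<\infty$ via the $\lebe^{\infty}$-bound on the traces, so $\curl\FF\in\mathrm{RM}_{\mathrm{fin}}(\Omega;\R^{3})$ and $\FF\in\mathscr{CM}^{\infty}(\Omega)$.

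The three summands in the decomposition are supported on the pairwise disjoint sets $\Omega'$, $\Omega\setminus\overline{\Omega'}$ and $\partial\Omega'$, hence are mutually singular. The total variation of their sum is therefore the sum of the individual total variations, which gives the stated formula upon using $|a-b|=|b-a|$ for the boundary term. The main obstacle I foresee is the formal point that Theorem \ref{thm:tracemain1} is stated for fields living in $\mathscr{CM}^{\infty}$ of a strictly larger ambient open set; I expect this is resolved by inspecting the collar-type construction in its proof, which is purely one-sided. Alternatively, one could first establish $\FF\in\mathscr{CM}^{\infty}(\Omega)$ by smooth approximation via Lemma \ref{lem:smoothapprox} applied separately in $\Omega'$ and $\Omega\setminus\overline{\Omega'}$, and then invoke Theorem \ref{thm:tracemain1} directly for the glued field.
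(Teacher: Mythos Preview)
Your proof is correct and follows exactly the route the paper intends: the corollary is stated there as a ``routine consequence of Theorem~\ref{thm:tracemain1} (see \emph{e.g.} \cite[Chpt.~5.4]{EvansGariepy} in the full gradient case)'' with no further argument, and the Evans--Gariepy template is precisely the decomposition into three mutually singular pieces that you carry out. Your identification of the one technical wrinkle---that Theorem~\ref{thm:tracemain1} is stated for fields defined on a strictly larger ambient set---and your resolution via the one-sidedness of the collar construction are both accurate; the proof of Theorem~\ref{thm:tracemain1}\ref{item:trace1} indeed only uses values of the field inside $\Omega'$ for the interior trace (and inside $\Omega\setminus\overline{\Omega'}$ for the exterior trace), so the formulas \eqref{eq:scaltovec} apply to $\FF_{1}$ and $\FF_{2}$ separately.
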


Finally, we record a consequence of the above results that might be of independent interest. Namely, we may use Theorem \ref{thm:tracemain1} to assign interior tangential traces in $\lebe^{\infty}$ to the \emph{gradients} of Lipschitz functions; note that this is impossible for general $\lebe^{\infty}$-fields. Previous results \cite{Alonso,BuffaCiarlet1,BuffaCiarlet2,Sheen,Tartar1997} in $\sobo^{\curl,p}$ for $1<p<\infty$, as recalled for the reader's convenience in Appendix \hyperref[sec:AppendixC]{C}, only allow for such an assignment in negative Sobolev spaces.

\begin{corollary}[Gradients of Lipschitz Maps]\label{cor:traceLipschitz} 
Let $\Omega\subset\R^{3}$ be open and bounded, and let $\Omega'\Subset\Omega$ be open with Lipschitz boundary. Then there exists a bounded linear operator
\begin{align*}
\mathrm{tr}_{\tau,\partial\Omega'}\colon \nabla\mathrm{Lip}(\Omega):=\{\nabla u\colon\;u\in\mathrm{Lip}(\Omega)\}\to\lebe^{\infty}(\partial\Omega';T_{\partial\Omega'})
\end{align*}
such that $\mathrm{tr}_{\tau,\partial\Omega'}(\nabla u)(x)=\nabla u(x)\times\nu_{\partial\Omega'}(x)$ holds for $\mathscr{H}^{2}$-a.e. $x\in\partial\Omega'$ and every $u\in\hold_{b}^{1}(\Omega)$, where 
$\nabla\mathrm{Lip}(\Omega)$ is endowed with the $\lebe^{\infty}$-norm.
\end{corollary}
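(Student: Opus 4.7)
The plan is to realise $\mathrm{tr}_{\tau,\partial\Omega'}$ as the interior tangential trace of Theorem~\ref{thm:tracemain1} applied to $\FF:=\nabla u$. For any $u\in\mathrm{Lip}(\Omega)$, Rademacher's theorem gives $\nabla u\in\lebe^{\infty}(\Omega;\R^{3})$, and $\curl(\nabla u)=0$ in $\mathscr{D}'(\Omega;\R^{3})$, so in particular $\nabla u\in\mathscr{CM}^{\infty}(\Omega)$. I would then \emph{define}
\begin{align*}
\mathrm{tr}_{\tau,\partial\Omega'}(\nabla u):=(\nabla u\times\nu_{\partial\Omega'})_{\partial\Omega'}^{\mathrm{int}},
\end{align*}
which, by Theorem~\ref{thm:tracemain1}\ref{item:trace1}--\ref{item:trace2}, belongs to $\lebe^{\infty}(\partial\Omega';T_{\partial\Omega'})$. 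Well-definedness on the quotient space $\nabla\mathrm{Lip}(\Omega)$ and linearity are both immediate from Definition~\ref{def:tangentialtrace}, whose right-hand side depends on $\FF$ only as an $\lebe^{\infty}$-field.

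The second step is the quantitative $\lebe^{\infty}$-bound
\begin{align*}
\bigl\|(\nabla u\times\nu_{\partial\Omega'})_{\partial\Omega'}^{\mathrm{int}}\bigr\|_{\lebe^{\infty}(\partial\Omega')}\leq C(\Omega')\,\|\nabla u\|_{\lebe^{\infty}(\Omega)}.
\end{align*}
I would extract this by re-reading the last estimate in the proof of Theorem~\ref{thm:tracemain1}\ref{item:trace1}: at any $\mathscr{H}^{2}$-Lebesgue point $x_{0}\in\partial\Omega'$ of the trace one gets
\begin{align*}
\bigl|((\nabla u\times\nu_{\partial\Omega'})_{\partial\Omega'}^{\mathrm{int}})^{*}(x_{0})\bigr|\leq\|\nabla u\|_{\lebe^{\infty}(\Omega)}\,\limsup_{r\searrow 0}\limsup_{\substack{t\searrow 0\\ t\in I}}\frac{\mathscr{H}^{2}(\Phi_{\partial\Omega'}(t,\ball_{r}(x_{0})\cap\partial\Omega'))}{\mathscr{H}^{2}(\ball_{r}(x_{0})\cap\partial\Omega')},
\end{align*}
and the right-hand side is controlled by the uniform Lipschitz bounds on $\Phi_{\partial\Omega'}(t,\cdot)$ from Lemma~\ref{lem:collar1} together with Remark~\ref{rem:aeintro}\ref{item:collario3}, yielding a constant depending only on the Lipschitz character of $\partial\Omega'$.

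For the pointwise identification when $u\in\hold_{b}^{1}(\Omega)$, I would argue by smoothing. Since $\overline{\Omega'}\Subset\Omega$ and $\nabla u\in\hold_{b}(\Omega;\R^{3})$, the mollifications $\rho_{\delta}\ast\nabla u$ converge uniformly to $\nabla u$ on a compact neighbourhood of $\overline{\Omega'}$. For each smooth approximation the tangential trace equals the pointwise cross product $\rho_{\delta}\ast\nabla u\times\nu_{\partial\Omega'}$ by the classical integration-by-parts formula~\eqref{eq:calculus2a} of Appendix~\hyperref[sec:AppendixD]{D}. Identifying the limit of these smooth traces with $(\nabla u\times\nu_{\partial\Omega'})_{\partial\Omega'}^{\mathrm{int}}$ via the push-forward construction~\eqref{eq:whale}--\eqref{eq:zipper} in the proof of Theorem~\ref{thm:tracemain1}\ref{item:trace1} then yields the asserted $\mathscr{H}^{2}$-a.e.\ equality.

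The only genuinely non-routine step is the extraction of the \emph{explicit} $\lebe^{\infty}$-bound from the proof of Theorem~\ref{thm:tracemain1}, since the theorem as stated only asserts membership in $\lebe^{\infty}(\partial\Omega';T_{\partial\Omega'})$ without a constant. A mild subtlety in the third step is that $\nabla u$ for $u\in\hold_{b}^{1}(\Omega)$ is only continuous on $\Omega$ and not necessarily uniformly so; however, this is harmless because $\overline{\Omega'}\Subset\Omega$ ensures uniform continuity on any compact neighbourhood of $\overline{\Omega'}$ contained in $\Omega$, which is all that the mollification argument requires.
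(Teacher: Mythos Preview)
Your proposal is correct and follows essentially the same route as the paper: define $\mathrm{tr}_{\tau,\partial\Omega'}(\nabla u):=(\nabla u\times\nu_{\partial\Omega'})_{\partial\Omega'}^{\mathrm{int}}$ via Theorem~\ref{thm:tracemain1} after observing $\nabla u\in\mathscr{CM}^{\infty}(\Omega)$ with $\curl(\nabla u)=0$. The paper's proof is terser---it simply asserts boundedness and the $\hold_{b}^{1}$-consistency without the explicit extraction you perform---and additionally records that $(\nabla u\times\nu_{\partial\Omega'})_{\partial\Omega'}^{\mathrm{int}}=(\nabla u\times\nu_{\partial\Omega'})_{\partial\Omega'}^{\mathrm{ext}}$ $\mathscr{H}^{2}$-a.e.\ on $\partial\Omega'$ (since $|\curl(\nabla u)|(\partial\Omega')=0$), which is not strictly needed for the statement but clarifies that the choice of interior versus exterior trace is immaterial.
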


\begin{proof}
We note that, if $u\in\mathrm{Lip}(\Omega)$, then $\nabla u\in\lebe^{\infty}(\Omega;\R^{3})$ and $\curl(\nabla u)=0$ in $\mathscr{D}'(\Omega;\R^{3})$. In particular, $\|\nabla u\|_{\mathscr{CM}^{\infty}(\Omega)} = \|\nabla u\|_{\lebe^{\infty}(\Omega)}$ holds for all $u\in\mathrm{Lip}(\Omega)$, where $\|\FF\|_{\mathscr{CM}^{\infty}(\Omega)}:=\|\FF\|_{\lebe^{\infty}(\Omega)}+|\curl\FF|(\Omega)$. The assignment  $\mathscr{CM}^{\infty}(\Omega)\ni\FF\mapsto (\FF\times\nu_{\partial\Omega'})_{\partial\Omega'}^{\mathrm{int}}\in\lebe^{\infty}(\partial\Omega')$ as in Theorem \ref{thm:tracemain1} is linear, 
and 
$(\FF\times\nu_{\partial\Omega'})_{\partial\Omega'}^{\mathrm{int}}=(\FF\times\nu_{\partial\Omega'})_{\partial\Omega'}^{\mathrm{ext}}$ $\mathscr{H}^{2}$-a.e. on $\partial\Omega'$ provided that $|\curl\FF|(\partial\Omega')=0$. 
For $u\in\mathrm{Lip}(\Omega)$, we define 
$$
\mathrm{tr}_{\tau,\partial\Omega'}(\nabla u):=(\nabla u\times\nu_{\partial\Omega'})_{\partial\Omega'}^{\mathrm{int}}(=(\nabla u\times\nu_{\partial\Omega'})_{\partial\Omega'}^{\mathrm{ext}}),
$$
Then $\mathrm{tr}_{\tau,\partial\Omega'}$ is clearly well-defined, linear, and bounded, 
and has the requisite property on $\hold_{b}^{1}(\Omega)$. This completes the proof. 
\end{proof}

\section{Transversal and Tangential Maximal Functions}\label{sec:maxfunctions}
In this intermediate section, we single out two weak-(1,1)-type estimates for certain transversal and tangential maximal operators. Most importantly, they will allow us to identify the manifolds where the Stokes theorem for 
$\mathscr{CM}^{p}$-fields is available.  
In particular, in \S\ref{sec:stokes}--\S\ref{sec:divmeasfieldsmanif}, 
the use of tangential or transversal maximal functions will lead to different sorts of Stokes theorems; Figure \ref{fig:geometricmain} below displays the effects which are captured by the corresponding maximal functions.

\smallskip
In what follows, let $\Omega\subset\R^{n}$ be open and bounded, 
let $\Omega'\Subset\Omega$ be open with $\hold^{1}$-boundary,
and let $\Sigma\subset\partial\Omega'$ be a $\hold^{1}$-regular Lipschitz boundary 
manifold relative to $\Omega'$. Potentially redefining $\Phi_{\partial\Omega'}$ from Lemma \ref{lem:collar1}, it is no loss of generality to assume that $\Phi_{\partial\Omega'}((-1,1)\times\partial\Omega')\Subset\Omega$. For $|t|<\frac{1}{2}$ and $0<\varepsilon<\frac{1}{4}$, we then define 
\begin{align}\label{eq:SigmaShiftNormal}
\begin{split}
&(\Sigma_{\Omega'}^{\Phi,t})_{\varepsilon} := \Phi_{\partial\Omega'}((t-\varepsilon,t+\varepsilon)\times\Sigma)),\\ & (\Sigma_{\Omega'}^{\Phi,t})_{\varepsilon}^{+}:= \Phi_{\partial\Omega'}((t,t+\varepsilon)\times\Sigma),\\ & (\Sigma_{\Omega'}^{\Phi,t})_{\varepsilon}^{-}:=\Phi_{\partial\Omega'}((t-\varepsilon,t)\times\Sigma).  
\end{split}
\end{align}

\begin{definition}[Transversal  Maximal Functions]\label{def:normalmax} In the above situation, let $\mu\in\mathrm{RM}(\Omega)$ or $\mu\in\mathrm{RM}(\Omega;\R^{3})$. We define the \emph{maximal function in  transversal direction} by 
\begin{align}\label{eq:normalmaxfunct}
\mathcal{M}_{\Sigma,\partial\Omega'}^{\Phi}\mu(t) := \sup_{0<\varepsilon<\frac{1}{4}}\frac{1}{\varepsilon}\int_{(\Sigma_{\Omega'}^{\Phi,t})_{\varepsilon}}\dif|\mu| \qquad\mbox{for $|t|<\frac{1}{2}$}, 
\end{align}
and the \emph{lower and upper maximal functions in  transversal direction} by 
\begin{align}\label{eq:upperlowermaxfunct}
\mathcal{M}_{\Sigma,\partial\Omega'}^{\Phi,\pm}\mu(t) := \sup_{0<\varepsilon<\frac{1}{4}}\frac{1}{\varepsilon}\int_{(\Sigma_{\Omega'}^{\Phi,t})_{\varepsilon}^{\pm}}\dif|\mu| \qquad 
\mbox{for $|t|<\frac{1}{2}$}. 
\end{align}
\end{definition}
From this definition, it is immediate that, for all $\mu\in\mathrm{RM}(\Omega)$ or $\mu\in\mathrm{RM}(\Omega;\R^{3})$,
\begin{align}\label{eq:maximalfunctionscomparison}
\mathcal{M}_{\Sigma,\partial\Omega'}^{\Phi,\pm}\mu(t) \leq \mathcal{M}_{\Sigma,\partial\Omega'}^{\Phi}\mu(t)\qquad\text{for all}\;|t|<\frac{1}{2}. 
\end{align}
For our later purposes, we also need tangential maximal functions. To this end, we let $\Psi_{\Sigma}\colon (-1,1)\times\Gamma_{\Sigma}\to\mathcal{O}(\subset\partial\Omega')$ be the bi-Lipschitz map provided by Lemma \ref{lem:collar2} and denote, for $-1<t<1$, $\Gamma_{\Sigma}^{t}:=\Psi(\{t\}\times\Gamma_{\Sigma})$. Moreover, we put for $|t|<\frac{1}{2}$ and $0<\varepsilon<\frac{1}{4}$
\begin{align}\label{eq:SigmaShiftTangential}
\begin{split}
& (\Gamma_{\Sigma}^{t})_{\varepsilon}:=\Psi_{\Sigma}((t-\varepsilon,t+\varepsilon)\times\Gamma_{\Sigma}), \\ 
& (\Gamma_{\Sigma}^{t})_{\varepsilon}^{+} := \Psi_{\Sigma}((t,t+\varepsilon)\times\Gamma_{\Sigma}), \\ 
& (\Gamma_{\Sigma}^{t})_{\varepsilon}^{-} := \Psi_{\Sigma}((t-\varepsilon,t)\times\Gamma_{\Sigma}). 
\end{split}
\end{align}

\begin{definition}[Tangential Maximal Functions]\label{def:tangentialmax} In the above situation, let $\mu\in\mathrm{RM}(\Omega)$ or $\mu\in\mathrm{RM}_{\mathrm{fin}}(\Omega;\R^{3})$. 
We then define the \emph{maximal function in tangential direction} by 
\begin{align}\label{eq:normalmaxfunct}
\mathcal{M}_{\Sigma,\partial\Omega'}^{\Psi}\mu(t) := \sup_{0<\varepsilon<\frac{1}{4}}\frac{1}{\varepsilon}\int_{(\Gamma_{\Sigma}^{t})_{\varepsilon}}\dif|\mu| \qquad \mbox{for $|t|<\frac{1}{2}$}, 
\end{align}
and the \emph{lower and upper maximal functions in tangential direction} by 
\begin{align}\label{eq:upperlowermaxfunct}
\mathcal{M}_{\Sigma,\partial\Omega'}^{\Psi,\pm}\mu(t) := \sup_{0<\varepsilon<\frac{1}{4}}\frac{1}{\varepsilon}\int_{(\Gamma_{\Sigma}^{t})_{\varepsilon}^{\pm}}\dif|\mu| \qquad \mbox{for $|t|<\frac{1}{2}$}. 
\end{align}
\end{definition}
The next proposition gives the key auxiliary tool for the following sections:

\begin{prop}[Hardy-Littlewood-Type Inequalities]\label{prop:HLWnormal}
Let $\mu\in\mathrm{RM}(\Omega)$. 
\begin{enumerate}
\item\label{item:HLW1} In the setting of {\rm Definition \ref{def:normalmax}}, 
for all $\lambda>0$
\begin{align}\label{eq:HLW1}
\mathscr{L}^{1}(\{t\in (-\tfrac{1}{2},\tfrac{1}{2})\colon\;\mathcal{M}_{\Sigma,\partial\Omega'}^{\Phi(,\pm)}\mu(t)>\lambda\})\leq \frac{10}{\lambda}|\mu|(\Phi_{\partial\Omega'}((-1,1)\times\partial\Omega'))
\end{align}
holds for all $\lambda>0$. 
In particular, the following holds: 
\begin{align}\label{eq:a.e.normal}
\mathcal{M}_{\Sigma,\partial\Omega'}^{\Phi(,\pm)}\mu(t)<\infty\qquad\text{for $\mathscr{L}^{1}$-a.e. $|t|<\frac{1}{2}$.}
\end{align} 
    \item\label{item:HLW2} In the setting of {\rm Definition \ref{def:tangentialmax}}, 
    for all $\lambda>0$,
    \begin{align}
  \mathscr{L}^{1}(\{t\in(-\tfrac{1}{2},\tfrac{1}{2})\colon\;\mathcal{M}_{\Sigma,\partial\Omega'}^{\Psi(,\pm)}\mu(t)>\lambda\})\leq \frac{10}{\lambda}|\mu|(\Psi_{\Sigma}((-1,1)\times\Gamma_{\Sigma}))
    \end{align}
 holds for all $\lambda>0$.    In particular, the following holds: 
\begin{align}\label{eq:a.e.normal1}
\mathcal{M}_{\Sigma,\partial\Omega'}^{\Psi(,\pm)}\mu(t)<\infty\qquad\text{for $\mathscr{L}^{1}$-a.e. $|t|<\frac{1}{2}$.}
\end{align}
\end{enumerate}
\end{prop}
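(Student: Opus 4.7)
The plan is to reduce both inequalities to the classical one-dimensional weak-$(1,1)$ estimate for the Hardy--Littlewood maximal operator by pushing the measure $\mu$ forward along the collar maps. We carry out the transversal case \ref{item:HLW1} in detail; the tangential case \ref{item:HLW2} is verbatim with $\Phi_{\partial\Omega'}$ replaced by $\Psi_{\Sigma}$.

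First, for a Borel set $A\subset (-1,1)$, set
\begin{align*}
\widetilde{\mu}^{\Phi}(A):=|\mu|\bigl(\Phi_{\partial\Omega'}(A\times\Sigma)\bigr).
\end{align*}
Since $\Phi_{\partial\Omega'}$ is bi-Lipschitz and $|\mu|$ is a finite Radon measure on the relatively compact neighborhood $\Phi_{\partial\Omega'}((-1,1)\times\partial\Omega')\Subset\Omega$, $\widetilde{\mu}^{\Phi}$ is a well-defined finite Borel measure on $(-1,1)$ with $\widetilde{\mu}^{\Phi}((-1,1))\leq |\mu|(\Phi_{\partial\Omega'}((-1,1)\times\partial\Omega'))$. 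By construction and by the definition \eqref{eq:SigmaShiftNormal} of $(\Sigma_{\Omega'}^{\Phi,t})_{\varepsilon}$, we have the identities
\begin{align*}
\int_{(\Sigma_{\Omega'}^{\Phi,t})_{\varepsilon}}\dif|\mu|=\widetilde{\mu}^{\Phi}((t-\varepsilon,t+\varepsilon)),\qquad \int_{(\Sigma_{\Omega'}^{\Phi,t})_{\varepsilon}^{\pm}}\dif|\mu|=\widetilde{\mu}^{\Phi}(I_{t,\varepsilon}^{\pm}),
\end{align*}
where $I_{t,\varepsilon}^{+}=(t,t+\varepsilon)$ and $I_{t,\varepsilon}^{-}=(t-\varepsilon,t)$. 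Consequently
\begin{align*}
\mathcal{M}_{\Sigma,\partial\Omega'}^{\Phi}\mu(t)=2\sup_{0<\varepsilon<\frac{1}{4}}\frac{\widetilde{\mu}^{\Phi}((t-\varepsilon,t+\varepsilon))}{2\varepsilon},
\end{align*}
which is twice the (truncated) centered one-dimensional Hardy--Littlewood maximal function of $\widetilde{\mu}^{\Phi}$.

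The main (and only non-routine) step is then to apply the classical weak-$(1,1)$ estimate for the centered Hardy--Littlewood maximal operator on $\mathbb{R}$ acting on the finite Borel measure $\widetilde{\mu}^{\Phi}$; the standard proof via the Vitali $5r$-covering lemma yields the weak-$(1,1)$ constant $5$ and hence, combined with the factor of $2$ above,
\begin{align*}
\mathscr{L}^{1}\bigl(\{t\in(-\tfrac{1}{2},\tfrac{1}{2})\colon\mathcal{M}_{\Sigma,\partial\Omega'}^{\Phi}\mu(t)>\lambda\}\bigr)\leq \frac{10}{\lambda}\widetilde{\mu}^{\Phi}((-1,1))\leq \frac{10}{\lambda}|\mu|\bigl(\Phi_{\partial\Omega'}((-1,1)\times\partial\Omega')\bigr).
\end{align*}
For the one-sided operators, \eqref{eq:maximalfunctionscomparison} (which is immediate from $I_{t,\varepsilon}^{\pm}\subset (t-\varepsilon,t+\varepsilon)$) gives the same bound with $\mathcal{M}_{\Sigma,\partial\Omega'}^{\Phi,\pm}$ in place of $\mathcal{M}_{\Sigma,\partial\Omega'}^{\Phi}$. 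This proves \eqref{eq:HLW1}, and the a.e. finiteness \eqref{eq:a.e.normal} follows at once, since $\{\mathcal{M}\mu=\infty\}=\bigcap_{k\in\mathbb{N}}\{\mathcal{M}\mu>k\}$ has Lebesgue measure zero by \eqref{eq:HLW1}.

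For part \ref{item:HLW2}, the only change is to replace $\Phi_{\partial\Omega'}$ by the tangential collar map $\Psi_{\Sigma}$ from Lemma \ref{lem:collar2}, which is again bi-Lipschitz onto its image. Defining $\widetilde{\mu}^{\Psi}(A):=|\mu|(\Psi_{\Sigma}(A\times\Gamma_{\Sigma}))$ and repeating the argument verbatim yields the claim. The main obstacle, if any, is bookkeeping: verifying that the hypotheses of Lemmas \ref{lem:collar1} and \ref{lem:collar2} suffice to ensure that the pushforward measures $\widetilde{\mu}^{\Phi},\widetilde{\mu}^{\Psi}$ are well-defined Borel measures on $(-1,1)$, which follows from the bi-Lipschitz character of the collar maps and the Borel regularity of $|\mu|$.
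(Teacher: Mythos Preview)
Your proposal is correct and follows essentially the same approach as the paper: both arguments rest on the one-dimensional Vitali $5r$-covering lemma. The only cosmetic difference is that you package the argument by first introducing the pushforward measure $\widetilde{\mu}^{\Phi}$ on $(-1,1)$ and then invoking the classical weak-$(1,1)$ bound as a black box, whereas the paper runs the Vitali covering step directly on the superlevel set $\mathcal{O}_{\lambda}$ and uses the injectivity of $\Phi_{\partial\Omega'}$ explicitly to conclude that the image slabs $(\Sigma_{\Omega'}^{\Phi,t_j})_{\varepsilon(t_j)}$ are pairwise disjoint; in your version this injectivity is hidden in the verification that $\widetilde{\mu}^{\Phi}$ is countably additive.
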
 
\begin{proof}
We focus on assertion \ref{item:HLW1} for $\mathcal{M}_{\Sigma,\partial\Omega'}^{\Phi}$, since the remaining assertions can be established by analogous means. The proof is a variation of the classical weak-$(1,1)$-property of the Hardy-Littlewood maximal operator: Let $\mu\in\mathrm{RM}(\Omega)$ or $\mu\in\mathrm{RM}(\Omega;\R^{3})$, and denote the set on the left-hand side of \eqref{eq:HLW1} by $\mathcal{O}_{\lambda}$. We then choose  $0<\varepsilon(t)<\frac{1}{4}$ 
for each $t\in\mathcal{O}_{\lambda}$ such that $|\mu|((\Sigma_{\Omega'}^{\Phi,t})_{\varepsilon(t)})>\lambda\varepsilon(t)$. By the Vitali covering lemma  (see \emph{e.g.} \cite[Thm. 1.24]{EvansGariepy}), there exists a sequence $(t_{j})\subset (-\frac{1}{2},\frac{1}{2})$ such that the intervals $I_{j}:=(t_{j}-\varepsilon(t_{j}),t_{j}+\varepsilon(t_{j}))$ are pairwise disjoint and satisfy 
$\mathcal{O}_{\lambda}\subset\bigcup_{j\in\mathbb{N}}5I_{j}$, where $5I_{j}:=(t_{j}-5\varepsilon(t_{j}),t_{j}+5\varepsilon(t_{j}))$. By Lemma \ref{lem:collar1}, the map $\Phi_{\partial\Omega'}$ is injective on $(-1,1)\times\partial\Omega'$, whereby the sets $(\Sigma_{\Omega'}^{\Phi,t_{j}})_{\varepsilon(t_{j})}(\subset\Phi_{\partial\Omega'}((-1,1)\times\partial\Omega'))$ are also pairwise disjoint. In consequence, 
\begin{align*}
\mathscr{L}^{1}(\mathcal{O}_{\lambda})\leq 10\sum_{j=1}^{\infty}\varepsilon(t_{j}) \leq \frac{10}{\lambda}\sum_{j=1}^{\infty}|\mu|((\Sigma_{\Omega'}^{\Phi,t_{j}})_{\varepsilon(t_{j})})\leq \frac{10}{\lambda}|\mu|(\Phi_{\partial\Omega'}((-1,1)\times\partial\Omega')). 
\end{align*}
 Hence, \eqref{eq:HLW1} follows, and \eqref{eq:a.e.normal} then is a direct consequence thereof. 
\end{proof}
The following simple observation will be used in \S \ref{sec:divmeasfieldsmanif}.

\begin{lem}\label{lem:curlvanishmax}
In the setting of {\rm Definition \ref{def:normalmax}}, 
suppose that $t\in(-\frac{1}{2},\frac{1}{2})$ is such that $\mathcal{M}_{\partial\Omega',\partial\Omega'}^{\Phi}\mu(t)<\infty$. Then 
$|\mu|(\Phi_{\partial\Omega'}(\{t\}\times\partial\Omega'))=0$.
\end{lem}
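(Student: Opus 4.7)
The plan is to exploit the very definition of the transversal maximal function together with outer regularity (continuity from above) of the Radon measure $|\mu|$. Set $C := \mathcal{M}_{\partial\Omega',\partial\Omega'}^{\Phi}\mu(t)$, which is finite by assumption. Unwinding Definition~\ref{def:normalmax} for the special choice $\Sigma = \partial\Omega'$, this gives
\begin{align*}
|\mu|\bigl(\Phi_{\partial\Omega'}((t-\varepsilon,t+\varepsilon)\times\partial\Omega')\bigr) \leq C\,\varepsilon \qquad \text{for all } 0<\varepsilon<\tfrac{1}{4}.
\end{align*}

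Next, I would observe that the sets $A_\varepsilon := \Phi_{\partial\Omega'}((t-\varepsilon,t+\varepsilon)\times\partial\Omega')$ are decreasing in $\varepsilon$ and satisfy
\begin{align*}
\bigcap_{0<\varepsilon<1/4} A_\varepsilon = \Phi_{\partial\Omega'}(\{t\}\times\partial\Omega'),
\end{align*}
which uses only the injectivity of $\Phi_{\partial\Omega'}$ on $(-1,1)\times\partial\Omega'$ provided by Lemma~\ref{lem:collar1}. Since $|\mu|(A_{1/8}) \leq C/8 < \infty$, continuity from above of the Radon measure $|\mu|$ applies, yielding
\begin{align*}
|\mu|\bigl(\Phi_{\partial\Omega'}(\{t\}\times\partial\Omega')\bigr) = \lim_{\varepsilon\searrow 0} |\mu|(A_\varepsilon) \leq \lim_{\varepsilon\searrow 0} C\varepsilon = 0.
\end{align*}

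There is essentially no obstacle here; this is a one-line consequence of the definition, and the only point that warrants a sentence of justification is the finiteness of $|\mu|(A_{1/8})$, which is needed to invoke continuity from above. The result extends verbatim to vector-valued $\mu \in \mathrm{RM}(\Omega;\R^3)$ by applying the scalar statement componentwise or directly to $|\mu|$.
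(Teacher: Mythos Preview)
Your proof is correct and follows essentially the same idea as the paper's. The paper argues by contrapositive using only monotonicity of $|\mu|$: since $\Phi_{\partial\Omega'}(\{t\}\times\partial\Omega')\subset A_\varepsilon$ for every $\varepsilon$, the bound $|\mu|(\Phi_{\partial\Omega'}(\{t\}\times\partial\Omega'))\leq |\mu|(A_\varepsilon)\leq C\varepsilon$ already forces the left side to vanish, so your appeal to continuity from above is not actually needed.
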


\begin{proof}
Suppose that $\mu\in\mathrm{RM}(\Omega)$ or $\mu\in\mathrm{RM}(\Omega;\R^{3})$ is such that $|\mu|(\Phi_{\partial\Omega'}(\{t\}\times\partial\Omega'))>0$. Since $\Phi_{\partial\Omega'}(\{t\}\times\partial\Omega')\subset\Phi_{\partial\Omega'}((t-\varepsilon,t+\varepsilon))\times\partial\Omega')$ for all $\varepsilon>0$, we conclude  
\begin{align*}
\mathcal{M}_{\partial\Omega',\partial\Omega'}^{\Phi}\mu(t) \geq \liminf_{\varepsilon\searrow 0}\frac{1}{\varepsilon}|\mu|(\Phi_{\partial\Omega'}(\{t\}\times\partial\Omega'))=\infty. 
\end{align*}
This completes the proof. 
\end{proof}

\section{The Stokes Theorem for $\mathscr{CM}^{\infty}$-Fields and Tangential Variations}\label{sec:stokes}
The classical Stokes theorem asserts that, if $\Sigma\subset\R^{3}$ is a smooth $2$-dimensional manifold oriented by $\nu_{\Sigma}\colon\mathrm{int}(\Sigma)\to\mathbb{S}^{2}$ and with boundary $\Gamma_{\Sigma}$, we have 
\begin{align}\label{eq:Stokes}
    \int_{\Sigma}\mathrm{curl}(\FF)\cdot\nu_{\Sigma}\dif\mathscr{H}^{2} = -\int_{\Gamma_{\Sigma}} \FF\cdot\tau_{\Gamma_{\Sigma}}\dif\mathscr{H}^{1}  
    \qquad\mbox{for all $\FF\in\hold^{1}(\R^{3};\R^{3})$}. 
\end{align}
Here, $\tau_{\Sigma}$ is the tangential field to $\Gamma_{\Sigma}$, in turn being determined by $\Sigma$ and $\nu_{\Sigma}$; the minus sign in \eqref{eq:Stokes} is due to our choice of orientation. As one of the main results of the present paper, the aim of this section is to generalize the Stokes formula \eqref{eq:Stokes} to $\mathscr{CM}^{\infty}$-fields; see Theorem \ref{thm:stokes} below. To this end, we fix the general setting in
\S \ref{sec:smoothsetCMinfty} and then proceed to the Stokes theorem with respect to tangential variations in \S \ref{sec:stokestanvarfirst}. 
In \S \ref{sec:consistency}, we are concerned with the consistency 
of Theorem \ref{thm:stokes} in view of \eqref{eq:Stokes} for smooth maps 
and, in \S \ref{sec:examplesstokes1st}, we discuss several examples and establish 
the optimality of the Stokes theorem, Theorem \ref{thm:stokes}.

The notion of \emph{tangential variations}  refers to varying a given manifold $\Sigma$ inside a larger boundary manifold, and corresponds to studying the light green manifolds displayed in Figure \ref{fig:geometricmain}. In contrast to this, 
\emph{transversal} variations correspond to the manifolds indicated by light blue in Figure \ref{fig:geometricmain}, and will be the main focus of \S \ref{sec:divmeasfieldsmanif}. 
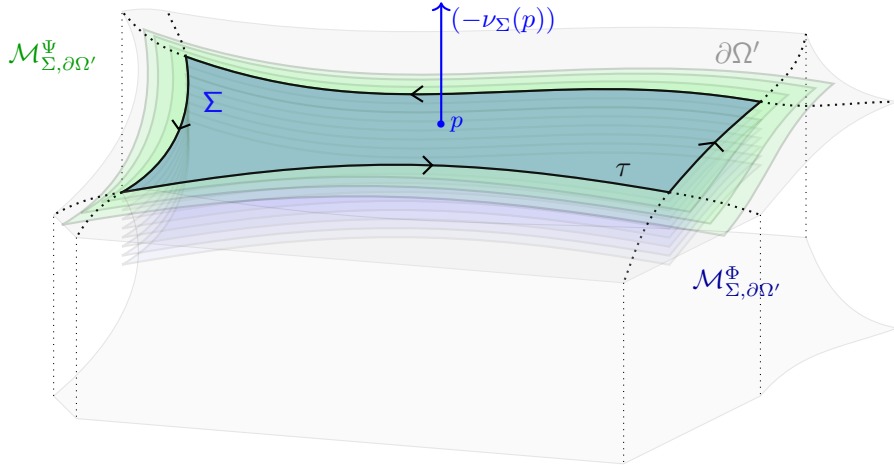
\begin{figure}
\begin{tikzpicture}[scale=1.2]
\draw[-,black,thick,fill=blue!50!white, opacity=0.9] (-3,0) [out=10, in =170] to (3,0) [out= 50, in = 220] to (4,1) [out= 170, in = -20] to (-2.3,1.5) [out= 280, in =30] to (-3,0);
\draw[-,black,thick,fill=blue!40!white, opacity=0.1] (-3,-0.1) [out=10, in =170] to (3,-0.1) [out= 50, in = 220] to (4,0.9) [out= 170, in = -20] to (-2.3,1.4) [out= 280, in =30] to (-3,-0.1);
\draw[-,black,thick,fill=blue!40!white, opacity=0.1] (-3,-0.2) [out=10, in =170] to (3,-0.2) [out= 50, in = 220] to (4,0.8) [out= 170, in = -20] to (-2.3,1.3) [out= 280, in =30] to (-3,-0.2);
\draw[-,black,thick,fill=blue!40!white, opacity=0.1] (-3,-0.3) [out=10, in =170] to (3,-0.3) [out= 50, in = 220] to (4,0.8) [out= 170, in = -20] to (-2.3,1.3) [out= 280, in =30] to (-3,-0.3);
\draw[-,black,thick,fill=blue!40!white, opacity=0.1] (-3,-0.4) [out=10, in =170] to (3,-0.4) [out= 50, in = 220] to (4,0.7) [out= 170, in = -20] to (-2.3,1.2) [out= 280, in =30] to (-3,-0.4);
\draw[-,black,thick,fill=blue!40!white, opacity=0.1] (-3,-0.5) [out=10, in =170] to (3,-0.5) [out= 50, in = 220] to (4,0.6) [out= 170, in = -20] to (-2.3,1.1) [out= 280, in =30] to (-3,-0.5);
\draw[-,black,thick,fill=blue!40!white, opacity=0.1] (-3,-0.6) [out=10, in =170] to (3,-0.6) [out= 50, in = 220] to (4,0.5) [out= 170, in = -20] to (-2.3,1) [out= 280, in =30] to (-3,-0.6);
\draw[-,black,thick,fill=blue!40!white, opacity=0.1] (-3,-0.7) [out=10, in =170] to (3,-0.7) [out= 50, in = 220] to (4,0.4) [out= 170, in = -20] to (-2.3,0.9) [out= 280, in =30] to (-3,-0.7);
\draw[-,black,thick,fill=blue!40!white, opacity=0.1] (-3,-0.8) [out=10, in =170] to (3,-0.8) [out= 50, in = 220] to (4,0.3) [out= 170, in = -20] to (-2.3,0.8) [out= 280, in =30] to (-3,-0.8);
\draw[-,green!20!black,thick,fill=green!40!white, opacity=0.2,scale=1.075] (-3,-0.1) [out=10, in =170] to (3,-0.2) [out= 50, in = 220] to (4,1) [out= 170, in = -20] to (-2.3,1.5) [out= 280, in =30] to (-3,-0.1);
\draw[-,green!20!black,thick,fill=green!40!white, opacity=0.2,scale=1.15] (-3,-0.2) [out=10, in =170] to (3,-0.3) [out= 50, in = 220] to (4,1) [out= 170, in = -20] to (-2.3,1.5) [out= 280, in =30] to (-3,-0.2);
\draw[-,green!20!black,thick,fill=green!40!white, opacity=0.2,scale=1.2] (-3.05,-0.3) [out=10, in =170] to (3,-0.4) [out= 50, in = 220] to (4,1) [out= 170, in = -20] to (-2.3,1.5) [out= 280, in =30] to (-3.05,-0.3);
\draw[-,black,thick] (-3,0) [out=10, in =170] to (3,0) [out= 50, in = 220] to (4,1) [out= 170, in = -20] to (-2.3,1.5) [out= 280, in =30] to (-3,0);
\draw[-,thick,dotted] (3,0) [out=-10, in =160,thick] to (4,-0.25);
\draw[-,thick,dotted] (4,1) [out= 45, in =260] to (4.5,1.75);
\draw[-,thick,dotted] (4,1) [out= -10, in =180] to (5.5,1.0);
\draw[-,thick,dotted] (3,0) [out= -130, in =70] to (2.5,-1.0);
\draw[-,thick,dotted] (-2.3,1.5) [out= 160, in =-50] to (-3,2.0);
\draw[-,thick,dotted] (-2.3,1.5) [out= 100, in =290] to (-2.5,2.0);
\draw[-,thick,dotted] (-3,0) [out= 210, in =60] to (-3.5,-0.5);
\draw[-,thick,dotted] (-3,0) [out= 190, in =30] to (-3.75,-0.25);
\node[blue] at (-2,1) {\large $\mathsf{\Sigma}$};
\node[black] at (2.5,0.25) {\large $\tau$};
\draw[-,fill=black!30!white,opacity=0.1] (-3.5,-0.5) -- (2.5,-1) -- (4,-0.25) [out=50, in =200] to (5.5,1.0) [out= 170, in = 290] to (4.5,1.75) [out= 175, in = -10] to (-2.5,2.0) [out=170, in = 10] to (-3,2.0) [out = -60, in =40] to (-3.75,-0.25) -- (-3.5,-0.5);
\draw[-,gray,fill=gray!20!white, opacity=0.2] (-3.5,-2.5) -- (2.5,-3) -- (4,-2.25) [out=50, in =200] to (5.5,-1.5) [out= 170, in = 290] to (4.5,-0.5) [out= 175, in = -10] to (-2.5,0) [out=170, in = 10] to (-3,0) [out = -60, in =40] to (-3.75,-2.25) -- (-3.5,-2.5);
\node[green!60!black] at (-3.75,1.5) {$\mathcal{M}_{\Sigma,\partial\Omega'}^{\Psi}$};
\node[blue!60!black] at (3.75,-1) {$\mathcal{M}_{\Sigma,\partial\Omega'}^{\Phi}$};
\draw[-,dotted] (-3.5,-0.5) -- (-3.5,-2.5);
\draw[-,dotted] (2.5,-1) -- (2.5,-3);
\draw[-,dotted] (4,-0.25) -- (4,-2.25);
\draw[-,dotted] ( 5.5,-1.5) -- (5.5,1);
\draw[-,dotted] ( 4.5,-0.5) -- (4.5,1.75);
\draw[-,dotted] ( -3,0) -- (-3,2);
\draw[-,dotted] (-3.75,-2.25) -- (-3.75,-0.25);
\draw[->,blue,thick] (0.5,0.75) -- (0.5,2.1);
\node[blue] at (0.5,0.75) {\tiny\textbullet};
\node[blue,right] at (0.5,0.75) {\small $p$};
\node[blue,right] at (0.5,1.9) {$(-\nu_{\Sigma}(p))$};
\draw[-,thick] (-2.45,0.8) -- (-2.38,0.7) -- (-2.25,0.75);
\draw[-,thick] (0.3,0.4) -- (0.4,0.3) -- (0.3,0.2);
\draw[-,thick] (3.32,0.525) -- (3.5,0.55) -- (3.595,0.45);
\draw[-,thick] (0.3,1.175) -- (0.175,1.08) -- (0.3,1.0);
\node[black!40!white] at (3.75,1.55) {\large $\mathsf{\partial\Omega'}$};
\end{tikzpicture}
\caption{The geometric situation in \S \ref{sec:stokes}--\S \ref{sec:divmeasfieldsmanif}.  The boundary manifolds in tangential direction as indicated in light green correspond to the tangentially varied manifolds as studied in \S \ref{sec:stokes}; see Theorem \ref{thm:stokes}. 
The boundary manifolds in transversal direction as indicated in light blue correspond to the setting of \S \ref{sec:divmeasfieldsmanif}; see Theorem  \ref{thm:stokes1st}. 
These variations are governed by $\mathcal{M}_{\Sigma,\partial\Omega'}^{\Psi}$ (green) 
and $\mathcal{M}_{\Sigma,\partial\Omega'}^{\Phi}$ (blue).}\label{fig:geometricmain}
\end{figure}

\subsection{Setting and Basic Facts}\label{sec:smoothsetCMinfty}
Throughout, let $\Omega\subset\R^{3}$ be open and bounded. 
Moreover, let $\Sigma$ be a $\hold^{1}$-regular Lipschitz boundary manifold relative 
to some $\Omega'\Subset\Omega$ with $\hold^{1}$-boundary, the latter being oriented by the inner unit normal $\nu_{\partial\Omega'}\colon \partial\Omega'\to\mathbb{S}^{2}$; 
we define $\nu_{\Sigma}:=\nu_{\partial\Omega'}|_{\Sigma}$.  

For the following, we fix a collar map $\Psi_{\Sigma}\colon (-1,1)\times\Gamma_{\Sigma}\to \mathcal{O}$ as in Lemma \ref{lem:collar2} and adopt the notation introduced afterwards. For $0\leq t<\frac{1}{2}$ and $0<\delta<\frac{1}{4}$, we define \emph{interior height functions} $\psi_{t,\delta,\Sigma}\colon\partial\Omega'\to[0,1]$ by
\begin{align}\label{eq:psiddefmain}
\psi_{t,\delta,\Sigma}(x):=\begin{cases}
0 & \;\text{if}\;x\in\partial\Omega'\setminus \Sigma^{\tau,t}, \\ 
\frac{1}{\delta}(s-t) &\;\text{if}\;x\in\Gamma_{\Sigma}^{s}(:=\Psi_{\Sigma}(\{s\}\times\Gamma_{\Sigma}))\;\text{for}\;t<s<t+\delta,\\ 
1 &\;\text{if}\;x\in \Sigma\setminus\Psi_{\Sigma}((t,t+\delta)\times\Gamma_{\Sigma}),
\end{cases}
\end{align}
where we have set $\Sigma^{\tau,t}:=\Sigma_{\Omega'}^{\tau,t}$ for brevity. 
This definition primarily concerns the case where $\Sigma\subsetneq\partial\Omega'$. 
If $\Sigma=\partial\Omega'$, 
then $\Gamma_{\Sigma}=\emptyset$, and $\psi_{t,\delta,\Sigma}\equiv 1$ globally on $\partial\Omega'$ in this case.

The interior height functions will serve as localizers 
in the Stokes functionals below. For our future applications, 
we require the uniform Lipschitz bounds on the $\psi_{t,\delta,\Sigma}$'s as follows: 

\begin{lem}[Lipschitz bounds]\label{lem:Lipbounds}
For each $0\leq t <\frac{1}{2}$ and each $0<\delta<\frac{1}{4}$, 
$\psi_{t,\delta,\Sigma}\in\mathrm{Lip}_{\rm c}(\partial\Omega';[0,1])$. 
More precisely,  there exists a constant $c=c(\Sigma,\Psi_{\Sigma})>0$ such that 
\begin{align}\label{eq:Lipbound1}
\|\nabla_{\tau}\psi_{t,\delta,\Sigma}\|_{\lebe^{\infty}(\partial\Omega')}\leq \frac{c}{\delta}\qquad \text{for all} \;\;0\leq t <\tfrac{1}{2}\;
\text{and} \;\;0<\delta<\tfrac{1}{4}. 
\end{align}
\end{lem}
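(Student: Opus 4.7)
The plan is to represent $\psi_{t,\delta,\Sigma}$ in the collar region as a composition of a piecewise linear one-variable cutoff with the first-coordinate projection of $\Psi_{\Sigma}^{-1}$, and then to exploit the bi-Lipschitz property of $\Psi_{\Sigma}$ from Lemma \ref{lem:collar2}. First, I would verify continuity across the interfaces of the piecewise definition: on $\Gamma_{\Sigma}^{t}$ the function takes value $0$ both from the exterior part of $\partial\Omega'\setminus\Sigma^{\tau,t}$ and from the collar limit (where $s=t$), while on $\Gamma_{\Sigma}^{t+\delta}$ the collar formula $(s-t)/\delta$ reaches $1$, matching the constant value on $\Sigma\setminus\Psi_{\Sigma}((t,t+\delta)\times\Gamma_{\Sigma})$. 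Hence $\psi_{t,\delta,\Sigma}$ is continuous on the compact set $\partial\Omega'$, and in particular belongs to $\mathrm{Lip}_{\rm c}(\partial\Omega';[0,1])$ once the quantitative estimate is established.

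Next, introduce the auxiliary function $\chi_{t,\delta}\colon\R\to[0,1]$ by $\chi_{t,\delta}(s):=\max\{0,\min\{(s-t)/\delta,1\}\}$, which is Lipschitz continuous with constant exactly $1/\delta$. On the collar neighborhood $\Psi_{\Sigma}((t,t+\delta)\times\Gamma_{\Sigma})$, the very definition \eqref{eq:psiddefmain} yields the identity
$$
\psi_{t,\delta,\Sigma}(x)=\chi_{t,\delta}\bigl(\pi_{1}\bigl(\Psi_{\Sigma}^{-1}(x)\bigr)\bigr),
$$
where $\pi_{1}\colon(-1,1)\times\Gamma_{\Sigma}\to(-1,1)$ is the projection onto the first coordinate. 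Since $\Psi_{\Sigma}$ is bi-Lipschitz (Lemma \ref{lem:collar2}), its inverse is Lipschitz with a constant $c_{0}=c_{0}(\Sigma,\Psi_{\Sigma})$ depending only on $\Sigma$ and the chosen collar map, and $\pi_{1}$ is $1$-Lipschitz. Composition of Lipschitz maps and the chain rule then give that $\psi_{t,\delta,\Sigma}$ is Lipschitz in the collar region with constant at most $c_{0}/\delta$, which translates to $|\nabla_{\tau}\psi_{t,\delta,\Sigma}(x)|\leq c_{0}/\delta$ for $\mathscr{H}^{2}$-a.e.\ $x$ in $\Psi_{\Sigma}((t,t+\delta)\times\Gamma_{\Sigma})$.

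Away from the closure of the collar, $\psi_{t,\delta,\Sigma}$ is locally constant (equal to $0$ on the relatively open set $\partial\Omega'\setminus\overline{\Sigma^{\tau,t}}$ and to $1$ on the relative interior of $\Sigma\setminus\overline{\Psi_{\Sigma}((t,t+\delta)\times\Gamma_{\Sigma})}$), so $\nabla_{\tau}\psi_{t,\delta,\Sigma}=0$ there. Combining this with the collar bound, and invoking the continuity already established so that the function is globally Lipschitz on the compact manifold $\partial\Omega'$ (the $(n-2)$-dimensional transition sets $\Gamma_{\Sigma}^{t}$ and $\Gamma_{\Sigma}^{t+\delta}$ are $\mathscr{H}^{2}$-negligible), one obtains the uniform estimate \eqref{eq:Lipbound1} with $c=c_{0}$ independent of $t$ and $\delta$.

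The only technicality is the invocation of the chain rule for tangential gradients on the Lipschitz manifold $\partial\Omega'$; this is routine once one works in a finite bi-Lipschitz atlas covering the collar neighborhood $\mathcal{O}$, reducing the estimate to an elementary Euclidean computation based on Rademacher's theorem applied to $\Psi_{\Sigma}^{-1}$ read in local charts. No genuine obstacle is anticipated, and the uniformity of the Lipschitz constants of $\Psi_{\Sigma}^{-1}$ across the fixed atlas is what guarantees that $c$ depends only on $\Sigma$ and $\Psi_{\Sigma}$.
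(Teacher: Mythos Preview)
Your proof is correct and takes a genuinely different route from the paper's. The paper establishes the Lipschitz bound by a direct case analysis: it fixes two points $x,y\in\partial\Omega'$, distinguishes four cases according to which of the three regions in the definition \eqref{eq:psiddefmain} contain $x$ and $y$, and in each case invokes the quantitative distance estimates \eqref{eq:PsiSigmaBounds} from Lemma \ref{lem:collar2}\ref{item:collF2} to bound $|\psi_{t,\delta,\Sigma}(x)-\psi_{t,\delta,\Sigma}(y)|$ by $(c\theta/\delta)|x-y|$. Only at the very end does it invoke Lemma \ref{lem:LipCharManifold} to pass from the Lipschitz constant to the tangential gradient bound.

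Your argument is more structural: you recognise $\psi_{t,\delta,\Sigma}$ on the collar as the composition $\chi_{t,\delta}\circ\pi_{1}\circ\Psi_{\Sigma}^{-1}$ and read off the Lipschitz constant from the bi-Lipschitz property of $\Psi_{\Sigma}$ alone, without ever using the finer estimates in \ref{item:collF2}. This is cleaner and more conceptual. Two small points are worth making explicit. First, the composition formula you write down in fact extends to all of $\mathcal{O}=\Psi_{\Sigma}((-1,1)\times\Gamma_{\Sigma})$, not just the strip $\Psi_{\Sigma}((t,t+\delta)\times\Gamma_{\Sigma})$; this saves you from having to treat the transition curves $\Gamma_{\Sigma}^{t},\Gamma_{\Sigma}^{t+\delta}$ separately. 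Second, the passage from the a.e.\ pointwise gradient bound to a global Lipschitz estimate on $\partial\Omega'$ (needed to conclude $\psi_{t,\delta,\Sigma}\in\mathrm{Lip}_{\rm c}$) relies on the comparability of geodesic and Euclidean distances on $\partial\Omega'$; this is exactly Lemma \ref{lem:geodesic} in the paper, and you might cite it explicitly rather than leaving it implicit in the closing paragraph.
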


\begin{proof}
Throughout, it is useful to keep in mind that, in the present geometric setting, it does not matter whether we derive the Lipschitz bounds for the geodesic or the ambient Euclidean metric; see Lemma \ref{lem:geodesic}. We record from Lemma \ref{lem:collar2}\ref{item:collF2} that there exists a constant $\theta=\theta(\Sigma,\Psi_{\Sigma})\geq 2$ with 
\begin{align}\label{eq:recallcollar}
 \frac{s'-s}{\theta}\leq \mathrm{dist}(\Gamma_{\Sigma}^{s'},\Gamma_{\Sigma}^{s})\leq \theta(s'-s)\qquad\text{for all}\;t\leq s< s' \leq t+\delta.
 \end{align}
There are now four non-trivial cases:  
\begin{itemize}
\item[(i)] Suppose that $\psi_{t,\delta,\Sigma}(x)=0$ and that $y\in\Gamma_{\Sigma}^{s}$ for some $t<s<t+\delta$.  For every $z\in\Gamma_{\Sigma}^{t}$, it follows that
\begin{align*}
\mathrm{dist}(y,\Gamma_{\Sigma}^{t})\leq |y-z|\leq |y-x|+|x-z|, 
\end{align*}
and infimising the previous inequality over $z\in\Gamma_{\Sigma}^{t}$ yields 
\begin{align}\label{eq:officermorgan}
\mathrm{dist}(y,\Gamma_{\Sigma}^{t})\leq  |x-y| + \mathrm{dist}(x,\Gamma_{\Sigma}^{t}) = |x-y| + c'\,\mathrm{dist}(x,\Sigma^{\tau,t}) \leq c|x-y|. 
\end{align}
Different from the geometrically flat case, it might happen that the constant from \eqref{eq:officermorgan} satisfies $c'>1$; see Figure \ref{fig:constantchoose}. We then conclude that
\begin{align}\label{eq:officermorganA}
|\psi_{t,\delta,\Sigma}(x)-\psi_{t,\delta,\Sigma}(y)|=\frac{s-t}{\delta} \stackrel{\eqref{eq:recallcollar}}{\leq} \frac{\theta}{\delta}\mathrm{dist}(\Gamma_{\Sigma}^{t},\Gamma_{\Sigma}^{s})\leq \frac{\theta}{\delta}\mathrm{dist}(y,\Gamma_{\Sigma}^{t})\stackrel{\eqref{eq:officermorgan}}{\leq} \frac{c\,\theta}{\delta}|x-y|.
\end{align}
\item[(ii)] If $t<s<s'<t+\delta$, $x\in\Gamma_{\Sigma}^{s}$ and $y\in\Gamma_{\Sigma}^{s'}$, then we estimate
\begin{align}\label{eq:officermorganB}
|\psi_{t,\delta,\Sigma}(x)-\psi_{t,\delta,\Sigma}(y)| = \frac{s'-s}{\delta} & \stackrel{\eqref{eq:recallcollar}}{\leq} \frac{\theta}{\delta}\mathrm{dist}(\Gamma_{\Sigma}^{s},\Gamma_{\Sigma}^{s'}) \leq \frac{\theta}{\delta}|x-y|. 
\end{align}
 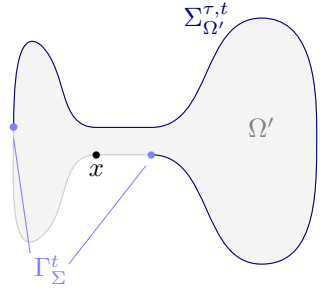
\begin{figure}
\begin{tikzpicture}[scale=1.45,rotate=90]
    \draw[-, fill=black!20!white,opacity=0.2] (0.25,1) -- (0.25,0.5) [out=270, in = 90] to (-0.75,-0.5) [out = 270,in=180] to (0.5,-1) [out=0,in = 270] to (1.5,-0.5) [out=90,in=270] to (0.5,0.5) -- (0.5,1) [out = 90, in = 225] to (1.25,1.5) [out=50,in=0] to (0.5,1.75) [out=180, in =130] to (-0.5,1.5) [out=310, in = 90] to (0.25,1);
    \node[black!50!white] at (0.5,-0.5) {$\Omega'$};
    \node[blue!50!black] at (1.5,0) {$\Sigma_{\Omega'}^{\tau,t}$};
   \draw[-,black!50!blue] (0.25,0.5) [out=270, in = 90] to (-0.75,-0.5) [out = 270,in=180] to (0.5,-1) [out=0,in = 270] to (1.5,-0.5) [out=90,in=270] to (0.5,0.5) -- (0.5,1) [out = 90, in = 225] to (1.25,1.5) [out=50,in=0] to (0.5,1.75) [out=180, in =130];
   \node[white!50!blue,left] at (-0.8,1.175) {$\Gamma_{\Sigma}^{t}$};
   \node[white!50!blue] at (0.25,0.5) {\tiny\textbullet};
   \node[white!50!blue] at (0.5,1.75) {\tiny\textbullet};
   \node[black] at (0.25,1) {\tiny\textbullet};
   \node[black,below] at (0.25,1) {$x$};
   \node[white!50!blue] at (0.5,1.75) {\tiny\textbullet};
   \draw[-,white!50!blue] (0.4,1.75) -- (-0.65,1.6);
   \draw[-,white!50!blue] (0.15,0.55) -- (-0.75,1.25);
\end{tikzpicture}
\caption{Lipschitz bounds on $\psi_{t,\delta,\Sigma}$. Different from the geometrically flat case, the constant $c'>0$ in \eqref{eq:officermorgan} cannot be chosen to equal $1$.}\label{fig:constantchoose}
 \end{figure}
\item[(iii)] If $\psi_{t,\delta,\Sigma}(x)=0$ and $\psi_{t,\delta,\Sigma}(y)=1$, then we have 
\begin{align}\label{eq:groupstep}
\delta \stackrel{\eqref{eq:recallcollar}}{\leq}\theta\,\mathrm{dist}(\Gamma_{\Sigma}^{t},\Gamma_{\Sigma}^{t+\delta}) \leq c'\,\theta\, \mathrm{dist}(\Sigma^{\tau,t+\delta},\partial\Omega'\setminus\Sigma^{\tau,t})\leq c\,\theta\,|x-y|, 
\end{align}
and therefore 
\begin{align}\label{eq:officermorganC}
|\psi_{t,\delta,\Sigma}(x)-\psi_{t,\delta,\Sigma}(y)|=1 \stackrel{\eqref{eq:groupstep}}{\leq} \frac{c\,\theta}{\delta}|x-y|.
\end{align}
\item[(iv)] If $t<s<t+\delta$,  $x\in\Gamma_{\Sigma}^{s}$ and $\psi_{t,\delta,\Sigma}(y)=1$, then 
\begin{align}\label{eq:officermorganD}
\begin{split}
|\psi_{t,\delta,\Sigma}(x)-\psi_{t,\delta,\Sigma}(y)| & = \frac{t+\delta- s}{\delta}\stackrel{\eqref{eq:recallcollar}}{\leq} \frac{\theta}{\delta}\,\mathrm{dist}(\Gamma_{\Sigma}^{t+\delta},\Gamma_{\Sigma}^{s}) \\ & \leq \frac{c'\,\theta}{\delta}\mathrm{dist}(\Sigma^{\tau,t+\delta},x)\leq \frac{c\,\theta}{\delta}|x-y|. 
\end{split}
\end{align}
\end{itemize}
Summarizing, \eqref{eq:officermorganA}--\eqref{eq:officermorganD} imply that $\psi_{t,\delta,\Sigma}$ is Lipschitz with Lipschitz constant at most $L=\frac{c\,\theta}{\delta}$, and then  \eqref{eq:Lipbound1} follows from  Lemma \ref{lem:LipCharManifold} 
in Appendix \hyperref[sec:AppendixA1]{A.1}. This completes the proof.
\end{proof}

\subsection{The Stokes theorem for the tangential variations}\label{sec:stokestanvarfirst}
We are now ready to approach the Stokes theorem with respect to the tangential variations. 
Without further mention, we adopt the geometric setting displayed in the previous subsection. 

Our strategy is as follows: In Definition \ref{def:stokes}, we introduce interior and exterior Stokes functionals; their definition is motivated by the left-hand side of \eqref{eq:Stokes} when being applied to $\varphi\FF$ with $\varphi\in\mathscr{D}(\Omega)$ and $\Sigma$ being replaced by $\Sigma^{\tau,t}$. This, in turn, is reduced to the (weak) interior and exterior  traces of the extended divergence measure field $\curl\FF\in\mathscr{DM}^{\mathrm{ext}}(\Omega)$ \emph{along} $\partial\Omega'$, and consequently must be localized to $\Sigma^{\tau,t}$. It is here where the  interior height functions $\psi_{t,\delta,\Sigma}$ from \eqref{eq:psiddefmain} enter. The localization procedure and, in particular, getting a definition independent of $\delta$, forces us to subsequently send $\delta\searrow 0$. As we show in Lemma \ref{lem:bdryintegralcont} and
Proposition \ref{prop:diff}, this is feasible for $\mathscr{L}^{1}$-a.e. $t\in[0,\frac{1}{2})$, in turn determining the \emph{good} manifolds $\Sigma^{\tau,t}$ where the Stokes functionals are well-defined indeed. Since, by construction, the manifolds $\Sigma^{\tau,t}$ are obtained by varying $\Sigma$ tangentially in $\partial\Omega'$, we speak of \emph{tangential variations}. 

The (strong) Stokes theorem (Theorem \ref{thm:stokes}) with respect to 
the tangential variations then asserts that, 
for $\mathscr{L}^{1}$-a.e. $\Sigma^{\tau,t}$ with $t\in[0,\frac{1}{2})$, 
the Stokes functionals are distributions of order zero, and so can be represented by Radon measures. The latter will be shown to be supported on $\Gamma_{\Sigma}^{t}$, 
and this will be the analogue of the right-hand side of \eqref{eq:Stokes}. Specifying to test functions $\varphi\in\mathscr{D}(\Omega)$ with $\varphi\equiv 1$ in a neighborhood of $\Sigma$ will eventually give us the requisite generalization of \eqref{eq:Stokes}, which means the weak or vorticity flux formulation of the Stokes theorem. In view of this program, we begin with the following definition.
\begin{definition}[Stokes Functionals]\label{def:stokes}
In the above situation, let $\FF\in\cm^{\infty}(\Omega)$. Given $0\leq t<\frac{1}{2}$ and $0<\delta<\frac{1}{4}$, let the interior height function $\psi_{t,\delta,\Sigma}$ 
be defined as in \eqref{eq:psiddefmain}. 
\begin{enumerate}
    \item Based on the normal trace for extended divergence measure fields 
    from {\rm Definition \ref{def:normaltraces}}, 
    we define
\begin{align}\label{eq:curllocaliser}
\begin{split}
&\left\langle\overline{\overline{(\curl\FF)\cdot\nu_{\Sigma^{\tau,t}}}},\varphi\right\rangle_{\Sigma^{\tau,t},\Omega'} := \lim_{\delta\searrow 0} \langle (\curl\FF)\cdot\nu,\overline{\psi}_{t,\delta,\Sigma}\varphi\rangle_{\partial\Omega'}
\\
&\left\langle\overline{\overline{(\curl\FF)\cdot\nu_{\Sigma^{\tau,t}}}},\varphi\right\rangle_{\Sigma^{\tau,t},\Omega\setminus\overline{\Omega'}} := \lim_{\delta\searrow 0} \langle (\curl\FF)\cdot\nu,\overline{\psi}_{t,\delta,\Sigma}\varphi\rangle_{\partial(\Omega\setminus\overline{\Omega'})},
\end{split}
\end{align}
for $\varphi\in\hold_{\rm c}^{1}(\Omega)$, provided that these limits exist,
where $\overline{\psi}_{t,\delta,\Sigma}\in\mathrm{Lip}_{\rm c}(\Omega)$ is an arbitrary Lipschitz extension of $\psi_{t,\delta,\Sigma}\colon\partial\Omega'\to[0,1]$. 
\item If the limits from \eqref{eq:curllocaliser} exist, we define the \emph{interior} and \emph{exterior Stokes functionals} along $\Sigma^{\tau,t}$ by
\begin{align}\label{eq:Stokesfunctionaldef}
\begin{split}
&\mathfrak{S}_{\Sigma^{\tau,t}}^{\mathrm{int}}(\varphi):= \left\langle\overline{\overline{(\curl\FF)\cdot\nu_{\Sigma^{\tau,t}}}},\varphi\right\rangle_{\Sigma^{\tau,t},\Omega'} +  \int_{\Sigma^{\tau,t}}(\FF\times\nu_{\partial\Omega'})_{\partial\Omega'}^{\interior}\cdot\nabla_{\tau}\varphi\dif\mathscr{H}^{2}\\
& \mathfrak{S}_{\Sigma^{\tau,t}}^{\mathrm{ext}}(\varphi):= \left\langle\overline{\overline{(\curl\FF)\cdot\nu_{\Sigma^{\tau,t}}}},\varphi\right\rangle_{\Sigma^{\tau,t},\Omega\setminus\overline{\Omega'}} +  \int_{\Sigma}\nabla_{\tau}\varphi\cdot(\FF\times\nu_{\partial(\Omega\setminus\overline{\Omega'})})_{T,\Omega'}^{\exterior}\dif\mathscr{H}^{2}.
\end{split}
\end{align}
\end{enumerate}
\end{definition}
We see later in \S\ref{sec:consistency} that this definition is natural 
and that it is consistent with
the classical Stokes formula \eqref{eq:Stokes} for sufficiently smooth maps $\FF$. Before we proceed, we note that $\psi_{t,\delta,\Sigma}\colon\partial\Omega'\to[0,1]$ is Lipschitz by Lemma \ref{lem:Lipbounds}, and therefore $\overline{\psi}_{t,\delta,\Sigma}$ exists by the classical McShane or Kirszbraun extension theorems. In the situation considered here, however, it will be convenient to work with the lifting provided by Lemma \ref{lem:GoodLip}\ref{item:Lipextend2}. For completeness, we note that the expressions as declared in \eqref{eq:curllocaliser} 
are well-defined, provided that they exist.

\begin{rem}[Well-definedness]\label{rem:welldefnessstokes}
In the situation of the previous definition, $\curl\FF\in\mathscr{DM}^{\mathrm{ext}}(\Omega)$, and so the pairings occuring on the right-hand side of \eqref{eq:curllocaliser} have a clear meaning. Moreover, if $\overline{\psi},\overline{\psi'}\in\mathrm{Lip}_{\rm c}(\Omega)$ are such that $\overline{\psi}=\overline{\psi'}$ on $\partial\Omega'$, then 
$\langle(\curl\FF)\cdot\nu,\overline{\psi}\rangle_{\partial\Omega'}=\langle(\curl\FF)\cdot\nu,\overline{\psi'}\rangle_{\partial\Omega'}$ by Lemma \ref{lem:LipschitzVanish} (recall that $\Omega'$ is open). 
\end{rem}

In the following, we give criteria for the limits  in \eqref{eq:curllocaliser} and \eqref{eq:Stokesfunctionaldef} to exist, respectively. To this end, we begin with an easier case:

\begin{lem}\label{lem:goodrepmeas}
In the situation of {\rm Definition \ref{def:stokes}}, 
suppose that $\langle(\curl\FF)\cdot\nu,\cdot\rangle_{\partial\Omega'}$ can be represented by a finite Radon measure $\mu_{(\curl\FF)\cdot\nu}\in\mathrm{RM}_{\mathrm{fin}}(\partial\Omega')$. Then  
\begin{align}\label{eq:measrep0}
\left\langle\overline{\overline{(\curl\FF)\cdot\nu_{\Sigma^{\tau,t}}}},\varphi\right\rangle_{\Sigma^{\tau,t},\Omega'} = \int_{\Sigma^{\tau,t}} \varphi \dif\mu_{(\curl\FF)\cdot\nu}
\end{align}
holds for all $\varphi\in\hold_{\rm c}^{1}(\Omega)$ and all $0\leq t <\frac{1}{2}$.
\end{lem}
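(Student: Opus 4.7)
The plan is to reduce the statement to an elementary application of the dominated convergence theorem. Recall that, by Definition 5.3, the left-hand side of \eqref{eq:measrep0} is defined as
\[
\lim_{\delta\searrow 0}\langle (\curl\FF)\cdot\nu,\overline{\psi}_{t,\delta,\Sigma}\varphi\rangle_{\partial\Omega'},
\]
where $\overline{\psi}_{t,\delta,\Sigma}\in\mathrm{Lip}_{\rm c}(\Omega)$ is any Lipschitz extension of the height function $\psi_{t,\delta,\Sigma}$ on $\partial\Omega'$. Under the hypothesis that the pairing $\langle(\curl\FF)\cdot\nu,\,\cdot\,\rangle_{\partial\Omega'}$ is represented by the finite Radon measure $\mu_{(\curl\FF)\cdot\nu}$ on $\partial\Omega'$, its evaluation on any Lipschitz test map reduces to integration of the boundary trace against this measure. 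Consequently, for every $0<\delta<\tfrac{1}{4}$,
\[
\langle(\curl\FF)\cdot\nu,\overline{\psi}_{t,\delta,\Sigma}\varphi\rangle_{\partial\Omega'}
=\int_{\partial\Omega'}\psi_{t,\delta,\Sigma}\,\varphi\dif\mu_{(\curl\FF)\cdot\nu},
\]
a representation that is moreover independent of the particular Lipschitz extension in view of Lemma \ref{lem:LipschitzVanish} together with Remark \ref{rem:welldefnessstokes}.

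The proof then rests on passing $\delta\searrow 0$ inside this integral. Inspecting \eqref{eq:psiddefmain} case by case, one checks that $\psi_{t,\delta,\Sigma}$ is uniformly bounded by $1$ on $\partial\Omega'$ and satisfies
\[
\psi_{t,\delta,\Sigma}(x)\longrightarrow \mathbf{1}_{\Sigma^{\tau,t}}(x)\qquad\text{as }\delta\searrow 0,\text{ for every }x\in\partial\Omega'.
\]
Indeed, if $x\in\partial\Omega'\setminus\Sigma^{\tau,t}$ (which includes $\Gamma_{\Sigma}^{t}$ by the convention $\Sigma^{\tau,t}=\Sigma\setminus\Psi_{\Sigma}((0,t]\times\Gamma_{\Sigma})$), then $\psi_{t,\delta,\Sigma}(x)=0$ for every $\delta$; if $x\in\Sigma^{\tau,t}$, say $x\in\Gamma_{\Sigma}^{s}$ with $s>t$, then for all $\delta<s-t$ we have $x\in\Sigma\setminus\Psi_{\Sigma}((t,t+\delta)\times\Gamma_{\Sigma})$, whence $\psi_{t,\delta,\Sigma}(x)=1$.

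Since $|\psi_{t,\delta,\Sigma}\varphi|\leq \|\varphi\|_{\lebe^{\infty}(\Omega)}$ and $\mu_{(\curl\FF)\cdot\nu}$ is a finite Radon measure, the Lebesgue dominated convergence theorem applies and yields
\[
\lim_{\delta\searrow 0}\int_{\partial\Omega'}\psi_{t,\delta,\Sigma}\,\varphi\dif\mu_{(\curl\FF)\cdot\nu}
=\int_{\partial\Omega'}\mathbf{1}_{\Sigma^{\tau,t}}\,\varphi\dif\mu_{(\curl\FF)\cdot\nu}
=\int_{\Sigma^{\tau,t}}\varphi\dif\mu_{(\curl\FF)\cdot\nu},
\]
which is precisely \eqref{eq:measrep0}. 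No real obstacle is expected in this argument: the only point requiring minor care is the pointwise convergence on $\Gamma_{\Sigma}^{t}$, which is automatic from the definition of $\psi_{t,\delta,\Sigma}$ and so does not require any further hypothesis on the mass of $\mu_{(\curl\FF)\cdot\nu}$ on $\Gamma_{\Sigma}^{t}$.
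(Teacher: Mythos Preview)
Your proof is correct and follows essentially the same approach as the paper's: both represent the pairing as an integral against $\mu_{(\curl\FF)\cdot\nu}$, use the uniform bound $|\psi_{t,\delta,\Sigma}|\leq 1$ together with the pointwise convergence $\psi_{t,\delta,\Sigma}\to\mathbf{1}_{\Sigma^{\tau,t}}$, and conclude via dominated convergence. The only cosmetic difference is that the paper first restricts the integral to $\Sigma^{\tau,t}$ (using that $\psi_{t,\delta,\Sigma}$ vanishes on its complement) before passing to the limit, whereas you apply dominated convergence on all of $\partial\Omega'$; your case analysis for the pointwise convergence is slightly informal for points of $\Sigma^{\tau,t}$ lying outside the collar neighborhood $\mathcal{O}$, but there $\psi_{t,\delta,\Sigma}\equiv 1$ trivially, so nothing is missing.
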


\begin{proof}
Since $\mu_{(\curl\FF)\cdot\nu}\in\mathrm{RM}(\partial\Omega')$, $\spt(\psi_{t,\delta,\Sigma})\subset\overline{\Sigma}$, and $\psi_{t,\delta,\Sigma}|_{\Gamma_{\Sigma}^{t}}=0$, we have  
\begin{align}\label{eq:thenight}
\big\langle(\curl\FF)\cdot\nu,\psi_{t,\delta,\Sigma}\varphi\big\rangle_{\partial\Omega'} 
= \int_{\partial\Omega'}\psi_{t,\delta,\Sigma}\varphi\dif\mu_{(\curl\FF)\cdot\nu} = \int_{\Sigma^{\tau,t}}\psi_{t,\delta,\Sigma}\varphi\dif\mu_{(\curl\FF)\cdot\nu}
\end{align}
for all $\varphi\in\hold_{\rm c}^{1}(\Omega)$. Moreover, $|\psi_{t,\delta,\Sigma}|\leq 1$ for every $0<\delta<\frac{1}{4}$ and $\psi_{t,\delta,\Sigma}\to 1$ everywhere in the relatively open set $\Sigma^{\tau,t}$ as $\delta\searrow 0$. By Lebesgue's theorem on dominated convergence, 
we pass to the limit $\delta\searrow 0$ in \eqref{eq:thenight} and obtain \eqref{eq:measrep0}.  This completes the proof. 
\end{proof}

In general, we cannot expect $(\curl\FF)\cdot\nu$ to be represented by a measure on $\Sigma$. To deal with this general case (see  Proposition \ref{prop:diff} below), 
we record a preparatory lemma:
\begin{lem}\label{lem:bdryintegralcont}
    Let $\Omega'\Subset\Omega$ be open and bounded with $\hold^{1}$-boundary, and let $\Sigma$ be a $\hold^{1}$-regular Lipschitz boundary manifold relative to $\Omega'$.
Let $\mathbf{v}\in\hold^{1}(\partial\Omega';T_{\partial\Omega'})$. Then the limit 
\begin{align}\label{eq:lim0a}
\ell_{t}^{\mathbf{v}}(\varphi):=\lim_{\delta\searrow 0}\int_{\Sigma^{\tau,t}}\varphi \mathbf{v}\cdot\nabla_{\tau}\psi_{t,\delta,\Sigma}\dif\mathscr{H}^{2}\qquad\text{exists and is finite} 
\end{align}
for every $0\leq t<\frac{1}{2}$ and every $\varphi\in\hold^{1}(\partial\Omega')$. Moreover, for every $\varepsilon>0$, there exists $0<\delta'<\frac{1}{4}$ such that $0<\delta<\delta'$ implies that 
\begin{align}\label{eq:lim00a}
\left\vert \ell_{t}^{\mathbf{v}}(\varphi) - \int_{\Sigma^{\tau,t}}\varphi \mathbf{v}\cdot\nabla_{\tau}\psi_{t,\delta,\Sigma}\dif\mathscr{H}^{2}\right\vert < \varepsilon\|\varphi\|_{\hold^{1}(\partial\Omega')}\|\mathbf{v}\|_{\hold^{1}(\partial\Omega')}
\end{align}
holds for all $0\leq t<\frac{1}{2}$ and all $\varphi\in\hold^{1}(\partial\Omega')$. 
\end{lem}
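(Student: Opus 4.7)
The plan is to use a tangential integration by parts on $\Sigma^{\tau,t}$ to transfer the derivative off the Lipschitz localizer $\psi_{t,\delta,\Sigma}$ onto the smooth factor $\varphi \mathbf{v}$. Since $\psi_{t,\delta,\Sigma}$ vanishes on $\Gamma_{\Sigma}^{t} = \partial \Sigma^{\tau,t}$ by construction, no boundary term survives, and the resulting $\delta$-dependent quantity is then amenable to dominated convergence.

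First, I would approximate $\psi_{t,\delta,\Sigma}$ on $\partial\Omega'$ by tangentially mollified functions $\psi^{(j)} \in \hold^{1}(\partial\Omega')$ with uniform Lipschitz bounds (accessible via Lemma \ref{lem:Lipbounds}) such that $\psi^{(j)} \to \psi_{t,\delta,\Sigma}$ uniformly on $\partial\Omega'$ and $\nabla_{\tau} \psi^{(j)} \to \nabla_{\tau} \psi_{t,\delta,\Sigma}$ pointwise $\mathscr{H}^{2}$-a.e.\ on $\partial\Omega'$. Applying \eqref{eq:IBPmanifolds00} on $\Sigma^{\tau,t}$ with the $\hold^{1}$-tangential field $T = \varphi \mathbf{v}$ and the $\hold^{1}$-scalar $\psi^{(j)}$, then sending $j \to \infty$ and exploiting that $\psi_{t,\delta,\Sigma} = 0$ on $\Gamma_{\Sigma}^{t}$ (so that the induced boundary integral vanishes in the limit), produces
\begin{equation*}
\int_{\Sigma^{\tau,t}} \varphi \mathbf{v} \cdot \nabla_{\tau} \psi_{t,\delta,\Sigma} \,\dif\mathscr{H}^{2} = -\int_{\Sigma^{\tau,t}} \psi_{t,\delta,\Sigma} \,\divm(\varphi \mathbf{v}) \,\dif\mathscr{H}^{2}.
\end{equation*}
As $\delta \searrow 0$, $\psi_{t,\delta,\Sigma} \to \mathbbm{1}_{\Sigma^{\tau,t}}$ pointwise on the relative interior of $\Sigma^{\tau,t}$ while staying bounded by $1$. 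Since $\divm(\varphi \mathbf{v}) = \nabla_{\tau} \varphi \cdot \mathbf{v} + \varphi\,\divm(\mathbf{v})$ lies in $\lebe^{\infty}(\partial\Omega')$, Lebesgue's dominated convergence theorem gives the existence and finiteness of the limit and yields $\ell_t^{\mathbf{v}}(\varphi) = -\int_{\Sigma^{\tau,t}} \divm(\varphi \mathbf{v}) \,\dif\mathscr{H}^{2}$.

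For the uniform error estimate \eqref{eq:lim00a}, combining the two identities gives
\begin{equation*}
\ell_t^{\mathbf{v}}(\varphi) - \int_{\Sigma^{\tau,t}} \varphi \mathbf{v} \cdot \nabla_{\tau} \psi_{t,\delta,\Sigma} \,\dif\mathscr{H}^{2} = -\int_{\Sigma^{\tau,t}} (1 - \psi_{t,\delta,\Sigma}) \,\divm(\varphi \mathbf{v}) \,\dif\mathscr{H}^{2},
\end{equation*}
and the integrand is supported in the tangential strip $\Psi_{\Sigma}((t, t+\delta) \times \Gamma_{\Sigma})$ and bounded in absolute value by $\|\divm(\varphi \mathbf{v})\|_{\lebe^{\infty}}$. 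The product-rule bound $\|\divm(\varphi \mathbf{v})\|_{\lebe^{\infty}(\partial\Omega')} \leq c \|\varphi\|_{\hold^{1}(\partial\Omega')} \|\mathbf{v}\|_{\hold^{1}(\partial\Omega')}$ combined with the bi-Lipschitz property of $\Psi_{\Sigma}$ from Lemma \ref{lem:collar2}\ref{item:collF2}, which yields $\mathscr{H}^{2}(\Psi_{\Sigma}((t, t+\delta) \times \Gamma_{\Sigma})) \leq C \delta\, \mathscr{H}^{1}(\Gamma_{\Sigma})$ uniformly in $t \in [0, \tfrac{1}{2})$, reduces the argument to choosing $\delta' = \varepsilon / (cC\, \mathscr{H}^{1}(\Gamma_{\Sigma}))$. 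The main obstacle is justifying the integration by parts for the merely Lipschitz scalar factor $\psi_{t,\delta,\Sigma}$, but this is routine in view of the uniform bounds from Lemma \ref{lem:Lipbounds} and the $\hold^{1}$-regularity of $\partial\Omega'$.
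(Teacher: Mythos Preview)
Your argument is correct and arguably more direct than the paper's. The paper does not integrate by parts on $\Sigma^{\tau,t}$ with $\psi_{t,\delta,\Sigma}$ as the scalar factor; instead it applies the coarea formula (Lemma~\ref{lem:coarea}) to slice the integral along the level sets $\Gamma_{\Sigma}^{s}$ of $\psi_{t,\delta,\Sigma}$, and then invokes \eqref{eq:IBPmanifolds00} on each $\Sigma^{\tau,s}$ separately to rewrite the slice integrals as $-\int_{\Sigma^{\tau,s}}\varphi\,\divm(\mathbf{v})-\int_{\Sigma^{\tau,s}}\nabla_{\tau}\varphi\cdot\mathbf{v}$. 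The error is then controlled by comparing these with the $s=t$ expression, which again comes down to the strip measure bound $\mathscr{H}^{2}(\Sigma^{\tau,t}\setminus\Sigma^{\tau,t+\delta})\leq \mathfrak{L}\delta\mathscr{H}^{1}(\Gamma_{\Sigma})$. Two differences are worth noting. First, the paper's route identifies the limit explicitly as the boundary integral $\int_{\Gamma_{\Sigma}^{t}}\varphi\mathbf{v}\cdot\nu_{\Gamma_{\Sigma}^{t}}\dif\mathscr{H}^{1}$, which is the form used later (e.g.\ in the consistency check of Proposition~\ref{prop:consistency} and in Corollary~\ref{cor:contstok}); your representation $-\int_{\Sigma^{\tau,t}}\divm(\varphi\mathbf{v})\dif\mathscr{H}^{2}$ is of course equivalent by one more application of \eqref{eq:IBPmanifolds00}, but the boundary form is what downstream arguments consume. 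Second, because the coarea formula applies to Lipschitz maps directly, the paper never needs to approximate $\psi_{t,\delta,\Sigma}$ by $\hold^{1}$-functions, whereas your route requires that mollification step (which, as you say, is routine given Lemma~\ref{lem:Lipbounds}).
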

\begin{proof}
First, let $0<\delta<\frac{1}{4}$ be arbitrary. 
By the smooth integration-by-parts formula from \eqref{eq:IBPmanifolds00}, we have 
\begin{align}\label{eq:ibpaux}
\int_{\Gamma_{{\Sigma}}^{s}}\varphi \mathbf{v}\cdot\nu_{\Gamma_{{\Sigma}}^{s}}\dif\mathscr{H}^{1}  = -\int_{{\Sigma}^{\tau,s}} \varphi\,\mathrm{div}_{\tau}(\mathbf{v})\dif\mathscr{H}^{2} - \int_{{\Sigma}^{\tau,s}}\nabla_{\tau}\varphi\cdot \mathbf{v} \dif\mathscr{H}^{2}
\end{align}
for all $t\leq s<t+\delta$. We apply the coarea formula from Lemma \ref{lem:coarea} to $g=\varphi\mathbf{v}\cdot\frac{\nabla_{\tau}\psi_{t.\delta,\Sigma}}{|\nabla_{\tau}\psi_{t,\delta,\Sigma}|}$ and $f=\psi_{t,\delta,\Sigma}$, whereby $\mathbf{J}_{1}^{{\Sigma}}f=|\nabla_{\tau}\psi_{\delta,t,\Sigma}|$. By the very definition of $\psi_{t,\delta,\Sigma}$, this gives us 
\begin{align}\label{eq:icetruckk}
\begin{split}
&\int_{\Psi_{\Sigma}((t,t+\delta)\times\Gamma_{{\Sigma}})} \varphi\mathbf{v}\cdot\nabla_{\tau}\psi_{t,\delta,\Sigma}\dif\mathscr{H}^{2} = \int_{{\Psi_{\Sigma}}((t,t+\delta)\times\Gamma_{{\Sigma}})} g\,\mathbf{J}_{1}^{{\Sigma}}f\dif\mathscr{H}^{2} \\ 
&\,\,=  \int_{0}^{1}\int_{{\Sigma}\cap f^{-1}(\{\vartheta\})}g\dif\mathscr{H}^{1}\dif\vartheta  = \int_{0}^{1}\int_{\Gamma_{\Sigma}^{t+\vartheta\delta}}\varphi\mathbf{v}\cdot \nu_{\Gamma_{\Sigma}^{t+\vartheta\delta}}\dif\mathscr{H}^{1}\dif \vartheta \\ 
&\,\,\stackrel{\eqref{eq:ibpaux}}{=} -\int_{0}^{1} \int_{{\Sigma}^{\tau,t+\vartheta\delta}} \varphi\,\mathrm{div}_{\tau}(\mathbf{v})\dif\mathscr{H}^{2}\dif\vartheta - \int_{0}^{1}\int_{{\Sigma}^{\tau,t+\vartheta\delta}}\nabla_{\tau}\varphi\cdot \mathbf{v} \dif\mathscr{H}^{2}\dif\vartheta.
\end{split}
\end{align}
In view of \eqref{eq:ibpaux}, which also holds for $s=t$, we thus conclude 
\begin{align}\label{eq:hotsummer}
\begin{split}
& \left\vert \int_{{\Psi_{\Sigma}}((t,t+\delta)\times\Gamma_{{\Sigma}})}\varphi \mathbf{v}\cdot\nabla_{\tau}\psi_{t,\delta,\Sigma}\dif\mathscr{H}^{2} - \int_{\Gamma_{{\Sigma}}^{t}}\varphi \mathbf{v}\cdot\nu_{\Gamma_{{\Sigma}}^{t}}\dif\mathscr{H}^{1}\right\vert  \\  
&\,\,\leq \left\vert \int_{0}^{1}\Big(\int_{{\Sigma}^{\tau,t+\vartheta\delta}} \varphi\,\mathrm{div}_{\tau}(\mathbf{v})\dif\mathscr{H}^{2} - \int_{{\Sigma}^{\tau,t}}\varphi\,\mathrm{div}_{\tau}(\mathbf{v})\dif\mathscr{H}^{2} \Big)\dif\vartheta  \right\vert \\ 
&\quad\,\,+ \left\vert \int_{0}^{1}\Big(\int_{{\Sigma}^{\tau,t+\vartheta\delta}}\nabla_{\tau}\varphi\cdot \mathbf{v} \dif\mathscr{H}^{2}
- \int_{{\Sigma}^{\tau,t}}\nabla_{\tau}\varphi\cdot \mathbf{v}\dif\mathscr{H}^{2}\Big)\dif \vartheta\right\vert  \\ 
&\,\,\leq 2\,\mathscr{H}^{2}({\Sigma}^{\tau,t}\setminus{\Sigma}^{\tau,t+\delta})\,\|\varphi\|_{\hold^{1}(\partial\Omega')}\|\mathbf{v}\|_{\hold^{1}(\partial\Omega')}.
\end{split}
\end{align}
To complete the proof, we recall that $\Psi_{\Sigma}\colon [-\frac{3}{4},\frac{3}{4}]\times\Gamma_{\Sigma}\to \Psi([-\frac{3}{4},\frac{3}{4}]\times\Gamma_{\Sigma})$ is bi-Lipschitz,  and $\mathfrak{L}:=\max\{\mathrm{Lip}(\Psi_{\Sigma}),\mathrm{Lip}(\Psi_{\Sigma}^{-1})\}<\infty$. Since ${\Sigma}^{\tau,t}\setminus{\Sigma}^{\tau,t+\delta}=\Psi((t,t+\delta]\times\Gamma_{\Sigma})$, we have
\begin{align}\label{eq:steinalgen}
\mathscr{H}^{2}({\Sigma}^{\tau,t}\setminus{\Sigma}^{\tau,t+\delta})\leq \mathfrak{L}\mathscr{H}^{2}((t,t+\delta]\times\Gamma_{\Sigma})=\mathfrak{L}\delta\mathscr{H}^{1}(\Gamma_{\Sigma}). 
\end{align}
Given $\varepsilon>0$, we may choose $\delta'=\min\{\varepsilon,1\}/(2\mathfrak{L}\mathscr{H}^{1}(\Gamma_{\Sigma})+4)$, whence \eqref{eq:hotsummer} and \eqref{eq:steinalgen} combine to \eqref{eq:lim0a} and \eqref{eq:lim00a}. The proof is complete.  
\end{proof}

\begin{rem}\label{rem:onthebdryproof}
It is important to note that the previous proof does not carry over to general $\lebe^{1}$-functions $\mathbf{v}$. However, it does extend to the case where $\mathbf{v}\in\lebe^{\infty}(\partial\Omega';T_{\partial\Omega'})$ is such that $\mathrm{div}_{\tau}(\mathbf{v})$ can be represented by a finite Radon measure. This will be a by-product of the proof of Theorem \ref{thm:stokes1st} in \S \ref{sec:divmeasfieldsmanif}.  
\end{rem} 
Even though the generic case of this section is for $\lebe^{\infty}$-fields, 
we directly state the next proposition for $\lebe^{1}$-fields,  since this  will play an important role in Section \ref{sec:Stokesgeneral}. In view of our applications below, it is important to note that we may  choose the set $\mathscr{I}$ as it appears in Proposition \ref{prop:diff} independently of the particular function $\varphi\in\hold^{1}(\partial\Omega')$.

\begin{prop}[Tangential limits on manifolds]\label{prop:diff}
Let $\Omega'\Subset\Omega$ be open and bounded with $\hold^{1}$-boundary, and let $\Sigma$ be a $\hold^{1}$-regular Lipschitz boundary manifold relative to $\Omega'$. Moreover, let $\mathbf{v}\in\lebe^{1}(\partial\Omega';T_{\partial\Omega'})$. Then there exists a set $\mathscr{I}\subset I:= [0,\frac{1}{2})$ such that $\mathscr{L}^{1}(I\setminus\mathscr{I})=0$, $\mathscr{I}\subset\{t\in I\colon\;\mathcal{M}_{\Sigma,\partial\Omega'}^{\Psi,+}\mathbf{v}(t)<\infty\}$, and, for every $t\in \mathscr{I}$, the limit 
\begin{align}\label{eq:lim1}
\ell_{t}^{\mathbf{v}}(\varphi):=\lim_{\delta\searrow 0}\int_{\Sigma^{\tau,t}}\varphi \mathbf{v}\cdot\nabla_{\tau}\psi_{t,\delta,\Sigma}\dif\mathscr{H}^{2}\qquad\text{exists and is finite} 
\end{align}
for every $\varphi\in\hold^{1}(\partial\Omega')$  
together with the estimate 
\begin{align}\label{eq:lim2}
|\ell_{t}^{\mathbf{v}}(\varphi)|\leq c\,(\mathcal{M}_{\Sigma,\partial\Omega'}^{\Psi,+}\mathbf{v}(t))\|\varphi\|_{\hold(\overline{\Sigma})}, 
\end{align}
where $c>0$ is independent of $\varphi$, $\mathbf{v}$, and $t$. 
\end{prop}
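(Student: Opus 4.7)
The plan is to combine the $\hold^{1}$-case from Lemma \ref{lem:bdryintegralcont} with a density-plus-maximal-function argument. Write $a_{\delta}^{\mathbf{w}}(\varphi):=\int_{\Sigma^{\tau,t}}\varphi\,\mathbf{w}\cdot\nabla_{\tau}\psi_{t,\delta,\Sigma}\,\dif\mathscr{H}^{2}$ for any $\mathbf{w}\in\lebe^{1}(\partial\Omega';T_{\partial\Omega'})$ and $\varphi\in\hold^{1}(\partial\Omega')$. The first observation is the \emph{uniform-in-$\delta$} estimate: since $\nabla_{\tau}\psi_{t,\delta,\Sigma}$ is supported in $(\Gamma_{\Sigma}^{t})_{\delta}^{+}$ with $\|\nabla_{\tau}\psi_{t,\delta,\Sigma}\|_{\lebe^{\infty}}\leq c/\delta$ by Lemma \ref{lem:Lipbounds}, we have
\begin{align*}
|a_{\delta}^{\mathbf{w}}(\varphi)|\leq \frac{c}{\delta}\|\varphi\|_{\hold(\overline{\Sigma})}\int_{(\Gamma_{\Sigma}^{t})_{\delta}^{+}}|\mathbf{w}|\dif\mathscr{H}^{2}\leq c\|\varphi\|_{\hold(\overline{\Sigma})}\,\mathcal{M}_{\Sigma,\partial\Omega'}^{\Psi,+}\mathbf{w}(t),
\end{align*}
the final inequality being valid for every $0<\delta<\tfrac{1}{4}$ by definition of the upper maximal function.

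Secondly, I would identify the good set $\mathscr{I}$ by a Borel--Cantelli/diagonal procedure. By a standard partition-of-unity plus mollification-in-charts argument, one picks $(\mathbf{v}_{j})\subset\hold^{1}(\partial\Omega';T_{\partial\Omega'})$ with $\|\mathbf{v}-\mathbf{v}_{j}\|_{\lebe^{1}(\partial\Omega')}\leq 4^{-j}$. The weak-$(1,1)$-estimate of Proposition \ref{prop:HLWnormal}\ref{item:HLW2} then yields
\begin{align*}
\mathscr{L}^{1}\!\left(\{t\in I\colon\mathcal{M}_{\Sigma,\partial\Omega'}^{\Psi,+}(\mathbf{v}-\mathbf{v}_{j})(t)>2^{-j}\}\right)\leq 10\cdot 2^{-j}.
\end{align*}
By Borel--Cantelli, for $\mathscr{L}^{1}$-a.e. $t\in I$ one has $\mathcal{M}_{\Sigma,\partial\Omega'}^{\Psi,+}(\mathbf{v}-\mathbf{v}_{j})(t)\leq 2^{-j}$ for all large $j$, in particular $\mathcal{M}_{\Sigma,\partial\Omega'}^{\Psi,+}(\mathbf{v}-\mathbf{v}_{j})(t)\to 0$. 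I take $\mathscr{I}$ to be the intersection of this set of $t$ with $\{t\in I:\mathcal{M}_{\Sigma,\partial\Omega'}^{\Psi,+}\mathbf{v}(t)<\infty\}$; Proposition \ref{prop:HLWnormal}\ref{item:HLW2} guarantees that $\mathscr{L}^{1}(I\setminus\mathscr{I})=0$, and $\mathscr{I}$ is independent of $\varphi$.

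Thirdly, for $t\in\mathscr{I}$ and $\varphi\in\hold^{1}(\partial\Omega')$, Lemma \ref{lem:bdryintegralcont} gives the existence of $\ell_{t}^{\mathbf{v}_{j}}(\varphi):=\lim_{\delta\searrow 0}a_{\delta}^{\mathbf{v}_{j}}(\varphi)$ for every $j$. Linearity of $a_{\delta}^{\mathbf{w}}$ in $\mathbf{w}$ combined with the uniform estimate applied to $\mathbf{v}-\mathbf{v}_{j}$ yields
\begin{align*}
|a_{\delta}^{\mathbf{v}}(\varphi)-a_{\delta}^{\mathbf{v}_{j}}(\varphi)|\leq c\|\varphi\|_{\hold(\overline{\Sigma})}\,\mathcal{M}_{\Sigma,\partial\Omega'}^{\Psi,+}(\mathbf{v}-\mathbf{v}_{j})(t)
\end{align*}
uniformly in $\delta$. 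Letting $\delta\searrow 0$ shows that $(\ell_{t}^{\mathbf{v}_{j}}(\varphi))_{j\in\mathbb{N}}$ is Cauchy, and that $\limsup_{\delta}a_{\delta}^{\mathbf{v}}(\varphi)-\liminf_{\delta}a_{\delta}^{\mathbf{v}}(\varphi)$ is bounded by $2c\|\varphi\|_{\hold(\overline{\Sigma})}\mathcal{M}_{\Sigma,\partial\Omega'}^{\Psi,+}(\mathbf{v}-\mathbf{v}_{j})(t)$, which tends to $0$ as $j\to\infty$ by construction of $\mathscr{I}$. Hence $\ell_{t}^{\mathbf{v}}(\varphi)$ exists, equals $\lim_{j}\ell_{t}^{\mathbf{v}_{j}}(\varphi)$, and \eqref{eq:lim2} follows by passing to $\delta\searrow 0$ in the uniform estimate applied to $\mathbf{v}$ itself.

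The main obstacle is the density step: one must approximate a tangential $\lebe^{1}$-field by tangential $\hold^{1}$-fields with quantitative $\lebe^{1}$-control. This is done by localizing with a $\hold^{1}$-partition of unity subordinate to the chart covering from Definition \ref{def:bdrymanifolds}, mollifying the local coefficients in the parameter domains, and composing back with the chart, which preserves tangentiality up to a harmless $\hold^{1}$-perturbation that can be absorbed in the $4^{-j}$ bound. Everything else is an essentially formal chaining of the two key inputs, namely Lemma \ref{lem:bdryintegralcont} and the weak-$(1,1)$ bound of Proposition \ref{prop:HLWnormal}.
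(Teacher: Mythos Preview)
Your proposal is correct and follows essentially the same approach as the paper: both proofs combine the uniform maximal-function bound (via Lemma~\ref{lem:Lipbounds}), the $\hold^{1}$-case from Lemma~\ref{lem:bdryintegralcont}, density of $\hold^{1}$ tangential fields in $\lebe^{1}$, and the weak-$(1,1)$ estimate from Proposition~\ref{prop:HLWnormal} together with a Borel--Cantelli argument to single out the good set $\mathscr{I}$. The only organizational difference is that you fix a single approximating sequence $(\mathbf{v}_{j})$ with geometric $\lebe^{1}$-decay upfront, whereas the paper introduces a tangential oscillation functional and chooses a separate approximant $\mathbf{w}_{\lambda}$ for each threshold $\lambda=2^{-j}$; this is a minor stylistic variation and your version is arguably slightly more direct.
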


\begin{proof}
We begin with some preliminary remarks; for this, let $\varphi\in\hold_{\rm c}^{1}(\partial\Omega')$ be arbitrary with $0<\|\varphi\|_{\hold^{1}(\partial\Omega')}\leq 1$. We record from Lemma \ref{lem:Lipbounds}  that there exists a constant $\mathtt{c}_{1}=\mathtt{c}_{1}(\Sigma,\Psi_{\Sigma})>0$ such that
\begin{align}\label{eq:psideltagradient}
\|\nabla_{\tau}\psi_{t,\delta,\Sigma}\|_{\lebe^{\infty}(\Sigma)}\leq \frac{\mathtt{c}_{1}}{\delta}\qquad\text{for all $t\in I$ and all $0<\delta<\tfrac{1}{4}$}. 
\end{align}
This implies that, with the tangential maximal operator $\mathcal{M}_{\Sigma,\partial\Omega'}^{\Psi,+}$ from Definition \ref{def:tangentialmax}, 
\begin{align}\label{eq:maxo}
 \int_{\Sigma^{\tau,t}}|\varphi\,\mathbf{u}\cdot\nabla_{\tau}\psi_{t,\delta,\Sigma}|\dif\mathscr{H}^{2} \leq \mathtt{c}_{1}\|\varphi\|_{\hold(\partial\Omega')}\mathcal{M}_{\Sigma,\partial\Omega'}^{\Psi,+}\mathbf{u}(t) \leq \mathtt{c}_{1}\mathcal{M}_{\Sigma,\partial\Omega'}^{\Psi,+}\mathbf{u}(t)
\end{align}
holds for all $\mathbf{u}\in\lebe^{1}(\partial\Omega';T_{\partial\Omega'})$, all $t\in I$, 
and all $0<\delta<\frac{1}{4}$. 
Finally, by the weak-$(1,1)$-inequality from  Proposition \ref{prop:HLWnormal}\ref{item:HLW2}, there exists $\mathtt{c}_{2}>0$ such that, for all $\mathbf{u}\in\lebe^{1}(\partial\Omega';T_{\partial\Omega'})$ and all $\lambda>0$, 
\begin{align}\label{eq:highandlow}
\mathscr{L}^{1}(\{t\in I\colon\;\mathcal{M}_{\Sigma,\partial\Omega'}^{\Psi,+}\mathbf{u}(t)>\lambda\})\leq \frac{\mathtt{c}_{2}}{\lambda}\|\mathbf{u}\|_{\lebe^{1}(\partial\Omega')}. 
\end{align}
We now embark on the main part of the proof, and first  establish \eqref{eq:lim1}. To this end, we define 
\begin{align*}
\mathfrak{v}_{\delta}(t):=\int_{\Sigma^{\tau,t}}\varphi\, \mathbf{v}\cdot\nabla_{\tau}\psi_{t,\delta,\Sigma}\dif\mathscr{H}^{2}\qquad \mbox{for $t\in I$ and $0<\delta<\frac{1}{4}$},
\end{align*}
whereby $\mathfrak{v}_{\delta}\colon I\to\R$ is continuous; recall that $\mathbf{v}\in\lebe^{1}(\partial\Omega';T_{\partial\Omega'})$ and, by \eqref{eq:psideltagradient}, $\nabla_{\tau}\psi_{\delta,t,\Sigma}\in\lebe^{\infty}(\partial\Omega';T_{\partial\Omega'})$. 

We now prove that 
\begin{align}\label{eq:lim3}
\mathscr{L}^{1}(\mathscr{N}):=\mathscr{L}^{1}(\{t\in I\colon\;\lim_{\delta\searrow 0}\mathfrak{v}_{\delta}(t)\;\text{does not exist}\})=0.
\end{align}
To this end, we define  the \emph{tangential oscillation} 
for some given $0<\delta'<\frac{1}{4}$ by
\begin{align*}
\tosc_{\delta'}\mathbf{v}(t):=\sup_{0<\delta<\delta'}\mathfrak{v}_{\delta}(t)-\inf_{0<\delta<\delta'}\mathfrak{v}_{\delta}(t) \qquad \mbox{for $t \in I$}, 
\end{align*} 
so that $\tosc_{\delta'}\colon I\to [0,\infty]$ is $\mathscr{L}^{1}$-measurable. Let $\lambda>0$, and put $\varepsilon:=\frac{\lambda}{2}$. We then choose {$\mathbf{w}_{\lambda}\in\hold_{\rm c}^{1}(\partial\Omega';T_{\partial\Omega'})$} such that 
\begin{align}\label{eq:verfuegungvorschrift}
\|\mathbf{v}-\mathbf{w}_{\lambda}\|_{\lebe^{1}(\partial\Omega')} < \frac{\lambda\varepsilon}{4\mathtt{c}_{1}\mathtt{c}_{2}}. 
\end{align}
Since $\partial\Omega'$ is of class $\hold^{1}$, 
the existence of such a field $\mathbf{w}_{\lambda}$ 
can be established by flattening $\partial\Omega'$, 
routine mollification, and patching the mollified pieces together by use of a partition of unity.  
Since {$\mathbf{w}_{\lambda}\in\hold_{\rm c}^{1}(\partial\Omega';T_{\partial\Omega'})$}, Lemma \ref{lem:bdryintegralcont} implies that the limit: 
\begin{align}\label{eq:eighteen}
\mathfrak{l}_{t}:=\lim_{\delta\searrow 0}\int_{\Sigma^{\tau,t}}\varphi\,\mathbf{w}_{\lambda}\cdot\nabla_{\tau}\psi_{t,\delta,\Sigma}\dif\mathscr{H}^{2}
\end{align}
exists uniformly in $t\in I$. In consequence, there exists $0<\delta'<\frac{1}{4}$ such that 
\begin{align}\label{eq:threshold1}
\left\vert\mathfrak{l}_{t} - \int_{\Sigma^{\tau,t}}\varphi\,\mathbf{w}_{\lambda}\cdot\nabla_{\tau}\psi_{t,\delta,\Sigma}\dif\mathscr{H}^{2}\right\vert<\frac{\varepsilon}{4}\qquad\text{for all}\;0<\delta<\delta'\;\text{and all}\;t\in I. 
\end{align}
Hence, if $0<\delta_{1},\delta_{2}<\delta'$, then 
\begin{align}\label{eq:threshold2}
\left\vert \int_{\Sigma^{\tau,t}}\varphi\,\mathbf{w}_{\lambda}\cdot\nabla_{\tau}\psi_{t,\delta_{1},\Sigma}\dif\mathscr{H}^{2} -\int_{\Sigma^{\tau,t}}\varphi\,\mathbf{w}_{\lambda}\cdot\nabla_{\tau}\psi_{t,\delta_{2},\Sigma}\dif\mathscr{H}^{2}\right\vert<\frac{\varepsilon}{2}, 
\end{align}
and so in particular $
\tosc_{\delta'}\mathbf{w}_{\lambda}(t)<\varepsilon$ for all $t\in I$. 
Therefore, 
\begin{align*}
\mathrm{tOsc}_{\delta'}\mathbf{v}(t) & \leq \mathrm{tOsc}_{\delta'}(\mathbf{v}-\mathbf{w}_{\lambda})(t) + \mathrm{tOsc}_{\delta'}\mathbf{w}_{\lambda}(t) \leq \mathrm{tOsc}_{\delta'}(\mathbf{v}-\mathbf{w}_{\lambda})(t) + \varepsilon. 
\end{align*}
Hence, if $\mathrm{tOsc}_{\delta'}\mathbf{v}(t)>\lambda$, we recall $\varepsilon=\frac{\lambda}{2}$ to conclude that 
\begin{align*}
\frac{\lambda}{2} < \mathrm{tOsc}_{\delta'}(\mathbf{v}-\mathbf{w}_{\lambda})(t) & \leq 2\sup_{0<\delta<\delta'}\int_{\Sigma^{\tau,t}}|\varphi(\mathbf{v}-\mathbf{w}_{\lambda})\cdot\nabla_{\tau}\psi_{t,\delta,\Sigma}|\dif\mathscr{H}^{2} \\ & \!\!\!\!\stackrel{\eqref{eq:maxo}}{\leq} 2\mathtt{c}_{1}\,\mathcal{M}_{\Sigma,\partial\Omega'}^{\Psi,+}(\mathbf{v}-\mathbf{w}_{\lambda})(t).
\end{align*}
In particular, we have
\begin{align}\label{eq:janeklang}
\Big\{t\in I\colon\;\mathrm{tOsc}_{\delta'}\mathbf{v}(t)>{\lambda}\Big\}\subset \Big\{t\in I\colon\;\mathcal{M}_{\Sigma,\partial\Omega'}^{\Psi,+}(\mathbf{v}-\mathbf{w}_{\lambda})(t)>\frac{\lambda}{4\mathtt{c_{1}}}\Big\}=: J_{\lambda},
\end{align}
and 
$J_{\lambda}$ is independent of the specific choice of $\varphi$. 
We then see that 
\begin{align}\label{eq:dexter}
\begin{split}
\mathscr{L}^{1}(J_{\lambda}) \stackrel{\eqref{eq:highandlow}}{\leq}  \frac{4\mathtt{c}_{1}\mathtt{c}_{2}}{\lambda} \|\mathbf{v}-\mathbf{w}_{\lambda}\|_{\lebe^{1}(\partial\Omega')} \stackrel{\eqref{eq:verfuegungvorschrift}}{<}\varepsilon=\frac{\lambda}{2}. 
\end{split}
\end{align}
Next, we define
\begin{align*}
\mathscr{J}_{1}:=\bigcap_{N=1}^{\infty}\bigcup_{j=N}^{\infty}J_{2^{-j}},\quad
\mathscr{J}_{2}:=\{t\in I\colon\;\mathcal{M}_{\Sigma,\partial\Omega'}^{\Psi,+}\mathbf{v}(t)=\infty\},\quad
\mathscr{J}:=\mathscr{J}_{1}\cup\mathscr{J}_{2},
\end{align*}
whereby $\mathscr{J}_{1},\mathscr{J}_{2}$, and $\mathscr{J}$ are  obviously independent of the specific choice of $\varphi$. We have 
\begin{align*}
\mathscr{L}^{1}(\mathscr{J}_{1})\leq \lim_{N\to\infty}\mathscr{L}^{1}(\bigcup_{j=N}^{\infty}J_{2^{-j}}) \stackrel{\eqref{eq:dexter}}{\leq} 
\lim_{N\to\infty}\sum_{j=N}^{\infty}2^{-j-1}=0, 
\end{align*}
and Proposition \ref{prop:HLWnormal} yields that $\mathscr{L}^{1}(\mathscr{J}_{2})=0$. Hence, $\mathscr{L}^{1}(\mathscr{J})=0$, and we set $\mathscr{I}:=I\setminus\mathscr{J}$. To conclude the proof, we apply \eqref{eq:dexter} to $\lambda=2^{-j}$ for $j\in\mathbb{N}$, obtaining a sequence $(\delta_{j})\subset (0,\tfrac{1}{4})$ such that 
\begin{align}\label{eq:bruschetto}
\mathscr{L}^{1}(\{t\in I\colon\;\tosc_{\delta_{j}}\mathbf{v}(t)>2^{-j}\})<2^{-j-1}\qquad\text{for all}\;j\in\mathbb{N}.
\end{align}
By definition of  $\mathscr{N}$ (see  \eqref{eq:lim3}), we then arrive at
\begin{align*}
\mathscr{N} & \subset 
\big\{t\in I\colon\;\limsup_{\delta\searrow 0}\mathfrak{v}_{\delta}(t)-\liminf_{\delta\searrow 0}\mathfrak{v}_{\delta}(t)>0\big\} \\ 
& \subset \bigcup_{j=N}^{\infty}\big\{t\in I\colon\;\limsup_{\delta\searrow 0}\mathfrak{v}_{\delta}(t)-\liminf_{\delta\searrow 0}\mathfrak{v}_{\delta}(t)>2^{-j}\big\}\\ 
& \subset \bigcup_{j=N}^{\infty}\big\{t\in I\colon\;\tosc_{\delta_{j}}\mathbf{v}(t)>2^{-j}\big\}\stackrel{\eqref{eq:janeklang}}{\subset}\bigcup_{j=N}^{\infty}J_{2^{-j}}\qquad\text{for all}\;N\in\mathbb{N}. 
\end{align*}
In consequence, $\mathscr{N}\subset \mathscr{J}_{1}$, so that $\mathscr{L}^{1}(\mathscr{N})=0$, and \eqref{eq:lim3} follows. In particular, if $t\in \mathscr{I}=(I\setminus\mathscr{J}_{1})\cap(I\setminus\mathscr{J}_{2})$, then we have \eqref{eq:lim1}. For such $t$, \eqref{eq:lim2} is a direct consequence of \eqref{eq:maxo}. This completes the proof provided that $0<\|\varphi\|_{\hold^{1}(\partial\Omega)}\leq 1$. For general non-trivial $\varphi\in\hold^{1}(\partial\Omega)$, it suffices to observe that, if $t\in \mathscr{I}$, then the first part of the proof yields that 
\begin{align*}
\ell_{t}^{\mathbf{v}}(\varphi)=\|\varphi\|_{\hold^{1}(\partial\Omega)}\lim_{\delta\searrow 0} \int_{\Sigma^{\tau,t}}\Big(\frac{\varphi}{\|\varphi\|_{\hold^{1}(\partial\Omega)}}\Big)\mathbf{v}\cdot\nabla_{\tau}\psi_{t,\delta,\Sigma}\dif\mathscr{H}^{2}\;\;\;\text{exists and is finite}, 
\end{align*}
and the bound \eqref{eq:lim2} follows as in \eqref{eq:maxo}. This completes the proof. 
\end{proof}

\begin{rem}[On the existence of the limit \eqref{eq:lim1}]Below, we will apply Proposition \ref{prop:diff} to the particular choices $\mathbf{v}=(\FF\times\nu_{\partial\Omega'})_{\partial\Omega'}^{\mathrm{int}}$ and $\mathbf{v}=(\FF\times\nu_{\partial\Omega'})_{\partial\Omega'}^{\mathrm{ext}}$. Connecting with Remark \ref{rem:onthebdryproof}, we will prove in \S \ref{sec:divmeasfieldsmanif} that \eqref{eq:lim1} exists for every $t\in I$ if the tangential distributional divergence satisfies $\mathrm{div}_{\tau}(\mathbf{v})\in\mathrm{RM}_{\mathrm{fin}}(\partial\Omega')$. In the terminology of Section \ref{sec:divmeasfieldsmanif} below, this corresponds to $\mathbf{v}$ being a tangential divergence measure field. This can only be established for a.e. manifold in transversal direction, see Theorem \ref{thm:stokes1st}; in the context of Proposition \ref{prop:diff} and Theorem \ref{thm:stokes}, the underlying boundary $\partial\Omega'$ is
\emph{a priori} fixed and $\Sigma$ is only allowed to vary \emph{inside} $\partial\Omega$, so in tangential but not in transversal  direction.
\end{rem}

For the proof of the Stokes theorem (Theorem \ref{thm:stokes}), we isolate an auxiliary lemma as follows:
\begin{lem}\label{lem:geometrichelp}
Let $0\leq t<\frac{1}{2}$. Then there exists a constant $c=c(\Sigma,t)>0$ and a threshold number $r'>0$ such that 
\begin{align}\label{eq:technique}
\limsup_{\delta\searrow 0}\frac{\mathscr{H}^{2}(\Psi_{\Sigma}((t,t+\delta)\times\Gamma_{\Sigma})\cap\ball_{r}(x_{0}))}{\mathscr{H}^{2}(\Psi_{\Sigma}((t,t+\delta)\times\Gamma_{\Sigma}))}\leq c\,r
\end{align}
for all $x_{0}\in\Gamma_{\Sigma}^{t}$ and all $0<r<r'$. 
\end{lem}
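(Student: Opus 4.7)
The plan is to reduce the statement to the product domain $(-1,1)\times\Gamma_{\Sigma}$ via the bi-Lipschitz collar map $\Psi_{\Sigma}$, and then to exploit the $(n-2)=1$-dimensional Lipschitz structure of $\Gamma_{\Sigma}$. Throughout, let $\mathfrak{L}\geq 1$ denote the bi-Lipschitz constant of $\Psi_{\Sigma}$ on $(-\tfrac{3}{4},\tfrac{3}{4})\times\Gamma_{\Sigma}$.

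\emph{Step 1 (upper bound of the numerator).} Fix $x_{0}\in\Gamma_{\Sigma}^{t}$ and write $x_{0}=\Psi_{\Sigma}(t,w_{0})$ for a unique $w_{0}\in\Gamma_{\Sigma}$. For any $y=\Psi_{\Sigma}(s,z)$ with $s\in(t,t+\delta)$, $z\in\Gamma_{\Sigma}$ and $|y-x_{0}|<r$, the bi-Lipschitz property of $\Psi_{\Sigma}^{-1}$ forces $|(s,z)-(t,w_{0})|\leq \mathfrak{L}r$, whence $z\in\Gamma_{\Sigma}\cap \ball_{\mathfrak{L}r}(w_{0})$. Therefore, applying the Lipschitz bound $\mathscr{H}^{2}(\Psi_{\Sigma}(A))\leq \mathfrak{L}^{2}\mathscr{H}^{2}(A)$ to the preimage yields
\begin{align*}
\mathscr{H}^{2}\bigl(\Psi_{\Sigma}((t,t+\delta)\times\Gamma_{\Sigma})\cap\ball_{r}(x_{0})\bigr)
\leq \mathfrak{L}^{2}\,\delta\,\mathscr{H}^{1}\bigl(\Gamma_{\Sigma}\cap\ball_{\mathfrak{L}r}(w_{0})\bigr),
\end{align*}
by Fubini on the product $(t,t+\delta)\times(\Gamma_{\Sigma}\cap\ball_{\mathfrak{L}r}(w_{0}))$.

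\emph{Step 2 (linear growth of $\mathscr{H}^{1}$ on $\Gamma_{\Sigma}$).} Since $\Sigma$ is a $\hold^{1}$-regular Lipschitz boundary manifold relative to $\Omega'$, Definition~\ref{def:bdrymanifolds} provides a finite covering of the compact set $\Gamma_{\Sigma}$ by charts $(U_{\ell},f_{\ell})$ in which $\Gamma_{\Sigma}\cap U_{\ell}$ is the image under the bi-Lipschitz $\hold^{1}$-diffeomorphism $f_{\ell}$ of the graph of a Lipschitz function $g_{\ell}\colon(-1,1)\to\R$. A routine application of the area formula gives a constant $C_{\Gamma}=C_{\Gamma}(\Sigma)>0$ and a radius $r'>0$ (the latter chosen small enough that $\mathfrak{L}r<1$ and that $\ball_{\mathfrak{L}r}(w_{0})$ is contained in a single chart for every $w_{0}\in\Gamma_{\Sigma}$, by the Lebesgue number lemma applied to the open cover of the compact set $\Gamma_{\Sigma}$) such that
\begin{align*}
\mathscr{H}^{1}\bigl(\Gamma_{\Sigma}\cap\ball_{\rho}(w)\bigr)\leq C_{\Gamma}\,\rho\qquad\text{for all $w\in\Gamma_{\Sigma}$ and $0<\rho<\mathfrak{L}r'$.}
\end{align*}

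\emph{Step 3 (lower bound of the denominator).} Still by the bi-Lipschitz property,
\begin{align*}
\mathscr{H}^{2}\bigl(\Psi_{\Sigma}((t,t+\delta)\times\Gamma_{\Sigma})\bigr)\geq \mathfrak{L}^{-2}\,\delta\,\mathscr{H}^{1}(\Gamma_{\Sigma}).
\end{align*}
Combining Steps~1--3, the ratio in \eqref{eq:technique} is bounded above, independently of $\delta$, by $\mathfrak{L}^{4}C_{\Gamma}\mathscr{H}^{1}(\Gamma_{\Sigma})^{-1}\cdot \mathfrak{L}r$, which is of the form $c(\Sigma,t)\,r$ as required. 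Since the bound is uniform in $\delta\in(0,\tfrac{1}{4})$ (and in $x_{0}\in\Gamma_{\Sigma}^{t}$), passing to the $\limsup$ as $\delta\searrow 0$ yields \eqref{eq:technique}.

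The main (minor) obstacle is making Step~2 uniform in the base point $w_{0}\in\Gamma_{\Sigma}$: one needs a single threshold $r'$ such that $\ball_{\mathfrak{L}r}(w_{0})$ is swallowed by a chart for all $w_{0}$ simultaneously. This is taken care of by compactness of $\Gamma_{\Sigma}$ together with the Lebesgue number applied to the finite chart cover from Definition~\ref{def:bdrymanifolds}; all remaining steps are elementary consequences of the bi-Lipschitz character of $\Psi_{\Sigma}$.
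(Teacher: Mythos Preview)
Your argument is correct and in fact cleaner than the paper's. Both proofs share the same lower bound on the denominator (Step~3 is exactly the paper's \eqref{eq:heretostay2}), but they differ in how the numerator is controlled. The paper works in local charts $f_{k}$ of the ambient surface $\partial\Omega'$, replaces the collar layer by the sublevel set $\{x\in\overline{\Sigma^{\tau,t}}:\mathrm{dist}(x,\Gamma_{\Sigma}^{t})<M_{1}\delta\}$ via Lemma~\ref{lem:collar2}\ref{item:collF2}, and then invokes the coarea formula together with the Caraballo-type level set estimate (Lemma~\ref{lem:distancelevelsets}) to obtain $\mathscr{H}^{2}(\cdots)\leq c\delta r$. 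You bypass this entirely: the bi-Lipschitz property of $\Psi_{\Sigma}$ directly pulls the intersection back into the product $(t,t+\delta)\times(\Gamma_{\Sigma}\cap\ball_{\mathfrak{L}r}(w_{0}))$, where the estimate is immediate from the product structure and the Ahlfors regularity of the Lipschitz curve $\Gamma_{\Sigma}$. Your route avoids Lemma~\ref{lem:distancelevelsets} altogether and, as a bonus, yields a constant $c=c(\Sigma,\Psi_{\Sigma})$ that is actually independent of $t$.

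Two minor remarks for precision. First, the ``Fubini on the product'' in Step~1 is not the classical Fubini theorem but the product formula $\mathscr{H}^{2}(I\times E)=\mathscr{L}^{1}(I)\,\mathscr{H}^{1}(E)$ for an interval $I\subset\R$ and a $1$-rectifiable set $E\subset\R^{3}$; this follows from the area formula applied to a local bi-Lipschitz parametrisation of $\Gamma_{\Sigma}$, and is worth a half-line of justification. Second, in Step~2 the Lebesgue number argument should be applied to the cover of $\Gamma_{\Sigma}$ by the open sets $U_{\ell}\cap\partial\Omega'$ (not the $U_{\ell}$ themselves), so that balls of the form $\ball_{\mathfrak{L}r}(w_{0})\cap\Gamma_{\Sigma}$ are captured by a single chart; this is what you mean, but the phrasing ``contained in a single chart'' is slightly ambiguous.
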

\begin{proof}
In the present situation, there exist $N\in\mathbb{N}$ and  finitely many open sets $U^{1},\cdots,U^{N}\subset\R^{3}$ and $\widetilde{U}^{1}, \cdots,\widetilde{U}^{N}\subset\R^{3}$ 
such that all of the following hold:
\begin{itemize}
\item[(i)] For each  $k\in\{1, \cdots,N\}$, $U^{k}\subset\widetilde{U}^{k}$. 
\item[(ii)] $\Gamma_{\Sigma}^{t}\subset\bigcup_{k=1}^{N}U^{k}\subset\bigcup_{k=1}^{N}\widetilde{U}^{k}$. 
    \item[(iii)] For each $k\in\{1,\cdots,N\}$, there exist $0<\varepsilon_{k}<1$ and 
    a bi-Lipschitz $\hold^{1}$-diffeomorphism 
    $f_{k}\colon (-1-\varepsilon_{k},1+\varepsilon_{k})^{2}\to \widetilde{V}_{k}:=\partial\Omega'\cap \widetilde{U}^{k}$, 
    whose restriction $f_{k}\colon(-1,1)^{2}\to V_{k}:=\partial\Omega'\cap U^{k}$ 
    is equally a bi-Lipschitz $\hold^{1}$-diffeomorphism. In particular, 
 $\mathfrak{L}_{k}:=\max\{\mathrm{Lip}(f_{k}),\mathrm{Lip}(f_{k}^{-1})\}<\infty$, 
 and denote $\mathfrak{L}:=\max\{\mathfrak{L}_{1}, \cdots,\mathfrak{L}_{N}\}$.
    \item[(iv)] For each $k\in\{1,\cdots,N\}$, 
    there exists a Lipschitz function $g_{k}\colon (-1-\varepsilon_{k},1+\varepsilon_{k})\to (-\theta,\theta)$ for some $\theta=\theta_{k}\in (0,1)$ such that 
    \begin{align*}
    \mathrm{graph}(g_{k})=f_{k}^{-1}(\Gamma_{\Sigma}^{t}\cap \widetilde{U}^{k}), \qquad\mathrm{graph}(g_{k}|_{(-1,1)})=f_{k}^{-1}(\Gamma_{\Sigma}^{t}\cap U^{k})
    \end{align*}
    together with 
    \begin{align*}
    & \{(s,s')\colon\;|s|<1,\;g_{k}(s)<s'<1\} = f_{k}^{-1}(\Sigma\cap U^{k}), \\ 
    & \{(s,s')\colon\;|s|<1+\varepsilon_{k},\;g_{k}(s)<s'<1\} = f_{k}^{-1}(\Sigma\cap \widetilde{U}^{k}), \\ 
    & \{(s,s')\colon\;|s|<1,\;-1<s'<g_{k}(s)\} = f_{k}^{-1}((\partial\Omega'\setminus\overline{\Sigma})\cap U^{k}), \\ 
&\{(s,s')\colon\;|s|<1+\varepsilon_{k},\;-1<s'<g_{k}(s)\}=f_{k}^{-1}((\partial\Omega'\setminus \overline{\Sigma})\cap\widetilde{U}^{k}).
    \end{align*}
\end{itemize}
These properties are a direct consequence of Definition \ref{def:bdrymanifolds}.

We note that there exist $\delta_{0}>0$ and $r_{0}>0$ such that both $\Psi_{\Sigma}([t,t+\delta_{0}]\times\Gamma_{\Sigma})$ and $\Gamma_{\Sigma}^{t}+\overline{\ball}_{r_{0}}(0)$ are contained in $\bigcup_{k=1}^{N}U^{k}$. The set $\mathfrak{A}:=\Psi_{\Sigma}([t,t+\delta_{0}]\times\Gamma_{\Sigma})\cup ((\Gamma_{\Sigma}^{t}+\overline{\ball}_{r_{0}}(0))\cap\partial\Omega')$ is compact, whereby its Lebesgue number $d>0$ with respect to the covering $U^{1},\cdots,U^{N}$ is well-defined; in particular, whenever $A\subset\mathfrak{A}$ satisfies $\mathrm{diam}(A)<d$, then there exists $k=k(A)\in\{1,\cdots,N\}$ such that $A\subset U^{k}$. Diminishing $d$ if necessary, it is no loss of generality to assume that $d<\frac{r_{0}}{10}$. We put 
\begin{align}\label{eq:rchoose}
r':=\frac{1}{1000}\min\big\{\delta_{0},d,\frac{\varepsilon_{1}}{\mathfrak{L}}, \cdots,\frac{\varepsilon_{N}}{\mathfrak{L}}\big\}
\end{align}
and let $0<r<r'$. 

Now let $x_{0}\in\Gamma_{\Sigma}^{t}$ be arbitrary. The set $\ball_{d/3}(x_{0})\cap\partial\Omega'$ has diameter at most $\frac{2}{3}d$ and, because of $d<\frac{r_{0}}{10}$, is contained in $\Gamma_{\Sigma}^{t}+\overline{\ball}_{r_{0}}(0)$; in particular, $\ball_{d/3}(x_{0})\cap\partial\Omega'\subset V_{k}$ for some $k\in\{1,\cdots,N\}$ and $\ball_{r}(x_{0})\cap\partial\Omega'$ is relatively compact in $V_{k}$. We set  $(y_{0,1},y_{0,2}):=y_{0}:=f_{k}^{-1}(x_{0})$ and observe that $f_{k}^{-1}(\ball_{r}(x_{0})\cap\partial\Omega')$ is relatively compact in $(-1,1)^{2}$. On the other hand, we have 
\begin{align*}
\mathrm{diam}(f_{k}^{-1}(\ball_{r}(x_{0})\cap\partial\Omega')) \leq \mathfrak{L}r \stackrel{\eqref{eq:rchoose}}{<}\frac{\varepsilon_{k}}{1000},
\end{align*}
and so we may locate the open set $f_{k}^{-1}(\ball_{r}(x_{0})\cap\partial\Omega)$ 
quantitatively as 
\begin{align}\label{eq:justakissaway}
\begin{split}
f_{k}^{-1}(\ball_{r}(x_{0}) \cap\partial\Omega) & \subset (y_{0,1}-\mathfrak{L}r,y_{0,1}+\mathfrak{L}r)\times(-1,1) \\ & =: Z \subset (-1-\varepsilon_{k},1+\varepsilon_{k})\times (-1,1).
\end{split}
\end{align}
By Lemma \ref{lem:collar2}\ref{item:collF2}, there exists $M_{1}>0$ such that 
\begin{align}\label{eq:boiler}
\Psi_{\Sigma}([t,t+\delta)\times\Gamma_{\Sigma})
\subset \big\{x\in\overline{\Sigma^{\tau,t}}\colon\;\mathrm{dist}(x,\Gamma_{\Sigma}^{t})<M_{1}\delta\big\} 
\end{align}
for all sufficiently small $\delta>0$. Thus, for all such $\delta>0$, we have 
\begin{align*}
\Psi_{\Sigma}([t,t+\delta)\times\Gamma_{\Sigma})\cap\ball_{r}(x_{0})& \subset \{x\in\overline{\Sigma^{\tau,t}}\colon\;\mathrm{dist}(x,\Gamma_{\Sigma}^{t})<M_{1}\delta\} \cap\ball_{r}(x_{0}) \\ & \subset \ball_{d/3}(x_{0})\cap\partial\Omega'\subset V_{k}, 
\end{align*}
so that
\begin{align}\label{eq:justashotaway}
\begin{split}
&f_{k}^{-1}(\Psi_{\Sigma}([t,t+\delta)\times\Gamma_{\Sigma})\cap\ball_{r}(x_{0})) \\
& \subset f_{k}^{-1}(\{x\in\overline{\Sigma^{\tau,t}}\cap V_{k}\colon\;\mathrm{dist}(x,\Gamma_{\Sigma}^{t})<M_{1}\delta\} ) 
\cap f_{k}^{-1}(\ball_{r}(x_{0})\cap\partial\Omega') =: A_{k}^{1}\cap A_{k}^{2}.
\end{split}
\end{align}
Since $f_{k}$ is bi-Lipschitz with bi-Lipschitz 
constant $\mathfrak{L}_{k}\leq\mathfrak{L}<\infty$, then (iii), \eqref{eq:justakissaway}, and \eqref{eq:justashotaway} imply that there exists $M_{2}>0$ 
depending only on $\mathfrak{L}$ and $M_{1}$ such that 
\begin{align*}
A_{k}^{1}\cap A_{k}^{2} & \subset 
\big\{y=(s,s')\colon\;|s-y_{0,1}|<\mathfrak{L}r,\;s'\geq g_{k}(s),\;\mathrm{dist}(y,\mathrm{graph}(g_{k}))<M_{2}\delta\big\} \\ 
& =:A_{k} \subset (-1-\varepsilon_{k},1+\varepsilon_{k})^{2}
\end{align*}
for all sufficiently small $\delta>0$; see Figure \ref{fig:dinergetthebriefcase} for the geometric set-up. 

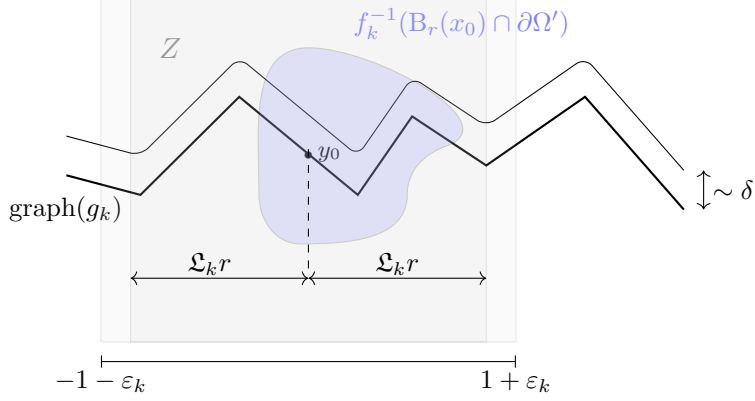
\begin{figure}[t]
\begin{tikzpicture}[scale=1.3]
\node at (0.7,0.4) {{\tiny \textbullet}};
\node at (-1.8,-0.15) {{ $\mathrm{graph}(g_{k})$}};
\node[right] at (0.7,0.42) {{\small $y_{0}$}};
\draw[-,thick] (-1.75,0.2) -- (-1,0) -- (0,1) -- (1.2,0) -- (1.75,0.8) -- (2.5,0.3) -- (3.5,1) -- (4.5,-0.15);
\draw[-,rounded corners] (-1.75,0.6) -- (-1,0.4) -- (0,1.4) -- (1.2,0.4) -- (1.75,1.2) -- (2.5,0.7) -- (3.5,1.4) -- (4.5,0.25);
\draw[<->] (4.7,0.25) -- (4.7,-0.15);
\node at (5,0.05) {$\sim\delta$};
\draw[-,fill=blue!50!white,opacity=0.2] (0.62,-0.5) [out = 0, in =270] to (1.7,0) [out =90, in =200] to (2.1,0.5) [out=20, in =0] to (0.7,1.5) [out=180, in =90] to (0.2,0) [out=270, in =180] to (0.62,-0.5);
\node[blue!50!white] at (2.25,1.75) {$f_{k}^{-1}(\ball_{r}(x_{0})\cap\partial\Omega')$};
\draw[fill=black!20!white,opacity=0.1] (-1.1,-1.5)-- (2.5,-1.5) -- (2.5,2) -- (-1.1,2) -- (-1.1,-1.5);
\draw[fill=black!20!white,opacity=0.1] (-1.4,-1.5)-- (2.8,-1.5) -- (2.8,2) -- (-1.4,2) -- (-1.4,-1.5);
\draw[<->] (-1.1,-0.85)-- (0.7,-0.85); 
\draw[<->] (0.7,-0.85)
-- (2.5,-0.85);
\node at (1.6,-0.7) {$\mathfrak{L}_{k}r$};
\node at (-0.3,-0.7) {$\mathfrak{L}_{k}r$};
\draw[dashed] (0.7,-0.75) -- (0.7,0.47);
\node[black!40!white] at (-0.7,1.5) {\large $Z$};
\draw[|-|] (-1.4,-1.7)-- (2.8,-1.7);
\node[below] at (-1.4,-1.7) {$-1-\varepsilon_{k}$};
\node[below] at (2.8,-1.7) {$1+\varepsilon_{k}$};
\end{tikzpicture}
\caption{On the proof of Lemma \ref{lem:geometrichelp}.}\label{fig:dinergetthebriefcase} 
\end{figure}

To conclude the proof, we employ the coarea formula. The distance function $y\mapsto \mathrm{dist}(y,\mathrm{graph}(g_{k}))$ is Lipschitz, and so is classically differentiable at $\mathscr{L}^{2}$-{a.e.} $y\in A_{k}$. At each such point, its approximate gradient satisfies $|\nabla\mathrm{dist}(y,\mathrm{graph}(g_{k}))|=1$. Then
\begin{align}\label{eq:anightaway}
\begin{split}
\mathscr{L}^{2}(A_{k}) & = \int_{0}^{M_{2}\delta}\mathscr{H}^{1}(A_{k}\cap\mathrm{dist}(\cdot,\mathrm{graph}(g_{k}))^{-1}(\{\theta\}))\dif\theta \\ & \leq c\,\delta \mathscr{H}^{1}(\{(s,g_{k}(s))\colon\;|s-y_{0,1}|<\mathfrak{L}r\}) \leq c\delta r 
\end{split}
\end{align}
by Lemma \ref{lem:distancelevelsets}. In particular, in the ultimate inequality, the constant $c>0$ is independent of $\delta$ and $r$. Since $f_{k}$ is bi-Lipschitz with bi-Lipschitz constants $\mathfrak{L}_{k}\leq\mathfrak{L}<\infty$, we deduce from \eqref{eq:justashotaway}--\eqref{eq:anightaway} that, with $c>0$ still being independent of $\delta$ and $r$, 
\begin{align}\label{eq:heretostay1}
\mathscr{H}^{2}(\Psi_{\Sigma}((t,t+\delta)\times\Gamma_{\Sigma})\cap\ball_{r}(x_{0}))\leq c\delta r. 
\end{align}
On the other hand,  there exists $c'>1$ such that 
\begin{align}\label{eq:heretostay2}
\frac{1}{c'}\delta\mathscr{H}^{1}(\Gamma_{\Sigma})\leq \mathscr{H}^{2}(\Psi_{\Sigma}((t,t+\delta)\times\Gamma_{\Sigma}))\leq c'\delta\mathscr{H}^{1}(\Gamma_{\Sigma})  
\qquad\mbox{for all sufficiently $\delta>0$}. 
\end{align}
Combining \eqref{eq:heretostay1} and \eqref{eq:heretostay2} then yields \eqref{eq:technique}, and the proof is complete. 
\end{proof}

We are now ready to state and prove the Stokes theorem for tangential variations. In particular, we establish in which sense the previously introduced Stokes functionals are well-defined on almost every manifold with respect to tangential variations. This cannot be improved to \emph{every} manifold, see Example \ref{ex:nonex} below.

\begin{theorem}[Stokes Theorem for the Tangential Variations]\label{thm:stokes} 
Let $\FF\in\mathscr{CM}^{\infty}(\Omega)$. In the situation described in \S \ref{sec:smoothsetCMinfty}, 
for $t\in I:=[0,\frac{1}{2})$, consider the Stokes functionals 
$\mathfrak{S}_{\Sigma^{\tau,t}}^{\mathrm{int}}$ and 
$\mathfrak{S}_{\Sigma^{\tau,t}}^{\mathrm{ext}}$ as introduced in  \eqref{eq:Stokesfunctionaldef}. Then there exist sets $\mathscr{I}_{1},\mathscr{I}_{2}\subset I$ with $\mathscr{L}^{1}(I\setminus\mathscr{I}_{1})=\mathscr{L}^{1}(I\setminus\mathscr{I}_{2})=0$ such that the following hold{\rm :} 
\begin{enumerate}
\item\label{item:elementary1} \emph{Measure representation:} For every $t\in\mathscr{I}_{1}$, $\mathfrak{S}_{\Sigma^{\tau,t}}^{\mathrm{int}}$ is a \emph{well-defined distribution of order zero} on $\Omega$ with 
\begin{align}\label{eq:stokessupport}
\spt(\mathfrak{S}_{\Sigma^{\tau,t}}^{\mathrm{int}})\subset\Gamma_{\Sigma}^{t},  
\end{align}
and for every $t\in\mathscr{I}_{2}$, $\mathfrak{S}_{\Sigma^{\tau,t}}^{\mathrm{ext}}$ is a well-defined distribution of order zero on $\Omega$ with 
\begin{align*}
\spt(\mathfrak{S}_{\Sigma^{\tau,t}}^{\mathrm{ext}})\subset\Gamma_{\Sigma}^{t}.
\end{align*}
In consequence, for such $t$, there exist {finite} and  {signed} 
Radon measures $\mu_{\Sigma,t}^{\tau,\mathrm{int}}$ and $\mu_{\Sigma,t}^{\tau,\mathrm{ext}}$ with 
\begin{align*}
\spt(\mu_{\Sigma,t}^{\tau,\mathrm{int}}),\spt(\mu_{\Sigma,t}^{\tau,\mathrm{ext}})\subset\Gamma_{\Sigma}^{t}
\end{align*} 
such that,
for all $\varphi\in\hold_{\rm c}^{1}(\Omega)$,
\begin{align}\label{eq:representasmeasure}
\mathfrak{S}_{\Sigma^{\tau,t}}^{\mathrm{int}}(\varphi) = \int_{\Omega}\varphi \dif \mu_{\Sigma,t}^{\tau,\mathrm{int}}, \qquad \mathfrak{S}_{\Sigma^{\tau,t}}^{\mathrm{ext}}(\varphi) = \int_{\Omega}\varphi \dif \mu_{\Sigma,t}^{\tau,\mathrm{ext}}.
\end{align}
 
\item\label{item:StokesAbsCon} \emph{$\mathscr{H}^{1}$-absolute continuity:} For all $t\in\mathscr{I}_{1}$ or $t\in\mathscr{I}_{2}$ as in  \ref{item:elementary1}, 
\begin{align*}
\mu_{\Sigma,t}^{\tau,\mathrm{int}}\ll\mathscr{H}^{1}\mres\Gamma_{\Sigma}^{t},
\qquad \mu_{\Sigma,t}^{\tau,\mathrm{ext}}\ll\mathscr{H}^{1}\mres\Gamma_{\Sigma}^{t}. 
\end{align*}
In particular, for $t\in\mathscr{I}_{1}$, there exists a density 
\begin{align}\label{eq:densityGammaSigma}
\frac{\dif\mu_{\Sigma,t}^{\tau,\interior}}{\dif\mathscr{H}^{1}}\in\lebe^{1}(\Gamma_{\Sigma}^{t})\;\;\;\;\text{such that}\;\;\;\;\mu_{\Sigma,t}^{\tau,\mathrm{int}} = \frac{\dif\mu_{\Sigma,t}^{\tau,\mathrm{int}}}{\dif\mathscr{H}^{1}}\mathscr{H}^{1}\mres\Gamma_{\Sigma}^{t}. 
\end{align}
The same holds true with the obvious modifications for $t\in\mathscr{I}_{2}$ and $\mu_{\Sigma,t}^{\tau,\mathrm{ext}}$. 
\item\label{item:StokesTangential} \emph{Tangential character of $\mu_{\Sigma,t}^{\tau,\mathrm{int}}$ 
and $\mu_{\Sigma,t}^{\tau,\mathrm{ext}}$:} 
For $t\in\mathscr{I}_{1}$, suppose that $x_{0}\in\Gamma_{\Sigma}^{t}$ is a continuity point of $(\FF\times\nu_{\partial\Omega'})_{\partial\Omega'}^{\mathrm{int}}$ in the sense that there exists $z_{0}\in \R^{3}$ such that a suitable $\lebe^{1}(\Sigma)$-representative $\mathbf{G}\colon\Sigma\to\R^{3}$ of $(\FF\times\nu_{\partial\Omega})_{\partial\Omega}^{\mathrm{int}}$ satisfies 
\begin{align}\label{eq:Lebesgue1}
\lim_{\substack{x\to x_{0} \\ x\in\Sigma}}|\mathbf{G}(x)-z_{0}|\to 0. 
\end{align}
In addition, if $x_{0}$ is a weak $\mathscr{H}^{1}$-Lebesgue point 
of $\nu_{\Gamma_{\Sigma}^{t}}$ in the sense that 
\begin{align}\label{eq:LebesgueNormal}
\nu_{\Gamma_{\Sigma}^{t}}(x_{0})=\lim_{r\searrow 0}\dashint_{\ball_{r}(x_{0})\cap\Gamma_{\Sigma}^{t}}\nu_{\Gamma_{\Sigma}^{t}}\dif\mathscr{H}^{1}, 
\end{align}
then 
\begin{align}\label{eq:densityrep1}
\frac{\dif\mu_{\Sigma,t}^{\tau,\mathrm{int}}}{\dif\mathscr{H}^{1}}(x_{0})= - z_{0}\cdot\nu_{\Gamma_{\Sigma}^{t}}(x_{0}). 
\end{align}
The same holds true with the obvious modifications for $\mu_{\Sigma,t}^{\tau,\exterior}$ with $t\in\mathscr{I}_{2}$. 
\end{enumerate}
\end{theorem}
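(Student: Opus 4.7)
The plan is to first reduce both Stokes functionals to a single tangential-limit integral, then invoke the existence results from Section 4, and finally extract the fine properties of the resulting measure from the coarea formula. Throughout I focus on $\mathfrak{S}_{\Sigma^{\tau,t}}^{\mathrm{int}}$; the exterior case is analogous.

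\textbf{Step 1: Reduction to $\ell_t^{\mathbf{v}}$.} Fix $\varphi\in\hold_{\rm c}^{1}(\Omega)$ and $0<\delta<\frac{1}{4}$. Using Lemma \ref{lem:GoodLip}\ref{item:Lipextend2}, extend $\psi_{t,\delta,\Sigma}$ to $\overline{\psi}_{t,\delta,\Sigma}\in\mathrm{Lip}_{\rm c}(\Omega)$ with $\overline{\psi}_{t,\delta,\Sigma}|_{\Omega'}\in\hold_{b}^{1}(\Omega')$ and tangential mollifier convergence. Applied to $\overline{\psi}_{t,\delta,\Sigma}\varphi$ (which has all hypotheses inherited), Corollary \ref{cor:LipAdmit} and Lemma \ref{lem:easyprod} give, since $\mathrm{div}(\curl\FF)=0$,
\begin{equation*}
\langle(\curl\FF)\cdot\nu,\overline{\psi}_{t,\delta,\Sigma}\varphi\rangle_{\partial\Omega'} = -\int_{\partial\Omega'}(\FF\times\nu_{\partial\Omega'})_{\partial\Omega'}^{\interior}\cdot\nabla_{\tau}(\psi_{t,\delta,\Sigma}\varphi)\dif\mathscr{H}^{2}.
\end{equation*}
Expanding $\nabla_{\tau}(\psi_{t,\delta,\Sigma}\varphi)=\varphi\nabla_{\tau}\psi_{t,\delta,\Sigma}+\psi_{t,\delta,\Sigma}\nabla_{\tau}\varphi$, and noting $\psi_{t,\delta,\Sigma}\to\mathbf{1}_{\Sigma^{\tau,t}}$ pointwise with $|\psi_{t,\delta,\Sigma}|\leq 1$, dominated convergence applied to the second summand yields exactly $\int_{\Sigma^{\tau,t}}(\FF\times\nu_{\partial\Omega'})_{\partial\Omega'}^{\interior}\cdot\nabla_{\tau}\varphi\dif\mathscr{H}^{2}$, which cancels the second term in \eqref{eq:Stokesfunctionaldef}. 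Hence, for every admissible $t$,
\begin{equation*}
\mathfrak{S}_{\Sigma^{\tau,t}}^{\mathrm{int}}(\varphi) = -\lim_{\delta\searrow 0}\int_{\Sigma^{\tau,t}}\varphi\,(\FF\times\nu_{\partial\Omega'})_{\partial\Omega'}^{\interior}\cdot\nabla_{\tau}\psi_{t,\delta,\Sigma}\dif\mathscr{H}^{2} = -\ell_{t}^{(\FF\times\nu)_{\partial\Omega'}^{\interior}}(\varphi|_{\partial\Omega'}).
\end{equation*}

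\textbf{Step 2: Existence and support (part \ref{item:elementary1}).} Apply Proposition \ref{prop:diff} with $\mathbf{v}:=(\FF\times\nu_{\partial\Omega'})_{\partial\Omega'}^{\interior}\in\lebe^{\infty}(\partial\Omega';T_{\partial\Omega'})\subset\lebe^{1}(\partial\Omega';T_{\partial\Omega'})$; this furnishes $\mathscr{I}_{1}\subset I$ with $\mathscr{L}^{1}(I\setminus\mathscr{I}_{1})=0$, independent of $\varphi$, on which the limit exists together with the bound $|\mathfrak{S}_{\Sigma^{\tau,t}}^{\mathrm{int}}(\varphi)|\leq c\,\mathcal{M}^{\Psi,+}_{\Sigma,\partial\Omega'}\mathbf{v}(t)\,\|\varphi\|_{\hold(\overline{\Sigma})}$. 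This immediately shows $\mathfrak{S}_{\Sigma^{\tau,t}}^{\mathrm{int}}$ is a distribution of order zero. For the support inclusion \eqref{eq:stokessupport}, observe that $\nabla_{\tau}\psi_{t,\delta,\Sigma}$ is supported in the collar $\Psi_{\Sigma}((t,t+\delta)\times\Gamma_{\Sigma})$; thus if $\spt(\varphi)\cap\Gamma_{\Sigma}^{t}=\emptyset$, then $\spt(\varphi)$ is disjoint from this collar for all sufficiently small $\delta$, and the pre-limit integrals vanish identically. The Riesz representation theorem delivers the signed Radon measure $\mu_{\Sigma,t}^{\tau,\mathrm{int}}$. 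The set $\mathscr{I}_{2}$ and the exterior statements are obtained identically using Theorem \ref{thm:tracemain1}\ref{item:trace1} and $\divm$-free structure on the complementary side.

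\textbf{Step 3: Absolute continuity (part \ref{item:StokesAbsCon}).} Fix $x_{0}\in\Gamma_{\Sigma}^{t}$ and $0<r<r'$ with $r'$ as in Lemma \ref{lem:geometrichelp}. For $\varphi\in\hold_{\rm c}^{1}(\Omega)$ with $\spt(\varphi)\subset\ball_{r}(x_{0})$ and $\|\varphi\|_{\hold(\overline{\Sigma})}\leq 1$, combine $\|\nabla_{\tau}\psi_{t,\delta,\Sigma}\|_{\lebe^{\infty}}\leq c/\delta$ (Lemma \ref{lem:Lipbounds}) with $\|\mathbf{v}\|_{\lebe^{\infty}}\leq\|\FF\|_{\lebe^{\infty}(\Omega)}$ and Lemma \ref{lem:geometrichelp} to obtain
\begin{equation*}
|\mathfrak{S}_{\Sigma^{\tau,t}}^{\mathrm{int}}(\varphi)| \leq c\|\FF\|_{\lebe^{\infty}}\limsup_{\delta\searrow 0}\frac{\mathscr{H}^{2}(\Psi_{\Sigma}((t,t+\delta)\times\Gamma_{\Sigma})\cap\ball_{r}(x_{0}))}{\delta} \leq Cr.
\end{equation*}
Passing to the supremum over such $\varphi$ gives $|\mu_{\Sigma,t}^{\tau,\mathrm{int}}|(\ball_{r}(x_{0})\cap\Gamma_{\Sigma}^{t})\leq Cr$ uniformly in $x_{0}\in\Gamma_{\Sigma}^{t}$. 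A standard Vitali-covering argument converts this into $\mu_{\Sigma,t}^{\tau,\mathrm{int}}\ll\mathscr{H}^{1}\mres\Gamma_{\Sigma}^{t}$, and Radon--Nikod\'{y}m gives \eqref{eq:densityGammaSigma}.

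\textbf{Step 4: Tangential character (part \ref{item:StokesTangential}).} The crucial identity is the coarea representation obtained as in \eqref{eq:icetruckk}: for every $\delta>0$,
\begin{equation*}
\int_{\Sigma^{\tau,t}}\varphi\,\mathbf{v}\cdot\nabla_{\tau}\psi_{t,\delta,\Sigma}\dif\mathscr{H}^{2} = \int_{0}^{1}\int_{\Gamma_{\Sigma}^{t+\vartheta\delta}}\varphi\,\mathbf{v}\cdot\nu_{\Gamma_{\Sigma}^{t+\vartheta\delta}}\dif\mathscr{H}^{1}\dif\vartheta,
\end{equation*}
valid for any $\mathbf{v}\in\lebe^{\infty}(\partial\Omega';T_{\partial\Omega'})$ since the coarea formula (Lemma \ref{lem:coarea}) is geometric. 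Given $x_{0}$ satisfying \eqref{eq:Lebesgue1}--\eqref{eq:LebesgueNormal}, select bump functions $\varphi_{r}\in\hold_{\rm c}^{1}(\ball_{r}(x_{0}))$ with $0\leq\varphi_{r}\leq 1$ and $\varphi_{r}\equiv 1$ on $\ball_{r/2}(x_{0})$. Split $\mathbf{v}=z_{0}+(\mathbf{v}-z_{0})$: the second contribution is $o(1)\cdot\|\varphi_{r}\|_{\infty}r$ using \eqref{eq:Lebesgue1} and the uniform collar estimates, hence negligible after normalising by $\mathscr{H}^{1}(\Gamma_{\Sigma}^{t}\cap\ball_{r/2}(x_{0}))$. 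For the constant part, continuity of $\Psi_{\Sigma}$ together with Lemma \ref{lem:collar2}\ref{item:collF2} shows $\nu_{\Gamma_{\Sigma}^{t+\vartheta\delta}}\circ\Psi_{\Sigma}(t+\vartheta\delta,\cdot)\to\nu_{\Gamma_{\Sigma}^{t}}$ in $\lebe^{1}(\Gamma_{\Sigma})$ as $\delta\searrow 0$, so dominated convergence in $\vartheta$ reduces the coarea expression to $-\int_{\Gamma_{\Sigma}^{t}}\varphi z_{0}\cdot\nu_{\Gamma_{\Sigma}^{t}}\dif\mathscr{H}^{1}$. Dividing by $\mathscr{H}^{1}(\Gamma_{\Sigma}^{t}\cap\ball_{r}(x_{0}))$ and letting $r\searrow 0$ yields \eqref{eq:densityrep1} via the weak Lebesgue point property \eqref{eq:LebesgueNormal}.

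The main obstacle is Step 4: verifying that the two limits $\delta\searrow 0$ (collapsing slices) and $r\searrow 0$ (concentrating bumps) can be interchanged in a controlled way near a mere continuity--weak-Lebesgue point, without any global regularity of $\mathbf{v}$ or $\nu_{\Gamma_{\Sigma}^{s}}$. The uniform geometric control provided by Lemma \ref{lem:collar2}\ref{item:collF2} and Lemma \ref{lem:geometrichelp} is exactly what makes the splitting work.
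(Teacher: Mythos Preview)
Your Steps 1--3 follow the paper almost line for line: the reduction to \eqref{eq:alternativemain}, the invocation of Proposition~\ref{prop:diff}, and the use of Lemma~\ref{lem:geometrichelp} for absolute continuity are exactly the paper's arguments.

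Step 4 contains a real gap. Your claim that Lemma~\ref{lem:collar2}\ref{item:collF2} yields
\[
\nu_{\Gamma_{\Sigma}^{t+\vartheta\delta}}\circ\Psi_{\Sigma}(t+\vartheta\delta,\cdot)\;\to\;\nu_{\Gamma_{\Sigma}^{t}}\quad\text{in }\lebe^{1}(\Gamma_{\Sigma})
\]
is not justified and need not hold: that lemma controls only \emph{distances} between the collar layers, not tangent or normal directions. Since $\Psi_{\Sigma}$ is merely bi-Lipschitz, the curves $\Gamma_{\Sigma}^{s}$ are only Lipschitz, and there is no mechanism forcing $\lebe^{1}$-convergence of their normals as $s\to t$; think of a family of Lipschitz graphs converging uniformly but whose derivatives do not converge in $\lebe^{1}$.

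The paper avoids this entirely. Instead of comparing normals on nearby layers, it uses the integration-by-parts identity \eqref{eq:ibpaux} from the proof of Lemma~\ref{lem:bdryintegralcont} to convert each slice integral
$\int_{\Gamma_{\Sigma}^{s}}\varphi_{r}\,z_{0}\cdot\nu_{\Gamma_{\Sigma}^{s}}\dif\mathscr{H}^{1}$
into a \emph{surface} integral over $\Sigma^{\tau,s}$ involving $\divm((z_{0})_{\tau})$ and $\nabla_{\tau}\varphi_{r}$. These surface integrals converge trivially by dominated convergence over the shrinking domains $\Sigma^{\tau,t}\setminus\Sigma^{\tau,t+\delta}$ (cf.\ \eqref{eq:hotsummer}--\eqref{eq:steinalgen}), and the identification of the limit with $\int_{\Gamma_{\Sigma}^{t}}\varphi_{r}\nu_{\Gamma_{\Sigma}^{t}}\dif\mathscr{H}^{1}$ drops out of the same identity at $s=t$. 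This is the mechanism you should invoke for the constant part; your coarea representation is correct and useful for the splitting, but the passage to the limit for the $z_{0}$-part must go through integration by parts, not through convergence of layer normals.
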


\begin{proof}
Throughout, we focus on the interior Stokes functionals $\eqref{eq:Stokesfunctionaldef}_{1}$, 
the results for the exterior Stokes functionals follow by analogous means. 

We start by rewriting $\eqref{eq:Stokesfunctionaldef}_{1}$, and fix $t$ and $\delta$ for the time being. To this end, we recall from Remark \ref{rem:welldefnessstokes} that the limits in \eqref{eq:curllocaliser} are independent of the specific Lipschitz extension $\overline{\psi}=\overline{\psi}_{t,\delta,\Sigma}$ of $\psi_{t,\delta,\Sigma}$. In particular, we may specify to the Lipschitz extension provided by Lemma \ref{lem:GoodLip}\ref{item:Lipextend2}. The latter implies the existence of some $\overline{\psi}\in\mathrm{Lip}_{\rm c}(\Omega)$ such that $\overline{\psi}|_{\partial\Omega'}=\psi_{t,\delta,\Sigma}$ on $\partial\Omega'$, $\overline{\psi}|_{\Omega'}\in\hold_{b}^{1}(\Omega')$, $\overline{\psi}|_{\Omega\setminus\overline{\Omega'}}\in\hold_{b}^{1}(\Omega\setminus\overline{\Omega'})$, 
and $\nabla_{\tau}(\rho_{\varepsilon}*\overline{\psi})\to\nabla_{\tau}\psi_{t,\delta,\Sigma}$ 
$\mathscr{H}^{2}$-a.e. on $\partial\Omega'$, where $\rho_{\varepsilon}$ is the $\varepsilon$-rescaled version of a standard mollifier on $\R^{3}$. Given $\varphi\in\hold_{\rm c}^{1}(\Omega)$, 
these properties clearly hold with the obvious modifications for $\zeta:=\overline{\psi}\varphi$. 

Since $\Omega'\Subset\Omega$ is open and bounded with $\hold^{1}$-boundary and $\curl\FF\in\mathscr{DM}^{\mathrm{ext}}(\Omega)$, all  assumptions of Lemma \ref{lem:easyprod} and Corollary \ref{cor:LipAdmit} are fulfilled. Recalling that $\mathrm{div}(\curl\FF)=0$ in $\mathscr{D}'(\Omega)$ and using the product rule for the tangential gradients, we arrive at 
\begin{align}\label{eq:goslingdrive1}
\begin{split}
\langle(\curl\FF)\cdot\nu, \zeta\rangle_{\partial\Omega'} & \stackrel{\eqref{eq:easynormaltraceextdivmeas1}}{=} - \int_{\Omega'}\nabla\zeta\cdot\dif\,(\curl\FF)\\ &  \stackrel{\eqref{eq:criticalId}}{=} - \int_{\partial\Omega'}(\FF\times\nu_{\partial\Omega'})_{\partial\Omega'}^{\mathrm{int}}\cdot\nabla_{\tau}\zeta\dif\mathscr{H}^{2} \\ & \;\;\, = -\int_{\partial\Omega'}(\FF\times\nu_{\partial\Omega'})_{\partial\Omega'}^{\mathrm{int}}\cdot \psi_{t,\delta,\Sigma}\nabla_{\tau}\varphi \dif \mathscr{H}^{2} \\ & \;\;\;\;\;\;\, -\int_{\partial\Omega'}(\FF\times\nu_{\partial\Omega'})_{\partial\Omega'}^{\mathrm{int}}\cdot \varphi\nabla_{\tau}\psi_{t,\delta,\Sigma}\dif \mathscr{H}^{2}. 
\end{split}
\end{align}
We recall from Theorem \ref{thm:tracemain1} that $(\FF\times\nu_{\partial\Omega'})_{\partial\Omega'}^{\mathrm{int}}\in\lebe^{\infty}(\partial\Omega';T_{\partial\Omega'})$. By relative openness of $\Sigma^{\tau,t}$ in $\partial\Omega'$, we see that $\psi_{t,\delta,\Sigma}\to 1$ everywhere in $\Sigma^{\tau,t}$ as $\delta\searrow 0$. We thus find that 
\begin{align}\label{eq:goslingdrive2}
-\int_{\partial\Omega'}(\FF\times\nu_{\partial\Omega'})_{\partial\Omega'}^{\mathrm{int}}\cdot \psi_{t,\delta,\Sigma}\nabla_{\tau}\varphi \dif \mathscr{H}^{2} \to -\int_{\partial\Omega'}(\FF\times\nu_{\partial\Omega'})_{\partial\Omega'}^{\mathrm{int}}\cdot \nabla_{\tau}\varphi \dif \mathscr{H}^{2}
\end{align}
by dominated convergence. In view of definition $\eqref{eq:Stokesfunctionaldef}_{1}$ 
of the interior Stokes functionals, we therefore obtain the following alternative representation: 
\begin{align}\label{eq:alternativemain}
\mathfrak{S}_{\Sigma^{\tau,t}}^{\mathrm{int}}(\varphi) = - \lim_{\delta\searrow 0}\int_{\partial\Omega'}(\FF\times\nu_{\partial\Omega'})_{\partial\Omega'}^{\mathrm{int}}\cdot \varphi\nabla_{\tau}\psi_{t,\delta,\Sigma}\dif \mathscr{H}^{2}.
\end{align}
Based on \eqref{eq:alternativemain}, we now embark on the actual proof. 

\smallskip
For \ref{item:elementary1}, by  \eqref{eq:alternativemain}, we conclude that the well-definedness of the interior Stokes functional is equivalent to the existence and finiteness of the limit on the right-hand side of \eqref{eq:alternativemain}. By Theorem \ref{thm:tracemain1}, $\mathbf{v}:=(\FF\times\nu_{\partial\Omega'})_{\partial\Omega'}^{\mathrm{int}}\in\lebe^{\infty}(\partial\Omega',T_{\partial\Omega'})$. With this choice of $\mathbf{v}$, Proposition \ref{prop:diff} yields a set $\mathscr{I}\subset I=[0,\frac{1}{2})$ with $\mathscr{L}^{1}(I\setminus\mathscr{I})=0$  such that the limit on the right-hand side of \eqref{eq:alternativemain} exists for all $t\in\mathscr{I}$ \emph{and all} $\varphi\in\hold_{\rm c}^{1}(\Omega)$. In  consequence, for $t\in\mathscr{I}$, 
\begin{align*}
\mathfrak{S}_{\Sigma^{\tau,t}}^{\mathrm{int}} \stackrel{\eqref{eq:alternativemain}}{=} -\ell_{t}^{\mathbf{v}}\colon\hold_{\rm c}^{1}(\Omega)\ni \varphi \mapsto -\ell_{t}^{\mathbf{v}}(\varphi)\in\R\qquad\text{with $\ell_{t}^{\mathbf{v}}(\varphi)$ as in \eqref{eq:lim1}}\end{align*}
is well-defined and linear. Moreover, \eqref{eq:lim2} implies that there exists a constant $c>0$ independent of $\FF$ and $c(t)>0$ such that 
\begin{align}\label{eq:goose}
|\mathfrak{S}_{\Sigma^{\tau,t}}^{\mathrm{int}}(\varphi)| \leq c\,(\mathcal{M}_{\Sigma,\partial\Omega'}^{\Psi,+}\mathbf{v}(t))\|\varphi\|_{\hold(\overline{\Sigma})}\leq c(t)\,\|\varphi\|_{\hold(\overline{\Sigma})}\qquad\text{for all}\;\varphi\in\hold_{\rm c}^{1}(\Omega). 
\end{align}
Since $\hold_{\rm c}^{1}(\Omega)$ is dense in $\hold_{0}(\Omega)$ with respect to $\|\cdot\|_{\hold(\Omega)}$, 
estimate \eqref{eq:goose} entails that $\mathfrak{S}_{\Sigma^{\tau,t}}^{\mathrm{int}}$ 
extends to a (non-relabeled) bounded linear functional ${{\mathfrak{S}}}{_{\Sigma^{\tau,t}}^{\mathrm{int}}}\colon\hold_{0}(\Omega)\to\R$, which inherits bound \eqref{eq:goose}. 
Hence, the Riesz representation theorem provides us with a measure 
$\mu_{\Sigma,t}^{\tau,\mathrm{int}}\in\mathrm{RM}_{\mathrm{fin}}(\Omega)$ such that 
\begin{align}\label{eq:Stokesrepext}
{\mathfrak{S}}{_{\Sigma^{\tau,t}}^{\mathrm{int}}}(\varphi)=\int_{\Omega}\varphi\dif\mu_{\Sigma,t}^{\tau,\mathrm{int}}\qquad\,\,\text{for all $\;\varphi\in\hold_{0}(\Omega)$}, 
\end{align}
which is \eqref{eq:representasmeasure}. 

Next, we observe that, if $\varphi\in\hold_{\rm c}^{1}(\Omega)$ is such that $\spt(\varphi)\cap \Gamma_{\Sigma}^{t}=\emptyset$, the closedness of $\Gamma_{\Sigma}^{t}$ implies that $d:=\mathrm{dist}(\spt(\varphi),\Gamma_{\Sigma}^{t})>0$. By the support properties of the interior height functions $\psi_{t,\delta,\Sigma}$ (see \eqref{eq:psiddefmain}), 
it is clear that there exists $\delta_{0}=\delta_{0}(\varphi)>0$ such that $0<\delta<\delta_{0}$ implies that the integral on the right-hand side of \eqref{eq:alternativemain} vanishes. Hence, necessarily, $\spt(\mu_{\Sigma,t}^{\tau,\mathrm{int}})\subset \Gamma_{\Sigma}^{t}$. Summarising, we have established \ref{item:elementary1}. 

\smallskip
For \ref{item:StokesAbsCon}, let $t\in\mathscr{I}$, and let $\varepsilon>0$ be arbitrary. Moreover, let $A\subset\Gamma_{\Sigma}^{t}$ be a $\mathscr{H}^{1}$-measurable set with $\mathscr{H}^{1}(A)<\varepsilon$. 
Then there exists $\delta_{0}>0$ such that, for every $0<\delta'<\delta_{0}$, 
there exists a countable collection $(\ball_{j})=(\ball_{r_{j}}(x_{j}))$ of open balls such that 
\begin{align}\label{eq:radiichoose}
2r_{j}<\delta'\;\;\text{for all}\;j\in\mathbb{N},\qquad A\subset\bigcup_{j=1}^{\infty}\ball_{j},
\qquad\sum_{j=1}^{\infty}r_{j}<2\varepsilon.
\end{align}
Moreover, without loss of generality, we assume that each $\ball_{j}$ is centered at an element of $A$ and that, diminishing $\delta_{0}>0$ if necessary, $\ball_{j}\Subset\Omega$ holds for all $j\in\mathbb{N}$ and $\delta_{0}<2r'$ with the threshold number $r'$ from Lemma \ref{lem:geometrichelp}. In particular, $0<r_{j}<r'$ for all $j\in\mathbb{N}$.

Because $\mu_{\Sigma,t}^{\tau,\mathrm{int}}\in\mathrm{RM}_{\mathrm{fin}}(\Omega)$ and each $\ball_{j}$ is open, for each $j\in\mathbb{N}$, 
there exists $\varphi_{j}\in\hold_{\rm c}^{1}(\ball_{j})$ such that $\|\varphi_{j}\|_{\lebe^{\infty}(\ball_{j})}\leq 1$ and  
\begin{align}\label{eq:StokesCloseness}
|\mu_{\Sigma,t}^{\tau,\mathrm{int}}|(\ball_{j}) 
\leq \bigg\vert \int_{\ball_{j}}\varphi_{j}\dif\mu_{\Sigma,t}^{\tau,\mathrm{int}}\bigg\vert + \frac{\varepsilon}{2^{j}}. 
\end{align}
Now, since each $\ball_{j}$ is open, $\spt(\mu_{\Sigma,t}^{\tau,\mathrm{int}})\subset\Gamma_{\Sigma}^{t}$, 
and $\spt(\varphi_{j})\subset\ball_{j}$ for every $j\in\mathbb{N}$, we have 
\begin{align}\label{eq:AbsConEst}
\begin{split}
|\mu_{\Sigma,t}^{\tau,\mathrm{int}}|(A) & \stackrel{\eqref{eq:radiichoose}}{\leq}  \sum_{j=1}^{\infty}|\mu_{\Sigma,t}^{\tau,\mathrm{int}}|(\ball_{j}) \\ 
& \stackrel{\eqref{eq:StokesCloseness}}{\leq} \sum_{j=1}^{\infty}\bigg(\bigg\vert \int_{\ball_{j}}\varphi_{j}\dif\mu_{\Sigma,t}^{\tau,\mathrm{int}}\bigg\vert + \frac{\varepsilon}{2^{j}}\bigg)  
= \sum_{j=1}^{\infty} \bigg(\bigg\vert \int_{\Gamma_{\Sigma}^{t}}\varphi_{j}\dif\mu_{\Sigma,t}^{\tau,\mathrm{int}}\bigg\vert + \frac{\varepsilon}{2^{j}}\bigg).
\end{split}
\end{align}
Let $j\in\mathbb{N}$ be arbitrary. We recall that, by \eqref{eq:Lipbound1}, there exists $c=c(\Sigma,\Psi_{\Sigma})>0$ such that $|\nabla_{\tau}\psi_{t,\delta,\Sigma}|\leq \frac{c}{\delta}$ holds for all sufficiently small $\delta>0$. Moreover, since $\Psi_{\Sigma}\colon (-1,1)\times\Gamma_{\Sigma}\to\mathcal{O}\subset\partial\Omega'$ is bi-Lipschitz, there exists a constant $c=c(\Sigma,\Psi_{\Sigma})>1$ such that 
\begin{align}\label{eq:kegelclub}
\frac{\delta}{c} \leq \mathscr{H}^{2}(\Psi_{\Sigma}((t,t+\delta)\times\Gamma_{\Sigma}))\leq c\delta\qquad\text{for all sufficiently small $\delta>0$}. 
\end{align}
By definition of the measure $\mu_{\Sigma,t}^{\tau,\mathrm{int}}$, we may employ Lemma \ref{lem:geometrichelp} to obtain
\begin{align*}
\bigg\vert \int_{\Gamma_{\Sigma}^{t}}\varphi_{j}\dif\mu_{\Sigma,t}^{\tau,\mathrm{int}}\bigg\vert & \stackrel{\eqref{eq:alternativemain}}{=} \lim_{\delta\searrow 0} \left\vert \int_{\Sigma^{\tau,t}}\varphi_{j}(\FF\times\nu_{\partial\Omega'})_{\partial\Omega'}^{\mathrm{int}}\cdot\nabla_{\tau}\psi_{t,\delta,\Sigma}\dif\mathscr{H}^{2}  \right\vert \\ 
& \;\;\leq c\,\|(\FF\times\nu_{\partial\Omega'})_{\partial\Omega'}^{\mathrm{int}}\|_{\lebe^{\infty}(\Sigma)}\limsup_{\delta\searrow 0}\Big(\frac{1}{\delta}\int_{\Psi_{\Sigma}((t,t+\delta)\times\Gamma_{\Sigma})}|\varphi_{j}|\dif\mathscr{H}^{2}\Big) \\ 
& \!\!\!\!\!\!\!\!\!\!\!\!\!\!\stackrel{\eqref{eq:kegelclub},\,\spt(\varphi_{j})\subset\ball_{j}}{\leq}  c\,\|(\FF\times\nu_{\partial\Omega'})_{\partial\Omega'}^{\mathrm{int}}\|_{\lebe^{\infty}(\Sigma)}\limsup_{\delta\searrow 0}\frac{\mathscr{H}^{2}(\Psi_{\Sigma}((t,t+\delta)\times\Gamma_{\Sigma})\cap\ball_{j})}{\mathscr{H}^{2}(\Psi_{\Sigma}((t,t+\delta)\times\Gamma_{\Sigma}))} \\ 
& \!\!\!\!\!\!\!\!\!\!\!\!\stackrel{\text{Lem. \ref{lem:geometrichelp},\,$r_{j}<r'$}}{\leq} c\,\|(\FF\times\nu_{\partial\Omega'})_{\partial\Omega'}^{\mathrm{int}}\|_{\lebe^{\infty}(\Sigma)}\,r_{j} =: C\,r_{j}, 
\end{align*}
where $C>0$ clearly is independent of $j\in\mathbb{N}$. In conjunction with \eqref{eq:AbsConEst}, we thus obtain 
\begin{align}\label{eq:finalAbsCon}
|\mu_{\Sigma,t}^{\tau,\mathrm{int}}|(A) & \leq \sum_{j=1}^{\infty}\Big(Cr_{j}+\frac{\varepsilon}{2^{j}}\Big) \stackrel{\eqref{eq:radiichoose}}{\leq} c\,\varepsilon. 
\end{align}
Sending $\varepsilon\searrow 0$, we obtain that $|\mu_{\Sigma,t}^{\tau,\mathrm{int}}|(A)=0$ whenever $\mathscr{H}^{1}(A)=0$. Both $\mu_{\Sigma,t}^{\tau,\mathrm{int}}$ and $\mathscr{H}^{1}\mres\Gamma_{\Sigma}^{t}$ are finite measures on $\Gamma_{\Sigma}^{t}$, 
so that $\mu_{\Sigma,t}^{\tau,\mathrm{int}}\ll\mathscr{H}^{1}\mres\Gamma_{\Sigma}^{t}$ yields  \eqref{eq:densityGammaSigma} by use of the Radon-Nikod\'{y}m theorem. Hence,  \ref{item:StokesAbsCon} follows. 

\smallskip
For \ref{item:StokesTangential}, by what we have established above, 
density \eqref{eq:densityGammaSigma} exists at $\mathscr{H}^{1}$-a.e. $x_{0}\in\Gamma_{\Sigma}^{t}$. 
For an arbitrary $\varepsilon>0$, \eqref{eq:Lebesgue1} implies that there exists $r_{0}>0$ such that 
\begin{align}\label{eq:makessense}
|\mathbf{G}(x)-z_{0}|<\varepsilon\qquad\text{for all}\;x\in\Sigma\;\text{with}\;|x-x_{0}|<r_{0}. 
\end{align}
We proceed to compute the density \eqref{eq:densityGammaSigma}. To this end, let $r>0$ be sufficiently small. Then we choose a sequence $(\widetilde{\varphi}_{j})\subset\hold_{\rm c}^{1}(\ball_{r}(x_{0});[0,1])$ such that $|\widetilde{\varphi}_{j}|\leq 1$ and $\widetilde{\varphi}_{j}\to 1$ everywhere in $\ball_{r}(x_{0})$. 
In particular, because $\spt(\mu_{\Sigma,t}^{\tau,\interior})\subset\Gamma_{\Sigma}^{t}$,  
there exists $j(r)\in\mathbb{N}$ such that 
\begin{align}\label{eq:sowhat?}
\begin{split}
& \bigg\vert\mu_{\Sigma,t}^{\tau,\mathrm{int}}(\ball_{r}(x_{0})\cap\Gamma_{\Sigma}^{t}) - \int_{\ball_{r}(x_{0})}\widetilde{\varphi}_{j(r)}\dif\mu_{\Sigma,t}^{\tau,\interior}\bigg\vert
< r^{2}, \\[1mm]
&  
\bigg\vert \int_{\Gamma_{\Sigma}^{t}\cap\ball_{r}(x_{0})}\nu_{\Gamma_{\Sigma}^{t}}\dif\mathscr{H}^{1} - \int_{\Gamma_{\Sigma}^{t}}\widetilde{\varphi}_{j(r)}\nu_{\Gamma_{\Sigma}^{t}}\dif\mathscr{H}^{1}\bigg\vert < r^{2}.
\end{split}
\end{align}
This is possible by the dominated convergence theorem; note that $\nu_{\Gamma_{\Sigma}^{t}}\in\lebe^{\infty}(\Gamma_{\Sigma}^{t};\R^{3})$. We then define $\varphi_{r}:=\widetilde{\varphi}_{j(r)}$. By \eqref{eq:sowhat?}, for each sufficiently small $r>0$, there exist $\theta_{r}\in\R$ and $\Theta_{r}\in\R^{3}$ such that 
\begin{align}\label{eq:bricko}
\begin{split}
& \mu_{\Sigma,t}^{\tau,\mathrm{int}}(\Gamma_{\Sigma}^{t}\cap\ball_{r}(x_{0})) = \int_{\ball_{r}(x_{0})}\varphi_{r}\dif\mu_{\Sigma,t}^{\tau,\interior} + \theta_{r}, \\ 
& \int_{\Gamma_{\Sigma}^{t}\cap\ball_{r}(x_{0})}\nu_{\Gamma_{\Sigma}^{t}}\dif\mathscr{H}^{1} = \int_{\Gamma_{\Sigma}^{t}}\varphi_{r}\nu_{\Gamma_{\Sigma}^{t}}\dif\mathscr{H}^{1} + \Theta_{r},\\ & \limsup_{r\searrow 0}\frac{|\theta_{r}|}{r^{2}}\leq 1, \quad\,\,\;\limsup_{r\searrow 0}\frac{|\Theta_{r}|}{r^{2}}\leq 1. 
\end{split}
\end{align}
We compute 
\begin{align*}
\frac{\dif\mu_{\Sigma,t}^{\tau,\mathrm{int}}}{\dif\mathscr{H}^{1}}(x_{0}) & = \lim_{r\searrow 0}\frac{\mu_{\Sigma,t}^{\tau,\mathrm{int}}(\ball_{r}(x_{0})\cap\Gamma_{\Sigma}^{t})}{\mathscr{H}^{1}(\ball_{r}(x_{0})\cap\Gamma_{\Sigma}^{t})} \\ 
& \!\!\!\!\!\!\!\!\!\!\!\!\!\!\!\! \stackrel{\eqref{eq:bricko}_{1}}{=} \lim_{r\searrow 0} \frac{1}{\mathscr{H}^{1}(\ball_{r}(x_{0})\cap\Gamma_{\Sigma}^{t})}\int_{\ball_{r}(x_{0})}\varphi_{r}\dif\mu_{\Sigma,t}^{\tau,\interior} + \lim_{r\searrow 0}\frac{\theta_{r}}{\mathscr{H}^{1}(\ball_{r}(x_{0})\cap\Gamma_{\Sigma}^{t})} \\ 
& \!\!\!\!\!\!\!\!\!\!\!\!\!\!\!\!\!\!\!\!\!\!\stackrel{\eqref{eq:alternativemain},\,\eqref{eq:Stokesrepext}}{=} -\lim_{r\searrow 0}\frac{1}{\mathscr{H}^{1}(\ball_{r}(x_{0})\cap\Gamma_{\Sigma}^{t})}\Big(\lim_{\delta\searrow 0}\int_{\Sigma^{\tau,t}}\varphi_{r}\big((\FF\times\nu_{\partial\Omega'})_{\partial\Omega'}^{\mathrm{int}}-z_{0}\big)\cdot\nabla_{\tau}\psi_{t,\delta,\Sigma}\dif\mathscr{H}^{2}\Big)\\ 
& \!\!\!\!\!\!\!\!\!\!\! - \lim_{r\searrow 0}\frac{1}{\mathscr{H}^{1}(\ball_{r}(x_{0})\cap\Gamma_{\Sigma}^{t})}\Big(\lim_{\delta\searrow 0}\int_{\Sigma^{\tau,t}}\varphi_{r}\nabla_{\tau}\psi_{t,\delta,\Sigma}\dif\mathscr{H}^{2}\Big)\cdot z_{0} \\ 
& \!\!\!\!\!\!\!\!\!\!\! +  \lim_{r\searrow 0}\frac{\theta_{r}}{\mathscr{H}^{1}(\ball_{r}(x_{0})\cap\Gamma_{\Sigma}^{t})} =: \mathrm{I}+\mathrm{II} + \mathrm{III}. 
\end{align*}
To estimate these terms separately, we note that $\Gamma_{\Sigma}^{t}$ is also 
the boundary of a Lipschitz boundary manifold relative to $\Omega'$. 
Hence, there exists a constant $c>1$ such that 
\begin{align}\label{eq:tkkg1}
\frac{r}{c} \leq \mathscr{H}^{1}(\ball_{r}(x_{0})\cap\Gamma_{\Sigma}^{t})\leq cr\qquad\text{for all sufficiently small}\;r>0. 
\end{align}

\smallskip
For $\mathrm{I}$, let $\varepsilon>0$ be arbitrary. Because of the Lipschitz bound \eqref{eq:Lipbound1},  \eqref{eq:makessense}, and \eqref{eq:tkkg1}, 
\begin{align}\label{eq:nightdrive}
\begin{split}
|\mathrm{I}|  &\stackrel{\eqref{eq:Lipbound1},\eqref{eq:tkkg1}}{\leq}  \limsup_{r\searrow 0}\Big(\limsup_{\delta\searrow 0}
\frac{c}{r\delta}\int_{\ball_{r}(x_{0})\cap\Psi_{\Sigma}((t,t+\delta)\times\Gamma_{\Sigma})}|\mathbf{G}-z_{0}|\dif\mathscr{H}^{2}\Big) \\ & \;\;\; \stackrel{\eqref{eq:makessense}}{<} \varepsilon\limsup_{r\searrow 0}\Big(\limsup_{\delta\searrow 0}\frac{c}{r\delta}\mathscr{H}^{2}(\ball_{r}(x_{0})\cap\Psi_{\Sigma}((t,t+\delta)\times\Gamma_{\Sigma}))\Big) \\ 
& \;\;\; \stackrel{\eqref{eq:kegelclub}}{\leq} c\,\varepsilon\limsup_{r\searrow 0}\frac{1}{r}\Big(\limsup_{\delta\searrow 0} \frac{\mathscr{H}^{2}(\ball_{r}(x_{0})\cap\Psi_{\Sigma}((t,t+\delta)\times\Gamma_{\Sigma}))}{\mathscr{H}^{2}(\Psi_{\Sigma}((t,t+\delta)\times\Gamma_{\Sigma}))}\Big) \\ 
& \;\;\; \stackrel{\eqref{eq:technique}}{\leq} c\,\varepsilon, 
\end{split}
\end{align}
where $c>0$ is clearly independent of $\varepsilon$. We may therefore send $\varepsilon\searrow 0$ to conclude that $\mathrm{I}=0$. 

\smallskip
For $\mathrm{II}$, an analogous argument as in the proof of Lemma \ref{lem:bdryintegralcont}, based on the coarea formula on manifolds, yields that
\begin{align*}
\lim_{\delta\searrow 0}\int_{\Sigma^{\tau,t}}\varphi_{r}\nabla_{\tau}\psi_{t,\delta,\Sigma}\dif\mathscr{H}^{2} = \int_{\Gamma_{\Sigma}^{t}}\varphi_{r}\nu_{\Gamma_{\Sigma}^{t}}\dif\mathscr{H}^{1}.
\end{align*}
Hence, we arrive at 
\begin{align}\label{eq:pressure}
\begin{split}
\mathrm{II} & =  - \Big(\lim_{r\searrow 0} \dashint_{\Gamma_{\Sigma}^{t}\cap\ball_{r}(x_{0})}\nu_{\Gamma_{\Sigma}^{t}}\dif\mathscr{H}^{1}\Big)\cdot z_{0}  +\Big(\underbrace{\lim_{r\searrow 0}\frac{\Theta_{r}}{\mathscr{H}^{1}(\ball_{r}(x_{0})\cap\Gamma_{\Sigma}^{t})}}_{(*)}\Big)\cdot z_{0} \\ & \!\!\!\!\stackrel{\eqref{eq:LebesgueNormal}}{=}  - \nu_{\Gamma_{\Sigma}^{t}}(x_{0})\cdot z_{0}.
\end{split}
\end{align}
Here, $(*)=0$ can be seen as a consequence of \eqref{eq:tkkg1} and the second part of $\eqref{eq:bricko}_{3}$.

\smallskip
For $\mathrm{III}$, the bounds from \eqref{eq:tkkg1} in conjunction with the first part of  $\eqref{eq:bricko}_{3}$ directly yield that $\mathrm{III}=0$. 

Gathering the above identities, it follows that  
\begin{align*}
\frac{\dif\mu_{\Sigma,t}^{\tau,\mathrm{int}}}{\dif\mathscr{H}^{1}}(x_{0}) = \mathrm{II} \stackrel{\eqref{eq:pressure}}{=}  - \nu_{\Gamma_{\Sigma}^{t}}(x_{0})\cdot z_{0}. 
\end{align*}
This is \eqref{eq:densityrep1}, and the proof is complete. 
\end{proof}

\begin{rem}[On Theorem \ref{thm:stokes}]\label{rem:stokesdiscus}
The existence (or well-definedness) of the Stokes functionals 
$\mathfrak{S}_{\Sigma^{\tau,t}}^{\mathrm{int}}$ and $\mathfrak{S}_{\Sigma^{\tau,t}}^{\mathrm{int}}$ 
depends strongly on $\FF$, and hereafter $t\in I$. Moreover, Theorem \ref{thm:stokes}\ref{item:StokesTangential} underlines in which (weak) sense the density of $\mu_{\Sigma,t}^{\tau,\interior}$ can be understood to be tangential to $\Gamma_{\Sigma}^{t}$ inside $\Sigma$. To explain this, assume that we may write $(\FF\times\nu_{\partial\Omega'})_{\partial\Omega'}^{\mathrm{int}}=\mathbf{H}\times\nu_{\partial\Omega'}$ with some $\mathbf{H}\in\lebe^{\infty}(\partial\Omega';\R^{3})$ such that, for simplicity, $x_{0}\in\Gamma_{\Sigma}^{t}$ is a continuity point for $\mathbf{H}$. In the situation of Theorem \ref{thm:stokes}\ref{item:StokesTangential}, \eqref{eq:densityrep1} gives us 
\begin{align}\label{eq:densitycompute1}
\begin{split}
\frac{\dif\mu_{\Sigma,t}^{\tau,\mathrm{int}}}{\dif\mathscr{H}^{1}}(x_{0}) & = - (\mathbf{H}(x_{0})\times \nu_{\partial\Omega'}(x_{0}))\cdot \nu_{\Gamma_{\Sigma}^{t}}(x_{0}) \\ &  =  - \mathbf{H}(x_{0})\cdot(\nu_{\partial\Omega'}(x_{0})\times\nu_{\Gamma_{\Sigma}^{t}}(x_{0})) = - \mathbf{H}(x_{0})\cdot\tau_{\Gamma_{\Sigma}^{t}}(x_{0}), 
\end{split}
\end{align}
where the tangential field $\tau_{\Gamma_{\Sigma}^{t}}\colon\Gamma_{\Sigma}^{t}\to\mathbb{S}^{2}$ carries the orientation as depicted in Figure \ref{fig:conventionsorientations}. 

We 
point out that it is not clear to us how to prove \eqref{eq:densityrep1} 
if we only assume $x_{0}$ to be a strong Lebesgue point of $(\FF\times\nu_{\partial\Omega'})_{\partial\Omega'}^{\mathrm{int}}$ with respect to $\mathscr{H}^{2}$, meaning that 
\begin{align}\label{eq:strongconj}
\lim_{r\searrow 0} \dashint_{\ball_{r}(x_{0})\cap\partial\Omega'}|(\FF\times\nu_{\partial\Omega'})_{\partial\Omega'}^{\mathrm{int}}-z_{0}|\dif\mathscr{H}^{2}=0. 
\end{align}
The key obstruction in proving \eqref{eq:densityrep1} is the order of limits in \eqref{eq:nightdrive}, and the low regularity of $(\FF\times\nu_{\partial\Omega'})_{\partial\Omega'}^{\mathrm{int}}$ 
does not seem to give us access to \eqref{eq:strongconj}, 
\emph{e.g.} by use of the coarea formula. 
\end{rem}

Based on Theorem \ref{thm:stokes}, we now give a version of the Stokes in the classical vorticity flux-circulation version. 

\begin{corollary}[Stokes in the Vorticity Flux Formulation]\label{cor:stokesvortflux} 
In the situation of {\rm Theorem \ref{thm:stokes}}, let \emph{(i)} $t\in\mathscr{I}_{1}$ or \emph{(ii)} $t\in\mathscr{I}_{2}$. We define the  \emph{total interior and exterior vorticity fluxes} through $\Sigma^{\tau,t}$ by 
\begin{align}\label{eq:vortflux}
\begin{split}
&\Big[\overline{\overline{(\curl\FF)\cdot\nu_{\Sigma^{\tau,t}}}}\Big]{_{\Sigma^{\tau,t}}^{\mathrm{int}}} := \left\langle\overline{\overline{(\curl\FF)\cdot\nu_{\Sigma^{\tau,t}}}},\varphi\right\rangle_{\Sigma^{\tau,t},\Omega'}, \\
&\Big[\overline{\overline{(\curl\FF)\cdot\nu_{\Sigma^{\tau,t}}}}\Big]{_{\Sigma^{\tau,t}}^{\mathrm{ext}}} := \left\langle\overline{\overline{(\curl\FF)\cdot\nu_{\Sigma^{\tau,t}}}},\varphi\right\rangle_{\Sigma^{\tau,t},\Omega\setminus\overline{\Omega'}}, 
\end{split}
\end{align}
where, in each case, $\varphi\in\hold_{\rm c}^{1}(\Omega)$ is such that $\varphi=1$ in an open neighborhood of $\Sigma^{\tau,t}$. These definitions are independent of the specific choice of $\varphi$ with this property,
and we have the \emph{Stokes theorem in the vorticity flux formulation}:  
\begin{align*}
&\Big[\overline{\overline{(\curl\FF)\cdot\nu_{\Sigma^{\tau,t}}}}\Big]{_{\Sigma^{\tau,t}}^{\mathrm{int}}} = \mu_{\Sigma,t}^{\tau,\mathrm{int}}(\Gamma_{\Sigma}^{t}) = \int_{\Gamma_{\Sigma}^{t}}\frac{\dif\mu_{\Sigma,t}^{\tau,\mathrm{int}}}{\dif\mathscr{H}^{1}}\dif\mathscr{H}^{1}\qquad\;\text{if}\;t\in\mathscr{I}_{1}, \\ & \Big[\overline{\overline{(\curl\FF)\cdot\nu_{\Sigma^{\tau,t}}}}\Big]{_{\Sigma^{\tau,t}}^{\mathrm{ext}}} = \mu_{\Sigma,t}^{\tau,\exterior}(\Gamma_{\Sigma}^{t}) = \int_{\Gamma_{\Sigma}^{t}}\frac{\dif\mu_{\Sigma,t}^{\tau,\mathrm{ext}}}{\dif\mathscr{H}^{1}}\dif\mathscr{H}^{1}\qquad\;\text{if}\;t\in\mathscr{I}_{2}. 
\end{align*}
\end{corollary}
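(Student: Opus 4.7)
The plan is to deduce the corollary directly from Theorem \ref{thm:stokes} by choosing the test function so that the tangential-trace integral appearing in the Stokes functional collapses, leaving exactly the distributional curl-normal pairing that defines the vorticity flux. Throughout, I focus on $t\in\mathscr{I}_{1}$ and the interior case, since the exterior case is identical up to notation.

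First, I would establish independence of \eqref{eq:vortflux} under changes of admissible $\varphi$. Given $\varphi\in\hold_{\rm c}^{1}(\Omega)$ with $\varphi\equiv 1$ on an open set $U\subset\R^{3}$ containing $\Sigma^{\tau,t}$, the set $U\cap\partial\Omega'$ is relatively open in $\partial\Omega'$ and contains $\Sigma^{\tau,t}$, so $\varphi$ is constant on a relatively open neighborhood of $\Sigma^{\tau,t}$ in $\partial\Omega'$. Consequently $\nabla_{\tau}\varphi=0$ $\mathscr{H}^{2}$-a.e.\ on $\Sigma^{\tau,t}$, and the tangential trace integral in $\eqref{eq:Stokesfunctionaldef}_{1}$ vanishes. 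Hence
\[
\mathfrak{S}_{\Sigma^{\tau,t}}^{\mathrm{int}}(\varphi)=\left\langle\overline{\overline{(\curl\FF)\cdot\nu_{\Sigma^{\tau,t}}}},\varphi\right\rangle_{\Sigma^{\tau,t},\Omega'}=\Big[\overline{\overline{(\curl\FF)\cdot\nu_{\Sigma^{\tau,t}}}}\Big]_{\Sigma^{\tau,t}}^{\mathrm{int}}.
\]

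Next, I would apply Theorem \ref{thm:stokes}\ref{item:elementary1} to rewrite $\mathfrak{S}_{\Sigma^{\tau,t}}^{\mathrm{int}}(\varphi)=\int_{\Omega}\varphi\dif\mu_{\Sigma,t}^{\tau,\mathrm{int}}$, where $\spt(\mu_{\Sigma,t}^{\tau,\mathrm{int}})\subset\Gamma_{\Sigma}^{t}$. By continuity, $\varphi\equiv 1$ on $\overline{U}$, and since $\Gamma_{\Sigma}^{t}\subset\overline{\Sigma^{\tau,t}}\subset\overline{U}$, we have $\varphi\equiv 1$ on the support of $\mu_{\Sigma,t}^{\tau,\mathrm{int}}$. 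Therefore
\[
\int_{\Omega}\varphi\dif\mu_{\Sigma,t}^{\tau,\mathrm{int}}=\int_{\Gamma_{\Sigma}^{t}}1\dif\mu_{\Sigma,t}^{\tau,\mathrm{int}}=\mu_{\Sigma,t}^{\tau,\mathrm{int}}(\Gamma_{\Sigma}^{t}),
\]
which is independent of $\varphi$. This simultaneously yields the well-definedness of \eqref{eq:vortflux} and the first equality in the claimed Stokes identity. The representation in terms of the $\mathscr{H}^{1}$-density is then immediate from the absolute continuity $\mu_{\Sigma,t}^{\tau,\mathrm{int}}\ll\mathscr{H}^{1}\mres\Gamma_{\Sigma}^{t}$ provided by Theorem \ref{thm:stokes}\ref{item:StokesAbsCon}, together with the Radon–Nikod\'ym density \eqref{eq:densityGammaSigma}.

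I do not anticipate any real obstacle here: the corollary is essentially a reformulation of Theorem \ref{thm:stokes} once one restricts to test functions that are constant in a neighborhood of $\Sigma^{\tau,t}$, the only delicate point being the verification that such a $\varphi$ is automatically identically one on $\Gamma_{\Sigma}^{t}$, which follows purely from continuity and the inclusion $\Gamma_{\Sigma}^{t}\subset\overline{\Sigma^{\tau,t}}$.
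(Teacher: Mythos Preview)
Your proposal is correct and follows essentially the same route as the paper: identify the vorticity flux with $\mathfrak{S}_{\Sigma^{\tau,t}}^{\mathrm{int}}(\varphi)$ for admissible $\varphi$ (since the $\nabla_{\tau}\varphi$-integral in \eqref{eq:Stokesfunctionaldef} drops out), then invoke the measure representation and support property from Theorem~\ref{thm:stokes}\ref{item:elementary1} together with $\varphi\equiv 1$ on $\Gamma_{\Sigma}^{t}$. The only cosmetic difference is that the paper phrases the independence-of-$\varphi$ step via the alternative representation \eqref{eq:alternativemain}, whereas you argue directly from the definition of the Stokes functional; both amount to the same observation.
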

\begin{proof}
Based on \eqref{eq:alternativemain}, it is clear that the quantities from \eqref{eq:vortflux} are independent of the specific function $\varphi\in\hold_{\rm c}^{1}(\Omega)$ as long as $\varphi=1$ in an open neighborhood $U$ of $\Sigma^{\tau,t}$. Since the supports of $\mu_{\Sigma,t}^{\tau,\interior},\mu_{\Sigma,t}^{\tau,\interior}$ are contained in $\Gamma_{\Sigma}^{t}\subset U$ in each of the cases, the remaining assertions then are immediate from Theorem \ref{thm:stokes}; see \eqref{eq:representasmeasure}. The proof is complete.
\end{proof}
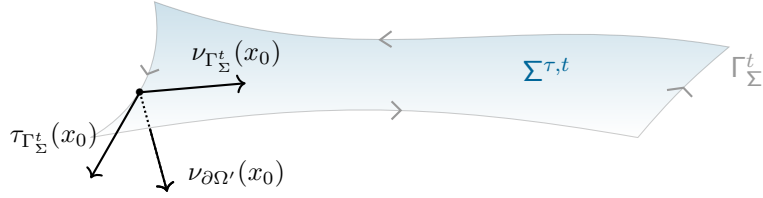
\begin{figure}
\begin{tikzpicture}[scale=1.2]
\draw[top color=green!40!blue,opacity=.2] (-3,0) [out=10, in =170] to (3,0) [out= 50, in = 220] to (4,1) [out= 170, in = -20] to (-2.3,1.5) [out= 280, in =30] to (-3,0);
\draw[->,thick] (-2.46,0.5) -- (-3,-0.45);
\draw[->,thick] (-2.46,0.5) -- (-1.3,0.6);
\draw[->,thick,densely dotted] (-2.46,0.5) -- (-2.16,-0.6);
\draw[->,thick] (-2.35,0.1) -- (-2.16,-0.6);
\node at (-3.4,0) {$\tau_{\Gamma_{\Sigma}^{t}}(x_{0})$};
\node at (-1.4,-0.4) {$\nu_{\partial\Omega'}(x_{0})$};
\node at (-1.4,0.9) {$\nu_{\Gamma_{\Sigma}^{t}}(x_{0})$};
\node[-,green!40!blue] at (2,0.75) {\large $\mathsf{\Sigma}^{\tau,t}$};
\node[-,black!40!white] at (4.2,0.75) {\large $\mathsf{\Gamma}_{\mathsf{\Sigma}}^{t}$};
\draw[-,thick,black!40!white] (-2.45,0.8) -- (-2.38,0.7) -- (-2.25,0.75);
\draw[-,thick,black!40!white] (0.3,0.4) -- (0.4,0.3) -- (0.3,0.2);
\draw[-,thick,black!40!white] (3.32,0.525) -- (3.5,0.55) -- (3.595,0.45);
\draw[-,thick,black!40!white] (0.3,1.175) -- (0.175,1.08) -- (0.3,1.0);
\node at (-2.46,0.5) {\tiny\textbullet};
\end{tikzpicture}
\caption{Figure \ref{fig:orientationprelims}, continued: On orientations in the Stokes theorem, continuing with the setting displayed in Figure \ref{fig:geometricmain}. }\label{fig:conventionsorientations}
\end{figure}

\subsection{Consistency and remarks on Definition \ref{def:stokes}}\label{sec:consistency}
In this intermediate subsection, we establish that Theorem \ref{thm:stokes} lets us retrieve the classical Stokes formula \eqref{eq:Stokes}. As a key point, this will also explain why Definition \ref{def:stokes} is natural. We begin with 
\begin{prop}[Consistency]\label{prop:consistency}
Retaining the geometric setting displayed in \S\ref{sec:smoothsetCMinfty}, 
let $\FF\in\hold^{1}(\Omega;\R^{3})$. Then Theorem \ref{thm:stokes} implies that 
\begin{align}\label{eq:stokes1A}
\int_{\Sigma}\curl(\varphi\FF)|_{\Sigma}\cdot\nu_{\partial\Omega'}\dif\mathscr{H}^{2} 
= - \int_{\Gamma_{\Sigma}}\varphi\FF\cdot\tau_{\Gamma_{\Sigma}}\dif\mathscr{H}^{1}
\qquad\text{for all}\;\varphi\in\hold_{\rm c}^{1}(\Omega). 
\end{align}
\end{prop}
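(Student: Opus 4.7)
\textbf{Proof plan for Proposition \ref{prop:consistency}.} The plan is to apply Theorem \ref{thm:stokes} on a sequence of tangentially shrunk manifolds $\Sigma^{\tau,t_n}$ with $t_n\searrow 0$ chosen from the good set $\mathscr{I}_1$, and to pass to the limit using the smoothness of $\FF$. The first step is to identify the Stokes functional with the classical surface integral of $\curl(\varphi\FF)\cdot\nu_{\partial\Omega'}$. Because $\FF\in\hold^{1}$, Theorem \ref{thm:tracemain1} gives $(\FF\times\nu_{\partial\Omega'})^{\mathrm{int}}_{\partial\Omega'}=\FF\times\nu_{\partial\Omega'}$ and the extended normal trace reduces, via \eqref{eq:easynormaltraceextdivmeas1}, to the classical pairing $\langle(\curl\FF)\cdot\nu,\zeta\rangle_{\partial\Omega'}=\int_{\partial\Omega'}\zeta(\curl\FF)\cdot\nu_{\partial\Omega'}\dif\mathscr{H}^{2}$; dominated convergence as $\delta\searrow 0$ in \eqref{eq:curllocaliser} (using $\psi_{t,\delta,\Sigma}\to\mathbbm{1}_{\Sigma^{\tau,t}}$ pointwise on $\Sigma^{\tau,t}$) then yields
\begin{equation*}
\big\langle\overline{\overline{(\curl\FF)\cdot\nu_{\Sigma^{\tau,t}}}},\varphi\big\rangle_{\Sigma^{\tau,t},\Omega'}
=\int_{\Sigma^{\tau,t}}\varphi\,(\curl\FF)\cdot\nu_{\partial\Omega'}\dif\mathscr{H}^{2}.
\end{equation*}
Combining this with the product rule $\curl(\varphi\FF)=\varphi\curl\FF+\nabla\varphi\times\FF$ and the triple-product identity $(\nabla\varphi\times\FF)\cdot\nu_{\partial\Omega'}=\nabla\varphi\cdot(\FF\times\nu_{\partial\Omega'})=\nabla_{\tau}\varphi\cdot(\FF\times\nu_{\partial\Omega'})$ (valid because $\FF\times\nu_{\partial\Omega'}$ is $\mathscr{H}^{2}$-a.e. tangent to $\partial\Omega'$), one obtains
\begin{equation*}
\mathfrak{S}_{\Sigma^{\tau,t}}^{\mathrm{int}}(\varphi)=\int_{\Sigma^{\tau,t}}\curl(\varphi\FF)\cdot\nu_{\partial\Omega'}\dif\mathscr{H}^{2}
\end{equation*}
at every $t$ where the functional is defined.

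The second step is to identify $\mathfrak{S}_{\Sigma^{\tau,t}}^{\mathrm{int}}(\varphi)$ as a boundary integral. Fix $t\in\mathscr{I}_1$. Since $\FF\in\hold^{1}$ and $\nu_{\partial\Omega'}$ is continuous, $(\FF\times\nu_{\partial\Omega'})^{\mathrm{int}}_{\partial\Omega'}$ is continuous on $\partial\Omega'$, so every $x_0\in\Gamma_{\Sigma}^{t}$ satisfies the continuity condition \eqref{eq:Lebesgue1} with $z_0=\FF(x_0)\times\nu_{\partial\Omega'}(x_0)$; moreover, $\nu_{\Gamma_{\Sigma}^{t}}\in\lebe^{\infty}(\Gamma_{\Sigma}^{t};\mathbb{S}^{2})$ admits a weak $\mathscr{H}^{1}$-Lebesgue point in the sense of \eqref{eq:LebesgueNormal} at $\mathscr{H}^{1}$-a.e.\ $x_0\in\Gamma_{\Sigma}^{t}$. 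Theorem \ref{thm:stokes}\ref{item:StokesTangential} together with the scalar-triple-product computation \eqref{eq:densitycompute1} in Remark \ref{rem:stokesdiscus} then gives
\begin{equation*}
\frac{\dif\mu_{\Sigma,t}^{\tau,\mathrm{int}}}{\dif\mathscr{H}^{1}}(x_0)=-\FF(x_0)\cdot\tau_{\Gamma_{\Sigma}^{t}}(x_0)\qquad\text{$\mathscr{H}^{1}$-a.e. on $\Gamma_{\Sigma}^{t}$},
\end{equation*}
and the representation \eqref{eq:representasmeasure} together with \eqref{eq:densityGammaSigma} upgrades this to
\begin{equation*}
\mathfrak{S}_{\Sigma^{\tau,t}}^{\mathrm{int}}(\varphi)=-\int_{\Gamma_{\Sigma}^{t}}\varphi\,\FF\cdot\tau_{\Gamma_{\Sigma}^{t}}\dif\mathscr{H}^{1}.
\end{equation*}

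Equating the two identifications and passing to $t\searrow 0$ along a sequence $t_n\in\mathscr{I}_1$ with $t_n\searrow 0$ yields the claim: the left-hand side $\int_{\Sigma^{\tau,t_n}}\curl(\varphi\FF)\cdot\nu_{\partial\Omega'}\dif\mathscr{H}^{2}$ converges to $\int_{\Sigma}\curl(\varphi\FF)\cdot\nu_{\partial\Omega'}\dif\mathscr{H}^{2}$ by dominated convergence (the integrand is in $\lebe^{\infty}(\partial\Omega')$ and $\mathbbm{1}_{\Sigma^{\tau,t_n}}\to\mathbbm{1}_{\Sigma}$ $\mathscr{H}^{2}$-a.e.), while the right-hand side $-\int_{\Gamma_{\Sigma}^{t_n}}\varphi\,\FF\cdot\tau_{\Gamma_{\Sigma}^{t_n}}\dif\mathscr{H}^{1}$ converges to $-\int_{\Gamma_{\Sigma}}\varphi\,\FF\cdot\tau_{\Gamma_{\Sigma}}\dif\mathscr{H}^{1}$ after pulling back to $\Gamma_{\Sigma}$ via the bi-Lipschitz collar $\Psi_{\Sigma}$ from Lemma \ref{lem:collar2}. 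The main technical obstacle is precisely this final passage to the limit: the tangent vector $\tau_{\Gamma_{\Sigma}^{t_n}}$ is defined only $\mathscr{H}^{1}$-a.e.\ and depends on $t_n$ through the bi-Lipschitz parametrization, so the argument rests on Rademacher-differentiability of $\Psi_{\Sigma}$ in order to obtain a uniform Jacobian bound on $\Psi_{\Sigma}(t_n,\cdot)$ and a.e.\ convergence of the pulled-back tangent and line element, after which the continuity of $\varphi\FF$ and dominated convergence close the argument.
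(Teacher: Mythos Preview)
Your argument is essentially correct, but it takes a detour that the paper avoids. The paper works directly at $t=0$: since $\FF\in\hold^{1}$, the field $(\FF\times\nu_{\partial\Omega'})_{\partial\Omega'}^{\mathrm{int}}=\FF\times\nu_{\partial\Omega'}$ is a $\hold^{1}$ tangential vector field on $\partial\Omega'$, and Lemma~\ref{lem:bdryintegralcont} then guarantees that the limit defining $\mathfrak{S}_{\Sigma^{\tau,0}}^{\mathrm{int}}(\varphi)$ via \eqref{eq:alternativemain} exists at \emph{every} $t\in[0,\tfrac{1}{2})$, including $t=0$. In other words, for $\hold^{1}$-fields one has $0\in\mathscr{I}_{1}$, and the paper simply applies Theorem~\ref{thm:stokes} and the density computation of Remark~\ref{rem:stokesdiscus} at $t=0$ without any sequence $t_n\searrow 0$. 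The paper also reaches the surface-integral identification by a slightly different route: it inserts smooth localizers $\psi_\delta$, uses $\int_{\partial\Omega'}\curl(\psi_\delta\varphi\FF)\cdot\nu_{\partial\Omega'}\,\dif\mathscr{H}^{2}=0$ (divergence theorem on the closed surface), and then compares $\psi_\delta$ with $\psi_{0,\delta,\Sigma}$ via an integration by parts; your route via Lemma~\ref{lem:goodrepmeas} and the product rule is equally valid and arguably more direct.

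The one place where your argument is weaker is precisely the step you flag as the ``main technical obstacle'': the convergence of $\int_{\Gamma_{\Sigma}^{t_n}}\varphi\FF\cdot\tau_{\Gamma_{\Sigma}^{t_n}}\dif\mathscr{H}^{1}$ as $t_n\searrow 0$. Your proposed justification through Rademacher differentiability of $\Psi_\Sigma$ is not quite enough on its own, since a.e.\ differentiability of a bi-Lipschitz map does not yield a.e.\ convergence of $D_x\Psi_\Sigma(t_n,\cdot)$ to the identity along an arbitrary sequence $t_n\searrow 0$. The clean fix is to rewrite the line integral as a surface integral via \eqref{eq:ibpaux} (with $\mathbf{v}=\FF\times\nu_{\partial\Omega'}$), after which continuity in $t$ follows by dominated convergence. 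But that is exactly the content of Lemma~\ref{lem:bdryintegralcont}, and once you invoke it you may as well work at $t=0$ from the start, as the paper does.
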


\begin{proof} 
In the present situation, let  $\varphi\in\hold_{\rm c}^{1}(\Omega)$, and let $(\psi_{\delta})\subset\hold_{\rm c}^{1}(\partial\Omega;[0,1])$ 
be a sequence such that, for each $\delta$,  
$\psi_{\delta}=0$ on $\partial\Omega'\setminus\Sigma$ 
and $\psi_{\delta}=1$ on $\Sigma^{\tau,\delta}$. 
Since $\psi_{0,\delta,\Sigma}\in\sobo_{0}^{1,\infty}(\Sigma)$, 
we have $\psi_{\delta}-\psi_{0,\delta,\Sigma}\in\sobo_{0}^{1,\infty}(\Sigma\setminus\overline{\Sigma^{\tau,\delta}})$. Since $\FF(x)\times\nu_{\partial\Omega'}(x)\in T_{\partial\Omega'}(x)$ for any $x\in\partial\Omega'$, 
integrating by parts gives us 
\begin{align}\label{eq:helpotscherp}
\begin{split}
&\left\vert\int_{\Sigma\setminus\overline{\Sigma^{\tau,\delta}}}\varphi\nabla_{\tau} (\psi_{\delta}-\psi_{0,\delta,\Sigma})\cdot(\FF\times\nu_{\partial\Omega'})\dif\mathscr{H}^{2}\right\vert \\ &  = \left\vert \int_{\Sigma\setminus\overline{\Sigma^{\tau,\delta}}}\underbrace{(\psi_{\delta}-\psi_{0,\delta,\Sigma})}_{|\cdot|\leq 2}\underbrace{\mathrm{div}_{\tau}(\varphi(\FF\times\nu_{\partial\Omega'}))}_{|\cdot|\leq C'<\infty}\dif\mathscr{H}^{2}\right\vert \leq C\,\mathscr{H}^{2}(\Sigma\setminus\overline{\Sigma^{\tau,\delta}})\stackrel{\delta\searrow 0}{\longrightarrow} 0, 
\end{split}
\end{align}
where we have used that $\varphi\in\hold_{\rm c}^{1}(\Omega)$ and $\FF\in\hold^{1}(\Omega;\R^{3})$. 
The dominated convergence theorem and the classical product rule for   $\curl$ then yield
\begin{align*}
&\int_{\Sigma}\curl(\varphi\FF)|_{\Sigma} \cdot\nu_{\partial\Omega'}\dif\mathscr{H}^{2} \\ 
&= \lim_{\delta\searrow 0}
\int_{\partial\Omega'}\psi_{\delta}\curl(\varphi \FF)|_{\Sigma}\cdot\nu_{\partial\Omega'}\dif\mathscr{H}^{2} \\ 
&= \lim_{\delta\searrow 0} \Big(\underbrace{\int_{\partial\Omega'}\curl(\psi_{\delta}(\varphi\FF))|_{\Sigma}\cdot\nu_{\partial\Omega'}\dif\mathscr{H}^{2}}_{=0\;\text{by the divergence theorem}} -\int_{\partial\Omega'}(\nabla\psi_{\delta}\times(\varphi\FF))\cdot\nu_{\partial\Omega'}\dif\mathscr{H}^{2}\Big) \\ 
&= - \lim_{\delta\searrow 0}\int_{\partial\Omega'}\varphi\nabla\psi_{\delta}\cdot(\FF\times\nu_{\partial\Omega'})\dif\mathscr{H}^{2}\\
&= - \lim_{\delta\searrow 0}\int_{\partial\Omega'}\varphi\nabla_{\tau}\psi_{\delta}\cdot(\FF\times\nu_{\partial\Omega'})\dif\mathscr{H}^{2} \\ 
&\stackrel{\eqref{eq:helpotscherp}}{=} - \lim_{\delta\searrow 0}\int_{\partial\Omega'}\varphi\nabla_{\tau}\psi_{0,\delta,\Sigma}\cdot(\FF\times\nu_{\partial\Omega'})_{\partial\Omega'}^{\mathrm{int}}\dif\mathscr{H}^{2} \stackrel{\eqref{eq:alternativemain}}{=} \mathfrak{S}_{\Sigma^{\tau,0}}^{\mathrm{int}}(\varphi). 
\end{align*}
The ultimate limit exists as a consequence of Lemma \ref{lem:bdryintegralcont} and our regularity assumptions on $\FF$ and $\varphi$; in particular, in the present $\hold^{1}$-context, no tangential variations of $\Sigma$ are required. By Theorem \ref{thm:stokes} and Remark \ref{rem:stokesdiscus}(see \eqref{eq:densitycompute1}), we moreover see that 
\begin{align*}
\mathfrak{S}_{\Sigma^{\tau,0}}^{\mathrm{int}}(\varphi) & = \int_{\Gamma_{\Sigma}^{0}}\varphi\dif\mu_{t,\Sigma}^{\tau,\interior} = - \int_{\Gamma_{\Sigma}^{0}}\varphi\FF\cdot\tau_{\Gamma_{\Sigma}^{0}}\dif\mathscr{H}^{1} = - \int_{\Gamma_{\Sigma}}\varphi\FF\cdot\tau_{\Gamma_{\Sigma}}\dif\mathscr{H}^{1}. 
\end{align*}
This implies \eqref{eq:stokes1A}, and the proof is complete. 
\end{proof}

Based on the preceding proof, we briefly comment on the specific choice of Definition \ref{def:stokes}. 
In order to obtain access to the lower smoothness context compared with \eqref{eq:Stokes} 
for a $\hold^{1}$-field $\FF$, one applies \eqref{eq:Stokes} to $\varphi\FF$, 
where $\varphi$ and $\FF$ can be assumed smooth for the time being. This leads to 
\begin{align}\label{eq:daylight}
-\int_{\Gamma_{\Sigma}}\varphi\FF\cdot\tau_{\Gamma_{\Sigma}}\dif\mathscr{H}^{1} =  \int_{\Sigma} (\nabla\varphi\times\FF)\cdot\nu_{\partial\Omega'}\dif\mathscr{H}^{2} + \int_{\Sigma}\varphi\,(\curl\FF)\cdot\nu_{\partial\Omega'}\dif\mathscr{H}^{2}, 
\end{align}
which we may compactly write as $-\mathrm{T}_{1}=\mathrm{T}_{2}+\mathrm{T}_{3}$. Ultimately, it is $\mathrm{T}_{1}$ what we wish to define in the non-smooth context considered here. 
Rewriting $\mathrm{T}_{2}$ by use of the cross product rule $(\mathbf{a}\times\mathbf{b})\cdot\mathbf{c}=\mathbf{a}\cdot(\mathbf{b}\times\mathbf{c})$,
the trace theorem (Theorem \ref{thm:tracemain1}) implies that $\mathrm{T}_{2}$ can be meaningfully 
defined by employing $(\FF\times\nu_{\partial\Omega'})_{\partial\Omega'}^{\mathrm{int}}$; as an $\lebe^{\infty}(\partial\Omega;T_{\partial\Omega'})$-field, it is straightforward to restrict the latter to $\Sigma$.  

This is not so for term $\mathrm{T}_{3}$. Indeed, in order to make sense of the expression $(\curl\FF)\cdot\nu_{\partial\Omega'}$, we need to involve the normal trace of $\curl\FF$ along $\partial\Omega'$. Since $\curl\FF\in\mathscr{DM}^{\mathrm{ext}}(\Omega)$, this requires the normal traces of $\mathscr{DM}^{\mathrm{ext}}$-fields from Definition \ref{def:normaltraces}. The latter, in turn, are given by \eqref{eq:normaltraceextdivmeas} and thus are objects which are defined globally on $\partial\Omega'$. In view of term $\mathrm{T}_{3}$, which only involves a boundary integral over $\Sigma$ in the smooth case, they must be localized to $\Sigma$. This is the key reason for the appearance of the localizing functions $\psi_{t,\delta,\Sigma}$. In essence, Proposition \ref{prop:diff}  asserts that such a localization procedure can be meaningfully executed for $\mathscr{L}^{1}$-a.e. $t$. In this regard, in view of the subsequent sections, an elementary remark is in order.

\begin{rem}
For $\FF\in\mathscr{CM}^{\infty}(\Omega)$, $\curl\FF\in\mathrm{RM}_{\mathrm{fin}}(\Omega;\R^{3})$ can be restricted to the Borel set $\partial\Omega'$. Regarding $\mathrm{T}_{3}$, it is strictly wrong to interpret $\mathrm{T}_{3}$ as 
\begin{align}\label{eq:rushofbloodtothehead}
\int_{\Sigma} \varphi\,\nu_{\partial\Omega'}\cdot\dif\,(\curl\FF)\mres\partial\Omega'), 
\end{align}
which would allow for an ad-hoc generalization of $\mathrm{T}_{3}$ to the non-smooth context of $\mathscr{CM}^{\infty}$-fields. For instance, even if $\FF\in\hold_{b}^{1}(\Omega;\R^{3})$, then the \emph{measure} $\curl\FF$ satisfies $\curl\FF\ll\mathscr{L}^{3}\mres\Omega$. Since $\mathscr{L}^{3}(\Sigma)=0$, the expression from \eqref{eq:rushofbloodtothehead} will vanish throughout. Instead, the correct interpretation of the right-hand side of \eqref{eq:walter} requires the normal trace of measure $\curl\FF$ along $\Sigma$, and this is incorporated in Definition \ref{def:stokes}. 
\end{rem}

These considerations lead to the specific form of Definition \ref{def:stokes}. Finally, we note that  \eqref{eq:alternativemain} and Proposition \ref{prop:consistency} suggest directly defining 
the Stokes functionals by \eqref{eq:alternativemain}. 
This is also possible, but it then requires the link on the right-hand side of \eqref{eq:alternativemain} 
with the flux of $\curl$ through $\Sigma$. 
Our strategy proceeds the other way around, namely, starting from the flux of curl 
as in Definition \ref{def:stokes}, we establish representation \eqref{eq:alternativemain}. 

We conclude this subsection by discussing the standing of the localizers $\psi_{t,\delta,\Sigma}$. 
First, once $t$ is fixed, it seems natural to employ intrinsic localizers that are  
based directly on the collar map $\Psi_{\Sigma^{\tau,t}}$, instead of $\Psi_{\Sigma}$. 
By this, we mean to replace $\psi_{t,\delta,\Sigma}$ in \eqref{eq:curllocaliser} by 
\begin{align}\label{eq:permanentstate}
\widetilde{\psi}_{t,\delta,\Sigma}(x):=\begin{cases}
0 & \;\text{if}\;x\in\partial\Omega'\setminus \Sigma^{\tau,t}, \\ 
\frac{s}{\delta} &\;\text{if}\;x\in\Gamma_{\Sigma^{\tau,t}}^{s}(:=\Psi_{\Sigma^{\tau,t}}(\{s\}\times\Gamma_{\Sigma^{\tau,t}}))\;\text{for}\;0<s<\delta,\\ 
1 &\;\text{if}\;x\in \Sigma\setminus\Psi_{\Sigma^{\tau,t}}((0,\delta)\times\Gamma_{\Sigma^{\tau,t}}). 
\end{cases}
\end{align}
While this seems feasible at a first glance, it is unclear how this can lead to a useful version of Theorem \ref{thm:stokes}. Throughout the above proof, one is bound to employ the comparability estimates 
of layers $\mathscr{H}^{2}(\Psi_{\Sigma}((t,t+\delta)\times\Gamma_{\Sigma}))$ in terms of $\delta$. 
These estimates depend implicitly on the Lipschitz character of $\Gamma_{\Sigma}$ in $\partial\Omega'$ and, when working with \eqref{eq:psiddefmain}, can be assumed to be uniform in the collar parameter $t$. 

To the contrary, if we work with \eqref{eq:permanentstate}, such uniform estimates are not possible. 
Namely, \emph{a priori}, the collar maps $\Psi_{\Sigma^{\tau,t}}$ do not have to be linked with $\Psi_{\Sigma}$ in any way. In essence, for each $t$, $\Psi_{\Sigma^{\tau,t}}$ could follow a completely different mapping rule. In particular, the bi-Lipschitz constants of $\Psi_{\Sigma^{\tau,t}}$ need not be comparable at all, rendering uniform comparability estimates impossible. This can be circumvented by linking $\Psi_{\Sigma^{\tau,t}}$'s with each other, 
thereby forcing them to follow a uniform mapping rule. 
In this case, however, it is equally natural to work directly with the localizer 
family \eqref{eq:psiddefmain}, which satisfies the requisite uniformity by definition. 

Finally, we address different localization families and single out the following remark.

\begin{rem}[Smooth $\Sigma$ and smooth localizers $\psi_{\delta}$]
Suppose that $\partial\Omega'$ is sufficiently smooth and $\Sigma\subset\partial\Omega'$ is a $\hold^{k}$-boundary manifold (in the obvious sense) relative to $\Omega'$ for a potentially large $k\in\mathbb{N}$. 
Then an adaptation of the arguments of Gilbarg-Trudinger \cite{GilbargTrudinger} 
for $k\geq 2$ (see also Krantz-Parks \cite{KrantzParks0} for $k=1$ and $\Gamma_{\Sigma}$ being of \emph{positive reach} in the sense of Federer \cite{Federer0}) yields that the distance function $x\mapsto \dista(x,\Gamma_{\Sigma})$ is of class $\hold^{k}$ in $U\cap\Sigma$, where $U$ is an open neighborhood of $\Gamma_{\Sigma}$. For sufficiently small $\delta>0$, 
\begin{align*}
\widetilde{\psi}_{t,\delta,\Sigma}(x) := \begin{cases} 0 & \;\text{if}\;x\in\partial\Omega'\setminus\Sigma^{\tau,t}, \\ \frac{1}{\delta}\dista(x,\Psi(\{t\}\times\Gamma_{\Sigma}))&\;\text{if}\;0\leq \dista(x,\Psi(\{t\}\times\Gamma_{\Sigma}))\leq\delta, \\ 
1&\;\text{if}\;\dista(x,\Psi(\{t\}\times\Gamma_{\Sigma}))>\delta, 
\end{cases}
\end{align*}
can be employed as a localizing function and is of class $\hold^{k}$ in a suitable intersection $U\cap\Sigma^{\tau,t}$, where $U$ is open and independent of $\delta$. This additional differentiability offers some simplifications in the above proofs, especially as to the $\hold^{1}$-extensions to $\Omega'$. However, in the case of $\hold^{1}$-regular Lipschitz boundary manifolds, the use of $\widetilde{\psi}_{t,\delta,\Sigma}$ does  not come with substantial simplifications compared with $\psi_{t,\delta,\Sigma}$ given by \eqref{eq:psiddefmain}. 
\end{rem}

\subsection{Examples}\label{sec:examplesstokes1st}
We conclude the overall section by discussing two examples which illustrate Theorem \ref{thm:stokes}. First, we establish that the key assertion of the Stokes theorem (Theorem \ref{thm:stokes}) only holds for $\mathscr{L}^{1}$-a.e. $t\in[0,\frac{1}{2})$, and thus requires a selection of \emph{good} tangential variations indeed.
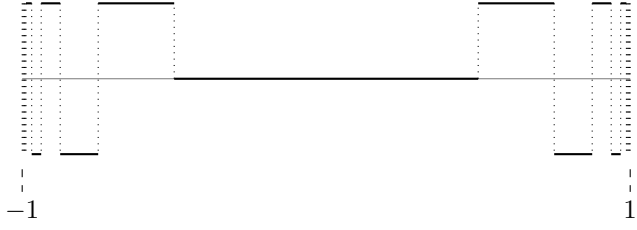
\begin{figure}[t]
\begin{tikzpicture}
\draw[-,black!50!white] (-4,0) -- (4,0);
\draw[-,thick] (-2,0) -- (2,0); 
\draw[dotted] (-2,0) -- (-2,1);
\draw[dotted] (2,0) -- (2,1);
\draw[-,thick] (-3,1) -- (-2,1);
\draw[dotted] (-3,1) -- (-3,-1);
\draw[-,thick] (-3.5,-1) -- (-3,-1);
\draw[dotted] (-3.5,1) -- (-3.5,-1);
\draw[-,thick] (-3.75,1) -- (-3.5,1);
\draw[dotted] (-3.75,1) -- (-3.75,-1);
\draw[-,thick] (-3.75,-1) -- (-3.875,-1);
\draw[-,thick] (-3.95,1) -- (-3.875,1);
\draw[dotted] (-3.875,1) -- (-3.875,-1);
\draw[dotted] (-3.95,1) -- (-3.95,-1);
\draw[dotted] (-3.96,1) -- (-3.96,-1);
\draw[dotted] (-3.97,1) -- (-3.97,-1);
\draw[dotted] (-3.98,1) -- (-3.98,-1);
\draw[dotted] (-3.99,1) -- (-3.99,-1);
\draw[dotted] (-4,1) -- (-4,-1);
\draw[-,thick] (3,1) -- (2,1);
\draw[dotted] (3,1) -- (3,-1);
\draw[-,thick] (3.5,-1) -- (3,-1);
\draw[dotted] (3.5,1) -- (3.5,-1);
\draw[-,thick] (3.75,1) -- (3.5,1);
\draw[dotted] (3.75,1) -- (3.75,-1);
\draw[-,thick] (3.75,-1) -- (3.875,-1);
\draw[-,thick] (3.95,1) -- (3.875,1);
\draw[dotted] (3.875,1) -- (3.875,-1);
\draw[dotted] (3.95,1) -- (3.95,-1);
\draw[dotted] (3.96,1) -- (3.96,-1);
\draw[dotted] (3.97,1) -- (3.97,-1);
\draw[dotted] (3.98,1) -- (3.98,-1);
\draw[dotted] (3.99,1) -- (3.99,-1);
\draw[dotted] (4,1) -- (4,-1);
\draw[dashed] (4,-1.2) -- (4,-1.5);
\node[below] at (4,-1.5) {$1$};
\draw[dashed] (-4,-1.2) -- (-4,-1.5);
\node[below] at (-4,-1.5) {$-1$};
\end{tikzpicture}
\caption{Profile of the function $\eta$ from Example \ref{ex:nonex}.}
\end{figure}
\begin{example}[Non-existence of limit \eqref{eq:curllocaliser}]\label{ex:nonex}  
We now give an example of a $\mathscr{CM}^{\infty}$-field for which limit \eqref{eq:curllocaliser} 
does not exist for $t=0$. 
To this end, define  annuli $A_{k}\subset\R^{2}$ for $k\in\mathbb{N}$ by 
\begin{align*}
A_{k}:=\ball_{1-2^{-k-1}}^{(2)}(0)\setminus\overline{\ball}_{1-2^{-k}}^{(2)}(0)
\end{align*}
and set 
\begin{align*}
\eta(x_{1},x_{2}):=\sum_{k=1}^{\infty}(-1)^{k+1}\mathbbm{1}_{A_{k}}(x_{1},x_{2})
\qquad\mbox{for $(x_{1},x_{2})\in\R^{3}$}. 
\end{align*}
We set $\Omega:=\ball_{2}^{(2)}(0)\times(-2,2)$ and let $\Omega'\Subset \Omega\cap\{x_{3}>0\}$ have $\hold^{1}$-boundary such that $\Sigma:=\ball_{1}^{(2)}(0)\times\{0\}\subset\partial\Omega'$. We define  
\begin{align}\label{eq:exGdef}
\mathbf{g}(x_{1},x_{2},0):=\frac{\eta(x_{1},x_{2})}{\sqrt{x_{1}^{2}+x_{2}^{2}}}(x_{2},-x_{1},0)\qquad\,
\mbox{for $(x_{1},x_{2},0)\in \Sigma$}, 
\end{align}
whereby $\mathbf{g}\in\lebe^{\infty}(\Sigma;\R^{3})$. 

Now note that the classical (interior) Sobolev trace operator $\mathrm{tr}_{\Sigma}\colon (\sobo^{1,1}\cap\lebe^{\infty})(\Omega;\R^{3})\to \lebe^{\infty}(\Sigma;\R^{3})$ is surjective. To see this, we first recall that $\mathrm{tr}_{\Sigma}\colon\sobo^{1,1}(\Omega;\R^{3})\to\lebe^{1}(\Sigma;\R^{3})$ is well-known to be surjective; see \cite{Gagliardo} or \cite{Mironescu}. Moreover, by \cite[Chapter 5.3, Theorem 2]{EvansGariepy}, we see that,
for all $\mathbf{h}\in\sobo^{1,1}(\Omega;\R^{3})$, 
\begin{align*}
\mathrm{tr}_{\Sigma}(\mathbf{h})(x_{0})=\lim_{r\searrow 0}\dashint_{\ball_{r}(x_{0})}\mathbf{h}\dif x \qquad\text{for $\mathscr{H}^{2}$-a.e. $x_{0}\in\Sigma$},  
\end{align*}
and this immediately implies that $\mathrm{tr}_{\Sigma}\colon(\sobo^{1,1}\cap\lebe^{\infty})(\Omega;\R^{3})\to\lebe^{\infty}(\Sigma;\R^{3})$ is a bounded linear operator. On the other hand, the constructions given in \cite{Gagliardo,Mironescu} directly yield that, for any $\mathbf{g}\in\lebe^{\infty}(\Sigma;\R^{3})$, there exists $\mathbf{h}\in(\sobo^{1,1}\cap\lebe^{\infty})(\Omega;\R^{3})$ such that $\mathrm{tr}_{\Sigma}(\mathbf{h})=\mathbf{g}$ $\mathscr{H}^{2}$-a.e. on $\Sigma$. Hence, the claimed surjectivity follows.

In particular, with $\mathbf{g}$ as in \eqref{eq:exGdef}, there exists $\FF\in(\sobo^{1,1}\cap\lebe^{\infty})(\Omega;\R^{3})$ such that 
$\mathrm{tr}_{\Sigma}(\FF)=g$ holds $\mathscr{H}^{2}$-a.e. on $\Sigma$. As a consequence of Theorem \ref{thm:tracemain1}, we record that  
\begin{align}\label{eq:allthesame}
(\FF\times\nu_{\partial\Omega'})_{\partial\Omega'}^{\mathrm{int}}=(\FF\times\nu_{\partial\Omega'})_{\partial\Omega'}^{\mathrm{ext}} = \mathrm{tr}_{\Sigma}(\FF)\times\nu_{\partial\Omega'}=\mathbf{g}\times\nu_{\partial\Omega'}\qquad\text{$\mathscr{H}^{2}$-a.e. on $\Sigma$}. 
\end{align}
We fix the particular collar map 
\begin{align*}
\Psi_{\Sigma}(t,x):=(1-t)x\qquad\,\,\,\mbox{for $\,0\leq t<\frac{1}{2},\;x=(x',0)$, and $|x'|=1$}, 
\end{align*}
which implies that $\psi_{0,\delta,\Sigma}$ is given by 
\begin{align*}
\psi_{0,\delta,\Sigma}(x',0)=\frac{1}{\delta}\big(1-|x'|\big)\qquad\text{if $\;0\leq \dist(x',\Gamma_{\Sigma})\leq 1-\delta$}. 
\end{align*}
In all other regions, its tangential gradient will be zero. Since $\nu_{\partial\Omega'}(x)=(0,0,1)$ on $\Sigma$, we may compute for $\mathscr{H}^{2}$-a.e. $x=(x_{1},x_{2},0)\in\Sigma$:
\begin{align}\label{eq:gdef}
\begin{split}
&(\FF\times\nu_{\partial\Omega'})_{\partial\Omega'}^{\mathrm{int}}(x)=-\frac{\eta(x_{1},x_{2})}{\sqrt{x_{1}^{2}+x_{2}^{2}}}(x_{1},x_{2},0), \\ 
&\nabla_{\tau}\psi_{0,\delta,\Sigma}(x) = -\frac{1}{\delta}\frac{1}{\sqrt{x_{1}^{2}+x_{2}^{2}}}(x_{1},x_{2},0).
\end{split}
\end{align}
Now let $\varphi\in\hold_{\rm c}^{1}(\Omega)$ be such that $\varphi\equiv 1$ in an open neighborhood of $\partial\!\ball_{1}^{(2)}(0)\times\{0\}$. Considering $\delta=2^{-j}$ for sufficiently large $j\in\mathbb{N}$, we have 
\begin{align}\label{eq:limitnon}
\begin{split}
&\lim_{j\to\infty} \int_{(\ball_{1}^{(2)}(0)\setminus\overline{\ball}_{1-2^{-j}}^{(2)}(0))\times\{0\}}\varphi\nabla_{\tau}\psi_{0,2^{-j},\Sigma}\cdot(\FF\times\nu_{\partial\Omega'})_{\partial\Omega'}^{\mathrm{int}}\dif\mathscr{H}^{2} \\ 
&\,\,\stackrel{\eqref{eq:gdef}}{=} \lim_{j\to\infty} 2^{j} \int_{(\ball_{1}^{(2)}(0)\setminus\overline{\ball}_{1-2^{-j}}^{(2)}(0))\times\{0\}}\eta(x_{1},x_{2})\dif\mathscr{H}^{2} \\ 
&\,\, = \lim_{j\to\infty} 2^{j}\sum_{k=j}^{\infty} (-1)^{k+1} \mathscr{L}^{2}(A_{k}) =:\mathrm{I}.
\end{split}
\end{align}
The ultimate limit does not exist. To see this, we compute for $j\in\mathbb{N}$:
\begin{align}\label{eq:explicitcompute}
\begin{split}
2^{j} \sum_{k=j}^{\infty}(-1)^{k+1}\mathscr{L}^{2}(A_{k}) & = -2^{j}\pi \Big(\sum_{k=j}^{\infty}(-2)^{-k} - \frac{3}{4}\sum_{k=j}^{\infty}(-4)^{-k}\Big) \\ 
& =  \pi\Big(\frac{2(-1)^{-j+1}}{3} -\frac{3(-1)^{j+1}2^{-j}}{5} \Big)
\end{split}
\end{align}
by use of the geometric series. Based on \eqref{eq:explicitcompute} and considering even and odd indices $j$ separately, it is clear that the limit $\mathrm{I}$ from \eqref{eq:limitnon} as $j\to\infty$ does not exist. By the proof of Theorem \ref{thm:stokes} (see \eqref{eq:alternativemain}), it is however precisely the first limit in \eqref{eq:limitnon} that is decisive for $\mathfrak{S}_{\Sigma^{\tau,0}}^{\mathrm{int}}(\varphi)$ to be well-defined. In conclusion, $\mathfrak{S}_{\Sigma^{\tau,0}}^{\mathrm{int}}(\varphi)$ is ill-defined in the present situation. 
Yet, the tangential oscillation of $\eta$ is bounded in sufficiently small neighborhoods 
of $\partial\!\ball_{r}^{(2)}(0)\times\{0\}$ for any $0<r<1$, so that
$\mathfrak{S}_{\Sigma^{\tau,t}}^{\mathrm{int}}(\varphi)$ exists and is well-defined for all $0<t<\frac{1}{2}$. 
\end{example}

Anticipating the terminology from \S\ref{sec:divmeasfieldsmanif}, the function:  $(\FF\times\nu_{\partial\Omega'})_{\partial\Omega'}^{\mathrm{int}}$ from \eqref{eq:gdef} is not a divergence measure field, whereby Example \ref{ex:nonex} is also in line with the Stokes theorem in the transversal direction, Theorem \ref{thm:stokes1st} below; see Example \ref{ex:nonexctd} for more detail. 

\begin{example}[Tangential jumps in higher smoothness scenarios]\label{ex:tanjumps1A}
We adopt the setting described in Section \ref{sec:smoothsetCMinfty}. Let $\FF_{1}\in(\sobo^{2,1}\cap\lebe^{\infty})(\Omega';\R^{3})$ and $\FF_{2}\in(\sobo^{2,1}\cap\lebe^{\infty})(\Omega\setminus\overline{\Omega'})$, and define the glued function $\FF$ by 
\begin{align*}
\FF:=\begin{cases} 
\FF_{1}&\;\text{in}\;\Omega',\\ 
\FF_{2}&\;\text{in}\;\Omega\setminus\overline{\Omega'}. 
\end{cases}
\end{align*}
Then $\FF\in(\bv\cap\lebe^{\infty})(\Omega;\R^{3})$, whereby $\FF\in\mathscr{CM}^{\infty}(\Omega)$. We note that there exist bounded linear trace operators $\mathrm{tr}_{\partial\Omega'}^{\mathrm{in}}\colon (\sobo^{2,1}\cap\lebe^{\infty})(\Omega';\R^{3})\to(\sobo^{1,1}\cap\lebe^{\infty})(\partial\Omega';\R^{3})$ and $\mathrm{tr}_{\partial\Omega'}^{\mathrm{ex}}\colon (\sobo^{2,1}\cap\lebe^{\infty})(\Omega\setminus\overline{\Omega'};\R^{3})\to(\sobo^{1,1}\cap\lebe^{\infty})(\partial\Omega';\R^{3})$. On the manifold $\partial\Omega'$, in turn, there is a bounded linear trace operator $\mathrm{tr}_{\Gamma_{\Sigma}}\colon(\sobo^{1,1}\cap\lebe^{\infty})(\partial\Omega';\R^{3})\to\lebe^{\infty}(\Gamma_{\Sigma};\R^{3})$. Now, if $\mathbf{v}\in\sobo^{1,1}(\partial\Omega';\R^{3})$, we have 
\begin{align*}
\int_{\Gamma_{\Sigma}}\mathrm{tr}_{\Gamma_{\Sigma}}(\mathbf{v})\cdot\nu_{\Gamma_{\Sigma}} \dif\mathscr{H}^{1}=\lim_{\delta\searrow 0}\int_{\Sigma}\varphi\nabla_{\tau}\psi_{t,\delta,\Sigma}\cdot \mathbf{v}\dif\mathscr{H}^{2}. 
\end{align*}
This identity can be established analogously to Lemma \ref{lem:bdryintegralcont}, 
now taking into account the properties of the classical trace operator on $\sobo^{1,1}$ (see, \emph{e.g.}, 
\cite[Chapter 5.3]{EvansGariepy}) and adapting it to the spaces on manifolds. Then
\begin{align*}
\mathfrak{S}_{\Sigma^{\tau,0}}^{\mathrm{int}}(\varphi)=-\int_{\Gamma_{\Sigma}} \varphi\,\mathrm{tr}_{\Gamma_{\Sigma}}(\mathrm{tr}_{\partial\Omega'}(\FF_{1})\times\nu_{\Sigma})\cdot\nu_{\Gamma_{\Sigma}}\dif\mathscr{H}^{1}, 
\end{align*}
and analogously for $\mathfrak{S}_{\Sigma^{\tau,0}}^{\mathrm{ext}}$ and $\FF_{2}$. 
\end{example}

\section{$\mathscr{DM}^{\infty}$-Fields on Manifolds, Stokes Theorem, 
and Transversal Variations}\label{sec:divmeasfieldsmanif} 
In \S 5,
The manifold $\Sigma$ and the set $\Omega'\Subset\Omega$ with $\Sigma\Subset\partial\Omega'$ were thought of as fixed; 
the Stokes theorem, Theorem \ref{thm:stokes}, consequently made use of the tangential variations, 
meaning that we vary $\Sigma$ inside $\partial\Omega'$. 
Suppose we allow $\Sigma$ to vary transversally. 
In that case, the key outcome of the present section is that the Stokes theorem is available on $\mathscr{L}^{1}$-a.e. 
such transversally varied manifolds \emph{without} having to additionally vary tangentially. 

This, in turn, is achieved by proving that, 
for $\FF\in\mathscr{CM}^{\infty}(\Omega)$, the tangential traces provided by Theorem \ref{thm:tracemain1} are tangential divergence measure fields on $\mathscr{L}^{1}$-a.e. transversally varied manifold. On such manifolds, the Stokes theorem turns out to be equivalent to the Gauss--Green theorem for $\mathscr{DM}^{\infty}$-fields on manifolds; 
see Theorem \ref{thm:stokes1st}. 
This connection is easily visible from the following simple example, which we take as a starting point 
for this section: 

\begin{example}\label{ex:tangentialex}
Let $\Omega=(-2,2)^{2}\times (-1,1)$ and let $\Sigma:=(-1,1)^{2}\times\{0\}$, which we view as a subset of $\partial\Omega'$, where $\Omega'\subset\Omega\cap \{x_{3}<0\}$ has $\hold^{1}$-boundary. We put $\nu_{\Sigma}:=\nu_{\partial\Omega'}|_{\Sigma}=(0,0,-1)^{\top}$. If $\FF\in\hold^{1}({\Omega};\R^{3})$, then $\FF\times \nu_{\Sigma}\in\hold^{1}(\Sigma;\R^{3})$ and satisfies 
\begin{align}\label{eq:nucrosscompute}
\FF\times\nu_{\Sigma} = (-\FF_{2},\FF_{1},0)^{\top}, \quad\,\,
(\curl\FF)\cdot\nu_{\Sigma}=\partial_{2}\FF_{1}-\partial_{1}\FF_{2}\qquad\;\text{along $\;\Sigma$}. 
\end{align}
As usual, we denote by $\tau_{\Gamma_{\Sigma}}\colon \Gamma_{\Sigma}\to\mathbb{S}^{2}$ the unit tangential field to $\Gamma_{\Sigma}$, where $\Sigma$ inherits the orientation from $\partial\Omega'$; 
see Figure \ref{fig:conventionsorientations}. 
In particular, for $x\in\Gamma_{\Sigma}$, the vectors $\nu_{\Sigma}(x),\nu_{\Gamma_{\Sigma}}(x)$,
and $\tau_{\Gamma_{\Sigma}}(x)$ satisfy, in this order, the right-hand rule and are mutually orthogonal. Letting  
\begin{align*}
    \mathbf{Q}=\left(\begin{matrix} 0 & -1 & 0 \\ 1 & 0 & 0 \\ 0 & 0 & 0 \end{matrix}\right),
\end{align*}
we have in the present geometrically flat case that $\mathbf{Q}\tau(x)=\nu_{\Gamma_{\Sigma}}(x)$ for every $x\in \Gamma_{\Sigma}$ except at the edges of square $\Sigma$. At every such $x$, we have 
\begin{align}\label{eq:useful0}
(\FF(x)\times\nu_{\Sigma}(x))\cdot \nu_{\Gamma_{\Sigma}}(x) =  \FF(x)\cdot (\nu_{\Sigma}(x)\times\nu_{\Gamma_{\Sigma}}(x)) =\FF(x)\cdot\tau_{\Gamma_{\Sigma}}(x). 
\end{align}
Thus, based on the classical Gauss--Green theorem in two dimensions, we conclude that 
\begin{align*}
\int_{\Sigma}(\curl\FF)\cdot\nu_{\Sigma}\dif\mathscr{H}^{2}  & \stackrel{\eqref{eq:nucrosscompute}}{=}  \int_{\Sigma} \mathrm{div}_{\tau}((-\FF_{2},\FF_{1},0)^{\top})\dif\mathscr{H}^{2} = -\int_{\Gamma_{\Sigma}}(-\FF_{2},\FF_{1},0)^{\top}\cdot \nu_{\Gamma_{\Sigma}}\dif\mathscr{H}^{1} \\ 
& \stackrel{\eqref{eq:nucrosscompute}}{=} - \int_{\Gamma_{\Sigma}}(\FF\times\nu_{\Sigma})\cdot\nu_{\Gamma_{\Sigma}}\dif\mathscr{H}^{1} \stackrel{\eqref{eq:useful0}}{=} - \int_{\Gamma_{\Sigma}}\FF\cdot\tau_{\Gamma_{\Sigma}}\dif\mathscr{H}^{1}, 
\end{align*}
which is exactly the Stokes formula based on our convention on orientations.  
As a consequence of this representation, the {tangential trace} of $\FF|_{\Sigma}$ along $\Gamma_{\Sigma}$ as required in the Stokes formula is the {normal trace} of 
$(\FF\times\nu_{\Sigma})|_{\Sigma}$ along $\Gamma_{\Sigma}$. 
\end{example}

\subsection{$\mathscr{DM}^{\infty}$-fields on manifolds}
In view of our above discussion, we pause and introduce $\mathscr{DM}^{\infty}$-fields on oriented $(n-1)$-dimensional $\hold^{2}$-manifolds $\Sigma\subset\R^{n}$ with $\Sigma=\mathrm{int}(\Sigma)$ first. To motivate their definition, we adopt the notation of Theorem \ref{thm:IBPsmooth}. If $\mathbf{v}\in\hold_{\rm c}^{1}(\Sigma;T_{\Sigma})$, then formula \eqref{eq:IBPmanifolds0} yields
\begin{align}\label{eq:IBPmanifolds1}
\int_{\Sigma}\varphi\,\divm(\mathbf{v})\dif\mathscr{H}^{n-1} = -\int_{\Sigma}\nabla_{\tau}\varphi\cdot\mathbf{v}\dif\mathscr{H}^{n-1}\qquad\text{for all}\;\varphi\in\hold_{\rm c}^{1}(\Sigma), 
\end{align}
and gives rise to the following definition: 
\begin{definition}[$\mathscr{DM}^{\infty}$-fields on submanifolds of $\R^{n}$]\label{def:DMinftymanifolds} Let $\Sigma\subset\R^{n}$ be an oriented, $(n-1)$-dimensional $\hold^{2}$-manifold with $\Sigma=\mathrm{int}(\Sigma)$. We say that $\mathbf{v}\in\lebe^{\infty}(\Sigma;T_{\Sigma})$ is a \emph{{bounded divergence measure field on $\Sigma$}}, in formulas $\mathbf{v}\in\mathscr{DM}^{\infty}(\Sigma)$, if there exists $\mu\in\mathrm{RM}_{\mathrm{fin}}(\Sigma)$ such that 
\begin{align}\label{eq:DMINFTYfieldsMANIFOLD}
\int_{\Sigma}\varphi\dif\mu = - \int_{\Sigma}\nabla_{\tau}\varphi\cdot \mathbf{v}\dif\mathscr{H}^{n-1}\qquad\text{for all}\;\varphi\in\hold_{\rm c}^{1}(\Sigma). 
\end{align}
In this case, we write $\mathrm{div}_{\tau}(\mathbf{v}):=\mu$. 
\end{definition}

Before we proceed to the Gauss--Green theorem, we make a remark that ultimately turns out 
to be one of the main reasons for having singled out Theorem \ref{thm:tracemain1}\ref{item:trace2}.

\begin{rem}\label{rem:tangentialimportance}
The defining condition \eqref{eq:DMINFTYfieldsMANIFOLD} does not make sense for general
fields $\mathbf{v}\in\lebe^{\infty}(\Sigma;\R^{n})$, since in this case identity \eqref{eq:IBPmanifolds1} does not hold true and so consistency might be lost. This  is due to the curvature term embodied by the Weingarten map $\mathbf{H}_{\Sigma}$ in \eqref{eq:IBPmanifolds}, and only leads to \eqref{eq:IBPmanifolds1} provided that $\mathbf{v}(x)\in T_{\Sigma}(x)$ for $\mathscr{H}^{n-1}$-a.e. $x\in\Sigma$. 
\end{rem}

We do not aim for an extensive treatment of $\mathscr{DM}^{\infty}(\Sigma)$-fields here. However, for our future applications, we record the following elementary lemma. 

\begin{lem}\label{lem:dualchar}
In the situation of {\rm Definition \ref{def:DMinftymanifolds}}, let $\overline{\Sigma}$ also be compact. 
\begin{enumerate}
\item\label{item:chrisrea1} Let $\mathbf{v}\in\lebe^{1}(\Sigma;T_{\Sigma})$. If  
\begin{align}\label{eq:measfin}
m:=\sup\Big\{\int_{\Sigma}\nabla_{\tau}\varphi\cdot\mathbf{v}\dif\mathscr{H}^{n-1}\colon\;\varphi\in\hold_{\rm c}^{1}(\Sigma),\;\|\varphi\|_{\lebe^{\infty}(\Sigma)}\leq 1 \Big\}<\infty, 
\end{align}
then there exists $\mu\in\mathrm{RM}_{\mathrm{fin}}(\Sigma)$ such that 
\begin{align}\label{eq:dualchar1}
\int_{\Sigma}\varphi\dif\mu = - \int_{\Sigma}\nabla_{\tau}\varphi\cdot\mathbf{v}\dif\mathscr{H}^{n-1} \qquad\text{for all}\;\varphi\in\hold_{\rm c}^{1}(\Sigma) 
\end{align}
and 
$\mu=\mathrm{div}_{\tau}(\mathbf{v})$ together with $|\mathrm{div}_{\tau}(\mathbf{v})|(\Sigma)=m$.

\item\label{item:chrisrea2} If, moreover,  $\mathbf{v}\in\mathscr{DM}^{\infty}(\Sigma)$, 
then identity \eqref{eq:dualchar1} holds for any Lipschitz function 
$\varphi\in\mathrm{Lip}_{0}(\Sigma):=\{\psi\in\mathrm{Lip}(\Sigma)\colon\;\psi|_{\Gamma_{\Sigma}}=0\}$. 
\end{enumerate}
\end{lem}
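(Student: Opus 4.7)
My plan for part (a) is to invoke the Riesz--Markov--Kakutani representation theorem. The linear functional
$$L\colon \hold_{\rm c}^{1}(\Sigma)\ni\varphi\mapsto -\int_{\Sigma}\nabla_{\tau}\varphi\cdot\mathbf{v}\dif\mathscr{H}^{n-1}\in\R$$
satisfies $|L(\varphi)|\leq m\,\|\varphi\|_{\lebe^{\infty}(\Sigma)}$ on $\hold_{\rm c}^{1}(\Sigma)$ by the hypothesis $m<\infty$. Since $\hold_{\rm c}^{1}(\Sigma)$ is dense in $\hold_{0}(\Sigma)$ with respect to the uniform norm---which follows by localization via a finite partition of unity subordinate to a $\hold^{2}$-atlas of a relatively open neighborhood of $\overline{\Sigma}$, together with standard mollification in each chart and a Lipschitz cutoff near $\Gamma_{\Sigma}$---$L$ extends uniquely to a bounded linear functional on $\hold_{0}(\Sigma)$ of norm at most $m$. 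The Riesz--Markov--Kakutani theorem then provides a unique $\mu\in\mathrm{RM}_{\mathrm{fin}}(\Sigma)$ with $L(\varphi)=\int_{\Sigma}\varphi\dif\mu$ for all $\varphi\in\hold_{0}(\Sigma)$, and with $|\mu|(\Sigma)=\|L\|_{\hold_{0}(\Sigma)'}=m$ (the final equality because the supremum defining $m$ is already taken over $\hold_{\rm c}^{1}(\Sigma)$). By Definition~\ref{def:DMinftymanifolds}, this $\mu$ is precisely $\mathrm{div}_{\tau}(\mathbf{v})$.

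For part (b), the strategy is to approximate a given $\varphi\in\mathrm{Lip}_{0}(\Sigma)$ by a sequence $(\varphi_{j})\subset\hold_{\rm c}^{1}(\Sigma)$ with uniformly bounded Lipschitz constants, respecting the vanishing on $\Gamma_{\Sigma}$. First, introduce a Lipschitz cutoff $\chi_{j}\colon \Sigma\to[0,1]$ depending only on $d(x):=\mathrm{dist}(x,\Gamma_{\Sigma})$, equal to $1$ on $\{d>2/j\}$, vanishing on $\{d<1/j\}$, and satisfying $\|\nabla_{\tau}\chi_{j}\|_{\lebe^{\infty}(\Sigma)}\leq cj$. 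Setting $\tilde\varphi_{j}:=\chi_{j}\varphi$, one has $\tilde\varphi_{j}\to\varphi$ uniformly on $\Sigma$, and the product rule gives $\nabla_{\tau}\tilde\varphi_{j}=\chi_{j}\nabla_{\tau}\varphi+\varphi\nabla_{\tau}\chi_{j}$. The first term is dominated by $\mathrm{Lip}(\varphi)$ and converges pointwise $\mathscr{H}^{n-1}$-a.e.\ to $\nabla_{\tau}\varphi$. The second term is supported in the shell $\{1/j<d<2/j\}$ and, since $|\varphi(x)|\leq\mathrm{Lip}(\varphi)\,d(x)$, is uniformly bounded by $c\,\mathrm{Lip}(\varphi)$; the shell has $\mathscr{H}^{n-1}$-measure tending to zero, so $\varphi\nabla_{\tau}\chi_{j}\to 0$ in $\lebe^{1}(\Sigma;T_{\Sigma})$ and its pairing with $\mathbf{v}\in\lebe^{\infty}(\Sigma;T_{\Sigma})$ vanishes in the limit. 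A further intrinsic mollification of $\tilde\varphi_{j}$ at a scale $\varepsilon_{j}\ll 1/j$ (again by charts and a partition of unity) produces $\varphi_{j}\in\hold_{\rm c}^{1}(\Sigma)$ with $\varphi_{j}\to\varphi$ uniformly, $\nabla_{\tau}\varphi_{j}\to\nabla_{\tau}\varphi$ $\mathscr{H}^{n-1}$-a.e., and $\|\nabla_{\tau}\varphi_{j}\|_{\lebe^{\infty}(\Sigma)}\leq c\,\mathrm{Lip}(\varphi)$.

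Passing to the limit in the identity $\int_{\Sigma}\varphi_{j}\dif\mu=-\int_{\Sigma}\nabla_{\tau}\varphi_{j}\cdot\mathbf{v}\dif\mathscr{H}^{n-1}$ yields \eqref{eq:dualchar1} for the Lipschitz test $\varphi$: the left-hand side converges by uniform convergence and finiteness of $\mu$, and the right-hand side by Lebesgue's dominated convergence theorem, using $\mathbf{v}\in\lebe^{\infty}$ and the uniform bound on $\nabla_{\tau}\varphi_{j}$. The main obstacle is the simultaneous control of the cutoff error (whose gradient is of order $1/\delta$) and the mollification error: this is resolved by exploiting the fact that $\varphi$ vanishes on $\Gamma_{\Sigma}$ at rate $\mathrm{dist}(\cdot,\Gamma_{\Sigma})$, which absorbs the $1/\delta$ factor, and by choosing the mollification scales much smaller than the cutoff scales so that the two regularizations decouple in the limit.
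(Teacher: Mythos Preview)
Your proof is correct and follows essentially the same route as the paper. Part (a) is identical: bound the functional on $\hold_{\rm c}^{1}(\Sigma)$, extend by density to $\hold_{0}(\Sigma)$, apply Riesz. For part (b) the paper simply asserts the existence of a sequence $(\varphi_{j})\subset\hold_{\rm c}^{1}(\Sigma)$ with $\varphi_{j}\to\varphi$ uniformly and $\nabla_{\tau}\varphi_{j}\stackrel{*}{\rightharpoonup}\nabla_{\tau}\varphi$ in $\lebe^{\infty}$, then passes to the limit; your explicit cutoff-then-mollify construction, with the key estimate $|\varphi|\leq\mathrm{Lip}(\varphi)\,\mathrm{dist}(\cdot,\Gamma_{\Sigma})$ absorbing the cutoff gradient, is exactly how one would justify that assertion, and your dominated-convergence argument is a concrete realization of the same limit passage.
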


\begin{proof}
For \ref{item:chrisrea1}, by \eqref{eq:measfin}, the functional 
\begin{align*}
G(\varphi):=-\int_{\Sigma}\nabla_{\tau}\varphi\cdot\mathbf{v}\dif\mathscr{H}^{n-1}\qquad 
\mbox{for any $\varphi\in\hold_{\rm c}^{1}(\Sigma)$}, 
\end{align*}
is a bounded linear functional on $(\hold_{\rm c}^{1}(\Sigma),\|\cdot\|_{\hold(\Sigma)})$. By routine localization and mollification, $\hold_{\rm c}^{1}(\Sigma)$ is seen to be dense in $\hold_{0}(\Sigma)$ with respect to $\|\cdot\|_{\hold(\Sigma)}$; note that, since $\Sigma$ is a $\hold^{2}$-manifold, 
the $\hold^{1}$-regularity of maps $\varphi$ is not destroyed 
during the localization process. Hence, $G$ has a continuous extension $\overline{G}\colon\mathrm{C}_{0}(\Sigma)\to \R$ with $\|\overline{G}\|_{\hold_{0}(\Sigma)'}=m$. By the Riesz representation theorem, there exists a uniquely determined  Radon measure $\mu\in\mathrm{RM}_{\mathrm{fin}}(\Sigma)$ such that 
\begin{align*}
\overline{G}(\varphi) = \int_{\Sigma}\varphi\dif\mu\qquad\text{for all}\;\varphi\in\hold_{0}(\Sigma).  
\end{align*}
Specifying to $\varphi\in\hold_{\rm c}^{1}(\Sigma)$, we arrive at \eqref{eq:dualchar1}. 
The Riesz representation theorem in conjunction with the aforementioned density then yields $|\mathrm{div}_{\tau}(\mathbf{v})|(\Sigma)=m$ 
and thus completes the proof of the first assertion. 

\smallskip
For \ref{item:chrisrea2}, let $\varphi\in\mathrm{Lip}_{0}(\Sigma)$. As in the geometrically flat case, the present assumptions allow us to find a sequence $(\varphi_{j})\subset\hold_{\rm c}^{1}(\Sigma)$ such that $\varphi_{j}\to \varphi$ strongly in $\lebe^{\infty}(\Sigma)$ and $\nabla_{\tau}\varphi_{j}\stackrel{*}{\rightharpoonup}\nabla_{\tau}\varphi$ in $\lebe^{\infty}(\Sigma;T_{\Sigma})$. 
By \eqref{eq:dualchar1}, this immediately implies the underlying identity for $\mathrm{Lip}_{0}$-functions.
The proof is complete. 
\end{proof}

\subsection{Tangential traces of $\mathscr{CM}^{\infty}$-fields as $\mathscr{DM}^{\infty}$-fields}
We now introduce the Gauss--Green functional for $\mathscr{DM}^{\infty}(\Sigma)$-fields, allowing us to assign  distributional traces to $\mathscr{DM}^{\infty}$-fields along $\Gamma_{\Sigma}$. This leads 
to the announced variant of the Stokes theorem with respect to the normal or transversal variations. 
We begin with

\begin{definition}[Gauss--Green functional]\label{def:GG}
In the situation of {\rm Definition \ref{def:DMinftymanifolds}}, 
let $\mathbf{v}\in\mathscr{DM}^{\infty}(\Sigma)$, and let  $U\subset\Sigma$ be a Borel set with $\overline{U}\subset\Sigma$. We then define 
\begin{align}\label{eq:normaldefmanif}
\langle \mathbf{v}\cdot\nu,\varphi\rangle_{\Gamma_{U}}:= -\int_{U}\varphi\,\dif\,\mathrm{div}_{\tau}(\mathbf{v}) - \int_{U} \nabla_{\tau}\varphi\cdot\mathbf{v}\dif\mathscr{H}^{n-1}
\qquad\mbox{for any $\varphi\in\mathrm{C}_{\rm c}^{1}(\Sigma)$}. 
\end{align}
Moreover, if $U=\Sigma$, we put 
\begin{align}\label{eq:brady}
\langle \mathbf{v}\cdot\nu,\varphi\rangle_{\Gamma_{\Sigma}} := -\int_{\Sigma}\varphi\,\dif\,\di_{\tau}(\mathbf{v}) - \int_{\Sigma}\nabla_{\tau}\varphi\cdot\mathbf{v}\dif\mathscr{H}^{n-1}\qquad 
\mbox{for any $\varphi\in\mathrm{C}^{1}(\overline{\Sigma})$},
\end{align}
where $\hold^{1}(\overline{\Sigma})$ is the space of all functions $\varphi\in\hold^{1}(\Sigma)$ for which $\varphi$ and $\nabla_{\tau}\varphi$ can be continuously extended to $\overline{\Sigma}$. 
\end{definition}
It is trivial to see that these functionals extend to non-relabelled functionals
\begin{align}\label{eq:GGmanifoldLipdual}
\langle\mathbf{v}\cdot\nu,\cdot\rangle_{\Gamma_{U}},\langle\mathbf{v}\cdot\nu,\cdot\rangle_{\Gamma_{\Sigma}}\in \mathrm{Lip}(\Sigma)'. 
\end{align}
\begin{lem}\label{lem:orderonmanifolds}
In the situation of {\rm Definition \ref{def:DMinftymanifolds}}, the following hold{\rm :} 
\begin{enumerate}
\item\label{item:Lipbdry1A} Let $\mathbf{v}\in\mathscr{DM}^{\infty}(\Sigma)$. Viewing \eqref{eq:brady} as an element of $\mathscr{D}'(\Omega)$, $\langle\mathbf{v}\cdot\nu,\cdot\rangle_{\Gamma_{\Sigma}}$ is a well-defined distribution of order at most one with $\spt(\langle\mathbf{v}\cdot\nu,\cdot\rangle_{\Gamma_{\Sigma}})\subset\Gamma_{\Sigma}$.
\item\label{item:Lipbdry1B} Suppose that $\Omega'\subset\R^{n}$ be open and bounded with $\hold^{2}$-boundary, and let $\Sigma$ be a $\hold^{2}$-regular Lipschitz manifold relative to $\Omega'$. Moreover, let $\mathbf{v}\in\mathscr{DM}^{\infty}(\Sigma)$. Then there exists a function $(\mathbf{v}\cdot\nu)_{\Gamma_{\Sigma}}\in\lebe^{\infty}(\Gamma_{\Sigma})$ such that 
\begin{align}\label{eq:divmeasmanifreg}
\langle\mathbf{v}\cdot\nu,\,\varphi\rangle_{\Gamma_{\Sigma}}=\int_{\Gamma_{\Sigma}}(\mathbf{v}\cdot\nu)_{\Gamma_{\Sigma}}\,\varphi\dif\mathscr{H}^{n-2}\qquad 
\text{for all}\;\varphi\in\mathrm{Lip}(\Sigma). 
\end{align}
\end{enumerate}
 
\end{lem}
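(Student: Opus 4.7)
For part \ref{item:Lipbdry1A}, both claims are essentially bookkeeping. The estimate
\begin{align*}
|\langle\mathbf{v}\cdot\nu,\varphi\rangle_{\Gamma_\Sigma}| \le |\divm\mathbf{v}|(\Sigma)\|\varphi\|_{\lebe^\infty(\Sigma)} + \mathscr{H}^{n-1}(\Sigma)\|\mathbf{v}\|_{\lebe^\infty(\Sigma)}\|\nabla_\tau\varphi\|_{\lebe^\infty(\Sigma)},
\end{align*}
applied to the restriction $\varphi|_{\overline{\Sigma}}$ of a test function $\varphi\in\hold_{\rm c}^\infty(\Omega)$, yields well-definedness and the bound on the distributional order. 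For the support claim, whenever $\spt(\varphi)\cap\Gamma_{\Sigma}=\emptyset$, the restriction $\varphi|_\Sigma$ has its support compactly contained in $\Sigma$ by compactness of $\overline\Sigma$, whence $\varphi|_\Sigma\in\hold_{\rm c}^1(\Sigma)$; Definition \ref{def:DMinftymanifolds} then forces $\langle\mathbf{v}\cdot\nu,\varphi\rangle_{\Gamma_\Sigma}=0$.

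For part \ref{item:Lipbdry1B}, the plan is to adapt the trace-extraction scheme used in the proof of Theorem \ref{thm:tracemain1}\ref{item:trace1}, carrying out the construction \emph{inside} $\partial\Omega'$ rather than in $\R^n$, with the collar map $\Psi_\Sigma$ of Lemma \ref{lem:collar2} playing the role of $\Phi_{\partial\Omega'}$. First, I would produce a smooth approximation $(\mathbf{v}_j)\subset\hold^\infty(\Sigma;T_\Sigma)$ with $\mathbf{v}_j\stackrel{*}{\rightharpoonup}\mathbf{v}$ in $\lebe^\infty(\Sigma;\R^n)$, $\|\mathbf{v}_j\|_{\lebe^\infty(\Sigma)}\le\|\mathbf{v}\|_{\lebe^\infty(\Sigma)}$, and $\divm(\mathbf{v}_j)\mathscr{H}^{n-1}\stackrel{*}{\rightharpoonup}\divm(\mathbf{v})$ in $\mathrm{RM}_{\mathrm{fin}}(\Sigma)$, obtained by local flattening via the $\hold^2$-charts of $\partial\Omega'$, chart-wise mollification, projection onto $T_\Sigma$ through the smooth normal projector of $\partial\Omega'$, and a partition of unity. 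For each $t\in(0,\tfrac{1}{2})$ outside an exceptional null set $N$ (defined so that $|\divm\mathbf{v}|(\Gamma_\Sigma^t)=0$ and $\mathscr{H}^{n-2}$-a.e.\ point of $\Gamma_\Sigma^t$ is a Lebesgue point of $\mathbf{v}$), applying \eqref{eq:IBPmanifolds00} to each $\mathbf{v}_j$ on $\Sigma^{\tau,t}$ and passing to $j\to\infty$ would give, for every $\varphi\in\hold^1(\overline{\Sigma})$,
\begin{align*}
\int_{\Gamma_\Sigma^t}\varphi\,\mathbf{v}^*\cdot\nu_{\Gamma_\Sigma^t}\dif\mathscr{H}^{n-2} = -\int_{\Sigma^{\tau,t}}\varphi\dif\divm(\mathbf{v}) - \int_{\Sigma^{\tau,t}}\nabla_\tau\varphi\cdot\mathbf{v}\dif\mathscr{H}^{n-1}.
\end{align*}

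Setting $\mu_t := (\Psi_\Sigma(t,\cdot)^{-1})_\#\bigl((\mathbf{v}^*\cdot\nu_{\Gamma_\Sigma^t})\mathscr{H}^{n-2}\mres\Gamma_\Sigma^t\bigr)\in\mathrm{RM}_{\mathrm{fin}}(\Gamma_\Sigma)$, the uniform bi-Lipschitz bounds on $\Psi_\Sigma$ together with $\mathbf{v}\in\lebe^\infty$ yield $\sup_{t\notin N}|\mu_t|(\Gamma_\Sigma)<\infty$, so Banach-Alaoglu extracts a weak-$*$ limit $\mu\in\mathrm{RM}_{\mathrm{fin}}(\Gamma_\Sigma)$ along some sequence $t_k\searrow 0$. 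Because $|\divm\mathbf{v}|(\Sigma)<\infty$ and $\mathbf{v}\in\lebe^\infty(\Sigma;T_\Sigma)$, dominated convergence drives the right-hand side of the displayed identity to $\langle\mathbf{v}\cdot\nu,\varphi\rangle_{\Gamma_\Sigma}$, so via \eqref{eq:GGmanifoldLipdual} the limit $\mu$ is uniquely determined and the convergence $\mu_t\stackrel{*}{\rightharpoonup}\mu$ holds along the full net $t\searrow 0$ with $t\notin N$. For any Borel $A\subset\Gamma_\Sigma$, the bi-Lipschitz estimate $\mathscr{H}^{n-2}(\Psi_\Sigma(t,A))\le c\,\mathscr{H}^{n-2}(A)$ combined with $|\mathbf{v}^*|\le\|\mathbf{v}\|_{\lebe^\infty(\Sigma)}$ on $\Gamma_\Sigma^t$ yields $|\mu|(A)\le c\|\mathbf{v}\|_{\lebe^\infty(\Sigma)}\mathscr{H}^{n-2}(A)$; hence $\mu\ll\mathscr{H}^{n-2}\mres\Gamma_\Sigma$, and the Radon-Nikod\'ym density delivers $(\mathbf{v}\cdot\nu)_{\Gamma_\Sigma}\in\lebe^\infty(\Gamma_\Sigma)$ together with representation \eqref{eq:divmeasmanifreg}; the pointwise $\lebe^\infty$-bound at each $\mathscr{H}^{n-2}$-Lebesgue point of this density follows from the averaging estimate appearing in the final step of the proof of Theorem \ref{thm:tracemain1}\ref{item:trace1}.

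The hard part will be producing the smooth tangential approximants $(\mathbf{v}_j)$ in such a way that both the tangentiality constraint $\mathbf{v}_j(x)\in T_\Sigma(x)$ and the weak-$*$ convergence $\divm(\mathbf{v}_j)\mathscr{H}^{n-1}\stackrel{*}{\rightharpoonup}\divm(\mathbf{v})$ survive simultaneously: chart-wise mollification destroys tangentiality, and a subsequent projection onto $T_\Sigma$ introduces a normal-component correction whose tangential divergence must nonetheless vanish weakly-$*$. Controlling this correction relies on $\nabla\nu_{\partial\Omega'}\in\lebe^\infty$, which is precisely why $\hold^2$-regularity of $\partial\Omega'$ (rather than the weaker $\hold^1$-regularity used in Sections \ref{sec:tracetheorem} and \ref{sec:stokes}) is assumed in part \ref{item:Lipbdry1B}.
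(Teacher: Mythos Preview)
Your proof of part \ref{item:Lipbdry1A} matches the paper's exactly, and for part \ref{item:Lipbdry1B} your detailed outline is precisely the adaptation of the Chen--Frid argument \cite[Theorem 2.2]{ChenFrid1999} that the paper invokes without repeating: the paper merely lists the three ingredients (Lipschitz deformability via $\Psi_\Sigma$, the integration-by-parts formula for tangential fields, and the extension from $\hold^1$ to Lipschitz test functions), while you spell out the push-forward/compactness/Radon--Nikod\'ym scheme and correctly identify the tangential smooth approximation as the point where the $\hold^2$-regularity of $\partial\Omega'$ enters.
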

\begin{proof}
For \ref{item:Lipbdry1A}, let $\varphi\in\hold_{\rm c}^{1}(\Omega)$ be such that $\spt(\varphi)\cap \Gamma_{\Sigma}=\emptyset$. Since $\Gamma_{\Sigma}$ is closed, $\mathrm{dist}(\spt(\varphi),\Gamma_{\Sigma})>0$. In particular, $\varphi|_{\Sigma}\in\hold_{\rm c}^{1}(\Sigma)$, and thus the claim is immediate from \eqref{eq:DMINFTYfieldsMANIFOLD}.

\smallskip
For \ref{item:Lipbdry1B}, this follows in the same way as the corresponding statement established by 
Chen-Frid \cite[Theorem 2.2]{ChenFrid1999} for divergence-measure fields 
on open and bounded sets $\Omega\subset\R^{n}$ with Lipschitz deformable boundary. 
We do not repeat the proof here, but comment on three key points required for the argument of \cite{ChenFrid1999} to work: Since $\Sigma$ is a $\hold^{2}$-regular Lipschitz manifold relative to $\Omega'$, it automatically has Lipschitz deformable boundary $\Gamma_{\Sigma}$ in $\partial\Omega'$, the Lipschitz deformations being given by the collar maps $\Psi_{\Sigma}\colon (0,1)\times\Gamma_{\Sigma}\to\mathcal{O}\subset\Sigma$. 
Secondly, the requisite integration-by-parts formula is available since $\mathbf{v}(x)\in T_{\Sigma}(x)$ 
for $\mathscr{H}^{n-1}$-a.e. $x\in \Sigma$; see also Remark \ref{rem:tangentialimportance}. 
Finally, the adaptation of the proof of \cite[Thm. 2.2]{ChenFrid1999} gives us the validity of \eqref{eq:divmeasmanifreg} for all $\varphi\in\hold^{1}(\partial\Omega')$. For general $\varphi\in\mathrm{Lip}(\Sigma)$, we denote by $\overline{\varphi}\in\mathrm{Lip}(\partial\Omega')$ an arbitrary but fixed Lipschitz extension of $\varphi$ to $\partial\Omega'$. By localization and smooth approximation, we then find a sequence $(\overline{\varphi}_{j})\subset\hold^{1}(\Omega)$ such that $\overline{\varphi}_{j}\to\overline{\varphi}$ locally uniformly on $\partial\Omega'$ and $\nabla_{\tau}\overline{\varphi}_{j}\stackrel{*}{\rightharpoonup}\nabla_{\tau}\overline{\varphi}$ in $\lebe^{\infty}(\partial\Omega';T_{\partial\Omega'})$. Since $\overline{\varphi}|_{\Gamma_{\Sigma}}$ is independent of the extension, this allows us to pass to the limit $j\to\infty$ in \eqref{eq:divmeasmanifreg} for $\overline{\varphi}_{j}$ and to obtain \eqref{eq:divmeasmanifreg} also for general $\varphi\in\mathrm{Lip}(\Sigma)$.
This completes the proof. 
\end{proof}

We now establish that {$\mathscr{L}^{1}$-a.e.  boundary manifold $\Sigma$ in $\Omega\subset\R^{3}$} satisfies the Stokes theorem for $\mathscr{CM}^{\infty}(\Omega)$-fields.  

\begin{theorem}[Stokes Theorem for the Transversal Variations]\label{thm:stokes1st}
Let $\Omega\subset\R^{3}$ be open and bounded, let $\Omega'\Subset\Omega$ be open with boundary of class $\hold^{2}$, 
and let $\Sigma$ be a $\hold^{2}$-regular Lipschitz boundary manifold relative to $\Omega'$. 
Recalling the collar map $\Phi_{\partial\Omega'}$ from {\rm Lemma \ref{lem:collar1}}, define 
\begin{align*}
(\partial\Omega')^{\Phi,t}:=\Phi_{\partial\Omega'}(\{t\}\times\partial\Omega')\;\;\;\text{and recall that}\;\;\;\Sigma_{\Omega'}^{\Phi,t}:=\Phi_{\partial\Omega'}(\{t\}\times\Sigma),\qquad |t|<\frac{1}{2}. 
\end{align*}
Then, for $\FF\in\mathscr{CM}^{\infty}(\Omega)$, the following hold{\rm :}
\begin{enumerate}
    \item\label{item:StokesNormal1} \emph{Divergence-measure field property:} 
    There exists a set $\widetilde{\mathscr{I}}\subset I := (-\frac{1}{2},\frac{1}{2})$ with $\mathscr{L}^{1}(I\setminus\widetilde{\mathscr{I}})=0$ such that the interior tangential traces from 
    {\rm Theorem \ref{thm:tracemain1}} along $(\partial\Omega')^{\Phi,t}$ satisfy 
\begin{align}\label{eq:tracco1}
(\FF\times \nu_{(\partial\Omega')^{\Phi,t}})_{(\partial\Omega')^{\Phi,t}}^{\mathrm{int}}|_{\Sigma_{\Omega'}^{\Phi,t}}\in\mathscr{DM}^{\infty}(\Sigma_{\Omega'}^{\Phi,t})\qquad\text{for all}\;t\in\widetilde{\mathscr{I}},
\end{align}
together with the bound{\rm :}
\begin{align}\label{eq:StokesNormalBound}
|\mathrm{div}_{\tau}((\FF\times \nu_{(\partial\Omega')^{\Phi,t}})_{(\partial\Omega')^{\Phi,t}}^{\mathrm{int}}|_{\Sigma_{\Omega'}^{\Phi,t}})|(\Sigma_{\Omega'}^{\Phi,t}) \leq c\,\mathcal{M}_{\partial\Omega',\partial\Omega'}^{\Phi,+}(\curl\FF)(t) \qquad\mbox{for all $t\in\widetilde{\mathscr{I}}$},
\end{align}
where constant $c>0$ is independent of $\FF$ and $t$. The same holds true for the exterior tangential traces 
from {\rm Theorem \ref{thm:tracemain1}} 
with a potentially different set $\widetilde{\mathscr{I}}$, if we replace the right-hand side of \eqref{eq:StokesNormalBound} by $c\,\mathcal{M}_{\Sigma,\Omega'}^{\Phi,-}(\curl\FF)(t)$. 
\item\label{item:StokesNormal2} \emph{Stokes property for the transversal variations:} 
Based on {\rm Definition \ref{def:stokes}}, define 
\begin{align*}
\mathfrak{S}_{\Sigma_{\Omega'}^{\Phi,t}}^{\mathrm{int}}:=\mathfrak{S}_{(\Sigma_{\Omega'}^{\Phi,t})^{\tau,0}}^{\mathrm{int}},\qquad\;\mathfrak{S}_{\Sigma_{\Omega'}^{\Phi,t}}^{\mathrm{ext}}:=\mathfrak{S}_{(\Sigma_{\Omega'}^{\Phi,t})^{\tau,0}}^{\mathrm{ext}}. 
\end{align*} 
If $t\in\widetilde{\mathscr{I}}$ and adopting the notation from 
{\rm Lemma \ref{lem:orderonmanifolds}\ref{item:Lipbdry1B}}, then the function 
\begin{align*}
f_{t,\Sigma,\Omega'}:= -((\FF\times\nu_{(\partial{\Omega}')^{\Phi,t}})_{(\partial{\Omega}')^{\Phi,t}}^{\mathrm{int}}|_{\Sigma_{\Omega'}^{\Phi,t}}\cdot\nu)_{\Gamma_{\Sigma_{\Omega'}^{\Phi,t}}}\in\lebe^{\infty}({\Gamma_{\Sigma_{\Omega'}^{\Phi,t}}})
\end{align*}
satisfies 
\begin{align}\label{eq:walter}
\mathfrak{S}_{\Sigma_{\Omega'}^{\Phi,t}}^{\mathrm{int}}(\varphi) =  \int_{\Gamma_{\Sigma_{\Omega'}^{\Phi,t}}}f_{t,\Sigma,\Omega'}\varphi\dif\mathscr{H}^{1}\qquad\text{for all}\;\varphi\in\hold_{\rm c}^{1}(\Omega). 
\end{align}
In particular, $\mathfrak{S}_{\Sigma_{\Omega'}^{\Phi,t}}^{\mathrm{int}}$ is a \emph{well-defined distribution of order zero}. 
\end{enumerate}
The same holds true with the obvious modifications for the exterior Stokes functional.
\end{theorem}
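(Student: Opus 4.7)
The plan is to leverage the transversal maximal function from Definition \ref{def:normalmax} to select good transversal levels, then to prove the divergence-measure property of the tangential trace on $\Sigma_{\Omega'}^{\Phi,t}$ by extending test functions through the collar and bounding a volume integral against $\curl\FF$, and finally to identify the Stokes functional with the normal trace of $\mathbf{v}_t$ along $\Gamma_{\Sigma_{\Omega'}^{\Phi,t}}$ via the Gauss--Green framework of Definitions \ref{def:DMinftymanifolds}--\ref{def:GG}. We focus on the interior case; the exterior one follows analogously using $\mathcal{M}^{\Phi,-}$ in place of $\mathcal{M}^{\Phi,+}$ and Lemma \ref{lem:easyprod} on $\Omega\setminus\overline{(\Omega')^{\Phi,t}}$. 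Set $\widetilde{\mathscr{I}}:=\{t\in I:\mathcal{M}^{\Phi,+}_{\partial\Omega',\partial\Omega'}(\curl\FF)(t)<\infty\}$; Proposition \ref{prop:HLWnormal}\ref{item:HLW1} applied to $\mu=\curl\FF$ yields $\mathscr{L}^1(I\setminus\widetilde{\mathscr{I}})=0$, Lemma \ref{lem:curlvanishmax} gives $|\curl\FF|((\partial\Omega')^{\Phi,t})=0$ on $\widetilde{\mathscr{I}}$, and by Lemma \ref{lem:collar1}\ref{item:collD1} together with Remark \ref{rem:aeintro}, $(\partial\Omega')^{\Phi,t}$ is a $\hold^2$-manifold bounding an open set $(\Omega')^{\Phi,t}$ with $\hold^1$-boundary, so that Corollary \ref{cor:LipAdmit} and Lemma \ref{lem:orderonmanifolds}\ref{item:Lipbdry1B} are applicable.

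For assertion \ref{item:StokesNormal1}, fix $t\in\widetilde{\mathscr{I}}$ and write $\mathbf{v}_t$ for the trace in \eqref{eq:tracco1}; by Theorem \ref{thm:tracemain1}, $\mathbf{v}_t\in\lebe^\infty(\Sigma_{\Omega'}^{\Phi,t};T_{\Sigma_{\Omega'}^{\Phi,t}})$. To apply Lemma \ref{lem:dualchar}\ref{item:chrisrea1}, I bound $|\int\mathbf{v}_t\cdot\nabla_\tau\varphi\,\dif\mathscr{H}^2|$ for $\varphi\in\hold_{\rm c}^1(\Sigma_{\Omega'}^{\Phi,t})$ with $\|\varphi\|_\infty\leq 1$. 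Extend $\varphi$ by zero to a compactly supported Lipschitz function $\widetilde\varphi$ on $(\partial\Omega')^{\Phi,t}$; by Lemma \ref{lem:GoodLip}\ref{item:Lipextend2}, lift it to $g\in\mathrm{Lip}_{\rm c}(\R^3)$ with the attendant $\hold_b^1$-regularity and tangential-mollification convergence. For $0<\varepsilon<\frac14$, fix a transversal cutoff $\chi_\varepsilon\in\hold_{\rm c}^\infty(\R^3;[0,1])$ depending only on the collar parameter, equal to $1$ on $(\partial\Omega')^{\Phi,t}$, supported in $\Phi_{\partial\Omega'}([t,t+\varepsilon]\times\partial\Omega')$, and satisfying $|\nabla\chi_\varepsilon|\leq 2/\varepsilon$, whence $\nabla_\tau\chi_\varepsilon\equiv 0$ on $(\partial\Omega')^{\Phi,t}$. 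Setting $\overline{\varphi}_\varepsilon:=g\chi_\varepsilon\in\mathrm{Lip}_{\rm c}(\Omega)$, the identity $\nabla_\tau\overline{\varphi}_\varepsilon=\nabla_\tau\widetilde\varphi$ holds on $(\partial\Omega')^{\Phi,t}$, and Corollary \ref{cor:LipAdmit} applied on $(\Omega')^{\Phi,t}$ yields
\[
\int_{\Sigma_{\Omega'}^{\Phi,t}}\mathbf{v}_t\cdot\nabla_\tau\varphi\,\dif\mathscr{H}^2=\int_{(\Omega')^{\Phi,t}}\nabla\overline{\varphi}_\varepsilon\cdot\dif(\curl\FF).
\]
The ambient gradient splits as $\nabla\overline{\varphi}_\varepsilon=\chi_\varepsilon\nabla g+g\nabla\chi_\varepsilon$, giving an $\lebe^\infty$-bound of $C\|\widetilde\varphi\|_{\mathrm{Lip}}+2\|\widetilde\varphi\|_\infty/\varepsilon$ on the support $\Phi_{\partial\Omega'}([t,t+\varepsilon]\times\partial\Omega')$; together with the defining bound $|\curl\FF|(\Phi_{\partial\Omega'}((t,t+\varepsilon)\times\partial\Omega'))\leq\varepsilon\,\mathcal{M}^{\Phi,+}(\curl\FF)(t)$, this yields
\[
\Big|\int_{(\Omega')^{\Phi,t}}\nabla\overline{\varphi}_\varepsilon\cdot\dif(\curl\FF)\Big|\leq(C\|\widetilde\varphi\|_{\mathrm{Lip}}\varepsilon+2\|\widetilde\varphi\|_\infty)\mathcal{M}^{\Phi,+}(\curl\FF)(t).
\]
Since the left-hand side of the preceding identity is $\varepsilon$-independent, letting $\varepsilon\searrow 0$ delivers exactly \eqref{eq:StokesNormalBound}; Lemma \ref{lem:dualchar}\ref{item:chrisrea1} then produces the finite Radon measure $\mathrm{div}_\tau(\mathbf{v}_t)$ and thus $\mathbf{v}_t\in\mathscr{DM}^\infty(\Sigma_{\Omega'}^{\Phi,t})$.

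For assertion \ref{item:StokesNormal2}, start from representation \eqref{eq:alternativemain} in the setting $((\Omega')^{\Phi,t},\Sigma_{\Omega'}^{\Phi,t})$,
\[
\mathfrak{S}^{\mathrm{int}}_{\Sigma_{\Omega'}^{\Phi,t}}(\varphi)=-\lim_{\delta\searrow 0}\int_{\Sigma_{\Omega'}^{\Phi,t}}\varphi\,\mathbf{v}_t\cdot\nabla_\tau\psi_{0,\delta,\Sigma_{\Omega'}^{\Phi,t}}\dif\mathscr{H}^2,
\]
conditional on existence of the limit. Since $\varphi\psi_{0,\delta,\Sigma_{\Omega'}^{\Phi,t}}\in\mathrm{Lip}_0(\Sigma_{\Omega'}^{\Phi,t})$, Lemma \ref{lem:dualchar}\ref{item:chrisrea2} and the product rule for $\nabla_\tau$ give, for each $\delta$,
\[
\int\varphi\mathbf{v}_t\cdot\nabla_\tau\psi_{0,\delta}\dif\mathscr{H}^2=-\int\psi_{0,\delta}\mathbf{v}_t\cdot\nabla_\tau\varphi\dif\mathscr{H}^2-\int\varphi\psi_{0,\delta}\dif\mathrm{div}_\tau(\mathbf{v}_t).
\]
Passing to $\delta\searrow 0$ by dominated convergence (using $|\psi_{0,\delta}|\leq 1$, $\psi_{0,\delta}\to 1$ pointwise on $\Sigma_{\Omega'}^{\Phi,t}$, the $\lebe^\infty$-bound on $\mathbf{v}_t$, and finiteness of $\mathrm{div}_\tau(\mathbf{v}_t)$) secures the limit and
\[
\mathfrak{S}^{\mathrm{int}}_{\Sigma_{\Omega'}^{\Phi,t}}(\varphi)=\int_{\Sigma_{\Omega'}^{\Phi,t}}\mathbf{v}_t\cdot\nabla_\tau\varphi\dif\mathscr{H}^2+\int_{\Sigma_{\Omega'}^{\Phi,t}}\varphi\dif\mathrm{div}_\tau(\mathbf{v}_t)=-\langle\mathbf{v}_t\cdot\nu,\varphi\rangle_{\Gamma_{\Sigma_{\Omega'}^{\Phi,t}}}
\]
by formula \eqref{eq:brady}. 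Since $\Sigma_{\Omega'}^{\Phi,t}$ is a $\hold^2$-regular Lipschitz boundary manifold relative to $(\Omega')^{\Phi,t}$, Lemma \ref{lem:orderonmanifolds}\ref{item:Lipbdry1B} supplies an $\lebe^\infty$-density $(\mathbf{v}_t\cdot\nu)_{\Gamma_{\Sigma_{\Omega'}^{\Phi,t}}}$, and $f_{t,\Sigma,\Omega'}:=-(\mathbf{v}_t\cdot\nu)_{\Gamma_{\Sigma_{\Omega'}^{\Phi,t}}}$ realizes \eqref{eq:walter} and certifies that $\mathfrak{S}^{\mathrm{int}}_{\Sigma_{\Omega'}^{\Phi,t}}$ is a distribution of order zero.

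The main technical obstacle lies in the construction and bookkeeping for $\overline{\varphi}_\varepsilon$: one must ensure that the lifted extension $g$ from Lemma \ref{lem:GoodLip}\ref{item:Lipextend2} together with the smooth transversal cutoff $\chi_\varepsilon$ satisfies condition (b) of Corollary \ref{cor:LipAdmit} (the $\mathscr{H}^2$-a.e. convergence of mollified tangential gradients on $(\partial\Omega')^{\Phi,t}$), and arrange the splitting of $\nabla\overline{\varphi}_\varepsilon$ so that the $\|\widetilde\varphi\|_{\mathrm{Lip}}$-dependent tangential contribution vanishes in the limit $\varepsilon\searrow 0$ while the transversal contribution captures only $\|\widetilde\varphi\|_\infty\,\mathcal{M}^{\Phi,+}(\curl\FF)(t)$; here the bi-Lipschitz collar of Remark \ref{rem:aeintro} and the $\hold^2$-regularity of $\partial\Omega'$ are decisive.
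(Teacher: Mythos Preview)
Your proposal is correct and follows essentially the same approach as the paper. The only notable difference lies in the extension construction for part \ref{item:StokesNormal1}: the paper uses Lemma \ref{lem:GoodLip}\ref{item:Lipextend1} to obtain a single $\hold^{1}$-extension $\varphi_{\delta}$ with the built-in gradient bound $\|\nabla\varphi_{\delta}\|_{\infty}\leq c(\|\nabla_{\tau}\varphi\|_{\infty}+\tfrac{1}{\delta}\|\varphi\|_{\infty})$, and then applies Theorem \ref{thm:tracemain1}\ref{item:trace3} with $\bphi=\nabla\psi$ for $\psi\in\hold^{2}$ followed by a $\hold^{1}$-approximation (equation \eqref{eq:carlsen}). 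You instead factor the extension as $g\chi_{\varepsilon}$, with $g$ from Lemma \ref{lem:GoodLip}\ref{item:Lipextend2} carrying the tangential information and $\chi_{\varepsilon}$ a transversal cutoff, and invoke Corollary \ref{cor:LipAdmit} directly. Both routes deliver the same estimate after letting the transversal parameter vanish; the paper's packaging via Lemma \ref{lem:GoodLip}\ref{item:Lipextend1} is slightly more economical since extension and cutoff are merged. Part \ref{item:StokesNormal2} is argued identically in both.

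One small imprecision: your $\chi_{\varepsilon}$ cannot simultaneously lie in $\hold_{\rm c}^{\infty}(\R^{3})$, equal $1$ on $(\partial\Omega')^{\Phi,t}$, and have support contained in the one-sided slab $\Phi_{\partial\Omega'}([t,t+\varepsilon]\times\partial\Omega')$, since this forces a jump across the boundary. What you need (and what suffices) is $\chi_{\varepsilon}\in\hold^{1}(\overline{(\Omega')^{\Phi,t}})$ depending on the collar parameter, with the stated one-sided support; since only values on $\overline{(\Omega')^{\Phi,t}}$ enter \eqref{eq:criticalId}, this is harmless. The regularity is at best $\hold^{2}$ (inherited from $\partial\Omega'$), not $\hold^{\infty}$, but again this does not matter for the argument.
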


\begin{proof}
Throughout, we focus on the interior Stokes functionals. We define 
\begin{align*}
\widetilde{\mathscr{I}}
:=\big\{t\in I\colon\;\mathcal{M}_{\partial\Omega',\partial\Omega'}^{\Phi,+}(\curl\FF)(t)<\infty\big\}. 
\end{align*}
Letting $t\in\widetilde{\mathscr{I}}$, we now divide the proof of \ref{item:StokesNormal1} and \ref{item:StokesNormal2}  into three steps. 

\smallskip
\emph{Step 1. Set-up.} 
For notational brevity, we put  $\widetilde{\Sigma}:=\Sigma_{\Omega'}^{\Phi,t}$. Based on Lemma \ref{lem:collar1}\ref{item:collD1}, $\widetilde{\Sigma}$ still is a $\hold^{2}$-regular Lipschitz boundary manifold relative to the open and bounded set $\widetilde{\Omega}':=\Phi_{\partial\Omega'}((t,t_{0})\times\partial\Omega')$ for an arbitrary but fixed $t<t_{0}<1$. By Lemma \ref{lem:collar1}, $\partial\widetilde{\Omega}'$ is of class $\hold^{\infty}$. In view of \ref{item:StokesNormal1} and \ref{item:StokesNormal2}, we aim to apply Lemma \ref{lem:dualchar}; hence, we let $\varphi\in\hold_{\rm c}^{1}(\widetilde{\Sigma})$ be arbitrary with $\|\varphi\|_{\lebe^{\infty}(\widetilde{\Sigma})}\leq 1$. We define $\overline{\varphi}\in\hold_{\rm c}^{1}(\partial\widetilde{\Omega}')$ to be the trivial extension of $\varphi$ to $\partial \widetilde{\Omega}'$. 
Let $\delta_{0}>0$ be so small such that 
\begin{align}\label{eq:crysler}
\mathrm{dist}(\Phi_{\partial\Omega'}(\{t_{0}\}\times\partial\Omega'),\{x\in\widetilde{\Omega}'\colon\dista(x,\Phi_{\partial\Omega'}(\{t\}\times\partial\Omega')))<\delta_{0}\})>0.
\end{align}
Now let  $0<\delta<\delta_{0}$. As a consequence of Lemma \ref{lem:GoodLip}\ref{item:Lipextend1}, there exists a function $\varphi_{\delta}\in\hold^{1}(\Phi_{\partial\Omega'}([t,t_{0}]\times\partial{\Omega}'))$ such that 
\begin{align}\label{eq:psideltabds}
\begin{split}
&\varphi_{\delta}=\overline{\varphi}\quad\text{on}\;\Phi_{\partial\Omega'}(\{t\}\times\partial\Omega'), \\&\big\{x\in\widetilde{\Omega}'\colon\;\varphi_{\delta}(x)\neq 0\big\}
\Subset \big\{x\in\widetilde{\Omega}'\colon\;
\dista(x,\Phi_{\partial{\Omega}'}(\{t\}\times\partial\Omega'))<{\delta}\big\}=:\mathcal{U}_{\delta}, \\ 
&|\nabla \varphi_{\delta}|\leq c\big(\|\nabla_{\tau}\varphi\|_{\lebe^{\infty}(\widetilde{\Sigma})} + \frac{1}{\delta}\|\varphi\|_{\lebe^{\infty}(\widetilde{\Sigma})}\big)\quad\;\text{in $\widetilde{\Omega}'$},
\end{split}
\end{align}
where constant $c>0$ neither depends on $\varphi$ nor $\delta$; for the right-hand side of $\eqref{eq:psideltabds}_{3}$, note that $\varphi$ vanishes outside $\widetilde{\Sigma}$. From \eqref{eq:crysler} and $\eqref{eq:psideltabds}_{2}$, we conclude that $\varphi_{\delta}$ vanishes in an open neighborhood of $\Phi_{\partial\Omega'}(\{t_{0}\}\times\partial\Omega')$. 

\smallskip
\emph{Step 2. Proof of \ref{item:StokesNormal1}.} We aim to apply 
Theorem \ref{thm:tracemain1}\ref{item:trace2}--\ref{item:trace3} to $\widetilde{\Omega}'$ as underlying set. In particular, whenever $\psi\in\hold^{2}(\Phi_{\partial\Omega'}([t,t_{0}]\times\partial\Omega'))$ vanishes 
in a neighborhood of $\Phi(\{t_{0}\}\times\partial\Omega)$, we may employ the relation: $\curl(\nabla\psi)=0$ to find 
\begin{align}\label{eq:carlsen}
\int_{\Phi_{\partial\Omega'}(\{t\}\times\partial\Omega')} (\FF\times\nu_{\partial\widetilde{\Omega}'})_{\partial\widetilde{\Omega}'}^{\mathrm{int}}\cdot\nabla_{\tau}\psi\dif\mathscr{H}^{2} \stackrel{\eqref{eq:scaltovec}}{=}  \int_{\Phi_{\partial\Omega'}((t,t_{0})\times\partial\Omega')}\nabla \psi\cdot\dif\,(\curl\FF).
\end{align}
Now let $\psi\in\hold^{1}(\Phi_{\partial\Omega'}([t,t_{0}]\times\partial\Omega'))$ be such that $\psi$ vanishes in a neighborhood of $\Phi_{\partial\Omega'}(\{t_{0}\}\times\partial\Omega')$. In order to apply \eqref{eq:carlsen} to $\varphi_{\delta}$, let $\overline{\psi}\in\hold_{\rm c}^{1}(\Omega)$ be an extension of $\psi$; such a function exists by the very definition of  $\hold^{1}(\Phi_{\partial\Omega'}([t,t_{0}]\times\partial\Omega'))$ and $\Phi_{\partial\Omega'}((t,t_{0})\times\partial\Omega')$ being relatively compact in $\Omega$.  Mollifying $\overline{\psi}$ yields 
a sequence $(\overline{\psi}_{\varepsilon})\subset\hold_{\rm c}^{\infty}(\Omega)$ such that $\overline{\psi}_{\varepsilon}\to \overline{\psi}$ and $\nabla\overline{\psi}_{\varepsilon}\to\nabla\overline{\psi}$ uniformly in $\Omega$ as $\varepsilon\searrow 0$. Applying \eqref{eq:carlsen} to $\overline{\psi}_{\varepsilon}$, we recall $(\FF\times\nu_{\partial\widetilde{\Omega}'})_{\partial\widetilde{\Omega}'}^{\mathrm{int}}\in\lebe^{\infty}(\partial\widetilde{\Omega}';\R^{3})$ and thus may pass to the limit $\varepsilon\searrow 0$ by the dominated convergence theorem. In consequence, we may apply \eqref{eq:carlsen} to $\psi=\varphi_{\delta}$. 

Since $\varphi$ vanishes outside $\widetilde{\Sigma}$, we thus obtain 
\begin{align}
&\bigg\vert \int_{\widetilde{\Sigma}} (\FF\times\nu_{\partial\widetilde{\Omega}'})_{\partial\widetilde{\Omega}'}^{\mathrm{int}}\cdot\nabla_{\tau}\varphi\dif\mathscr{H}^{2}\bigg\vert\label{eq:kasparov} \\
&\, \stackrel{\varphi_{\delta}|_{\partial\widetilde{\Omega}'}=\varphi}{=} \bigg\vert \int_{\Phi_{\partial\Omega'}(\{t\}\times\partial\Omega')} (\FF\times\nu_{\partial\widetilde{\Omega}'})_{\partial\widetilde{\Omega}'}^{\mathrm{int}}\cdot\nabla_{\tau}\varphi_{\delta}\dif\mathscr{H}^{2}\bigg\vert \notag \\ 
&\,\stackrel{\eqref{eq:carlsen}}{\leq} \bigg\vert \int_{\Phi_{\partial\Omega'}((t,t_{0})\times\partial\Omega')}\nabla\varphi_{\delta}\cdot\dif\,(\curl\FF)\bigg\vert \notag \\ 
&\,\stackrel{\eqref{eq:psideltabds}}{\leq} c\Big( \|\nabla_{\tau}\varphi\|_{\lebe^{\infty}(\widetilde{\Sigma})}|\curl\FF|(\mathcal{U}_{\delta}) + \frac{1}{\delta}\|\varphi\|_{\lebe^{\infty}(\widetilde{\Sigma})}|\curl\FF|(\mathcal{U}_{\delta})\Big), \notag
\end{align}
where $\mathcal{U}_{\delta}$ is as in $\eqref{eq:psideltabds}_{2}$.
Estimate \eqref{eq:kasparov} holds for all sufficiently small $\delta>0$. 
Since $\mathcal{U}_{\delta}\to\emptyset$ as $\delta\searrow 0$, we may send $\delta\searrow 0$ in \eqref{eq:kasparov} to find by use of $\|\varphi\|_{\lebe^{\infty}(\widetilde{\Sigma})}\leq 1$ that 
\begin{align}\label{eq:kasparov1}
\left\vert \int_{\widetilde{\Sigma}} (\FF\times\nu_{\partial\widetilde{\Omega}'})_{\partial\widetilde{\Omega}'}^{\mathrm{int}}\cdot\nabla_{\tau}\varphi\dif\mathscr{H}^{2}\right\vert \leq c\,\limsup_{\delta\searrow 0} \frac{|\curl\FF|(\mathcal{U}_{\delta})}{\delta}. 
\end{align}
By Lemma \ref{lem:collar1} and Remark \ref{rem:aeintro}, we may assume that there exists a constant $\theta\geq 1$ such that 
$\mathcal{U}_{\delta}\subset \Phi_{\partial\Omega'}((t,t+\theta\delta)\times\partial\Omega')$ holds for all sufficiently small $\delta>0$. Then
\begin{align}\label{eq:rook}
\begin{split}
|\diver_{\tau}((\FF\times\nu_{\partial\widetilde{\Omega}'})_{\partial\widetilde{\Omega}'}^{\mathrm{int}})|(\widetilde{\Sigma}) & \stackrel{\text{Lem. \ref{lem:dualchar}}}{=}  \sup_{\substack{\varphi\in\hold_{\rm c}^{1}(\widetilde{\Sigma}) \\ |\varphi|\leq 1}} \left\vert \int_{\widetilde{\Sigma}} (\FF\times\nu_{\partial\widetilde{\Omega}'})_{\partial\widetilde{\Omega}'}^{\mathrm{int}}\cdot\nabla_{\tau}\varphi\dif\mathscr{H}^{2}\right\vert \\ 
& \;\;\stackrel{\eqref{eq:kasparov1}}{\leq} c\,\limsup_{\delta\searrow 0} \frac{|\curl\FF|(\mathcal{U}_{\delta}))}{\delta} \\ 
& \;\;\;\; \leq c\limsup_{\delta\searrow 0} \frac{|\curl\FF|(\Phi_{\partial\Omega'}((t,t+\theta\delta)\times\partial\Omega')}{\delta} \\ 
& \;\;\;\;\leq c\,\mathcal{M}_{\partial\Omega',\partial\Omega'}^{\Phi,+}(\curl\FF)(t).
\end{split}
\end{align}
Since $t\in\widetilde{\mathscr{I}}$, the ultimate expression is finite and $\mathscr{L}^{1}(I\setminus\widetilde{\mathscr{I}})=0$ by Proposition \ref{prop:HLWnormal}. 
In particular, $\mathrm{div}_{\tau}((\FF\times \nu_{\partial\widetilde{\Omega}'})_{\partial\widetilde{\Omega}'}^{\mathrm{int}}|_{\widetilde{\Sigma}})\in\mathrm{RM}_{\mathrm{fin}}(\widetilde{\Sigma})$ and,  since $(\FF\times \nu_{\partial\widetilde{\Omega}'})_{\partial\widetilde{\Omega}'}^{\mathrm{int}}|_{\widetilde{\Sigma}}\in\lebe^{\infty}(\widetilde{\Sigma};T_{\widetilde{\Sigma}})$ by Theorem \ref{thm:tracemain1}, $(\FF\times \nu_{\partial\widetilde{\Omega}'})_{\partial\widetilde{\Omega}'}^{\mathrm{int}}|_{\widetilde{\Sigma}}\in\mathscr{DM}^{\infty}(\widetilde{\Sigma})$. This yields \eqref{eq:tracco1}, 
while \eqref{eq:StokesNormalBound} is a direct consequence of \eqref{eq:rook}. 
Hence, \ref{item:StokesNormal1} follows.  

\smallskip
\emph{Step 3. Proof of \ref{item:StokesNormal2}.} 
We first note that the right-hand side \eqref{eq:walter} has a clear meaning: Indeed, since $\mathcal{M}_{\partial\Omega',\partial\Omega'}^{\Phi,+}(\curl\FF)(t)<\infty$, the boundary regularity criterion \eqref{eq:measregcrit} from Lemma \ref{lem:localisabilitynormaltrace} is satisfied, so that
$\langle (\curl\FF)\cdot\nu,\cdot\rangle_{\partial\widetilde{\Omega}'}$ can be represented 
by a finite Radon measure $\mu_{(\curl\FF)\cdot\nu_{\partial\widetilde{\Omega}'}}$ 
on $\partial\widetilde{\Omega}'$. Now let $\psi_{0,\delta,\widetilde{\Sigma}}$ be as in \eqref{eq:psiddefmain}. Based on \eqref{eq:alternativemain}, we conclude that 
\begin{align}\label{eq:superstrat1}
\begin{split}
\mathfrak{S}_{\widetilde{\Sigma}}^{\mathrm{int}}(\varphi) 
&= - \lim_{\delta\searrow 0}\int_{\partial\widetilde{\Omega}'} \varphi(\FF\times\nu_{\partial\widetilde{\Omega}'})_{\partial\widetilde{\Omega}'}^{\interior}\cdot\nabla_{\tau}\psi_{0,\delta,\widetilde{\Sigma}}\dif\mathscr{H}^{2} \\ 
& = \lim_{\delta\searrow 0}\Big(\int_{\widetilde{\Sigma}}\psi_{0,\delta,\widetilde{\Sigma}}(\FF\times\nu_{\partial\widetilde{\Omega}'})_{\partial\widetilde{\Omega}'}^{\mathrm{int}}\cdot\nabla_{\tau}\varphi\dif\mathscr{H}^{2}\Big. \\ 
& \Big. \qquad\qquad
-\int_{\widetilde{\Sigma}} \nabla_{\tau}(\psi_{0,\delta,\widetilde{\Sigma}}\varphi)\cdot(\FF\times\nu_{\partial\widetilde{\Omega}'})_{\partial\widetilde{\Omega}'}^{\mathrm{int}} \dif\mathscr{H}^{2}\Big) \\ 
& =  \int_{\widetilde{\Sigma}}(\FF\times\nu_{\partial\widetilde{\Omega}'})_{\partial\widetilde{\Omega}'}^{\mathrm{int}}\cdot\nabla_{\tau}\varphi\dif\mathscr{H}^{2} + \lim_{\delta\searrow 0} \int_{\widetilde{\Sigma}} \psi_{0,\delta,\widetilde{\Sigma}}\varphi\dif\,(\mathrm{div}_{\tau}(\FF\times\nu_{\partial\widetilde{\Omega}'})_{\partial\widetilde{\Omega}'}^{\mathrm{int}})\\
&=: \mathrm{II}. 
\end{split}
\end{align}
In the ultimate step, we have used the dominated convergence theorem for the first term, 
whereas the second one is dealt with  by $(\psi_{0,\delta,\widetilde{\Sigma}}\varphi)\in\mathrm{Lip}_{\rm c}(\widetilde{\Sigma})$ and $(\FF\times\nu_{\partial\widetilde{\Omega}'})_{\partial\widetilde{\Omega}'}^{\mathrm{int}}\in\mathscr{DM}^{\infty}(\widetilde{\Sigma})$; note that the underlying integration 
by parts is justified by Lemma \ref{lem:orderonmanifolds}\ref{item:chrisrea2}. Since $\mathrm{div}_{\tau}((\FF\times\nu_{\partial\widetilde{\Omega}'})_{\partial\widetilde{\Omega}'}^{\mathrm{int}})\in\mathrm{RM}_{\mathrm{fin}}(\widetilde{\Sigma})$ and $\widetilde{\Sigma}$ is relatively open, 
then the dominated convergence theorem yields
\begin{align}\label{eq:superstrat2}
\begin{split}
\mathrm{II} & =  \int_{\widetilde{\Sigma}}(\FF\times\nu_{\partial\widetilde{\Omega}'})_{\partial\widetilde{\Omega}'}^{\mathrm{int}}\cdot\nabla_{\tau}\varphi\dif\mathscr{H}^{2} + \int_{\widetilde{\Sigma}}\varphi\dif\,(\mathrm{div}_{\tau}(\FF\times\nu_{\partial\widetilde{\Omega}'})_{\partial\widetilde{\Omega}'}^{\mathrm{int}}) \\ 
& =  - \langle (\FF\times\nu_{\partial\widetilde{\Omega}'})_{\partial\widetilde{\Omega}'}^{\mathrm{int}}|_{\widetilde{\Sigma}}\cdot\nu,\varphi\rangle_{\Gamma_{\widetilde{\Sigma}}},
\end{split}
\end{align}
where we have used the very definition of the Gauss--Green functional from Definition \ref{def:GG}. By \ref{item:StokesNormal1} and Lemma \ref{lem:orderonmanifolds}\ref{item:Lipbdry1B}, we obtain \eqref{eq:walter} and so the full statement of \ref{item:StokesNormal2}. 
This completes the proof. 
\end{proof}

\begin{rem}[Further Localizations]
With slightly more refined estimates that take into the account the geometry of $\Phi_{\partial\Omega'}((t,t_{0})\times\partial\Omega')$, it is possible to improve \eqref{eq:StokesNormalBound} to only have the term $c\,\mathcal{M}_{\Sigma,\partial\Omega'}^{\Phi,+}(\curl\FF)(t)$, 
instead of $c\,\mathcal{M}_{\partial\Omega',\partial\Omega'}^{\Phi,+}(\curl\FF)(t)$,  
on its right-hand side. However, we do not need this improvement in the sequel. 
\end{rem}
\begin{rem}
In stating the above results, we stick to the notation $\Sigma_{\Omega'}^{\Phi,t}$. 
This is so because we partially work on the full boundaries  $(\partial\Omega')^{\Phi,t}$, 
which arise from $\partial\Omega'$ by transversal shifting and depend on $\Omega'$. 
On the contrary, in \S\ref{sec:stokes}, we only work on $\Sigma$ and with $0\leq t<\frac{1}{2}$. For such $t$, we have $\Sigma_{\Omega'}^{\tau,t}\subset\Sigma$, whereby it is convenient to simply write $\Sigma^{\tau,t}$ instead of $\Sigma_{\Omega'}^{\tau,t}$ in the context of \S \ref{sec:stokes}. 
Moreover, note that, if the two-sided maximal operator satisfies $\mathcal{M}_{\partial\Omega',\partial\Omega'}^{n}(\curl\FF)(t)<\infty$, Lemma \ref{lem:curlvanishmax} yields $|\curl\FF|((\partial\Omega')^{\Phi,t})=0$. This is still the case on a subset of full $\mathscr{L}^{1}$-measure in $I$. In particular, for such $t$, the Stokes theorem on the respective transversally shifted boundary manifolds is available 
for both the interior and exterior Stokes functionals. 
\end{rem}

As a consequence of the proof of Theorem \ref{thm:stokes1st}, we do not have to vary tangentially when $t$ is such that 
$\mathcal{M}_{\partial\Omega',\partial\Omega'}^{\Phi,+}(\curl\FF)(t)<\infty$. In view of our discussion 
in \S\ref{sec:consistency} (see \eqref{eq:permanentstate}\emph{ff.}),  
there is no need to work with localizers adapted to $\Sigma$ and the specific collar map $\Psi_{\Sigma}$ when aiming to introduce Stokes functionals as in Definition \ref{def:stokes}. On transversally shifted manifolds $\Sigma_{\Omega'}^{\Phi,t}$ with $\mathcal{M}_{\partial\Omega',\partial\Omega'}^{\Phi,+}(\curl\FF)(t)<\infty$, we may therefore directly state a Stokes theorem for relatively open subsets as follows.

\begin{corollary}[Low Regularity]\label{cor:lowreg}
Let $\Omega\subset\R^{3}$ be open and bounded, and let $\Omega'\Subset\Omega$ be open with boundary of class $\hold^{2}$. Moreover, let $\FF\in\mathscr{CM}^{\infty}(\Omega)$, and let $t\in (-\frac{1}{2},\frac{1}{2})$ be such that 
\begin{align*}
\emph{(i)}\;\;\;\mathcal{M}_{\partial\Omega',\partial\Omega'}^{\Phi,+}(\curl\FF)(t)<\infty\;\;\;\text{or}\;\;\;\emph{(ii)}\;\mathcal{M}_{\partial\Omega',\partial\Omega'}^{\Phi,-}(\curl\FF)(t)<\infty.
\end{align*}
Let $E\subset(\partial\Omega')^{\Phi,t}$ be relatively open. 
Finally, for all sufficiently small $\delta>0$, 
suppose that  $\psi_{\delta}\colon (\partial\Omega')^{\Phi,t}\to[0,1]$ be a Lipschitz function such that $\psi_{\delta}=0$ on $(\partial\Omega')^{\Phi,t}\setminus E$ and $\psi_{\delta}\to 1$ pointwisely on $E$ as $\delta\searrow 0$. We define  \emph{interior} and \emph{exterior Stokes functionals} on $E$ for $\varphi\in\hold_{\rm c}^{1}(\Omega)$ by 
    \begin{align*}
        \mathfrak{S}_{E}^{\mathrm{int}}(\varphi):= 
        \lim_{\delta\searrow 0}\langle(\curl\FF)\cdot\nu,\,\psi_{\delta}\varphi\rangle_{(\partial\Omega')^{\Phi,t}}
    +  \int_{E}(\FF\times\nu_{\partial\Omega'})_{\partial\Omega'}^{\interior}\cdot\nabla_{\tau}\varphi\dif\mathscr{H}^{2}
    \end{align*}
    in case \emph{(i)}, and 
    \begin{align*}
    \mathfrak{S}_{E}^{\mathrm{ext}}(\varphi):= 
        \lim_{\delta\searrow 0}\langle(\curl\FF)\cdot\nu,\psi_{\delta}\varphi\rangle_{(\partial\Omega')^{\Phi,t}}
    +  \int_{E}(\FF\times\nu_{\partial\Omega'})_{\partial\Omega'}^{\exterior}\cdot\nabla_{\tau}\varphi\dif\mathscr{H}^{2}
    \end{align*}
in case \emph{(ii)}. 
Then, in each of cases \emph{(i)}--\emph{(ii)},  
\begin{enumerate}
\item\label{item:StokesSelect1} the functionals $\mathfrak{S}_{E}^{\mathrm{int}}$ and $\mathfrak{S}_{E}^{\mathrm{ext}}$ 
is independent of the specific choice of the sequence $(\psi_{\delta})$ with the above properties.
\item\label{item:StokesSelect2} the functionals $\mathfrak{S}_{E}^{\mathrm{int}}$ and $\mathfrak{S}_{E}^{\mathrm{ext}}$ are \emph{distributions of order at most one} supported on $\Gamma_{E}:=\overline{E}\setminus\mathrm{int}(E)$. More precisely, we have the representations{\rm :} 
\begin{align}\label{eq:guthriegovan1}
\begin{split}
    \mathfrak{S}_{E}^{\mathrm{int}}(\varphi)=  - \langle (\FF\times\nu_{\partial\widetilde{\Omega}'})_{\partial\widetilde{\Omega}'}^{\mathrm{int}}|_{E}\cdot\nu,\varphi\rangle_{\Gamma_{E}},\\ 
    \mathfrak{S}_{E}^{\mathrm{ext}}(\varphi) =  - \langle (\FF\times\nu_{\partial\widetilde{\Omega}'})_{\partial\widetilde{\Omega}'}^{\mathrm{ext}}|_{E}\cdot\nu,\varphi\rangle_{\Gamma_{E}}
    \end{split}
\end{align}
for all $\varphi\in\hold_{\rm c}^{1}(\Omega)$. 
\end{enumerate}

\end{corollary}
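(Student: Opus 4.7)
The strategy is to identify, on the transversally shifted full manifold $M := (\partial\Omega')^{\Phi,t}$, the Radon-measure representation of the distributional normal trace of $\curl\FF$ with the tangential divergence of the tangential trace $\mathbf{v} := (\FF\times\nu_{\partial\Omega'})_{\partial\Omega'}^{\mathrm{int}}|_M$. Once this identification is in place, both assertions of the corollary reduce to an application of Lebesgue's theorem on dominated convergence together with Definition \ref{def:GG}.

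Concretely, under assumption (i), fix $t_0 \in (t,\tfrac{1}{2})$ and set $\widetilde{\Omega}' := \Phi_{\partial\Omega'}((t,t_0)\times\partial\Omega')$. Then $\widetilde{\Omega}'\Subset\Omega$ is open with Lipschitz boundary containing $M$ as a relatively open subset (Lemma \ref{lem:collar1}), and the hypothesis $\mathcal{M}_{\partial\Omega',\partial\Omega'}^{\Phi,+}(\curl\FF)(t)<\infty$ is precisely the boundary regularity criterion \eqref{eq:measregcrit} of Lemma \ref{lem:localisabilitynormaltrace} applied to $\widetilde{\Omega}'$. Hence $\langle(\curl\FF)\cdot\nu,\cdot\rangle_{\partial\widetilde{\Omega}'}$ is represented by a finite signed Radon measure; restricting it to $M$ gives $\mu\in\mathrm{RM}_{\mathrm{fin}}(M)$. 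On the other hand, since the proof of Theorem \ref{thm:stokes1st}\ref{item:StokesNormal1} (in particular estimate \eqref{eq:rook}) uses only the maximal operator on all of $\partial\Omega'$, it applies with $\widetilde{\Sigma}$ replaced by the full $M$; this gives $\mathbf{v}\in\mathscr{DM}^\infty(M)$. Moreover, testing \eqref{eq:carlsen} against an arbitrary $\hold^1$-function that vanishes near $\Phi_{\partial\Omega'}(\{t_0\}\times\partial\Omega')$ and using the normal-trace formula \eqref{eq:easynormaltraceextdivmeas1} (recall $\mathrm{div}(\curl\FF)=0$) yields
\[
\int_M \mathbf{v}\cdot\nabla_\tau\psi\,\d\mathscr{H}^2 = -\int_M \psi\,\d\mu \qquad\mbox{for all such $\psi$},
\]
so that by Lemma \ref{lem:dualchar}\ref{item:chrisrea1} we conclude $\mathrm{div}_\tau(\mathbf{v})=\mu$ as elements of $\mathrm{RM}_{\mathrm{fin}}(M)$.

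With this identification, for $\varphi\in\hold_{\rm c}^1(\Omega)$ we rewrite the first term in the definition of $\mathfrak{S}_E^{\mathrm{int}}(\varphi)$ as $\int_M \psi_\delta\varphi\,\d\mu$. Since $|\psi_\delta\varphi|\leq\|\varphi\|_{\lebe^\infty(\Omega)}$ uniformly in $\delta$, $\psi_\delta\varphi\to\mathbbm{1}_E\varphi$ pointwise on $M$, and $\mu$ is a finite measure, dominated convergence gives the limit $\int_E \varphi\,\d\,\mathrm{div}_\tau(\mathbf{v})$. This depends on the sequence $(\psi_\delta)$ only through $\mathbbm{1}_E$, proving independence \ref{item:StokesSelect1}, and combining with the remaining term in the definition of $\mathfrak{S}_E^{\mathrm{int}}$ and Definition \ref{def:GG} gives the representation formula \eqref{eq:guthriegovan1}. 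The order-at-most-one property is immediate from
\[
|\mathfrak{S}_E^{\mathrm{int}}(\varphi)|\leq \|\varphi\|_{\lebe^\infty(\overline{E})}|\mu|(\overline{E}) + \mathscr{H}^2(E)\|\mathbf{v}\|_{\lebe^\infty(E)}\|\nabla_\tau\varphi\|_{\lebe^\infty(E)},
\]
and the support property is obtained as follows: if $\varphi\in\hold_{\rm c}^1(\Omega)$ vanishes on an open neighborhood of $\Gamma_E$, then $\varphi|_E$ is compactly supported in the relatively open $C^2$-manifold $E\subset M$, so $\mathbf{v}|_E\in\mathscr{DM}^\infty(E)$ combined with Lemma \ref{lem:orderonmanifolds}\ref{item:Lipbdry1A} forces the right-hand side of \eqref{eq:guthriegovan1} to vanish. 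Case (ii) is handled symmetrically with $\widetilde{\Omega}'':=\Phi_{\partial\Omega'}((-t_0,t)\times\partial\Omega')$ in place of $\widetilde{\Omega}'$ and the exterior trace replacing the interior one. The principal subtlety is the measure identification $\mathrm{div}_\tau(\mathbf{v})=\mu$ on the full manifold $M$; however, once it is recognized that the argument leading to \eqref{eq:rook} in Theorem \ref{thm:stokes1st} already produces exactly this identity when the test function is extended from a general sub-manifold to $M$, everything else is routine.
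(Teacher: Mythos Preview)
Your proof is correct and follows essentially the same approach as the paper. Both rely on Theorem~\ref{thm:stokes1st}\ref{item:StokesNormal1} to obtain $\mathbf{v}\in\mathscr{DM}^\infty(M)$, identify the normal trace of $\curl\FF$ along $\partial\widetilde{\Omega}'$ with a finite Radon measure, pass to the limit by dominated convergence, and arrive at the Gauss--Green functional of Definition~\ref{def:GG}; the only organizational difference is that you isolate the identity $\mathrm{div}_\tau(\mathbf{v})=\mu$ explicitly and then apply dominated convergence directly to $\int_M\psi_\delta\varphi\,\d\mu$, whereas the paper first passes through the ``alternative representation'' analogous to \eqref{eq:alternativemain} and then repeats the product-rule computation of \eqref{eq:superstrat1}--\eqref{eq:superstrat2}, which amounts to the same thing done in a slightly more roundabout order.
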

\begin{proof}
We focus on $\mathfrak{S}_{E}^{\mathrm{int}}$, the proof for $\mathfrak{S}_{E}^{\mathrm{ext}}$ being analogous. Let $\varphi\in\hold_{\rm c}^{1}(\Omega)$. As in the derivation of \eqref{eq:alternativemain}, we have 
\begin{align*}
\mathfrak{S}_{E}^{\mathrm{int}}(\varphi)=-\lim_{\delta\searrow 0}\int_{\partial\widetilde{\Omega}'}\varphi(\FF\times\nu_{\partial\widetilde{\Omega}'})_{\partial\widetilde{\Omega}'}^{\mathrm{int}}\cdot\nabla\psi_{\delta}\dif\mathscr{H}^{2}. 
\end{align*}
Based on this observation, we can follow the proof of Theorem \ref{thm:stokes1st}. More precisely, in \eqref{eq:superstrat1}--\eqref{eq:superstrat2}, we replace $\widetilde{\Sigma}$ by $E$ and $\psi_{0,\delta,\widetilde{\Sigma}}$ by $\psi_{\delta}$. Based on Definition \ref{def:GG} and \eqref{eq:GGmanifoldLipdual}, this yields \eqref{eq:guthriegovan1}, from where both \ref{item:StokesSelect1} and \ref{item:StokesSelect2} follow at once. The proof is complete. 
\end{proof}
\begin{rem}
While Theorem \ref{thm:stokes1st} and Corollary \ref{cor:lowreg} have a parallel structure, the outcomes differ in the regularity of the Stokes functionals. The Lipschitz regularity of the boundaries in the context of Theorem \ref{thm:stokes1st} yields the $\lebe^{\infty}$-representability of the Stokes functionals. In the situation of general relatively open sets $E$ as considered in Corollary \ref{cor:lowreg}, the validity of the Stokes theorem happens at the cost of the Stokes functionals potentially being non-regular distributions on $\Gamma_{E}$. 
\end{rem}

\subsection{Examples} 
We conclude the overall section with two examples.
\begin{example}[Example \ref{ex:nonex}, continued]\label{ex:nonexctd} 
We compute the tangential distributional divergence of the function $(\FF\times\nu_{\partial\Omega'})_{\partial\Omega'}^{\mathrm{int}}$ from \eqref{eq:gdef}, which we directly view as a function on $\R^{2}$: 
\begin{align*}
\di_{\tau}((\FF\times\nu_{\partial\Omega'})_{\partial\Omega'}^{\mathrm{int}}) & = 2\sum_{k=1}^{\infty} (-1)^{k+1}\mathscr{H}^{1}\mres\partial\!\ball_{1-2^{-k}}\qquad\text{in}\;\mathscr{D}'(\R^{2}), 
\end{align*}
and so $\mathrm{tr}(\FF)\times\nu\notin\mathscr{DM}^{\infty}(\R^{2})$ follows from 
\begin{align*}
|\mathrm{div}_{\tau}((\FF\times\nu_{\partial\Omega'})_{\partial\Omega'}^{\mathrm{int}})|(\R^{2})= 4\pi\sum_{k=1}^{\infty}(1-2^{-k}) =\infty. 
\end{align*} 
In particular, Theorem \ref{thm:stokes1st} is not applicable here, whereas Theorem \ref{thm:stokes} is indeed. 
\end{example}
\begin{example}
The boundary regularity criterion $\mathcal{M}_{\partial\Omega',\partial\Omega'}^{\Phi,t}(\curl\FF)<\infty$ is automatically satisfied for any choice of $t$, provided that \emph{e.g.}  $\curl\FF\in\lebe^{\infty}(\Omega;\R^{3})$. In this sense, Theorem \ref{thm:stokes1st} and Corollary \ref{cor:lowreg} provide unconditional Stokes theorems without manifold selection for $\sobo^{\curl,\infty}$-fields.
\end{example}

\section{The Stokes Theorem for $\mathscr{CM}^{p}$-Fields, $1\leq p <\infty$}\label{sec:Stokesgeneral}
As a hybrid of the methods developed in the previous two sections, we now establish a (tangential) Stokes theorem in the spirit of Theorem \ref{thm:stokes} for a large subclass of $\mathscr{CM}^{p}$-fields, $1\leq p<\infty$. 
This requires localizing certain distributions, namely divergences of $\lebe_{\locc}^{1}$-fields, 
to certain closed sets; see \S \ref{sec:locclosed}. 
In the intermediate subsection \S\ref{sec:bdrypairing}, 
we introduce a boundary pairing and prove, as the $\mathscr{CM}^{p}$-substitute of Theorem \ref{thm:tracemain1}\ref{item:trace2}, in which sense the distributional tangential traces are tangential. This leads to a general version of the Stokes theorem (see \S\ref{sec:Stokestheorem}), 
and eventually fills the remaining gaps from Table \ref{table:1}; see \S \ref{sec:StokesMostGeneral}.

\subsection{Localization elements of $\mathrm{div}_{\tau}(\lebe_{\locc}^{1})$}\label{sec:locclosed}
Let $\Omega\subset\R^{n}$ be open and bounded,  and let $\Omega'\Subset\Omega$ be open with $\hold^{2}$-boundary. Moreover, let $\Sigma\Subset\partial\Omega'$ be a $\hold^{2}$-regular Lipschitz boundary manifold relative to $\Omega'$, with  boundary $\Gamma_{\Sigma}\subset\partial\Omega'$; in particular, $\Sigma$ is relatively open in $\partial\Omega'$. Temporarily assuming that $\partial\Omega'$ is even smooth and  $S\in\mathscr{D}'(\partial\Omega')$, $S$ can be localized to $\Sigma$ by restricting it to an element $S|_{\Sigma}\in\mathscr{D}'(\Sigma)$ via 
\begin{align}\label{eq:restrictiontoopen}
\langle S|_{\Sigma},\varphi\rangle_{\mathscr{D}'(\Sigma)\times\mathscr{D}(\Sigma)} := \langle S,\varphi\rangle_{\mathscr{D}'(\partial\Omega')\times\mathscr{D}(\partial\Omega')}\qquad 
\mbox{for $\varphi\in\mathscr{D}(\Sigma)$}. 
\end{align}
For our future purposes, we require a restriction of $S$ to an element of $\hold^{\infty}(\overline{\Sigma})'$, and this is what we mean by \emph{localizing to closed sets}. 

In the case of $\hold^{2}$-regular Lipschitz boundary manifolds, $\mathscr{D}(\partial\Omega')$ is not well-defined, but it is clear that the task of localizing elements of $\mathrm{Lip}(\partial\Omega')'$ comes with similar issues. To this end, we consider the subspace
\begin{align}\label{eq:divL1loc}
\mathrm{div}_{\tau}(\lebe_{\locc}^{1}(\partial\Omega';T_{\partial\Omega'}))
:=\big\{\mathrm{div}_{\tau}(\GG)\colon\;\GG\in\lebe_{\locc}^{1}(\partial\Omega';T_{\partial\Omega'})\big\}, 
\end{align}
where $\mathrm{div}_{\tau}$ denotes the tangential distributional divergence. 
Since $\partial\Omega'$ is compact, we could equivalently admit $\GG\in\lebe^{1}(\partial\Omega';T_{\partial\Omega'})$ in \eqref{eq:divL1loc} and end up with the same space. We explicitly record that the duality pairing is given by 
\begin{align}\label{eq:representationLip}
\langle\mathrm{div}_{\tau}(\GG),\vartheta\rangle_{\Sigma} = - \int_{\Sigma}\GG\cdot\nabla_{\tau}\vartheta\dif x
\end{align}
whenever $\GG\in\lebe_{\locc}^{1}(\partial\Omega';T_{\partial\Omega'})$ and $\vartheta\in\mathrm{Lip}_{\rm c}(\Sigma)$. In this context, the $\hold^{2}$-regularity of $\partial\Omega'$ implies the  consistency of \eqref{eq:representationLip} with the classical integration by parts-formula from Theorem \ref{thm:IBPsmooth}\emph{ff.} for sufficiently smooth maps. Moreover, note that \eqref{eq:representationLip} extends to the case where $\Sigma=\partial\Omega'$ and $\vartheta\in\mathrm{Lip}(\partial\Omega')$, in which case $\Gamma_{\partial\Omega'}=\emptyset$.

We now recall the collar map $\Psi_{\Sigma}\colon(-1,1)\times\Gamma_{\Sigma}\to\mathcal{O}$ from Lemma \ref{lem:collar2}. 
For $0<\delta<\frac{1}{4}$ and $0\leq  t<\frac{1}{2}$, we define interior height functions as in \eqref{eq:psiddefmain}, and proceed to display the definition of the boundary pairing as required for 
our Stokes theorem, Theorem \ref{thm:StokesDist}, below.

\begin{definition}[Boundary Pairing $\lla\cdot,\cdot\rra_{\Gamma_{\Sigma}}$]\label{def:locclosed} In the situation described above, let $\GG\in\lebe^{1}(\partial\Omega';T_{\partial\Omega'})$. 
For $0\leq t < \frac{1}{2}$, define
\begin{align}\label{eq:distlocaliser}
\begin{split}
& \lla \GG\cdot\nu_{\Gamma_{\Sigma}^{t}},\psi\rra_{\Gamma_{\Sigma}^{t}}:= \lim_{\delta\searrow 0}\int_{\Sigma} \psi\,\GG\cdot\nabla_{\tau}\psi_{t,\delta,\Sigma}\dif\mathscr{H}^{n-1}\qquad\mbox{for $\psi\in\mathrm{Lip}(\Sigma)$}, 
\end{split}
\end{align}
provided that this limit exists \emph{for all} $\psi\in\mathrm{Lip}(\Sigma)$. In this case, we say that $\lla \GG\cdot\nu_{\Gamma_{\Sigma}^{t}},\cdot\rra_{\Gamma_{\Sigma}^{t}}$ \emph{exists}. 
\end{definition}
If $\mathbf{G}$ and $\psi$ are  sufficiently smooth, then Lemma \ref{lem:bdryintegralcont} and its proof imply that 
\begin{align*}
 \lla \GG\cdot\nu_{\Gamma_{\Sigma}^{t}},\psi\rra_{\Gamma_{\Sigma}^{t}} = \int_{\Gamma_{\Sigma^{\tau,t}}}\psi\,\mathbf{G}\cdot\nu_{\Gamma_{\Sigma^{\tau,t}}}\dif\mathscr{H}^{n-2}.
\end{align*}
Hence, Definition \ref{def:locclosed} gives a generalization of the ultimate boundary integral to non-smooth scenarios. We may then introduce the localization of certain elements of $\mathrm{Lip}(\partial\Omega')'$  as follows:

\begin{definition}[Localization]\label{def:locdivL1}
In the above situation, suppose that $S\in\mathrm{Lip}(\partial\Omega')'$ satisfies $S=\mathrm{div}_{\tau}(\GG)$ for some $\GG\in\lebe^{1}(\partial\Omega';T_{\partial\Omega'})$. Moreover, let $0\leq t<\frac{1}{2}$ be such that $\lla\mathbf{G}\cdot\nu_{\Gamma_{\Sigma}^{t}},\cdot\rra_{\Gamma_{\Sigma}^{t}}$ exists.
We then define 
\begin{align}\label{eq:observe}
\langle S,\psi\rangle_{{\Sigma^{\tau,t}}}  :=  - \int_{\Sigma^{\tau,t}}\GG\cdot\nabla_{\tau}\psi\dif\mathscr{H}^{n-1} -  \lla \mathbf{G}\cdot\nu_{\Gamma_{\Sigma}^{t}},\psi\rra_{\Gamma_{\Sigma}^{t}}
\qquad\mbox{for $\psi\in\mathrm{Lip}(\Sigma^{\tau,t})$}, 
\end{align}
and the \emph{mass} of $S$ on $\Sigma^{\tau,t}$ by 
\begin{align}\label{eq:mass}
[S]_{\Sigma^{\tau,t}} := \langle S,\mathbbm{1}_{\Sigma^{\tau,t}}\rangle_{{\Sigma^{\tau,t}}}:=-\lim_{\delta\searrow 0}\int_{\Sigma}\mathbf{G}\cdot\nabla_{\tau}\psi_{t,\delta,\Sigma}\dif\mathscr{H}^{n-1}. 
\end{align}
\end{definition}
We now confirm that \eqref{eq:observe} gives rise to a well-defined functional indeed. 

\begin{lem}\label{lem:independence}
In the above situation, let $\GG,\widetilde{\GG}\in\lebe^{1}(\partial\Omega';T_{\partial\Omega'})$ be such that 
\begin{align}\label{eq:divequality}
\mathrm{div}_{\tau}(\GG)=\mathrm{div}_{\tau}(\widetilde{\GG})\qquad\text{as an identity in $\mathrm{Lip}(\partial\Omega')'$}.
\end{align}
If $0\leq t<\frac{1}{2}$ is such that both $\lla\mathbf{G}\cdot\nu_{\Gamma_{\Sigma}^{t}},\cdot\rra_{\Gamma_{\Sigma}^{t}}$ and $\lla\widetilde{\mathbf{G}}\cdot\nu_{\Gamma_{\Sigma}^{t}},\cdot\rra_{\Gamma_{\Sigma}^{t}}$ exist, then 
\begin{align}\label{eq:unam}
\langle\mathrm{div}_{\tau}(\GG),\cdot\rangle_{{\Sigma^{\tau,t}}}=\langle\mathrm{div}_{\tau}(\widetilde{\GG}),\cdot\rangle_{{\Sigma^{\tau,t}}}\qquad\text{on}\;\;\mathrm{Lip}(\Sigma^{\tau,t})
\end{align}
and, in particular, 
\begin{align}\label{eq:massesequal}
[\mathrm{div}_{\tau}(\GG)]_{\Sigma^{\tau,t}}=[\mathrm{div}_{\tau}(\widetilde{\GG})]_{\Sigma^{\tau,t}}. 
\end{align}
\end{lem}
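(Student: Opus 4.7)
Set $H:=\GG-\widetilde{\GG}\in\lebe^{1}(\partial\Omega';T_{\partial\Omega'})$, so by \eqref{eq:divequality} one has $\mathrm{div}_{\tau}(H)=0$ in $\mathrm{Lip}(\partial\Omega')'$, i.e.\ $\int_{\partial\Omega'}H\cdot\nabla_{\tau}\vartheta\,\d\mathscr{H}^{n-1}=0$ for every $\vartheta\in\mathrm{Lip}(\partial\Omega')$. Moreover, the limit \eqref{eq:distlocaliser} is linear in the integrand, so the existence of the pairings for $\GG$ and $\widetilde{\GG}$ immediately implies the existence of $\lla H\cdot\nu_{\Gamma_{\Sigma}^{t}},\cdot\rra_{\Gamma_{\Sigma}^{t}}$, together with the expected linear decomposition. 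The task therefore reduces to showing
$$
-\int_{\Sigma^{\tau,t}}H\cdot\nabla_{\tau}\psi\,\d\mathscr{H}^{n-1}\;-\;\lla H\cdot\nu_{\Gamma_{\Sigma}^{t}},\psi\rra_{\Gamma_{\Sigma}^{t}}\;=\;0
\qquad\text{for all }\psi\in\mathrm{Lip}(\Sigma^{\tau,t}),
$$
from which \eqref{eq:unam} follows at once, and \eqref{eq:massesequal} is the special case $\psi\equiv 1$ (or, alternatively, an immediate consequence of $\int_{\partial\Omega'}H\cdot\nabla_{\tau}\psi_{t,\delta,\Sigma}\,\d\mathscr{H}^{n-1}=0$ for every $\delta$ via \eqref{eq:mass}).

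Given $\psi\in\mathrm{Lip}(\Sigma^{\tau,t})$, I will first extend it to a function $\overline{\psi}\in\mathrm{Lip}(\partial\Omega')$ by the McShane--Kirszbraun theorem (using Lemma \ref{lem:geodesic} to compare the geodesic and ambient metrics). For each sufficiently small $\delta>0$, the product $\vartheta_{\delta}:=\overline{\psi}\,\psi_{t,\delta,\Sigma}$ is Lipschitz on $\partial\Omega'$ (both factors are bounded Lipschitz, and $\psi_{t,\delta,\Sigma}\in\mathrm{Lip}_{\rm c}(\partial\Omega';[0,1])$ by Lemma \ref{lem:Lipbounds}), so $\text{div}_\tau(H)=0$ applied to $\vartheta_{\delta}$ together with the tangential Leibniz rule yields
\begin{equation}\label{eq:twoterms}
0=\int_{\Sigma}\psi_{t,\delta,\Sigma}\,H\cdot\nabla_{\tau}\overline{\psi}\,\d\mathscr{H}^{n-1}+\int_{\Sigma}\overline{\psi}\,H\cdot\nabla_{\tau}\psi_{t,\delta,\Sigma}\,\d\mathscr{H}^{n-1}.
\end{equation}

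The main step is to let $\delta\searrow 0$ in \eqref{eq:twoterms}. Since $|\psi_{t,\delta,\Sigma}|\leq 1$ and $\psi_{t,\delta,\Sigma}\to\mathbbm{1}_{\Sigma^{\tau,t}}$ pointwise on $\Sigma$, while $H\cdot\nabla_{\tau}\overline{\psi}\in\lebe^{1}(\Sigma)$ by $H\in\lebe^{1}$ and $\nabla_{\tau}\overline{\psi}\in\lebe^{\infty}$, the dominated convergence theorem combined with $\overline{\psi}|_{\Sigma^{\tau,t}}=\psi$ identifies the limit of the first integral as $\int_{\Sigma^{\tau,t}}H\cdot\nabla_{\tau}\psi\,\d\mathscr{H}^{n-1}$. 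For the second integral, the hypothesis on the existence of both pairings and the linear decomposition $\lla H\cdot\nu_{\Gamma_{\Sigma}^{t}},\cdot\rra=\lla\GG\cdot\nu_{\Gamma_{\Sigma}^{t}},\cdot\rra-\lla\widetilde{\GG}\cdot\nu_{\Gamma_{\Sigma}^{t}},\cdot\rra$ give convergence of this term to $\lla H\cdot\nu_{\Gamma_{\Sigma}^{t}},\overline{\psi}\rra_{\Gamma_{\Sigma}^{t}}$. Inserting both limits into \eqref{eq:twoterms} yields the displayed identity above and thus \eqref{eq:unam}.

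\textbf{Main obstacle.} The only subtlety is to verify that the pairing $\lla H\cdot\nu_{\Gamma_{\Sigma}^{t}},\overline{\psi}\rra$ is \emph{independent} of the specific McShane extension used, so that appealing to Definition \ref{def:locclosed} (where the argument ranges over $\mathrm{Lip}(\Sigma)$) is consistent. This is resolved by the observation that $\mathrm{supp}(\nabla_{\tau}\psi_{t,\delta,\Sigma})\subset\overline{\Psi_{\Sigma}((t,t+\delta)\times\Gamma_{\Sigma})}\subset\Sigma^{\tau,t}$ for all $0<\delta<\tfrac{1}{2}-t$, so that two extensions of $\psi\in\mathrm{Lip}(\Sigma^{\tau,t})$ to $\mathrm{Lip}(\Sigma)$ (or to $\mathrm{Lip}(\partial\Omega')$) coincide on the support of $\nabla_{\tau}\psi_{t,\delta,\Sigma}$ and yield the same value in \eqref{eq:distlocaliser}. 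Modulo this bookkeeping, the proof is a straightforward ``integration by parts in the limit''.
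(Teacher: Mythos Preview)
Your proof is correct and follows essentially the same approach as the paper: both arguments extend $\psi\in\mathrm{Lip}(\Sigma^{\tau,t})$ to $\overline{\psi}\in\mathrm{Lip}(\partial\Omega')$, use $\psi_{t,\delta,\Sigma}\overline{\psi}$ as a global Lipschitz test function in the hypothesis $\mathrm{div}_{\tau}(\GG)=\mathrm{div}_{\tau}(\widetilde{\GG})$, split via the Leibniz rule, and pass to the limit $\delta\searrow 0$ with dominated convergence on one term and the assumed existence of the boundary pairing on the other. The only cosmetic difference is that you work with the difference $H=\GG-\widetilde{\GG}$ and show the localized functional for $H$ vanishes, whereas the paper keeps $\GG$ and $\widetilde{\GG}$ separate and swaps one for the other midway; the underlying computation is identical.
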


\begin{proof}
Let $\psi\in\mathrm{Lip}({\Sigma^{\tau,t}})$ and denote by  $\overline{\psi}\in\mathrm{Lip}(\partial\Omega')$ an arbitrary Lipschitz extension to $\partial\Omega'$. Then $\psi_{t,\delta,\Sigma}\overline{\psi}\in\mathrm{Lip}(\partial\Omega')$. Directly using Definition \ref{def:locdivL1} and dominated convergence in the first step, we find that 
\begin{align*}
\langle\mathrm{div}_{\tau}(\mathbf{G}),\psi\rangle_{{\Sigma^{\tau,t}}} & \stackrel{\text{Def}}{=} - \lim_{\delta\searrow 0}\Big(\int_{\Sigma^{\tau,t}}\psi_{t,\delta,\Sigma}\GG\cdot\nabla_{\tau}\psi\dif\mathscr{H}^{n-1} +  \int_{\Sigma^{\tau,t}} \psi\,\mathbf{G}\cdot\nabla_{\tau}\psi_{t,\delta,\Sigma}\dif\mathscr{H}^{n-1}\Big) \\ 
& \;= - \lim_{\delta\searrow 0}\int_{\Sigma^{\tau,t}}\mathbf{G}\cdot\nabla_{\tau}(\psi_{t,\delta,\Sigma}\overline{\psi})\dif\mathscr{H}^{n-1} \\ & \; = -\lim_{\delta\searrow 0} \int_{\partial\Omega'}\mathbf{G}\cdot\nabla_{\tau}(\psi_{t,\delta,\Sigma}\overline{\psi})\dif\mathscr{H}^{n-1} \\ 
& \!\!\!\!\!\!\!\stackrel{\eqref{eq:representationLip}, \eqref{eq:divequality}}{=} - \lim_{\delta\searrow 0}\int_{\partial\Omega'}\widetilde{\mathbf{G}}\cdot\nabla_{\tau}(\psi_{t,\delta,\Sigma}\overline{\psi})\dif\mathscr{H}^{n-1} = \langle\mathrm{div}_{\tau}(\widetilde{\mathbf{G}}),\psi\rangle_{{\Sigma^{\tau,t}}}, 
\end{align*}
where the last equality can be seen by reading the first three lines backwards with $\mathbf{G}$ being replaced by $\widetilde{\mathbf{G}}$. This is \eqref{eq:unam}, from where \eqref{eq:massesequal} also follows.
The proof is complete. 
\end{proof}

\begin{rem}\label{rem:independentchoicerem}
In \eqref{eq:unam}, we do not assert that $\lla\GG\cdot\nu_{\Gamma_{\Sigma}^{t}},\cdot\rra_{\Gamma_{\Sigma}^{t}}=\lla\widetilde{\GG}\cdot\nu_{\Gamma_{\Sigma}^{t}},\cdot\rra_{\Gamma_{\Sigma}^{t}}$, which can easily seen 
to be wrong by putting $\widetilde{\mathbf{G}}=\mathbf{G}+\mathbf{g}$, 
where $\mathrm{div}_{\tau}(\mathbf{g})=0$. However, the above proof yields that, if $\mathbf{G},\widetilde{\mathbf{G}}\in\lebe^{1}(\partial\Omega';T_{\partial\Omega'})$ are such that $\mathrm{div}_{\tau}(\mathbf{G})=\mathrm{div}_{\tau}(\widetilde{\mathbf{G}})$ in $\mathrm{Lip}(\partial\Omega')'$ 
and $0\leq t<\frac{1}{2}$, then 
$\lla \mathbf{G}\cdot\nu_{\Gamma_{\Sigma}^{t}},\cdot\rra_{\Gamma_{\Sigma}^{t}}$ exists if and only if $\lla \widetilde{\mathbf{G}}\cdot\nu_{\Gamma_{\Sigma}^{t}},\cdot\rra_{\Gamma_{\Sigma}^{t}}$ exists. 
\end{rem}

\begin{rem}[Masses and Admissibility]\label{rem:massesadmissibility}
In Definition \ref{def:locdivL1}, we have declared the mass of an element $S\in\mathrm{Lip}(\partial\Omega')'$ which satisfies $S=\mathrm{div}_{\tau}(\GG)$ for some $\GG\in\lebe^{1}(\partial\Omega';T_{\partial\Omega'})$
in the sense that  
\begin{align}\label{eq:gutealteZeiten}
\langle S,\psi\rangle = - \langle \mathbf{G},\nabla_{\tau}\psi\rangle= - \int_{\partial\Omega'}\mathbf{G}\cdot\nabla_{\tau}\psi\dif\mathscr{H}^{2}\qquad\,\,\text{for all}\;\psi\in\mathrm{Lip}(\partial\Omega'). 
\end{align}
Now, if $S\in\hold^{2}(\partial\Omega')'$ satisfies, for some $\GG\in\lebe^{1}(\partial\Omega';T_{\partial\Omega'})$, the identity 
\begin{align}\label{eq:idreduced}
\langle S,\psi\rangle = - \int_{\partial\Omega'}\GG\cdot\nabla_{\tau}\psi\dif\mathscr{H}^{2}\qquad\text{for all}\;\psi\in\hold^{2}(\partial\Omega'), 
\end{align}
then $S$ automatically gives rise to an element of $\widetilde{S}\in\mathrm{Lip}(\partial\Omega')'$: Given $\psi\in\mathrm{Lip}(\partial\Omega')$, the  $\hold^{2}$-regularity of $\partial\Omega'$ allows us to choose a sequence $(\psi_{j})\subset\hold^{2}(\partial\Omega')$ such that $\psi_{j}\to\psi$ strongly in $(\hold(\partial\Omega');\|\cdot\|_{\sup})$ and $\nabla_{\tau}\psi_{j}\stackrel{*}{\rightharpoonup}\nabla_{\tau}\psi$ in $\lebe^{\infty}(\partial\Omega';T_{\partial\Omega'})$. We then define 
\begin{align*}
\langle\widetilde{S},\psi\rangle := \lim_{j\to\infty}\langle S,\psi_{j}\rangle \stackrel{\eqref{eq:idreduced}}{=} - \lim_{j\to\infty}\int_{\partial\Omega'}\GG\cdot\nabla_{\tau}\psi_{j}\dif\mathscr{H}^{2} \stackrel{\GG\in\lebe^{1}}{=} - \int_{\partial\Omega'}\GG\cdot\nabla_{\tau}\psi\dif\mathscr{H}^{2}. 
\end{align*}
From here, it follows at once that $\widetilde{S}$ is well-defined, meaning that it does not depend on
the specific choice of sequence $(\psi_{j})$. In the situation of Definition \ref{def:locdivL1}, 
we may also introduce  
its mass on $\Sigma^{\tau,t}$ by 
$[S]_{\Sigma^{\tau,t}}:=[\widetilde{S}]_{\Sigma^{\tau,t}}$ for 
$S\in\hold^{2}(\partial\Omega')'$ with \eqref{eq:idreduced}.
\end{rem}

We now have
\begin{prop}[of Gauss--Green type]\label{prop:StokesPreparatory}
Let $\Omega\subset\R^{n}$ be open and bounded, and let $\Omega'\Subset\Omega$ be open and bounded with $\hold^{2}$-boundary. Moreover, let $\Sigma$ be a $\hold^{2}$-regular Lipschitz boundary manifold relative to $\Omega'$, and let $\mathbf{G}\in\lebe^{1}(\partial\Omega';T_{\partial\Omega'})$.  Then we have: 
\begin{enumerate}
    \item\label{item:welldef1} There exists a subset $\mathscr{I}\subset I:=[0,\frac{1}{2})$ with $\mathscr{L}^{1}(I\setminus\mathscr{I})=0$ such that, for each $t\in\mathscr{I}$, $\lla\mathbf{G}\cdot\nu_{\Gamma_{\Sigma}^{t}},\cdot\rra_{\Gamma_{\Sigma}^{t}}$ exists. In particular, for each $t\in\mathscr{I}$, $\langle\mathrm{div}_{\tau}(\mathbf{G}),\cdot\rangle_{{\Sigma^{\tau,t}}}$ is well-defined as an element of $\mathrm{Lip}(\Sigma^{\tau,t})'$. \item\label{item:GGdist} For $t\in\mathscr{I}$ as in \ref{item:welldef1},  the \emph{Gauss--Green formula} holds{\rm :}
\begin{align}\label{eq:GGdistMAIN}
\lla\mathbf{G}\cdot\nu_{\Gamma_{\Sigma}^{t}},\psi\rra_{\Gamma_{\Sigma}^{t}}=-\langle\mathrm{div}_{\tau}(\GG),\psi\rangle_{{\Sigma^{\tau,t}}} -  \int_{\Sigma^{\tau,t}}\GG\cdot\nabla_{\tau}\psi\dif\mathscr{H}^{n-1}
\end{align}
for all $\psi\in\mathrm{Lip}({\Sigma^{\tau,t}})$. 
\end{enumerate}
\end{prop}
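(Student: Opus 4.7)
Part (b) is a direct reformulation of Definition \ref{def:locdivL1}: rearranging \eqref{eq:observe} immediately yields \eqref{eq:GGdistMAIN}. Hence the entire substance of the proposition lies in part (a), namely the $\mathscr{L}^1$-a.e.\ existence of $\lla \GG\cdot\nu_{\Gamma_{\Sigma}^{t}},\cdot\rra_{\Gamma_{\Sigma}^{t}}$ on all of $\mathrm{Lip}(\Sigma)$, together with continuous dependence.

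The starting point is to apply Proposition \ref{prop:diff} with $\mathbf{v}=\GG$. This furnishes a set $\mathscr{I}\subset I$ with $\mathscr{L}^{1}(I\setminus\mathscr{I})=0$ such that, for every $t\in\mathscr{I}$, the maximal function $\mathcal{M}_{\Sigma,\partial\Omega'}^{\Psi,+}\GG(t)$ is finite and the limit
\begin{align*}
\ell_{t}^{\GG}(\varphi)=\lim_{\delta\searrow 0}\int_{\Sigma^{\tau,t}}\varphi\,\GG\cdot\nabla_{\tau}\psi_{t,\delta,\Sigma}\dif\mathscr{H}^{n-1}
\end{align*}
exists for every $\varphi\in\hold^{1}(\partial\Omega')$, together with the bound $|\ell_{t}^{\GG}(\varphi)|\leq c\,\mathcal{M}_{\Sigma,\partial\Omega'}^{\Psi,+}\GG(t)\,\|\varphi\|_{\hold(\overline{\Sigma})}$. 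Definition \ref{def:locclosed}, however, demands the limit for every \emph{Lipschitz} test map on $\Sigma$, so the main work is to upgrade the admissible test class from $\hold^{1}$ to $\mathrm{Lip}$.

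The key observation enabling this upgrade is that, writing $I_{\delta}(\psi):=\int_{\Sigma^{\tau,t}}\psi\,\GG\cdot\nabla_{\tau}\psi_{t,\delta,\Sigma}\dif\mathscr{H}^{n-1}$, the Lipschitz bound $\|\nabla_{\tau}\psi_{t,\delta,\Sigma}\|_{\lebe^{\infty}}\leq c/\delta$ from Lemma \ref{lem:Lipbounds}, combined with the very definition of $\mathcal{M}_{\Sigma,\partial\Omega'}^{\Psi,+}\GG(t)$, yields the \emph{uniform-in-$\delta$} estimate
\begin{align*}
|I_{\delta}(\psi_{1})-I_{\delta}(\psi_{2})|\leq c\,\mathcal{M}_{\Sigma,\partial\Omega'}^{\Psi,+}\GG(t)\,\|\psi_{1}-\psi_{2}\|_{\lebe^{\infty}(\Sigma)}
\end{align*}
for all $\psi_{1},\psi_{2}\in\hold(\overline{\Sigma})$ and all sufficiently small $\delta>0$; compare \eqref{eq:maxo}. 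Now, given $\psi\in\mathrm{Lip}(\Sigma)$, one extends $\psi$ to a Lipschitz function $\overline{\psi}$ on $\partial\Omega'$ (via McShane in the geodesic metric, equivalent to the Euclidean one by Lemma \ref{lem:geodesic}) and then mollifies in the $\hold^{2}$-charts of $\partial\Omega'$ using a partition of unity, producing a sequence $(\overline{\psi}_{j})\subset\hold^{1}(\partial\Omega')$ with $\overline{\psi}_{j}\to\overline{\psi}$ uniformly. A standard three-$\varepsilon$ argument, comparing $I_{\delta}(\overline{\psi})$ with $I_{\delta}(\overline{\psi}_{j})$ via the uniform estimate and using Proposition \ref{prop:diff} to handle $I_{\delta}(\overline{\psi}_{j})$ for fixed $j$, shows that $(I_{\delta}(\overline{\psi}))_{\delta>0}$ is Cauchy as $\delta\searrow 0$. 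Since $\spt(\nabla_{\tau}\psi_{t,\delta,\Sigma})\subset\Psi_{\Sigma}((t,t+\delta)\times\Gamma_{\Sigma})\subset\Sigma^{\tau,t}$ for small $\delta$, the limit depends only on $\overline{\psi}|_{\Sigma}=\psi$, and we set $\lla\GG\cdot\nu_{\Gamma_{\Sigma}^{t}},\psi\rra_{\Gamma_{\Sigma}^{t}}$ equal to this limit. Passing the uniform bound to the limit gives $|\lla\GG\cdot\nu_{\Gamma_{\Sigma}^{t}},\psi\rra_{\Gamma_{\Sigma}^{t}}|\leq c\,\mathcal{M}_{\Sigma,\partial\Omega'}^{\Psi,+}\GG(t)\,\|\psi\|_{\lebe^{\infty}(\Sigma)}$, and Definition \ref{def:locdivL1} then immediately produces $\langle\mathrm{div}_{\tau}(\GG),\cdot\rangle_{\Sigma^{\tau,t}}\in\mathrm{Lip}(\Sigma^{\tau,t})'$ with dual norm controlled by $\|\GG\|_{\lebe^{1}(\Sigma^{\tau,t})}+c\,\mathcal{M}_{\Sigma,\partial\Omega'}^{\Psi,+}\GG(t)$.

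The main obstacle is the approximation step: passing from a Lipschitz function on the open Lipschitz submanifold $\Sigma$ to a uniformly approximating $\hold^{1}$-sequence on the $\hold^{2}$-manifold $\partial\Omega'$, while preserving the sup-norm control needed for the three-$\varepsilon$ scheme. This is routine given the $\hold^{2}$-regularity of $\partial\Omega'$ and the intrinsic/extrinsic metric comparability of Lemma \ref{lem:geodesic}, but it is the only non-formal step; everything else is bookkeeping around the bound in Proposition \ref{prop:diff}.
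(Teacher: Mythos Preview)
Your argument is correct and rests on the same engine as the paper's, namely Proposition~\ref{prop:diff} applied with $\mathbf{v}=\GG$ together with the uniform-in-$\delta$ bound \eqref{eq:maxo}. The only difference is how you pass from $\hold^{1}$-test functions to Lipschitz ones: the paper simply observes that the \emph{proofs} of Lemma~\ref{lem:bdryintegralcont} and Proposition~\ref{prop:diff} go through verbatim if one replaces $\|\cdot\|_{\hold^{1}}$ by $\|\cdot\|_{\mathrm{Lip}}$ throughout (since neither proof uses more than $\nabla_{\tau}\varphi\in\lebe^{\infty}$), whereas you treat Proposition~\ref{prop:diff} as a black box for $\hold^{1}$-tests and then extend to $\mathrm{Lip}(\Sigma)$ by a density/three-$\varepsilon$ argument using the $\lebe^{\infty}$-stability of $I_{\delta}$. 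Your route is slightly longer but more self-contained and avoids re-inspecting earlier proofs; the paper's is shorter but leaves the verification implicit. Incidentally, your three-$\varepsilon$ scheme is exactly the mechanism the paper deploys later in the proof of Corollary~\ref{cor:contstok} (there for continuous $\mathbf{v}$ rather than Lipschitz $\psi$).
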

\begin{proof}
From the proofs of Lemma \ref{lem:bdryintegralcont} and hence Proposition \ref{prop:diff}, it is clear that both statements hold for Lipschitz functions $\varphi$ (or $\psi=\varphi|_{\Sigma}$, respectively) 
if we systematically replace $\|\cdot\|_{\hold^{1}(\Sigma)}$ by $\|\cdot\|_{\mathrm{Lip}(\Sigma)}$. 
Based on this modification, assertion \ref{item:welldef1} follows from the 
obvious $n$-dimensional variant of Proposition \ref{prop:diff} with $\mathbf{v}=\mathbf{G}$ and Definition \ref{def:locdivL1}. 
For such $t\in\mathscr{I}$, assertion \ref{item:GGdist} then is an immediate 
consequence of Definition \ref{def:locdivL1}. This completes the proof. 
\end{proof}

For our future applications, we explicitly record the following corollary.
\begin{corollary}\label{cor:contstok}
In the situation of {\rm Proposition \ref{prop:StokesPreparatory}}, suppose that $\GG\in\lebe^{1}(\partial\Omega';T_{\partial\Omega'})$ has an $\lebe_{\locc}^{1}$-representative which is continuous in a relatively open neighborhood of $\Gamma_{\Sigma}$ in $\partial\Omega'$. Then there exists $0<t_{0}<\frac{1}{4}$ such that 
conclusions \ref{item:welldef1}--\ref{item:GGdist} from {\rm Proposition \ref{prop:StokesPreparatory}} 
hold on the \emph{entire} interval $\mathscr{I}':=[0,t_{0})$.   
\end{corollary}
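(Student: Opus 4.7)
The plan is to reduce the continuous case to the $\hold^1$ situation already handled in Lemma \ref{lem:bdryintegralcont} by a uniform approximation argument that crucially produces an error estimate independent of the layer parameter $\delta$. First, since $\Gamma_\Sigma$ is compact and $\Psi_\Sigma(0,\cdot)=\mathrm{id}_{\Gamma_\Sigma}$, one chooses $t_0\in(0,\tfrac14)$ so small that the compact set $\mathcal{K}:=\overline{\Psi_\Sigma([0,t_0]\times\Gamma_\Sigma)}$ lies inside the relatively open neighborhood of $\Gamma_\Sigma$ in $\partial\Omega'$ on which $\GG$ admits a continuous representative; in particular $\GG$ is bounded and uniformly continuous on $\mathcal{K}$.

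Fix $\psi\in\mathrm{Lip}(\Sigma)$ and $t\in\mathscr{I}':=[0,t_0)$, and write
\begin{align*}
F_\delta^{\mathbf{H}}(t):=\int_\Sigma\psi\,\mathbf{H}\cdot\nabla_\tau\psi_{t,\delta,\Sigma}\dif\mathscr{H}^{n-1}\qquad\text{for $\mathbf{H}\in\lebe^1(\partial\Omega';T_{\partial\Omega'})$ and $0<\delta<t_0-t$.}
\end{align*}
Using a partition of unity on the $\hold^2$-manifold $\partial\Omega'$ together with standard mollification carried out in local coordinate charts and projected onto the tangent bundle, one produces a sequence $(\GG_\varepsilon)\subset\hold^1(\partial\Omega';T_{\partial\Omega'})$ with $\GG_\varepsilon\to\GG$ uniformly on $\mathcal{K}$. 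The decisive $\delta$-uniform bound is
\begin{align*}
|F_\delta^{\GG}(t)-F_\delta^{\GG_\varepsilon}(t)|\leq \|\psi\|_{\lebe^\infty(\Sigma)}\,\|\nabla_\tau\psi_{t,\delta,\Sigma}\|_{\lebe^\infty(\Sigma)}\,\mathscr{H}^{n-1}(\Psi_\Sigma((t,t+\delta)\times\Gamma_\Sigma))\,\|\GG-\GG_\varepsilon\|_{\lebe^\infty(\mathcal{K})}.
\end{align*}
Applying $\|\nabla_\tau\psi_{t,\delta,\Sigma}\|_{\lebe^\infty}\leq c/\delta$ from Lemma \ref{lem:Lipbounds} and the layer estimate $\mathscr{H}^{n-1}(\Psi_\Sigma((t,t+\delta)\times\Gamma_\Sigma))\leq c'\delta$ (immediate from the bi-Lipschitz character of $\Psi_\Sigma$ on $[-t_0,t_0]\times\Gamma_\Sigma$; compare \eqref{eq:steinalgen}), the $\delta$-factors cancel and the right-hand side becomes
\begin{align*}
|F_\delta^{\GG}(t)-F_\delta^{\GG_\varepsilon}(t)|\leq C\|\psi\|_{\lebe^\infty(\Sigma)}\|\GG-\GG_\varepsilon\|_{\lebe^\infty(\mathcal{K})},
\end{align*}
with $C$ independent of both $\delta\in(0,t_0-t)$ and $t\in\mathscr{I}'$.

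Since $\GG_\varepsilon\in\hold^1(\partial\Omega';T_{\partial\Omega'})$, Lemma \ref{lem:bdryintegralcont} guarantees that $\lim_{\delta\searrow 0}F_\delta^{\GG_\varepsilon}(t)$ exists for every $t\in[0,\tfrac12)$. Combined with the $\delta$-uniform error estimate, this forces $\{F_\delta^{\GG}(t)\}_{\delta\searrow 0}$ to be a Cauchy net in $\R$ and hence convergent, and letting $\varepsilon\to 0$ shows that the limit is independent of the approximating sequence. Therefore $\lla\GG\cdot\nu_{\Gamma_\Sigma^t},\psi\rra_{\Gamma_\Sigma^t}$ exists for every $t\in\mathscr{I}'$ and every $\psi\in\mathrm{Lip}(\Sigma)$, which is assertion (a) of Proposition \ref{prop:StokesPreparatory} on the \emph{entire} interval $\mathscr{I}'$; assertion (b) then follows from Definition \ref{def:locdivL1} exactly as in the proof of Proposition \ref{prop:StokesPreparatory}\ref{item:GGdist}.

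The main technical point is the simultaneous control required of the approximating sequence: each $\GG_\varepsilon$ must lie in $\hold^1(\partial\Omega';T_{\partial\Omega'})$ (so that Lemma \ref{lem:bdryintegralcont} applies) \emph{and} approach $\GG$ uniformly on $\mathcal{K}$. This is precisely where the $\hold^2$-regularity of $\partial\Omega'$ is used, since the tangent bundle is then of class $\hold^1$ and mollification in local coordinates can be pushed forward to a tangential field of class $\hold^1$; uniform convergence on $\mathcal{K}$ is then the classical statement that mollifiers of continuous functions converge uniformly on compact subsets of their continuity set. Once this approximation is in place, the argument is essentially a cancellation between $\|\nabla_\tau\psi_{t,\delta,\Sigma}\|_{\lebe^\infty}\sim\delta^{-1}$ and $\mathscr{H}^{n-1}$ of the support layer $\sim\delta$, promoting the $\mathscr{L}^1$-a.e. conclusion of Proposition \ref{prop:StokesPreparatory} to an everywhere statement near $t=0$.
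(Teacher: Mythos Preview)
Your proof is correct and follows essentially the same approach as the paper's: both reduce to the $\hold^{1}$-case of Lemma \ref{lem:bdryintegralcont} via uniform approximation by $\hold^{1}$ tangential fields, controlling the error through the $\delta$-uniform bound $\int_{\Sigma^{\tau,t}}|\nabla_{\tau}\psi_{t,\delta,\Sigma}|\dif\mathscr{H}^{n-1}\leq c$ (which you obtain as the product of $\|\nabla_{\tau}\psi_{t,\delta,\Sigma}\|_{\lebe^{\infty}}\sim\delta^{-1}$ and the layer measure $\sim\delta$, and the paper records as \eqref{eq:fixyou}). The only cosmetic difference is that the paper first multiplies $\GG$ by a Lipschitz cut-off to reduce to a \emph{globally} continuous tangential field and then invokes Proposition \ref{prop:smoothapp} for the approximation, whereas you work directly on the compact set $\mathcal{K}$ and construct the approximants by chart-wise mollification with tangential projection; both routes yield the same $\delta$-independent error estimate.
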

\begin{proof}
As we noted in the preceding proof, Lemma \ref{lem:bdryintegralcont} holds
for Lipschitz functions $\varphi$ (or $\psi=\varphi|_{\Sigma}$, respectively). However, the statement still requires the underlying vector field $\mathbf{v}$ to be of class $\hold^{1}$. We now prove that \eqref{eq:lim0a} holds
for all $\mathbf{v}\in\hold(\partial\Omega';T_{\partial\Omega'})$ 
and all $\varphi\in\mathrm{Lip}(\partial\Omega')$; yet, note that 
estimate \eqref{eq:lim00a} does not extend to merely continuous fields $\mathbf{v}$. Once this is achieved, we multiply $\mathbf{G}$ as in the present corollary with a Lipschitz localizer (which in turn determines $t_{0}$) so that $\mathbf{G}$ can indeed be assumed to be globally continuous on $\partial\Omega'$. We then conclude by setting $\mathbf{v}=\mathbf{G}$.

Hence, let $\mathbf{v}\in\hold(\partial\Omega';T_{\partial\Omega'})$ and $\varphi\in\mathrm{Lip}(\partial\Omega')$. Let $\varepsilon>0$ be arbitrary. By the obvious $n$-dimensional variant of Lemma \ref{lem:Lipbounds} and  \eqref{eq:steinalgen}, there exists a constant $c>0$ such that 
\begin{align}\label{eq:fixyou}
    \sup_{0<\delta<\frac{1}{4}}\int_{\Sigma^{\tau,t}}|\nabla_{\tau}\psi_{t,\delta,\Sigma}|\dif\mathscr{H}^{2} \leq c.
    \end{align}
By density (see Proposition \ref{prop:smoothapp} in Appendix \hyperref[sec:AppendixA2]{A.2}), 
we find $\mathbf{v}_{\varepsilon}\in\hold^{1}(\partial\Omega';T_{\partial\Omega'})$ 
such that $\|\mathbf{v}-\mathbf{v}_{\varepsilon}\|_{\lebe^{\infty}(\partial\Omega')}<\varepsilon$. 
Then 
\begin{align*}
 \mathrm{I} & := \left\vert  \int_{\Gamma_{\Sigma}^{t}}\varphi\mathbf{v}\cdot\nu_{\Gamma_{\Sigma}^{t}}\dif\mathscr{H}^{1} - \int_{\Sigma^{\tau,t}}\varphi\mathbf{v}\cdot\nabla_{\tau}\psi_{t,\delta,\Sigma}\dif\mathscr{H}^{2}\right\vert \\ 
 &  \leq  \int_{\Gamma_{\Sigma}^{t}}|\varphi|\,|\mathbf{v}-\mathbf{v}_{\varepsilon}| \dif\mathscr{H}^{1} + \int_{\Sigma^{\tau,t}}|\varphi|\,|\mathbf{v}-\mathbf{v}_{\varepsilon}|\,|\nabla_{\tau}\psi_{t,\delta,\Sigma}|\dif\mathscr{H}^{2} \\ 
 &\quad\; + \left\vert  \int_{\Gamma_{\Sigma}^{t}}\varphi\mathbf{v}_{\varepsilon}\cdot\nu_{\Gamma_{\Sigma}^{t}}\dif\mathscr{H}^{1} - \int_{\Sigma^{\tau,t}}\varphi\mathbf{v}_{\varepsilon}\cdot\nabla_{\tau}\psi_{t,\delta,\Sigma}\dif\mathscr{H}^{2}\right\vert \\ 
    & \leq \varepsilon\,\|\varphi\|_{\lebe^{\infty}(\partial\Omega')}
       \big(\mathscr{H}^{1}(\Gamma_{\Sigma}^{t}) + c\big) 
       + \left\vert  \int_{\Gamma_{\Sigma}^{t}}\varphi\mathbf{v}_{\varepsilon}\cdot\nu_{\Gamma_{\Sigma}^{t}}\dif\mathscr{H}^{1} - \int_{\Sigma^{\tau,t}}\varphi\mathbf{v}_{\varepsilon}\cdot\nabla_{\tau}\psi_{t,\delta,\Sigma}\dif\mathscr{H}^{2}\right\vert. 
    \end{align*}
    We then use the obvious $n$-dimensional variant of Lemma \ref{lem:bdryintegralcont} with Lipschitz functions $\varphi$ to send $\delta\searrow 0$ in the overall inequality, whereby the ultimate term vanishes. By arbitrariness of $\varepsilon>0$, we may send $\varepsilon\searrow 0$ to deduce that $\mathrm{I}=0$. This completes the proof.  
\end{proof}

\subsection{The boundary pairing $\mathcal{T}$ and tangentiality}\label{sec:bdrypairing}
For the general Stokes theorem, Theorem \ref{thm:StokesDist}, 
it is useful to give an intrinsic pairing on the boundaries of subsets of $\Omega$. 
This allows us to clarify in which sense the distributions $\langle(\FF\times\nu)_{\Omega'}^{\mathrm{int}},\cdot\rangle_{\partial\Omega'}$ and $\langle(\FF\times\nu)_{\Omega'}^{\mathrm{ext}},\cdot\rangle_{\partial\Omega'}$ are tangential to $\partial\Omega'$; see Corollary \ref{cor:tangentiality}. We begin with 

\begin{lem}[Boundary Pairing]\label{lem:bdrypairingSec7}
Let $\Omega\subset\R^{3}$ be open and bounded, and let $\Omega'\Subset\Omega$ be open with $\hold^{2}$-boundary. Moreover, let $1\leq p<\infty$ and $\FF\in\mathscr{CM}^{p}(\Omega)$. Define
\begin{align}\label{eq:bdrypairing1A}
    \mathcal{T}_{(\FF\times\nu)_{\Omega'}^{\mathrm{int}}}(\bphi) := \langle(\FF\times\nu)_{\Omega'}^{\mathrm{int}},\overline{\bphi}\rangle_{\partial \Omega'}\qquad \mbox{for $\bphi\in\mathrm{Lip}(\partial\Omega';\R^{3})$},  
    \end{align}
    where $\overline{\bphi}\in\mathrm{Lip}_{\rm c}(\Omega;\R^{3})$ is  arbitrary with $\overline{\bphi}|_{\partial\Omega'}=\bphi$ on $\partial \Omega'$. Then $  \mathcal{T}_{(\FF\times\nu)_{\Omega'}^{\mathrm{int}}}$ is a \emph{well-defined bounded linear operator} from $\mathrm{Lip}(\partial\Omega';\R^{3})$ to $\R$. In particular, it is independent of the specific choice of $\overline{\bphi}$ with the above properties. The same holds true  with the natural modifications  for $\mathcal{T}_{(\FF\times\nu)_{\Omega'}^{\mathrm{ext}}}$.
\end{lem}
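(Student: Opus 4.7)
My plan is to first observe that the formula \eqref{eq:vectorialtrace}, $\int_{\Omega'}\overline{\bphi}\cdot\dif\,(\curl\FF) - \int_{\Omega'}\FF\cdot\curl\overline{\bphi}\,\dif x$, is well-defined for \emph{any} $\overline{\bphi}\in\mathrm{Lip}_{\rm c}(\Omega;\R^3)$: $\curl\FF\in\mathrm{RM}_{\mathrm{fin}}(\Omega;\R^3)$ pairs with the bounded function $\overline{\bphi}$, while $\FF\in\lebe^p$ pairs with $\curl\overline{\bphi}\in\lebe^\infty\subset\lebe^{p'}$. The definition thus trivially extends from $\hold_{\rm c}^\infty(\Omega;\R^3)$ to $\mathrm{Lip}_{\rm c}(\Omega;\R^3)$, and linearity in $\overline{\bphi}$ is obvious. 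Well-definedness (i.e.\ independence of the chosen extension) then reduces, by linearity, to the following \emph{localization identity}:
\begin{equation*}
\int_{\Omega'}\psi\cdot\dif\,(\curl\FF) - \int_{\Omega'}\FF\cdot\curl\psi\,\dif x = 0 \quad\text{for every}\;\psi\in\mathrm{Lip}_{\rm c}(\Omega;\R^3)\;\text{with}\;\psi|_{\partial\Omega'}=0.
\end{equation*}

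I would establish this identity in two steps. In the first step, assume $\psi\in\hold_{\rm c}^\infty(\Omega';\R^3)$. Choosing a smooth approximating sequence $(\FF_j)\subset\mathscr{CM}^p(\Omega)\cap\hold^\infty(\Omega;\R^3)$ as in Lemma \ref{lem:smoothapprox}\ref{item:SMAP1}, the smoothness of $\FF_j,\psi$ together with $\psi|_{\partial\Omega'}=0$ permit the pointwise identity $\div(\FF_j\times\psi)=\psi\cdot\curl\FF_j-\FF_j\cdot\curl\psi$ and the classical divergence theorem on $\Omega'$ to yield $\int_{\Omega'}\psi\cdot\curl\FF_j\,\dif x=\int_{\Omega'}\FF_j\cdot\curl\psi\,\dif x$. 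Letting $j\to\infty$, the right-hand side converges to $\int_{\Omega'}\FF\cdot\curl\psi\,\dif x$ by the $\lebe^p$-strong convergence of $\FF_j$ and the boundedness of $\curl\psi$, while the left-hand side converges to $\int_{\Omega'}\psi\cdot\dif\,(\curl\FF)$ by the weak*-convergence $(\curl\FF_j)\mathscr{L}^3\stackrel{*}{\rightharpoonup}\curl\FF$ combined with $\psi\in\hold_{\rm c}(\Omega)$.

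In the second step, for a general Lipschitz $\psi$ vanishing on $\partial\Omega'$, I would approximate $\psi|_{\overline{\Omega'}}$ by $(\psi_j)\subset\hold_{\rm c}^\infty(\Omega';\R^3)$ satisfying $\psi_j\to\psi$ uniformly on $\overline{\Omega'}$, $\|\psi_j\|_{\lebe^\infty}+\|\nabla\psi_j\|_{\lebe^\infty}$ bounded uniformly in $j$, and $\nabla\psi_j\to\nabla\psi$ $\mathscr{L}^3$-a.e.\ in $\Omega'$ (hence strongly in $\lebe^q$ for every $q<\infty$). The construction is the standard distance-function cutoff $\eta_\delta:=\min\{1,d(\cdot,\partial\Omega')/\delta\}$ applied to $\psi$, followed by mollification at scale $\varepsilon\ll\delta$ and a diagonal selection. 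The error term coming from $\psi\nabla\eta_\delta$ is controlled by the linear decay $|\psi(x)|\leq\mathrm{Lip}(\psi)\,d(x,\partial\Omega')$, which gives $\|\psi\nabla\eta_\delta\|_{\lebe^q(\Omega')}\lesssim\mathrm{Lip}(\psi)\,\mathscr{L}^3(\{d<\delta\})^{1/q}\to 0$. Passing to the limit in the first-step identity for $\psi_j$, using dominated convergence for the $\curl\FF$-integral and the $\lebe^p$--$\lebe^{p'}$ pairing (or an a.e.-plus-uniform-bound argument when $p=1$) for the $\FF$-integral, yields the localization identity for $\psi$.

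Boundedness of $\mathcal{T}_{(\FF\times\nu)_{\Omega'}^{\mathrm{int}}}$ on $\mathrm{Lip}(\partial\Omega';\R^3)$ then follows from estimate \eqref{eq:pope}: applying Lemma \ref{lem:GoodLip}\ref{item:Lipextend2} componentwise (together with a cutoff supported inside $\Omega$) produces an extension $\overline{\bphi}\in\mathrm{Lip}_{\rm c}(\Omega;\R^3)$ of $\bphi$ with $\|\overline{\bphi}\|_{\lebe^\infty(\Omega)}+\|\nabla\overline{\bphi}\|_{\lebe^\infty(\Omega)}\leq c\,\|\bphi\|_{\mathrm{Lip}(\partial\Omega';\R^3)}$, whence $|\mathcal{T}_{(\FF\times\nu)_{\Omega'}^{\mathrm{int}}}(\bphi)|\leq c\|\bphi\|_{\mathrm{Lip}(\partial\Omega')}(|\curl\FF|(\Omega)+\|\FF\|_{\lebe^p(\Omega)})$. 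The main obstacle I foresee lies in the second step: one must simultaneously keep the approximants $\psi_j$ compactly supported in $\Omega'$ (to invoke the first step) and secure convergence strong enough to pair with both $\curl\FF\in\mathrm{RM}_{\mathrm{fin}}$ and $\FF\in\lebe^p$ uniformly in $j$. The Lipschitz regularity of $\partial\Omega'$ (freely provided by the $\hold^2$-assumption) is precisely what yields the decay $|\psi|\leq\mathrm{Lip}(\psi)\,d$ that tames the cutoff error.
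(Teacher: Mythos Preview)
Your proposal is correct and follows essentially the same route as the paper: reduce well-definedness to showing that the pairing vanishes on $\bm{\psi}\in\mathrm{Lip}_{\rm c}(\Omega;\R^3)$ with $\bm{\psi}|_{\partial\Omega'}=0$, then establish this by a double approximation (smooth $\bm{\psi}_j$ compactly supported in $\Omega'$, smooth $\FF_k$ approximating $\FF$) combined with the classical integration-by-parts formula for the curl.

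The only noteworthy differences are technical. First, the paper runs the two limits in the opposite order (approximate $\bm{\psi}$ first, then $\FF$). Second, and more substantively, for the approximation of $\bm{\psi}$ the paper simply invokes the weak*-density of $\hold_{\rm c}^{\infty}(\Omega';\R^3)$ in $\mathrm{Lip}_{0}(\Omega';\R^3)$, obtaining $\bm{\psi}_j\to\bm{\psi}$ uniformly and $\nabla\bm{\psi}_j\stackrel{*}{\rightharpoonup}\nabla\bm{\psi}$ in $\lebe^{\infty}$; the latter mode of convergence pairs directly against $\FF\in\lebe^{p}\subset\lebe^{1}$ without any case distinction in $p$. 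Your explicit cutoff-and-mollify construction with a.e.\ gradient convergence plus a uniform bound is a perfectly valid substitute (and your decay estimate $|\psi|\leq\mathrm{Lip}(\psi)\,d(\cdot,\partial\Omega')$ is exactly what controls the cutoff error), but it is slightly more laborious than necessary. One small slip: you refer to the sequence in $\mathscr{CM}^{p}\cap\hold^{\infty}$, whereas the weak*-convergence of curls you then invoke is stated in Lemma~\ref{lem:smoothapprox}\ref{item:SMAP1} for the $\hold_{\rm c}^{\infty}$-sequence $(\FF_j)$; either sequence works for your purposes, but the latter is the cleaner choice.
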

\begin{proof}
Let $\overline{\bphi}_{1},\overline{\bphi}_{2}\in\mathrm{Lip}_{\rm c}(\Omega;\R^{3})$ be such that $\overline{\bphi}_{1}=\overline{\bphi}_{2}$ on $\partial\Omega'$. Then $\bm{\psi}:=(\overline{\bphi}_{1}-\overline{\bphi}_{2})|_{\Omega'}\in\mathrm{Lip}_{0}(\Omega';\R^{3})$. Since $\hold_{\rm c}^{\infty}(\Omega';\R^{3})$ is weak*-dense in $\mathrm{Lip}_{0}(\Omega';\R^{3})$, we find a sequence $(\bm{\psi}_{j})\subset\hold_{\rm c}^{\infty}(\Omega';\R^{3})$ such that $\bm{\psi}_{j}\to\bm{\psi}$ strongly in $\hold_{0}(\Omega';\R^{3})$ and $\nabla\bm{\psi}_{j}\stackrel{*}{\rightharpoonup}\nabla\bm{\psi}$ in $\lebe^{\infty}(\Omega';\R^{3\times 3})$, respectively. By Lemma \ref{lem:smoothapprox}\ref{item:SMAP1}, we may  choose a sequence $(\FF_{k})\subset\hold_{\rm c}^{\infty}(\Omega;\R^{3})$ such that $\FF_{k}\to\FF$ strongly in $\lebe^{1}(\omega;\R^{3})$ and $(\curl\FF_{k})\mathscr{L}^{3}\stackrel{*}{\rightharpoonup}\curl\FF$ in $\mathrm{RM}_{\mathrm{fin}}(\omega;\R^{3})$ for any open $\omega\Subset\Omega$. 
We then obtain  
\begin{align*}
\langle(\FF\times\nu)_{\Omega'}^{\mathrm{int}},{\bm{\psi}}\rangle_{\partial\Omega'} & \stackrel{\eqref{eq:vectorialtrace}}{=} - \int_{\Omega'}\FF\cdot\curl\bm{\psi}\dif x + \int_{\Omega'}\bm{\psi}\cdot\dif\,(\curl\FF) \\ 
& \;= \lim_{j\to\infty}\Big(- \int_{\Omega'}\FF\cdot\curl\bm{\psi}_{j}\dif x + \int_{\Omega'}\bm{\psi}_{j}\cdot\dif\,(\curl\FF)\Big) \\ 
& \;= \lim_{j\to\infty}\lim_{k\to\infty}\Big(- \int_{\Omega'}\FF_{k}\cdot\curl\bm{\psi}_{j}\dif x + \int_{\Omega'}\bm{\psi}_{j}\cdot\dif\,(\curl\FF_{k})\Big) \\ 
& = \lim_{j\to\infty}\lim_{k\to\infty} \int_{\partial\Omega'}(\FF_{k}\times\nu_{\partial\Omega'})\cdot\bm{\psi}_{j}\dif x = 0, 
\end{align*}
where we have used the classical integration-by-parts formula for curl in the penultimate step; see formula \eqref{eq:calculus7} in Appendix \hyperref[sec:AppendixD]{D}. From here, the claim is immediate. 
\end{proof}

\begin{rem}\label{rem:onesided}
Note that the right-hand side of \eqref{eq:bdrypairing1A}, in turn defined via \eqref{eq:vectorialtrace}, only incorporates the values of $\overline{\bphi}\in\mathrm{Lip}_{\rm c}(\Omega;\R^{3})$ on $\Omega'$. Since any $\bphi\in\mathrm{Lip}(\Omega';\R^{3})=\mathrm{Lip}(\overline{\Omega'};\R^{3})$ can be extended to an element of $\mathrm{Lip}_{\rm c}(\Omega;\R^{3})$, we have 
\begin{align}\label{eq:tabealove}
    \mathcal{T}_{(\FF\times\nu)_{\Omega'}^{\mathrm{int}}}(\bphi) = \langle(\FF\times\nu)_{\Omega'}^{\mathrm{int}},\overline{\bphi}\rangle_{\partial \Omega'} = \int_{\Omega'}\overline{\bphi}\cdot\dif\,(\curl\FF) - \int_{\Omega'}\FF\cdot\curl\overline{\bphi}\dif x
\end{align}
for any $\bphi\in\mathrm{Lip}(\partial\Omega';\R^{3})$, where $\overline{\bphi}\in\mathrm{Lip}(\Omega';\R^{3})$ with $\overline{\bphi}|_{\partial\Omega}=\bphi$ is arbitrary. 
\end{rem}

In an intermediate step, we give an explicit description of the distributional traces of $\mathscr{CM}^{p}$-fields in terms of integrals. To this end, it is useful to work with the particular collar map $\Phi_{\partial\Omega'}\colon (-1,1)\times\partial\Omega'\to\mathcal{O}\subset\Omega$ described in Remark \ref{rem:aeintro}; here, $\partial\Omega'$ is as in Lemma \ref{lem:bdrypairingSec7}. We then define
\begin{align}\label{eq:solidheight}
\psi_{\varepsilon,\Omega'}(x):=\begin{cases}
    0&\;\text{if}\;x\in\Omega\setminus\Omega', \\ 
    \frac{s}{\varepsilon} &\;\text{if}\;x\in\Phi_{\partial\Omega'}(\{s\}\times\partial\Omega')\;\text{for}\;0<s<\varepsilon, \\ 
    1 &\;\text{if}\;x\in\Omega'\setminus\Phi_{\partial\Omega'}((0,\varepsilon)\times\partial\Omega'),   
\end{cases}
\end{align}
and these height functions are the solid substitutes of those on manifolds from \eqref{eq:psiddefmain}. 

\begin{lem}[Representation of Tangential Traces]\label{lem:reptanglimit}
Let $\Omega\subset\R^{3}$ be open and bounded, and let $\Omega'\Subset\Omega$ be open with $\hold^{2}$-boundary.
Then, for any $\FF\in\mathscr{CM}^{p}(\Omega)$ with $1\leq p <\infty$ and any $\bm{\psi}\in\mathrm{Lip}(\Omega';\R^{3})$, 
\begin{align}\label{eq:distreppo1}
\langle(\FF\times\nu)_{\Omega'}^{\mathrm{int}},\bm{\psi}\rangle_{\partial\Omega'} = \lim_{\varepsilon\searrow 0}\int_{\Phi_{\partial\Omega'}((0,\varepsilon)\times\partial\Omega')} (\FF\times\nabla \psi_{\varepsilon,\Omega'})\cdot\bm{\psi}\dif x, 
\end{align}
where  $\psi_{\varepsilon,\Omega'}$ are the height functions from \eqref{eq:solidheight}.
Thus, for all $\bphi\in\mathrm{Lip}(\partial\Omega';\R^{3})$ 
and any $\overline{\bphi}\in\mathrm{Lip}(\Omega';\R^{3})$ with $\overline{\bphi}|_{\partial\Omega'}=\bphi$,
\begin{align}\label{eq:distreppo2}
\mathcal{T}_{(\FF\times\nu)_{\Omega'}^{\mathrm{int}}}(\bphi)= \lim_{\varepsilon\searrow 0}\int_{\Phi_{\partial\Omega'}((0,\varepsilon)\times\partial\Omega')} (\FF\times\nabla \psi_{\varepsilon,\Omega'})\cdot\overline{\bphi}\dif x. 
\end{align}
The same holds true with the natural modifications for $\mathcal{T}_{(\FF\times\nu)_{\Omega'}^{\mathrm{ext}}}$.
\end{lem}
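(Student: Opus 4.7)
The plan is to apply the Lipschitz extension of the defining identity for the distributional tangential trace provided by Lemma \ref{lem:bdrypairingSec7} and Remark \ref{rem:onesided} to the localized competitor $(1-\psi_{\varepsilon,\Omega'})\bm{\psi}$, and then pass to the limit $\varepsilon\searrow 0$. First I would fix a Lipschitz extension of $\bm{\psi}\in\mathrm{Lip}(\Omega';\R^{3})=\mathrm{Lip}(\overline{\Omega'};\R^{3})$ to $\mathrm{Lip}_{\rm c}(\Omega;\R^{3})$. Since $\psi_{\varepsilon,\Omega'}$ vanishes on $\partial\Omega'$ by the very definition \eqref{eq:solidheight}, the competitor $(1-\psi_{\varepsilon,\Omega'})\bm{\psi}\in\mathrm{Lip}(\overline{\Omega'};\R^{3})$ restricts to $\bm{\psi}|_{\partial\Omega'}$ on $\partial\Omega'$. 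By Remark \ref{rem:onesided} (in particular by formula \eqref{eq:tabealove}, which is independent of the specific Lipschitz extension chosen), this yields
\begin{equation*}
\langle(\FF\times\nu)_{\Omega'}^{\mathrm{int}},\bm{\psi}\rangle_{\partial\Omega'} = \int_{\Omega'}(1-\psi_{\varepsilon,\Omega'})\bm{\psi}\cdot\dif(\curl\FF) - \int_{\Omega'}\FF\cdot\curl\bigl((1-\psi_{\varepsilon,\Omega'})\bm{\psi}\bigr)\dif x
\end{equation*}
for every sufficiently small $\varepsilon>0$.

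Next I would expand the curl via the Leibniz rule, $\curl((1-\psi_{\varepsilon,\Omega'})\bm{\psi}) = (1-\psi_{\varepsilon,\Omega'})\curl\bm{\psi} - \nabla\psi_{\varepsilon,\Omega'}\times\bm{\psi}$, and employ the scalar triple product identity $\FF\cdot(\nabla\psi_{\varepsilon,\Omega'}\times\bm{\psi}) = (\FF\times\nabla\psi_{\varepsilon,\Omega'})\cdot\bm{\psi}$, so that the right-hand side decomposes as
\begin{align*}
\langle(\FF\times\nu)_{\Omega'}^{\mathrm{int}},\bm{\psi}\rangle_{\partial\Omega'}
&= \int_{\Omega'}(1-\psi_{\varepsilon,\Omega'})\bm{\psi}\cdot\dif(\curl\FF) \\
&\quad - \int_{\Omega'}(1-\psi_{\varepsilon,\Omega'})\FF\cdot\curl\bm{\psi}\dif x + \int_{\Omega'}(\FF\times\nabla\psi_{\varepsilon,\Omega'})\cdot\bm{\psi}\dif x.
\end{align*}

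Now I would pass $\varepsilon\searrow 0$. Since $\Omega'$ is open and $\psi_{\varepsilon,\Omega'}(x)=1$ for all sufficiently small $\varepsilon$ and every fixed $x\in\Omega'$, we have $(1-\psi_{\varepsilon,\Omega'})\to 0$ pointwise on $\Omega'$. As $\bm{\psi}\in\lebe^{\infty}(\Omega';\R^{3})$ and $\curl\FF\in\mathrm{RM}_{\mathrm{fin}}(\Omega;\R^{3})$, the first integral vanishes by the dominated convergence theorem. Similarly, the second integral vanishes by dominated convergence using $\FF\in\lebe^{p}(\Omega';\R^{3})\subset\lebe^{1}(\Omega';\R^{3})$ and $\curl\bm{\psi}\in\lebe^{\infty}(\Omega';\R^{3})$. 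The third integral is supported on $\Phi_{\partial\Omega'}((0,\varepsilon)\times\partial\Omega')$, since $\nabla\psi_{\varepsilon,\Omega'}$ vanishes outside this collar, yielding \eqref{eq:distreppo1}.

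Finally, \eqref{eq:distreppo2} is immediate from \eqref{eq:distreppo1}: applied to $\bm{\psi}=\overline{\bphi}$, the left-hand side is $\langle(\FF\times\nu)_{\Omega'}^{\mathrm{int}},\overline{\bphi}\rangle_{\partial\Omega'}=\mathcal{T}_{(\FF\times\nu)_{\Omega'}^{\mathrm{int}}}(\bphi)$ by Lemma \ref{lem:bdrypairingSec7}. The exterior case is treated analogously by reversing the role of $\Omega'$ with $\Omega\setminus\overline{\Omega'}$ and working with the obvious exterior variant of $\psi_{\varepsilon,\Omega'}$; no additional ideas are required. The main (very mild) obstacle is simply verifying that the Lipschitz competitor $(1-\psi_{\varepsilon,\Omega'})\bm{\psi}$ is an admissible one-sided extension of $\bm{\psi}|_{\partial\Omega'}$ in the sense of Remark \ref{rem:onesided}, which is transparent from its construction; the blow-up $\|\nabla\psi_{\varepsilon,\Omega'}\|_{\lebe^{\infty}}\lesssim\varepsilon^{-1}$ is harmless because the identity is applied at each fixed $\varepsilon>0$ before passing to the limit.
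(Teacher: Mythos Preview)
Your proof is correct and takes a genuinely different, more direct route than the paper's argument. The paper proceeds by smooth approximation: it picks $(\FF_{j})\subset\hold_{\rm c}^{\infty}(\Omega;\R^{3})$ with $\FF_{j}\to\FF$ in $\lebe^{1}$ and $(\curl\FF_{j})\mathscr{L}^{3}\stackrel{*}{\rightharpoonup}\curl\FF$, applies the coarea formula to $\psi_{\varepsilon,\Omega'}$ to rewrite the collar integral as an average over layers $\partial\Omega'_{s\varepsilon}$, invokes the classical integration-by-parts formula \eqref{eq:calculus7} for the smooth $\FF_{j}$ on each layer, and then passes to the limit $j\to\infty$ (which requires isolating the full-measure set $A_{\varepsilon}$ where $|\curl\FF|$ does not charge the layer) before sending $\varepsilon\searrow 0$.

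Your argument bypasses all of this by leveraging the extension-independence already established in Lemma~\ref{lem:bdrypairingSec7}/Remark~\ref{rem:onesided}: once one knows that \eqref{eq:tabealove} holds for \emph{any} one-sided Lipschitz extension, plugging in $(1-\psi_{\varepsilon,\Omega'})\bm{\psi}$ and expanding via the Leibniz rule immediately isolates the collar term, and dominated convergence handles the rest. This is shorter and avoids the smooth approximation, the coarea formula, and the auxiliary set $A_{\varepsilon}$. The paper's route, on the other hand, is somewhat more self-contained in that it reproves the identity essentially from the smooth case rather than relying on the prior lemma, and its layer-by-layer representation (the intermediate identity expressing the collar integral as $\int_{0}^{1}\int_{\partial\Omega'_{s\varepsilon}}(\FF_{j}\times\nu)\cdot\bm{\psi}\,\dif\mathscr{H}^{2}\dif s$) makes the geometric meaning of the limit more explicit.
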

\begin{proof}
We begin with \eqref{eq:distreppo1} and may conclude as in Lemma \ref{lem:Lipbounds} 
that $\psi_{\varepsilon,\Omega'}$ is Lipschitz.  
Based on Lemma \ref{lem:smoothapprox}\ref{item:SMAP1}, we first choose a sequence $(\FF_{j})\subset\hold_{\rm c}^{\infty}(\Omega;\R^{3})$ such that 
\begin{align}\label{eq:Fjapproxprops}
\FF_{j}\to\FF\;\text{strongly in}\;\lebe^{1}(\Omega;\R^{3}),\qquad (\curl\FF_{j})\mathscr{L}^{3}\stackrel{*}{\rightharpoonup}\curl\FF\;\;\text{in}\;\mathrm{RM}_{\mathrm{fin}}(\Omega;\R^{3}). 
\end{align}
Now let $\bm{\psi}\in\mathrm{Lip}(\Omega';\R^{3})$ and let $\varepsilon>0$ be sufficiently small.  For $s>0$, we abbreviate $\Omega'_{s\varepsilon}:=\Omega'\setminus\Phi_{\partial\Omega'}((0,s\varepsilon]\times\partial\Omega')$ 
and define 
\begin{align*}
A_{\varepsilon}:=\big\{s\in (0,1)\,\colon\;|\curl(\FF)|(\Phi_{\partial\Omega'}(\{s\varepsilon\}\times\partial\Omega'))=0\big\},  
\end{align*}
whereby $\mathscr{L}^{1}((0,1)\setminus A_{\varepsilon})=0$. Setting 
\begin{align*}
    \mathbf{G}_{j}(x) := \begin{cases}\Big(\FF_{j}(x)\times\tfrac{\nabla\psi_{\varepsilon,\Omega'}(x)}{|\nabla\psi_{\varepsilon,\Omega'}(x)|}\Big)\cdot\bm{\psi}(x)&\;\text{if}\;\nabla\psi_{\varepsilon,\Omega'}(x)\neq 0, \\ 
    0&\;\text{otherwise},
    \end{cases}
\end{align*}
we employ the usual coarea formula (see \emph{e.g.}  \cite[\S 3.4.2, Theorem 1]{EvansGariepy}) to obtain 
\begin{align*}
&\int_{\Phi_{\partial\Omega'}((0,\varepsilon)\times\partial\Omega')}(\FF\times\nabla\psi_{\varepsilon,\Omega'})\cdot\bm{\psi}\dif x \\  
&\stackrel{\eqref{eq:Fjapproxprops}}{=}  \lim_{j\to\infty} 
\int_{\Phi_{\partial\Omega'}((0,\varepsilon)\times\partial\Omega')}(\FF_{j}\times\nabla\psi_{\varepsilon,\Omega'})\cdot\bm{\psi}\dif x \\ &   = \lim_{j\to\infty}\int_{\Phi_{\partial\Omega'}((0,\varepsilon)\times\partial\Omega')}\mathbf{G}_{j}|\nabla\psi_{\varepsilon,\Omega'}|\dif x \\ 
& = \lim_{j\to\infty} \int_{0}^{1}\int_{\Phi_{\partial\Omega'}(\{s\varepsilon\}\times\partial\Omega')}\mathbf{G}_{j}\dif\mathscr{H}^{2}\dif s \\ 
& = \lim_{j\to\infty} \int_{0}^{1}\int_{\Phi_{\partial\Omega'}(\{s\varepsilon\}\times\partial\Omega')}(\FF_{j}\times\nu_{\partial\Omega'_{s\varepsilon}})\cdot\bm{\psi}\dif\mathscr{H}^{2}\dif s \\ 
& \!\!\! \stackrel{\eqref{eq:calculus7}}{=}  \lim_{j\to\infty} \int_{(0,1)\cap A_{\varepsilon}}\int_{\Omega'_{s\varepsilon}} (\curl\FF_{j})\cdot\bm{\psi} - \FF_{j}\cdot\curl\bm{\psi}\dif x\dif s =: \mathrm{I}.
\end{align*}
By \eqref{eq:Fjapproxprops}, the inner integral can be bounded, independently of $j$ and $\varepsilon$. 
By the definition of $A_{\varepsilon}$, the dominated convergence theorem thus yields 
\begin{align*}
\mathrm{I} & =  \int_{(0,1)\cap A_{\varepsilon}}\int_{\Omega'_{s\varepsilon}} \bm{\psi}\cdot\dif\,(\curl\FF)\dif s - \int_{(0,1)\cap A_{\varepsilon}}\int_{\Omega'_{s\varepsilon}}\FF\cdot\curl\bm{\psi}\dif x\dif s \\ 
& \!\!\! \stackrel{\varepsilon\searrow 0}{\longrightarrow }  \int_{\Omega'} \bm{\psi}\cdot\dif\,(\curl\FF) - \int_{\Omega'} \FF\cdot\curl\bm{\psi}\dif x  \stackrel{\eqref{eq:vectorialtrace}}{=}  \langle(\FF\times\nu)_{\Omega'}^{\mathrm{int}},\bm{\psi}\rangle_{\partial\Omega'}.
\end{align*}
This completes the proof. 
\end{proof}

We now come to the final main result of this subsection.

\begin{theorem}[Distributional Tangentiality]\label{cor:tangentiality} 
In the situation of {\rm Lemma \ref{lem:bdrypairingSec7}}, we have  
\begin{align}\label{eq:tangentialityDist1}
 \mathcal{T}_{(\FF\times\nu)_{\Omega'}^{\mathrm{int}}}(\bphi)= \mathcal{T}_{(\FF\times\nu)_{\Omega'}^{\mathrm{int}}}(\bphi_{\tau})\qquad\text{for all}\;\bphi\in\mathrm{Lip}(\partial\Omega';\R^{3}),
\end{align}
where $\bphi_{\tau}$ denotes the tangential component of $\bphi$ as usual. In particular, we have 
\begin{align}\label{eq:tangentialityDist2}
 \mathcal{T}_{(\FF\times\nu)_{\Omega'}^{\mathrm{int}}}(\nabla\varphi)= \mathcal{T}_{(\FF\times\nu)_{\Omega'}^{\mathrm{int}}}(\nabla_{\tau}\varphi)\qquad\text{for all}\;\varphi\in\hold^{2}(\partial\Omega').
\end{align}
The same holds true with the natural modifications for $\mathcal{T}_{(\FF\times\nu)_{\Omega'}^{\mathrm{ext}}}$.
\end{theorem}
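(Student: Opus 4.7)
The plan is to establish \eqref{eq:tangentialityDist1} by showing that $\mathcal{T}_{(\FF\times\nu)_{\Omega'}^{\mathrm{int}}}$ annihilates purely normal fields. By linearity and the decomposition $\bphi = \bphi_\tau + (\bphi\cdot\nu_{\partial\Omega'})\nu_{\partial\Omega'}$, noting that $\nu_{\partial\Omega'}$ is Lipschitz because $\partial\Omega'$ is of class $\hold^2$, it suffices to prove that $\mathcal{T}_{(\FF\times\nu)_{\Omega'}^{\mathrm{int}}}(g\nu_{\partial\Omega'}) = 0$ for every $g\in\mathrm{Lip}(\partial\Omega')$. Here I exploit the flexibility granted by Lemma \ref{lem:bdrypairingSec7}: the value of $\mathcal{T}$ does not depend on the chosen Lipschitz extension, so I pick one adapted to the collar geometry. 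Specifically, extend $\nu_{\partial\Omega'}$ to a Lipschitz field $\overline{\nu}$ on a neighborhood $\mathcal{O}$ of $\partial\Omega'$ (possible since $\partial\Omega'\in\hold^2$), let $\overline{g}\in\mathrm{Lip}(\Omega)$ be any Lipschitz extension of $g$, and set $\overline{\bphi}:=\chi\overline{g}\,\overline{\nu}$, where $\chi\in\mathrm{Lip}_{\rm c}(\Omega)$ is a cutoff equal to $1$ on an open neighborhood of $\partial\Omega'$ and supported in $\mathcal{O}$.

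Next, applying Lemma \ref{lem:reptanglimit} yields
\begin{align*}
\mathcal{T}_{(\FF\times\nu)_{\Omega'}^{\mathrm{int}}}(g\nu_{\partial\Omega'}) = \lim_{\varepsilon\searrow 0}\int_{\Phi_{\partial\Omega'}((0,\varepsilon)\times\partial\Omega')}\overline{g}\,\FF\cdot(\nabla\psi_{\varepsilon,\Omega'}\times\overline{\nu})\,\dif x,
\end{align*}
after invoking the triple product identity $(\FF\times\nabla\psi_{\varepsilon,\Omega'})\cdot\overline{\bphi}=\overline{g}\,\FF\cdot(\nabla\psi_{\varepsilon,\Omega'}\times\overline{\nu})$ on the layer, where $\chi\equiv 1$ for all sufficiently small $\varepsilon>0$. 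The pivotal step will be the pointwise estimate $|\nabla\psi_{\varepsilon,\Omega'}(y)\times\overline{\nu}(y)|\le C$ \emph{uniformly} on the shrinking layer. For this, I observe that $\nabla\psi_{\varepsilon,\Omega'}(y)$ is (up to sign) parallel to the unit normal $\nu_{\partial\Omega'_{s}}(y)$ of the level surface $\Phi_{\partial\Omega'}(\{s\}\times\partial\Omega')$ at each $y=\Lambda_s(x)$, with magnitude $|\nabla\psi_{\varepsilon,\Omega'}(y)|\le C/\varepsilon$ inherited from the bi-Lipschitz character of the collar. Remark \ref{rem:aeintro}\ref{item:collario5} together with the Lipschitz continuity of $\overline{\nu}$ and Remark \ref{rem:aeintro}\ref{item:collario4} then forces $|\overline{\nu}(y)-\nu_{\partial\Omega'_{s}}(y)|\le Cs\le C\varepsilon$, whence the angular defect between $\nabla\psi_{\varepsilon,\Omega'}(y)/|\nabla\psi_{\varepsilon,\Omega'}(y)|$ and $\overline{\nu}(y)$ is of order $\varepsilon$; the desired uniform bound follows.

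With this estimate in hand, the argument concludes by dominated convergence:
\begin{align*}
\bigg|\int_{\Phi_{\partial\Omega'}((0,\varepsilon)\times\partial\Omega')}\overline{g}\,\FF\cdot(\nabla\psi_{\varepsilon,\Omega'}\times\overline{\nu})\,\dif x\bigg|\le C\|\overline{g}\|_{\infty}\int_{\Phi_{\partial\Omega'}((0,\varepsilon)\times\partial\Omega')}|\FF|\,\dif x\stackrel{\varepsilon\searrow 0}{\longrightarrow} 0,
\end{align*}
since $\FF\in\lebe^{p}(\Omega;\R^{3})\subset\lebe^{1}(\Omega;\R^{3})$ and the layer shrinks to a set of zero Lebesgue measure. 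Identity \eqref{eq:tangentialityDist2} then follows at once by applying \eqref{eq:tangentialityDist1} to $\bphi=\nabla\varphi|_{\partial\Omega'}\in\mathrm{Lip}(\partial\Omega';\R^{3})$, whose tangential component is precisely $\nabla_{\tau}\varphi$, and the exterior case is proved analogously using the representation from Lemma \ref{lem:reptanglimit} over $\Omega\setminus\overline{\Omega'}$. The main obstacle is the delicate balance in the key pointwise estimate: $|\nabla\psi_{\varepsilon,\Omega'}|$ blows up at precisely the rate $1/\varepsilon$ at which the angular defect $|\nabla\psi_{\varepsilon,\Omega'}/|\nabla\psi_{\varepsilon,\Omega'}|-\overline{\nu}|$ shrinks, so the uniform bound is genuinely tight, and all the quantitative collar properties from Remark \ref{rem:aeintro} have to be exploited simultaneously.
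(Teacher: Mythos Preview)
Your proof is correct and takes a genuinely different, more direct route than the paper's. Both arguments rest on the same geometric fact---that the unit normal $\nu_s$ to the level surface $\partial\Omega'_s$ differs from the extended boundary normal by $O(s)$, which exactly compensates the $O(1/\varepsilon)$ blow-up of $|\nabla\psi_{\varepsilon,\Omega'}|$---but they implement it differently. The paper works with a general test field $\bphi$, constructs an explicit Lipschitz extension $\widetilde{\bphi}$ of $\bphi_\tau$ via the collar projection $T=\Pi_2\circ\Phi_{\partial\Omega'}^{-1}$, and then compares $\bphi$ with $\widetilde{\bphi}$ on each slice; to exploit the orthogonality $(\FF\times\nu_t)\perp\nu_t$ it passes through a smooth approximation $(\FF_j)$ of $\FF$, applies the coarea formula, and estimates $|\bphi_{\tau,t}-\widetilde{\bphi}|\le Ct$ slice by slice. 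Your reduction to purely normal fields $g\,\nu_{\partial\Omega'}$ and the single pointwise bound $|\nabla\psi_{\varepsilon,\Omega'}\times\overline{\nu}|\le C$ bypasses the smooth approximation entirely: the triple-product identity puts the cancellation directly into the cross product, and dominated convergence does the rest. What you gain is economy and transparency; what the paper's argument buys is an explicit extension of $\bphi_\tau$ which, while not needed for the statement itself, makes the comparison with the slice-wise tangential component $\bphi_{\tau,t}$ more visible. Both approaches genuinely require the $\hold^2$-regularity of $\partial\Omega'$ (you for the Lipschitz continuity of $\nu_{\partial\Omega'}$ in the decomposition, the paper implicitly in Remark~\ref{rem:aeintro}\ref{item:collario5}).
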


\begin{proof}
For  \eqref{eq:tangentialityDist1}, we may directly assume that $\bphi\in\mathrm{Lip}_{\rm c}(\Omega;\R^{3})$. Let $0<\varepsilon<\varepsilon_{0}<1$ be sufficiently small. For notational convenience, we  abbreviate $S_{\varepsilon}:=\Phi_{\partial\Omega'}((0,\varepsilon)\times\partial\Omega')$ and, for $0<t<\varepsilon_{0}$, put $\Omega_{t}:=\Omega'\setminus\Phi_{\partial\Omega'}((0,t)\times\partial\Omega')$, so that 
$\partial\Omega_{t}:=\Phi_{\partial\Omega'}(\{t\}\times\partial\Omega')$. 
In what follows, we let $\Pi_{2}\colon (0,1)\times\partial\Omega'\to\partial\Omega'$ be the projection on the second component and define $T:=\Pi_{2}\circ \Phi_{\partial\Omega'}^{-1}$. We define 
\begin{align}\label{eq:vogel}
\widetilde{\bphi}(x):=\rho(x)\bphi_{\tau}(T(x)) =\rho(x)\big(\bphi(T(x))-(\bphi(T(x))\cdot\nu_{\partial\Omega'}(T(x)))\nu_{\partial\Omega'}(T(x)) \big)
\end{align}
for $x\in\Omega'$,
where $\rho\in\hold^{\infty}(\overline{\Omega'};[0,1])$ is such that $\rho=1$ in $S_{\frac{1}{2}}$ and $\rho=0$ in $\Omega'\setminus S_{\frac{3}{4}}$. 

We firstly claim that $\widetilde{\bphi}\colon\Omega'\to\R^{3}$ is Lipschitz. 
To this end, we note that $\Phi_{\partial\Omega'}^{-1}\colon S_{\frac{3}{4}}\to (0,1)\times\partial\Omega'$ 
is Lipschitz and, since $\Pi_{2}$ is the projection onto second component, $\Pi_{2}$ is also Lipschitz.
Thus, $T$ is Lipschitz, whose Lipschitz constant is denoted by $L_{T}$.
Analogously, we denote the Lipschitz constants of $\bphi$ and $\rho$ by $L_{\bphi}$ and $L_{\rho}$. 
Finally, since $\partial\Omega'$ is compact and of class $\hold^{2}$, the inner unit normal field $\nu_{\partial\Omega'}\colon\partial\Omega'\to\mathbb{S}^{2}$ is also Lipschitz, whose Lipschitz constant
is denoted by $L_{\nu}$. Now let $x,y\in\Omega'\setminus S_{\frac{3}{4}}$. We estimate 
\begin{align*}
&|\widetilde{\bphi}(x)-\widetilde{\bphi}(y)| \\
& \leq \rho(x)|\bphi(T(x))-\bphi(T(y))|
+ |\rho(x)-\rho(y)|\,|\bphi(T(y))| \\
&\quad+ \rho(x)|(\bphi(T(x))\cdot\nu_{\partial\Omega'}(T(x)))\nu_{\partial\Omega'}(T(x))- (\bphi(T(y))\cdot\nu_{\partial\Omega'}(T(x)))\nu_{\partial\Omega'}(T(x))|\\ 
&\quad + \rho(x)|(\bphi(T(y))\cdot\nu_{\partial\Omega'}(T(x)))\nu_{\partial\Omega'}(T(x))- (\bphi(T(y))\cdot\nu_{\partial\Omega'}(T(y)))\nu_{\partial\Omega'}(T(x))|\\ 
&\quad+ |\rho(x)-\rho(y)|\,| (\bphi(T(y))\cdot\nu_{\partial\Omega'}(T(y)))\nu_{\partial\Omega'}(T(x))| \\ 
&\quad + \rho(y)|(\bphi(T(y))\cdot\nu_{\partial\Omega'}(T(y)))\nu_{\partial\Omega'}(T(x))- (\bphi(T(y))\cdot\nu_{\partial\Omega'}(T(y)))\nu_{\partial\Omega'}(T(y))| \\ 
&\leq L_{\bphi}L_{T}|x-y| + L_{\rho}\|\bphi\|_{\lebe^{\infty}(\partial\Omega')}|x-y| + L_{\bphi}L_{T}|x-y|+ \|\bphi\|_{\lebe^{\infty}(\partial\Omega')}L_{\nu}L_{T}|x-y| \\ 
& \quad + L_{\rho}\|\bm{\varphi}\|_{\lebe^{\infty}(\partial\Omega')}|x-y|+L_{\nu}\|\bm{\varphi}\|_{\lebe^{\infty}(\partial\Omega')}L_{T}|x-y| \\ 
&=: C(\rho,\bphi,T,\partial\Omega')|x-y|. 
\end{align*}
Whenever $x\in\partial\Omega'$ and $(x_{j})\subset\Omega'$ are such that $x_{j}\to x$, then
\begin{align*}
|\bphi_{\tau}(T(x_{j}))-\bphi_{\tau}(x)|\leq |\bphi(T(x_{j}))-\bphi(x)|\leq L_{\bphi}|T(x_{j})-x| \to 0,\qquad j\to\infty, 
\end{align*}
so that
\begin{align*}
\widetilde{\bphi}(x_{j}) & = \bphi_{\tau}(T(x_{j})) \to \bphi_{\tau}(x),\qquad j\to\infty. 
\end{align*}
It follows that 
\begin{align}\label{eq:ware}
\widetilde{\bphi}(x):=\begin{cases}
    \widetilde{\bphi}(x)\;\text{as defined by \eqref{eq:vogel}}&\;\text{if}\;x\in\Omega',\\ 
    \bphi_{\tau}(x)&\;\text{if}\;x\in\partial\Omega' 
\end{cases}
\end{align}
is a Lipschitz extension of $\bphi_{\tau}\colon\partial\Omega'\to T_{\partial\Omega'}$. In particular, it is admissible in \eqref{eq:distreppo1}. We now  choose a sequence $(\FF_{j})\subset\hold_{\rm c}^{\infty}(\Omega;\R^{3})$ such that $\FF_{j}\to\FF$ strongly in $\lebe^{1}(\Omega';\R^{3})$. In consequence,
\begin{align}\label{eq:gosling}
 \begin{split}
&\int_{S_{\varepsilon}}\bphi\cdot(\FF\times\nabla \psi_{\varepsilon,\Omega'})\dif x\\
& = \int_{S_{\varepsilon}}\widetilde{\bphi}\cdot(\FF\times\nabla \psi_{\varepsilon,\Omega'})\dif x 
+ \lim_{j\to\infty}\int_{S_{\varepsilon}}\bphi\cdot(\FF_{j}\times\nabla\psi_{\varepsilon,\Omega'})-\widetilde{\bphi}\cdot(\FF_{j}\times\nabla\psi_{\varepsilon,\Omega'})\dif x\\
&=: \mathrm{I}_{\varepsilon} + \mathrm{II}_{\varepsilon}. 
    \end{split}
    \end{align}
Since $\varepsilon>0$ is assumed small, we may assume that $\rho=1$ on $S_{\varepsilon}$. For $x\in \partial\Omega_{t}=\Phi_{\partial\Omega'}(\{t\}\times\partial\Omega')$ with $0<t\leq\varepsilon$, we let $\nu_{t}(x)$ be the inner unit normal to $\partial\Omega_{t}$ at $x$ and 
\begin{align*}
& \bphi_{\tau,t}(x):=\bphi(x)-(\bphi(x)\cdot\nu_{t}(x))\nu_{t}(x). 
\end{align*}
Setting $\mathrm{III}(x):=|\bphi_{\tau,t}(x)-\widetilde{\bphi}(x)|$, we estimate
\begin{align*}
\mathrm{III}(x) & = |\bphi(x)-(\bphi(x)\cdot\nu_{t}(x))\nu_{t}(x)- (\bphi(T(x))-(\bphi(T(x))\cdot\nu_{\partial\Omega'}(T(x)))\nu_{\partial\Omega'}(T(x)))| \\ 
& \leq |\bphi(x)-\bphi(T(x))| + |(\bphi(x)\cdot\nu_{t}(x))\nu_{t}(x)-(\bphi(x)\cdot\nu_{\partial\Omega'}(T(x)))\nu_{t}(x)| \\ 
& \quad +|(\bphi(x)\cdot\nu_{\partial\Omega'}(T(x)))\nu_{t}(x)-(\bphi(T(x))\cdot\nu_{\partial\Omega'}(T(x)))\nu_{t}(x)| \\ 
&\quad + |(\bphi(T(x))\cdot\nu_{\partial\Omega'}(T(x)))\nu_{t}(x)- (\bphi(T(x))\cdot\nu_{\partial\Omega'}(T(x)))\nu_{\partial\Omega'}(T(x))| \\ 
& \leq L_{\bphi}|x-T(x)|+ \|\bphi\|_{\lebe^{\infty}(\partial\Omega')}|\nu_{t}(x)-\nu_{\partial\Omega'}(T(x))| +L_{\bphi}|x-T(x)| \\ &\quad  + \|\bphi\|_{\lebe^{\infty}(\partial\Omega')}|\nu_{t}(x)-\nu_{\partial\Omega'}(T(x))|. 
\end{align*}
As we mentioned prior to Lemma \ref{lem:reptanglimit}, we work with collar maps $\Phi_{\partial\Omega'}$ which satisfy the properties listed in Remark \ref{rem:aeintro}. In particular, 
by Remark \ref{rem:aeintro}\ref{item:collario4}--\ref{item:collario5}, we have 
\begin{align}\label{eq:dexterdebra}
\begin{split}
& \sup_{y\in\partial\Omega'}|y-\Phi_{\partial\Omega'}(t,y)|\leq ct\qquad\text{for all}\;0<t<t_{0}, \\ 
& \sup_{y\in\partial\Omega'}|\nu_{\partial\Omega'}(y)-\nu_{t}(\Phi_{\partial\Omega'}(t,y))|\leq Ct\qquad \text{for all}\;0<t<t_{0}, 
\end{split}
\end{align}
where $t_{0}>0$ is suitably small and $c,C>0$ are constants independent of $0<t<t_{0}$. For fixed $0<t<t_{0}$, 
$\Phi_{\partial\Omega'}(t,\cdot)\colon \partial\Omega'\to \partial\Omega_{t}$ is bijective. 
Hence, there exists a unique $y\in\partial\Omega'$ such that $x=\Phi_{\partial\Omega'}(t,y)$ and so $y=\Pi_{2}\circ\Phi_{\partial\Omega'}^{-1}(x)=T(x)$. In consequence, \eqref{eq:dexterdebra} gives us both  
\begin{align*}
|x-T(x)|\leq ct\;\;\;\text{and}\;\;\;
|\nu_{t}(x)-\nu_{\partial\Omega'}(T(x))| \leq Ct. 
\end{align*}
Going back to the estimation of $\mathrm{III}$, we thus find that there exists a constant $C=C(\bphi,T,\Phi_{\partial\Omega'})>0$ such that 
\begin{align}\label{eq:harryscodex}
& \mathrm{III}(x) \leq Ct\qquad\text{for all}\;0<t<\min\{\varepsilon,t_{0}\}. 
\end{align}
By the definition of $\psi_{\varepsilon,\Omega'}$, there exists a constant $C>0$ such that $|\nabla\psi_{\varepsilon,\Omega'}|\leq\frac{C}{\varepsilon}$ for all sufficiently small $\varepsilon>0$; see \eqref{eq:solidheight}.
Now note that, for all $0<t<\varepsilon$, 
\begin{align*}
\int_{\partial\Omega_{t}}\bphi\cdot(\FF_{j}\times\nabla\psi_{\varepsilon,\Omega'})\dif\mathscr{H}^{2} = \int_{\partial\Omega_{t}}{\bphi}_{\tau,t}\cdot(\FF_{j}\times\nu_{\partial\Omega_{t}})|\nabla\psi_{\varepsilon,\Omega'}|\dif\mathscr{H}^{2},
\end{align*}
since  $(\FF_{j}\times\nu_{\partial\Omega_{t}})\bot\,\nu_{\partial\Omega_{t}}$ holds $\mathscr{H}^{2}$-a.e. on $\partial\Omega_{t}$ in the smooth context. 
The usual coarea formula implies that, for all $0<\varepsilon<\min\{t_{0},\frac{1}{4}\}$, 
\begin{align}\label{eq:batista}
\begin{split}
|\mathrm{II}_{\varepsilon}| & \leq \lim_{j\to\infty} \int_{0}^{1}\int_{\Phi_{\partial\Omega'}(\{t\varepsilon\}\times\partial\Omega')\cap\{|\nabla\psi_{\varepsilon,\Omega'}|\neq 0\}} \mathrm{III}(x)\,\left\vert \FF_{j}(x)\times\frac{\nabla\psi_{\varepsilon,\Omega'}(x)}{|\nabla\psi_{\varepsilon,\Omega'}(x)|}
\right\vert\dif\mathscr{H}^{2}(x)\dif t \\ 
& \leq \lim_{j\to\infty} \frac{1}{\varepsilon} \int_{0}^{\varepsilon}\int_{\Phi_{\partial\Omega'}(\{t\}\times\partial\Omega')} \mathrm{III}(x)\,|\FF_{j}(x)|\dif\mathscr{H}^{2}(x)\dif t \\ 
& \!\!\!\!\stackrel{\eqref{eq:harryscodex}}{\leq} C\lim_{j\to\infty}\int_{S_{\varepsilon}}|\FF_{j}|\dif x \leq \int_{S_{\varepsilon}}|\FF|\dif x \stackrel{\varepsilon\searrow 0}{\to} 0. 
\end{split}
\end{align}
We thus conclude by Lemma \ref{lem:reptanglimit} that 
\begin{align*}
\langle(\FF\times\nu)_{\Omega'}^{\mathrm{int}},\bphi\rangle_{\partial\Omega'} & = \lim_{\varepsilon\searrow 0}\int_{\Phi_{\partial\Omega'}((0,\varepsilon)\times\partial\Omega')}(\FF\times\nabla\psi_{\varepsilon,\Omega'})\cdot\bphi\dif x \\ 
& \!\!\!\!\!\!\!\!\!\!\!  \stackrel{\eqref{eq:gosling},\,\eqref{eq:batista}}{=} \lim_{\varepsilon\searrow 0}\int_{\Phi_{\partial\Omega'}((0,\varepsilon)\times\partial\Omega')} (\FF\times\nabla\psi_{\varepsilon,\Omega'})\cdot\widetilde{\bphi}\dif x = \langle(\FF\times\nu)_{\Omega'}^{\mathrm{int}},\widetilde{\bphi}\rangle_{\partial\Omega'}. 
\end{align*}
In view of Lemma \ref{lem:bdrypairingSec7} and \eqref{eq:ware}\emph{ff.}, we conclude the validity of \eqref{eq:tangentialityDist1}. Since \eqref{eq:tangentialityDist2} is a direct consequence of \eqref{eq:tangentialityDist1}, this completes the proof.  
\end{proof}

\subsection{The general Stokes theorem}\label{sec:Stokestheorem}
We now come to the Stokes theorem for a large subclass of $\mathscr{CM}^{p}$-fields to be defined next, and so let $\Omega'\Subset\Omega$ be open with $\hold^{2}$-boundary. We denote by $\mathrm{Lip}_{2}(\partial\Omega')$ the space of all Lipschitz continuous functions $\varphi\colon\partial\Omega'\to\R$ for which their weak tangential gradients $\nabla_{\tau}\varphi$ belong to $\mathrm{Lip}(\partial\Omega';T_{\partial\Omega'})$. For $\Sbold\in\mathrm{Lip}(\partial\Omega';T_{\partial\Omega'})'$ and {$\varphi\in\mathrm{Lip}_{2}(\partial\Omega')$},  $\mathrm{div}_{\tau}(\Sbold)$ is defined by 
\begin{align}\label{eq:definitiondivtandist}
\langle\mathrm{div}_{\tau}(\Sbold),\varphi\rangle = -\langle \Sbold,\nabla_{\tau}\varphi\rangle. 
\end{align}
Note that we require $\varphi\in\mathrm{Lip}_{2}(\partial\Omega')$ here, as otherwise the right-hand side of \eqref{eq:definitiondivtandist} is ill-defined. Moreover, if $\GG\in\lebe^{1}(\partial\Omega';T_{\partial\Omega'})$, then $\mathrm{div}_{\tau}(\GG)$ belongs to $\mathrm{Lip}(\partial\Omega')'$, and we understand the statement $\mathrm{div}_{\tau}(\Sbold)\in\mathrm{div}_{\tau}(\lebe^{1}(\partial\Omega';T_{\partial\Omega'}))$ as the equality{\rm :} 
\begin{align}\label{eq:precisemeaningcontainment}
\langle\mathrm{div}_{\tau}(\Sbold),\varphi\rangle =\langle\mathrm{div}_{\tau}(\GG),\varphi\rangle\qquad\text{for all}\;\varphi\in\mathrm{Lip}_{2}(\partial\Omega')
\end{align}
with a suitable $\GG\in\lebe^{1}(\partial\Omega';T_{\partial\Omega'})$. If $\partial\Omega'$ is of class $\hold^{\infty}$, then one can give a uniform approach to the regularity of test maps 
in \eqref{eq:definitiondivtandist}--\eqref{eq:precisemeaningcontainment}; 
see Remark \ref{rem:uniformiseregularity} below. 

\begin{definition}[The Classes $\mathscr{CM}_{1,\partial\Omega'}^{p,\mathrm{int}}$ 
and $\mathscr{CM}_{1,\partial\Omega'}^{p,\mathrm{ext}}$]\label{def:Cm1}
Let $\Omega\subset\R^{3}$ be open and bounded, let $\Omega'\Subset\Omega$ be open with $\hold^{2}$-boundary, and let $\Sigma$ be a $\hold^{2}$-regular Lipschitz boundary manifold relative to $\Omega'$. 
For $1\leq p \leq \infty$, define 
\begin{align}\label{eq:definingcondCm1p}
\mathscr{CM}_{1,\partial\Omega'}^{p,\mathrm{int}}(\Omega) 
:=\Big\{\FF\in\mathscr{CM}^{p}(\Omega)\,\colon\;  \mathrm{div}_{\tau}(\mathcal{T}_{(\FF\times\nu)_{\Omega'}^{\mathrm{int}}})\in\mathrm{div}_{\tau}(\lebe^{1}(\partial\Omega';T_{\partial\Omega'})) \Big\}, 
\end{align}
where $\mathcal{T}_{(\FF\times\nu)_{\Omega'}^{\mathrm{int}}}$ is as in {\rm Definition \ref{lem:bdrypairingSec7}}, 
and the defining condition in \eqref{eq:definingcondCm1p} is understood in the sense of \eqref{eq:precisemeaningcontainment}. The space $\mathscr{CM}_{1,\partial\Omega'}^{p,\mathrm{ext}}(\Omega)$ is defined analogously:
\begin{align}\label{eq:definingcondCm1pext}
\mathscr{CM}_{1,\partial\Omega'}^{p,\mathrm{ext}}(\Omega) 
:=\Big\{\FF\in\mathscr{CM}^{p}(\Omega)\,\colon\;  
\mathrm{div}_{\tau}
(\mathcal{T}_{(\FF\times\nu)_{\Omega'}^{\mathrm{ext}}})\in\mathrm{div}_{\tau}(\lebe^{1}(\partial\Omega';T_{\partial\Omega'}))
\Big\}, 
\end{align}
Finally, define 
\begin{align}
\mathscr{CM}_{1}^{p,\mathrm{int}}(\Omega):=\bigcap_{\Omega'\Subset\Omega\;\text{\emph{open with $\hold^{2}$-boundary}}}\mathscr{CM}_{1,\partial\Omega'}^{p,\mathrm{int}}(\Omega). 
\end{align}
The space $\mathscr{CM}_{1}^{p,\mathrm{ext}}(\Omega)$ is defined by analogous means.
\end{definition}
In other words, $\FF\in\mathscr{CM}^{p}(\Omega)$ belongs to $\mathscr{CM}_{1,\partial\Omega'}^{p,\mathrm{int}}(\Omega)$ if and only if there exist $\mathbf{G}\in\lebe^{1}(\partial\Omega';T_{\partial\Omega'})$ and $\mathbf{H}\in\mathrm{Lip}(\partial\Omega')'$ such that $\mathcal{T}_{(\FF\times\nu)_{\Omega'}^{\mathrm{int}}}=\mathbf{G}+\mathbf{H}$ and $\mathrm{div}_{\tau}(\mathbf{H})=0$. Based on this consideration, it is convenient to introduce an equivalence relation '$\sim$' on $\lebe^{1}(\partial\Omega';T_{\partial\Omega'})$ via 
\begin{align}\label{eq:equivrelation}
\mathbf{G}_{1}\sim \mathbf{G}_{2} \,\Longleftrightarrow\, \mathrm{div}_{\tau}(\mathbf{G}_{1}-\mathbf{G}_{2})=0, 
\end{align}
and to write $\mathbf{G}/_{\sim}$ for the corresponding equivalence class. As we will see in Theorems \ref{thm:StokesDist} and \ref{thm:StokesWcurl} below, the flexibility in switching between different representatives turn out important, since the Stokes theorem in the vorticity flux formulation only hinges on equivalence classes rather than the specific representatives. This comes to the following effect: Even if $\mathcal{T}_{(\FF\times\nu)_{\Omega'}^{\mathrm{int}}}$ is not representable by an $\lebe^{1}(\partial\Omega';T_{\partial\Omega'})$-field, whereby we cannot directly introduce the Stokes functionals as in \eqref{eq:Stokesfunctionaldef}, it still might happen that  
\begin{align}\label{eq:goodrepvortflux}
\mathrm{div}_{\tau}(\mathcal{T}_{(\FF\times\nu)_{\Omega'}^{\mathrm{int}}}) = \mathrm{div}_{\tau}(\mathbf{G})\qquad\text{for  some}\;\mathbf{G}\in\lebe^{1}(\partial\Omega';T_{\partial\Omega'}). 
\end{align}
In this case, we may perform a localization procedure analogous to that of \S\ref{sec:stokestanvarfirst}, 
now relying on Definition \ref{def:locdivL1}\emph{ff.}. Even though the fully analogous Stokes functionals as in \eqref{eq:Stokesfunctionaldef}  \emph{do} depend on the specific choice of $\mathbf{G}$, this is \emph{not} so for the masses of the distributions from \eqref{eq:goodrepvortflux}. It is this observation that lets  us obtain unconditional Stokes theorems in the vorticity flux formulation for $\sobo^{\curl,p}$-fields in Theorem \ref{thm:StokesWcurl}; see also Remark \ref{rem:vortfluxStokesfctl}  below. For our future purposes, we record the following observation:

\begin{rem}[$p=\infty$ versus $1\leq p<\infty$]
In the situation of Definition \ref{def:Cm1}, if $p=\infty$, then $\mathscr{CM}_{1}^{\infty}(\Omega)=\mathscr{CM}^{\infty}(\Omega)$ by Theorem \ref{thm:tracemain1}. This observation leads to the consistency 
of the results of this section with those in \S \ref{sec:stokes}. 
\end{rem}

We now have
\begin{theorem}[Stokes Theorem $\mathscr{CM}_{1}^{p}$-Fields with respect to the Tangential Variations]\label{thm:StokesDist}
Let $\Omega\subset\R^{3}$ be open and bounded, and let $\Omega'\Subset\Omega$ be open and bounded with $\hold^{2}$-boundary. Moreover, let $\Sigma$ be a $\hold^{2}$-regular Lipschitz boundary manifold relative to $\Omega'$. For any $\FF\in\mathscr{CM}_{1,\partial\Omega'}^{p,\mathrm{int}}(\Omega)$ and any $\mathbf{G}\in\lebe^{1}(\partial\Omega';T_{\partial\Omega'})$ with 
\begin{align}\label{eq:lucid}
\mathrm{div}_{\tau}\big(\mathcal{T}_{(\FF\times\nu)_{\Omega'}^{\mathrm{int}}}\big) = \mathrm{div}_{\tau}(\GG)\qquad\text{as an identity in}\;\hold^{2}(\partial\Omega')', 
\end{align}
the following hold{\rm :} 
\begin{enumerate}
    \item\label{item:StokesMG1} $\mathcal{N}_{(\curl\FF)\cdot \nu,\Omega'} = \mathrm{div}_{\tau}(\GG)$ as an identity in $\hold^{2}(\partial\Omega')'$, where $\mathcal{N}_{(\curl\FF)\cdot\nu,\Omega'}$ is as in Definition \ref{def:intrinsicnormaltrace}.
    
    \item\label{item:StokesMG2} Adopting the setting and terminology of 
    {\rm Proposition \ref{prop:StokesPreparatory}}, there exists a subset $\mathscr{I}\subset I:=[0,\frac{1}{2})$ with $\mathscr{L}^{1}(I\setminus\mathscr{I})=0$ and \emph{independent of the specific choice of the $\sim$-representative $\mathbf{G}$} such that, for each $t\in\mathscr{I}$, the \emph{Stokes theorem in 
    the vorticity flux formulation} holds{\rm :}
    \begin{align}\label{eq:vortfluxdist}
        [\mathcal{N}_{\curl(\FF)\cdot\nu,\Omega'}]_{\Sigma^{\tau,t}}=-\lla\mathbf{G}\cdot\nu_{\Gamma_{\Sigma}^{t}},\mathbbm{1}_{\Sigma^{\tau,t}}\rra_{\Gamma_{\Sigma}^{t}}. 
    \end{align}
    \item\label{item:StokesMG3} Suppose moreover that a $\sim$-representative  $\GG$ has an $\lebe_{\locc}^{1}$-representative 
    that is continuous in a relatively open neighborhood of $\Gamma_{\Sigma}$ in $\partial\Omega'$. 
    Then there exists $0<t_{0}<\frac{1}{4}$ such that the conclusion of \ref{item:StokesMG2} holds on the entire interval $\mathscr{I}'=[0,t_{0})$. 
\end{enumerate}
\end{theorem}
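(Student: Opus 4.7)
The proof reduces to three steps, of which only (b) requires genuine care; (a) is an identification, and (c) is a direct application of Corollary~\ref{cor:contstok}.

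For (a), I would unravel both sides on a test function $\varphi\in\hold^{2}(\partial\Omega')$. Choose a Lipschitz extension $\overline{\varphi}\in\mathrm{Lip}_{\rm c}(\Omega)$ with $\overline{\varphi}|_{\Omega'}\in\hold_{b}^{2}(\Omega')$ (via Lemma~\ref{lem:GoodLip}\ref{item:Lipextend1} applied componentwise to $\nabla\varphi$, or by standard extension). Using $\mathrm{div}(\curl\FF)=0$ together with Lemma~\ref{lem:easyprod}, we have
\[
\mathcal{N}_{(\curl\FF)\cdot\nu,\Omega'}(\varphi) = -\int_{\Omega'}\nabla\overline{\varphi}\cdot \d(\curl\FF).
\]
On the other hand, applying the vectorial trace formula \eqref{eq:vectorialtrace} to $\bphi=\nabla\overline{\varphi}$ and using $\curl\nabla\overline{\varphi}=0$ yields
\[
\mathcal{T}_{(\FF\times\nu)_{\Omega'}^{\mathrm{int}}}(\nabla\overline{\varphi}|_{\partial\Omega'})=\int_{\Omega'}\nabla\overline{\varphi}\cdot\d(\curl\FF).
\]
Thus $\mathcal{N}_{(\curl\FF)\cdot\nu,\Omega'}(\varphi)=-\mathcal{T}_{(\FF\times\nu)_{\Omega'}^{\mathrm{int}}}(\nabla\overline{\varphi}|_{\partial\Omega'})$. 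Invoking tangentiality (Theorem~\ref{cor:tangentiality}, \eqref{eq:tangentialityDist2}) and the definition \eqref{eq:definitiondivtandist} of $\mathrm{div}_{\tau}$, this equals $\langle\mathrm{div}_{\tau}(\mathcal{T}_{(\FF\times\nu)_{\Omega'}^{\mathrm{int}}}),\varphi\rangle$, which by assumption \eqref{eq:lucid} equals $\langle\mathrm{div}_{\tau}(\GG),\varphi\rangle$. This yields (a).

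For (b), the statement (a) shows that $\mathcal{N}_{(\curl\FF)\cdot\nu,\Omega'}$ is a functional in $\hold^{2}(\partial\Omega')'$ satisfying the representability \eqref{eq:idreduced} in Remark~\ref{rem:massesadmissibility} through $\GG$. The discussion therein extends it to a unique element $\widetilde{\mathcal{N}}\in\mathrm{Lip}(\partial\Omega')'$ coinciding with $\mathrm{div}_{\tau}(\GG)$, and hence defines $[\mathcal{N}_{(\curl\FF)\cdot\nu,\Omega'}]_{\Sigma^{\tau,t}}$ whenever the relevant boundary pairing exists. Applying Proposition~\ref{prop:StokesPreparatory}\ref{item:welldef1} to $\GG$ produces the null set of bad $t$'s and so the set $\mathscr{I}$. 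To see that $\mathscr{I}$ is $\sim$-invariant, I note that any two representatives $\GG_{1},\GG_{2}$ of \eqref{eq:lucid} satisfy $\mathrm{div}_{\tau}(\GG_{1}-\GG_{2})=0$ in $\hold^{2}(\partial\Omega')'$; since $\partial\Omega'$ is of class $\hold^{2}$, $\hold^{2}(\partial\Omega')$ is weak*-dense in $\mathrm{Lip}(\partial\Omega')$ with simultaneous $\lebe^{\infty}$-weak* convergence of tangential gradients, so the identity extends to $\mathrm{Lip}(\partial\Omega')'$, and Remark~\ref{rem:independentchoicerem} together with Lemma~\ref{lem:independence} apply. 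Finally, the key observation for the formula \eqref{eq:vortfluxdist} is that, by construction \eqref{eq:psiddefmain}, $\spt(\nabla_{\tau}\psi_{t,\delta,\Sigma})\subset\Psi_{\Sigma}((t,t+\delta)\times\Gamma_{\Sigma})\subset\Sigma^{\tau,t}$. Hence
\[
\lla\GG\cdot\nu_{\Gamma_{\Sigma}^{t}},\mathbbm{1}_{\Sigma^{\tau,t}}\rra_{\Gamma_{\Sigma}^{t}} \stackrel{\eqref{eq:distlocaliser}}{=} \lim_{\delta\searrow 0}\int_{\Sigma}\GG\cdot\nabla_{\tau}\psi_{t,\delta,\Sigma}\d\mathscr{H}^{2} \stackrel{\eqref{eq:mass}}{=} -[\mathcal{N}_{(\curl\FF)\cdot\nu,\Omega'}]_{\Sigma^{\tau,t}},
\]
which is \eqref{eq:vortfluxdist}.

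For (c), the continuity hypothesis on $\GG$ near $\Gamma_{\Sigma}$ is precisely what is needed to invoke Corollary~\ref{cor:contstok} in place of Proposition~\ref{prop:StokesPreparatory}. This upgrades the set of admissible $t$ from an a.e.\ set to an entire interval $[0,t_{0})$, yielding (c). The main obstacle is in (b): ensuring the representative-independence of $\mathscr{I}$, which hinges on the density-based upgrade from $\hold^{2}(\partial\Omega')'$-equality to $\mathrm{Lip}(\partial\Omega')'$-equality so that Remark~\ref{rem:independentchoicerem} becomes applicable. Everything else is a careful but routine assembly of the machinery of \S\ref{sec:locclosed}--\ref{sec:bdrypairing}.
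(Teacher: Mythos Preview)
Your proof is correct and follows essentially the same route as the paper's own argument: the identification in (a) via the extension, $\mathrm{div}(\curl\FF)=0$, the vectorial trace formula with $\curl\nabla\overline{\varphi}=0$, and tangentiality (Theorem~\ref{cor:tangentiality}); then (b) via Remark~\ref{rem:massesadmissibility}, Proposition~\ref{prop:StokesPreparatory}, Remark~\ref{rem:independentchoicerem}, and the mass definition~\eqref{eq:mass}; and (c) via Corollary~\ref{cor:contstok}. The only cosmetic difference is that in the final step of (b) the paper invokes the Gauss--Green identity~\eqref{eq:GGdistMAIN} with $\psi=\mathbbm{1}_{\Sigma^{\tau,t}}$, whereas you read the same equality directly from the definitions~\eqref{eq:distlocaliser} and~\eqref{eq:mass}.
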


\begin{proof}
For \ref{item:StokesMG1}, let $\varphi\in\hold^{2}(\partial\Omega')$ be arbitrary, and let $\overline{\varphi}\in\hold^{2}(\overline{\Omega'})$ be such that $\overline{\varphi}|_{\partial\Omega'}=\varphi$; such an extension can be obtained as in the proof of Lemma \ref{lem:GoodLip}. In particular, $\nabla\overline{\varphi}|_{\Omega'}\in\hold^{1}(\overline{\Omega'};\R^{3})$. For the following, we note that \eqref{eq:vectorialtrace} clearly extends to $\bphi\in\hold^{1}(\overline{\Omega'};\R^{3})$ in the form: 
\begin{align}\label{eq:tomturbo}
\langle(\FF\times\nu)_{\Omega'}^{\mathrm{int}},\bphi\rangle_{\partial\Omega'} = \int_{\Omega'}\bphi\cdot\dif\,(\curl\FF)-\int_{\Omega'}\FF\cdot\curl\bphi\dif x.  
\end{align}
Hence, 
\begin{align}\label{eq:goldenbrown1}
\begin{split}
\mathcal{N}_{(\curl\FF)\cdot\nu,\Omega'}(\varphi) & \stackrel{\eqref{eq:normaltraceextdivmeasintrinsic}}{=}\langle(\curl\FF)\cdot\nu,\overline{\varphi}\rangle_{\partial\Omega'} \stackrel{\eqref{eq:easynormaltraceextdivmeas1}}{=}  -\int_{\Omega'}\nabla\overline{\varphi}\cdot\dif\,(\curl\FF) \\ 
& \stackrel{\eqref{eq:tomturbo}}{=} -\langle(\FF\times\nu)_{\Omega'}^{\mathrm{int}},\nabla\overline{\varphi}\rangle_{\partial\Omega'}. 
\end{split}
\end{align}
Moreover, $\nabla\overline{\varphi}|_{\Omega'}$ in particular has a continuous extension  to $\overline{\Omega'}$, and the restriction $\bm{\psi}$ of the latter to $\partial\Omega$ belongs to $\mathrm{Lip}(\partial\Omega';\R^{3})$ and has tangential component  $\bm{\psi}_{\tau}=\nabla_{\tau}\varphi$. Recalling Remark \ref{rem:onesided} and Theorem \ref{cor:tangentiality}, we thus have 
\begin{align}
-\langle(\FF\times\nu)_{\Omega'}^{\mathrm{int}},\nabla\overline{\varphi}\rangle_{\partial\Omega'} & \stackrel{\eqref{eq:tabealove}}{=} - \mathcal{T}_{(\FF\times\nu)_{\Omega'}^{\mathrm{int}}}(\bm{\psi}) & \notag\\ & \stackrel{\eqref{eq:tangentialityDist1}}{=} - \mathcal{T}_{(\FF\times\nu)_{\Omega'}^{\mathrm{int}}}(\bm{\psi}_{\tau})
 &\text{(since $\bm{\psi}\in\mathrm{Lip}(\partial\Omega';\R^{3})$)} \notag \\ 
 & \;\; = - \mathcal{T}_{(\FF\times\nu)_{\Omega'}^{\mathrm{int}}}(\nabla_{\tau}\varphi) & \text{(since $\bm{\psi}_{\tau}=\nabla_{\tau}\varphi$ on $\partial\Omega'$)} \label{eq:almostalmost} \\ &\;\stackrel{\text{Def}}{=} \langle \mathrm{div}_{\tau}(\mathcal{T}_{(\FF\times\nu)_{\Omega'}^{\mathrm{int}}}),\varphi\rangle & \notag \\ 
& \! \stackrel{\eqref{eq:lucid}}{=} \langle \mathrm{div}_{\tau}(\mathbf{G}),\varphi\rangle.&\notag
\end{align}
Combining \eqref{eq:goldenbrown1} with \eqref{eq:almostalmost}, \ref{item:StokesMG1} follows. 

\smallskip
For \ref{item:StokesMG2}, we first note that, by Remark \ref{rem:massesadmissibility}, the identity from \ref{item:StokesMG1} extends to $\mathrm{Lip}(\partial\Omega')$. Let $t\in[0,\frac{1}{2})$. If $\mathbf{G},\widetilde{\mathbf{G}}\in\lebe^{1}(\partial\Omega';T_{\partial\Omega'})$ are such that they both satisfy \eqref{eq:lucid}, then Remark \ref{rem:independentchoicerem} implies that $\lla\mathbf{G}\cdot\nu_{\Gamma_{\Sigma}^{t}},\cdot\rra_{\Gamma_{\Sigma}^{t}}$ exists if and only if $\lla\widetilde{\mathbf{G}}\cdot\nu_{\Gamma_{\Sigma}^{t}},\cdot\rra_{\Gamma_{\Sigma}^{t}}$ exists. In particular, by Proposition \ref{prop:StokesPreparatory}, 
\begin{align*}
\mathscr{I}:=\big\{t\in I:=[0,\tfrac{1}{2})\,\colon\; \lla\mathbf{G}\cdot\nu_{\Gamma_{\Sigma}^{t}},\cdot\rra_{\Gamma_{\Sigma}^{t}}\;\text{exists}\big\}
\end{align*}
has full $\mathscr{L}^{1}$-measure in $I$, and is independent of the specific choice of $\mathbf{G}$ with \eqref{eq:lucid}. Hence, in light of Definition \ref{def:locdivL1} and Remark \ref{rem:massesadmissibility}, 
we conclude that, for  $t\in\mathscr{I}$,   
\begin{align*}
[\mathcal{N}_{\curl(\FF)\cdot\nu,\Omega'}]_{\Sigma^{\tau,t}} \stackrel{\ref{item:StokesMG1}}{=} [\mathrm{div}_{\tau}(\mathbf{G})]_{\Sigma^{\tau,t}} \stackrel{\eqref{eq:GGdistMAIN}}{=} - \lla \mathbf{G}\cdot\nu_{\Gamma_{\Sigma}^{t}},\mathbbm{1}_{\Sigma^{\tau,t}}\rra_{\Gamma_{\Sigma}^{t}}. 
\end{align*}
This is \ref{item:StokesMG2}. 

\smallskip
For \ref{item:StokesMG3}, this is a direct consequence of \ref{item:StokesMG1}, \ref{item:StokesMG2}, and Corollary \ref{cor:contstok}. The proof is complete. 
\end{proof}

\begin{rem}[$\lebe^{1}$-Tangential Traces and Full Stokes Functionals]\label{rem:vortfluxStokesfctl} 
Let $\FF\in\mathscr{CM}_{1,\partial\Omega'}^{p,\mathrm{int}}(\Omega)$. 
In analogy with Definition \ref{def:stokes} (see \eqref{eq:Stokesfunctionaldef}), 
one might be inclined to introduce a (interior) Stokes functional via
\begin{align}\label{eq:wrongstokes}
\widetilde{\mathfrak{S}}_{\Sigma^{\tau,t}}^{\mathrm{int}}(\psi):= \langle \mathcal{N}_{\curl(\FF)\cdot\nu,\Omega'},\psi\rangle_{{\Sigma^{\tau,t}}} +   \int_{\Sigma^{\tau,t}}\mathbf{G}\cdot\nabla_{\tau}\psi\dif\mathscr{H}^{2}\qquad \mbox{for $\psi\in\hold^{1}(\partial\Omega')$}. 
\end{align}
This functional is not well-defined in the sense that it is not independent of the specific choice of $\mathbf{G}$, the reason being the second term on the right-hand side of \eqref{eq:wrongstokes}. 
However, the mass of $\mathcal{N}_{\curl(\FF)\cdot\nu,\Omega'}$ is indeed well-defined, 
thereby leading to Theorem \ref{thm:StokesDist} through the vorticity flux formulation. 
On the other hand, if, in addition, $\mathcal{T}_{(\FF\times\nu)_{\Omega'}^{\mathrm{int}}}$ is representable by an $\lebe^{1}$-field $\mathbf{G}_{\FF}$ on $\partial\Omega'$ (meaning that it is a regular distribution), 
then we may define 
\begin{align}\label{eq:wrongstokes1}
\widetilde{\mathfrak{S}}_{\Sigma^{\tau,t}}^{\mathrm{int}}(\psi):= \langle \mathcal{N}_{\curl(\FF)\cdot\nu,\Omega'},\psi\rangle_{\overline{\Sigma^{\tau,t}}} +  \int_{\Sigma^{\tau,t}}\mathbf{G}_{\FF}\cdot\nabla_{\tau}\psi\dif\mathscr{H}^{2}\qquad \mbox{for  $\psi\in\hold^{1}(\partial\Omega')$}. 
\end{align}
In this case, we may prove an analogous result as Theorem \ref{thm:stokes}. We refrain from stating such a theorem explicitly, since a characterization of those $\FF\in\mathscr{CM}^{p}(\Omega)$ such that  $\mathcal{T}_{(\FF\times\nu)_{\Omega'}^{\mathrm{int}}}$ is a regular distribution is unclear to us. 
\end{rem}

\begin{rem}[Regularity of $\partial\Omega'$]\label{rem:uniformiseregularity}
In \eqref{eq:definitiondivtandist}--\eqref{eq:precisemeaningcontainment}, 
we require $\varphi\in\mathrm{Lip}_{2}(\partial\Omega')$ for both expressions to be well-defined. It would be natural to state both for test functions $\varphi\in\hold_{\rm c}^{\infty}(\partial\Omega')$, but this requires $\partial\Omega'$ to be of class $\hold^{\infty}$. If $\partial\Omega'$ satisfies this condition, one can indeed read $\mathrm{div}_{\tau}$ as the classical distributional tangential divergence. 
\end{rem}

Next, we confirm the consistency of the vorticity-flux formulation of the Stokes theorem for $\mathscr{CM}^{\infty}$-fields with that of Corollary \ref{cor:stokesvortflux}:  

\begin{lem}[$p=\infty$ and Consistency with Theorem \ref{cor:stokesvortflux}] 
Let $\Omega\subset\R^{3}$ be open and bounded, let $\Omega'\Subset\Omega$ be open with $\hold^{2}$-boundary,
and let $\Sigma$ be a $\hold^{2}$-regular Lipschitz boundary manifold relative to $\Omega'$. 
Then, if $\FF\in\mathscr{CM}^{\infty}(\Omega)$, the conclusions of {\rm Corollary \ref{cor:stokesvortflux}} 
and  {\rm Theorem \ref{thm:StokesDist}\ref{item:StokesMG2}} are equivalent. 
In particular, there exists a subset $\mathscr{I}\subset I:=[0,\frac{1}{2})$ such that, 
for every $t\in\mathscr{I}$, 
\begin{align}\label{eq:consistencydist}
[\mathcal{N}_{\curl(\FF)\cdot\nu,\Omega'}]_{\Sigma^{\tau,t}}=
\Big[\overline{\overline{\curl(\FF)\cdot\nu_{\Sigma^{\tau,t}}}}\Big]{_{\Sigma^{\tau,t}}^{\mathrm{int}}} = -\lla(\FF\times\nu_{\partial\Omega'})_{\partial\Omega'}^{\mathrm{int}}\cdot\nu_{\Gamma_{\Sigma}^{t}},\mathbbm{1}_{\Sigma^{\tau,t}}\rra_{\Gamma_{\Sigma}^{t}}. 
\end{align}
\end{lem}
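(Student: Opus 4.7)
The plan is to verify that, for $\FF \in \mathscr{CM}^{\infty}(\Omega)$, both the vorticity-flux expression of Corollary \ref{cor:stokesvortflux} and the mass expression of Theorem \ref{thm:StokesDist}\ref{item:StokesMG2} collapse to the same limit of boundary integrals against $\nabla_{\tau}\psi_{t,\delta,\Sigma}$, thereby producing \eqref{eq:consistencydist}.

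First, I would exploit Theorem \ref{thm:tracemain1} to place $\FF$ in the framework of Theorem \ref{thm:StokesDist}: the interior tangential trace $\mathbf{G} := (\FF\times\nu_{\partial\Omega'})_{\partial\Omega'}^{\mathrm{int}}$ belongs to $\lebe^{\infty}(\partial\Omega';T_{\partial\Omega'})\subset\lebe^{1}(\partial\Omega';T_{\partial\Omega'})$, and the vectorial identity \eqref{eq:scaltovec} shows that the boundary pairing $\mathcal{T}_{(\FF\times\nu)_{\Omega'}^{\mathrm{int}}}$ from Lemma \ref{lem:bdrypairingSec7} is integration against $\mathbf{G}$. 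Hence $\divm \mathcal{T}_{(\FF\times\nu)_{\Omega'}^{\mathrm{int}}} = \divm \mathbf{G}$ in $\mathrm{Lip}(\partial\Omega')'$, i.e., $\FF \in \mathscr{CM}_{1,\partial\Omega'}^{p,\mathrm{int}}(\Omega)$ with this choice of $\mathbf{G}$. Applying Theorem \ref{thm:StokesDist}\ref{item:StokesMG2} with this $\mathbf{G}$ then yields a set $\mathscr{I}\subset I$ of full $\mathscr{L}^{1}$-measure on which the rightmost equality of \eqref{eq:consistencydist} already holds.

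Second, I would establish the middle equality by rewriting both sides as the same limit. Using Definition \ref{def:locdivL1} and Remark \ref{rem:massesadmissibility}, applied to $\mathcal{N}_{\curl(\FF)\cdot\nu,\Omega'} = \divm \mathbf{G}$ (as shown in the proof of Theorem \ref{thm:StokesDist}\ref{item:StokesMG1}),
\[
[\mathcal{N}_{\curl(\FF)\cdot\nu,\Omega'}]_{\Sigma^{\tau,t}} = -\lim_{\delta\searrow 0}\int_{\Sigma} \mathbf{G}\cdot\nabla_{\tau}\psi_{t,\delta,\Sigma}\dif\mathscr{H}^{2}.
\]
On the other hand, the alternative representation \eqref{eq:alternativemain} of the interior Stokes functional reads
\[
\mathfrak{S}^{\mathrm{int}}_{\Sigma^{\tau,t}}(\varphi) = -\lim_{\delta\searrow 0}\int_{\Sigma} \varphi\,\mathbf{G}\cdot\nabla_{\tau}\psi_{t,\delta,\Sigma}\dif\mathscr{H}^{2}.
\]
Specializing to $\varphi \in \hold_{\rm c}^{1}(\Omega)$ with $\varphi\equiv 1$ on an open neighborhood of $\overline{\Sigma^{\tau,t}}$ (the class admissible in Corollary \ref{cor:stokesvortflux}, since the support of $\mu^{\tau,\mathrm{int}}_{\Sigma,t}$ lies in $\Gamma_{\Sigma}^{t}$), the two limits coincide: for $\delta$ small enough the support of $\nabla_{\tau}\psi_{t,\delta,\Sigma}$ lies in $\Psi_{\Sigma}([t,t+\delta]\times\Gamma_{\Sigma})$, on which $\varphi \equiv 1$. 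Invoking Corollary \ref{cor:stokesvortflux} to identify $\mathfrak{S}^{\mathrm{int}}_{\Sigma^{\tau,t}}(\varphi)$ with $[\overline{\overline{\curl(\FF)\cdot\nu_{\Sigma^{\tau,t}}}}]^{\mathrm{int}}_{\Sigma^{\tau,t}}$ closes the chain and gives the middle equality of \eqref{eq:consistencydist}.

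Finally, the equivalence of the two a.e. statements follows by observing that the exceptional null sets in both results arise from Proposition \ref{prop:diff} applied to the common field $\mathbf{G}$; consequently, the sets $\mathscr{I}_{1}$ of Theorem \ref{thm:stokes} and $\mathscr{I}$ of Theorem \ref{thm:StokesDist}\ref{item:StokesMG2} may be intersected to yield a single full-measure subset of $I = [0,\tfrac{1}{2})$ on which all three expressions in \eqref{eq:consistencydist} are simultaneously well-defined and coincide. I do not anticipate any conceptual obstacle; the only technical nuance to track is that $\varphi\equiv 1$ on the shrinking supports of $\nabla_{\tau}\psi_{t,\delta,\Sigma}$ as $\delta\searrow 0$, which is a direct geometric consequence of those supports contracting onto $\Gamma_{\Sigma}^{t}$.
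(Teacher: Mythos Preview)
Your proposal is correct and follows essentially the same route as the paper: choose $\mathbf{G}=(\FF\times\nu_{\partial\Omega'})_{\partial\Omega'}^{\mathrm{int}}$ via Theorem~\ref{thm:tracemain1}, then identify both the mass $[\mathcal{N}_{\curl(\FF)\cdot\nu,\Omega'}]_{\Sigma^{\tau,t}}$ (through Definition~\ref{def:locdivL1} and \eqref{eq:vortfluxdist}) and the vorticity-flux expression (through \eqref{eq:alternativemain} with $\varphi\equiv 1$ near $\overline{\Sigma^{\tau,t}}$) as the same limit $-\lim_{\delta\searrow 0}\int_{\Sigma}\mathbf{G}\cdot\nabla_{\tau}\psi_{t,\delta,\Sigma}\dif\mathscr{H}^{2}$. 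The paper collapses this into a single displayed chain of equalities, but the logical content is identical.
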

\begin{proof}
If $\FF\in\mathscr{CM}^{\infty}(\Omega)$, then we may choose $\mathbf{G}=(\FF\times\nu_{\partial\Omega'})_{\partial\Omega'}^{\mathrm{int}}\in\lebe^{\infty}(\partial\Omega';T_{\partial\Omega'})$  by Theorem \ref{thm:tracemain1}. By Lemma \ref{lem:independence},  the masses are independent of the specific $\lebe_{\locc}^{1}$-representative, whereby we may directly work with this particular choice of $\GG$ in the sequel;
see \eqref{eq:massesequal}. 
Let $\varphi\in\hold_{\rm c}^{1}(\Omega)$ be such that that $\varphi=1$ in an open neighborhood of $\Sigma$. Then 
\begin{align*}
[\mathcal{N}_{(\curl\FF)\cdot\nu,\Omega'}]_{\Sigma^{\tau,t}} & \stackrel{\eqref{eq:vortfluxdist}}{=} - \lla\mathbf{G}\cdot\nu_{\Gamma_{\Sigma}^{t}},\mathbbm{1}_{\Sigma^{\tau,t}}\rra_{\Gamma_{\Sigma}^{t}} \stackrel{\eqref{eq:distlocaliser}}{=}  - \lim_{\delta\searrow 0}\int_{\Sigma}\varphi\,\mathbf{G}\cdot\nabla_{\tau}\psi_{t,\delta,\Sigma}\dif\mathscr{H}^{2} \\ 
& \;\; = - \lim_{\delta\searrow 0} \int_{\partial\Omega'}\varphi\,(\FF\times\nu_{\partial\Omega'})_{\partial\Omega'}^{\mathrm{int}}\cdot\nabla_{\tau}\psi_{t,\delta,\Sigma}\dif\mathscr{H}^{2} \stackrel{\eqref{eq:alternativemain}}{=} \mathfrak{S}_{\Sigma^{\tau,t}}^{\mathrm{int}}(\varphi) \\ & \;\stackrel{\text{Def}}{=} \Big[\overline{\overline{(\curl \FF)\cdot\nu_{\Sigma^{\tau,t}}}} \Big]{_{\Sigma^{\tau,t}}^{\mathrm{int}}}. 
\end{align*}
This completes the proof. 
\end{proof}

The aforementioned desired flexibility in choosing $\lebe^{1}$-representatives is visible most clearly 
in the following Stokes theorem for $\sobo^{\curl,p}$-fields. 
To the best of our knowledge, this is the first result in this direction, 
even in the context of $\mathrm{H}^{\curl}$-fields. 

\begin{theorem}[Stokes Theorem for $\sobo^{\curl,p}$-Fields with respect to the Tangential Variations]\label{thm:StokesWcurl}
Let $\Omega\subset\R^{3}$ be open and bounded, let $\Omega'\Subset\Omega$ be open with $\hold^{2}$-boundary,
and let $\Sigma$ be a $\hold^{2}$-regular Lipschitz boundary manifold relative to $\Omega'$. 
Then the following hold{\rm :} 
\begin{enumerate}
    \item\label{item:StokesMG4} If $\frac{3}{2}< p\leq 3$, then $\sobo^{\curl,p}(\Omega)\subset\mathscr{CM}_{1,\partial\Omega'}^{p,\mathrm{int}}(\Omega)$. In particular, the conclusions of {\rm Theorem \ref{thm:StokesDist}\ref{item:StokesMG1}--\ref{item:StokesMG2}} hold for any $\FF\in\sobo^{\curl,p}(\Omega)$. 
    \item\label{item:StokesMG5} If $3<p\leq\infty$, then $\sobo^{\curl,p}(\Omega)\subset\mathscr{CM}_{1,\partial\Omega'}^{p,\mathrm{int}}(\Omega)$. Moreover, for every $\FF\in\sobo^{\curl,p}(\Omega)$, there exists a continuous representative $\GG\in\hold(\partial\Omega')$ with \eqref{eq:lucid}. 
    As a consequence, the conclusions of {\rm  Theorem \ref{thm:StokesDist}\ref{item:StokesMG1}\&\ref{item:StokesMG3}} hold for any $\FF\in\sobo^{\curl,p}(\Omega)$. In particular, in this case, the Stokes theorem is available \emph{without} having to vary $\Sigma$ tangentially.
\end{enumerate}
\end{theorem}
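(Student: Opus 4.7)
The plan is to construct a concrete $\lebe^{1}$-representative $\GG$ of the distribution $\mathrm{div}_{\tau}(\mathcal{T}_{(\FF\times\nu)_{\Omega'}^{\mathrm{int}}})$ by solving a Laplace--Beltrami equation on the closed manifold $\partial\Omega'$, and then read off both parts of the theorem from Sobolev embedding on the $2$-dimensional surface $\partial\Omega'$.

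The first step is to observe that the calculation opening the proof of Theorem \ref{thm:StokesDist}\ref{item:StokesMG1} uses only Lemma \ref{lem:easyprod}, Theorem \ref{cor:tangentiality}, and Definition \ref{def:intrinsicnormaltrace}, but \emph{not} any $\mathscr{CM}_{1}$-hypothesis. Consequently, for every $\FF\in\mathscr{CM}^{p}(\Omega)$,
\begin{align*}
\mathrm{div}_{\tau}(\mathcal{T}_{(\FF\times\nu)_{\Omega'}^{\mathrm{int}}})=\mathcal{N}_{(\curl\FF)\cdot\nu,\Omega'}\qquad\text{in }\hold^{2}(\partial\Omega')'.
\end{align*}
When $\FF\in\sobo^{\curl,p}(\Omega)$, the field $\curl\FF\in\lebe^{p}(\Omega;\R^{3})$ is an honest $\lebe^{p}$-vector field with $\mathrm{div}(\curl\FF)=0$, so the classical normal-trace theorem for $\lebe^{p}$-fields of vanishing divergence identifies the above with an element $f\in\sobo^{-1/p,p}(\partial\Omega')$, with $\langle f,1\rangle=0$ directly from Definition \ref{def:normaltraces}.

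Next, on the compact boundary manifold $\partial\Omega'$ I would solve $-\Delta_{\tau}u=f$ modulo constants, using the standard $\sobo^{s+2,p}/\R$-to-mean-zero-$\sobo^{s,p}$ isomorphism for the Laplace--Beltrami operator on a smooth closed surface, yielding $u\in\sobo^{2-1/p,p}(\partial\Omega')$. Setting $\GG:=-\nabla_{\tau}u\in\sobo^{1-1/p,p}(\partial\Omega';T_{\partial\Omega'})$ gives $\mathrm{div}_{\tau}(\GG)=f=\mathrm{div}_{\tau}(\mathcal{T}_{(\FF\times\nu)_{\Omega'}^{\mathrm{int}}})$ in $\hold^{2}(\partial\Omega')'$, so that the inclusion $\FF\in\mathscr{CM}_{1,\partial\Omega'}^{p,\mathrm{int}}(\Omega)$ is reduced to verifying $\GG\in\lebe^{1}(\partial\Omega';T_{\partial\Omega'})$. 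This is handled by Sobolev embedding on the $2$-dimensional manifold $\partial\Omega'$: for $3/2<p\leq 3$ one has $\sobo^{1-1/p,p}\hookrightarrow\lebe^{2p/(3-p)}(\partial\Omega')\subset\lebe^{1}(\partial\Omega')$, giving \ref{item:StokesMG4}; for $3<p\leq\infty$ one obtains $\sobo^{1-1/p,p}\hookrightarrow\hold(\partial\Omega')$, producing a continuous representative of $\GG$ on all of $\partial\Omega'$, hence a fortiori on a relatively open neighborhood of any $\Gamma_{\Sigma}$, giving \ref{item:StokesMG5}. With $\GG$ so constructed, the asserted Stokes conclusions are direct quotations of parts \ref{item:StokesMG1}--\ref{item:StokesMG3} of Theorem \ref{thm:StokesDist}.

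The main technical point I foresee is ensuring the elliptic regularity for $-\Delta_{\tau}$ with right-hand side in the negative Sobolev space $\sobo^{-1/p,p}(\partial\Omega')$ on a manifold that is only $\hold^{2}$-smooth: the cleanest isomorphism statements for the Laplace--Beltrami operator require higher smoothness, so one would either slightly deform $\partial\Omega'$ to a $\hold^{\infty}$-hypersurface via the collar map of Lemma \ref{lem:collar1} (transferring $f$ and $\GG$ back through the bi-Lipschitz map $\Phi_{\partial\Omega'}(t,\cdot)$, and using the convergence of coordinate representations in Remark \ref{rem:aeintro}\ref{item:collario3}) or set up the spaces $\sobo^{s,p}(\partial\Omega')$ chart-by-chart and verify directly that the two $\hold^{2}(\partial\Omega')'$-pairings appearing in the defining identity for $\mathscr{CM}_{1,\partial\Omega'}^{p,\mathrm{int}}$ coincide. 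Apart from this bookkeeping, the argument is routine.
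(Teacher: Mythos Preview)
Your proposal is correct and follows essentially the same route as the paper: identify the right-hand side $f$ as an element of $\sobo^{-1/p,p}(\partial\Omega')$ with zero mean, solve $-\Delta_{\tau}u=f$ on the closed surface $\partial\Omega'$, and set $\GG=-\nabla_{\tau}u$. Two remarks on details. First, for part \ref{item:StokesMG4} the paper takes a shortcut you do not: it only invokes the $\sobo^{1,2}$ variational solution (Remark \ref{rem:existencevar}), so that $\GG\in\lebe^{2}$ immediately, and the threshold $p>3/2$ enters precisely as the condition for the embedding $\sobo^{-1/p,p}(\partial\Omega')\hookrightarrow\sobo^{-1,2}(\partial\Omega')$ (Proposition \ref{prop:embeddingsmanifolds}\ref{item:embmanifold1}); your use of the full $\lebe^{p}$-regularity and subsequent Sobolev embedding into $\lebe^{2p/(3-p)}$ is unnecessary here. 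Second, your worry about the $\hold^{2}$-regularity of $\partial\Omega'$ is already handled in the paper by Proposition \ref{prop:ellreg}, which is stated for $\hold^{2}$-manifolds and smoothness $s<2$, so no collar deformation to a $\hold^{\infty}$-surface is needed.
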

In Theorem \ref{thm:StokesWcurl}, it does not matter whether we consider the interior or tangential traces along $\partial\Omega'$. This is due to the following elementary observation: 
\begin{lem}\label{lem:leftrightsame}
In the situation of {\rm Theorem \ref{thm:StokesWcurl}}, 
let $\FF\in\sobo^{\curl,p}(\Omega)$ with $1\leq p\leq\infty$. 
With the tangential trace functionals for $\mathscr{CM}^{p}$-fields from \eqref{eq:tangentialtrace},
we have  
\begin{align}\label{eq:pequality}
(\FF\times\nu)_{\Omega'}^{\mathrm{int}}=(\FF\times\nu)_{\Omega'}^{\mathrm{ext}}. 
\end{align}
\end{lem}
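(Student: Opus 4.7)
The plan is to show that the difference $\langle(\FF\times\nu)_{\Omega'}^{\mathrm{int}},\varphi\rangle_{\partial\Omega'} - \langle(\FF\times\nu)_{\Omega'}^{\mathrm{ext}},\varphi\rangle_{\partial\Omega'}$ vanishes for every $\varphi\in\hold_{\rm c}^{\infty}(\Omega)$, which is enough by Definition~\ref{def:tangentialtrace}. Using the definitions on the right-hand sides of \eqref{eq:tangentialtrace}, this difference rearranges to
\begin{align*}
\int_{\Omega'}\varphi\dif(\curl\FF) + \int_{\Omega\setminus\overline{\Omega'}}\varphi\dif(\curl\FF) - \int_{\Omega'}\FF\times\nabla\varphi\dif x - \int_{\Omega\setminus\overline{\Omega'}}\FF\times\nabla\varphi\dif x.
\end{align*}
The first key observation is that the hypothesis $\FF\in\sobo^{\curl,p}(\Omega)$ promotes $\curl\FF$ from a Radon measure to an $\lebe^{p}$-field, so $\curl\FF\mathscr{L}^{3}$ is absolutely continuous with respect to Lebesgue measure. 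In particular, $|\curl\FF|(\partial\Omega')=0$, so the two measure integrals glue to $\int_{\Omega}\varphi\,\curl\FF\dif x$ and the two volume integrals glue to $\int_{\Omega}\FF\times\nabla\varphi\dif x$.

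The second step is to identify these two $\Omega$-integrals by exploiting the distributional definition of the curl. For any fixed $i\in\{1,2,3\}$, testing the defining identity $\int_{\Omega}(\curl\FF)\cdot\bphi\dif x = \int_{\Omega}\FF\cdot\curl\bphi\dif x$ against $\bphi=\varphi e_{i}\in\hold_{\rm c}^{\infty}(\Omega;\R^{3})$, together with $\curl(\varphi e_{i})=\nabla\varphi\times e_{i}$ and the cyclic identity $\FF\cdot(\nabla\varphi\times e_{i})=e_{i}\cdot(\FF\times\nabla\varphi)$, yields the componentwise equality
\begin{align*}
\int_{\Omega}\varphi\,(\curl\FF)_{i}\dif x = \int_{\Omega}(\FF\times\nabla\varphi)_{i}\dif x.
\end{align*}
Summing over $i$ with a vectorial interpretation gives $\int_{\Omega}\varphi\,\curl\FF\dif x = \int_{\Omega}\FF\times\nabla\varphi\dif x$, which is precisely the cancellation needed. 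Hence the scalar pairings agree for every $\varphi\in\hold_{\rm c}^{\infty}(\Omega)$, and by \eqref{eq:vectorialtrace} the identity \eqref{eq:pequality} of $\R^{3}$-valued distributions follows.

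There is no real obstacle here since the argument is a bookkeeping exercise built on two ingredients that are completely standard in the present setting: absolute continuity of $\curl\FF$ with respect to $\mathscr{L}^{3}$ (killing any boundary contribution from the gluing) and the distributional definition of $\curl$ applied to scalar-times-constant test vector fields. The only point requiring minimal care is keeping track of the sign conventions in \eqref{eq:tangentialtrace} and ensuring that the measure-theoretic integrals truly add, which is handled once $|\curl\FF|(\partial\Omega')=0$ is noted.
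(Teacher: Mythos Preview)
Your proof is correct and follows essentially the same route as the paper. Both arguments glue the two halves using $|\curl\FF|(\partial\Omega')=0$ (which holds since $\curl\FF\in\lebe^{p}$), reducing matters to the identity $\int_{\Omega}\varphi\,\curl\FF\dif x=\int_{\Omega}\FF\times\nabla\varphi\dif x$ for $\varphi\in\hold_{\rm c}^{\infty}(\Omega)$. The paper establishes this by approximating $\FF$ with smooth $\FF_{j}$, invoking the product rule $\curl(\varphi\FF_{j})=\varphi\,\curl\FF_{j}-\FF_{j}\times\nabla\varphi$, and applying the classical Gauss--Green formula \eqref{eq:calculus2} on an auxiliary domain $\widetilde{\Omega}$ with $\spt(\varphi)\subset\widetilde{\Omega}\Subset\Omega$; you instead read the identity off directly from the distributional definition of $\curl\FF$ tested against $\varphi e_{i}$. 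Your version is slightly more economical since the mollification step is already absorbed into the meaning of ``$\curl\FF\in\lebe^{p}$'', but the underlying content is identical.
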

\begin{proof}
Let $\varphi\in\hold_{\rm c}^{1}(\Omega)$. Since $\mathrm{dist}(\spt(\varphi),\partial\Omega)>0$, we find an open and bounded set $\widetilde{\Omega}$ with $\hold^{\infty}$-boundary such that $\spt(\varphi)\subset\widetilde{\Omega}\Subset\Omega$. Since $\curl\FF\in\lebe^{p}(\Omega;\R^{3})$, $|\curl\FF|(\partial\Omega')=0$. By routine smooth approximation, we find a sequence $(\FF_{j})\subset\hold^{\infty}(\Omega;\R^{3})\cap\sobo^{\curl,p}(\Omega)$ such that $\FF_{j}\to\FF$ strongly in $\lebe^{p}(\Omega;\R^{3})$ and $\curl\FF_{j}\to\curl\FF$ strongly in $\lebe^{p}(\Omega;\R^{3})$. 
Working directly from \eqref{eq:tangentialtrace}, we then obtain 
\begin{align*}
\langle (\FF \times \nu)_{\Omega'}^{\mathrm{int}}, \varphi \rangle_{\partial \Omega'} - \langle (\FF \times \nu)_{\Omega'}^{\mathrm{ext}}, \varphi \rangle_{\partial \Omega'}  & = \int_{{\Omega'}} \varphi \, \d ( \curl \FF) - \int_{{\Omega'}}   \FF\times\nabla \varphi  \,\dif x, \\  
& = \lim_{j\to\infty}\int_{{\Omega'}}\varphi\,\curl\FF_{j}\dif x - \int_{{\Omega'}}\FF_{j}\times\nabla\varphi\dif x \\ 
& = \lim_{j\to\infty} \int_{\Omega'}\curl(\varphi\FF_{j})\dif x =  \lim_{j\to\infty} \int_{\widetilde{\Omega}}\curl(\varphi\FF_{j})\dif x \\ 
& \!\!\! \stackrel{\eqref{eq:calculus2}}{=} \lim_{j\to\infty} \int_{\partial\widetilde{\Omega}}\varphi(\FF_{j}\times\nu_{\partial\widetilde{\Omega}})\dif\mathscr{H}^{2} = 0,  
  \end{align*}
where we used the classical integration-by-parts formula \eqref{eq:calculus2} from the Appendix \hyperref[sec:AppendixD]{D}. 
This is \eqref{eq:pequality}, and the proof is complete. 
\end{proof}

We now come to Theorem \ref{thm:StokesWcurl}.
\begin{proof}[Proof of Theorem \ref{thm:StokesWcurl}]
We begin with some preliminary remarks. By Lemma \ref{lem:leftrightsame}, we have  $ (\FF\times\nu)_{\Omega'}^{\mathrm{int}}=(\FF\times\nu)_{\Omega'}^{\mathrm{ext}}$. Moreover, we note that $\sobo^{\curl,p}(\Omega)\hookrightarrow\mathscr{CM}_{1}^{p}(\Omega)$, which can be seen as follows: By the results gathered in Appendix \hyperref[sec:AppendixC]{C}, the tangential trace operator on $\sobo^{\curl,p}$ maps $\mathrm{tr}_{\tau}^{p}\colon\sobo^{\curl,p}(\Omega')\to\mathcal{X}_{\partial\Omega'}^{p} (\subset \sobo^{1/p,p'}(\partial\Omega';\R^{3})')$ so that, in particular, $\mathrm{tr}_{\tau}^{p}(\FF)\in\sobo^{-1/p,p}(\partial\Omega';T_{\partial\Omega'})$ and $f:=\mathrm{div}_{\tau}(\mathrm{tr}_{\tau}^{p}(\FF))\in\sobo^{-1/p,p}(\partial\Omega')$; clearly, $\mathrm{tr}_{\tau}^{p}(\FF)=(\FF\times\nu)_{\Omega'}^{\mathrm{int}}$. We now consider the auxiliary equation
\begin{align}\label{eq:auxsolve}
\begin{cases}
-\Delta_{\tau}u = \mathrm{div}_{\tau}(\mathrm{tr}_{\tau}^{p}(\FF)) &\;\text{in}\;\partial\Omega',\\[1mm] 
\displaystyle\int_{\partial\Omega'}u\dif\mathscr{H}^{2}=0& 
\end{cases}
\end{align}
with the Laplace-Beltrami operator $\Delta_{\tau}$. 
Because of $p> \frac{3}{2}$, we have  $f\in\sobo^{-1,2}(\partial\Omega')$, which can be seen directly via Proposition \ref{prop:embeddingsmanifolds}\ref{item:embmanifold1}: 
\begin{align*}
\sobo^{1,2}(\partial\Omega')\stackrel{\text{Prop. \ref{prop:embeddingsmanifolds}\ref{item:embmanifold1}}}{\hookrightarrow} \sobo^{1/p,p'}(\partial\Omega')\Longrightarrow \sobo^{-1/p,p}(\partial\Omega')\cong \sobo^{1/p,p'}(\partial\Omega')'\hookrightarrow \sobo^{-1,2}(\partial\Omega'). 
\end{align*}
Now, since $\mathrm{tr}_{\tau}^{p}(\FF)$ is tangential to $\partial\Omega'$ (see the definition of $\mathcal{X}_{\partial\Omega'}^{p}$) and the manifold $\partial\Omega'$ has no boundary, $f$ moreover satisfies condition \eqref{eq:fixing} from the Appendix \hyperref[sec:AppendixA3]{A.3}: 
\begin{align*}
\langle f,\mathbbm{1}_{\partial\Omega'}\rangle_{\partial\Omega'} = -\langle \mathrm{tr}_{\tau}^{p}(\FF),\nabla_{\tau}\mathbbm{1}_{\partial\Omega'}\rangle =0. 
\end{align*}
Hence, by Remark \ref{rem:existencevar} from the Appendix \hyperref[sec:AppendixA3]{A.3}, \eqref{eq:auxsolve} has a solution $u\in\sobo^{1,2}(\partial\Omega')$. 
We then define 
$\mathbf{G}:=-\nabla_{\tau}u$, so that $\mathrm{div}_{\tau}(\GG)=\mathrm{div}_{\tau}(\mathrm{tr}_{\tau}^{p}(\FF))$ by \eqref{eq:auxsolve}. In particular, 
\begin{align*}
\mathrm{div}_{\tau}\big(\mathcal{T}_{(\FF\times\nu)_{\Omega'}^{\mathrm{int}}}\big) = \mathrm{div}_{\tau}(\GG)\qquad\text{as an identity in}\;\hold^{2}(\partial\Omega')'. 
\end{align*}
For \ref{item:StokesMG4}, if $\frac{3}{2}< p \leq 3$, then $u\in\sobo^{1,2}(\partial\Omega')$ and so $\mathbf{G}=-\nabla_{\tau}u\in\lebe^{2}(\partial\Omega';T_{\partial\Omega'})$. The conclusion of \ref{item:StokesMG4} thus follows from Theorem \ref{thm:StokesDist}\ref{item:StokesMG1}--\ref{item:StokesMG2}.\\ 

\noindent
For \ref{item:StokesMG5}, if $3<p\leq\infty$,
then the elliptic regularity result from Proposition \ref{prop:ellreg} from the Appendix \hyperref[sec:AppendixA3]{A.3} gives us 
\begin{align*}
\mathrm{div}_{\tau}(\mathrm{tr}_{\tau}^{p}(f))\in\sobo^{-1/p,p}(\partial\Omega') \Longrightarrow u\in\sobo^{2-1/p,p}(\partial\Omega') \Longrightarrow \mathbf{G}=-\nabla_{\tau}u\in \sobo^{1-1/p,p}(\partial\Omega'). 
\end{align*}
By Proposition \ref{prop:embeddingsmanifolds}\ref{item:embdmanifold2}  from the Appendix \hyperref[sec:AppendixA3]{A.3}, $\sobo^{1-1/p,p}(\partial\Omega')\hookrightarrow\hold(\partial\Omega')$ provided that $p>3$. Hence, $\mathbf{G}\in\hold(\partial\Omega';T_{\partial\Omega'})$, and so the conclusion of \ref{item:StokesMG5} follows from Theorem \ref{thm:StokesDist}\ref{item:StokesMG1}\&\ref{item:StokesMG3}. The proof is complete. 
\end{proof}
We conclude the subsection with the following remark. 
\begin{rem}

In the preceding proof, the key point is that the $\mathrm{curl}$ comes with more structure (or compatibility conditions) than the divergence. This is reflected by the fact that, for $\FF\in\sobo^{\curl,p}(\Omega')$, the distributional tangential divergence $\mathrm{div}_{\tau}(\mathrm{tr}_{\tau}^{p}(\FF))$ belongs to $\sobo^{-1/p,p}(\partial\Omega')$; this can be viewed as a very weak, yet crucial regularity result on $\mathrm{tr}_{\tau}(\FF)$. 
By way of comparison, in the case of $\sobo^{\mathrm{div},p}$-fields, we can only assert that the normal trace $\mathrm{tr}_{\nu}^{p}(\FF)$ belongs to $\sobo^{-1/p,p}(\partial\Omega')$ but, in general, satisfies no additional condition.

We note that, if $p>3$, then $\sobo^{\curl,p}(\Omega)\not\hookrightarrow\hold(\Omega;\R^{3})$, different from $\sobo^{1,p}(\Omega;\R^{3})\hookrightarrow\hold_{b}(\Omega;\R^{3})$ (if, \emph{e.g.}, $\partial\Omega$ is Lipschitz). In the latter case, the Stokes theorem would follow by easier means. Yet, we wish to stress that despite the lack of continuity of $\sobo^{\curl,p}$-fields for $p>3$, \emph{no} tangential variations of the underlying manifold $\Sigma$ are required. 
\end{rem}
\subsection{Examples and special cases}\label{sec:StokesMostGeneral}
We now discuss the results of the previous sections in view of Theorem \ref{thm:StokesDist}, and start with illustrating the results by expanding on Examples \ref{ex:limitationsp}--\ref{ex:1leqpleqinfty}. 
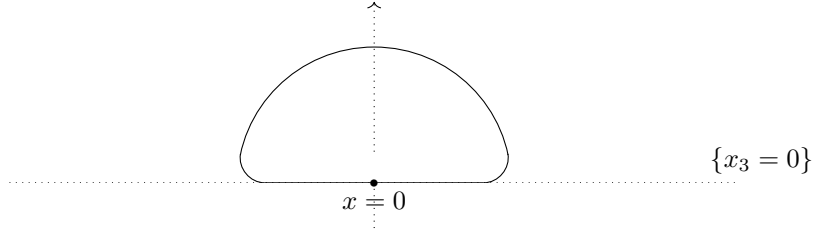
\begin{figure}[t]
\begin{tikzpicture}[scale=1.2]
\begin{scope}
 \node at (0,0) {\tiny\textbullet};
 \node[below] at (0,0) {$x=0$};
 \draw[dotted,->] (0,-0.5) -- (0,2);
    \clip (-6,0) rectangle (6,1.6);
    \draw (0,0) circle(1.5);
    \draw[-] (-1.5,0) -- (1.5,0);
    \draw (-1.5,0) -- (1.5,0);
    \draw[white,fill=white] (-1.7,-0.2) -- (1.7,-0.2) -- (1.7,0.3) -- (-1.7,0.3) -- (-1.7,0);
     \draw[-] (-1.2,0) -- (1.2,0);
     \draw[-,black] (-1.466,0.32) [out = -100, in = 180] to (-1.2,0);
     \draw[-,black] (1.466,0.32) [out = 280, in = 0] to (1.2,0);
     \draw[dotted,-] (-4,0) -- (4,0); 
     \node at (4.25,0.25) {$\{x_{3}=0\}$}; 
      \node at (0,0) {\tiny\textbullet};
\end{scope}
\end{tikzpicture}
\caption{Cross section of the domain from Example \ref{ex:limitationsctd}.}\label{fig:crosssection}
\end{figure}

\begin{example}[Example \ref{ex:limitationsp}, continued]\label{ex:limitationsctd}
In this example, $\FF(x)=-\frac{1}{4\pi}\frac{x}{|x|^{3}}$ and $\Omega=\ball_{2}(0)$
so that $\curl\FF=0$ on $\Omega$. We let $\Omega'\Subset\Omega$ be the open and bounded domain with $\hold^{\infty}$-boundary which emerges by rotating the cross section indicated in Figure \ref{fig:crosssection} around the $x_{3}$-axis; compared to the upper half-circle, we require this modification in order to work with sufficiently regular domains. We note that, for every $0<\varepsilon<1$, $(\FF\times\nu_{\partial\Omega'})$ is a $\hold^{2}$-function on $\partial\Omega'\setminus\overline{\ball}{_{\varepsilon}^{(2)}}(0)$. By the same computation as in Example \ref{ex:limitationsp}, $\mathcal{T}_{(\FF\times\nu)_{\Omega'}^{\mathrm{int}}}$ is a distribution of order one. Moreover, testing \eqref{eq:maxlaim} with tangential gradients, we equally find that $\mathrm{div}_{\tau}(\mathcal{T}_{(\FF\times\nu)_{\Omega'}^{\mathrm{int}}})$ can be represented as $\mathrm{div}_{\tau}(\mathbf{G})$, where $\mathbf{G}$ is a tangential $\hold^{2}$-field such that $\mathrm{div}(\mathbf{G})=0$ on  $V=\partial\Omega'\cap\{x_{3}=0\}$. In particular,
\begin{align*}
\langle\mathrm{div}_{\tau}(\mathcal{T}_{(\FF\times\nu)_{\Omega'}^{\mathrm{int}}}),\varphi\rangle = - \langle \mathcal{T}_{(\FF\times\nu)_{\Omega'}^{\mathrm{int}}},\nabla_{\tau}\varphi\rangle =0\qquad\,\,
\text{for all}\;\varphi\in\hold_{\rm c}^{2}(V),
\end{align*}
and we may fix this choice of $\GG$ in Theorem \ref{thm:StokesDist}. Yet, we emphasize that, by Example \ref{ex:limitationsp}, $\mathcal{T}_{(\FF\times\nu)_{\Omega'}^{\mathrm{int}}}$ cannot be represented as a Radon measure on $\partial\Omega'$.

Since $\FF\notin\lebe_{\locc}^{\infty}(\Omega;\R^{3})$, it is not possible to apply Theorems \ref{thm:stokes} or \ref{thm:stokes1st}. In particular, 
\begin{itemize}
    \item[(i)] $(\FF\times\nu)_{\Omega'}^{\mathrm{int}}$ is not even an extended divergence measure field. This especially implies that a potential analogue of Theorem \ref{thm:stokes1st}, where one uses a finiteness condition on the transversal maximal operator in order to reduce to the Gauss--Green theorem for $\mathscr{DM}^{\mathrm{ext}}$-fields on $\partial\Omega'$, is impossible for $1\leq p<\infty$ in general.
    \item[(ii)] the Stokes theorem now is a consequence of Theorem \ref{thm:StokesDist}: If $\Sigma\subset\partial\Omega'$ is a $\hold^{2}$-regular Lipschitz boundary manifold relative to $\Omega'$ with $0\in\mathrm{int}(\Sigma)$ and $\Sigma\Subset V$, then we may apply Theorem \ref{thm:StokesDist}\ref{item:StokesMG3}. The vector field $\GG$ is continuous and vanishes in a relative neighborhood of $\Gamma_{\Sigma}$, whereby 
    \begin{align*}
    [\mathcal{N}_{\curl(\FF)\cdot\nu,\Omega'}]_{\Sigma} = - \langle\GG\cdot\nu_{\Gamma_{\Sigma}},\mathbbm{1}_{\Sigma}\rangle_{\Gamma_{\Sigma}}=0. 
    \end{align*}
\end{itemize}

\end{example}
\begin{example}[Example \ref{ex:singCMP}, continued] With $\Omega,\Omega'\subset\R^{3}$ as in the previous example, we consider the field \eqref{eq:curldef}. Its distributional tangential trace belongs to $\lebe^{q}(\partial\Omega';T_{\partial\Omega'})$ for any $1\leq q<2$; along $\partial\Omega'\cap\{x_{3}=0\}$, it is given by 
\begin{align*}
(\FF\times\nu_{\partial\Omega'})(x_{1},x_{2},x_{3}) 
= \frac{1}{2\pi}\Big(\frac{x_{1}}{x_{1}^{2}+x_{2}^{2}},\frac{x_{2}}{x_{1}^{2}+x_{2}^{2}},0\Big),
\end{align*}
and its tangential divergence is given by 
\begin{align*}
\mathrm{div}_{\tau}(\FF\times\nu_{\partial\Omega'})=\delta_{(0,0,0)} +\mathrm{div}_{\tau}(\mathbf{H}),  
\end{align*}
where $\mathbf{H}$ is supported on $\overline{\partial\Omega'\cap\{x_{3}>0\}}$ and of class $\hold^{1}$. Based on the representation \eqref{eq:distclaim}, we have with any natural choice of the transversal collar map $\Phi$ that 
\begin{align}\label{eq:goodfiniteness}
\mathcal{M}_{\partial\Omega',\partial\Omega'}^{\Phi}(\curl\FF)(0) \leq  \sup_{0<\varepsilon<\frac{1}{2}}\frac{|\curl\FF|(\{x\in\Omega'\colon\;\mathrm{dist}(x,\partial\Omega')<\varepsilon\})}{\varepsilon} \leq c<\infty. 
\end{align}
Here, Theorem \ref{thm:stokes1st} is not applicable, since the distributional tangential trace does not belong to $\lebe^{\infty}(\partial U;\R^{3})$. Moreover,  Example \ref{ex:limitationsctd} shows that there cannot be a general theorem on the reducibility of tangential traces to those of $\mathscr{DM}^{p}$-fields even in the presence of a condition such as \eqref{eq:goodfiniteness}. Yet, $(\FF\times\nu_{\partial\Omega'})$ is continuous away from zero, and so Theorem \ref{thm:StokesDist}\ref{item:StokesMG3} gives us for balls $\Sigma_{r}:=\ball_{r}^{(2)}(0)$ with sufficiently small $0<r<1$ that 
\begin{align*}
-\int_{\partial\Sigma_{r}}(\FF\times\nu_{\Sigma_{r}})\cdot\nu_{\Gamma_{\Sigma_{r}}^{0}}\dif\mathscr{H}^{1} & =\frac{1}{2\pi r}\mathscr{H}^{1}(\partial\Sigma_{r}) = 1 = [\mathrm{div}_{\tau}(\mathcal{T}_{(\FF\times\nu)_{\Omega'}^{\mathrm{int}}})]_{\Sigma_{r}}.  
\end{align*}
\end{example}

{\begin{example}[High Regularity Scenarios]\label{ex:highreg}
Expanding on Example \ref{ex:tanjumps1A}, the main assumptions of Theorem \ref{thm:StokesDist} are always satisfied if, \emph{e.g.}, $\FF\in\sobo^{2,1}(\Omega;\R^{3})$; in this case, the full (and hence tangential) traces belong to $\sobo^{1,1}(\partial\Omega';T_{\partial\Omega'})$. Classical descriptions of the traces (see, \emph{e.g.} \cite[\S 5.3, Theorem 2]{EvansGariepy}) then can be employed to directly yield the conclusion of Theorem \ref{thm:StokesDist}, but not so here. Going below this regularity regime, yet staying \emph{above} the regularity considered here, analogous and more direct arguments would require results as  \emph{e.g.} \cite[\S 5.3, Theorem 2]{EvansGariepy} for Besov functions; to the best of our knowledge, such results are not available at present; see \cite{Schneider} for an overview. 
\end{example}}
\subsection{Remarks on generalizations}\label{sec:remsgen} In stating our results, we have not opted for the highest possible generality but rather deal with the cases which we view most relevant for the first paper on the topic. 
We thus conclude the overall section with some comments on potential generalizations; in this regard, we highlight three points:
\begin{rem}[On Traces]
In the trace theory as discussed in \S \ref{sec:tracetheorem}, we primarily focused on sufficiently regular sets $\Omega'$ which are compactly contained in $\Omega$. Since we particularly require the results on the (distributional) tangential traces in view of Stokes' theorem, we did not aim for traces on irregular sets (see, \emph{e.g.} \cite{ChenLiTorres}); some aspects of such more irregular scenarios shall be covered in the follow-up paper \cite{ChenGmeinederStephanTorresYeh1} based on the methods developed here. Yet, even in the case of regular boundaries, a characterisation of the trace space of $\mathscr{CM}^{p}$-fields in the spirit of Irving's recent work \cite{Irving} seems difficult; this is essentially due to the fact that the curl is self-adjoint. Lastly, besides traces, associated extension theorems for curl-measure fields can be obtained by employing the recent work \cite{GmeinederSchiffer} on divergence-free extensions.
\end{rem}
\begin{rem}[On Stokes Theorems]\label{rem:generalizations}
  First, in Cartan's formulation, the Stokes theorem can be stated via differential forms and, more generally, certain differential operators. Keeping in mind physical applications, the curl as considered here is the primary object of interest, which is why we leave such generalizations to future work. Secondly, we often work with $\hold^{1}$- or even $\hold^{2}$-manifolds in the main part. Whereas statements in the low regularity context (\emph{e.g.} Lipschitz manifolds) are certainly desirable, some tools such as integration by parts and weakly differentiable functions on Lipschitz manifolds come with numerous technical difficulties. In particular, this concerns scenarios where curvature terms are involved, and would be in contrast to focusing on the conceptual points we aim to present in this paper; our findings are novel even in the case of $\hold^{\infty}$-manifolds. 
\end{rem}
\begin{rem}[Matrix fields]
In problems from elasticity in three-dimensional space, one often considers the curls of matrix fields; see, \emph{e.g.} \cite{GmeinederLewintanNeff,GmeinederLewintanNeff1,GmeinederSpector}, and the references therein. In this case, the curl is being applied row-wise, and the results obtained in the present paper apply to such situations as well with their natural modifications. 
\end{rem}
\section{$\mathscr{CM}$-Fields, Vorticity Fluxes, and Vortex Sheets}
\label{sec:applications}
In this final section, we showcase the utility of curl-measure fields and the Stokes theorems from the previous sections in view of physical problems describing vortex phenomena. By the ubiquity of the latter, we confine ourselves to a selection of applications in fluid mechanics, electromagnetism, and magnetohydrodynamics. Here, we focus on rigorous derivations and justifications of various types of consistency results in the low-regularity context. This illustrates the application of the above results, rather than devising existence theories for individual cases, a task we aim to address in future work.

\subsection{Differential form of Maxwell's equations in the low regularity context}\label{sec:differentialformMaxwell}

The connection between magnetic and electric fields as well as electric charges and currents is given by \emph{Maxwell's equations}. Given an open and bounded set $\Omega\subset\R^{3}$ with Lipschitz boundary $\partial\Omega$ and a final time $T>0$, 
we set $\Omega_{T}:=(0,T)\times\Omega$. 
For a charge density $\rho\colon\Omega_{T}\to\R$, 
let $\mathbf{E}\colon\Omega_{T}\to\R^{3}$ be an electric field that 
causes an electric displacement 
field $\mathbf{D}\colon\Omega_{T}\to\R^{3}$ with current density 
$\mathbf{J}\colon\Omega_{T}\to\R^{3}$. 
On the other hand, $\mathbf{E}$ leads to a magnetic field with 
magnetic flux density $\mathbf{{ H}}\colon\Omega_{T}\to\R^{3}$. 
These quantities are coupled via the following Maxwell equations: 
\begin{align}\label{eq:maxwell}
\begin{cases} 
\mathrm{div \,} \mathbf{{E}}= \rho &\text{(Gauss' electric law)}, \\ 
\mathrm{div \,}\mathbf{H}= 0 & \text{(Gauss' magnetic law),}\\ 
\partial_{t}\mathbf{E} = \curl \mathbf{H}-\mathbf{J} & \text{(Amp\'{e}re's law),}\\ 
\partial_{t}\mathbf{H} = - \curl \mathbf{E} & \text{(Faraday's law of induction)}. 
\end{cases}
\end{align}
The derivation of Amp\'{e}re's law is based on the following empirical law: {\it The circulation of the magnetic field is equal to the net current through $S$}. Similarly, the derivation of Faraday's law starts with the following empirical law: {\it The voltage around a curve $C$ is equal to the negative rate of change of the magnetic flux through surface $S$ with boundary $C$}. These principles are discussed in detail, \emph{e.g.}, in Feynman \cite{Feynman}. 

In the following, we aim to derive the equations in $\eqref{eq:maxwell}_{4}$ 
based on these physical principles in the low regularity context. 
To keep our exposition at a reasonable length, we assume for simplicity 
that the electric field and magnetic flux densities satisfy  
\begin{align}\label{eq:functionspacesetup}
\mathbf{E}\in\sobo^{1,1}(0,T;X(\Omega)),\qquad\;
\mathbf{H}\in\sobo^{1,1}(0,T;Y(\Omega)), 
\end{align}
where 
\begin{align*}
X(\Omega) & := \{\FF\in\lebe^{\infty}(\Omega;\R^{3})\colon\;\curl\FF\in\lebe^{1}(\Omega;\R^{3})\;\text{and}\;\mathrm{div}\FF \in\mathrm{RM}_{\mathrm{fin}}(\Omega)\}, \\ 
Y(\Omega) & := \{\FF\in\lebe^{1}(\Omega;\R^{3})\colon\;\curl\FF\in\lebe^{\infty}(\Omega;\R^{3})\}.
\end{align*}
Both assumptions are natural from a physical perspective  and in view of \eqref{eq:maxwell}, of which we aim to derive $\eqref{eq:maxwell}_{4}$. We wish to stress that, for the purpose of the following derivation, the regularity in time can be lowered. However, primarily aiming to address the spatial low regularity context, we stick to this assumption. \\

\begin{figure}[t]
\begin{center}
\begin{tikzpicture}
\draw[black!20!white,fill=black!20!white,opacity=0.4] (-2.5,-1.05) -- (-1.5,-1.05) -- (-1.5,1.05) -- (-2.5,1.05) -- (-2.5,-1.05);
\node[black!40!white] at (-3.5,0.5) {$\mathcal{B}_{R_{1},R_{2},\delta}(z_{0})$};
\draw[black!40!white] (-2.75,0.25) -- (-2,0);
\draw[thick, blue, rounded corners = 10pt] (-1.15,-1.5) -- (1.5,-1.5) -- (1.5,1.5) -- (-1.5,1.5) -- (-1.5,-1.5) -- (1.15,-1.5);
\node[blue] at (1,0.5) {$\partial\mathrm{Q}_{R_{1}}$};
\draw[dotted] (-1.6,-1.6) -- (1.6,-1.6) -- (1.6,1.6) -- (-1.6,1.6) -- (-1.6,1.6) -- (-1.6,-1.6);
\draw[dotted] (-1.7,-1.7) -- (1.7,-1.7) -- (1.7,1.7) -- (-1.7,1.7) -- (-1.7,1.7) -- (-1.7,-1.7);
\draw[dotted] (-1.8,-1.8) -- (1.8,-1.8) -- (1.8,1.8) -- (-1.8,1.8) -- (-1.8,1.8) -- (-1.8,-1.8);
\draw[dotted] (-1.9,-1.9) -- (1.9,-1.9) -- (1.9,1.9) -- (-1.9,1.9) -- (-1.9,1.9) -- (-1.9,-1.9);
\draw[dotted] (-2,-2) -- (2,-2) -- (2,2) -- (-2,2) -- (-2,2) -- (-2,-2);
\draw[dotted] (-2.1,-2.1) -- (2.1,-2.1) -- (2.1,2.1) -- (-2.1,2.1) -- (-2.1,2.1) -- (-2.1,-2.1);
\draw[dotted] (-2.2,-2.2) -- (2.2,-2.2) -- (2.2,2.2) -- (-2.2,2.2) -- (-2.2,2.2) -- (-2.2,-2.2);
\draw[dotted] (-2.3,-2.3) -- (2.3,-2.3) -- (2.3,2.3) -- (-2.3,2.3) -- (-2.3,2.3) -- (-2.3,-2.3);
\draw[dotted] (-2.4,-2.4) -- (2.4,-2.4) -- (2.4,2.4) -- (-2.4,2.4) -- (-2.4,2.4) -- (-2.4,-2.4);
\draw[dotted] (-2.5,-2.5) -- (2.5,-2.5) -- (2.5,2.5) -- (-2.5,2.5) -- (-2.5,2.5) -- (-2.5,-2.5);
\node at (0,0) {\tiny\textbullet};
\node[right] at (0,-0.2) {$z_{0}$};
\draw[->] (-1.5,0.75) -- (-0.75,0.75);
\node[above] at (-0.75,0.75) {$\nu=e_{2}$};
\draw[dotted,blue,thick] (-2.5,2.8) -- (-2.5,-4);
\draw[dotted,blue,thick] (-2.4,2.8) -- (-2.4,-4);
\draw[dotted,blue,thick] (-2.3,2.8) -- (-2.3,-4);
\draw[dotted,blue,thick] (-2.2,2.8) -- (-2.2,-4);
\draw[dotted,blue,thick] (-2.1,2.8) -- (-2.1,-4);
\draw[dotted,blue,thick] (-2.0,2.8) -- (-2.0,-4);
\draw[dotted,blue,thick] (-1.9,2.8) -- (-1.9,-4);
\draw[dotted,blue,thick] (-1.8,2.8) -- (-1.8,-4);
\draw[dotted,blue,thick] (-1.7,2.8) -- (-1.7,-4);
\draw[dotted,blue,thick] (-1.6,2.8) -- (-1.6,-4);
\draw[dotted,blue,thick] (-1.5,2.8) -- (-1.5,-4);
\draw[-,blue] (-2.5,-4) -- (-1.5,-4); 
\node[below,blue] at (-2.5,-4) {$R_{2}$}; 
\node[below,blue] at (-1.5,-4) {$R_{1}$}; 
\draw[-] (-5,-1) [out = -20, in = 160] to (-1,-3.25) [out = -20, in =200] to (3,-3.25) [out = 20, in = 200] to (6,-1); 
\node at (4.5,-0.5) {\LARGE $\Omega$}; 
\node at (5,-2.75) {\LARGE $\R^{3}\setminus\overline{\Omega}$}; 
\end{tikzpicture}
\end{center}
\caption{The construction from \S \ref{sec:differentialformMaxwell}.}\label{fig:constructionMaxwell}
\end{figure}
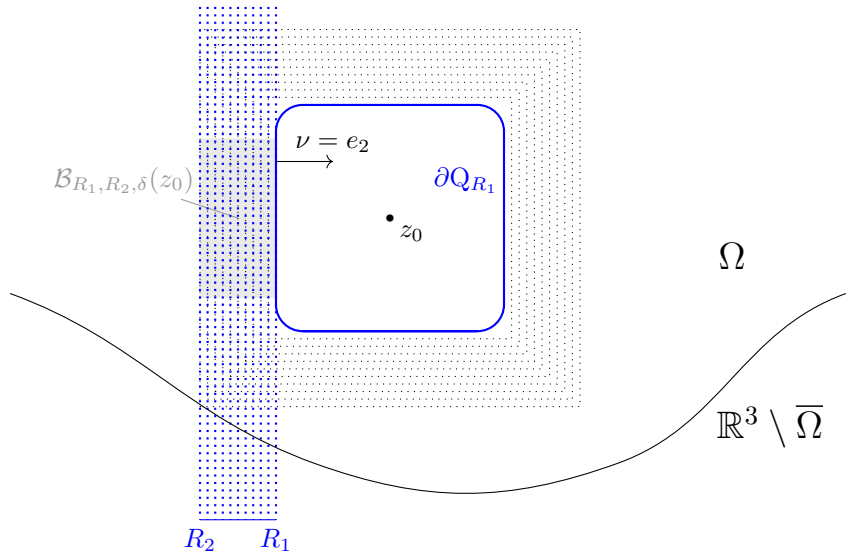

\emph{Step 1.} For $z_{0}\in\Omega$, a suitable interval $I:=I_{R_{1},R_{2}}:=[R_{1},R_{2}]$ and $R\in I$, we consider 
sets $\mathrm{Q}_{R}$ of the form as indicated in Figure \ref{fig:constructionMaxwell};  
these are nothing but cubes $\{x\colon |x-z_{0}|_{\infty}<R\}$ with the edges being rounded off. In particular, each $\mathrm{Q}_{R}$ has $\hold^{\infty}$-boundary. For $R\in [R_{1},R_{2}]$,  $\partial\mathrm{Q}_{R}$ consists of eight mutually disjoint straight faces and eight mutually disjoint curved parts, each of the mutually disjoint straight faces being parallel to some coordinate plane. Out of those straight faces, 
we let $\mathcal{F}_{R}$ be the one that 
is parallel to the $x_{1}$-$x_{3}$ {plane}
and whose elements have smaller $x_{2}$-coordinate. We let $0<\delta<1$ be arbitrary but sufficiently small; then the set 
$\mathcal{F}_{R,\delta}:=\{x\in\mathcal{F}_{R}\,\colon\;\mathrm{dist}(x,\Gamma_{\mathcal{F}_{R}})>\delta\}$ is relatively compact in $\mathcal{F}_{R}$.

Based on this geometric set-up, it is not difficult to see that there exists a sufficiently smooth collar map $\Phi$ on $ (-1,1)\times\partial\mathrm{Q}_{R_{1}}$ such that, for $t\in(-\frac{1}{2},0]$, 
\begin{itemize}
    \item[(i)] $\Phi(t,\partial\mathrm{Q}_{R_{1}})=\partial\mathrm{Q}_{R_{1}-2t(R_{2}-R_{1})}$, 
    \item[(ii)] $\Phi(t,A)=2t(R_{2}-R_{1})e_{2}+ A $ for any $A\subset\mathcal{F}_{R_{1},\delta}$.
\end{itemize}
We then define the box 
\begin{align}\label{eq:box}
\mathcal{B}_{R_{1},R_{2},\delta}(z_{0}):=\bigcup_{-\frac{1}{2}<t<0}\Phi(t,\mathcal{F}_{R_{1},\delta}),  
\end{align}
see Figure \ref{fig:constructionMaxwell}.
This box has positive $\mathscr{L}^{3}$-measure. Moreover, we record that 
\begin{align}\label{eq:goodunitnormal}
\text{Along any face $\Phi(t,\mathcal{F}_{R_{1},\delta})$, the inner unit normal to the face equals $e_{2}$.}
\end{align}
Finally, notice that we can cover the entire $\Omega$ by boxes of 
form \eqref{eq:box} by suitably adjusting $z_{0}$, $R_{1},R_{2}$, and $\delta$. 

\emph{Step 2.} Let $\mathbf{G}\in\lebe^{1}(\Omega;\R^{3})$. Then, based on the geometric set-up given in Step 1, there exists a set $I_{\mathbf{G}}\subset I$ with $\mathscr{L}^{1}(I\setminus I_{\mathbf{G}})=0$ such that, for every $R\in I_{\mathbf{G}}$, $\mathscr{H}^{2}$-a.e. $x\in\partial\mathrm{Q}_{R}$ is a Lebesgue point of $\mathbf{G}$; see Lemma \ref{lem:Lebesgue}. In particular, for each $R\in I_{\mathbf{G}}$, there exists $\mathcal{N}_{R,\mathbf{G}}\subset\partial\mathrm{Q}_{R}$ such that $\mathscr{H}^{2}(\mathcal{N}_{R,\mathbf{G}})=0$, 
and for every $x\in\partial\mathrm{Q}_{R}\setminus \mathcal{N}_{R,\mathbf{G}}$,  
\begin{align*}
    \mathbf{G}^{*}(x):=\lim_{r\searrow 0}\dashint_{\ball_{r}(x)}\mathbf{G}(y)\dif y 
\end{align*}
exists and is finite. Thus, defining $\widetilde{\mathbf{G}}\colon \partial\mathrm{Q}_{R}\to\R^{3}$ for $R\in I_{\mathbf{G}}$ by
\begin{align*}
\widetilde{\mathbf{G}}(x):=\begin{cases} 
\mathbf{G}^{*}(x)&\;\text{if}\;x\in\partial\mathrm{Q}_{R}\setminus \mathcal{N}_{R,\mathbf{G}}, \\ 
0&\;\text{if}\;x\in \mathcal{N}_{R,\mathbf{G}}, 
\end{cases}
\end{align*}
we have $\widetilde{\mathbf{G}}\in\lebe^{1}(\partial\mathrm{Q}_{R};\R^{3})$. By the Lebesgue differentiation theorem, there exists a set $\mathcal{L}'_{R,\mathbf{G}}\subset\partial\mathrm{Q}_{R}$ with $\mathscr{H}^{2}(\partial\mathrm{Q}_{R}\setminus \mathcal{L}'_{R,\mathbf{G}})=0$ such that 
\begin{align*}
\widetilde{\mathbf{G}}^{*}(x):=\lim_{r\searrow 0}\dashint_{\ball_{r}(x)\cap\partial\mathrm{Q}_{R}}\widetilde{\mathbf{G}}(y)\dif\mathscr{H}^{2}(y) \qquad\text{exists and is finite for all $x\in \mathcal{L}'_{R,\mathbf{G}}$}. 
\end{align*}
Specifically, we note that 
\begin{align*}
\overline{\mathbf{G}}(x):=\begin{cases}
\widetilde{\mathbf{G}}^{*}(x) &\;\text{if}\;x\in \mathcal{L}'_{R,\mathbf{G}}, \\ 
0&\;\text{otherwise}, 
\end{cases}
\end{align*}
is an $\lebe^{1}$-representative of $\widetilde{\mathbf{G}}$ and so $\mathscr{H}^{2}(\{x\in\partial\mathrm{Q}_{R}\colon\;\widetilde{\mathbf{G}}(x)\neq\overline{\mathbf{G}}(x)\})=0$. In particular, this means that there exists a set $\mathcal{L}_{R,\mathbf{G}}\subset\partial\mathrm{Q}_{R}$ with $\mathscr{H}^{2}(\partial\mathrm{Q}_{R}\setminus\mathcal{L}_{R,\mathbf{G}})=0$ and 
\begin{align}\label{eq:oh!darling}
\lim_{r\searrow 0}\dashint_{\ball_{r}(x)}\mathbf{G}(y)\dif y = \lim_{r\searrow 0}\dashint_{\ball_{r}(x)\cap\partial\mathrm{Q}_{R}}\widetilde{\mathbf{G}}(y)\dif\mathscr{H}^{2}(y)\qquad\text{for every  $x\in\mathcal{L}_{R,\mathbf{G}}$}.
\end{align}
Now suppose moreover that $\mathbf{G}\in\lebe^{1}(\Omega;\R^{3})$ is such that $\mathrm{div}(\mathbf{G})\in\lebe^{1}(\Omega)$. We then use the fact that, 
\begin{align}\label{eq:crucialID}
\langle \mathbf{G}\cdot\nu,\varphi\rangle_{\partial\mathrm{Q}_{R}}=\int_{\partial\mathrm{Q}_{R}}\varphi\widetilde{\mathbf{G}}\cdot\nu_{\partial\mathrm{Q}_{R}}\dif\mathscr{H}^{2}\qquad\text{for all $R\in I_{\mathbf{G}}$ and  $\varphi\in\mathrm{Lip}_{0}(\Omega)$},
\end{align}
as can even be shown in the larger context of $\mathscr{DM}^{1}$-fields. 
As a consequence of the preceding identity, we record that, 
for each $R\in I_{\mathbf{G}}$, $\langle\mathbf{G}\cdot\nu,\cdot\rangle_{\partial\mathrm{Q}_{R}}$ can be represented by 
the finite Radon measure $\widetilde{\mathbf{G}}\cdot\nu_{\partial\mathrm{Q}_{R}}\mathscr{H}^{2}\mres\partial\mathrm{Q}_{R}$. 

\smallskip
\emph{Step 3.} By our assumptions, we see that  $\mathbf{E}(t,\cdot)\in\lebe^{\infty}(\Omega;\R^{3})$, $\curl\mathbf{E}(t,\cdot)\in\lebe^{1}(\Omega;\R^{3})$ and $\partial_{t}\mathbf{H}(t,\cdot)\in\lebe^{1}(\Omega;\R^{3})$, $\mathrm{div}(\partial_{t}\mathbf{H}(t,\cdot))=0(\in\lebe^{1}(\Omega))$ for $\mathscr{L}^{1}$-a.e. $0<t<T$. In particular, $\FF_{1}:=\curl\mathbf{E}(t,\cdot)$ satisfies $\FF_{1}\in\lebe^{1}(\Omega;\R^{3})$ together with $\mathrm{div}(\FF_{1})=0(\in\lebe^{1}(\Omega))$. Moreover, we put $\FF_{2}:=\partial_{t}\mathbf{H}(t,\cdot)$. Now, as a consequence of \textcolor{black}{Theorem \ref{thm:stokes1st}}, there exists a set $\mathscr{I}\subset I:=(R_{1},R_{2})$ such that $\mathscr{L}^{1}(I\setminus\mathscr{I})=0$ and the following holds: 
\begin{itemize}
    \item For every $R\in \mathscr{I}_{1}$, $\langle\FF_{1}\cdot\nu,\cdot\rangle_{\partial\mathrm{Q}_{R}}$ can be represented by an $\R^{3}$-valued Radon measure $\mu_{\FF_{1}\cdot\nu}^{R}$ on $\partial\mathrm{Q}_{R}$, and by (ii), the Stokes theorem is available for $\mathbf{E}(t,\cdot)$ on $\mathrm{Q}_{R}$ in the following form: For each sufficiently small $r>0$, there exists $g_{x_{0},r,R}\in\lebe^{\infty}(\partial\!\ball_{r}(x_{0})\cap\partial\mathrm{Q}_{R})$ such that 
    \begin{align*}
    \mathfrak{S}_{\ball_{r}(x_{0})\cap\partial\mathrm{Q}_{R},\mathbf{E}(t,\cdot)}(\varphi) = \int_{\partial\!\ball_{r}(x_{0})\cap\partial\mathrm{Q}_{R}}g_{x_{0},R,r}\varphi\dif\mathscr{H}^{1}. 
    \end{align*}
    holds for all $\varphi\in\hold^{2}(\partial\mathrm{Q}_{R})$. Here, we use the subscript $\mathbf{E}(t,\cdot)$ to express that the Stokes functional is applied to the field $\mathbf{E}(t,\cdot)$.
\end{itemize}
We then define 
\begin{align*}
\widetilde{I}:= I_{\FF_{1}}\cap I_{\FF_{2}}\cap \mathscr{I}
\end{align*}
and record that $\mathscr{L}^{1}(I\setminus\widetilde{I})=0$. Clearly, if $R\in \widetilde{I}$, then we find by \eqref{eq:crucialID} that 
\begin{align*}
\mu_{\FF_{1}\cdot\nu}^{R}=\widetilde{\FF}_{1}\cdot\nu_{\partial\mathrm{Q}_{R}}\mathscr{H}^{2}\mres\partial\mathrm{Q}_{R}
\end{align*}
and similarly for $\widetilde{\FF}_{2}$. Moreover, the set 
\begin{align*}
\mathcal{L}_{R,\FF_{1}}\cap\mathcal{L}_{R,\FF_{2}}\qquad\text{has full $\mathscr{H}^{2}$-measure in $\partial\mathrm{Q}_{R}$}. 
\end{align*}
As a consequence of Fubini's theorem, $\mathscr{L}^{1}(I\setminus\widetilde{I})=0$, 
and the preceding statements imply that, with $\mathcal{B}_{R_{1},R_{2},\delta}(z_{0})$ as in \eqref{eq:box}, 
\begin{align}\label{eq:boxmeasure}
\bigcup_{R\in\widetilde{I}}(\mathcal{L}_{R,\FF_{1}}\cap\mathcal{L}_{R,\FF_{2}}\cap\mathcal{B}_{R_{1},R_{2},\delta}(z_{0}))\qquad\text{has full $\mathscr{L}^{3}$-measure in}\;\mathcal{B}_{R_{1},R_{2},\delta}(z_{0}). 
\end{align}
\emph{Step 4.} For $R\in\widetilde{I}$, we now let $x_{0}\in\mathcal{L}_{R,\FF_{1}}\cap\mathcal{L}_{R,\FF_{2}}\cap\mathcal{B}_{R_{1},R_{2},\delta}(z_{0})$ be arbitrary. For all sufficiently small $r>0$, (ii) implies that we may write $\ball_{r}(x_{0})\cap\partial\mathrm{Q}_{R}$ as $\Phi(t,\Sigma)$, where $\Sigma$ is a ball contained in $\mathcal{F}_{R_{1},\delta}$, and therefore a smooth boundary manifold relative to $\partial\mathrm{Q}_{R_{1}}$. By our definition of $\FF_{1},\FF_{2}$ and $\widetilde{I}$, the Stokes theorem now allows us express the empirical form of Faraday's law as 
\begin{align}\label{eq:FaraFara}
\int_{\partial\!\ball_{r}(x_{0})\cap\partial\mathrm{Q}_{R}}g_{x_{0},R,r}\varphi\dif\mathscr{H}^{1} =  - \int_{\ball_{r}(x_{0})\cap\partial\mathrm{Q}_{R}}\varphi\,\widetilde{\partial_{t}\mathbf{H}}\cdot\nu_{\partial\mathrm{Q}_{R}}\dif\mathscr{H}^{2}
\end{align}
whenever $\varphi=1$ in an open neighborhood of $\ball_{r}(x_{0})\cap\partial\mathrm{Q}_{R}$. On the other hand, since $\langle\FF_{1}\cdot\nu,\cdot\rangle_{\partial\mathrm{Q}_{R}}$ can be represented by the finite Radon measure $\widetilde{\FF}_{1}\cdot\nu_{\partial\mathrm{Q}_{R}}\mathscr{H}^{2}\mres\partial\mathrm{Q}_{R}$,
Lemma \ref{lem:goodrepmeas} yields that, for $0<r\ll 1$, 
\begin{align*}
\int_{\partial\mathrm{Q}_{R}\cap\ball_{r}(x_{0})}\widetilde{\FF}_{1}\cdot\nu_{\partial\mathrm{Q}_{R}}\dif\mathscr{H}^{2} & = \mathfrak{S}_{\ball_{r}(x_{0})\cap\partial\mathrm{Q}_{R},\mathbf{E}(t,\cdot)}(\varphi) \\ & =  \int_{\partial\!\ball_{r}(x_{0})\cap\partial\mathrm{Q}_{R}}g_{x_{0},R,r}\varphi\dif\mathscr{H}^{1} \\ & \!\!\!\stackrel{\eqref{eq:FaraFara}}{=} - \int_{\ball_{r}(x_{0})\cap\partial\mathrm{Q}_{R}}\widetilde{\partial_{t}\mathbf{H}}\cdot\nu_{\partial\mathrm{Q}_{R}}\dif\mathscr{H}^{2}, 
\end{align*}
where $\varphi=1$ is an open neighbourhood of $\ball_{r}(x_{0})\cap\partial\mathrm{Q}_{R}$. 
Since $\nu_{\partial\mathrm{Q}_{R}}=e_{2}$ is constant along $\partial\mathrm{Q}_{R}\cap\ball_{r}(x_{0})$, we may divide the overall identity by $r$ to obtain 
\begin{align*}
\Big(\dashint_{\partial\mathrm{Q}_{R}\cap\ball_{r}(x_{0})}(\widetilde{\FF}_{1}+\widetilde{\partial_{t}\mathbf{H}})\dif\mathscr{H}^{2}\Big)\cdot e_{2} = 0
\end{align*}
for all sufficiently small $0<r\ll1$. Sending $r\searrow 0$ and recalling \eqref{eq:oh!darling}, our choice of $x_{0}$ gives us 
\begin{align}\label{eq:voomvoomvanillacamera}
\lim_{r\searrow 0}\Big(\dashint_{\ball_{r}(x_{0})}(\FF_{1}(y)+\partial_{t}\mathbf{H}(y))\dif y\Big)\cdot e_{2} = 0.
\end{align}
Because $R\in\widetilde{I}$ and $x_{0}\in\mathcal{L}_{R,\FF_{1}}\cap\mathcal{L}_{R,\FF_{2}}\cap\mathcal{B}_{R_{1},R_{2},\delta}(z_{0})$ are arbitrary,  \eqref{eq:boxmeasure}--\eqref{eq:voomvoomvanillacamera} yield that 
\begin{align}\label{eq:boxcomponent}
(\curl\mathbf{E}(t,\cdot))_{2}+\partial_{t}\mathbf{H}_{2}(t,\cdot) = 0\qquad\text{$\mathscr{L}^{3}$-a.e. in $\mathcal{B}_{R_{1},R_{2},\delta}(z_{0})$}
\end{align}
for the second components of the underlying fields. 
As mentioned at the end of Step 2, we may cover $\Omega$ by boxes of form \eqref{eq:box}. 
Thus, \eqref{eq:boxcomponent} yields that $(\curl\mathbf{E}(t,\cdot))_{2}+\partial_{t}\mathbf{H}_{2}(t,\cdot)=0$ holds $\mathscr{L}^{3}$-a.e. in $\Omega$. 

\emph{Step 5.} With the natural modifications, the above reasoning also applies to boxes not only on the left- and right-hand sides, but also to boxes $\mathcal{B}'_{R_{1},R_{2},\delta}(z_{0})$ above and below $\mathrm{Q}_{R_{1}}$. An analogous reasoning as in Steps 1--4 then gives us  $(\curl\mathbf{E}(t,\cdot))_{3}+\partial_{t}\mathbf{H}_{3}(t,\cdot)=0$ $\mathscr{L}^{3}$-a.e. in $\Omega$; similarly, we conclude $(\curl\mathbf{E}(t,\cdot))_{1}+\partial_{t}\mathbf{H}_{1}(t,\cdot)=0$ holds  $\mathscr{L}^{3}$-a.e. in $\Omega$. Hence, $\curl\mathbf{E}(t,\cdot))+\partial_{t}\mathbf{H}_(t,\cdot)=0$ holds $\mathscr{L}^{3}$-a.e. in $\Omega$, which is precisely the claimed differential form $\eqref{eq:maxwell}_{4}$ of Faraday's law. \\ 

In the preceding derivation, it is also possible to admit proper curl-measure fields (meaning that $\curl\mathbb{E}(t,\cdot)$ is a measure), but then the conclusion only persists for the absolutely continuous part. We conclude with several remarks: 
\begin{rem}
Many problems from superconductivity (see, \emph{e.g.},   \cite{Mirandaetal,WanLaforest}) or Bean's critical state model \cite{BarrettPrigozhin} often involve the $\curl$-based $p$-Laplacian operator $\curl(|\curl\mathbf{u}|^{p-2}\curl\mathbf{u})$ on the Sobolev-type space $\sobo^{\curl,p}(\Omega)$ for large values of $p$; in particular, $p>3$. For such problems, the Stokes theorem
(Theorem \ref{thm:StokesWcurl}) asserts that no proper manifold selection is required; specifically, this gives scenarios where the above derivation can be simplified.
\end{rem}

\begin{rem}[Alfv\'{e}n's theorem]
By a similar approach, one can give a derivation of Alfv\'{e}n's theorem \cite{Alfven1,Alfven2} from ideal magnetohydrodynamics in the low regularity context. Alfv\'{e}n's theorem asserts that, in the situation of high magnetic Reynolds number, electrically conducting fluids and the embedded  magnetic fields move together. On a more formal level, denote by $\Phi_{\mathbf{H}}$ the magnetic flux of an electrically conducting fluid through a $\hold^{1}$-regular Lipschitz boundary manifold $\Sigma$, which is advected by the velocity field $\mathbf{u}$. Denoting the associated material or advective derivative by $\frac{\mathrm{D}}{\mathrm{D}t}=\partial_{t}+(\mathbf{u}\cdot\nabla)$, Alfv\'{e}n's theorem can be expressed as $\frac{\mathrm{D}}{\mathrm{D}t}\Phi_{\mathbf{H}}=0$. 
\end{rem}

\subsection{$\mathscr{CM}^{p}$-fields and measure vorticities} We now briefly sketch how curl-measure fields can be used in fluid mechanical problems; the precise implementation shall be the objective of future work. To this end, consider the Cauchy problem of the $n$-dimensional incompressible Euler equations ($n=2,3$):
\begin{equation}\label{8.1}
\begin{cases}
\uu_t +\nabla\cdot (\uu\otimes \uu)+\nabla p=0,\qquad x\in \mathbb{R}^n, t>0,\\
{\rm div}\,\uu=0,\\
\uu|_{t=0}=\uu_0(x),
\end{cases}
\end{equation}
where $\uu=(u_1, \cdots,u_n)^\top$ is the fluid velocity, $p$ is the scalar pressure,
\begin{align*}
\uu\otimes\uu=(u_iu_j)_{i,j=1,\cdots,n}
\end{align*}
and $\uu_0=\uu_{0}(x)$ is an initial incompressible velocity field, {\it i.e.},
${\rm div}\uu_0=0$. The fluid vorticity is $\omega={\rm curl}\, \uu$; in particular, for $n=2$,
$\omega={\rm curl}\, \uu=u_{2x_1}-u_{1x_2}$.

One of the fundamental problems is to understand the structure of global weak solutions whose vorticity 
is a Radon measure, that is,
\begin{equation}\label{8.1c}
|\omega(t,\cdot)|(\R^{n})<\infty\qquad\,\,
\mbox{for any $t>0$}.
\end{equation}
To achieve this, it is essential to analyze the solution behaviors across any discontinuities such as
vortex sheets of the solutions of the above  equations.

\medskip
To this end, consider a discontinuity 
$$
\Sigma=\{(t,x)\,:\, \Phi(t,x)=0\}
$$
with the normal $N=(\partial_t\Phi, \nabla_x\Phi)=(\nu_0, \nu)=(\nu_0, \nu_1,\cdots, \nu_n)\ne 0$
in the solution $\uu(t,x)$:
\begin{align*}
\uu(t,x)=\begin{cases}
\uu^-(t,x) \qquad\mbox{when $\Phi(t,x)<0$},\\
\uu^+(t,x) \qquad\mbox{when $\Phi(t,x)>0$},
\end{cases}
\end{align*}
where $\uu^\pm(t,x)$ are weak solutions in $\{\pm \Phi(t,x)>0\}$, respectively, 
with suitable traces on $\Sigma$, and the vorticity $\omega(t,x)$ 
satisfies \eqref{8.1c}.
Then the Gauss-Green formula for divergence-measure fields in $\lebe^2$ indicates that
the necessary and sufficient conditions for $\uu(t,x)$ to be a weak solution
are the Rankine-Hugoniot conditions for the traces of $\uu$:
\begin{align}
&[\uu \nu_0 +(\uu\otimes\uu)\nu+p\nu]=0,\label{8.1a}\\
&[\uu\cdot\nu]=0,\label{8.2a}
\end{align}
where $[v]:=v|_{\Sigma+}-v|_{\Sigma-}$ denotes the jump of $v$ 
across the discontinuity $\Sigma$. Equally, the Trace Theorems  \ref{thm:tangtrace} and \ref{thm:tracemain1} for curl-measure fields implies that
\begin{align}
[\uu\times \nu] =\omega|_{\Sigma}\ne 0, \label{8.3a}
\end{align} 
where, for any fixed $t>0$, $\uu\times \nu$ is tangential to $\Sigma$ in the $x$-variables; here, we suppress the trace operators for ease of notation.

Denote $\uu_{\nu}=\uu\cdot \nu$ and $\uu_\tau=\mathbf{u}\times \nu$. Then \eqref{8.2a} becomes
\begin{align}
\uu_\nu^+= \uu_\nu^-, \label{8.2b}
\end{align}
which means that the normal trace of the velocity must be continuous 
across the discontinuity $\Sigma$, while
\eqref{8.1a} becomes
\begin{align}
\partial_t\Phi = -\uu_\nu^+= -\uu_\nu^-. \label{8.1b}
\end{align}
This is a condition on the normal traces from the left- and the right-hand sides of the sheet, whereas \eqref{8.3a} becomes
\begin{align}
\uu_{\tau}^+ - \uu_{\tau}^-=\omega|_{\Sigma}\ne 0  \label{8.3b}
\end{align}
as a condition on the tangential traces from the left- and the right-hand sides of the sheet. In particular, this indicates that the strength of the vorticity $\omega$ on $\Sigma$
determines the strength of the jump of the tangential velocity fields along $\Sigma$.

\medskip
For $n=2$, Delort in \cite{Delort} first established the following existence theorem:
Let $\omega_0={\rm curl}\,\uu_0\in \mathrm{RM}^+_{\mathrm{fin},\rm comp}\cap \mathrm{H}^{-1}(\mathbb{R}^{2};\R^{2})$.
Then there exist
\begin{align}\label{eq:Delort}
\uu\in \lebe_{\rm loc}^\infty(\mathbb{R}; \lebe_{\rm loc}^2(\mathbb{R}^2; \mathbb{R}^2)),
\qquad p\in \lebe^\infty_{\rm loc}(\mathbb{R}; \mathcal{D}'(\mathbb{R}^2))
\end{align}
such that $(u,p)$ is a global weak solution of the Cauchy problem \eqref{8.1} 
satisfying 
\begin{align}\label{eq:Delort1}
\omega(t,x)\ge 0, \quad 
|\omega(t,\cdot)|(\R^{2})\le |\omega_0|(\R^{2})\qquad\,\,
\mbox{for any $t>0$}.
\end{align}
See also  Evans-M\"{u}ller \cite{Evans} and Vecchi-Wu \cite{Wu}.
For the two-dimensional case, both $\uu_{\nu}=\uu\cdot \nu$ and $\uu_\tau=\uu\times \nu$
are scalar and satisfy
\begin{align}\label{8.2d}
\uu_\nu^+= \uu_\nu^-, \qquad 
\uu_{\tau}^+ - \uu_{\tau}^-=\omega|_{\Sigma}\ge 0. \
\end{align}
which means that the normal trace of the velocity must be continuous 
across the discontinuity $\Sigma$, while the positive strength of the vorticity $\omega$ on $\Sigma$
determines the positive strength of the jump of the tangential velocity fields along $\Sigma$.

In the two-dimensional case, the spatial local $\lebe^{2}$-integrability asserted in \eqref{eq:Delort} implies that \eqref{8.2d} has a precise meaning in the sense of normal or tangential traces of $\mathscr{DM}^{2}$- or  $\mathscr{CM}^{2}$-fields on $\R^{2}$; note, however, that in the two-dimensional case, the curl 
is a rotated divergence, and so the theory of $\mathscr{CM}^{2}$-fields can essentially be reduced to that of $\mathscr{DM}^{2}$-fields. In the three-dimensional case, however, this is not possible. 
Based on the tangential trace theorems from \S \ref{sec:tracetheorem}, a suitable version of 
\eqref{8.2d} persists \emph{if} one assumes that the solution $\mathbf{u}$ belongs to $\lebe_{\locc}^{\infty}(\R;\lebe_{\locc}^{p}(\R^{3};\R^{3}))$. However, at present, criteria for the latter are not clear. In this sense, a corresponding variant of \eqref{8.2d} is conditional to the existence of a sufficiently locally integrable solution. Subject to such an assumption, the Stokes theorem from \S \ref{sec:StokesMostGeneral} immediately allows for the description of the voriticity flux through the sheet.

The analysis of vortex sheets has faced numerous contributions over the past decades; see, \emph{e.g.}, \cite{Caflisch1988,Caflisch1989,Caflisch1992,ChenWang,Wu2002} and Wu \cite{Wu2006} for the detailed treatment of the two-dimensional case.
 In this regard, one is primarily interested in the evolution of the sheets, and the latter is governed by the so-called \emph{Birkhoff-Rott equation}. This is an evolution equation driven by a singular integral operator on manifolds. To keep the paper at a reasonable length, we do not aim to contribute to the solution theory of this equation here, a task that we shall pursue elsewhere. Instead, we argue that the non-local Birkhoff-Rott operator is indeed well-defined on suitable subclasses of $\mathscr{CM}^{\infty}$-fields, which also allows us to consider the equation in the low regularity context. 

To this end, we briefly revisit the derivation of the Birkhoff-Rott equation as given, for instance, by  Caflisch et al. \cite{Caflisch1992}; see also \cite{AgishteinMigdal,Birkhoff,Caflisch1989,Dahm,Kaneda}. 
For simplicity, we consider a fluid in full space $\R^{3}$ which splits $\R^{3}$ into two open subsets $\Omega^-$ and $\Omega^+$ divided by a two-dimensional smooth or, 
at least, Lipschitz submanifold $\Sigma$ oriented by 
$\nu:=\nu_{\partial\Omega^+}\colon\Sigma\to\mathbb{S}^{2}$; as above, its velocity is denoted by $\mathbf{u}$. 
To view $\Sigma$ as a vortex sheet, we 
suppose that the fluid satisfies is solenoidal and rotation-free away from $\Sigma$, meaning that 
\begin{align}\label{eq:solcurl}
\mathrm{div}\,\mathbf{u}^\pm = 0 \quad \;
\curl\mathbf{u}^\pm=0\;\qquad\text{in $\,\Omega^\pm$} 
\end{align}
and that it satisfies Euler's equations away from $\Sigma$ (see also \eqref{8.1}): 
\begin{align}
\partial_{t}\mathbf{u}^\pm 
+\nabla\cdot(\mathbf{u}^\pm\otimes\mathbf{u}^\pm)+\nabla p^\pm = 0\qquad\,\text{in $\,\Omega^\pm$}.
\end{align}
Here, one assumes the pressure functions $p^\pm\colon\Omega^\pm\to\R$ 
to be sufficiently regular. Moreover, in order to call $\Sigma$ a \emph{vortex sheet}, one typically requires continuity both of the pressure and the normal components 
along $\partial\Omega$: 
\begin{align}\label{eq:euleracrossboundary} 
\mathbf{u}_{\nu}^{+} = \mathbf{u}_{\nu}^{-}\qquad\text{along $\;\Sigma$}. 
\end{align}
In view of  $\eqref{eq:solcurl}_{1}$, this amounts to $\mathrm{div}\mathbf{u}=0$ globally in the sense of distributions on $\R^{3}$. 
For a vortex sheet, the vorticity $\bm{\omega}:=\curl\mathbf{u}$ is a measure on $\Sigma$ which is absolutely continuous with respect to $\mathscr{H}^{2}$ on $\Sigma$; more precisely, we have 
\begin{align}\label{eq:vortexsheets1}
\bm{\omega} = (\mathbf{u}_{\tau}^{+}-\mathbf{u}_{\tau}^{-})\mathscr{H}^{2}\mres\Sigma.
\end{align}
In order to describe the evolution of $\Sigma$ subject to \eqref{eq:solcurl}--\eqref{eq:euleracrossboundary}, it is customary to choose a parametrization of $\Sigma_{t}$ by a map $\mathbf{X}\colon \R^{2}\times [0,t_{0})\to\R^{3}$ so  that $\Sigma_{t}=\{\mathbf{X}(\xi_{1},\xi_{2},t)\colon\;(\xi_{1},\xi_{2})\in\R^{2}\}$ for $0\leq t<t_{0}$. The evolution of $\Sigma$ is primarily influenced by the normal component of $\partial_{t}\mathbf{X}$, as changes in its tangential component can be treated by re-parametrizations. As a modelling hypothesis, one assumes that the motion of $\Sigma_{t}$ is driven by the mean velocity $\overline{\mathbf{u}}:=\frac{1}{2}(\mathbf{u}^-|_{\Sigma}+\mathbf{u}^+|_{\Sigma})$ along the sheet. Taking into account the parametrization, this corresponds to the partial differential equation 
\begin{align}\label{eq:weatherreport}
\partial_{t}\mathbf{X}(\xi_{1},\xi_{2},t)=\overline{\mathbf{u}}(\xi_{1},\xi_{2},t). 
\end{align}
On the other hand, the global solenoidality hypothesis on $\mathbf{u}$ gives us the representation 
\begin{align*}
 \curl\bm{\omega} = \curl\curl\mathbf{u} = \nabla\mathrm{div}\mathbf{u} -\Delta\mathbf{u} = -\Delta\mathbf{u}
\end{align*}
based on the Biot-Savart law; see, \emph{e.g.}, \cite{Lopez} for a rigorous discussion of this point. Supposing a natural decay assumption on $\mathbf{u}$ at infinity, convolving with the fundamental solution of $(-\Delta)$ and a subsequent integration by parts imply that 
\begin{align*}
\mathbf{u}(x) & = - \frac{1}{4\pi}\int_{\R^{3}}\frac{x-y}{|x-y|^{3}}\times\dif\bm{\omega}(y) \\ 
& = \frac{1}{4\pi}\int_{\Sigma}\frac{\dif\bm{\omega}}{\dif\mathscr{H}^{2}}(y)\times \frac{x-y}{|x-y|^{3}}\dif\mathscr{H}^{2}(y)
\end{align*}
whenever $x\notin\Sigma$. Now let $x_{0}\in\Sigma$. 
Approaching a sheet point $x_{0}$ from $\Omega^{+}$ or $\Omega^{-}$, the classical three dimensional Plemelj formulas (see also \cite[Appendix A]{Caflisch1992}) then yield 
\begin{align*}
\mathbf{u}^\pm(x_{0}) = \frac{1}{4\pi}\mathrm{p.v.}\int_{\Sigma}\frac{\dif\bm{\omega}}{\dif\mathscr{H}^{2}}(y)\times \frac{x_{0}-y}{|x_{0}-y|^{3}}\dif\mathscr{H}^{2}(y) \pm \frac{1}{2}\frac{\dif\bm{\omega}}{\dif\mathscr{H}^{2}}(x_{0})\times\nu(x_{0}).
\end{align*}
Based, \emph{e.g.}, on Theorem \ref{thm:tracemain1} and our choice of orientations, we may express the density as the difference of tangential traces via $\frac{\dif\bm{\omega}}{\dif\mathscr{H}^{2}}=-(\mathbf{u}_{\tau}^{+}-\mathbf{u}_{\tau}^{-})$ $\mathscr{H}^{2}$-a.e. on $\Sigma$. Hence, it follows for $\overline{\mathbf{u}}$ that
\begin{align*}
\overline{\mathbf{u}}(x)& =-\frac{1}{4\pi}\mathrm{p.v.}\int_{\Sigma}(\mathbf{u}_{\tau}^{+}(y)-\mathbf{u}_{\tau}^{-}(y))\times \frac{x-y}{|x-y|^{3}}\dif\mathscr{H}^{2}(y), 
\end{align*}
and the non-local operator on the right-hand side is also called the \emph{Birkhoff-Rott operator}. Based on the parametrization of $\Sigma$ by $\mathbf{X}$, \eqref{eq:weatherreport} becomes 
\begin{align}\label{eq:birkhoffrott}
\partial_{t}\mathbf{X} = -\frac{1}{4\pi}\mathrm{p.v.}\int_{\Sigma}(\mathbf{u}_{\tau}^{+}(X')-\mathbf{u}_{\tau}^{-}(X'))\times \frac{\mathbf{X}-X'}{|\mathbf{X}-X'|^{3}}\dif\mathscr{H}^{2}(X'), 
\end{align}
which is one possible form of the Birkhoff-Rott equation. Note, however, that this derivation works subject to strong assumptions on the boundary behaviour of $\mathbf{u}^{\pm}$ along $\Sigma$.

If $\Sigma$ is sufficiently regular (\emph{e.g.} of class $\hold^{1}$), then the Birkhoff-Rott operator corresponds to a singular integral of convolution type; by classical results (see, \emph{e.g.} \cite[Theorem 6.6]{Duoandikoetxea}), the latter are well-defined on $\lebe^{\infty}(\Sigma;\R^{3})$. By Theorem \ref{thm:tracemain1}, the Birkhoff-Rott operator is well-defined for spatial $\mathscr{CM}^{\infty}$-fields, and in this sense the driving non-local term in the equation is even well-defined for $\mathscr{CM}^{\infty}$-fields. Even subject to harmonicity of $\mathbf{u}$ away from $\Sigma$ as assumed here, the boundary behaviour close to $\Sigma$ is crucially governed by the properties of the tangential traces of $\mathscr{CM}^{\infty}$-fields; the precise investigation of the $\lebe^{p}$-attainment of traces subject to the natural $\mathscr{CM}^{\infty}$-hypothesis will be the objective of future work.

Finally, it is well-known that even in the flat case, singular integrals of convolution type do not map $\lebe^{\infty}$ to itself; instead, the target is typically $\bmo$, the functions of bounded mean oscillation. This indicates that, in order to obtain a low smoothness space where the Birkhoff-Rott operator is stable, the above derivation indicates that a suitable space might be given by the vector fields  $\FF\in\bmo(\R^{3};\R^{3})$ for which $\curl\FF\in\mathrm{RM}_{\mathrm{fin}}(\R^{3};\R^{3})$. 

\section*{Appendix}
\subsection*{Appendix A: Sobolev and Lipschitz spaces on manifolds }\label{sec:AppendixA}
For the reader's convenience, we collect here some background definitions and results on Sobolev spaces on manifolds which have entered the main part.
\subsection*{Appendix A.1:  Weakly differentiable functions on manifolds}\label{sec:AppendixA1} 
 
Weakly differentiable functions and Sobolev spaces on manifolds are usually introduced by means of completions of spaces of smooth functions (see  \emph{e.g.}, Hebey \cite{Hebey}). 

Let $\Sigma\subset\R^{n}$ be a closed, $(n-1)$-dimensional $\hold^{k}$-manifold, and let $\Sigma=\bigcup_{j=1}^{N}U_{j}$ be a cover of $\Sigma$ by relatively open sets and with corresponding charts $\kappa_{j}\in\hold^{k}(U_{j};\R^{n-1})$. Letting $\varphi_{j}\in\hold^{k}(\Sigma)$ be such that $\spt(\varphi_{j})\subset U_{j}$, $j=1,\cdots,N$, with $\sum_{j=1}^{N}\varphi_{j}=1$ on $\Sigma$, 
we define
\begin{align}\label{eq:manifoldSob}
\sobo^{s,p}(\Sigma):=\Big\{u\colon\;\Sigma\to\R\colon\;\|u\|_{\sobo^{s,p}(\Sigma)}^{p}:=\sum_{|\alpha|\leq m}\|(\varphi_{j}u)\circ\kappa_{j}^{-1}\|_{\sobo^{s,p}(\R^{n-1})}^{p}<\infty\Big\},
\end{align}
for $0<s\leq k$ and $1\leq p<\infty$.
We point out that the restriction $s\leq k$ is due to the fact that $\sobo^{s,p}(\Sigma)$ might trivialize otherwise. If, in \eqref{eq:manifoldSob},  $s\in\mathbb{N}$, we understand $\sobo^{s,p}(\R^{n-1})$ as the usual integer order Sobolev spaces, whereas $\sobo^{s,p}(\R^{n-1})$ are the fractional or Sobolev-Slobodecki\u{\i} spaces otherwise. If $s=0$, we put $\sobo^{0,p}(\Sigma):=\lebe^{p}(\Sigma)$, and if $s<0$, we define $\sobo^{s,p'}(\Sigma):=(\sobo^{-s,p}(\Sigma))'$. The corresponding Bessel potential spaces $\mathscr{L}^{s,p}(\Sigma)$ are defined analogously by reduction to $\R^{n-1}$. 

\begin{prop}[Embeddings]\label{prop:embeddingsmanifolds}
In the above situation, let $s\leq k$. Then 
\begin{enumerate}
\item\label{item:embmanifold1} If $\frac{2n}{n+1}<p\leq\infty$, then $\sobo^{1,2}(\Sigma)\hookrightarrow\sobo^{1/p,p'}(\Sigma)$ and so $\sobo^{-1/p,p}(\Sigma)\hookrightarrow \sobo^{-1,2}(\Sigma)$. 
\item\label{item:embdmanifold2} If $p>n$, then $\sobo^{1-1/p,p}(\Sigma)\hookrightarrow\hold_{b}(\Sigma)$.
\end{enumerate}
\end{prop}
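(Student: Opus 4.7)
The plan is to reduce both embeddings to standard Euclidean Sobolev embeddings via the chart-based definition \eqref{eq:manifoldSob}, after verifying the exponent balance on an $(n-1)$-dimensional domain.

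First I would fix the covering $\Sigma=\bigcup_{j=1}^{N} U_{j}$ with charts $\kappa_{j}\in\hold^{k}(U_{j};\R^{n-1})$ and partition of unity $(\varphi_{j})$ as in \eqref{eq:manifoldSob}; since $\Sigma$ is closed and $\hold^{k}$ with $k\geq 1$, each $\kappa_{j}^{-1}$ is bi-Lipschitz on its image, so pullback preserves both integer and fractional Sobolev norms up to multiplicative constants. Consequently, verifying a continuous embedding $\sobo^{s,p}(\Sigma)\hookrightarrow\sobo^{t,q}(\Sigma)$ reduces to the Euclidean embedding $\sobo^{s,p}(\R^{n-1})\hookrightarrow\sobo^{t,q}(\R^{n-1})$ applied to each compactly supported piece $(\varphi_{j} u)\circ\kappa_{j}^{-1}$, followed by summation via \eqref{eq:manifoldSob}.

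For part (a), the Euclidean embedding $\sobo^{1,2}(\R^{n-1})\hookrightarrow\sobo^{1/p,p'}(\R^{n-1})$ requires the scaling condition
\begin{align*}
1-\tfrac{n-1}{2}\geq \tfrac{1}{p}-\tfrac{n-1}{p'},
\end{align*}
which, using $\tfrac{1}{p'}=1-\tfrac{1}{p}$, simplifies to $p\geq \tfrac{2n}{n+1}$, covered by the hypothesis $p>\tfrac{2n}{n+1}$. This yields $\sobo^{1,2}(\Sigma)\hookrightarrow\sobo^{1/p,p'}(\Sigma)$, and the statement $\sobo^{-1/p,p}(\Sigma)\hookrightarrow\sobo^{-1,2}(\Sigma)$ follows by dualizing, since $\sobo^{-1/p,p}(\Sigma)=\sobo^{1/p,p'}(\Sigma)'$ and $\sobo^{-1,2}(\Sigma)=\sobo^{1,2}(\Sigma)'$ by definition. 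For part (b), the Morrey-type embedding $\sobo^{1-1/p,p}(\R^{n-1})\hookrightarrow\hold_{b}(\R^{n-1})$ demands $(1-\tfrac{1}{p})-\tfrac{n-1}{p}>0$, i.e.\ $p>n$, matching the hypothesis; transferring back via the charts and using $\sum_{j}\varphi_{j}\equiv 1$ yields $u=\sum_{j}\varphi_{j}u\in\hold_{b}(\Sigma)$ with the corresponding norm estimate.

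The two potential sticking points are, first, the endpoint $p=\infty$ in part (a), where one must read $\sobo^{1/p,p'}$ as $\sobo^{0,1}=\lebe^{1}$ and invoke the Sobolev embedding $\sobo^{1,2}(\R^{n-1})\hookrightarrow\lebe^{1}_{\mathrm{loc}}$ together with compactness of $\Sigma$; and second, the verification that the chart-based definition \eqref{eq:manifoldSob} of $\sobo^{s,p}(\Sigma)$ is independent of the chosen atlas and partition of unity in the fractional range $0<s<1$, which is standard (see \emph{e.g.}\ \cite[Ch.~2]{Hebey}) but requires the $\hold^{k}$-regularity with $k$ large enough relative to $s$; both are covered by the standing hypothesis $s\leq k$.
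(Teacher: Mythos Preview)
Your proposal is correct and follows essentially the same approach as the paper: both arguments reduce to the Euclidean setting on $\R^{n-1}$ via localization and charts, verify the same scaling condition $1-\tfrac{n-1}{2}>\tfrac{1}{p}-\tfrac{n-1}{p'}\Leftrightarrow p>\tfrac{2n}{n+1}$ for part~(a), and the Morrey-type condition $1-\tfrac{1}{p}-\tfrac{n-1}{p}>0\Leftrightarrow p>n$ for part~(b). The only cosmetic difference is that the paper phrases the Euclidean embeddings via Besov space identifications (citing Triebel), whereas you invoke them directly as standard Sobolev embeddings; the endpoint $p=\infty$ is handled identically.
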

\begin{proof} First let $1<p<\infty$. We directly work on flat space; the assumed regularity allows to localize. 
Following Triebel \cite[\S 3.3.1]{Triebel}, $\sobo^{1,2}(\R^{n-1})\simeq\mathrm{B}_{2,2}^{1}(\R^{n-1})\hookrightarrow\besov_{p',p'}^{1/p}(\R^{n-1})$ if and only if 
\begin{align*}
1-\frac{n-1}{2}>\frac{1}{p}-\frac{n-1}{p'}\Longleftrightarrow p'<\frac{2n}{n-1} \Longleftrightarrow p>\frac{2n}{n+1}. 
\end{align*} 
If $p=\infty$, then we interpret $\sobo^{1/p,p'}(\R^{n-1}):=\lebe^{1}(\R^{n-1})$, and so we obtain the continuity of the embedding $\sobo^{1,2}(\R^{n-1})\hookrightarrow\sobo^{1/p,p'}(\R^{n-1})$ in both cases. From here, \ref{item:embmanifold1} follows. On the other hand, the H\"{o}lder spaces can be characterized as $\hold^{0,t}(\R^{n-1})\simeq\besov_{\infty,\infty}^{t}(\R^{n-1})$, provided that $t\in\R_{>0}\setminus\mathbb{N}$. The embedding $\sobo^{1-1/p,p}(\R^{n-1})\simeq\besov_{p,p}^{1-1/p}(\R^{n-1})\hookrightarrow\besov_{\infty,\infty}^{t}(\R^{n-1})$ then holds provided that 
\begin{align*}
1-\frac{1}{p}-\frac{n-1}{p}>t. 
\end{align*}
In particular, the exponent on the left-hand side is larger than $0$ if $p>n$.
In this case, we therefore find $t>0$ such that $\sobo^{1-1/p,p}(\R^{n-1})\hookrightarrow \hold^{0,t}(\R^{n-1})\hookrightarrow\hold_{b}(\R^{n-1})$. This implies \ref{item:embdmanifold2}, and the proof is complete. 
\end{proof}

We now provide an equivalent intrinsic characterization of the integer-order Sobolev spaces. In order to switch between the different characterizations, we recall  the notion of pseudo-inverses from \cite{Moore,Penrose}: For $S\in\R^{n\times (n-1)}$,  define 
\begin{align*}
S^{\dagger} :=(S^{\top}S)^{-1}S^{\top}\in \R^{(n-1)\times n}, 
\end{align*}
which immediately yields $S^{\dagger}S=\mathrm{Id}_{\R^{n}}$; on the other hand, $SS^{\dagger}$ is the orthogonal projector onto the range of $S$. The following lemma is routine, an can be established by the methods employed, \emph{e.g.}, in \cite{Gurtin}.
\begin{lem}
Let $\Sigma\subset\R^{n}$ be a closed, $(n-1)$-dimensional $\hold^{1}$-manifold, and let $1\leq p \leq \infty$. Then the following are equivalent for $u\in\lebe^{p}(\Sigma)${\rm :}
    \begin{enumerate}
        \item $u\in\sobo^{1,p}(\Sigma)$. 
        \item\label{item:weakoweako1} With $\kappa_{j}$ as in \eqref{eq:manifoldSob}, for each $j\in\{1, \cdots,N\}$, $v:=(\varphi_{j}u)\circ\kappa_{j}^{-1}\in\sobo_{\locc}^{1,p}(\kappa_{j}(U_{j}))$. 
        \item $u$ has tangential weak gradients in $\lebe^{p}(\Sigma;T_{\Sigma})$; with the notation from \ref{item:weakoweako1}, they can locally be computed via
        \begin{align*}
            \nabla_{\tau}{u}(x_{0})=(\nabla{v}(z_{0}))(\nabla\kappa_{j}^{-1}(z_{0}))^{\dagger}
        \end{align*}
        for $\mathscr{H}^{n-1}$-a.e. $x_{0}\in\Sigma$ and $z_{0}:=\kappa_{j}(x_{0})$. 
    \end{enumerate}
\end{lem}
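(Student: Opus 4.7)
The plan is to establish the chain of implications (1)$\Leftrightarrow$(2)$\Leftrightarrow$(3), with the first equivalence being essentially a reformulation of the definition and the second requiring a careful change-of-variables computation against the pseudo-inverse formula.

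First, (1)$\Leftrightarrow$(2) follows directly from definition \eqref{eq:manifoldSob}. The key observation is that $\varphi_j\in\hold^1(\Sigma)$ has support compactly contained in $U_j$, so $(\varphi_j u)\circ\kappa_j^{-1}$ has support compactly contained in $\kappa_j(U_j)$; consequently, membership in $\sobo^{1,p}(\R^{n-1})$ is equivalent to membership in $\sobo^{1,p}_{\locc}(\kappa_j(U_j))$. The summability across the finitely many charts $j=1,\dots,N$ ensures consistency of the two conditions.

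Next, for (2)$\Rightarrow$(3), I would begin by fixing a chart $\kappa_j$ and defining, for $\mathscr{H}^{n-1}$-a.e. $x_0\in U_j$ with $z_0=\kappa_j(x_0)$, the candidate row vector $\mathbf{G}_j(x_0):=(\nabla v_j(z_0))(\nabla\kappa_j^{-1}(z_0))^{\dagger}$ where $v_j:=(\varphi_j u)\circ\kappa_j^{-1}$ (on the support of $\varphi_j$); since $SS^{\dagger}$ is the orthogonal projector onto the range of $S=\nabla\kappa_j^{-1}(z_0)$, which coincides with $T_{\Sigma}(x_0)$, one gets $\mathbf{G}_j(x_0)\in T_{\Sigma}(x_0)$. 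Summing the pieces associated with a partition of unity $(\varphi_j)$, and using $S^{\dagger}S=\mathrm{Id}$ to verify that the resulting field is invariant under the choice of overlapping charts (the transition map $\kappa_i\circ\kappa_j^{-1}$ is a $\hold^1$-diffeomorphism on $\R^{n-1}$, and the chain rule combined with the pseudo-inverse identity cancels the transition Jacobian), defines a global field $\nabla_{\tau}u\in\lebe^p(\Sigma;T_{\Sigma})$. To confirm it is a tangential weak gradient, I would verify the intrinsic integration-by-parts identity $\int_{\Sigma}u\,\divm(X)\dif\mathscr{H}^{n-1}=-\int_{\Sigma}\nabla_{\tau}u\cdot X\dif\mathscr{H}^{n-1}$ for $X\in\hold_{\rm c}^1(\Sigma;T_{\Sigma})$ (cf.~\eqref{eq:IBPmanifolds1}) by pulling everything back through the charts, where the equality reduces to the familiar integration by parts on $\R^{n-1}$ applied to the $\sobo^{1,p}_{\locc}$-function $v_j$; the Jacobian factors of $\kappa_j^{-1}$ and the pseudo-inverse factors recombine correctly because of the identity $(\nabla\kappa_j^{-1})^{\dagger}\nabla\kappa_j^{-1}=\mathrm{Id}$.

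For (3)$\Rightarrow$(2), I would go in reverse: assuming $u$ has a tangential weak gradient $\nabla_{\tau}u\in\lebe^p(\Sigma;T_{\Sigma})$, test the intrinsic integration-by-parts identity against tangential vector fields of the form $X=\rho\,\partial_{\ell}\kappa_j^{-1}\circ\kappa_j$ for $\rho\in\hold_{\rm c}^1(\kappa_j(U_j))$ and $\ell\in\{1,\dots,n-1\}$; after a change of variables, this translates into the distributional derivative $\partial_{\ell}v_j$ on $\R^{n-1}$ being represented by an $\lebe^p_{\locc}$-function, namely the $\ell$-th component of $(\nabla_{\tau}u\circ\kappa_j^{-1})\cdot\nabla\kappa_j^{-1}$. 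The absorption of the product with $\varphi_j$ is unproblematic since $\varphi_j\in\hold^1$.

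The main technical obstacle is the chart-invariance computation and the consistent bookkeeping of the pseudo-inverse: one must check that the formula $\nabla_{\tau}u(x_0)=(\nabla v_j(z_0))(\nabla\kappa_j^{-1}(z_0))^{\dagger}$ gives the same tangent row vector for any chart $\kappa_i$ whose domain also contains $x_0$, which reduces to verifying that for the transition matrix $J:=\nabla(\kappa_i\circ\kappa_j^{-1})(z_0)\in\R^{(n-1)\times(n-1)}$ one has $(\nabla\kappa_i^{-1}(\tilde z_0))^{\dagger}=J(\nabla\kappa_j^{-1}(z_0))^{\dagger}$ on the range of $\nabla\kappa_j^{-1}(z_0)$; this in turn follows from $\nabla\kappa_j^{-1}(z_0)=\nabla\kappa_i^{-1}(\tilde z_0)J$ together with uniqueness of the pseudo-inverse restricted to the common range. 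Once this is in place, the rest is routine change of variables.
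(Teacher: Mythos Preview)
Your proposal is correct and follows the standard route. The paper itself does not supply a proof of this lemma: it simply declares the result ``routine'' and refers to the methods in \cite{Gurtin}. Your sketch therefore fills in exactly the details the paper omits, and your organisation---reading (1)$\Leftrightarrow$(2) off the definition \eqref{eq:manifoldSob}, then handling (2)$\Leftrightarrow$(3) via the pseudo-inverse change of variables together with the chart-compatibility check---is the natural one. One small remark: your justification that $\mathbf{G}_j(x_0)\in T_{\Sigma}(x_0)$ via ``$SS^{\dagger}$ is the orthogonal projector onto $\mathrm{range}(S)$'' is correct but slightly elliptical; the cleanest way to see it is to note that $(S^{\dagger})^{\top}=S(S^{\top}S)^{-1}$, so $(\nabla v_j\,S^{\dagger})^{\top}=S\bigl[(S^{\top}S)^{-1}(\nabla v_j)^{\top}\bigr]$ is manifestly in the column space of $S=\nabla\kappa_j^{-1}(z_0)$.
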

Based on the preceding lemma, one then deduces the following assertion which is clear in the flat case.
\begin{lem}\label{lem:LipCharManifold}
Let $\Omega'\subset\R^{n}$ be open and bounded with boundary of class $\hold^{1}$. If a function $f\colon \partial\Omega'\to\R$ is Lipschitz, then $f\in\sobo^{1,\infty}(\Sigma)$, and there exists a constant $c=c(\Omega')>0$ such that $\|\nabla_{\tau}f\|_{\lebe^{\infty}(\partial\Omega')}\leq c\,\mathrm{Lip}_{\partial\Omega'}(f)$. 
\end{lem}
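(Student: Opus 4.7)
The plan is to reduce to the classical Rademacher theorem on $\R^{n-1}$ via a finite covering of $\partial\Omega'$ by $\hold^{1}$-charts whose bi-Lipschitz constants are uniformly controlled by the geometry of $\Omega'$. Concretely, since $\partial\Omega'$ is compact and of class $\hold^{1}$, I would fix a finite atlas $\{(U_{j},\kappa_{j})\}_{j=1}^{N}$ of relatively open sets $U_{j}\subset\partial\Omega'$ together with $\hold^{1}$-diffeomorphisms $\kappa_{j}\colon U_{j}\to V_{j}\subset\R^{n-1}$ such that both $\kappa_{j}$ and $\kappa_{j}^{-1}$ are Lipschitz on the closures of their domains, with Lipschitz constants bounded by a constant $L=L(\Omega')$.

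For each index $j$, the composition $v_{j}:=f\circ\kappa_{j}^{-1}\colon V_{j}\to\R$ is Lipschitz, since it is the composition of a Lipschitz function $f$ with the Lipschitz map $\kappa_{j}^{-1}$; a direct estimate yields $\mathrm{Lip}(v_{j})\leq L\,\mathrm{Lip}_{\partial\Omega'}(f)$. By Rademacher's theorem on $\R^{n-1}$, $v_{j}$ is classically differentiable $\mathscr{L}^{n-1}$-a.e.\ in $V_{j}$, and the classical gradient agrees with the weak gradient $\nabla v_{j}\in\lebe^{\infty}(V_{j};\R^{n-1})$ with $\|\nabla v_{j}\|_{\lebe^{\infty}(V_{j})}\leq\mathrm{Lip}(v_{j})\leq L\,\mathrm{Lip}_{\partial\Omega'}(f)$. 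In particular, $v_{j}\in\sobo^{1,\infty}(V_{j})$, and reading this through the chart shows that $f|_{U_{j}}\in\sobo^{1,\infty}(U_{j})$ in the sense of Definition \eqref{eq:manifoldSob} without resorting to the partition of unity (which would bring in spurious $\|f\|_{\lebe^{\infty}}$-terms and ruin the clean bound).

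To upgrade these local statements to a global one, I would invoke the previous lemma, which identifies, for $\mathscr{H}^{n-1}$-a.e.\ $x_{0}\in U_{j}$, the tangential weak gradient via the chart formula $\nabla_{\tau}f(x_{0})=(\nabla v_{j}(\kappa_{j}(x_{0})))(\nabla\kappa_{j}^{-1}(\kappa_{j}(x_{0})))^{\dagger}$. Since $\nabla\kappa_{j}^{-1}$ is bounded on $V_{j}$ with norm controlled by $L$ (and so is its Moore--Penrose pseudo-inverse, because $\kappa_{j}$ is a $\hold^{1}$-diffeomorphism and the singular values of $\nabla\kappa_{j}^{-1}$ are bounded away from zero on the compact closure), one deduces
\begin{align*}
|\nabla_{\tau}f(x_{0})|\leq c_{j}\,\|\nabla v_{j}\|_{\lebe^{\infty}(V_{j})}\leq c_{j}L\,\mathrm{Lip}_{\partial\Omega'}(f),
\end{align*}
with $c_{j}=c_{j}(\Omega')$. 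Taking the maximum over the finitely many charts gives the desired constant $c=c(\Omega')$ in the estimate $\|\nabla_{\tau}f\|_{\lebe^{\infty}(\partial\Omega')}\leq c\,\mathrm{Lip}_{\partial\Omega'}(f)$.

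The only delicate point I foresee is the uniform control of the pseudo-inverses $(\nabla\kappa_{j}^{-1})^{\dagger}$ in $\lebe^{\infty}$, which rests on the non-degeneracy of the $\hold^{1}$-parametrisations; this is handled by choosing the atlas so that each $\kappa_{j}$ is a graph chart (whose Jacobian has singular values uniformly bounded below in terms of a pointwise bound on the gradient of the defining Lipschitz graph), and this is precisely the feature encoded in the $\hold^{1}$-regularity of $\partial\Omega'$ together with its compactness. Everything else is a routine application of Rademacher's theorem and the chart identification of $\nabla_{\tau}$.
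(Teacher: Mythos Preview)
Your proposal is correct and follows exactly the route the paper indicates: the paper merely states that the lemma ``is clear in the flat case'' and is deduced from the preceding chart-and-pseudo-inverse characterisation of $\nabla_{\tau}$, without spelling out details. You have supplied precisely those details---Rademacher on the chart domains, the pseudo-inverse formula, and uniform control via compactness and the $\hold^{1}$-regularity---so there is nothing to add.
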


\subsection*{Appendix A.2:  Smooth approximation}\label{sec:AppendixA2} 

The following result is clearly well-known to the experts, but we have not found a precise reference and thus supply the quick proof.
\begin{prop}\label{prop:smoothapp}
Let $\Omega'\subset\R^{n}$ be open and bounded with $\hold^{1}$-boundary $\partial\Omega'$. Moreover, let $\mathbf{u}\in\hold(\partial\Omega';T_{\partial\Omega'})$. Then there exists a sequence $(\mathbf{u}_{j})\subset\hold^{1}(\partial\Omega';T_{\partial\Omega'})$ such that $\|\mathbf{u}-\mathbf{u}_{j}\|_{\lebe^{\infty}(\partial\Omega')}\to 0$ as $j\to\infty$. 
\end{prop}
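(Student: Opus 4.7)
The strategy is to localize via a partition of unity, express $\mathbf{u}$ in the tangent basis of each coordinate patch, mollify the local coordinate representations in $\mathbb{R}^{n-1}$, and reassemble into a tangential field whose tangent-basis coefficients are smooth. To begin, I would fix a finite cover $(U_k)_{k=1}^N$ of the compact set $\partial\Omega'$ by open sets such that each $U_k\cap\partial\Omega'$ is parametrized by a $\hold^1$-chart $\kappa_k\colon V_k\subset\mathbb{R}^{n-1}\to U_k\cap\partial\Omega'$, together with a $\hold^1$-partition of unity $(\chi_k)_{k=1}^N$ subordinate to $(U_k)$, with compactly supported representatives $\chi_k\circ\kappa_k\in\hold^{1}_{\rm c}(V_k)$.

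For each $k$, using that $\mathbf{u}$ is tangential, I would decompose
\[
\chi_k\mathbf{u}(\kappa_k(y)) \;=\; \sum_{i=1}^{n-1} \tilde a_k^i(y)\,\partial_i\kappa_k(y),\qquad y\in V_k,
\]
where the coefficients are extracted via the first fundamental form $g_{ij}^{(k)}(y):=\partial_i\kappa_k(y)\cdot\partial_j\kappa_k(y)$ and its (pointwise) continuous inverse matrix $(g_{(k)}^{ij})$ through
\[
\tilde a_k^i(y) \;=\; \sum_{j=1}^{n-1} g_{(k)}^{ij}(y)\, \big(\chi_k\mathbf{u}\big)(\kappa_k(y))\cdot\partial_j\kappa_k(y).
\]
Since $\kappa_k$ is $\hold^1$ and $\mathbf{u}$ is continuous, each $\tilde a_k^i$ belongs to $\hold_{\rm c}(V_k)$, hence is uniformly continuous. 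Setting $\tilde a_{k,\varepsilon}^i := \rho_\varepsilon \ast \tilde a_k^i \in \hold_{\rm c}^\infty(V_k)$ for $\varepsilon$ smaller than the distance from $\spt\tilde a_k^i$ to $\partial V_k$, one has $\|\tilde a_{k,\varepsilon}^i - \tilde a_k^i\|_{\lebe^\infty(V_k)}\to 0$ as $\varepsilon\searrow 0$. Then I would define
\[
\mathbf{u}_{k,\varepsilon}(x) \;:=\; \sum_{i=1}^{n-1} \tilde a_{k,\varepsilon}^i(\kappa_k^{-1}(x))\,\partial_i\kappa_k(\kappa_k^{-1}(x)),\qquad x\in U_k\cap\partial\Omega',
\]
extended by zero outside, and put $\mathbf{u}_\varepsilon := \sum_{k=1}^N \mathbf{u}_{k,\varepsilon}$. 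By construction $\mathbf{u}_\varepsilon$ is tangential (a pointwise combination of tangent vectors), and its tangent-basis coefficients are smooth in the coordinate variable, so $\mathbf{u}_\varepsilon\in\hold^1(\partial\Omega';T_{\partial\Omega'})$.

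Uniform convergence then follows from the pointwise bound
\[
\|\mathbf{u}_\varepsilon-\mathbf{u}\|_{\lebe^\infty(\partial\Omega')} \;\leq\; \sum_{k=1}^N \Big(\max_{i}\|\partial_i\kappa_k\|_{\lebe^\infty(V_k)}\Big) \sum_{i=1}^{n-1}\|\tilde a_{k,\varepsilon}^i - \tilde a_k^i\|_{\lebe^\infty(V_k)} \;\xrightarrow{\varepsilon\searrow 0}\; 0,
\]
and a diagonal extraction yields the desired sequence $\mathbf{u}_j := \mathbf{u}_{\varepsilon_j}$. The main subtlety of the argument is that, because $\partial\Omega'$ is only of class $\hold^1$, the tangent basis vectors $\partial_i\kappa_k$ are merely continuous, so the fields $\mathbf{u}_\varepsilon$ have smooth coefficients in the tangent basis but generally only continuous ambient Euclidean components. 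The $\hold^1$-membership is therefore understood intrinsically, \emph{i.e.}\ as smooth sections of the tangent bundle over a $\hold^1$-manifold, in line with the convention used in the paper. A more naive approach via Tietze extension, ambient mollification on a tubular neighborhood, and orthogonal projection $v\mapsto v-(v\cdot\nu_{\partial\Omega'})\nu_{\partial\Omega'}$ onto $T_{\partial\Omega'}$ fails at this point because the projector uses the merely continuous normal field $\nu_{\partial\Omega'}$, which would destroy the regularity gained by mollification.
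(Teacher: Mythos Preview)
Your approach is essentially the same as the paper's: localize via a partition of unity, express the field in the local tangent basis, mollify the scalar coefficients in the chart variable, and reassemble. The paper uses graph parametrizations $x'\mapsto(x',g_k(x'))$ rather than abstract charts $\kappa_k$, but this is only a cosmetic difference; the key mechanism---mollifying the coefficient functions so that tangentiality is preserved automatically---is identical. Your discussion of the regularity subtlety (that the tangent frame $\partial_i\kappa_k$ is merely continuous on a $\hold^1$-manifold, so $\hold^1$-membership must be read intrinsically) is in fact more explicit than the paper's, which writes $g_k\in\hold^2$ in the proof despite the statement assuming only $\hold^1$; the proposition is only invoked in the paper for $\hold^2$-boundaries anyway, so the discrepancy is harmless in context.
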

\begin{proof}
We emphasize that the actual point of the proof is that the resulting mollified fields take values in the tangent space $T_{\partial\Omega'}$. Throughout, we cover $\partial\Omega'$ by finitely many open balls $\ball^{1}, \cdots,\ball^{N}\subset\R^{n}$ such that, for each $k\in\{1,\cdots,N\}$, $\partial\Omega'\cap \ball^{k}$ can be translated and rotated such that $\partial\Omega'\cap\ball^{k}$ can be written as $\mathrm{graph}(g_{k})$, where $g_{k}\colon U_{k}\to\R$ is a $\hold^{2}$-function on an open set $U_{k}\subset\R^{2}$. 

\smallskip
We write, with $(\frac{\partial}{\partial x_{i}}\rvert_{x})_{i=1,\cdots,n-1}$ 
denoting a basis of $T_{\mathrm{graph}(g_{k})}(x)$, 
\begin{align*}
\mathbf{u}(x)=\sum_{i=1}^{n-1}u_{i}(x)\frac{\partial}{\partial x_{i}}\bigg\rvert_{x}
\end{align*}
and define $v_{i}\colon U_{k}\ni x'\mapsto u_{i}(x',g_{k}(x'))\in\R^{n}$. 
For a rescaled standard mollifier $\rho_{\varepsilon}\in\hold_{\rm c}^{\infty}(\R^{2})$, 
we denote $v_{i}^{\varepsilon}:=\rho_{\varepsilon}*v_{i}$ so that $v_{i}^{\varepsilon}\to v_{i}$ uniformly on every relatively compact subset of $U_{i}$ as $\varepsilon\searrow 0$. For $x=(x',g_{k}(x'))$, 
we define $u_{i}^{\varepsilon}(x):=v_{i}^{\varepsilon}(x')$ and 
\begin{align*}
\mathbf{u}_{\varepsilon}(x) := \sum_{i=1}^{n-1}u_{i}^{\varepsilon}(x)\frac{\partial}{\partial x_{i}}\bigg\rvert_{x},
\end{align*}
whereby we conclude that, for every $x\in U_{k}$,
\begin{align*}
|\mathbf{u}(x)-\mathbf{u}_{\varepsilon}(x)| \leq \sum_{i=1}^{n-1}|u_{i}(x)-u_{i}^{\varepsilon}(x)| = \sum_{i=1}^{n-1}|v_{i}(x')-v_{i}^{\varepsilon}(x')|. 
\end{align*}
In consequence, $\mathbf{u}_{\varepsilon}\to\mathbf{u}$ uniformly on every relatively compact subset $V\subset\partial\Omega'\cap\ball^{k}$. By use of a partition of unity, this gives rise to the requisite sequence. This completes the proof. 
\end{proof}

\subsection*{Appendix A.3:  Elliptic regularity for 
the Laplace-Beltrami operator}\label{sec:AppendixA3} 
In this subsection, we collect some background material on the existence and regularity of elliptic partial differential equations on manifolds. To this end, we recall that a manifold is closed provided that it is compact and without boundary; we denote by $g=(g^{jk})_{j,k=1}^{n}$ its metric as usual. For a sufficiently regular function $u\colon\Sigma\to\R$, its  \emph{Laplace-Beltrami operator}  is given in local coordinates by
\begin{align}\label{eq:laplacebeltrami}
\Delta_{\tau}u:=\frac{1}{\sqrt{|g|}}\sum_{j,k=1}^{n}\partial_{j}(\sqrt{|g|}g^{jk}\partial_{k}u). 
\end{align}
\begin{prop}[Elliptic regularity]\label{prop:ellreg}
Let $\Sigma\subset\R^{n}$ be a closed $(n-1)$-dimensional $\hold^{2}$-manifold. For each $1<p<\infty$ and every $s<2$, $u\in\mathscr{L}^{s-1,p}(\Sigma)$ and $\Delta_{\tau}u\in\mathscr{L}^{s,p}(\Sigma)$ imply that $u\in\mathscr{L}^{s,p}(\Sigma)$. Moreover, there exists a constant $c=c(\Sigma,s,p)>0$ such that 
\begin{align}\label{eq:ellipticestimate}
\|u\|_{\mathscr{L}^{s,p}(\Sigma)}\leq c\big(\|\Delta_{\tau}u\|_{\mathscr{L}^{s-2,p}(\Sigma)}+\|u\|_{\mathscr{L}^{s-1,p}(\Sigma)} \big).
\end{align}
\end{prop}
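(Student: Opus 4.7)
My plan is to reduce the proposition to the well-known elliptic regularity theory for second-order uniformly elliptic operators with $\hold^{1}$ coefficients on open subsets of $\R^{n-1}$, and then patch together local estimates via a partition of unity. Note first that \eqref{eq:ellipticestimate} suggests the intended hypothesis $\Delta_{\tau}u\in\mathscr{L}^{s-2,p}(\Sigma)$ (the two-derivative gain being the whole point of elliptic regularity); I will proceed with this reading, which is what the estimate actually asserts.

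First, I will fix a finite cover $\Sigma=\bigcup_{j=1}^{N}U_{j}$ by coordinate patches with $\hold^{2}$-charts $\kappa_{j}\colon U_{j}\to V_{j}\subset\R^{n-1}$, and a subordinate partition of unity $(\varphi_{j})\subset\hold^{2}(\Sigma)$. By the very definition of $\mathscr{L}^{s,p}(\Sigma)$ via \eqref{eq:manifoldSob} (with the Bessel potential variants used in the statement), it suffices to bound each $(\varphi_{j}u)\circ\kappa_{j}^{-1}$ in $\mathscr{L}^{s,p}(\R^{n-1})$. In local coordinates, \eqref{eq:laplacebeltrami} gives $\Delta_{\tau}=\sum_{j,k}a^{jk}\partial_{j}\partial_{k}+\sum_{j}b^{j}\partial_{j}$ with $a^{jk}=g^{jk}\in\hold^{1}$ (since $\Sigma\in\hold^{2}$) and $b^{j}\in\hold^{0}$, and uniform ellipticity follows from the positive-definiteness of the metric on the compact manifold.

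Second, I will invoke the Calder\'{o}n--Zygmund/Agmon--Douglis--Nirenberg theory for second-order uniformly elliptic operators $L$ with $\hold^{1}$ leading and continuous lower-order coefficients on Bessel potential spaces: for any $1<p<\infty$ and any $s<2$, one has the a priori estimate
\begin{align*}
\|v\|_{\mathscr{L}^{s,p}(\R^{n-1})}\leq C\bigl(\|Lv\|_{\mathscr{L}^{s-2,p}(\R^{n-1})}+\|v\|_{\mathscr{L}^{s-1,p}(\R^{n-1})}\bigr)
\end{align*}
for every $v\in\mathscr{L}^{s,p}(\R^{n-1})$ with compact support in a fixed coordinate chart. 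The restriction $s<2$ is precisely what the $\hold^{1}$-regularity of the leading coefficients allows: for $s=2$ one would need Lipschitz continuity of $g^{jk}$, while for $s<2$ the loss of a fraction of a derivative is absorbed by standard commutator estimates in Bessel potential / Triebel--Lizorkin spaces (interpolation between the integer endpoints and the Mihlin multiplier theorem being the analytic backbone here).

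Third, applying the local estimate to $v_{j}:=(\varphi_{j}u)\circ\kappa_{j}^{-1}$ and computing $L_{j}v_{j}=((\varphi_{j}\Delta_{\tau}u)\circ\kappa_{j}^{-1})+[L_{j},\varphi_{j}\circ\kappa_{j}^{-1}]((u)\circ\kappa_{j}^{-1})$, the commutator is a first-order operator with $\hold^{1}$ coefficients, hence bounded from $\mathscr{L}^{s-1,p}\to\mathscr{L}^{s-2,p}$. Summing over $j=1,\ldots,N$ and using the equivalent norm \eqref{eq:manifoldSob} (adapted to Bessel potential spaces) produces the global estimate \eqref{eq:ellipticestimate}. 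The qualitative regularity statement $u\in\mathscr{L}^{s,p}(\Sigma)$ then follows by a standard difference-quotient or mollification argument, applied locally to each $v_{j}$, which upgrades the a priori estimate into an actual regularity conclusion provided the right-hand side of \eqref{eq:ellipticestimate} is finite.

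The main obstacle is packaging the correct reference for the Bessel potential elliptic estimate with coefficients of limited ($\hold^{1}$) regularity across the full range $s<2$; the borderline behavior near $s=2$ and low $s$ (where the pairing $\mathscr{L}^{s-1,p}\times\mathscr{L}^{s-2,p}$ must still produce a meaningful commutator bound) is handled by complex interpolation between classical $\sobo^{2,p}$-regularity at $s=2$ (using mollified coefficients and a standard approximation) and the $\mathscr{L}^{0,p}$-endpoint, together with the fact that multiplication by $\hold^{1}$ functions is bounded on $\mathscr{L}^{\sigma,p}$ for $|\sigma|<1$.
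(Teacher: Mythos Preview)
Your proposal is correct and takes essentially the same approach as the paper: localize via $\hold^{2}$-charts and a partition of unity, pass to the Laplace--Beltrami operator in local coordinates \eqref{eq:laplacebeltrami} as a uniformly elliptic operator on $\R^{n-1}$ with $\hold^{1}$ leading coefficients, invoke the classical elliptic $\mathscr{L}^{s,p}$-estimates there (valid for $1<p<\infty$), and note that the restriction $s<2$ is precisely what the $\hold^{2}$-regularity of $\Sigma$ (hence $\hold^{1}$-regularity of $g^{jk}$) permits. You have also correctly spotted the typo in the hypothesis ($\Delta_{\tau}u\in\mathscr{L}^{s,p}$ should read $\mathscr{L}^{s-2,p}$, as the estimate \eqref{eq:ellipticestimate} makes clear), and your treatment of the commutator terms and the difference-quotient upgrade from a priori estimate to genuine regularity fleshes out details the paper only sketches with references to Agranovich, Kumano-go, and Taylor.
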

The proof of Proposition \ref{prop:ellreg} is classical when $\Sigma$ is a $\hold^{\infty}$-manifold and $p=2$; see, for instance, 
Agranovich \cite[Theorem 2.2.5]{Agranovich}, Kumano-go \cite[Theorem 6.8]{Kumanogo}, 
and Taylor \cite[Chapter XI, Proposition 2.4]{Taylor} for a treatment of general elliptic pseudodifferential operators on manifolds, and \cite[Example 2.2.2]{Agranovich}\emph{ff.} for an explicit treatment of the Laplace-Beltrami operator. We briefly sketch how Proposition \ref{prop:ellreg} can be obtained: The referenced results hinge on the passage to the Laplace-Beltrami operator in local coordinates 
(see \eqref{eq:laplacebeltrami}), from where the corresponding estimate \eqref{eq:ellipticestimate} can be inferred from classical elliptic estimates on $\R^{n-1}$. The latter also holds for $1<p<\infty$. 
In the present context, the $\hold^{2}$-regularity allows to perform an analogous localization procedure \emph{as long as} the smoothness satisfies $s<2$. We briefly comment on the existence of solution for the associated Poisson problem:
\begin{rem}[Existence]\label{rem:existencevar}
Let $\Omega'\subset\R^{n}$ be open and bounded with $\hold^{2}$-boundary. 
We consider the Dirichlet energy 
\begin{align*}
\mathscr{E}[{u}]:=\frac{1}{2}\int_{\partial\Omega'}|\nabla_{\tau}{u}|^{2}\dif x - \langle f,{u}\rangle_{\sobo^{-1,2}(\partial\Omega')\times\sobo^{1,2}(\partial\Omega')}\qquad\text{on}\;\sobo_{0}^{1,2}(\partial\Omega'), 
\end{align*}
where now 
\begin{align*}
\sobo_{0}^{1,2}(\partial\Omega'):=\Big\{v\in\sobo^{1,2}(\partial\Omega')\colon\;\int_{\partial\Omega'}v\dif\mathscr{H}^{2}=0\Big\}. 
\end{align*}
Moreover, we assume that $f\in\sobo^{-1,2}(\partial\Omega')$ satisfies 
\begin{align}\label{eq:fixing}
\langle f,\mathbbm{1}_{\partial\Omega}\rangle_{\partial\Omega} =0
\end{align}
in the sense of the usual pairing between $f\in\sobo^{-1,2}(\partial\Omega)$ and $\sobo^{1,2}(\partial\Omega)$-functions. 
Then there exists a unique minimizer of $\mathscr{E}$ in $\sobo_{0}^{1,2}(\partial\Omega')$. This is a straightforward consequence of the direct method: By Young's and Poincar\'{e}'s inequalities, $\mathscr{E}$ is bounded below on $\sobo_{0}^{1,2}(\partial\Omega')$. We may thus pick a minimizing sequence $(u_{j})$, meaning that $\mathscr{E}[u_{j}]\to m := \inf_{\sobo_{0}^{1,2}(\partial\Omega')}\mathscr{E}$.
Again, by Poincar\'{e}'s inequality, $(u_{j})$ is bounded in $\sobo^{1,2}(\partial\Omega')$, and so there exists a weakly convergent subsequence: $u_{j_{i}}\rightharpoonup u\in\sobo_{0}^{1,2}(\partial\Omega')$. Clearly, $\mathscr{E}$ is lower semicontinuous with respect to weak convergence in $\sobo^{1,2}(\partial\Omega')$, whereby $u$ is a minimiser of $\mathscr{E}$. By virtue of \eqref{eq:fixing}, this implies that 
\begin{align}\label{eq:weakLaplacian}
\int_{\partial\Omega'}\nabla_{\tau}u\cdot\nabla_{\tau}\varphi\dif\mathscr{H}^{2}=\langle f,\varphi\rangle_{\sobo^{-1,2}(\partial\Omega')\times\sobo^{1,2}(\partial\Omega')}\qquad\text{for all}\;\varphi\in\sobo^{1,2}(\partial\Omega').
\end{align}
It is clear that \eqref{eq:fixing} is necessary, which can be seen the easiest for $f\in\lebe^{2}(\partial\Omega')$; indeed, in this case, \eqref{eq:weakLaplacian} is the weak formulation of $-\Delta_{\tau}u=f$ on $\partial\Omega'$, and integrating 
by parts yields 
\begin{align*}
\int_{\partial\Omega'}f\dif\mathscr{H}^{2}= -\int_{\partial\Omega'}\Delta_{\tau}u\dif\mathscr{H}^{2} = \int_{\Gamma_{\partial\Omega'}}u(\nabla_{\tau}u)\cdot\nu_{\Gamma_{\partial\Omega'}}\dif\mathscr{H}^{2}=0
\end{align*}
because $\partial\Omega'$ has empty boundary: $\Gamma_{\partial\Omega'}=\emptyset$. 
\end{rem}

\subsection*{Appendix B: Proof of Lemma \ref{lem:GoodLip}}\label{sec:AppendixB}
We now establish Lemma \ref{lem:GoodLip}. The statement is certainly clear to the experts, but we have not found a precise reference and thus include the proof for the sake of completeness.

\begin{proof}[Proof of Lemma \ref{lem:GoodLip}\ref{item:Lipextend1}]
There are many ways to come up with such a function $f_{\delta}$, and we address one possibility: Up to localising by use of a smooth partition of unity, rotating and translating, we may assume $\partial\Omega'$ to be a finite union of graphs of $\hold^{1}$-functions. Flattening the single pieces, we may then reduce to the half-space situation. Hence, let $f\in\hold_{\rm c}^{1}(\R^{n-1}\times\{0\})$ be given and consider a standard mollifier $\rho\colon \R^{n-1}\to\R$. We then consider 
\begin{align*}
(Ef)(x',x_{n}):=\begin{cases}\displaystyle \int_{\R^{n-1}}\rho_{x_{n}}(x'-y')f(y',0)\dif y' = \int_{\R^{n-1}}\rho(z')f(x'+x_{n}z',0)\dif z'&\!\!\!\!\text{if}\;x_{n}>0, \\[1mm] f(x',0)&\!\!\!\!\text{if}\;x_{n}=0
\end{cases}
\end{align*}
for $(x',x_{n})\in\R^{n-1}\times (0,\infty)$.
We choose a monotonously decreasing  $\hold^{\infty}$-function $\theta\colon\R_{\geq 0}\to [0,1]$ with $\theta(0)=1$ for $0\leq t\leq \frac{1}{2}$, $\theta(t)=0$ for $t\geq 1$, and $|\theta'|\leq 4$, and set
\begin{align}\label{eq:psideltadef1}\tag{B.1}
\overline{f}_{\delta}(x):=\theta(\frac{x_{n}}{\delta})(Ef)(x),\qquad x=(x',x_{n})\in\R^{n-1}\times [0,\infty). 
\end{align} 
Since $f$ is compactly supported, $\overline{f}_{\delta}$ is supported in the cylinder $(\spt(f)+\ball_{2\delta}^{(n-1)}(0))\times[0,\delta]$ and is clearly continuous. Now, whenever $0<x_{n},y_{n}<\delta$, 
\begin{align}\label{eq:starter}\tag{B.2}
\begin{split}
|(Ef)(x',x_{n})- (Ef)(y',y_{n})| & \leq \sup_{|z'|\leq 1}|f(x'+x_{n}z',0)-f(y'+y_{n}z',0)| \\ 
& \leq \|\nabla_{\tau}f\|_{\lebe^{\infty}(\R^{n-1}\times\{0\})}
\big(|x'-y'|+|x_{n}-y_{n}|\big). 
\end{split}
\end{align}
In particular, $Ef$,  and so $\overline{f}_{\delta}$, are Lipschitz. 
On the other hand, if $k\in\{1,\cdots,n-1\}$, we have 
\begin{align*}
|\partial_{x_{k}}(Ef)(x',x_{n})-\partial_{x_{k}}(Ef)(x',0)| & \leq \int_{\R^{n-1}}\rho(z')|(\partial_{x_{k}}f)(x'+x_{n}z',0))-(\partial_{x_{k}}f)(x',0)|\dif z' \\ 
& \leq \sup_{|z'|\leq 1}|(\partial_{x_{k}}f)(x'+x_{n}z',0)-(\partial_{x_{k}}f)(x',0)|. 
\end{align*}
Moreover, since $f\in\hold_{\rm c}^{1}(\R^{n-1}\times\{0\})$, the function: 
\begin{align*}
x'\mapsto \int_{\R^{n-1}}\rho(z')\nabla_{\tau}f(x'+z')\cdot z'\dif z'
\end{align*}
is uniformly continuous, and 
\begin{align*}
|\partial_{x_{n}}(Ef)(x',x_{n})-\partial_{x_{n}}(Ef)(x',0)| & \leq \int_{\R^{n-1}}\rho(z')|(\nabla_{\tau}f)(x'+x_{n}z',0)-(\nabla_{\tau}f)(x',0)|\dif z' \\ 
& \leq \sup_{|z'|\leq 1}|(\nabla_{\tau}f)(x'+x_{n}z')-(\nabla_{\tau}f)(x')|
\end{align*}
uniformly tends to zero as $x_{n}\searrow 0$. Summarizing, the functions $Ef$ and $\overline{f}_{\delta}$ belong to $\hold^{1}(\R^{n-1}\times[0,\infty))$. Moreover, as in \eqref{eq:starter}\emph{ff.}, we arrive at the estimates 
\begin{align*}
& \|Ef\|_{\lebe^{\infty}(\R^{n-1}\times[0,\infty))}\leq \|f\|_{\lebe^{\infty}(\R^{n-1}\times\{0\})},\\[1mm] 
& \|\nabla Ef\|_{\lebe^{\infty}(\R^{n-1}\times[0,\infty))}\leq \|\nabla_{\tau}f\|_{\lebe^{\infty}(\R^{n-1}\times\{0\})}.
\end{align*}
Together with the Leibniz rule, we therefore end up with 
\begin{align}\label{eq:carlsen3}\tag{B.3}
\|\nabla\overline{f}_{\delta}(x)\|_{\lebe^{\infty}(\R^{n-1}\times(0,\infty))} \leq c(\theta)\Big(\frac{1}{\delta}\|f\|_{\lebe^{\infty}(\R^{n-1}\times\{0\})}+\|\nabla_{\tau}f\|_{\lebe^{\infty}(\R^{n-1}\times\{0\})}\Big).
\end{align}
Recalling that $\partial\Omega'$ is of class $\hold^{1}$, we may undo the flattening, rotating and translating. Patching together the single localised pieces by use of a smooth partition of unity yields the requisite function $f_{\delta}\in\hold^{1}(\overline{\Omega'})$; note that, based on  \eqref{eq:carlsen3} and employing the Leibniz rule for the localised pieces, we may regroup the emerging terms to arrive at the estimate from Lemma \ref{lem:GoodLip}\ref{item:Lipextend1}(iii). This completes the proof. 
\end{proof}

\begin{proof}[Proof of Lemma \ref{lem:GoodLip}\ref{item:Lipextend2}] Throughout, note that we do not assert that $g\in\hold_{b}^{1}(\R^{n})$ (which would be impossible due to (i) and $f\in\mathrm{Lip}(\partial\Omega')$) but only $g|_{\R^{n}\setminus\overline{\Omega'}}\in\hold_{b}^{1}(\R^{n}\setminus\overline{\Omega'})$ and $g|_{\Omega'}\in\hold_{b}^{1}(\Omega')$. We adopt the same setting as in the proof of Lemma \ref{lem:GoodLip}\ref{item:Lipextend1} and firstly assume that $f\in\mathrm{Lip}_{\rm c}(\R^{n-1}\times\{0\})$. In the present situation, we let $\widetilde{\rho}\in\hold_{\rm c}^{\infty}(\R^{n-1})$ be a standard mollifier on $\R^{n-1}$ and consider 
\begin{align*}
(\widetilde{E}f)(x',x_{n})
:=\begin{cases}\displaystyle \int_{\R^{n-1}}\widetilde{\rho}_{|x_{n}|}(x'-y')f(y',0)\dif y'&\;\text{if}\;x_{n}\neq 0, \\[2mm]
f(x',0)&\;\text{if}\;x_{n}=0.
\end{cases}
\end{align*}
To achieve compact support, we may multiply $\widetilde{E}f$ with a rescaled cut-off as in the proof of Lemma \ref{lem:GoodLip}\ref{item:Lipextend1}; we do not repeat this step here. As in the proof of Lemma \ref{lem:GoodLip}\ref{item:Lipextend1}, $\widetilde{E}f$ is continuous and bounded in $\R^{n}$, and is differentiable with bounded derivatives in $\R^{n}\setminus(\R^{n-1}\times\{0\})$. Hence, it suffices to confirm (iii).  Let $\rho\in\hold_{\rm c}^{\infty}(\R^{n})$ be a standard mollifier {on} $\R^{n}$ and let $\delta>0$. For any $k\in\{1,\cdots,n-1\}$, we then have the following identity for the weak derivatives in the $k$-th direction:
\begin{align*}
&\int_{\R^{n-1}} |\partial_{x_{k}}(\rho_{\delta}*(\widetilde{E}f))(x',0) - \partial_{x_{k}}f(x',0)|\dif x' \\ 
& \! = \int_{\R^{n-1}}\left\vert\int_{\R^{n}}\rho(\xi)(\partial_{x_{k}}\widetilde{E}f)((x',0)+\delta \xi)\dif\xi - \partial_{x_{k}}f(x',0)\right\vert\dif x' \\ 
& \!=  \int_{\R^{n-1}}\left\vert\int_{\R^{n}}\int_{\R^{n-1}}\rho(\xi)\widetilde{\rho}(z')((\partial_{x_{k}}f)(x'+\delta\xi'+|\delta\xi_{n}|z',0)-(\partial_{x_{k}}f(x',0)))\dif z'\dif\xi\right\vert\dif x' \\ 
& \! =: \mathrm{I}. 
\end{align*}
We abbreviate $f_{k}:=\partial_{x_{k}}f(\cdot,0)$. Given $i\in\mathbb{N}$, we choose 
$g_{k}^{i}\in\hold_{\rm c}^{\infty}(\R^{n-1}\times\{0\})$ with $\|f_{k}-g_{k}^{i}\|_{\lebe^{1}(\R^{n-1}\times\{0\})}<2^{-i-2}$, and then fix some $\delta_{i}$ with 
\begin{align}\label{eq:deltaAppChoose}\tag{B.4}
0<\delta_{i}<\frac{2^{-i-2}}{1+\|\nabla g_{k}^{i}\|_{\lebe^{1}(\R^{n-1})}}.
\end{align}
We estimate $\mathrm{I} \leq \mathrm{II}+\mathrm{III}+\mathrm{IV}$, where now $\delta=\delta_{i}$, and the single terms are as follows:
\begin{align*}
\mathrm{II} & := \int_{\R^{n-1}}\int_{\R^{n}}\int_{\R^{n-1}}\rho(\xi)\widetilde{\rho}(z')|(f_{k}-g_{k}^{i})(x'+\delta_{i}\xi'+\delta_{i}|\xi_{n}|z',0)|\dif z'\dif\xi\dif x' \\ 
& \leq \int_{\R^{n-1}}\int_{\R^{n}}\rho(\xi)\widetilde{\rho}(z')\int_{\R^{n-1}}|(f_{k}-g_{k}^{i})(x'+\delta_{i}\xi'+\delta_{i}|\xi_{n}|z',0)|\dif x'\dif \xi\dif z' \\ 
& = \int_{\R^{n-1}}\int_{\R^{n}}\rho(\xi)\widetilde{\rho}(z')\int_{\R^{n-1}}|(f_{k}-g_{k}^{i})(x',0)|\dif x'\dif \xi\dif z' < 2^{-i-2}. 
\end{align*}
Next, we recall that both $\rho$ and $\widetilde{\rho}$ are supported in the $n$- or $(n-1)$-dimensional closed unit balls. Therefore, we have
\begin{align*}
\mathrm{III} & := \int_{\R^{n-1}}\int_{\R^{n}}\int_{\R^{n-1}} \rho(\xi)\widetilde{\rho}(z')|g_{k}^{i}(x'+\delta_{i}\xi'+\delta_{i}|\xi_{n}|z',0)-g_{k}^{i}(x',0)|\dif z'\dif\xi\dif x' \\ 
& \leq 2\delta_{i}\int_{\R^{n-1}}\int_{\R^{n}}\int_{\R^{n-1}} \rho(\xi)\widetilde{\rho}(z') \int_{0}^{1}|(\nabla g_{k}^{i})(x'+t(\delta_{i}\xi'+\delta_{i}|\xi_{n}|z'),0)|\dif t\dif z'\dif\xi\dif x' \\ 
& \leq 2\delta_{i} \int_{\R^{n}}\int_{\R^{n-1}}\rho(\xi)\widetilde{\rho}(z')\int_{0}^{1}\int_{\R^{n-1}}|(\nabla g_{k}^{i})(x'+t\delta_{i}(\xi'+|\xi_{n}|z'),0)|\dif x'\dif t \dif z'\dif\xi \\ 
& = 2\delta_{i}\int_{\R^{n}}\int_{\R^{n-1}}\rho(\xi)\widetilde{\rho}(z')\int_{0}^{1}\int_{\R^{n-1}}|(\nabla g_{k}^{i})(x',0)|\dif x'\dif t \dif z'\dif\xi \stackrel{\eqref{eq:deltaAppChoose}}{<} 2^{-i-1}.  
\end{align*}
Imitating the estimate of $\mathrm{II}$, we find  
\begin{align*}
\mathrm{IV} := \int_{\R^{n-1}}\int_{\R^{n}}\int_{\R^{n-1}}\rho(\xi)\widetilde{\rho}(z')|(f_{k}-g_{k}^{i})(x',0)|\dif z'\dif\xi \dif x' < 2^{-i-2}. 
\end{align*}
Gathering the above estimates, we have
\begin{align*}
\int_{\R^{n-1}} & |\partial_{x_{k}}(\rho_{\delta_{i}}*(\widetilde{E}f))(x',0) - \partial_{x_{k}}f(x',0)|\dif x' < 2^{-i}. 
\end{align*}
Passing to a (non-relabelled) subsequence of $(\delta_{i})$, we thus obtain
$$
\partial_{x_{k}}(\rho_{\delta_{i}}*(\widetilde{E}f))(x',0) \to \partial_{x_{k}}f(x',0)
\qquad\mbox{for $\mathscr{L}^{n-1}$-a.e. $x\in\R^{n-1}$}. 
$$
Since $\mathscr{L}^{n-1}$-a.e. $x\in\R^{n-1}$ is a differentiability point of $f$, the conclusion follows in the flat case. Localising and flattening as in the proof of Lemma \ref{lem:GoodLip}\ref{item:Lipextend1}, we may also conclude the proof in the case of open and bounded sets $\Omega'\subset\R^{n}$ with $\hold^{1}$-boundary. 
\end{proof}

\subsection*{Appendix C: Trace Theory for $\sobo^{\curl,p}$-Fields}\label{sec:AppendixC}
For completeness, we briefly return to the spaces $\sobo^{\curl,p}(\Omega)$ as introduced in \eqref{eq:sobocurl}, and revisit previous results (see \cite{Alonso,BuffaCiarlet1,BuffaCiarlet2,Sheen,Tartar1997}), some of which we recall for the reader's convenience. To this end, we recall from \cite{Alonso} the space 
\begin{align*} 
\mathcal{X}_{\partial\Omega}^{p} 
:= \big\{\Phi\in\sobo^{-1/p,p}(\partial\Omega;\R^{3})\colon\; (\Phi\cdot\nu)|_{\partial\Omega}=0\;\text{and}\;\mathrm{div}_{\tau}(\Phi)\in\sobo^{-1/p,p}(\partial\Omega)\big\}, 
\end{align*}
where $\nu=\nu_{\partial\Omega}$ represents the inner unit normal as usual. We briefly comment on the regularity of the tangential divergences based on the tangentiality $(\Phi\cdot\nu)|_{\partial\Omega}=0$. Note that, if $\FF\in\sobo^{\curl,p}(\Omega)$, 
then $\mathbf{G}:=\curl\FF\in\lebe^{p}(\Omega;\R^{3})$ 
satisfies $\di\mathbf{G}=0$, whereby $\mathbf{G}\in\sobo^{\di,p}(\Omega)$. Since there exists a (surjective) normal trace operator $\mathrm{tr}_{\nu}^{p}\colon\sobo^{\di,p}(\Omega)\to\sobo^{-1/p,p}(\partial\Omega)$, 
we have $\mathbf{G}\cdot\nu :=\mathrm{tr}_{\nu}^{p}(\mathbf{G})\in\sobo^{-1/p,p}(\partial\Omega)$. In particular, by the very definition of $\mathrm{tr}_{\nu}^{p}$, 
we have 
\begin{align}\label{eq:dividentity}\tag{C.1}
\langle(\mathbf{G}\cdot\nu),\varphi\rangle_{\partial\Omega} 
= -\int_{\Omega} (\di\mathbf{G})\,\varphi\dif x - \int_{\Omega}\mathbf{G}\cdot\nabla\varphi\dif x = -\int_{\Omega}\mathbf{G}\cdot\nabla\varphi\dif x
\end{align}
for all $\varphi\in\sobo^{1,p'}(\Omega)$. On the other hand, $\mathrm{tr}_{\tau}^{p}\colon\sobo^{\curl,p}(\Omega)\to\sobo^{-1/p,p}(\partial\Omega;\R^{3})$ satisfies 
\begin{align}\label{eq:curlidentity}\tag{C.2}
\langle \mathrm{tr}_{\tau}(\FF),\bm{\varphi}\rangle_{\partial\Omega'} = \int_{\Omega}\curl\FF\cdot\bm{\varphi}\dif x - \int_{\Omega}\FF\cdot\curl\bm{\varphi}\dif x
\end{align}
for all $\bm{\varphi}\in\sobo^{1,p'}(\Omega;\R^{3})$. Now let $\varphi\in\sobo^{2,p'}(\Omega)$, so that $\nabla\varphi\in\sobo^{1,p'}(\Omega;\R^{3})$. Using the tangentiality of $\mathrm{tr}_{\tau}^{p}(\FF)$, one finds that  
\begin{align}\label{eq:divcurlidentity}\tag{C.3}
\begin{split}
\langle(\mathbf{G}\cdot\nu),\varphi\rangle_{\partial\Omega} & \stackrel{\eqref{eq:dividentity}}{=} -\int_{\Omega}\curl\FF\cdot\nabla\varphi\dif x \\ & \stackrel{\eqref{eq:curlidentity}}{=} - \langle \mathrm{tr}_{\tau}^{p}(\FF),\nabla\varphi\rangle_{\partial\Omega} = \langle\mathrm{div}_{\tau}(\mathrm{tr}_{\tau}^{p}(\FF)),\varphi\rangle_{\partial\Omega}
\end{split}
\end{align}
for all $\varphi\in\sobo^{2,p'}(\Omega)$. The left-hand side of \eqref{eq:divcurlidentity} extends to a bounded linear functional on $\sobo^{1,p'}(\Omega)$, and so does the right-hand side. The trace space of $\sobo^{1,p'}(\Omega)$ is $\sobo^{1-1/p',p'}(\partial\Omega)$, and its dual space is $\sobo^{-1/p,p}(\partial\Omega)$. Therefore, $\mathrm{tr}_{\tau}^{p}\colon\sobo^{\curl,p}(\Omega)\to\mathcal{X}_{\partial\Omega}^{p}$ boundedly. 
\subsection*{Appendix D: Integral formulas for smooth maps}\label{sec:AppendixD}
For the reader's convenience, we here concisely collect 
some basic identities that have entered the main part; by the anti-commutativity of the cross product, some care regarding signs needs to be taken. To this end, let $\Omega\subset\R^{3}$ be open and bounded with Lipschitz boundary, and let $\FF\in\hold(\overline{\Omega};\R^{3})\cap\hold_{b}^{1}(\Omega;\R^{3})$ and 
$\varphi\in\hold(\overline{\Omega})\cap\hold_{b}^{1}(\Omega)$,  where, $\hold_{b}^{1}$ stands for the $\hold^{1}$-maps with bounded gradients. Based on our convention of $\nu_{\partial\Omega}$ denoting the \emph{inner} unit normal to $\partial\Omega$, we have the Gauss--Green-type formula:
\begin{align}\label{eq:calculus2}\tag{D.1}
\int_{\Omega}\curl \FF\dif x =  \int_{\partial\Omega}\FF\times\nu_{\partial\Omega}\dif\mathscr{H}^{2}. 
\end{align}
Because of the identity $\curl(\varphi \FF)= \varphi\,\curl\FF - \FF \times \nabla\varphi $, \eqref{eq:calculus2} yields 
\begin{align}\label{eq:calculus2a}\tag{D.2}
\int_{\Omega}  (\varphi\,\curl \FF - \FF \times \nabla \varphi )\dif x= \int_{\partial\Omega}\varphi(\FF\times\nu_{\partial\Omega})\dif\mathscr{H}^{2}. 
\end{align}
Now, if $\mathbf{G}\in\hold(\overline{\Omega};\R^{3})\cap\hold_{b}^{1}(\Omega;\R^{3})$, then we have the following integration-by-parts rule:
\begin{align}\label{eq:calculus4}\tag{D.3}
\int_{\partial \Omega}(\FF\times \mathbf{G})\cdot\nu_{\partial\Omega}\dif\mathscr{H}^{2} = \int_{\Omega}\FF\cdot\curl \mathbf{G}\dif x -\int_{\Omega}\curl \FF\cdot \mathbf{G} \dif x. 
\end{align}
Based on the rule $\mathbf{a}\cdot(\mathbf{b}\times\mathbf{c})=\mathbf{c}\cdot(\mathbf{a}\times\mathbf{b})$ for $\mathbf{a},\mathbf{b},\mathbf{c}\in\R^{3}$, we have 
\begin{align*}
(\FF\times\GG)\cdot\nu_{\partial\Omega} = \GG\cdot(\nu_{\partial\Omega}\times\FF) = - \GG\cdot(\FF\times\nu_{\partial\Omega}), 
\end{align*}
and so \eqref{eq:calculus4} yields 
\begin{align}\label{eq:calculus7}\tag{D.4}
\int_{\partial\Omega}(\FF\times\nu_{\partial\Omega})\cdot\mathbf{G}\dif\mathscr{H}^{2} = \int_{\Omega}\curl \FF\cdot\mathbf{G}\dif x - \int_{\Omega}\FF\cdot\curl \GG\dif x. 
\end{align}
We wish to point out that, alternatively, \eqref{eq:calculus7} can be derived directly from \eqref{eq:calculus2a}.

\bigskip
{\small 
\subsection*{Acknowledgments}
The research of Gui-Qiang G. Chen was supported in part
by the UK Engineering and Physical Sciences Research Council Award
EP/L015811/1, EP/V008854, and EP/V051121/1. The research of 
Franz Gmeineder is supported by the Hector Foundation (Project Nr. FP 626/21). He wishes to thank his colleagues Robert Denk,  Markus  Kunze and Oliver Schn\"{u}rer for useful discussions around the theme of the paper.}

\end{document}